\documentclass[10pt]{amsart}
\usepackage[foot]{amsaddr}
\usepackage[a4paper,margin=2.5cm]{geometry}
\usepackage[english]{babel}
\usepackage{csquotes}
\usepackage[T1]{fontenc}
\usepackage{textcomp}
\usepackage{amsmath,amssymb,amsthm,mathtools,mathrsfs}
\usepackage{newtxtext}
\usepackage{newtxmath}
\usepackage[bb=boondox,cal=boondoxo,scr=boondoxo]{mathalfa}
\usepackage{enumitem}
\usepackage{booktabs}
\usepackage{array}
\usepackage{tikz}
\usetikzlibrary{arrows.meta}
\definecolor{diagramHahn}{HTML}{5B4B8A}
\definecolor{diagramK}{HTML}{7A8F32}
\definecolor{diagramMII}{HTML}{2563A6}
\definecolor{diagramMI}{HTML}{C56A18}
\definecolor{diagramCII}{HTML}{16847B}
\definecolor{diagramCI}{HTML}{B84A5B}
\usepackage[hidelinks]{hyperref}
\hypersetup{
  pdftitle={An Askey-Type Confluence Scheme for Hahn-Like Multiple Orthogonality: Bernstein Discretization and Explicit Hypergeometric Formulas},
  pdfauthor={Manuel Ma\~nas},
  pdfsubject={Bernstein discretization of Wolfs' Jacobi-like multiple orthogonality and its explicit hypergeometric Askey-type confluence scheme},
  pdfkeywords={multiple orthogonal polynomials; Hahn-like weights; Kravchuk-like weights; Krawtchouk polynomials; Jacobi-like weights; Meixner-like weights; Charlier-like weights; Laguerre-like weights; Hermite-like systems; Askey-type confluence scheme; Bernstein transform; hypergeometric functions; Kampé de Fériet functions}
}
\usepackage{microtype}
\usepackage[
  backend=biber,
  style=numeric,
  sorting=nyt,
  giveninits=true,
  maxnames=99,
  doi=true,
  url=true,
  isbn=false
]{biblatex}
\DeclareFieldFormat[article]{title}{\mkbibemph{#1}}
\DeclareFieldFormat[article]{journaltitle}{#1}
\DeclareFieldFormat[article]{volume}{\textbf{#1}}
\DeclareFieldFormat{pages}{#1}
\renewbibmacro{in:}{%
  \ifentrytype{article}{}{\printtext{\bibstring{in}\intitlepunct}}}
\renewbibmacro*{journal+issuetitle}{%
  \usebibmacro{journal}%
  \setunit*{\addspace}%
  \printfield{volume}%
  \iffieldundef{number}{}{
    \setunit{\addcomma\space}%
    \printtext{no.\addspace}\printfield{number}}%
  \iffieldsequal{volume}{year}{}{
    \setunit{\addspace}%
    \printtext[parens]{\printfield{year}}}%
  \iffieldundef{eid}{}{
    \setunit{\addcomma\space}%
    \printfield{eid}}%
  \iffieldundef{pages}{}{
    \iffieldundef{eid}
      {\setunit{\addcomma\space}\printfield{pages}}
      {\iffieldsequal{eid}{pages}{}
        {\setunit{\addcomma\space}\printfield{pages}}}}%
  \newunit}
\renewbibmacro*{note+pages}{%
  \ifentrytype{article}{}{\printfield{note}%
    \setunit{\bibpagespunct}\printfield{pages}}}

\allowdisplaybreaks
\mathtoolsset{showonlyrefs}
\setlist[itemize]{leftmargin=*,itemsep=2pt,topsep=4pt}
\setlist[enumerate]{leftmargin=*,itemsep=2pt,topsep=4pt}

\newcommand{\N}{\mathbb N}
\newcommand{\Nzero}{\mathbb N_0}
\newcommand{\R}{\mathbb R}
\newcommand{\e}{\mathrm e}
\newcommand{\dd}{\,\mathrm d}
\newcommand{\one}{\boldsymbol 1}
\newcommand{\A}{\boldsymbol A}
\newcommand{\bbeta}{\boldsymbol\beta}
\newcommand{\mm}{\boldsymbol m}
\newcommand{\ee}{\boldsymbol e}
\newcommand{\abs}[1]{\lvert#1\rvert}
\newcommand{\fall}[2]{#1^{\underline{#2}}}
\newcommand{\pFq}[5]{{}_{#1}F_{#2}\!\left(\begin{matrix}#3\\#4\end{matrix};#5\right)}
\DeclareMathOperator*{\Res}{Res}

\theoremstyle{plain}
\newtheorem{theorem}{Theorem}[section]
\newtheorem{proposition}[theorem]{Proposition}
\newtheorem{lemma}[theorem]{Lemma}
\newtheorem{corollary}[theorem]{Corollary}

\theoremstyle{definition}
\newtheorem{definition}[theorem]{Definition}

\theoremstyle{remark}
\newtheorem{remark}[theorem]{Remark}

\title[An Askey-Type Hahn-Like Confluence Scheme]
{An Askey-Type Confluence Scheme for Hahn-Like Multiple Orthogonality\\[0.4ex]
	\mdseries\small Bernstein Discretization and Explicit Hypergeometric Formulas}
\author{Manuel Ma\~nas}
\address{Department of Theoretical Physics, Faculty of Physical Sciences,
Complutense University of Madrid, 28040 Madrid, Spain}
\email{manuel.manas@ucm.es}
\date{September 8, 2026}

\begin{document}

\begin{abstract}
We place the Jacobi-like and Laguerre-like systems for ordinary
type-I/type-II multiple orthogonality considered by Wolfs in a single
Askey-type confluence scheme. The construction proceeds in two linked
stages. First, applying the Bernstein transform simultaneously to the
\(q\) Jacobi-like weights produces a positive finite-lattice Hahn-like
ancestor. Its weights admit positive multiple beta-integral
representations, reduce to the classical Hahn weight when \(q=1\), and
converge to the original continuous weights. Second, from this common
ancestor, the Bernstein limit and further parameter and scaling limits yield a
Kravchuk-like system, two Meixner-like systems, and two Charlier-like
systems, together with the Jacobi-like, two Laguerre-like, and
Hermite-like continuous families.

We realize this diagram at the level of explicit hypergeometric
orthogonality data. For every near-diagonal multi-index with
\(1\le\abs\mm\le N\), the Hahn-like type-II polynomial is a terminating
\({}_{q+2}F_{q+1}(1)\) series and satisfies an exact inverse Bernstein
identity. We also construct the normalized type-I form and, under
explicit separation and nonvanishing conditions, recover each of its
polynomial components by finite sums of terminating hypergeometric
functions and prove uniqueness. The finite-pole contributions regroup
into terminating Kamp\'e de F\'eriet blocks. On the unreflected
Kravchuk-like and Meixner-II-like branches, the moving Hahn block is
isolated as a coefficient extraction from its terminating Kamp\'e de
F\'eriet factor and reconstructed at infinity; on the reflected branches,
the grouped blocks have ordinary coefficientwise limits given by finite
Lauricella--Horn sector sums. The
Kravchuk-like-to-Charlier-II-like,
reflected Kravchuk-like-to-Charlier-I-like,
Meixner-II-like-to-Charlier-II-like,
Meixner-II-like-to-Laguerre-I-like,
Meixner-I-like-to-Laguerre-II-like, and both
Charlier-like--Hermite-like confluences hold sectorwise.

Along every directed confluence arrow we determine the type-II polynomial, the
normalized type-I form, and, under the stated hypotheses, the individual
type-I components and their componentwise confluence. For one weight the
formulas reduce to the corresponding classical families. For several
weights, no permutation of the normalized rows identifies the two
Meixner-like systems, and the same holds for the two Charlier-like
systems.
\end{abstract}

\keywords{multiple orthogonal polynomials; Hahn-like weights; Kravchuk-like
weights; Krawtchouk polynomials; Jacobi-like weights; Meixner-like weights;
Charlier-like weights; Laguerre-like weights; Hermite-like systems;
Askey-type confluence scheme;
Bernstein transform; generalized hypergeometric functions;
Kamp\'e de F\'eriet functions}
\subjclass[2020]{Primary 33C45; Secondary 42C05, 33C20, 44A10}
\maketitle
\enlargethispage{4pt}
\tableofcontents

\section{Introduction}
\label{sec:intro}

The Hahn family is the finite-lattice member of the classical Askey
scheme, with limits to Jacobi, Meixner, and Kravchuk polynomials. In
multiple orthogonality, the Meixner and Laguerre families split into two
families that coincide only in the scalar case. Multiple Hahn systems and
their limits have been studied in
\cite{ArvesuCoussementVanAssche2003,BranquinhoDiazFoulquieManas2023Hahn,
BranquinhoDiazFoulquieManasWolfs2025,BranquinhoDiazFoulquieManas2025Classical};
standard scalar formulas and limits can be found in
\cite{KoekoekLeskySwarttouw2010}; the classical continuous multiple
families are surveyed in \cite{VanAsscheCoussement2001}.

For \(q=1\), the relation between Hahn polynomials and the Bernstein
coefficients of Jacobi polynomials is classical
\cite{Ciesielski1987,Waldron2006}.  In the multiple setting,
\cite[Proposition~3.18]{BranquinhoDiazFoulquieManasWolfs2025} gives a coefficient
identity between factorial and monomial expansions. We derive its
Bernstein form below. Classical multiple Hahn, Meixner-I, Meixner-II,
Kravchuk, and Charlier systems and their limits have likewise been studied in
\cite{ArvesuCoussementVanAssche2003,
BranquinhoDiazFoulquieManasWolfs2024Discrete}.

Our aim is to place the Jacobi-like and Laguerre-like systems for ordinary
type-I/type-II multiple orthogonality considered by Wolfs
\cite{Wolfs2024} inside a single Askey-type confluence scheme. In this
interpretation the continuous
families are not isolated constructions: they are connected through a
common finite-lattice ancestor and its discrete and continuous
confluences. The construction is carried out in two linked stages that
together form the central contribution of the paper.

The first stage constructs the common discrete ancestor. We start from
Wolfs' positive Jacobi-like system. Its weights have Mellin transforms
given by quotients of gamma products and admit multiple beta-integral
representations; explicit type-I and type-II formulas and the
corresponding AT conditions are known there. Applying the
Bernstein transform to all \(q\) weights produces a positive
finite-lattice system, which we call Hahn-like. We compute its factorial
moments and its type-II polynomial, prove that the Bernstein transform of
this polynomial is exactly Wolfs' Jacobi-like type-II polynomial, and
construct the normalized type-I form. Under explicit separation and
nonvanishing hypotheses, every type-I component is given by a finite
hypergeometric formula and the corresponding multiple-orthogonality
problem is normal.

The second stage develops the same scheme by confluence from the
Hahn-like node. Two endpoint scalings produce Meixner-like families that,
for \(q>1\), cannot be identified by permuting their normalized rows;
their further limits give two Charlier-like and two Laguerre-like
families. A partial parameter limit at fixed lattice size produces a
Kravchuk-like family. In this last limit only one parameter pair is sent
to infinity: sending every pair to infinity would collapse all normalized
rows to the same binomial weight and destroy the multiple system. The two
Charlier-like branches subsequently reach reflected realizations of the
Hermite-like family, while the Bernstein limit returns the original
Jacobi-like node.

The explicit hypergeometric formulas are the mechanism by which the nodes
and arrows of the diagram are realized. Besides the fully expanded
residue formulas, we retain the bivariate hypergeometric structure of the
Hahn-like type-I components throughout the confluences. At regular finite
poles this gives finite sums of terminating Kamp\'e de F\'eriet blocks. On
the unreflected Kravchuk- and Meixner-II-like branches, the moving block is
obtained as a single coefficient extraction from its terminating Hahn
Kamp\'e de F\'eriet factor; the subsequent Charlier-II-like and
Laguerre-I-like limits preserve the finite-pole sectors and the
reconstructed sector at infinity separately. On the reflected branches,
separating the triangular inversion by its terminal pole yields ordinary
blockwise limits and explicit finite Lauricella--Horn formulas. The
Meixner-I-like-to-Laguerre-II-like arrow is governed by an exact finite
rational deformation whose pole sectors converge separately.

The suffix ``-like'' distinguishes these systems from the standard
multiple Hahn, Meixner, Kravchuk, and Charlier families. Every
construction reduces to the corresponding classical scalar family when
\(q=1\). For \(q>1\), the two Meixner-like systems are mutually distinct
under every permutation of their normalized rows, as are the two
Charlier-like systems; see
Propositions~\ref{prop:Meixner-families-distinct} and
\ref{prop:Charlier-families-distinct}. The present paper establishes the
weights, type-II polynomials, normalized type-I forms, and explicit
hypergeometric component formulas throughout the scheme. Recurrence
coefficients and their factorizations form a separate layer and are not
treated here. We consider the usual type-I/type-II problem for \(q\)
weights; the mixed-type Hahn-like setting with two independent numbers of
weights is deliberately kept separate.

Figure~\ref{fig:intro-diagram} summarizes the Askey-type scheme
established here. Solid boxes denote discrete systems and dashed boxes
their continuous limits. Reflection of the lattice is indicated
explicitly.

\begin{figure}[!htbp]
\centering
\begin{tikzpicture}[
  >={Stealth[length=2.35mm,width=1.55mm]},
  discrete/.style={
    line width=.9pt, rounded corners=2.2pt,
    align=center, font=\small\bfseries,
    text width=30mm, minimum height=10mm, inner sep=3pt
  },
  continuous/.style={
    line width=.9pt, dashed, rounded corners=2.2pt,
    align=center, font=\small,
    text width=28mm, minimum height=10mm, inner sep=3pt
  },
  hahn family/.style={draw=diagramHahn,fill=diagramHahn!13},
  k family/.style={draw=diagramK,fill=diagramK!14},
  mii family/.style={draw=diagramMII,fill=diagramMII!12},
  mi family/.style={draw=diagramMI,fill=diagramMI!14},
  cii family/.style={draw=diagramCII,fill=diagramCII!12},
  ci family/.style={draw=diagramCI,fill=diagramCI!12},
  discrete flow/.style={
    ->,line width=.95pt,draw=black!78
  },
  reflection/.style={<->,line width=.8pt,draw=black!58},
  panel title/.style={font=\small\bfseries,anchor=west,text=black!75}
]
\node[panel title] at (-6.1,6.75) {The Askey-type scheme constructed here};
\node[discrete,hahn family] (H) at (0,5.75) {Hahn-like};
\node[continuous,hahn family] (J) at (5.25,5.75) {Jacobi-like};
\node[discrete,mii family] (MII) at (-4.2,3.85)
  {Meixner-II-like};
\node[discrete,k family] (K) at (0,3.85)
  {Kravchuk-like};
\node[discrete,mi family] (MI) at (4.2,3.85)
  {Meixner-I-like};
\node[continuous,mii family] (LI) at (-5.4,1.8)
  {Laguerre-I-like};
\node[discrete,cii family] (CII) at (-1.8,1.8)
  {Charlier-II-like};
\node[discrete,ci family] (CI) at (1.8,1.8)
  {Charlier-I-like};
\node[continuous,mi family] (LII) at (5.4,1.8)
  {Laguerre-II-like};
\node[continuous,cii family] (Hm) at (-1.8,-.2)
  {Hermite-like\\\(H^{-}\)};
\node[continuous,ci family] (Hp) at (1.8,-.2)
  {Hermite-like\\\(H^{+}\)};

\draw[discrete flow] (H) -- (MII);
\draw[discrete flow] (H) -- (K);
\draw[discrete flow] (H) -- (MI);
\draw[discrete flow] (H) -- (J);
\draw[discrete flow] (MII) -- (LI);
\draw[discrete flow] (MII) -- (CII);
\draw[discrete flow] (K) -- (CII);
\draw[discrete flow] (K) --
 node[pos=.48,above=1.5pt,fill=white,inner sep=1pt,font=\scriptsize]
 {after reflection} (CI);
\draw[discrete flow] (MI) -- (CI);
\draw[discrete flow] (MI) -- (LII);
\draw[discrete flow] (CII) -- (Hm);
\draw[discrete flow] (CI) -- (Hp);
\draw[reflection] (Hm) -- (Hp);
\end{tikzpicture}
\caption{The Askey-type confluence scheme constructed in this paper. Solid
boxes denote discrete systems, dashed boxes continuous systems, and the
double arrow the reflection \(t\mapsto-t\). No interchangeability of
limits is asserted.}
\label{fig:intro-diagram}
\end{figure}
The names Charlier-I-like and Charlier-II-like refer to the limits from the
Meixner-I-like and Meixner-II-like families, respectively. The first is
described by convolutions of elementary sequences, whereas the second has
a multiple beta-integral representation. They coincide when \(q=1\), as
the two classical Meixner families do
\cite{ArvesuCoussementVanAssche2003}, but are different when \(q>1\); see
Proposition~\ref{prop:Charlier-families-distinct}.

The organization mirrors this construction.
Section~\ref{sec:preliminaries} fixes the notation and recalls Wolfs'
Jacobi-like weights. Sections~\ref{sec:Hahn-weights} and
\ref{sec:Hahn-forms} construct the Hahn-like ancestor and derive its
type-II and type-I formulas.
Section~\ref{sec:explicit-unreflected-B} supplies the common finite
reconstruction used to transport the type-I components through the
scheme. Section~\ref{sec:Kravchuk} then treats the fixed-lattice parameter
limit, Sections~\ref{sec:MII} and \ref{sec:MI} treat the two Meixner-like
limits and their Laguerre-like limits, and
Section~\ref{sec:Charlier-Hermite} treats the two Charlier-like systems
and their Hermite limits. Each family section states its own type-II
polynomial, type-I form, component specialization, confluence, and
uniqueness hypotheses.

\section{Notation and Jacobi-like weights}
\label{sec:preliminaries}

This section fixes the notation used throughout the paper and recalls the
normalized Jacobi-like weights that will be discretized. We also specify the
normalization used in the Hahn-to-Jacobi limit.

If \(F(z)=\sum_{k\ge0}f_kz^k\), then \([z^k]F(z)\coloneq f_k\).

\begin{definition}[Near-diagonal index]
A multi-index \(\mm\in\Nzero^q\) is near the diagonal if
\(\max_{h\in\{1,\ldots,q\}}m_h-
\min_{h\in\{1,\ldots,q\}}m_h\le1\).
Equivalently, \(\abs{m_h-m_j}\le1\) for all
\(h,j\in\{1,\ldots,q\}\).
\end{definition}

When \(\abs\mm\ge1\), this condition is also equivalent to
\begin{equation}
 m_h-\delta_{h,j}\ge r
 \quad\text{for every }h,
 \quad\text{every active }j,
 \quad\text{and every }r\in\Nzero\text{ with }0\le r<m_j.
 \label{eq:near-diagonal-proof-condition}
\end{equation}
Indeed, near-diagonality gives
\(m_h-\delta_{h,j}\ge m_j-1\ge r\). Conversely, if
\eqref{eq:near-diagonal-proof-condition} holds, choose \(j_*\) with
\(m_{j_*}=\max_hm_h\); it is active because \(\abs\mm\ge1\). Taking
\(r=m_{j_*}-1\) yields \(m_h\ge m_{j_*}-1\) for every \(h\), and hence
\(\max_hm_h-\min_hm_h\le1\).
Thus near-diagonality is exactly the combinatorial condition used in the
degree and divisibility arguments below, rather than an auxiliary
regularity assumption.

Whenever the type-I moment conditions are considered, we assume
\(\abs\mm\ge1\) and write \(n=\abs\mm-1\). For weights
\(w_1,\ldots,w_q\), a type-I linear form has the representation
\[
 \mathcal Q_{\mm}(x)=
 \sum_{\substack{j\in\{1,\ldots,q\}\\m_j>0}}
 B_{\mm,j}(x)w_j(x).
\]
Here \(\deg B_{\mm,j}<m_j\) for every
\(j\in\{1,\ldots,q\}\) with \(m_j>0\), and the moments of every order
\(r\in\Nzero\) with \(r<n\) vanish. In several sections we
first construct an explicit signed sequence or function satisfying these
moment conditions. We identify it with the type-I linear form only after
proving that it has the displayed representation. The representation is
unique precisely when the corresponding type-I moment matrix is
nonsingular.

Let \(\mathcal L_j\) denote the moment functional associated with
\(w_j\), put \(d=\abs\mm\), and let
\((\phi_r)_{r\ge0}\) be any monic degree-graded polynomial basis. The
type-I moment matrix at \(\mm\) is the \(d\times d\) matrix
\begin{equation}
 M_{\mm}\coloneq
 \bigl(\mathcal L_j[\phi_r\phi_s]\bigr)_{
  0\le r<d,\ (j,s):\,m_j>0,\ 0\le s<m_j}.
 \label{eq:type-I-moment-matrix-definition}
\end{equation}

\begin{definition}[Normal multi-index]
\label{def:normal-multi-index}
We call \(\mm\) \emph{normal} for
\((\mathcal L_1,\ldots,\mathcal L_q)\) if \(M_{\mm}\) is nonsingular.
This definition is independent of the chosen monic degree-graded basis.
Equivalently, the normalized type-I problem has a unique solution and the
monic type-II problem has a unique solution of degree \(d\), because their
coefficient matrices are \(M_{\mm}\) and \(M_{\mm}^{\mathsf T}\),
respectively. Thus normality is a simultaneous type-I and type-II
property. Multiplication of any row functional by a nonzero constant does
not affect it.
\end{definition}

For a system supported on \(\{0,\ldots,N\}\), we allow
\(d=\abs\mm=N+1\) in Definition~\ref{def:normal-multi-index}. At this
endpoint the monic nodal polynomial
\begin{equation}
 \Pi_{N+1}(k)=\prod_{t=0}^{N}(k-t)
 \label{eq:finite-lattice-nodal-polynomial}
\end{equation}
is a type-II solution and, whenever \(M_{\mm}\) is nonsingular, it is the
unique monic type-II solution. Since \(\Pi_{N+1}(0)=0\), the convention
\(A(0)=1\) used for the explicit finite-lattice type-II formulas is
reserved for \(d\le N\). Type-I formulas and normality statements may
still include \(d=N+1\).

Each theorem states its normalization explicitly. For the Hahn-like,
Kravchuk-like, Meixner-II-like, and Charlier-II-like families
we initially use the normalization inherited from the factor
\((r+\beta_j)^{-1}\). For the reflected Meixner-I-like and
Charlier-I-like families, the factorial moment of order \(n\) is
normalized to one. The additional rescaling used in the
Charlier-II-like-to-Hermite limit is stated where it is introduced.

\subsection{Factorials and hypergeometric series}

We use rising and falling factorials throughout. We also specify the vector
conventions and the meaning of the terminating hypergeometric series used
below.

For \(r\in\Nzero\), we write \((a)_r=\Gamma(a+r)/\Gamma(a)\) and
\(\fall{x}{r}=x(x-1)\cdots(x-r+1)=(-1)^r(-x)_r\),
with either expression equal to one for \(r=0\). Products over empty
parameter strings are one. For vectors, Pochhammer symbols are multiplied
componentwise. Throughout, \(\A=(A_1,\ldots,A_q)\),
\(\bbeta=(\beta_1,\ldots,\beta_q)\), \(\mm=(m_1,\ldots,m_q)\), and
\(\abs{\mm}=\sum_{h=1}^qm_h\).
We also write
\(\boldsymbol\delta=(\delta_1,\ldots,\delta_q)\) whenever the
\(\delta_h\) parameters are present.
For each \(r\ge1\), \(\one_r=(1,\ldots,1)\in\mathbb R^r\), and
\(\ee_j\) is the \(j\)-th coordinate vector.  When the dimension of
\(\one\) or \(\ee_j\) is omitted, it is determined by the surrounding
vector expression. For example, \((\A)_r=\prod_{h=1}^q(A_h)_r\) and
\((\bbeta+\mm)_r=\prod_{h=1}^q(\beta_h+m_h)_r\). We also use the
truncated strings \(\A_{<q}=(A_1,\ldots,A_{q-1})\),
\(\bbeta_{<q}=(\beta_1,\ldots,\beta_{q-1})\), and
\(\mm_{<q}=(m_1,\ldots,m_{q-1})\), together with the componentwise
conventions
\((\bbeta)_{\mm+n}\coloneq\prod_{h=1}^q(\beta_h)_{m_h+n}\) and
\((\bbeta_{<q})_{\mm_{<q}+n}
\coloneq\prod_{h=1}^{q-1}(\beta_h)_{m_h+n}\).
The generalized hypergeometric notation follows \cite{DLMF}:
\[
 \pFq{p}{s}{\boldsymbol a}{\boldsymbol b}{z}
 =\sum_{r=0}^{\infty}
 \frac{(\boldsymbol a)_r}{(\boldsymbol b)_r}
 \frac{z^r}{r!}.
\]
All finite-lattice hypergeometric series in this paper terminate before a
lower parameter can create a convergence issue. A nonterminating
\({}_pF_{p-1}\) is first defined by its series for \(\abs z<1\) and then
by its principal analytic continuation to
\(\mathbb C\setminus[1,\infty)\). On the nonpositive real axis this is the
continuation selected by the Euler-type integrals used below. Thus the
Meixner-II-like generating-function identities are first
obtained near \(z=1\) and then continued throughout the unit disk, while
the Laguerre-I Laplace formulas are understood at their nonpositive
hypergeometric argument. In a terminating expression, the finite summation
range is fixed by its distinguished nonpositive-integer upper parameter
before common upper and lower parameters are simplified. If the resulting
finite expression has a removable singularity as a function of the
parameters, its value is understood by continuation of the complete
expression.

We use twice the following elementary estimate for alternating differences.

\begin{lemma}[Alternating-difference estimate]
\label{lem:alternating-Pochhammer-estimate}
Let \(m\in\Nzero\) and \(p\in\N\), and let \(a_i,b_i\in\mathbb C\) for
every \(i\in\{1,\ldots,p\}\). For \(s\in\{0,\ldots,m\}\), set
\(F_s(z)=\prod_{i=1}^p(z+a_i)_s/\prod_{i=1}^p(z+b_i)_s\). Then
\begin{equation}
 \sum_{s=0}^m(-1)^s\binom msF_s(z)=\mathrm O(z^{-m})
 \qquad\text{as }\abs z\to\infty.
 \label{eq:alternating-Pochhammer-estimate}
\end{equation}
\end{lemma}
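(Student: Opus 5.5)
The plan is to read the alternating sum as the $m$-th forward difference in the index $s$ and to show that $F_s(z)$ is, up to an $\mathrm O(z^{-m})$ error, a polynomial in $s$ of degree less than $m$. Write $\Delta$ for the forward difference in $s$. Then $\sum_{s=0}^m(-1)^s\binom ms F_s=(-1)^m(\Delta^m F)_0$, and $\Delta^m$ annihilates every polynomial in $s$ of degree $<m$. The case $m=0$ is trivial, since $F_0=1=\mathrm O(1)$, so we assume $m\ge1$.

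\textbf{Step 1: logarithmic expansion.} For $\abs z$ large and $0\le s\le m$ we have
\[
 \log F_s(z)=\sum_{i=1}^p\sum_{k=0}^{s-1}\Bigl[\log\bigl(1+\tfrac{a_i+k}{z}\bigr)-\log\bigl(1+\tfrac{b_i+k}{z}\bigr)\Bigr].
\]
Expand each logarithm in powers of $1/z$. The coefficient of $z^{-\ell}$ is
\[
 \frac{(-1)^{\ell+1}}{\ell}\sum_i\sum_{k<s}\bigl[(a_i+k)^\ell-(b_i+k)^\ell\bigr].
\]
By Faulhaber's formula this is a polynomial $c_\ell(s)$ in $s$ of degree at most $\ell$. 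The top power $k^\ell$ cancels between the two terms, so the degree is in fact at most $\ell$ rather than $\ell+1$. The expansion converges uniformly for the finitely many $s\le m$. Hence
\[
 \log F_s(z)=\sum_{\ell\ge1}c_\ell(s)z^{-\ell}, \qquad \deg c_\ell\le \ell.
\]

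\textbf{Step 2: exponentiate.} We get $F_s(z)=\sum_{\ell\ge0}P_\ell(s)z^{-\ell}$, where $P_\ell$ is a finite sum of products $c_{\ell_1}\cdots c_{\ell_r}$ with $\ell_1+\dots+\ell_r=\ell$. Therefore $\deg P_\ell\le\ell$, and the expansion again converges for $\abs z$ large, uniformly in $s\in\{0,\dots,m\}$.

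\textbf{Step 3: apply $\Delta^m$ termwise.} The terms with $\ell<m$ have $\deg P_\ell<m$, so they are killed. What remains is $\sum_{\ell\ge m}(\Delta^mP_\ell)(0)z^{-\ell}$, and this is $\mathrm O(z^{-m})$ because it is a finite linear combination of convergent tails starting at $z^{-m}$. One could sharpen the bound further, but that is not needed.

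The only point requiring care is the degree bound $\deg c_\ell\le\ell$, which depends on the cancellation of the leading Faulhaber term. This is exactly where it matters that the upper and lower Pochhammer strings have the same number $p$ of factors. Everything else is routine bookkeeping of convergent expansions over finitely many $s$.
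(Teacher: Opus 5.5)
Your proposal is correct and follows essentially the same route as the paper: expand $\log F_s$ at infinity, use the cancellation of the top powers $(a_i+k)^\ell-(b_i+k)^\ell$ to get coefficients of degree at most $\ell$ in $s$, note that exponentiation preserves this weighted degree bound, and let the $m$-th alternating difference kill every coefficient of order below $m$. The remaining convergent tail then gives the $\mathrm O(z^{-m})$ bound, exactly as in the paper's argument.
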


\begin{proof}
Choose \(R>0\) so large that the finitely many functions \(F_s\),
\(0\le s\le m\), are holomorphic and nonzero on \(\abs z>R\), and
there take the branch \(\log F_s\to0\) at infinity. Expanding each
\(\log(1+c/z)\) then gives, absolutely and locally uniformly,
\begin{equation}
 \log F_s(z)=
 \sum_{v\ge1}\frac{(-1)^{v+1}}{v z^v}
 \sum_{\substack{t\in\Nzero\\t<s}}
 \left[\sum_{i=1}^p(a_i+t)^v-\sum_{i=1}^p(b_i+t)^v\right].
 \label{eq:log-Pochhammer-quotient}
\end{equation}
The terms of degree \(v\) in \(t\) cancel. Hence the coefficient of
\(z^{-v}\) in \(\log F_s(z)\) is a polynomial in \(s\) of degree at most
\(v\). The coefficient of \(z^{-u}\) after exponentiation is a sum of
products whose total weighted order is \(u\), and therefore still has degree
at most \(u\) in \(s\). Its \(m\)-th alternating difference vanishes for
every \(u<m\). Since the Laurent series converges for \(\abs z>R\), this
coefficient cancellation proves \eqref{eq:alternating-Pochhammer-estimate}.
\end{proof}

We also use the classical bivariate Kamp\'e de F\'eriet series; see
\cite[Chapter~1]{SrivastavaKarlsson1985} and
\cite{BranquinhoDiazFoulquieManas2025Classical}. If the six parameter
strings have respective lengths \(p,q,s,\ell,m,u\), write
\begin{equation}
 F_{\ell:m;u}^{p:q;s}
 \left(
 \begin{matrix}
  \boldsymbol a:\boldsymbol b;\boldsymbol c\\
  \boldsymbol d:\boldsymbol e;\boldsymbol f
 \end{matrix}
 \middle|x,y\right)
 \coloneq\sum_{r,t\ge0}
 \frac{(\boldsymbol a)_{r+t}
       (\boldsymbol b)_r(\boldsymbol c)_t}
      {(\boldsymbol d)_{r+t}
       (\boldsymbol e)_r(\boldsymbol f)_t}
 \frac{x^r}{r!}\frac{y^t}{t!}.
 \label{eq:KdF-definition}
\end{equation}
In the convention fixed by \eqref{eq:KdF-definition}, the first pair of
parameter strings is coupled to \(r+t\); the second
depends only on \(r\), and the third only on \(t\). Empty strings are
allowed. Every Kamp\'e de F\'eriet series used below terminates because one
coupled upper parameter is a nonpositive integer. Removable exceptional
parameter values are understood by continuation of the complete
polynomial.

We need one particularly simple multivariable extension. For \(R\in\N\),
set
\begin{equation}
 \mathfrak K_R
 \left(A;\boldsymbol b;\boldsymbol x\right)
 \coloneq\sum_{r_1,\ldots,r_R\ge0}
 (A)_{r_1+\cdots+r_R}
 \prod_{v=1}^R\frac{(b_v)_{r_v}}{r_v!}x_v^{r_v}.
 \label{eq:multiple-KdF-simple}
\end{equation}
When \(A\) is a nonpositive integer, this is a terminating multiple
Kamp\'e de F\'eriet polynomial,
\(F_{0:0;\ldots;0}^{1:1;\ldots;1}\). Thus
\(\mathfrak K_1(A;b;x)={}_2F_0(A,b;-;x)\), while
\(\mathfrak K_2\) is the classical bivariate
\(F_{0:0;0}^{1:1;1}\). We use the notation only in terminating cases.

The confluent Lauricella function needed below is
\begin{equation}
 \Phi_2^{(R)}
 (b_1,\ldots,b_R;c;x_1,\ldots,x_R)
 \coloneq\sum_{r_1,\ldots,r_R\ge0}
 \frac{\prod_{v=1}^R(b_v)_{r_v}}
 {(c)_{r_1+\cdots+r_R}}
 \prod_{v=1}^R\frac{x_v^{r_v}}{r_v!}.
 \label{eq:Phi2-definition}
\end{equation}
The normalization in \eqref{eq:Phi2-definition} is the one used in all
Laguerre-II-like component formulas below.

For later coefficient formulas we set
\begin{equation}
 b_{a,b}(y)\coloneq
 \frac{\Gamma(a+b)}{\Gamma(a)\Gamma(b)}
 y^{a-1}(1-y)^{b-1},\quad 0<y<1,
 \label{eq:beta-kernel}
\end{equation}
for \(a,b>0\), and
\begin{equation}
 g_{\rho,d}(x)\coloneq\frac{\rho^d}{\Gamma(d)}x^{d-1}\e^{-\rho x}
 \mathbf1_{\{x>0\}},\qquad
 p_a(k)\coloneq\e^{-a}\frac{a^k}{k!},\quad k\in\Nzero,
 \label{eq:elementary-positive-kernels}
\end{equation}
\begin{equation}
 r_{d,c}(k)\coloneq(1-c)^d\frac{(d)_k}{k!}c^k,\qquad k\in\Nzero,
 \label{eq:elementary-discrete-weights}
\end{equation}
for \(a,\rho,d>0\) and \(0<c<1\). The beta density
\eqref{eq:beta-kernel} and the gamma density in
\eqref{eq:elementary-positive-kernels} are normalized. The Poisson and
negative-binomial sequences in \eqref{eq:elementary-positive-kernels} and
\eqref{eq:elementary-discrete-weights} have generating functions
\(\e^{a(z-1)}\) and \(((1-c)/(1-cz))^d\), respectively. These four
normalized kernels will be used throughout the paper.
The elementary product linearization
\begin{equation}
 \fall{x}{r}\fall{x}{\ell}
 =\sum_{u=0}^{\min\{r,\ell\}}
 \binom ru\binom\ell u u!\,
 \fall{x}{r+\ell-u}
 \label{eq:falling-linearization}
\end{equation}
will be used repeatedly. We work with multi-indices whose entries differ
by at most one.

\subsection{The normalized Jacobi-like weights}

The finite weights will be built from the following continuous functions.
We normalize them by \(\int_0^1\omega_j(x)\dd x=1\) and record their moments.

Fix parameters
\begin{equation}
 0<A_h<\beta_h,
 \qquad h\in\{1,\ldots,q\}.
 \label{eq:admissible-A-beta}
\end{equation}
We use \eqref{eq:admissible-A-beta} only as a sufficient,
\(N\)-independent construction domain; for \(q\ge2\) it makes every beta
factor in \eqref{eq:beta-product-integral} positive. It is not asserted to
characterize positivity of the finite sequences \(v_{j,N}\): after
continuation, the admissible domain can be strictly larger and may depend
on \(N\).\footnote{For example, let \(q=2\),
\(\A=(3,\frac34)\), and \(\bbeta=(2,\frac52)\), so that
\(A_1>\beta_1\). The two continued rows are the Bernstein transforms of
\(b_{\frac34,\frac74}\) and
\(\frac38b_{\frac74,\frac74}+\frac58b_{\frac34,\frac{11}4}\), respectively,
as follows by comparing their moments with
\eqref{eq:continuous-row-moments}; hence both rows are strictly positive
for every \(N\). Dependence on \(N\) is genuine: for
\(\A=(\frac14,\frac52)\) and \(\bbeta=(\frac34,\frac34)\), direct
evaluation of \eqref{eq:Hahn-like-mass} gives two strictly positive rows
for \(0\le N\le3\), whereas
\(v_{1,4}(3)=v_{2,4}(3)=-96/16093\).}
For \(j\in\{1,\ldots,q\}\), define the normalized positive weight
\(\omega_j\) on \((0,1)\) by
\begin{equation}
 \int_0^1 f(x)\omega_j(x)\dd x
 =\int_{(0,1)^q}f(y_1\cdots y_q)
 \prod_{h=1}^q
 b_{A_h,\,\beta_h+\delta_{h,j}-A_h}(y_h)\dd\boldsymbol y
 \label{eq:beta-product-integral}
\end{equation}
for every continuous \(f\). Taking \(f(x)=x^r\) gives
\begin{equation}
 \int_0^1x^r\omega_j(x)\dd x
 =\frac{(\A)_r}{(\bbeta+\ee_j)_r}
 =\frac{\beta_j}{r+\beta_j}
 \frac{(\A)_r}{(\bbeta)_r},
 \qquad r\in\Nzero.
 \label{eq:continuous-row-moments}
\end{equation}
Thus \(\int_0^1\omega_j(x)\dd x=1\). These are normalized versions of the
Jacobi-like weights of Wolfs
\cite{Wolfs2024}. Indeed, if
\(\mathcal M[w_j](s)=
\Gamma(s\one+\boldsymbol a)/
\Gamma(s\one+\boldsymbol b+\ee_j)\)
and \(A_h=a_h+\alpha+1\), \(\beta_h=b_h+\alpha+1\), then \(\omega_j\) is a
positive constant multiple of \(x^\alpha w_j(x)\).

For every \(j\in\{1,\ldots,q\}\), the type-I calculations use the weight
normalization
\begin{equation}
 \widetilde\omega_j=\frac{1}{\beta_j}\omega_j,
 \qquad
 \int_0^1x^r\widetilde\omega_j(x)\dd x
 =\frac{h(r)}{r+\beta_j},
 \qquad
 h(r)=\frac{(\A)_r}{(\bbeta)_r},\qquad r\in\Nzero.
 \label{eq:canonical-continuous-rows}
\end{equation}
Multiplying a weight by a positive constant changes neither its multiple
orthogonal polynomial nor normality. The type-I polynomials are rescaled
inversely.

For a near-diagonal \(\mm\), put \(d=\abs\mm\). The continuous
Jacobi-like polynomial normalized by its value at zero is
\begin{equation}
A^{\mathrm J}_{\mm}(x)
 =\pFq{q+1}{q}{-d,\bbeta+\mm}{\A}{x}.
 \label{eq:continuous-A}
\end{equation}
The polynomial \eqref{eq:continuous-A} is Wolfs's Jacobi-like type-II polynomial
\cite[Theorem~2.13]{Wolfs2024}, written in the present normalization.
Its orthogonality is used here as limiting data and follows from
\cite[Theorem~2.13]{Wolfs2024}.

\subsection{Normalization at the Jacobi-like limit}

We also fix the scale of the continuous function used in the
Hahn-to-Jacobi limit. Its moments determine the normalization.

Write \(\mathcal M[f](s)=\int_0^1x^{s-1}f(x)\dd x\).
Let \(\mm\) be near the diagonal, and put \(d=\abs\mm\) and \(n=d-1\).
The normalization of the known Jacobi-like type-I linear form is fixed by
\begin{equation}
 \mathcal M[\mathcal H^{\mathrm J}_{\mm}](s)
 =-(1-s)_n\prod_{h=1}^q
 \frac{\Gamma(\beta_h)\Gamma(A_h+s-1)}
 {\Gamma(A_h)\Gamma(\beta_h+m_h+s-1)}.
 \label{eq:Jacobi-B-Mellin}
\end{equation}
and hence by the moment identity
\begin{equation}
 \int_0^1x^r\mathcal H^{\mathrm J}_{\mm}(x)\dd x
 =-\frac{(\A)_r}{(\bbeta)_r}
 \frac{(-r)_n}{\prod_{h=1}^q(r+\beta_h)_{m_h}}.
 \label{eq:Jacobi-B-moments}
\end{equation}
Its moments of every order \(r\in\Nzero\) with \(r<n\) vanish, and its moment of order
\(n\) is nonzero. These formulas identify the limiting normalization; they
do not define a new
construction of the continuous type-I theory; see
\cite[Theorem~2.6 and Corollary~2.8]{Wolfs2024}.

\section{The Hahn-like finite-lattice weights}
\label{sec:Hahn-weights}

We now construct the finite Hahn-like weights. We establish positivity and
factorial moments, identify their reduction to the scalar Hahn weight, and
prove convergence
to the Jacobi-like weights.

\subsection{Definition, positivity, and factorial moments}

The finite weights are obtained by applying the Bernstein kernel to each
continuous weight. The Bernstein representation gives positivity, and the
kernel identity gives the factorial moments.

For fixed \(N\), the Bernstein basis on \([0,1]\) is
\(b_{N,k}(x)=\binom Nkx^k(1-x)^{N-k}\),
\(k\in\{0,\ldots,N\}\).
Its elements are nonnegative and sum to one. If \(\mu\) is a finite measure
on \([0,1]\), its Bernstein transform at level \(N\) is the lattice measure
whose weights are
\begin{equation}
 (\mathscr B_N^*\mu)(k)=\int_0^1b_{N,k}(x)\dd\mu(x).
 \label{eq:Bernstein-transform-definition}
\end{equation}
The ordinary Bernstein operator and its duality with the transformation
\eqref{eq:Bernstein-transform-definition} are
\begin{equation}
 (\mathscr B_NQ)(x)=\sum_{k=0}^NQ(k)b_{N,k}(x),\qquad
 \sum_{k=0}^NQ(k)(\mathscr B_N^*\mu)(k)
 =\int_0^1(\mathscr B_NQ)(x)\dd\mu(x).
 \label{eq:Bernstein-operator}
\end{equation}
Thus the transform of measures and the Bernstein operator are adjoint
\cite{Lorentz1986}.  The two kernel identities needed below are
\begin{equation}
 \sum_{k=0}^Nb_{N,k}(x)=1,
 \qquad
 \sum_{k=0}^N\fall{k}{r}b_{N,k}(x)=\fall{N}{r}x^r,
 \qquad r\in\{0,\ldots,N\}.
 \label{eq:Bernstein-kernel-identities}
\end{equation}
The identities \eqref{eq:Bernstein-kernel-identities} give preservation of
positivity and total mass and the factorial moments below. More explicitly,
\(\sum_{k=0}^N\fall{k}{r}(\mathscr B_N^*\mu)(k)
=\fall{N}{r}\int_0^1x^r\dd\mu(x)\) for
\(r\in\{0,\ldots,N\}\).
For \(Q_N(k)=f(k/N)\), Bernstein's approximation theorem gives
\(\mathscr B_NQ_N\xrightarrow[N\to\infty]{} f\) uniformly for every
\(f\in C[0,1]\).  By the
duality above, this is precisely the weak convergence
\(
 \sum_{k=0}^N(\mathscr B_N^*\mu)(k)\,\delta_{k/N}
 \xrightarrow[N\to\infty]{\mathrm w} \mu
\).

Let \(N\in\Nzero\). We call \((v_{1,N},\ldots,v_{q,N})\) the
\emph{Hahn-like system of weights}, where
\begin{equation}
 v_{j,N}(k)\coloneq\binom Nk\int_0^1
 x^k(1-x)^{N-k}\omega_j(x)\dd x.
 \label{eq:Bernstein-lift}
\end{equation}
Thus \(v_{j,N}=\mathscr B_N^*(\omega_j(x)\dd x)\).  These Bernstein
transforms are positive and have explicit formulas and
moments.

\begin{theorem}[Hahn-like weights]
\label{thm:Hahn-like-rows}
Let \(N\in\Nzero\). Under \eqref{eq:admissible-A-beta}, for every
\(j\in\{1,\ldots,q\}\), the values \(v_{j,N}(k)\) are strictly positive
for all \(k\in\{0,\ldots,N\}\), and
\begin{align}
 \mathcal V_{j,N}(z)
 &=\sum_{k=0}^Nv_{j,N}(k)z^k
 =\pFq{q+1}{q}{-N,\A}{\bbeta+\ee_j}{1-z},
 \label{eq:Hahn-like-pgf}\\
 v_{j,N}(k)
 &=\binom Nk\frac{(\A)_k}{(\bbeta+\ee_j)_k}
 \pFq{q+1}{q}
 {k-N,\A+k}{\bbeta+\ee_j+k}{1},
 \label{eq:Hahn-like-mass}\\
 \sum_{k=0}^N\fall{k}{r}v_{j,N}(k)
 &=\fall{N}{r}\frac{(\A)_r}{(\bbeta+\ee_j)_r},
 \qquad r\in\{0,\ldots,N\}.
 \label{eq:Hahn-like-factorial-moments}
\end{align}
\end{theorem}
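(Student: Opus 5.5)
Everything follows from the Bernstein representation \eqref{eq:Bernstein-lift} combined with the moment formula \eqref{eq:continuous-row-moments}.

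\emph{Positivity.} Under \eqref{eq:admissible-A-beta} every beta parameter \(A_h\) and \(\beta_h+\delta_{h,j}-A_h\) in \eqref{eq:beta-product-integral} is positive. Hence the product of beta densities is a strictly positive density on \((0,1)^q\). For \(k\in\{0,\ldots,N\}\) the integrand \(f(x)=\binom Nk x^k(1-x)^{N-k}\), evaluated at \(x=y_1\cdots y_q\), is continuous, nonnegative, and strictly positive on the open cube, because \(0<y_1\cdots y_q<1\) there. So \(v_{j,N}(k)>0\).

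\emph{Factorial moments.} Apply the duality \eqref{eq:Bernstein-operator} with \(Q(k)=\fall{k}{r}\) and use the kernel identity \eqref{eq:Bernstein-kernel-identities}. This gives
\[
\sum_{k=0}^N\fall{k}{r}v_{j,N}(k)=\fall{N}{r}\int_0^1x^r\omega_j(x)\dd x ,
\]
and \eqref{eq:continuous-row-moments} then yields \eqref{eq:Hahn-like-factorial-moments}.

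\emph{Generating function.} Write \(z^k=(1-(1-z))^k=\sum_r\binom kr(-1)^r(1-z)^r\), so that
\[
\mathcal V_{j,N}(z)=\sum_{r=0}^N\frac{(-1)^r(1-z)^r}{r!}\sum_k\fall{k}{r}v_{j,N}(k).
\]
Insert the factorial moments and use \((-1)^r\fall{N}{r}=(-N)_r\). This gives
\[
\sum_r\frac{(-N)_r(\A)_r}{(\bbeta+\ee_j)_r}\frac{(1-z)^r}{r!},
\]
which is \eqref{eq:Hahn-like-pgf}. Alternatively, integrate \((1-x+xz)^N=(1-x(1-z))^N\) against \(\omega_j\) and expand binomially.

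\emph{Mass formula.} Extract the coefficient of \(z^k\) from \eqref{eq:Hahn-like-pgf}. Since \((1-z)^r=\sum_k\binom rk(-z)^k\), we get
\[
v_{j,N}(k)=\sum_{r=k}^N\frac{(-N)_r(\A)_r}{(\bbeta+\ee_j)_r\,r!}\binom rk(-1)^k .
\]
Substitute \(r=k+s\) and use \((a)_{k+s}=(a)_k(a+k)_s\), \((-N)_{k+s}=(-N)_k(k-N)_s\), and \(\binom{k+s}{k}/(k+s)!=1/(k!\,s!)\). The remaining prefactor is \((-1)^k(-N)_k/k!=\binom Nk\), and the \(s\)-sum is the terminating \({}_{q+1}F_q\) at \(1\) in \eqref{eq:Hahn-like-mass}.

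The main obstacle is only bookkeeping: the reindexing and sign handling in this last step. No convergence issues arise, because all sums are finite: they terminate through \(-N\) and \(k-N\), and the lower parameters are positive.
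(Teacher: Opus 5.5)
Your proof is correct and is essentially the paper's argument: every formula follows from the Bernstein representation \eqref{eq:Bernstein-lift} together with the beta-product moments \eqref{eq:continuous-row-moments}, and strict positivity comes from the strictly positive integrand. The only difference is the order of derivation: you obtain the generating function from the factorial moments by Taylor expansion at $z=1$ and the mass formula by coefficient extraction. The paper instead integrates $(1-(1-z)x)^N$ against $\omega_j$ and expands $(1-x)^{N-k}$ directly inside the integral.
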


All hypergeometric series in the theorem terminate.

\begin{proof}
Summing \eqref{eq:Bernstein-lift} against \(z^k\) and applying the binomial
theorem gives
\(\mathcal V_{j,N}(z)=
\int_0^1(1-(1-z)x)^N\omega_j(x)\dd x\).
Expanding the last power and using \eqref{eq:continuous-row-moments} proves
\eqref{eq:Hahn-like-pgf}. Expanding \((1-x)^{N-k}\) in
\eqref{eq:Bernstein-lift} proves \eqref{eq:Hahn-like-mass}. Finally,
the elementary binomial identity
\(\sum_{k=0}^N\fall{k}{r}\binom Nkx^k(1-x)^{N-k}
=\fall{N}{r}x^r\)
together with \eqref{eq:continuous-row-moments} gives
\eqref{eq:Hahn-like-factorial-moments}. Strict positivity follows directly
from the integral in \eqref{eq:Bernstein-lift}.
\end{proof}

For \(j\in\{1,\ldots,q\}\), define the rescaled weights
\begin{equation}
 \widetilde v_{j,N}=\frac1{\beta_j}v_{j,N},
 \qquad
 h_N(r)=\fall{N}{r}\frac{(\A)_r}{(\bbeta)_r}.
 \label{eq:canonical-finite-row}
\end{equation}
Then, for every \(j\in\{1,\ldots,q\}\) and
\(r\in\{0,\ldots,N\}\),
\begin{equation}
 \sum_{k=0}^N\fall{k}{r}\widetilde v_{j,N}(k)
 =\frac{h_N(r)}{r+\beta_j}.
 \label{eq:resolvent-factorial-moment}
\end{equation}
If \(v_{0,N}\) denotes the weight obtained from
\eqref{eq:beta-product-integral} without the extra unit shift, then, for
every \(j\in\{1,\ldots,q\}\), the generating functions satisfy the exact
differential relation
\begin{equation}
 \bigl((z-1)\partial_z+\beta_j\bigr)\mathcal V_{j,N}(z)
 =\beta_j\mathcal V_{0,N}(z).
 \label{eq:finite-resolvent-relation}
\end{equation}
Equivalently, for \(j\in\{1,\ldots,q\}\) and
\(k\in\{0,\ldots,N\}\),
\((k+\beta_j)v_{j,N}(k)-(k+1)v_{j,N}(k+1)
=\beta_jv_{0,N}(k)\), where \(v_{j,N}(N+1)=0\).
Thus \eqref{eq:finite-resolvent-relation} is the finite counterpart of the simple pole
\((r+\beta_j)^{-1}\) in \eqref{eq:canonical-continuous-rows}.

\subsection{Reduction to the classical Hahn weight}

For one weight, the construction reduces to the classical Hahn weight
throughout its standard connected positivity region.

\begin{corollary}[The case \(q=1\)]
\label{cor:q1-Hahn-weight}
Let \(N\in\Nzero\), \(q=1\), \(A=A_1>0\), \(\beta=\beta_1\), and
\(\gamma=\beta-A>-1\). Then, for every
\(k\in\{0,\ldots,N\}\),
\begin{equation}
 v_{1,N}(k)=
 \frac{N!}{(\beta+1)_N}
 \frac{(A)_k}{k!}
 \frac{(\gamma+1)_{N-k}}{(N-k)!}.
 \label{eq:exact-Hahn-weight}
\end{equation}
\end{corollary}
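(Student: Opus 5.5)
For \(q=1\) the plan is to compute the Bernstein transform \eqref{eq:Bernstein-lift} directly as a beta integral. With \(q=1\) and \(j=1\), the representation \eqref{eq:beta-product-integral} reduces to a single factor, so \(\omega_1=b_{A,\beta+1-A}=b_{A,\gamma+1}\). This is a genuine normalized beta density exactly when \(A>0\) and \(\gamma+1>0\). This explains the hypothesis \(\gamma>-1\): it is weaker than \eqref{eq:admissible-A-beta}, which for \(q=1\) would demand \(\gamma>0\). The \(N\)-independent condition \eqref{eq:admissible-A-beta} was only sufficient, and for one weight the single beta factor is positive throughout \(A>0\), \(\gamma>-1\).

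Next I substitute \(\omega_1\) into \eqref{eq:Bernstein-lift} and evaluate by Euler's beta integral:
\[
 v_{1,N}(k)=\binom Nk\frac{\Gamma(A+\gamma+1)}{\Gamma(A)\Gamma(\gamma+1)}
 \frac{\Gamma(A+k)\Gamma(\gamma+1+N-k)}{\Gamma(A+\gamma+1+N)}.
\]
Since \(A+\gamma+1=\beta+1\), the gamma quotients become \((A)_k(\gamma+1)_{N-k}/(\beta+1)_N\). Writing \(\binom Nk=N!/(k!(N-k)!)\) then gives \eqref{eq:exact-Hahn-weight}.

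As a consistency check, which also covers any continuation in the parameters, I would evaluate \eqref{eq:Hahn-like-mass} for \(q=1\) instead. It reads \(\binom Nk\frac{(A)_k}{(\beta+1)_k}\,{}_2F_1(k-N,A+k;\beta+1+k;1)\). By Chu--Vandermonde the \({}_2F_1(1)\) equals \((\beta+1-A)_{N-k}/(\beta+1+k)_{N-k}=(\gamma+1)_{N-k}/(\beta+1+k)_{N-k}\). Combining \((\beta+1)_k(\beta+1+k)_{N-k}=(\beta+1)_N\) reproduces the same formula. Both sides are rational in the parameters, so the identity persists wherever both are defined.

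The only delicate point is the parameter range, namely making sure the beta integral converges for \(\gamma\in(-1,0]\), where \eqref{eq:admissible-A-beta} fails. I would handle this by the direct observation that \(b_{A,\gamma+1}\) is integrable for \(A>0\) and \(\gamma+1>0\), so the integral definition itself is valid there. Positivity of every \(v_{1,N}(k)\) is then immediate from the displayed product, since each Pochhammer factor is positive.
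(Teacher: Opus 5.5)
Your proof is correct, and its main line differs from the paper's. The paper specializes the general mass formula \eqref{eq:Hahn-like-mass} to \(q=1\) and sums the terminating \({}_2F_1(1)\) by Chu--Vandermonde. That is exactly what you present as your ``consistency check''.

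Your primary argument instead goes back to the definition \eqref{eq:Bernstein-lift}. You identify \(\omega_1=b_{A,\gamma+1}\) and evaluate a single Euler beta integral. This route needs no hypergeometric summation and makes the hypotheses transparent. The natural range is just integrability of the one kernel, \(A>0\) and \(\gamma>-1\), which is strictly larger than \eqref{eq:admissible-A-beta}. Positivity also comes for free.

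The paper's route keeps the scalar case as a specialization of the general-\(q\) machinery. It also yields a terminating rational identity in the parameters, which survives continuation beyond the region where the integral exists. An example is the disconnected fixed-\(N\) region of Remark~\ref{rem:q1-larger-region}, where the beta integral diverges but the finite formula still applies.

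Since you give both arguments, nothing is missing.
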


\begin{proof}
Apply Chu--Vandermonde to the \({}_2F_1(1)\) in
\eqref{eq:Hahn-like-mass}. The normalizing identity is the same summation
at \(k=0\).
\end{proof}

Apart from its displayed normalization, \eqref{eq:exact-Hahn-weight} is
the classical Hahn weight with parameters \(\alpha_{\mathrm H}=A-1\) and
\(\beta_{\mathrm H}=\gamma\).

\subsection{Confluence to the Jacobi-like weights}

The finite measures constructed in Theorem~\ref{thm:Hahn-like-rows}
recover the continuous weights as \(N\to\infty\). We prove weak
convergence, a local limit for the lattice values, and convergence of
every fixed moment.

Assume \eqref{eq:admissible-A-beta}. For every \(N\in\N\) and
\(j\in\{1,\ldots,q\}\), define
\begin{equation}
 \mu_{j,N}\coloneq\sum_{k=0}^Nv_{j,N}(k)\,\delta_{k/N}.
 \label{eq:Hahn-discrete-measure}
\end{equation}

\begin{theorem}[Bernstein confluence]
\label{thm:Hahn-to-Jacobi-rows}
Under \eqref{eq:admissible-A-beta}, for every
\(j\in\{1,\ldots,q\}\), as \(N\to\infty\), the measures
\eqref{eq:Hahn-discrete-measure} converge weakly to
\(\omega_j(x)\dd x\). Locally uniformly for \(x\) in
compact subsets of \((0,1)\),
\begin{equation}
 (N+1)v_{j,N}(\lfloor Nx\rfloor)
 \xrightarrow[N\to\infty]{}\omega_j(x).
 \label{eq:local-Hahn-Jacobi-limit}
\end{equation}
For every fixed \(r\in\Nzero\), every \(N\ge r\), and every
\(j\in\{1,\ldots,q\}\),
\[
 \frac{1}{\fall{N}{r}}
 \sum_{k=0}^N\fall{k}{r}v_{j,N}(k)
 =\int_0^1x^r\omega_j(x)\dd x.
\]
\end{theorem}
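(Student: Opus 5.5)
The proof has three independent parts, and I would take them in increasing order of difficulty: first the moment identity, then weak convergence, then the local limit.

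\emph{Moment identity.} This is immediate from Theorem~\ref{thm:Hahn-like-rows}. Dividing \eqref{eq:Hahn-like-factorial-moments} by \(\fall{N}{r}\), which is nonzero for \(N\ge r\), and comparing with \eqref{eq:continuous-row-moments} gives the claimed equality.

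\emph{Weak convergence.} Since \(v_{j,N}=\mathscr B_N^*(\omega_j\,\mathrm dx)\), the duality \eqref{eq:Bernstein-operator} with \(Q_N(k)=f(k/N)\) gives
\[
 \int f\,\mathrm d\mu_{j,N}=\int_0^1(\mathscr B_NQ_N)(x)\,\omega_j(x)\,\mathrm dx .
\]
Bernstein's theorem yields \(\mathscr B_NQ_N\to f\) uniformly for every \(f\in C[0,1]\). Because \(\omega_j\) is a probability density, the right-hand side converges to \(\int_0^1 f\omega_j\).

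\emph{Local limit.} This is the main step. Write
\[
 (N+1)v_{j,N}(k)=\int_0^1 K_{N,k}(x)\,\omega_j(x)\,\mathrm dx,
 \qquad
 K_{N,k}(x)=(N+1)\binom Nkx^k(1-x)^{N-k}.
\]
The kernel \(K_{N,k}\) is exactly the beta density \(b_{k+1,N-k+1}\) from \eqref{eq:beta-kernel}. It has total mass \(1\), mean \((k+1)/(N+2)\) and variance \(\mathrm O(1/N)\). So the integral is the expectation of \(\omega_j\) under a beta law that concentrates at \(x\) when \(k=\lfloor Nx\rfloor\).

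It therefore suffices to show three facts about \(\omega_j\).
\begin{enumerate}
\item \(\omega_j\) is continuous on \((0,1)\).
\item \(\omega_j\) is integrable, which holds since it is a probability density.
\item The kernel mass escaping a neighbourhood of \(x\) is negligible even against the possibly singular behaviour of \(\omega_j\) near \(0\) and \(1\).
\end{enumerate}

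\emph{Continuity of \(\omega_j\).} From \eqref{eq:beta-product-integral}, \(\omega_j\) is the density of a product of independent beta variables. It can be written as an iterated multiplicative convolution,
\[
 \omega_j(x)=\int b(y)\,\omega'(x/y)\,\frac{\mathrm dy}{y},
\]
where \(b\) is one of the beta densities and \(\omega'\) is the density of the remaining product. For \(q=1\) it is a single beta density, continuous on \((0,1)\). For \(q\ge2\) I would argue by induction that multiplicative convolution of a density continuous on \((0,1)\) with an integrable beta density remains continuous on \((0,1)\). Near \(y\approx x\) I would use dominated convergence. This argument also shows that \(\omega_j\) is bounded on compact subintervals.

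\emph{Splitting the integral.} Fix a compact set \([a,b]\subset(0,1)\) and \(\eta>0\) with \([a-2\eta,b+2\eta]\subset(0,1)\). On \(\abs{y-x}<\eta\), uniform continuity of \(\omega_j\) on \([a-\eta,b+\eta]\) and the concentration of the kernel give the limit \(\omega_j(x)\) uniformly in \(x\in[a,b]\).

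Off that window, a Chernoff (or Hoeffding) bound for the beta density gives
\[
 \sup_{\abs{y-k/N}\ge\eta}K_{N,k}(y)\le (N+1)\,\mathrm e^{-cN\eta^2}\to0 ,
\]
uniformly in \(k=\lfloor Nx\rfloor\) with \(x\in[a,b]\). The bound is a supremum of the kernel, not just of its mass. Multiplied by \(\int\omega_j=1\), it makes the tail contribution vanish with no integrability assumption on \(\omega_j\) near the endpoints.

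The obstacle I expect is this uniform pointwise tail bound combined with the continuity of the product density. Once both are in place, the limit \eqref{eq:local-Hahn-Jacobi-limit} follows uniformly on compacts.
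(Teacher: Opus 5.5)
Your proposal is correct and follows the paper's proof: the moment identity comes from \eqref{eq:Hahn-like-factorial-moments}, weak convergence from Bernstein's theorem through the duality with \(\mathscr B_N^*\), and the local limit from the normalized beta-kernel (approximate-identity) representation together with continuity of \(\omega_j\) in the interior. The paper gets that continuity from the change \(x=\e^{-t}\), which turns your multiplicative convolution into an additive one, and your Hoeffding supremum bound on the kernel makes explicit the tail control the paper leaves implicit, without any assumption on \(\omega_j\) near the endpoints.
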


\begin{proof}
For \(f\in C[0,1]\), the definition \eqref{eq:Bernstein-lift} gives
\[
 \int f\dd\mu_{j,N}
 =\int_0^1\left[\sum_{k=0}^Nf(k/N)\binom Nk
 x^k(1-x)^{N-k}\right]\omega_j(x)\dd x.
\]
The expression in brackets is the Bernstein polynomial of \(f\), hence it
converges uniformly to \(f\) as \(N\to\infty\). This proves weak convergence. The density
\(\omega_j\) is continuous in the interior: for \(q=1\) this follows from
the beta formula, while for \(q\ge2\) the change \(x=\e^{-t}\) identifies
its density, up to the continuous Jacobian, with a convolution of
integrable densities on \((0,\infty)\). Moreover,
\((N+1)\binom Nkx^k(1-x)^{N-k}\) is the normalized beta kernel with parameters
\(k+1,N-k+1\). Equation \eqref{eq:Bernstein-lift} is therefore an
approximate-identity representation, which proves
\eqref{eq:local-Hahn-Jacobi-limit}. The moment assertion is immediate
from \eqref{eq:Hahn-like-factorial-moments}.
\end{proof}

\section{Explicit Hahn-like type-II and type-I formulas}
\label{sec:Hahn-forms}

This section gives explicit formulas for the near-diagonal
multiple-orthogonality problem. We derive the type-II polynomial \(A\), a
normalized signed sequence satisfying the type-I moment conditions, and,
when the relevant moment matrix is nonsingular, its type-I polynomials.

\subsection{The type-II polynomial and the inverse Bernstein relation}

We now derive the finite polynomial \(A\). The Bernstein operator
\(\mathscr B_N\) defined in \eqref{eq:Bernstein-operator} turns it into the
corresponding Jacobi-like polynomial.  On normalized falling factorials it
satisfies
\begin{equation}
 \mathscr B_N\!\left[\frac{\fall{k}{\ell}}{\fall{N}{\ell}}\right](x)
 =x^\ell.
 \label{eq:Bernstein-factorial-basis}
\end{equation}

The Bernstein identity above gives the Hahn-like polynomial and its
orthogonality.

\begin{theorem}[Hahn-like polynomial]
\label{thm:Hahn-A}
Let \(\mm\) be near the diagonal and let \(d=\abs\mm\le N\). The polynomial
\begin{equation}
 A^{\mathrm H}_{\mm,N}(k)=
 \sum_{\ell=0}^d
 \frac{(-d)_\ell}{\ell!}
 \frac{(\bbeta+\mm)_\ell}{(\A)_\ell}
 \frac{\fall{k}{\ell}}{\fall{N}{\ell}},
 \label{eq:Hahn-A-factorial}
\end{equation}
can equivalently be written as
\(A^{\mathrm H}_{\mm,N}(k)=
\pFq{q+2}{q+1}{-d,-k,\bbeta+\mm}{-N,\A}{1}\).
It has degree \(d\), satisfies \(A^{\mathrm H}_{\mm,N}(0)=1\), and, for
every \(j\in\{1,\ldots,q\}\) with \(m_j>0\),
\begin{equation}
 \sum_{k=0}^N\fall{k}{r}
 A^{\mathrm H}_{\mm,N}(k)v_{j,N}(k)=0,
 \qquad r\in\{0,\ldots,m_j-1\}.
 \label{eq:Hahn-A-orthogonality}
\end{equation}
Moreover,
\begin{equation}
 \mathscr B_NA^{\mathrm H}_{\mm,N}=A^{\mathrm J}_{\mm}.
 \label{eq:inverse-Bernstein-identity}
\end{equation}
Its monic normalization is
\begin{equation}
 \widehat A^{\mathrm H}_{\mm,N}(k)
 =(-1)^d\fall{N}{d}
 \frac{(\A)_d}{(\bbeta+\mm)_d}
 A^{\mathrm H}_{\mm,N}(k).
 \label{eq:Hahn-monic-A}
\end{equation}
\end{theorem}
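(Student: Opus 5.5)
The argument works directly from the factorial expansion \eqref{eq:Hahn-A-factorial}, using only the factorial moments of Theorem~\ref{thm:Hahn-like-rows} and the Bernstein identity \eqref{eq:Bernstein-factorial-basis}. The only substantial step is the orthogonality \eqref{eq:Hahn-A-orthogonality}; for its final cancellation I name a tool but have not checked the degree count.

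\emph{Routine assertions.} Since \((-k)_\ell/(-N)_\ell=\fall{k}{\ell}/\fall{N}{\ell}\) and \(d\le N\), each term of \eqref{eq:Hahn-A-factorial} is the \(\ell\)-th term of the terminating \({}_{q+2}F_{q+1}(1)\). The series is cut off at \(\ell=d\) by the upper parameter \(-d\), so \(-N\) never causes a problem. The \(\ell=0\) term gives \(A^{\mathrm H}_{\mm,N}(0)=1\). Only \(\fall{k}{d}\) contributes to \(k^d\), with coefficient
\[
(-1)^d(\bbeta+\mm)_d/\bigl((\A)_d\fall{N}{d}\bigr)\ne0 .
\]
This gives degree \(d\), and inverting it gives \eqref{eq:Hahn-monic-A}. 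Applying \(\mathscr B_N\) termwise and using \eqref{eq:Bernstein-factorial-basis} sends \(\fall{k}{\ell}/\fall{N}{\ell}\) to \(x^\ell\). The result is the \({}_{q+1}F_q\) series \eqref{eq:continuous-A}, which proves \eqref{eq:inverse-Bernstein-identity}.

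\emph{Reducing the orthogonality to a finite sum in \(\ell\).} Fix an active \(j\) and \(r<m_j\). By the Bernstein representation \eqref{eq:Bernstein-lift},
\[
\sum_k\fall{k}{r}\fall{k}{\ell}v_{j,N}(k)=\int_0^1\Bigl[\sum_k\fall{k}{r}\fall{k}{\ell}b_{N,k}(x)\Bigr]\omega_j(x)\dd x .
\]
Write the bracket as \(\partial_t^r t^r\partial_t^\ell(1-x+xt)^N\big|_{t=1}\). This equals
\[
\fall{N}{\ell}\,x^\ell\sum_{i=0}^{r}\binom ri\frac{r!}{i!}\fall{(N-\ell)}{i}\,x^i .
\]
The factor \(\fall{N}{\ell}\) cancels the denominator in \eqref{eq:Hahn-A-factorial}. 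Integrating with \eqref{eq:continuous-row-moments} reduces the left side of \eqref{eq:Hahn-A-orthogonality} to a finite linear combination, over \(0\le i\le r\), of sums
\[
\sum_{\ell=0}^d\frac{(-d)_\ell}{\ell!}\,\fall{(N-\ell)}{i}\,
\frac{(\bbeta+\mm)_\ell(\A+\ell)_i}{(\bbeta+\ee_j)_{\ell+i}} .
\]

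\emph{Where near-diagonality enters.} For each \(h\) I would rewrite
\[
\frac{(\beta_h+m_h)_\ell}{(\beta_h+\delta_{h,j})_{\ell+i}}
=\frac{(\beta_h+\delta_{h,j}+\ell+i)_{m_h-\delta_{h,j}-i}}{(\beta_h+\delta_{h,j})_{m_h-\delta_{h,j}}} .
\]
This identity requires \(m_h-\delta_{h,j}-i\ge0\), which is exactly condition \eqref{eq:near-diagonal-proof-condition} with \(i\le r<m_j\). After the rewrite every summand is \((-d)_\ell/\ell!\) times a polynomial in \(\ell\). The sum then vanishes as soon as the total degree in \(\ell\) of that polynomial is below \(d\), because the \(d\)-th finite difference kills it.

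\emph{Main obstacle.} A naive per-\(i\) count gives degree \((d-1-qi)+qi+i=d-1+i\), coming from \(\prod_h(\beta_h+\cdots)_{m_h-\delta_{h,j}-i}\), from \((\A+\ell)_i\), and from \(\fall{(N-\ell)}{i}\). This is too large for \(i\ge1\). So the cancellation must come from recombining the \(i\)-sum, or from a better choice of test polynomials. Testing against any basis of polynomials of degree \(<m_j\) is equivalent.

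My first attempt would replace \(\fall{k}{r}\) by a basis adapted to the lattice. One option is \(\fall{(N-k)}{r}\), or mixed products \(\fall{k}{s}\fall{(N-k)}{r-s}\). These turn the \(t\)-derivative into one that does not produce \(\fall{(N-\ell)}{i}\). The moment then becomes an integral of \(x^{\ell+s}(1-x)^{r-s}\) against \(\omega_j\), which I would expand into single moments \eqref{eq:continuous-row-moments}.

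Failing that, I would use the resolvent relation \eqref{eq:finite-resolvent-relation} to trade \(v_{j,N}\) for \(v_{0,N}\) and a simple pole \((r+\beta_j)^{-1}\). I would then run a partial-fraction argument in \(\ell\) parallel to Wolfs' proof of Jacobi-like orthogonality. The inverse Bernstein identity already shows that the leading structure matches \(A^{\mathrm J}_{\mm}\).
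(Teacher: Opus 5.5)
Your routine assertions (the ${}_{q+2}F_{q+1}(1)$ form, $A^{\mathrm H}_{\mm,N}(0)=1$, the leading coefficient and \eqref{eq:Hahn-monic-A}, and the termwise Bernstein identity) are correct and match the paper. So do your reduction of the orthogonality to $\sum_\ell\frac{(-d)_\ell}{\ell!}P(\ell)$ with $P$ a polynomial, and your use of near-diagonality to clear the denominators.

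However, the orthogonality itself, which is the substance of the theorem, is not proved. Your degree count also rests on a miscomputed kernel. The correct operator is $\sum_k\fall{k}{r}\fall{k}{\ell}b_{N,k}(x)=\partial_t^r\bigl[t^\ell\partial_t^\ell(1-x+xt)^N\bigr]_{t=1}$, not $\partial_t^r t^r\partial_t^\ell(\cdots)$; for $\ell=0$, $r=1$ your formula gives $1+Nx$ instead of $Nx$. The correct bracket is
\[
 \fall{N}{\ell}\,x^\ell\sum_{i=0}^{r}\binom ri\fall{\ell}{r-i}\fall{(N-\ell)}{i}\,x^i ,
\]
which is exactly the expansion the paper obtains from \eqref{eq:falling-linearization} and \eqref{eq:Hahn-like-factorial-moments}. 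Once the factor $\fall{\ell}{r-i}$ is restored, every $i$-term has degree $d-1+r$ in $\ell$, not $d-1+i$. So you must gain $r$ degrees uniformly, and only cancellation across the $i$-sum can supply them.

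The missing idea is Lemma~\ref{lem:alternating-Pochhammer-estimate}. Put $b_h=\beta_h+\delta_{h,j}$ and $M_h=m_h-\delta_{h,j}$, and write $\fall{(N-\ell)}{i}=(-1)^i(\ell-N)_i$. Factor out $\fall{\ell}{r}\prod_h(\ell+b_h)_{M_h}$, which has degree $r+d-1$. What remains is $\sum_{i=0}^r(-1)^i\binom riG_i(\ell)$, where
\[
 G_i(z)=\frac{(z-N)_i\prod_h(z+A_h)_i}{(z-r+1)_i\prod_h(z+b_h)_i}.
\]
This is an $r$-th alternating difference, in the index, of a Pochhammer quotient with equally many upper and lower parameters. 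The coefficient of $z^{-u}$ in the expansion of $G_i(z)$ at infinity is a polynomial of degree at most $u$ in $i$. Hence the difference is $\mathrm O(z^{-r})$, the polynomial in $\ell$ has degree at most $d-1$, and its $d$-th difference vanishes. This is exactly the paper's argument.

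Your first alternative, testing against $\fall{(N-k)}{r}$ with $r<m_j$, would also work and is slightly cleaner. There $\sum_k\fall{k}{\ell}\fall{(N-k)}{r}b_{N,k}(x)=\fall{N}{\ell+r}x^\ell(1-x)^r$, which moves $\fall{(N-\ell)}{r}$ outside the $i$-sum. But the remaining factor $\sum_i(-1)^i\binom ri\prod_h(\ell+A_h)_i/(\ell+b_h)_i$ still needs the same $\mathrm O(\ell^{-r})$ estimate. Changing the test basis therefore does not remove the obstacle, and the resolvent/partial-fraction route is only sketched.
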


\begin{proof}
The Bernstein identity, the value at zero, and the leading coefficient
follow immediately from \eqref{eq:Bernstein-factorial-basis}. It remains
to prove \eqref{eq:Hahn-A-orthogonality}. Fix
\(j\in\{1,\ldots,q\}\) with \(m_j>0\) and
\(r\in\Nzero\) with \(r<m_j\), and write
\(b_h=\beta_h+\delta_{h,j}\),
\(M_h=m_h-\delta_{h,j}\), and
\(C_{j,\mm}=\prod_{h=1}^q(b_h)_{M_h}\).
Near-diagonality gives \(M_h\ge r\) for every
\(h\in\{1,\ldots,q\}\). As recorded in
\eqref{eq:near-diagonal-proof-condition}, these inequalities, for all
active \(j\) and all \(0\le r<m_j\), are equivalent to
near-diagonality. Thus this is exactly the combinatorial hypothesis used
in the orthogonality proof. Substitution of
\eqref{eq:Hahn-A-factorial},
\eqref{eq:Hahn-like-factorial-moments}, and
\eqref{eq:falling-linearization} gives the following finite calculation.
We use \eqref{eq:Hahn-like-factorial-moments} also for orders exceeding
\(N\), where both sides are zero by the falling-factorial convention.
Denote the left-hand side of \eqref{eq:Hahn-A-orthogonality} by
\(I_{j,r}\).  After putting \(s=r-u\) and interchanging the two finite
sums, one obtains
\begin{align*}
 I_{j,r}
 &=\sum_{\ell=0}^d\frac{(-d)_\ell}{\ell!}
   \frac{(\bbeta+\mm)_\ell}{(\A)_\ell\fall{N}{\ell}}
   \sum_{u=0}^{\min\{r,\ell\}}\binom ru\binom\ell u u!\,
   \fall{N}{r+\ell-u}
   \frac{(\A)_{r+\ell-u}}{\prod_h(b_h)_{r+\ell-u}}\\
 &=\sum_{s=0}^r\binom rs\sum_{\ell=0}^d(-1)^\ell\binom d\ell
   \fall{\ell}{r-s}\frac{\fall{N}{\ell+s}}{\fall{N}{\ell}}
   \frac{(\A)_{\ell+s}}{(\A)_\ell}
   \prod_h\frac{(b_h+M_h)_\ell}{(b_h)_{\ell+s}}\\
 &=\frac1{C_{j,\mm}}\sum_{\ell=0}^d(-1)^\ell\binom d\ell
   \sum_{s=0}^r(-1)^s\binom rs\fall{\ell}{r-s}(\ell-N)_s
   \prod_h(\ell+A_h)_s(\ell+b_h+s)_{M_h-s}.
\end{align*}
Here the terms with \(r-s>\ell\) vanish, and the last equality uses
\begin{equation}
 \frac{\fall{N}{\ell+s}}{\fall{N}{\ell}}=(-1)^s(\ell-N)_s,
 \qquad
 \frac{(\A)_{\ell+s}}{(\A)_\ell}=(\ell+\A)_s,
 \qquad
 \frac{(b_h+M_h)_\ell}{(b_h)_{\ell+s}}
 =\frac{(\ell+b_h+s)_{M_h-s}}{(b_h)_{M_h}}.
 \label{eq:Hahn-A-cancellations}
\end{equation}
The last quotient is legitimate because \(M_h\ge r\ge s\).  Thus the
inner sum is the polynomial
\begin{equation}
 R_{j,r,N}(z)=\sum_{s=0}^r(-1)^s\binom rs\fall{z}{r-s}(z-N)_s
 \prod_{h=1}^q (z+A_h)_s(z+b_h+s)_{M_h-s}.
 \label{eq:finite-difference-kernel-polynomial}
\end{equation}
Consequently,
\begin{equation}
 I_{j,r}
 =\frac1{C_{j,\mm}}\sum_{\ell=0}^d(-1)^\ell\binom d\ell
 R_{j,r,N}(\ell)
 =\frac{(-1)^d}{C_{j,\mm}}\Delta^dR_{j,r,N}(0),
 \label{eq:exact-Hahn-A-difference}
\end{equation}
where \(\Delta f(z)=f(z+1)-f(z)\).  Factoring
\(\fall{z}{r}\prod_h(z+b_h)_{M_h}\) from
\eqref{eq:finite-difference-kernel-polynomial} gives, away from the
removable exceptional values,
\begin{equation}
 R_{j,r,N}(z)=\fall{z}{r}
 \prod_{h=1}^q(z+b_h)_{M_h}
 \,\pFq{q+2}{q+1}
 {-r,z-N,z+\A}{z-r+1,z+\bbeta+\ee_j}{1}.
 \label{eq:finite-difference-kernel}
\end{equation}
Indeed,
\((-1)^s\binom rs=(-r)_s/s!\),
\(\fall{z}{r-s}/\fall{z}{r}=1/(z-r+1)_s\), and
\((z+b_h+s)_{M_h-s}/(z+b_h)_{M_h}=1/(z+b_h)_s\).
At an exceptional value, \eqref{eq:finite-difference-kernel} is understood
through the cancellation-free polynomial
\eqref{eq:finite-difference-kernel-polynomial}; in particular, this removes
the apparent singularity when an integer \(z<r\) occurs in
\((z-r+1)_s\).

We show that its degree is at most \(d-1\). Factor it as
\[
 R_{j,r,N}(z)=\fall{z}{r}\prod_{h=1}^q(z+b_h)_{M_h}
 \sum_{s=0}^r(-1)^s\binom rsG_s(z),
\]
where \(G_s(z)=(z-N)_s\prod_h(z+A_h)_s/
((z-r+1)_s\prod_h(z+b_h)_s)\).
Apply Lemma~\ref{lem:alternating-Pochhammer-estimate} with \(m=r\), numerator
parameters \((-N,A_1,\ldots,A_q)\), and denominator parameters
\((1-r,b_1,\ldots,b_q)\). It gives
\(\sum_{s=0}^r(-1)^s\binom rsG_s(z)
=\mathrm O(z^{-r})\) as \(|z|\to\infty\).
Since the prefactor has degree
\(r+\sum_hM_h=r+d-1\), we obtain
\(\deg R_{j,r,N}\le d-1\). Equation
\eqref{eq:exact-Hahn-A-difference} therefore vanishes.
\end{proof}

The monic polynomial \eqref{eq:Hahn-monic-A} will be used when a limit
must preserve the leading coefficient.  For \(q=1\),
\eqref{eq:inverse-Bernstein-identity} is the classical Hahn--Jacobi
Bernstein-coefficient relation \cite{Ciesielski1987,Waldron2006}.  The
published multiple-Hahn comparison
\cite[Proposition~3.18]{BranquinhoDiazFoulquieManasWolfs2025} writes
\(Q_{\boldsymbol n}(x)=\sum_{\ell=0}^d
Q_{\boldsymbol n}[\ell](-x)_\ell\) and
\(P_{\boldsymbol n}(x)=\sum_{\ell=0}^d
P_{\boldsymbol n}[\ell]x^\ell\), and obtains
\(Q_{\boldsymbol n}[\ell]=(-1)^\ell
(N-\ell)!P_{\boldsymbol n}[\ell]/(N-d)!\).
That article does not formulate the result in terms of the Bernstein
operator.  Since \((-k)_\ell=(-1)^\ell\fall{k}{\ell}\), its identity and
\eqref{eq:Bernstein-factorial-basis} imply
\(\mathscr B_N[Q_{\boldsymbol n}/\fall{N}{d}]=P_{\boldsymbol n}\), which
gives \eqref{eq:inverse-Bernstein-identity} in its normalization.

\begin{remark}[Sharpness of the near-diagonal hypothesis]
\label{rem:Hahn-near-diagonal-sharpness}
The near-diagonal assumption cannot simply be removed from the closed
formula \eqref{eq:Hahn-A-factorial}. Take
\(\A=(\frac12,\frac34)\), \(\bbeta=(2,\frac52)\), \(N=7\), and
\(\mm=(0,2)\).
These parameters are admissible and \(d=2\le N\), but \(\mm\) is not
near the diagonal. Formula \eqref{eq:Hahn-A-factorial} gives
\(A(k)=1-\frac{48}{7}k+\frac{176}{49}\fall{k}{2}\).
If
\(\mu_r=\sum_{k=0}^7\fall{k}{r}v_{2,7}(k)\), then
\eqref{eq:Hahn-like-factorial-moments} gives \(\mu_1=\frac38\),
\(\mu_2=\frac7{16}\), and \(\mu_3=\frac{175}{256}\).
Using \(\bigl(\fall{k}{1}\bigr)^2=\fall{k}{2}+\fall{k}{1}\) and
\(\fall{k}{1}\fall{k}{2}=\fall{k}{3}+2\fall{k}{2}\), one obtains
\[
 \sum_{k=0}^7 kA(k)v_{2,7}(k)
 =\mu_1-\frac{48}{7}(\mu_2+\mu_1)
  +\frac{176}{49}(\mu_3+2\mu_2)
 =\frac{45}{112}\ne0.
\]
The moment of order zero does vanish, but the displayed nonzero first
moment violates the condition required by \(m_2=2\).  Thus
near-diagonality is genuinely needed for the universal formula in
Theorem~\ref{thm:Hahn-A}; this example does not assert nonexistence of a
different type-II polynomial for the same off-diagonal index.
\end{remark}

Under the same scaling, the polynomial converges to its Jacobi-like
counterpart.

\begin{corollary}[Polynomial confluence]
\label{cor:Hahn-Jacobi-polynomial-confluence}
Let \(\mm\in\Nzero^q\) be a fixed near-diagonal index and put
\(d=\abs\mm\). As \(N\to\infty\), with \(N\ge d\),
\[
 A^{\mathrm H}_{\mm,N}(\lfloor Nx\rfloor)
 \xrightarrow[N\to\infty]{} A^{\mathrm J}_{\mm}(x)
\]
locally uniformly for \(x\) in compact subsets of \(\R\). Moreover, for
every \(\ell\in\{0,\ldots,d\}\), the coefficient of
\(\fall{k}{\ell}/\fall{N}{\ell}\) in
\(A^{\mathrm H}_{\mm,N}(k)\) equals the coefficient of \(x^\ell\) in
\(A^{\mathrm J}_{\mm}(x)\).
\end{corollary}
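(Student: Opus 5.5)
The coefficient statement is immediate from the definitions, so I will establish it first. By \eqref{eq:Hahn-A-factorial}, the coefficient of \(\fall{k}{\ell}/\fall{N}{\ell}\) in \(A^{\mathrm H}_{\mm,N}(k)\) is \(\frac{(-d)_\ell}{\ell!}\frac{(\bbeta+\mm)_\ell}{(\A)_\ell}\). This is independent of \(N\). Expanding the terminating series \eqref{eq:continuous-A} shows that it is exactly the coefficient of \(x^\ell\) in \(A^{\mathrm J}_{\mm}(x)\). The same fact also follows from \eqref{eq:inverse-Bernstein-identity} and \eqref{eq:Bernstein-factorial-basis}, because the normalized falling factorials \(\fall{k}{\ell}/\fall{N}{\ell}\), \(0\le\ell\le d\le N\), are linearly independent polynomials in \(k\).

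For the pointwise limit, I will write \(A^{\mathrm H}_{\mm,N}(\lfloor Nx\rfloor)=\sum_{\ell=0}^dc_\ell\,\fall{\lfloor Nx\rfloor}{\ell}/\fall{N}{\ell}\), where \(c_\ell\) are the common coefficients above. Since the sum has finitely many terms, it suffices to prove that for each fixed \(\ell\le d\),
\[
 \frac{\fall{\lfloor Nx\rfloor}{\ell}}{\fall{N}{\ell}}
 =\prod_{i=0}^{\ell-1}\frac{\lfloor Nx\rfloor-i}{N-i}
 \xrightarrow[N\to\infty]{}x^\ell
\]
uniformly for \(x\) in a compact set \(K\subset\R\). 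Each factor satisfies \(\abs{(\lfloor Nx\rfloor-i)/(N-i)-x}\le(1+i+i\abs x)/(N-i)\), which is \(\mathrm O(1/N)\) uniformly on \(K\) because \(\abs x\) is bounded there. The factors are also uniformly bounded on \(K\) for \(N\ge 2d\). A telescoping estimate for products of \(\ell\) bounded factors then gives uniform convergence of the product to \(x^\ell\). Summing over \(\ell\) with the fixed coefficients \(c_\ell\) yields locally uniform convergence to \(A^{\mathrm J}_{\mm}(x)\).

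There is no real obstacle here. The only point requiring care is that \(x\) is allowed to range over compact subsets of all of \(\R\), not just \([0,1]\). The formula is therefore used as a polynomial identity in \(k\), evaluated at \(\lfloor Nx\rfloor\) even when this lies outside the lattice, and the elementary factor bound above must be checked to hold uniformly with \(\abs x\) bounded. Near-diagonality enters only through the definition of the polynomials in Theorem~\ref{thm:Hahn-A} and plays no further role.
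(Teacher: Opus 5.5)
Your proposal is correct and follows essentially the paper's proof: the coefficients in \eqref{eq:Hahn-A-factorial} do not depend on \(N\) and coincide with those of \eqref{eq:continuous-A}, and the limit is taken termwise over the finitely many indices \(0\le\ell\le d\) using \(\fall{\lfloor Nx\rfloor}{\ell}/\fall{N}{\ell}\to x^\ell\) locally uniformly on \(\R\). Your explicit factor bound \((1+i+i\abs x)/(N-i)\) and the telescoping estimate fill in the elementary uniformity on compact subsets of \(\R\), including points off the lattice, which the paper asserts without detail.
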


\begin{proof}
For every fixed \(\ell\),
\(\fall{\lfloor Nx\rfloor}{\ell}/\fall{N}{\ell}
\xrightarrow[N\to\infty]{} x^\ell\) locally uniformly for \(x\in\R\). The sum
\eqref{eq:Hahn-A-factorial} has only the fixed terms
\(0\le\ell\le d\), so termwise passage to the limit gives
\(\sum_{\ell=0}^d(-d)_\ell(\bbeta+\mm)_\ell
x^\ell/[\ell!(\A)_\ell]=A^{\mathrm J}_{\mm}(x)\).
The same formula shows coefficientwise convergence in the scaled
factorial basis.

This limit also gives a self-contained proof of the Jacobi-like
orthogonality quoted after \eqref{eq:continuous-A}. Indeed, for an active
row \(j\) and \(0\le r<m_j\), divide
\eqref{eq:Hahn-A-orthogonality} by \(N^r\). Expressing the polynomial in
the scaled factorial basis, the preceding coefficient identity and the
exact moment relation in Theorem~\ref{thm:Hahn-to-Jacobi-rows} permit
termwise passage to the limit, using
\eqref{eq:falling-linearization} for the products of falling factorials.
The result is
\[
 \int_0^1x^rA_{\mm}^{\mathrm J}(x)\omega_j(x)\dd x=0,
 \qquad 0\le r<m_j.
\]
\end{proof}

\subsection{The normalized signed sequence and its type-I representation}

We first determine a normalized signed sequence satisfying the type-I
moment conditions. When the type-I moment matrix is nonsingular, the
polynomials in its type-I representation can then be reconstructed
explicitly.

Assume \(1\le d\coloneq\abs\mm\le N+1\) and put
\(n=d-1\le N\). A type-I linear form is a signed
sequence \(\mathcal B_{\mm,N}^{\mathrm H}(k)\) with a representation
\begin{equation}
 \mathcal B_{\mm,N}^{\mathrm H}(k)
 =\sum_{\substack{j\in\{1,\ldots,q\}\\m_j>0}}
 B_{j,N}(k)\widetilde v_{j,N}(k).
 \label{eq:B-component-decomposition}
\end{equation}
Here \(\deg B_{j,N}<m_j\) for every
\(j\in\{1,\ldots,q\}\) with \(m_j>0\).

For any finitely supported signed sequence with generating polynomial
\(\mathcal H(z)=\sum_k\mathcal B(k)z^k\), one has
\(\mathcal H^{(r)}(1)=\sum_k\fall{k}{r}\mathcal B(k)\).
Consequently, a factorization
\(\mathcal H(z)=C(1-z)^nF(1-z)\), with \(CF(0)\ne0\), forces the moments
of every order \(r\in\Nzero\) with \(r<n\) to vanish and gives the nonzero moment
\((-1)^n n!CF(0)\) at order \(n\).  Thus the moment conditions below are
already encoded by the factor \((1-z)^n\).  What is not automatic is the
representation \eqref{eq:B-component-decomposition} with
\(\deg B_{j,N}<m_j\): proving membership in this finite-dimensional span
and reconstructing the individual components is the content of
Theorem~\ref{thm:Hahn-B-components} and its corollaries.

Let \(\mm\) be near the diagonal and let \(d=\abs\mm\) satisfy
\(d\in\{1,\ldots,N+1\}\). Put \(n=d-1\), and define
\begin{equation}
 \mathcal H^{\mathrm H}_{\mm,N}(z)
 \coloneq\sum_{k=0}^N\mathcal B_{\mm,N}^{\mathrm H}(k)z^k
 =-\fall{N}{n}\frac{(\A)_n}{(\bbeta)_{\mm+n}}(1-z)^n
 \,\pFq{q+1}{q}{n-N,\A+n}{\bbeta+\mm+n}{1-z}
 \label{eq:Hahn-complete-B-pgf}
\end{equation}

\begin{theorem}[Normalized Hahn-like signed sequence]
\label{thm:Hahn-complete-B}
Let \(\mm\) be near the diagonal with
\(1\le d\coloneq\abs\mm\le N+1\), and set \(n=d-1\). The
factorial moments of the sequence defined by
\eqref{eq:Hahn-complete-B-pgf} are
\begin{equation}
 \sum_{k=0}^N\fall{k}{r}\mathcal B_{\mm,N}^{\mathrm H}(k)
 =-h_N(r)\frac{(-r)_n}
 {\prod_{h=1}^q(r+\beta_h)_{m_h}},
 \qquad r\in\Nzero.
 \label{eq:Hahn-complete-B-moments}
\end{equation}
In particular, its first \(n\) moments vanish and its moment of order \(n\)
is nonzero.
\end{theorem}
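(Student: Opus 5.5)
The plan is to read the factorial moments directly off the generating polynomial, using \(\mathcal H^{(r)}(1)=\sum_k\fall{k}{r}\mathcal B(k)\). We substitute \(w=1-z\), so that \(\partial_z^r=(-1)^r\partial_w^r\) and the \(r\)-th factorial moment equals \((-1)^r r!\) times the coefficient of \(w^r\) in
\[
 -\fall{N}{n}\frac{(\A)_n}{(\bbeta)_{\mm+n}}\,w^n\,
 \pFq{q+1}{q}{n-N,\A+n}{\bbeta+\mm+n}{w}.
\]
For \(r<n\) that coefficient is zero. For \(r=n+t\) with \(t\ge0\), it equals
\(-\fall{N}{n}\frac{(\A)_n}{(\bbeta)_{\mm+n}}\frac{(n-N)_t(\A+n)_t}{(\bbeta+\mm+n)_t\,t!}\).

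Next we simplify this coefficient, using the elementary identities already recorded in \eqref{eq:Hahn-A-cancellations}:
\[
 (-1)^t\fall{N}{n}(n-N)_t=\fall{N}{n+t}=\fall{N}{r},\qquad
 (\A)_n(\A+n)_t=(\A)_r,\qquad
 (\beta_h)_{m_h+n}(\beta_h+m_h+n)_t=(\beta_h)_{m_h+r}.
\]
The remaining sign and factorials are \((-1)^r r!(-1)^t/t!=(-1)^n r!/(r-n)!=(-r)_n\), since \((-r)_n=(-1)^n r!/(r-n)!\) for \(r\ge n\). This gives
\[
 \sum_k\fall{k}{r}\mathcal B(k)
 =-\fall{N}{r}\,(\A)_r\,\frac{(-r)_n}{\prod_h(\beta_h)_{m_h+r}} .
\]
To match \eqref{eq:Hahn-complete-B-moments} we write \((\beta_h)_{m_h+r}=(\beta_h)_r(r+\beta_h)_{m_h}\) and recall \(h_N(r)=\fall{N}{r}(\A)_r/(\bbeta)_r\).

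For \(r<n\), the left side is zero, and the right side of \eqref{eq:Hahn-complete-B-moments} also vanishes because \((-r)_n=0\). The identity therefore holds for every \(r\in\Nzero\). When \(r>N\), both sides vanish: the left side because the sequence is supported on \(\{0,\ldots,N\}\), and the right side through \(\fall{N}{r}=0\). This is consistent with the termination of the series at \(t=N-n\).

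For the nonvanishing statement, take \(r=n\le N\). The moment is then \(-\fall{N}{n}(\A)_n(-1)^n n!/\prod_h(\beta_h)_{m_h+n}\). This is nonzero because \(A_h,\beta_h>0\) by \eqref{eq:admissible-A-beta} and \(n\le N\).

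The only step needing care is the sign and Pochhammer bookkeeping in the conversion \((-1)^r r!/t!\to(-r)_n\) and \(\fall{N}{n}(n-N)_t\to\fall{N}{r}\). We will check it once at \(t=0\), where it gives \((-1)^nn!\), matching the general remark preceding \eqref{eq:Hahn-complete-B-pgf}. No orthogonality argument is needed, since the vanishing is built into the factor \((1-z)^n\).
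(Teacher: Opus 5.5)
Your proposal is correct and follows the paper's proof essentially step for step. Both substitute \(u=1-z\), extract the coefficient of \(u^r\) at index \(r-n\), and then convert via \(r!/(r-n)!=(-1)^n(-r)_n\), \(\fall{N}{n}(n-N)_{r-n}=(-1)^{r-n}\fall{N}{r}\), and the Pochhammer splittings \((\A)_n(\A+n)_{r-n}=(\A)_r\) and \((\beta_h)_{m_h+r}=(\beta_h)_r(r+\beta_h)_{m_h}\). Your explicit check that both sides vanish for \(r>N\) is a small addition that the paper leaves implicit.
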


\begin{proof}
Put \(u=1-z\) and expand the terminating series in
\eqref{eq:Hahn-complete-B-pgf} as
\(\sum_{s=0}^{N-n}
\frac{(n-N)_s(\A+n)_s}{(\bbeta+\mm+n)_s}\frac{u^s}{s!}\).
The factorial moment of order \(r\) is the \(r\)-th derivative of the
generating function at \(z=1\). Since \(\dd/\dd z=-\dd/\dd u\), it vanishes
for \(r<n\). For \(n\le r\le N\), the coefficient of \(u^r\) is obtained by
setting \(s=r-n\), and hence
\[
 \sum_{k=0}^N\fall{k}{r}\mathcal B_{\mm,N}^{\mathrm H}(k)
 =(-1)^r r!\left[-\fall{N}{n}
 \frac{(\A)_n}{(\bbeta)_{\mm+n}}
 \frac{(n-N)_{r-n}(\A+n)_{r-n}}
 {(\bbeta+\mm+n)_{r-n}(r-n)!}\right].
\]
Use \(\frac{r!}{(r-n)!}=(-1)^n(-r)_n\) and
\((n-N)_{r-n}=(-1)^{r-n}\frac{\fall{N}{r}}{\fall{N}{n}}\), together
with \((\A)_n(\A+n)_{r-n}=(\A)_r\) and
\((\bbeta)_{\mm+n}(\bbeta+\mm+n)_{r-n}
=(\bbeta)_r\prod_{h=1}^q(r+\beta_h)_{m_h}\).
After cancellation this is \eqref{eq:Hahn-complete-B-moments}. At \(r=n\)
all remaining factors are nonzero in the admissible parameter range.
\end{proof}

These moments converge to those of the Jacobi-like type-I linear form.

\begin{corollary}[Hahn-to-Jacobi type-I limit]
\label{cor:Hahn-Jacobi-dual}
Fix a near-diagonal \(\mm\) with \(d\coloneq\abs\mm\ge1\). For every
\(N\in\Nzero\) with \(N\ge d-1\), let
\(\mathcal B_{\mm,N}^{\mathrm H}\) be the sequence in
Theorem~\ref{thm:Hahn-complete-B}. As \(N\to\infty\), the push-forward
signed measures \(\sum_{k=0}^N\mathcal B_{\mm,N}^{\mathrm H}(k)
\,\delta_{k/N}\) converge in every fixed moment to the Jacobi-like type-I
linear form fixed by
\eqref{eq:Jacobi-B-Mellin}.
\end{corollary}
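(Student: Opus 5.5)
The plan is to compute the moments of the push-forward measure exactly for each finite \(N\), using the factorial moments in Theorem~\ref{thm:Hahn-complete-B}, and then let \(N\to\infty\) term by term. Fix \(r\in\Nzero\) and take \(N\ge\max\{r,d-1\}\). The moment of order \(r\) of \(\sum_k\mathcal B_{\mm,N}^{\mathrm H}(k)\delta_{k/N}\) is \(N^{-r}\sum_{k=0}^N k^r\mathcal B_{\mm,N}^{\mathrm H}(k)\). No additional rescaling of the measures is needed or intended: the statement concerns these unscaled moments.

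First I convert ordinary powers into falling factorials with Stirling numbers of the second kind:
\[
 k^r=\sum_{i=0}^rS(r,i)\,\fall{k}{i},\qquad S(r,r)=1 .
\]
Substituting \eqref{eq:Hahn-complete-B-moments} then gives the exact finite identity
\[
 \frac1{N^r}\sum_{k=0}^Nk^r\mathcal B_{\mm,N}^{\mathrm H}(k)
 =-\sum_{i=0}^rS(r,i)\frac{\fall{N}{i}}{N^r}
 \frac{(\A)_i}{(\bbeta)_i}\frac{(-i)_n}{\prod_{h=1}^q(i+\beta_h)_{m_h}} .
\]
Here \(h_N(i)=\fall{N}{i}(\A)_i/(\bbeta)_i\) by \eqref{eq:canonical-finite-row}, and every \(i\le r\le N\) lies in the range where \eqref{eq:Hahn-complete-B-moments} was derived. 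Every factor other than \(\fall{N}{i}/N^r\) is independent of \(N\) and finite, because \(\beta_h>0\) by \eqref{eq:admissible-A-beta}.

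Next I pass to the limit. The sum has the fixed length \(r+1\). For \(i<r\), \(\fall{N}{i}/N^r=\mathrm O(N^{i-r})\to0\). For \(i=r\), \(\fall{N}{r}/N^r\to1\). The limit of the order-\(r\) moment is therefore
\[
 -\frac{(\A)_r}{(\bbeta)_r}\frac{(-r)_n}{\prod_{h=1}^q(r+\beta_h)_{m_h}},
\]
which is exactly the moment \eqref{eq:Jacobi-B-moments} of \(\mathcal H^{\mathrm J}_{\mm}\) fixed by \eqref{eq:Jacobi-B-Mellin}. The case \(r<n\) is consistent with this: both sides vanish, since \((-i)_n=0\) for all \(i\le r<n\).

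There is no genuine analytic obstacle here. The only points needing care are the bookkeeping ones. One is the index range \(N\ge r\), so that the falling-factorial moments used are those given by \eqref{eq:Hahn-complete-B-moments}; for \(i>N\) they would be zero anyway. The other is checking that the normalization in \eqref{eq:Hahn-complete-B-pgf}, through \(h_N\), matches the Mellin normalization \eqref{eq:Jacobi-B-Mellin} exactly, so that the limit has no stray constant factor.
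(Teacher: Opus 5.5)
Your proposal is correct and follows essentially the same route as the paper. Both arguments divide the exact factorial moments \eqref{eq:Hahn-complete-B-moments} by \(N^r\), convert ordinary powers to falling factorials with Stirling numbers of the second kind, and observe that only the top term \(i=r\) survives. That term gives \eqref{eq:Jacobi-B-moments}.
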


\begin{proof}
For fixed \(r\), divide \eqref{eq:Hahn-complete-B-moments} by \(N^r\).
Since \(\fall{N}{r}/N^r\xrightarrow[N\to\infty]{}1\), the factor
\(h_N(r)/N^r\) tends to
\((\A)_r/(\bbeta)_r\), and the limit is the right-hand side of
\eqref{eq:Jacobi-B-moments}.

For ordinary moments use
\(k^r=\sum_{\ell=0}^r
\genfrac\{\}{0pt}{}{r}{\ell}\fall{k}{\ell}\), where the braces denote
Stirling numbers of the second kind. After division
by \(N^r\), the term \(\ell=r\) has the limit just computed, whereas each
term with \(\ell<r\) is \(\mathrm O(N^{\ell-r})\) as
\(N\to\infty\). Consequently
\(\sum_k(k/N)^r\mathcal B_{\mm,N}^{\mathrm H}(k)\) converges to the \(r\)-th moment of
\(\mathcal H^{\mathrm J}_{\mm}\).
\end{proof}

The next result recovers the type-I polynomials and states when they
are unique.

Let \(\mm\) be near the diagonal, let \(1\le d=\abs\mm\le N+1\), and
put \(n=d-1\). For every \(j\in\{1,\ldots,q\}\) and coefficients
\(b_{j,\ell}\), set
\begin{equation}
 B_{j,N}(k)=\sum_{\ell=0}^{m_j-1}b_{j,\ell}\fall{k}{\ell}.
 \label{eq:B-factorial-components}
\end{equation}
For every \(j\in\{1,\ldots,q\}\) with \(m_j>0\) and every
\(\ell\in\{0,\ldots,m_j-1\}\), define the rational function
\begin{equation}
 \begin{split}
 R_{j,\ell}^{(N)}(r)={}&
 \sum_{u=0}^{\ell}\binom\ell u
 \fall{r}{\ell-u}\fall{N-r}{u}
 \frac{\prod_h(r+A_h)_u}
 {\prod_h(r+\beta_h)_u(r+\beta_j+u)}\\
 ={}&\frac{\fall{r}{\ell}}{r+\beta_j}
 \pFq{q+2}{q+1}
 {-\ell,r-N,r+\A}
 {r-\ell+1,r+\beta_j+1,r+\bbeta^{*j}}{1},
 \end{split}
 \label{eq:R-kernel}
\end{equation}
where \(r+\bbeta^{*j}\) means that \(r+\beta_j\) is omitted, and the
second line is interpreted by continuation when \(\fall{r}{\ell}=0\).
For every \(r\in\{0,\ldots,N\}\),
\begin{equation}
 \frac{1}{h_N(r)}
 \sum_{k=0}^N\fall{k}{r}\fall{k}{\ell}\widetilde v_{j,N}(k)
 =R_{j,\ell}^{(N)}(r).
 \label{eq:R-pairing}
\end{equation}

Suppose that the sets of poles
\(\{-\beta_j-K:K\in\{0,\ldots,m_j-1\}\}\), for
\(j\in\{1,\ldots,q\}\) with \(m_j>0\), are pairwise disjoint and that the
quantities \(d_{J,K}\) defined below do not vanish for every
\(J\in\{1,\ldots,q\}\) with \(m_J>0\) and every
\(K\in\{0,\ldots,m_J-1\}\). For these \(J\) and \(K\), put
\begin{align}
 \pi_{J,K}
 &=\frac{(-1)^{K+1}(\beta_J+K)_n}
 {K!(m_J-1-K)!\prod_{h\ne J}
 (\beta_h-\beta_J-K)_{m_h}},
 \label{eq:target-residue}\\
 d_{J,K}
 &=\frac{(N+\beta_J+1)_K
 \prod_h(A_h-\beta_J-K)_K}
 {(-1)^KK!\prod_{h\ne J}(\beta_h-\beta_J-K)_K}.
 \label{eq:diagonal-residue}
\end{align}
Define \(b_{J,K}\) recursively in descending order of \(K\) by
\begin{equation}
 b_{J,K}=\frac{1}{d_{J,K}}
 \left(\pi_{J,K}-
 \sum_{j=1}^q\sum_{\ell=K+1}^{m_j-1}
 b_{j,\ell}\,
 \Res_{r=-\beta_J-K}R_{j,\ell}^{(N)}(r)\right).
 \label{eq:triangular-B-reconstruction}
\end{equation}

\begin{theorem}[Type-I polynomials from residues]
\label{thm:Hahn-B-components}
Let \(\mm\) be near the diagonal, let \(1\le\abs\mm\le N+1\), and assume
that the pole sets
\(\{-\beta_j-K:K\in\{0,\ldots,m_j-1\}\}\), indexed by
\(j\in\{1,\ldots,q\}\) with \(m_j>0\), are pairwise disjoint and that
\(d_{J,K}\ne0\) for every \(J\in\{1,\ldots,q\}\) with \(m_J>0\) and
every \(K\in\{0,\ldots,m_J-1\}\). Then the coefficients
defined by \eqref{eq:triangular-B-reconstruction} satisfy
\eqref{eq:B-component-decomposition} for the signed sequence of
Theorem~\ref{thm:Hahn-complete-B}, and that sequence is the type-I linear
form for the normalized weights.
\end{theorem}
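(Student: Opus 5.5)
We reduce the identity to equality of factorial moments. Both sides of \eqref{eq:B-component-decomposition} are sequences on \(\{0,\ldots,N\}\). The falling factorials \(\fall{k}{r}\), \(0\le r\le N\), form a basis of functions on that lattice. It therefore suffices to show that, for every \(r\in\{0,\ldots,N\}\),
\[
 \sum_{j:m_j>0}\sum_{\ell=0}^{m_j-1}b_{j,\ell}\,R^{(N)}_{j,\ell}(r)
 =-\frac{(-r)_n}{\prod_{h}(r+\beta_h)_{m_h}},
\]
where the left side comes from \eqref{eq:R-pairing} after dividing by \(h_N(r)\), and the right side from \eqref{eq:Hahn-complete-B-moments}. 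We prove the stronger statement that these two sides agree as rational functions of \(r\).

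\emph{Step 1: partial fractions of the target.} Call the right side \(T(r)\). Near-diagonality gives \(\deg(-r)_n=n=d-1<d=\deg\prod_h(r+\beta_h)_{m_h}\), so \(T(r)\to0\) at infinity. Under the disjointness hypothesis its poles \(-\beta_J-K\) are simple, and \(T\) is the sum of its principal parts. A direct residue computation identifies \(\Res_{r=-\beta_J-K}T\) with \(\pi_{J,K}\) in \eqref{eq:target-residue}: the factor \((-r)_n\) evaluates to \((\beta_J+K)_n\), and the remaining factors give \(K!(m_J-1-K)!(-1)^K\) together with the cross products over \(h\ne J\).

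\emph{Step 2: structure of the kernels.} From the first line of \eqref{eq:R-kernel}, each term is a polynomial in \(r\) divided by \(\prod_h(r+\beta_h)_u(r+\beta_j+u)\). Hence \(R_{j,\ell}^{(N)}\) has its poles among \(-\beta_h-u\) with \(u\le\ell\), and these are simple once the pole sets are disjoint. Poles \(-\beta_h-u\) with \(h\) inactive (\(m_h=0\)) must cancel. By near-diagonality, \(m_h=0\) forces every active \(m_j=1\), so \(\ell=0\) and \(u=0\), and no such poles occur.

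The key triangularity is that \(R_{j,\ell}^{(N)}\) has no pole at \(-\beta_J-K\) when \(K>\ell\). The diagonal residue is
\[
 \Res_{r=-\beta_J-K}R^{(N)}_{J,K}=d_{J,K},
\]
which we check from the \(u=K\) term using \((-\beta_J-K)^{\underline0}\), \(\fall{N+\beta_J+K}{K}=(N+\beta_J+1)_K\), and the remaining Pochhammers. We also need the behaviour at infinity. For \(\ell\le m_j-1\), Lemma~\ref{lem:alternating-Pochhammer-estimate} applied to the \({}_{q+2}F_{q+1}\) form shows \(R^{(N)}_{j,\ell}(r)=\mathrm O(r^{-1})\). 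The prefactor \(\fall r\ell/(r+\beta_j)\) has degree \(\ell-1\), and the alternating sum contributes \(\mathrm O(r^{-\ell})\). So each kernel is a proper rational function, equal to the sum of its principal parts.

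\emph{Step 3: matching.} The recursion \eqref{eq:triangular-B-reconstruction}, run in descending \(K\), is exactly the requirement that the residue of \(\sum b_{j,\ell}R_{j,\ell}\) at \(-\beta_J-K\) equals \(\pi_{J,K}\). Triangularity ensures that only terms with \(\ell\ge K\) contribute, and the diagonal residue \(d_{J,K}\) is nonzero by hypothesis. Both sides are proper rational functions with the same simple poles and the same residues, so they coincide. Evaluating at \(r=0,\ldots,N\) gives equality of all factorial moments, and hence \eqref{eq:B-component-decomposition} with \(\deg B_{j,N}<m_j\). Theorem~\ref{thm:Hahn-complete-B} supplies the vanishing moments, so the sequence is a type-I linear form.

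\emph{Main obstacle.} We expect the hardest step to be the pole bookkeeping in Step 2. Three things must be verified there: that \(R^{(N)}_{j,\ell}\) has no poles at \(-\beta_J-K\) for \(K>\ell\), that its residue at \(-\beta_J-\ell\) is exactly \(d_{J,\ell}\), and that poles at shifts of \(\beta_j\) arising from the factor \((r+\beta_j+u)\) together with \((r+\beta_j)_u\) (a double pole when \(h=j\)) collapse to simple ones. For the last point we would first try rewriting \(\prod_h(r+\beta_h)_u(r+\beta_j+u)=(r+\beta_j)_{u+1}\prod_{h\ne j}(r+\beta_h)_u\). This shows the poles are simple, located at \(-\beta_j-u'\) with \(u'\le u\le\ell\) and at \(-\beta_h-u'\) with \(u'<u\le\ell\), which gives triangularity directly.
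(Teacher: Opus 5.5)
Your proof is correct and is essentially the paper's argument. You reduce to the rational interpolation identity via \eqref{eq:R-pairing}, obtain the $\mathrm O(r^{-1})$ decay from Lemma~\ref{lem:alternating-Pochhammer-estimate}, use $(r+\beta_j)_u(r+\beta_j+u)=(r+\beta_j)_{u+1}$ to get simple poles and triangularity, match residues in descending $K$, and conclude because the pole-free difference vanishes at infinity. Two points should be stated explicitly rather than left implicit:
\begin{itemize}
\item Near-diagonality gives $u'\le\ell-1\le m_j-2\le m_h-1$. Hence every off-diagonal pole candidate $-\beta_h-u'$ lies in a prescribed pole set, and only then does disjointness yield simplicity.
\item The same bound $u'<\ell$ shows that the terms with $j\ne J$ and $\ell=K$ have zero residue at $-\beta_J-K$. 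This is exactly what \eqref{eq:triangular-B-reconstruction} needs, since it subtracts only the terms with $\ell\ge K+1$. Your Step~3 says ``$\ell\ge K$'', which is not quite enough on its own.
\end{itemize}
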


Every residue in \eqref{eq:triangular-B-reconstruction} is a terminating
\({}_{q+2}F_{q+1}(1)\) with one upper parameter a nonpositive integer.

\begin{proof}
Equation \eqref{eq:R-pairing} follows from
\eqref{eq:falling-linearization} and
\eqref{eq:resolvent-factorial-moment}; rewriting its finite sum gives the
hypergeometric expression.

For distinct indices \(h,J\) with \(m_hm_J>0\), disjointness at \(K=0\) implies
\(\beta_h\ne\beta_J\). More generally, for
\(K\in\{0,\ldots,m_J-1\}\),
\begin{equation}
 (\beta_h-\beta_J-K)_{m_h}\ne0,
 \qquad
 (\beta_h-\beta_J-K)_K\ne0
 \quad(h\ne J).
 \label{eq:pole-separation-nonzero-Pochhammers}
\end{equation}
Indeed, a zero factor would give
\(-\beta_J-K=-\beta_h-t\). For the first Pochhammer,
\(t\in\{0,\ldots,m_h-1\}\), contradicting disjointness of the pole sets.
For the second, near-diagonality gives
\(K\le m_J-1\le m_h\); the case \(K=0\) is the empty product.
Thus all Pochhammer factors in the denominators are nonzero. Since
\(N+\beta_J+1>0\), the assumption \(d_{J,K}\ne0\) excludes the remaining
possible zeros in its numerator.

The required type-I identity is equivalent
to the rational interpolation problem
\begin{equation}
 -\frac{(-r)_n}{\prod_h(r+\beta_h)_{m_h}}
 =\sum_{j=1}^q\sum_{\ell=0}^{m_j-1}
 b_{j,\ell}R_{j,\ell}^{(N)}(r).
 \label{eq:rational-component-identity}
\end{equation}
We record the decay at infinity needed here and in the normality argument.
In the hypergeometric expression in \eqref{eq:R-kernel}, set
\[
 Q_u(r)=
 \frac{(r-N)_u\prod_{h=1}^q(r+A_h)_u}
 {(r-\ell+1)_u(r+\beta_j+1)_u
  \prod_{h\ne j}(r+\beta_h)_u}.
\]
Apply Lemma~\ref{lem:alternating-Pochhammer-estimate} with \(m=\ell\),
numerator parameters \((-N,A_1,\ldots,A_q)\), and denominator parameters
\((1-\ell,\beta_j+1,\bbeta^{*j})\). Since
\((-\ell)_u/u!=(-1)^u\binom\ell u\), it gives
\(\sum_{u=0}^{\ell}(-1)^u\binom\ell u Q_u(r)
=\mathrm O(r^{-\ell})\) as \(|r|\to\infty\).
The prefactor \(\fall{r}{\ell}/(r+\beta_j)\) in \eqref{eq:R-kernel} is
\(\mathrm O(r^{\ell-1})\) as \(|r|\to\infty\). Therefore
\begin{equation}
 R_{j,\ell}^{(N)}(r)=\mathrm O(r^{-1}),
 \qquad |r|\to\infty.
 \label{eq:R-decay}
\end{equation}
The pole-separation hypothesis makes every pole of the finite-sum
representation of \(R_{j,\ell}^{(N)}\) simple. Indeed, the factors with
index \(j\) combine as
\((r+\beta_j)_u(r+\beta_j+u)=(r+\beta_j)_{u+1}\), whose possible pole
indices satisfy \(0\le t\le u\le\ell\le m_j-1\). For \(h\ne j\) and
\(u\ge1\), they satisfy
\(0\le t\le u-1\le\ell-1\le m_j-2\le m_h-1\).
Thus every possible pole lies in one of the prescribed pole sets, and a
coincidence would identify two points in those pairwise disjoint sets.
Thus matching all residues removes all poles.
At the pole \(r=-\beta_J-K\), the rational functions with
\(j=J,\ell<K\), or with
\(j\ne J,\ell<K+1\), have zero residue. The diagonal residue is
\eqref{eq:diagonal-residue}, and the residue required by the left-hand side is
\eqref{eq:target-residue}. Matching residues in decreasing order of \(K\)
gives \eqref{eq:triangular-B-reconstruction}. The difference between the
two sides of \eqref{eq:rational-component-identity} has no poles and tends
to zero as \(\abs r\to\infty\), hence is zero. Multiplying by \(h_N(r)\) and using
\eqref{eq:R-pairing} proves equality of all moments and therefore of the
finite sequences.
\end{proof}

The recursive residue formula can be summed in closed form. In the classical
multiple-Hahn system, every type-I polynomial is a single terminating
generalized hypergeometric polynomial
\cite[Theorem~2.1, equation~(21)]{BranquinhoDiazFoulquieManasWolfs2025}.
The weights in the present paper are different, so that result cannot be
transferred by a substitution of parameters. The corresponding Hahn-like
formula follows instead from the Cauchy transform written in the Newton
basis.

Under the hypotheses of Theorem~\ref{thm:Hahn-B-components}, for every
\(J\in\{1,\ldots,q\}\) with \(m_J>0\) and every
\(K\in\Nzero\) with \(K<m_J\), put
\(\xi_{J,K}=\beta_J+K\). For every
pair \(j,J\in\{1,\ldots,q\}\) with \(m_jm_J>0\), define
\(\mathcal C_{J,J,0}^{(N)}(k)=1\) and
\(\mathcal C_{j,J,0}^{(N)}(k)=0\) for \(j\ne J\).
For every \(j,J\in\{1,\ldots,q\}\) with \(m_jm_J>0\) and every
\(K\in\mathbb N\) with \(K<m_J\), set
\begin{equation}
 \mathcal D_{j,J,K}^{(N)}\coloneq
 \frac{(N+\beta_j)\prod_{h=1}^q(A_h-\beta_j)
       \prod_{h\ne j}(\beta_h-\xi_{J,K})}
 {(N+\xi_{J,K})\prod_{h\ne j}(\beta_h-\beta_j)
       \prod_{h=1}^q(A_h-\xi_{J,K})}
 \label{eq:Hahn-C-block-prefactor}
\end{equation}
and
\begin{equation}
 \mathcal C_{j,J,K}^{(N)}(k)\coloneq
 \mathcal D_{j,J,K}^{(N)}\;
 \pFq{q+2}{q+1}
 {1,1-k-\xi_{J,K},
  \bbeta+(1-\xi_{J,K})\one-\ee_j}
 {1-N-\xi_{J,K},
 \A+(1-\xi_{J,K})\one}{1}.
 \label{eq:Hahn-C-hypergeometric-block}
\end{equation}
Here the hypergeometric expression denotes the finite sum truncated at
\(s=K\) when \(j=J\) and at \(s=K-1\) when \(j\ne J\). This truncation is
imposed before any coincident upper and lower Pochhammer parameters are
simplified, including when
\(A_h+1-\xi_{J,K}\in\mathbb Z_{\le0}\) for some
\(h\in\{1,\ldots,q\}\). Any removable parameter
singularity is interpreted by continuation of the complete right-hand side
of \eqref{eq:Hahn-C-hypergeometric-block}, rather than by cancelling
parameter strings before fixing the finite range. Equivalently, the finite
recurrence formula \eqref{eq:Hahn-C-finite-recurrence} below gives the same
polynomial without hypergeometric ambiguity.

\begin{corollary}[Closed hypergeometric Hahn-like type-I polynomials]
\label{cor:Hahn-B-closed-components}
Under the hypotheses of Theorem~\ref{thm:Hahn-B-components}, the type-I
polynomials for the rescaled weights
\(\widetilde v_{j,N}=v_{j,N}/\beta_j\) are
\begin{equation}
 B_{j,N}(k)=
 \sum_{J=1}^q\sum_{K=0}^{m_J-1}
 \pi_{J,K}\mathcal C_{j,J,K}^{(N)}(k),
 \qquad j\in\{1,\ldots,q\},\quad m_j>0.
 \label{eq:Hahn-B-closed-components}
\end{equation}
For every \(J\in\{1,\ldots,q\}\) with \(m_J>0\) and every
\(K\in\{0,\ldots,m_J-1\}\),
\(\deg \mathcal C_{J,J,K}^{(N)}\le K\). For every such \(J,K\) with
\(K\ge1\), and every \(j\in\{1,\ldots,q\}\) with \(m_j>0\) and
\(j\ne J\), one has \(\deg \mathcal C_{j,J,K}^{(N)}\le K-1\).
Furthermore, \(\mathcal C_{j,J,0}^{(N)}=0\) for every pair of distinct
active indices \(j,J\), and \(\deg B_{j,N}<m_j\) for every
\(j\in\{1,\ldots,q\}\) with \(m_j>0\).
\end{corollary}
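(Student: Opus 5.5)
We solve the triangular recursion \eqref{eq:triangular-B-reconstruction} in closed form by linearity in the targets \(\pi_{J,K}\).

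\emph{Step 1: one target at a time.} Fix \((J,K)\) and replace the target data by the unit vector \(\pi_{J',K'}=\delta_{(J',K'),(J,K)}\). By Theorem~\ref{thm:Hahn-B-components} and the uniqueness of the residue-matching solution of \eqref{eq:rational-component-identity}, the resulting components are the unique polynomials \(\mathcal C^{(N)}_{j,J,K}\) with \(\deg\mathcal C_{j,J,K}^{(N)}<m_j\). Their characterizing property is that
\[
\sum_j\sum_\ell c_{j,\ell}R_{j,\ell}^{(N)}(r)
\]
has a single simple pole, at \(r=-\xi_{J,K}\), with unit residue, and no other poles. Since both the recursion and \eqref{eq:rational-component-identity} are linear in the targets, summing over \((J,K)\) with weights \(\pi_{J,K}\) yields \eqref{eq:Hahn-B-closed-components}.

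\emph{Step 2: the Cauchy transform.} The remaining task is to identify these unit-pole solutions with \eqref{eq:Hahn-C-hypergeometric-block}. The pairing \eqref{eq:R-pairing} says the following: if \(\sum_j B_j\widetilde v_{j,N}\) has normalized factorial moments equal to a rational function \(\rho(r)\) with decay \(\mathrm O(r^{-1})\), then the generating function is governed by \(\rho\). Consider the target \(\rho(r)=1/(r+\xi)\) with \(\xi=\xi_{J,K}\).

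I would write the \(B_j\) in the Newton/falling-factorial basis. The ansatz is that each component is a finite sum \(\sum_s c_s\,\fall{k}{s}\)-type terms produced by expanding \(1/(r+\xi)\) against the simple poles \(1/(r+\beta_j+u)\) appearing in \eqref{eq:R-kernel}. Concretely, I would use the partial-fraction identity
\[
\frac{1}{r+\xi}-\frac{1}{r+\beta_j}=\frac{\beta_j-\xi}{(r+\xi)(r+\beta_j)},
\]
iterated, together with the finite relation \eqref{eq:finite-resolvent-relation}. Each iteration shifts \(\xi\) down by one integer and generates a ratio of Pochhammers. The prefactor \(\mathcal D^{(N)}_{j,J,K}\) is then recognized as the residue ratio of the diagonal kernel at \(r=-\beta_j\) versus \(r=-\xi\); this is the computation behind \eqref{eq:diagonal-residue}. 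The iteration produces the series \eqref{eq:Hahn-C-hypergeometric-block}, with upper parameter \(1\) and the shifted strings. I would verify this by checking the contiguous recurrence satisfied by consecutive terms in \(s\): it should match the finite recurrence \eqref{eq:Hahn-C-finite-recurrence} that the paper announces.

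\emph{Step 3: residue check and degrees.} Direct verification is safer than derivation. I would substitute the candidate \(\mathcal C\) into \(\sum_{j,\ell}c_{j,\ell}R^{(N)}_{j,\ell}\) and compute its residues at every \(-\beta_{J'}-K'\) using the finite-sum form of \eqref{eq:R-kernel}. The truncation rule (at \(s=K\) for \(j=J\), at \(s=K-1\) for \(j\ne J\)) is exactly what yields the degree statements. A term of order \(s\) has degree \(\le s\) in \(k\), through the factor \((1-k-\xi)_s\), so \(\deg\mathcal C_{J,J,K}\le K\) and \(\deg\mathcal C_{j,J,K}\le K-1\) for \(j\ne J\). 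For \(K=0\) with \(j\ne J\) the sum is empty, so \(\mathcal C_{j,J,0}=0\).

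Since \(K\le m_J-1\), and near-diagonality gives \(K-1\le m_j-1\), we get \(\deg B_{j,N}<m_j\). Uniqueness (Theorem~\ref{thm:Hahn-B-components}) then forces the candidate to coincide with the recursive solution, provided it has the correct pole structure.

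\emph{Main obstacle.} The hard step is the residue verification in Step 3, i.e.\ showing that the truncated \({}_{q+2}F_{q+1}\) blocks cancel the poles at all \(-\beta_{J'}-K'\ne-\xi\) and leave unit residue at \(-\xi\). I would first try a telescoping argument in \(s\), showing that consecutive terms of the block cancel the pole created by the previous term. A fallback is induction on \(K\), mirroring the descending triangular order of \eqref{eq:triangular-B-reconstruction}. Removable parameter coincidences would be handled by continuation, as the statement prescribes.
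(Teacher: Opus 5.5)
Your reduction to unit targets in Step~1 and your degree count in Step~3 are sound. They agree with the end of the paper's proof, which expands the target $-(-r)_n/\prod_h(r+\beta_h)_{m_h}$ into the simple fractions $\pi_{J,K}/(r+\xi_{J,K})$ and reads the degree bounds off the terminating upper parameter $-K$ (resp.\ $1-K$).

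The genuine gap is the central claim. Nothing in Steps~2--3 shows that the truncated block \eqref{eq:Hahn-C-hypergeometric-block}, with prefactor $\mathcal D^{(N)}_{j,J,K}$, actually realizes the moment function $1/(r+\xi_{J,K})$. You call this the main obstacle and suggest telescoping or induction on $K$, but you give no relation that could drive either, and the tools you name cannot.
\begin{itemize}
\item The identity $\frac1{r+\xi}-\frac1{r+\beta_j}=\frac{\beta_j-\xi}{(r+\xi)(r+\beta_j)}$ is a static identity in $r$; it is not induced by any operation on sequences.
\item \eqref{eq:finite-resolvent-relation} ties $v_{j,N}$ to the unshifted weight $v_{0,N}$, which is not in the system. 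It produces the pole $-\beta_j$ from a pole-free moment function, but it cannot move a pole from $-\xi$ to $-\xi-1$ inside the span of the $\fall{k}{\ell}\widetilde v_{j,N}$.
\item Checking term ratios against \eqref{eq:Hahn-C-finite-recurrence} is circular, since that recurrence is exactly what has to be derived.
\item A direct residue check at all poles $-\beta_{J'}-K'$ has no stated route.
\end{itemize}

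The missing idea is a contiguous relation in the pole parameter. Let $u_c$ be the sequence on $\{0,\ldots,N\}$ with $\sum_k\fall{k}{r}u_c(k)=h_N(r)/(r+c)$ for $0\le r\le N$. Since $k\,\fall{k}{r}=\fall{k}{r+1}+r\fall{k}{r}$, the sequence $(k+c)u_c(k)$ has normalized moments $1+\rho_N(r)/(r+c+1)$, where $\rho_N(r)=h_N(r+1)/h_N(r)=(N-r)\prod_h(r+A_h)/\prod_h(r+\beta_h)$. The partial-fraction decomposition $\rho_N(r)/(r+c+1)=-1+\sum_j\alpha_j(c)/(r+\beta_j)+\alpha_0(c)/(r+c+1)$ therefore gives
\[
 (k+c)u_c(k)=\sum_{j=1}^q\alpha_j(c)\widetilde v_{j,N}(k)+\alpha_0(c)u_{c+1}(k).
\]
This is the sequence form of the paper's Cauchy-transform recurrence \eqref{eq:Hahn-Cauchy-contiguous-recurrence}; its constant $1/c$ is just $\sum_ku_c(k)$.

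Start from $u_{\beta_J}=\widetilde v_{J,N}$ and solve for $u_{c+1}$ at $c=\beta_J+t$, $t<K$. This is legitimate because pole separation keeps the partial-fraction denominators nonzero and $\prod_{t=0}^{K-1}\alpha_0(\beta_J+t)=d_{J,K}\ne0$. If some $m_h=0$, only $K=0$ occurs and no step is needed. After $K$ steps, $u_{\xi_{J,K}}=\sum_j\mathcal C^{(N)}_{j,J,K}\widetilde v_{j,N}$ with $\mathcal C$ given by \eqref{eq:Hahn-C-finite-recurrence}. Its first term is $-\alpha_j(\xi_{J,K}-1)/\alpha_0(\xi_{J,K}-1)=\mathcal D^{(N)}_{j,J,K}$, and its consecutive-term ratio is that of the ${}_{q+2}F_{q+1}$ block.

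This is exactly the induction on $K$ you were looking for, and it makes the residue verification unnecessary: the moment function is $1/(r+\xi_{J,K})$ by construction. Your linearity and degree arguments then complete the proof.
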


\begin{proof}
If some \(m_h=0\), near-diagonality gives
\(m_j\in\{0,1\}\) for every \(j\in\{1,\ldots,q\}\). Thus every active
pole has \(K=0\), the recursive formula
\eqref{eq:triangular-B-reconstruction} reduces to
\(b_{j,0}=\pi_{j,0}\) for every \(j\in\{1,\ldots,q\}\) with \(m_j>0\).
The assertion then follows from
\(\mathcal C_{j,J,0}^{(N)}=\delta_{j,J}\) for every pair of active
indices \(j,J\). We may therefore assume for the rest of the proof that
\(m_h>0\) for every \(h\in\{1,\ldots,q\}\).
The pole-separation hypothesis now implies that
\(\beta_1,\ldots,\beta_q\) are pairwise distinct. For every
\(J\in\{1,\ldots,q\}\), every \(K\in\mathbb N\) with \(K<m_J\), and every
\(t\in\{0,\ldots,K-1\}\), it also gives, with \(c=\beta_J+t\),
\(c+1\ne\beta_j\) for every \(j\in\{1,\ldots,q\}\). Hence all
denominators in the following partial-fraction identities are nonzero.

For a sequence \(u=(u(0),\ldots,u(N))\), define its Cauchy transform by
\(\mathscr C[u](z)=\sum_{k=0}^Nu(k)/(z-k)\).
The finite Newton identity
\(1/(z-k)=\sum_{r=0}^k\fall{k}{r}/\fall{z}{r+1}\)
gives
\begin{equation}
 \mathscr C[u](z)=\sum_{r=0}^N
 \frac{\sum_{k=0}^N\fall{k}{r}u(k)}{\fall{z}{r+1}}.
 \label{eq:Cauchy-Newton-transform}
\end{equation}
For a parameter \(c\), put
\(I_c(z)=\sum_{r=0}^Nh_N(r)/((r+c)\fall{z}{r+1})\).
The factorial moments \eqref{eq:resolvent-factorial-moment} imply
\(I_{\beta_j}=\mathscr C[\widetilde v_{j,N}]\). Write
\(\rho_N(r)=h_N(r+1)/h_N(r)
=(N-r)\prod_h(r+A_h)/\prod_h(r+\beta_h)\).
Partial fractions give
\[
 \frac{\rho_N(r)}{r+c+1}
 =-1+\sum_{j=1}^q\frac{\alpha_j(c)}{r+\beta_j}
 +\frac{\alpha_0(c)}{r+c+1},
\]
where
\[
 \alpha_j(c)=
 \frac{(N+\beta_j)\prod_h(A_h-\beta_j)}
 {(c+1-\beta_j)\prod_{h\ne j}(\beta_h-\beta_j)},
 \qquad
 \alpha_0(c)=
 \frac{(N+c+1)\prod_h(A_h-c-1)}
 {\prod_h(\beta_h-c-1)}.
\]
Put \(F_r(z)=1/\fall{z}{r+1}\). Since
\((z+c)F_r(z)/(r+c)=F_r(z)+1/((r+c)\fall{z}{r})\), the value
\(h_N(0)=1\) and an index shift give
\begin{align*}
 (z+c)I_c(z)
 =\sum_{r=0}^Nh_N(r)F_r(z)+\frac1c
   +\sum_{r=0}^{N-1}
   \frac{h_N(r+1)}{(r+c+1)\fall{z}{r+1}}=\frac1c+\sum_{r=0}^Nh_N(r)
   \left(1+\frac{\rho_N(r)}{r+c+1}\right)F_r(z).
\end{align*}
In the second equality the last sum was extended to \(r=N\), which is
valid because \(\rho_N(N)=0\).
Substitution of the partial-fraction identity yields
\begin{equation}
 (z+c)I_c(z)=\frac1c+
 \sum_{j=1}^q\alpha_j(c)I_{\beta_j}(z)
 +\alpha_0(c)I_{c+1}(z).
 \label{eq:Hahn-Cauchy-contiguous-recurrence}
\end{equation}
Equivalently,
\begin{equation}
 I_{c+1}(z)=\frac{z+c}{\alpha_0(c)}I_c(z)
 -\sum_{j=1}^q\frac{\alpha_j(c)}{\alpha_0(c)}I_{\beta_j}(z)
 -\frac1{c\alpha_0(c)}.
 \label{eq:Hahn-Cauchy-iteration}
\end{equation}
For every \(J\in\{1,\ldots,q\}\) and every
\(K\in\{0,\ldots,m_J-1\}\), direct multiplication of the displayed
formula for \(\alpha_0\) gives
\(\prod_{t=0}^{K-1}\alpha_0(\beta_J+t)=d_{J,K}\). Thus the nonvanishing
hypothesis on \(d_{J,K}\) justifies every division in the iteration.

Starting with \(c=\beta_J\) and iterating
\eqref{eq:Hahn-Cauchy-iteration} gives
\(I_{\xi_{J,K}}(z)=
\sum_{j=1}^q\mathcal C_{j,J,K}^{(N)}(z)I_{\beta_j}(z)
+P_{J,K}(z)\),
where \(P_{J,K}\) is a polynomial and
\begin{equation}
 \mathcal C_{j,J,K}^{(N)}(z)=
 \delta_{j,J}\prod_{s=0}^{K-1}
 \frac{z+\beta_J+s}{\alpha_0(\beta_J+s)}-\sum_{t=0}^{K-1}
 \frac{\alpha_j(\beta_J+t)}{\alpha_0(\beta_J+t)}
 \prod_{s=t+1}^{K-1}
 \frac{z+\beta_J+s}{\alpha_0(\beta_J+s)}.
 \label{eq:Hahn-C-finite-recurrence}
\end{equation}
For \(K\ge1\), substitute the formulas for \(\alpha_j\) and
\(\alpha_0\), change the index to \(t=K-1-\ell\), and denote the
resulting summands by \(T_\ell\), with
\(\ell\in\{0,\ldots,K-1\}\). The term \(T_0\) is
\(\mathcal D_{j,J,K}^{(N)}\). If \(j\ne J\) and \(K\ge2\), consecutive
terms satisfy, for every \(\ell\in\{0,\ldots,K-2\}\),
\begin{equation}
 \frac{T_{\ell+1}}{T_\ell}=
 \frac{(1-z-\xi_{J,K}+\ell)
 \prod_{h=1}^q(\beta_h+1-\xi_{J,K}-\delta_{h,j}+\ell)}
 {(1-N-\xi_{J,K}+\ell)
 \prod_{h=1}^q(A_h+1-\xi_{J,K}+\ell)}.
 \label{eq:Hahn-C-term-ratio}
\end{equation}
If \(j=J\), let \(T_K\) denote the first product in
\eqref{eq:Hahn-C-finite-recurrence}; then
\eqref{eq:Hahn-C-term-ratio} holds for every
\(\ell\in\{0,\ldots,K-1\}\). This is precisely the consecutive-term
ratio in \eqref{eq:Hahn-C-hypergeometric-block}, because
\((1)_\ell/\ell!=1\). The upper parameter
in position \(J\) is \(-K\) when \(j=J\), and \(1-K\) otherwise; this
proves the degree bounds. Near-diagonality gives
\(K-1\le m_J-2\le m_j-1\) in the off-diagonal case.

Finally, the target rational function has the partial-fraction expansion
\[
 -\frac{(-r)_n}{\prod_h(r+\beta_h)_{m_h}}
 =\sum_{J=1}^q\sum_{K=0}^{m_J-1}
 \frac{\pi_{J,K}}{r+\xi_{J,K}}.
\]
Equations \eqref{eq:Hahn-complete-B-moments} and
\eqref{eq:Cauchy-Newton-transform} therefore give
\[
 \mathscr C[\mathcal B_{\mm,N}^{\mathrm H}](z)
 =\sum_{J=1}^q\sum_{K=0}^{m_J-1}
 \pi_{J,K}I_{\xi_{J,K}}(z)
 =\sum_{\substack{j\in\{1,\ldots,q\}\\m_j>0}}
 B_{j,N}(z)\mathscr C[\widetilde v_{j,N}](z)+P(z)
\]
for a polynomial \(P\). For every polynomial \(B\), the difference
\(B(z)\mathscr C[u](z)-\mathscr C[Bu](z)
=\sum_{k=0}^N[B(z)-B(k)]u(k)/(z-k)\) is also a polynomial. At every
\(k\in\{0,\ldots,N\}\), the residues of
\(\mathscr C[\mathcal B_{\mm,N}^{\mathrm H}]\) and
\(B_{j,N}\mathscr C[\widetilde v_{j,N}]\) are, respectively,
\(\mathcal B_{\mm,N}^{\mathrm H}(k)\) and
\(B_{j,N}(k)\widetilde v_{j,N}(k)\), while \(P\) has no residue.
Their equality proves \eqref{eq:B-component-decomposition} with the
polynomials \eqref{eq:Hahn-B-closed-components}.
\end{proof}

For every \(j\in\{1,\ldots,q\}\) with \(m_j>0\), the type-I polynomial
relative to the original weight \(v_{j,N}\), rather than the rescaled
weight, is \(B_{j,N}/\beta_j\). To normalize the factorial moment of order
\(n\) to one, multiply every such \(B_{j,N}\) by
\((-1)^{n+1}\prod_h(n+\beta_h)_{m_h}/(n!h_N(n))\).

Grouping the terms in \eqref{eq:Hahn-B-closed-components} by the index
\(J\) gives a shorter bivariate hypergeometric representation.

Under the hypotheses of Corollary~\ref{cor:Hahn-B-closed-components},
fix \(j\in\{1,\ldots,q\}\) with \(m_j>0\). For every
\(J\in\{1,\ldots,q\}\), put
\(\varepsilon_{j,J}=1-\delta_{j,J}\),
and, whenever \(m_J\ge\varepsilon_{j,J}+1\), abbreviate
\(\varepsilon=\varepsilon_{j,J}\) and define
\begin{align*}
 \boldsymbol a_{j,J}(k)
 &\coloneq{}
 \left(
  \beta_J+\varepsilon+n,\,
  \varepsilon+1-m_J,\,
  k+\beta_J+\varepsilon,\,
  \{\beta_J-\beta_h+\varepsilon+1-m_h\}_{h\ne J}
 \right),\\
 \boldsymbol b_{j,J}^{(N)}
 &\coloneq{}
 \left(
  N+\beta_J+\varepsilon,\,
  \{\beta_J-A_h+\varepsilon\}_{h=1}^q
 \right),\\
 \boldsymbol c_{j,J}^{(N)}
 &\coloneq{}
 \left(
  \beta_J+\varepsilon,\,
  N+\beta_J+\varepsilon+1,\,
  \{\beta_J-A_h+\varepsilon+1\}_{h=1}^q
 \right),\\
 \boldsymbol d_{j,J}(k)
 &\coloneq{}
 \left(
  k+\beta_J+\varepsilon,\,
  \{\beta_J-\beta_h+\varepsilon+\delta_{h,j}\}_{h\ne J}
 \right).
\end{align*}
For \(J=j\), extend \eqref{eq:Hahn-C-block-prefactor} to \(K=0\) by
\(\mathcal D_{j,j,0}^{(N)}=1\). Define
\begin{equation}
 S_{j,J}^{(N)}(k)\coloneq
 \pi_{J,\varepsilon}\mathcal D_{j,J,\varepsilon}^{(N)}
 F_{q+2:q;0}^{q+2:q+1;1}
 \left(
 \begin{matrix}
  \boldsymbol a_{j,J}(k):
  \boldsymbol b_{j,J}^{(N)};1\\
  \boldsymbol c_{j,J}^{(N)}:
  \boldsymbol d_{j,J}(k);\text{--}
 \end{matrix}
 \middle|1,1
 \right).
 \label{eq:Hahn-B-KdF-block}
\end{equation}
\begin{corollary}[Kamp\'e de F\'eriet form of the Hahn-like type-I polynomials]
\label{cor:Hahn-B-KdF}
Under the hypotheses of Corollary~\ref{cor:Hahn-B-closed-components}, for
every \(j\in\{1,\ldots,q\}\) with \(m_j>0\),
\begin{equation}
 B_{j,N}(k)=
 \sum_{\substack{J\in\{1,\ldots,q\}\\
                   m_J\ge\varepsilon_{j,J}+1}}
 S_{j,J}^{(N)}(k).
 \label{eq:Hahn-B-KdF-sum}
\end{equation}
\end{corollary}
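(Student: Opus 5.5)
Starting point: Corollary~\ref{cor:Hahn-B-closed-components}, which gives
\(B_{j,N}=\sum_J\sum_{K=0}^{m_J-1}\pi_{J,K}\mathcal C^{(N)}_{j,J,K}\). The plan is to fix \(j\) and \(J\) and show that the inner sum over \(K\) is exactly \(S^{(N)}_{j,J}\).

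Two preliminary observations fix the summation ranges. Since \(\mathcal C^{(N)}_{j,J,0}=0\) for \(j\ne J\), the inner sum effectively starts at \(K=\varepsilon=\varepsilon_{j,J}\). Consequently, when \(j\ne J\) and \(m_J=1\) the \(J\)-th block is empty, which is exactly the restriction \(m_J\ge\varepsilon+1\) in \eqref{eq:Hahn-B-KdF-sum}. We then write \(K=\varepsilon+r\) with \(0\le r\le m_J-1-\varepsilon\), and expand each \(\mathcal C^{(N)}_{j,J,K}\) by \eqref{eq:Hahn-C-hypergeometric-block} as a finite sum over \(s\), with \(0\le s\le K-\varepsilon=r\). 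This produces a double sum over \((r,s)\) with \(s\le r\). Setting \(t=r-s\) turns it into a sum over \(r',t\ge0\) with \(r=r'+t\), which is the shape of \eqref{eq:KdF-definition}. The outer variable is the residue index and the inner one is the contiguous-recurrence index. The main task is to identify which Pochhammer factors depend on \(r'+t\), which only on \(r'\), and which only on \(t\).

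The factor-by-factor computation goes as follows.
\begin{enumerate}
\item Write \(\pi_{J,\varepsilon+r}/\pi_{J,\varepsilon}\) as a product of Pochhammer ratios in \(r\). From \eqref{eq:target-residue}, \((\beta_J+K)_n\) contributes \((\beta_J+\varepsilon+n)_r/(\beta_J+\varepsilon)_r\). The factor \(1/(m_J-1-K)!\) contributes \((-1)^r(\varepsilon+1-m_J)_r\). The factors \(1/(\beta_h-\beta_J-K)_{m_h}\) contribute \((\beta_J-\beta_h+\varepsilon+1-m_h)_r/(\beta_J-\beta_h+\varepsilon)_r\) after reflection. The factor \(1/K!\) together with the signs should combine with the remaining pieces.
\item Write \(\mathcal D^{(N)}_{j,J,\varepsilon+r}/\mathcal D^{(N)}_{j,J,\varepsilon}\) from \eqref{eq:Hahn-C-block-prefactor}. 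This is a ratio of linear factors in \(\xi_{J,K}=\beta_J+\varepsilon+r\), giving terms such as \((N+\beta_J+\varepsilon)_r/(N+\beta_J+\varepsilon+1)_r\) and \((\beta_J-A_h+\varepsilon)_r/(\beta_J-A_h+\varepsilon+1)_r\).
\item Reindex the \(s\)-th term of \eqref{eq:Hahn-C-hypergeometric-block}. Its parameters \(1-k-\xi\), \(1-N-\xi\), \(\beta_h+1-\xi-\delta_{h,j}\) and \(A_h+1-\xi\) depend on \(r\) and \(s\) only through \(\xi-s\). Under the reflection \((a-r)_s=(-1)^s(1-a+r-s)_s\), they become ratios of Pochhammers in \(r'+t\) and \(r'\). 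In particular, \((1-k-\xi)_s\) becomes \((k+\beta_J+\varepsilon)_{r}/(k+\beta_J+\varepsilon)_{r'}\). The variable \(k\) should therefore enter through the upper entry \(k+\beta_J+\varepsilon\) of \(\boldsymbol a_{j,J}(k)\), coupled to \(r'+t\), and through the lower entry of \(\boldsymbol d_{j,J}(k)\), attached to \(r'\). This matches the displayed parameter strings.
\end{enumerate}
After combining, the factors indexed by \(r=r'+t\) should form \(\boldsymbol a/\boldsymbol c\), those indexed by \(r'\) should form \(\boldsymbol b/\boldsymbol d\), and the remaining factor \((1)_t/t!\) gives the \(t\)-string ``\(1;\text{--}\)''. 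The overall constant is \(\pi_{J,\varepsilon}\mathcal D^{(N)}_{j,J,\varepsilon}\), and both arguments equal \(1\).

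The main obstacle is the bookkeeping: exact cancellation of the \(K!\), sign, and \(\delta_{h,j}\) shifts, and the \(h=J\) entries that separate \(\boldsymbol d\) from \(\boldsymbol a\). In particular, the \(h=J\) Pochhammer in the block contributes \((1-r-\delta_{J,j})\)-type factors, which should reduce to the \(\varepsilon+1-m_J\) and \(\beta_J+\varepsilon\) entries. I would verify this first in the diagonal case \(j=J\) (\(\varepsilon=0\), with a diagonal term \(s=K\)) and then in the off-diagonal case (\(\varepsilon=1\), truncation at \(s=K-1\)).

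Termination also needs attention. The natural range of the Kampé de Fériet series is set by \((\varepsilon+1-m_J)_{r'+t}\), which vanishes once \(r'+t>m_J-1-\varepsilon\). This matches \(K\le m_J-1\). The truncation conventions of \eqref{eq:Hahn-C-hypergeometric-block} are respected because the ranges are fixed before any parameters are cancelled. Removable parameter coincidences are then handled by continuation of the finite recurrence \eqref{eq:Hahn-C-finite-recurrence}, as stipulated.
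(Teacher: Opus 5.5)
Your strategy is the paper's: expand each block \eqref{eq:Hahn-C-hypergeometric-block}, reparametrize the pairs \((K,s)\) via \(K=\varepsilon+r+t\), and sort the Pochhammer ratios with the shift and reflection identities, with termination coming from \(\varepsilon+1-m_J\). However, the index assignment is inverted as you state it, and that assignment is the whole content of the proof. With \(t=r-s\) and \(r=r'+t\) you have \(r'=s\), and then your \(k\)-identity is false. The correct one is
\[
 (1-k-\xi_{J,K})_s
 =(-1)^s\frac{(k+\beta_J+\varepsilon)_{K-\varepsilon}}
 {(k+\beta_J+\varepsilon)_{K-\varepsilon-s}},
\]
so the lower entry \(k+\beta_J+\varepsilon\) of \(\boldsymbol d_{j,J}(k)\) sits on \(K-\varepsilon-s\), which is your \(t\). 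The same holds for every uncoupled factor. For instance, \((1-N-\xi_{J,K})_s\) together with the factor \(1/(N+\xi_{J,K})\) of \(\mathcal D_{j,J,K}^{(N)}\) equals, up to a constant and \((-1)^s\), \((N+\beta_J+\varepsilon)_{K-\varepsilon-s}/(N+\beta_J+\varepsilon+1)_{K-\varepsilon}\). The \(A_h\)-entries behave likewise. The block index \(s\) itself carries only \((1)_s/s!=1\). Hence the Kamp\'e de F\'eriet index paired with \(\boldsymbol b_{j,J}^{(N)}\) and \(\boldsymbol d_{j,J}(k)\) must be \(K-\varepsilon-s\), and the one carrying the string \(1;\text{--}\) must be \(s\); this is the paper's substitution. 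Your \(k\)-formula, and your claim that \(k+\beta_J+\varepsilon\) is attached to \(r'\), are correct exactly when \(r'=K-\varepsilon-s\), not under \(t=r-s\).

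Two further pieces of your bookkeeping need correcting.

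First, the \(h=J\) block factor \((\varepsilon-K)_s\) does not feed the entries \(\varepsilon+1-m_J\) and \(\beta_J+\varepsilon\). Those come from \(1/(m_J-1-K)!\) and \((\beta_J+K)_n\), as your step~1 says. Instead, combine \((\varepsilon-K)_s\) with \(1/K!\) from \(\pi_{J,K}\) and, when \(j\ne J\), with the factor \(\beta_J-\xi_{J,K}=-K\) of \(\mathcal D_{j,J,K}^{(N)}\). Relative to its value at \(K=\varepsilon\), the product is \((-1)^s/(K-\varepsilon-s)!\), which is the factorial \(1/r'!\) of the uncoupled index.

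Second, for \(h\ne J\), \(\pi_{J,K}\) alone contributes \((\beta_J-\beta_h+\varepsilon+1-m_h)_{K-\varepsilon}/(\beta_J-\beta_h+\varepsilon+1)_{K-\varepsilon}\). The denominator becomes \((\beta_J-\beta_h+\varepsilon)_{K-\varepsilon}\) only after you absorb the factor \(\beta_h-\xi_{J,K}\) of \(\mathcal D_{j,J,K}^{(N)}\), which is present only for \(h\ne j\). The block factor \((\beta_h+1-\xi_{J,K}-\delta_{h,j})_s\) then cancels the coupled denominator and leaves \(\beta_J-\beta_h+\varepsilon+\delta_{h,j}\) on \(r'\). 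This asymmetry is the source of the \(\delta_{h,j}\) in \(\boldsymbol d_{j,J}(k)\).

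All signs cancel. The sign \((-1)^{K+1}\) cancels against the reflection of \((m_J-1-K)!\), and the \(2q+2\) factors \((-1)^s\) from the reflected block parameters cancel among themselves. With these fixes your computation is the paper's proof.
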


\begin{proof}
Fix \(j,J\), write \(\varepsilon=\varepsilon_{j,J}\), and expand the
generalized hypergeometric polynomial in
\eqref{eq:Hahn-C-hypergeometric-block}. Its upper parameter in position
\(J\) is
\(\beta_J+1-(\beta_J+K)-\delta_{J,j}=\varepsilon-K\).
Consequently, the change of variables \(K=\varepsilon+r+t\), with
\(r,t\ge0\) and \(r+t\le m_J-1-\varepsilon\),
parametrizes the full finite summation range. Using
\((a+s)_d/(a)_d=(a+d)_s/(a)_s\),
\((a)_d/(a-s)_d=(1-a-d)_s/(1-a)_s\), and
\((a-r-t)_t=(-1)^t(1-a)_{r+t}/(1-a)_r\), and cancelling common factors
turns its \((r,t)\)-term into
\[
 \pi_{J,\varepsilon}\mathcal D_{j,J,\varepsilon}^{(N)}
 \frac{(\boldsymbol a_{j,J}(k))_{r+t}
       (\boldsymbol b_{j,J}^{(N)})_r(1)_t}
      {(\boldsymbol c_{j,J}^{(N)})_{r+t}
       (\boldsymbol d_{j,J}(k))_r}
 \frac1{r!}\frac1{t!}.
\]
Summing over \(r,t\) and then over the nonempty \(J\)-indexed groups proves
\eqref{eq:Hahn-B-KdF-block}--\eqref{eq:Hahn-B-KdF-sum}. Termination follows
from \(\varepsilon+1-m_J=-(m_J-1-\varepsilon)\). Finally,
\((k+\beta_J+\varepsilon)_{r+t}/
(k+\beta_J+\varepsilon)_r=(k+\beta_J+\varepsilon+r)_t\), which gives the
degree bounds already proved in
Corollary~\ref{cor:Hahn-B-closed-components}.
\end{proof}

Thus each Hahn-like type-I polynomial is a sum of at most \(q\)
terminating Kamp\'e de F\'eriet polynomials. The \(J\)-indexed group is
absent when \(j\ne J\) and \(m_J=1\).

For \(q=2\), the summands in \eqref{eq:Hahn-B-KdF-sum} are
\(F_{4:2;0}^{4:3;1}\); for \(q=3\), they are
\(F_{5:3;0}^{5:4;1}\). When \(q=1\), only the diagonal summand remains and,
by scalar uniqueness, reduces, with the normalization used here, to the usual
terminating \({}_3F_2(1)\) Hahn polynomial.

The extra numerator--denominator pair in these polynomials has the following
Jacobi limit. For \(j,J\in\{1,\ldots,q\}\) with \(m_jm_J>0\) and
\(K\in\Nzero\) with \(K<m_J\), put
\[
 \mathcal D_{j,J,K}^{\mathrm J}\coloneq
 \frac{\prod_h(A_h-\beta_j)\prod_{h\ne j}(\beta_h-\xi_{J,K})}
 {\prod_{h\ne j}(\beta_h-\beta_j)\prod_h(A_h-\xi_{J,K})}.
\]
For these indices and \(0<x<1\),
\(\frac{(1-\lfloor Nx\rfloor-\xi_{J,K})_\ell}
{(1-N-\xi_{J,K})_\ell}\xrightarrow[N\to\infty]{} x^\ell\) and
\(\frac{N+\beta_j}{N+\xi_{J,K}}\xrightarrow[N\to\infty]{}1\).
Thus, if \(K\ge1\),
\[
 \mathcal C_{j,J,K}^{(N)}(\lfloor Nx\rfloor)
 \xrightarrow[N\to\infty]{}
 \mathcal D_{j,J,K}^{\mathrm J}
 \pFq{q+1}{q}
 {1,\bbeta+(1-\xi_{J,K})\one-\ee_j}
 {\A+(1-\xi_{J,K})\one}{x},
\]
with the same diagonal/off-diagonal convention at \(K=0\). This is the
corresponding Jacobi-like type-I term.

When the pole sets overlap, the reconstruction requires a different analysis.

\begin{remark}[Overlapping pole sets]
\label{rem:Hahn-overlapping-poles}
The reconstruction in Theorem~\ref{thm:Hahn-B-components} uses the simple,
pairwise distinct poles
\(\{-\beta_j-\ell:\ell\in\Nzero,\ \ell<m_j\}\), with
\(j\in\{1,\ldots,q\}\).
If \(\beta_j+\ell=\beta_h+t\) for two different pairs
\((j,\ell)\ne(h,t)\), the simple-pole reconstruction does not apply and no
normality conclusion is drawn here. This separation condition is the finite
counterpart of the nonintegrality condition on the differences of the
\(b_j\)'s used in the Jacobi-like type-I normality argument
\cite[Proposition~2.7]{Wolfs2024}.

The equality \(\beta_j=\beta_h\), with \(j\ne h\), is genuinely
degenerate when \(m_j,m_h>0\). Indeed,
\eqref{eq:Hahn-like-factorial-moments} then gives identical factorial
moments for \(v_{j,N}\) and \(v_{h,N}\) through order \(N\). Finite support
implies \(v_{j,N}=v_{h,N}\), so the type-I moment matrix has identical
columns and is singular. More general overlapping-pole cases are outside
the scope of the present reconstruction.
\end{remark}

The reconstruction gives normality under its explicit hypotheses and the classical
formulas when \(q=1\).

\begin{corollary}[Normality and the classical Hahn reduction]
\label{cor:Hahn-normality}
Let \(N\in\N\).
\begin{enumerate}[label=\textup{(\roman*)}]
\item Let \(q>1\) and let \(\mm\in\Nzero^q\) be near the diagonal, with
\(1\le\abs\mm\le N+1\). If the pole sets
\(\{-\beta_j-K:K\in\{0,\ldots,m_j-1\}\}\), indexed by
\(j\in\{1,\ldots,q\}\) with \(m_j>0\), are pairwise disjoint and all the
coefficients \(d_{J,K}\) in \eqref{eq:diagonal-residue} are nonzero, then
\(\mm\) is normal.
\item Let \(q=1\), \(m\in\{1,\ldots,N+1\}\), and \(n=m-1\). Throughout
the admissible scalar positivity region, \(m\) is normal and
\[
 B_{1,N}(k)\propto\pFq{3}{2}{-n,n+\beta,-k}{A,-N}{1}.
\]
If \(m\le N\), the value-normalized type-II polynomial is
  \[
  A^{\mathrm H}_{m,N}(k)=
  \pFq{3}{2}{-m,-k,\beta+m}{-N,A}{1}.
  \]
If \(m=N+1\), its monic replacement is the nodal polynomial
\(\widehat A^{\mathrm H}_{N+1,N}=\Pi_{N+1}\) from
\eqref{eq:finite-lattice-nodal-polynomial}; no normalization with
\(A(0)=1\) is possible.
\end{enumerate}
\end{corollary}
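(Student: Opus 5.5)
For part (i), I would derive normality from the existence statement in Theorem~\ref{thm:Hahn-B-components} together with a dimension count. The matrix \(M_{\mm}\) is square of size \(d\), so it suffices to show it is injective. Equivalently, if \(\sum_j B_j\widetilde v_{j,N}\) with \(\deg B_j<m_j\) has vanishing factorial moments of all orders \(r<d\), then all \(B_j\) vanish. I would write \(B_j=\sum_\ell b_{j,\ell}\fall{k}{\ell}\). By \eqref{eq:R-pairing}, the order-\(r\) moment equals \(h_N(r)\) times the rational function \(\Phi(r)=\sum_{j,\ell}b_{j,\ell}R^{(N)}_{j,\ell}(r)\). We know \(h_N(r)\ne0\) for \(0\le r\le n\le N\), so \(\Phi\) vanishes at \(r=0,\ldots,n\).

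Next I would use the structure of \(\Phi\) established in the proof of Theorem~\ref{thm:Hahn-B-components}. It has only simple poles, located in the disjoint sets \(\{-\beta_J-K\}\). Its total number of possible poles is \(d=\sum_J m_J\), and by \eqref{eq:R-decay} it is \(\mathrm O(r^{-1})\) at infinity. Hence \(\Phi=P/Q\) with \(Q=\prod_h(r+\beta_h)_{m_h}\) of degree \(d\) and \(\deg P\le d-1\). Since \(P\) vanishes at the \(d\) points \(0,\ldots,n\), we get \(P=0\) and so \(\Phi\equiv0\). In particular every residue of \(\Phi\) vanishes.

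Now I would run the triangular residue argument with target \(\pi_{J,K}=0\). At \(r=-\beta_J-K\), the proof of the theorem shows that only the terms with \(j=J,\ell\ge K\) or \(j\ne J,\ell\ge K+1\) contribute, and the diagonal coefficient is \(d_{J,K}\ne0\). Descending induction on \(K\), starting at \(K=\max_J m_J-1\) and handling all \(J\) at each level simultaneously, then forces \(b_{J,K}=0\) for all \(J,K\). This gives injectivity of the coefficient map, hence nonsingularity of \(M_{\mm}\) by Definition~\ref{def:normal-multi-index}. I expect the main obstacle to be the bookkeeping in this induction. The level ordering must be genuinely triangular: the off-diagonal terms \(j\ne J\) at level \(K\) may involve only \(\ell\ge K+1\), and these are already known to vanish. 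This is exactly the residue-vanishing pattern recorded in the theorem's proof.

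For part (ii), with \(q=1\) the pole set is a single set \(\{-\beta-K:0\le K<m\}\), so disjointness is vacuous. The condition \(d_{1,K}\ne0\) reads \((N+\beta+1)_K(A-\beta-K)_K\ne0\), that is, \(\beta-A=\gamma\notin\{0,\ldots,m-2\}\)-type integers. To avoid this restriction over the whole region \(\gamma>-1\), I would instead argue normality directly. The weight is positive on \(N+1\) points by Corollary~\ref{cor:q1-Hahn-weight}, and the Gram matrix of a positive definite inner product on polynomials of degree \(<m\le N+1\) is nonsingular. For the formula for \(B_{1,N}\), uniqueness reduces it to identifying one solution. I would take the classical Hahn weight and the standard fact that \(B\) is proportional to the Hahn polynomial of degree \(n\), which is orthogonal to degree \(<n\) with respect to \(v_{1,N}\), in the \({}_3F_2\) form with parameters \(\alpha_{\mathrm H}=A-1\), \(\beta_{\mathrm H}=\gamma\), and \(n+\alpha_{\mathrm H}+\beta_{\mathrm H}+1=n+\beta\). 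The type-II formula is Theorem~\ref{thm:Hahn-A} with \(q=1\). For \(m=N+1\), the nodal polynomial vanishes on the support, hence is orthogonal and monic, and normality gives uniqueness. Moreover \(\Pi_{N+1}(0)=0\), so the value normalization \(A(0)=1\) is impossible.
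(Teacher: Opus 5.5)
Your proposal is correct and follows the paper's proof essentially step for step: a null vector of the moment matrix gives a combination of the $R^{(N)}_{j,\ell}$ vanishing at $r=0,\ldots,n$; clearing the denominator $\prod_h(r+\beta_h)_{m_h}$ and using \eqref{eq:R-decay} forces the combination to vanish identically; and the descending residue triangle with $d_{J,K}\ne0$ then kills every coefficient. For $q=1$, normality comes from positivity of the Hahn weight, and the formulas from the classical Hahn and nodal-polynomial identifications, exactly as in the paper. One small slip in your aside on $q=1$: $(A-\beta-K)_K=0$ means $\gamma\in\{-1,\ldots,-K\}$, not $\gamma\in\{0,\ldots,m-2\}$, so $d_{1,K}$ never vanishes when $\gamma>-1$; this does not affect your argument, since you prove normality by positivity anyway.
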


\begin{proof}
Consider a homogeneous null vector for the type-I moment matrix. By
\eqref{eq:R-pairing}, it gives a linear combination of the rational functions
\(R_{j,\ell}^{(N)}(r)\) that vanishes for every
\(r\in\{0,\ldots,d-1\}\). After multiplication by
\(\prod_h(r+\beta_h)_{m_h}\), its numerator has degree at most \(d-1\)
by \eqref{eq:R-decay}; hence those \(d\) zeros make it identically zero.
This includes
\(d=N+1\), because the required values \(r=0,\ldots,d-1\) are then
exactly \(0,\ldots,N\), the full finite-lattice range of
\eqref{eq:R-pairing}. Descending through the
sets of poles as in \eqref{eq:triangular-B-reconstruction}, the nonzero
diagonal residues \(d_{J,K}\) force every coefficient of the null vector
to vanish. Thus the moment matrix, and therefore its transpose, is
nonsingular. For \(q=1\), the displayed formulas through degree \(N\)
are the defining \({}_3F_2\) Hahn expressions, while the endpoint
type-II statement is \eqref{eq:finite-lattice-nodal-polynomial}. At the
type-I endpoint \(m=N+1\), so that \(n=N\), the finite-sum convention and
Chu--Vandermonde give, for \(0\le k\le N\),
\[
 \pFq{3}{2}{-N,N+\beta,-k}{A,-N}{1}
 =
 \pFq{2}{1}{-k,N+\beta}{A}{1}
 =\frac{(A-N-\beta)_k}{(A)_k}.
\]
Since \(A-N-\beta=-N-\gamma\), \eqref{eq:exact-Hahn-weight} then yields
\(B_{1,N}(k)v_{1,N}(k)\propto(-1)^k\binom Nk\). Its generating
polynomial is \((1-z)^N\), which directly verifies the endpoint
type-I moment conditions. In the scalar Hahn positivity region,
normality follows
independently of the residue argument: for every nonzero polynomial
\(p\) of degree at most \(m-1\),
\(\sum_{k=0}^Np(k)^2v_{1,N}(k)>0\),
because the Hahn weight is strictly positive at all \(N+1\) lattice
points. Hence the scalar moment matrix is positive definite.
\end{proof}

Corollaries~\ref{cor:q1-Hahn-weight} and \ref{cor:Hahn-normality} give the
standard scalar Hahn weight and, through degree \(N\), the standard Hahn
polynomial formulas; the degree-\(N+1\) type-II endpoint is instead the
nodal representative \eqref{eq:finite-lattice-nodal-polynomial}. No
additional pole-separation condition is needed in the admissible scalar
parameter range.

For every \(j\in\{1,\ldots,q\}\) with \(m_j>0\), if the original weight
\(v_{j,N}\), rather than \(\widetilde v_{j,N}\), is used in
\eqref{eq:B-component-decomposition}, replace \(B_{j,N}\) by
\(B_{j,N}/\beta_j\). The type-I linear form itself is unchanged.

For one weight, positivity holds on an additional parameter range.

\begin{remark}[Scalar Hahn positivity regions]
\label{rem:q1-larger-region}
For \(q\ge2\), the conditions
\(0<A_h<\beta_h\) make all kernels in
\eqref{eq:beta-product-integral} positive. For \(q=1\), the connected
Hahn positivity region is larger: \(A>0\) and
\(\gamma=\beta-A>-1\).
Equivalently, the Hahn parameters satisfy
\(\alpha_{\mathrm H}=A-1>-1\) and \(\gamma>-1\). This is the standard
positivity region uniform in \(N\). For a fixed lattice size \(N\), there
is also the disconnected Hahn positivity region
\(\alpha_{\mathrm H}<-N\), \(\gamma<-N\),
equivalently \(A<1-N\) and \(\beta-A<-N\). The results below use the
connected region uniform in \(N\). The normalization
\(\widetilde v=v/\beta\) requires \(\beta\ne0\); at \(\beta=0\), the
identities for \(v\) and the \(A\)-polynomial remain valid, but the signed
sequence expressed with the rescaled weight \(\widetilde v=v/\beta\) is
singular. A finite normalization for the original weight \(v\) is
  \[
   \mathcal H^{\mathrm H,0}_{m,N}(z)
   \coloneq\lim_{\beta\to0}\beta\,
   \mathcal H^{\mathrm H}_{m,N}(z)
   =-\fall{N}{n}\frac{(A)_n}{(m+n-1)!}(1-z)^n
   \pFq{2}{1}{n-N,A+n}{m+n}{1-z},
   \qquad n=m-1.
  \]
Equivalently, one multiplies the signed sequence of
Theorem~\ref{thm:Hahn-complete-B} by \(\beta\) before returning to the
original weight \(v\).
\end{remark}

\subsection{Componentwise confluence to the Jacobi-like system}
\label{subsec:type-I-special-function-confluence}

The Hahn-to-Jacobi limit follows term by term from the Kamp\'e de F\'eriet formula.
The other limits require one qualification: whenever a parameter pair is
sent to infinity, the terms associated with its poles must be combined
before taking the limit.

For \(j\in\{1,\ldots,q\}\) with \(m_j>0\) and
\(J\in\{1,\ldots,q\}\), put
\(\varepsilon_{j,J}=1-\delta_{j,J}\),
\(\eta_{j,J}=\beta_J+\varepsilon_{j,J}\), and
\(M_{j,J}=m_J-1-\varepsilon_{j,J}\).
For \(M_{j,J}\ge0\), define the parameter lists
\begin{align*}
 \boldsymbol a_{j,J}^{\mathrm{Jac}}
 &\coloneq{}
 \left(
  \eta_{j,J}+n,\,-M_{j,J},\,
  (\eta_{j,J}+1-\beta_h-m_h)_{h\ne J}
 \right),\\
 \boldsymbol b_{j,J}^{\mathrm{Jac}}
 &\coloneq{}\bigl(\eta_{j,J}-A_h\bigr)_{h=1}^q,\\
 \boldsymbol c_{j,J}^{\mathrm{Jac}}
 &\coloneq{}
 \left(
  \eta_{j,J},\,
  (\eta_{j,J}+1-A_h)_{h=1}^q
 \right),\\
 \boldsymbol d_{j,J}^{\mathrm{Jac}}
 &\coloneq{}\bigl(\eta_{j,J}-\beta_h+\delta_{h,j}\bigr)_{h\ne J}.
\end{align*}
Set \(\mathcal D_{J,J}^{\mathrm{Jac}}=1\); for \(j\ne J\), set
\begin{equation}
 \mathcal D_{j,J}^{\mathrm{Jac}}
 =
 \frac{
  \prod_h(A_h-\beta_j)
  \prod_{h\ne j}(\beta_h-\beta_J-1)
 }{
  \prod_{h\ne j}(\beta_h-\beta_j)
  \prod_h(A_h-\beta_J-1)
 }.
 \label{eq:Jacobi-KdF-sector-prefactor}
\end{equation}
For \(M_{j,J}\ge0\), define
\begin{equation}
 S_{j,J}^{\mathrm{Jac}}(x)\coloneq
 \pi_{J,\varepsilon_{j,J}}\mathcal D_{j,J}^{\mathrm{Jac}}
 F_{q+1:q-1;0}^{q+1:q;1}
 \left(
 \begin{matrix}
  \boldsymbol a_{j,J}^{\mathrm{Jac}}:
  \boldsymbol b_{j,J}^{\mathrm{Jac}};1\\
  \boldsymbol c_{j,J}^{\mathrm{Jac}}:
  \boldsymbol d_{j,J}^{\mathrm{Jac}};\text{--}
 \end{matrix}
 \,;\,1,x
 \right).
 \label{eq:Jacobi-KdF-sector}
\end{equation}
When \(M_{j,J}<0\), set both
\(S_{j,J}^{(N)}=0\) and \(S_{j,J}^{\mathrm{Jac}}=0\). Finally, set
\begin{equation}
 B_j^{\mathrm{Jac}}(x)\coloneq\sum_{J=1}^qS_{j,J}^{\mathrm{Jac}}(x).
 \label{eq:Jacobi-KdF-components}
\end{equation}
For every \(j\in\{1,\ldots,q\}\) with \(m_j=0\), set
\(B_j^{\mathrm{Jac}}=0\).

Corollary~\ref{cor:Hahn-Jacobi-polynomial-confluence} treats the type-II
polynomial. We now prove the corresponding sectorwise and componentwise
type-I confluence.

\begin{proposition}[Locally uniform Hahn-to-Jacobi component convergence]
\label{prop:Hahn-Jacobi-KdF-confluence}
Assume the pole-separation conditions in the hypotheses of
Corollary~\ref{cor:Hahn-B-KdF}, and assume that all its remaining
nonvanishing conditions hold for every sufficiently large \(N\). Then, as
\(N\to\infty\), the following assertions hold.
\begin{enumerate}[label=\textnormal{(\roman*)}]
\item For every \(j\in\{1,\ldots,q\}\) with \(m_j>0\), every
\(J\in\{1,\ldots,q\}\), and every compact set
\(\mathcal K\subset(0,1)\),
\begin{equation}
 \sup_{x\in\mathcal K}\left|
 S_{j,J}^{(N)}(\lfloor Nx\rfloor)-S_{j,J}^{\mathrm{Jac}}(x)
 \right|\xrightarrow[N\to\infty]{}0.
 \label{eq:Hahn-Jacobi-KdF-sector-limit}
\end{equation}
The limiting block is terminating and satisfies
\[
 \deg S_{j,J}^{\mathrm{Jac}}
 \le M_{j,J}=m_J-1-\varepsilon_{j,J}
\]
whenever \(M_{j,J}\ge0\); for \(M_{j,J}<0\), both blocks are zero by
definition.
\item For every \(j\in\{1,\ldots,q\}\) with \(m_j>0\) and every
compact set \(\mathcal K\subset(0,1)\),
\begin{equation}
 \sup_{x\in\mathcal K}\left|
 B_{j,N}(\lfloor Nx\rfloor)-B_j^{\mathrm{Jac}}(x)
 \right|\xrightarrow[N\to\infty]{}0.
 \label{eq:Hahn-Jacobi-KdF-component-limit}
\end{equation}
and \(\deg B_j^{\mathrm{Jac}}<m_j\).
\item If the limiting type-I moment matrix is nonsingular, then
\[
 \mathcal H_{\boldsymbol m}^{\mathrm J}(x)
 =\sum_{j=1}^qB_j^{\mathrm{Jac}}(x)\widetilde\omega_j(x),
 \qquad \widetilde\omega_j=\omega_j/\beta_j.
\]
This linear form has the moments in \eqref{eq:Jacobi-B-moments},
\(\boldsymbol m\) is normal, and
\((B_1^{\mathrm{Jac}},\ldots,B_q^{\mathrm{Jac}})\) is the unique tuple of
type-I polynomials with the stated degree bounds.
Moreover,
\begin{equation}
 (N+1)\mathcal B_{\mm,N}^{\mathrm H}(\lfloor Nx\rfloor)
 \xrightarrow[N\to\infty]{}\mathcal H_{\mm}^{\mathrm J}(x)
 \label{eq:Hahn-Jacobi-form-local-limit}
\end{equation}
locally uniformly for \(x\) in compact subsets of \((0,1)\).
\end{enumerate}
\end{proposition}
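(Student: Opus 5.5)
The plan is to prove (i) by termwise passage to the limit in the finite double sum defining \(S_{j,J}^{(N)}\) in \eqref{eq:Hahn-B-KdF-block}. Since the coupled upper parameter \(\varepsilon+1-m_J=-M_{j,J}\) is a fixed nonpositive integer, the range \(r+t\le M_{j,J}\) is independent of \(N\), so it suffices to check the limit of each \((r,t)\)-term, uniformly for \(x\in\mathcal K\).

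I would take \(k=\lfloor Nx\rfloor\) and group the \(N\)- and \(k\)-dependent Pochhammers. In \(\boldsymbol a_{j,J}(k)\) the entry \((k+\beta_J+\varepsilon)_{r+t}\) sits over \((k+\beta_J+\varepsilon)_r\) from \(\boldsymbol d_{j,J}(k)\), leaving \((k+\beta_J+\varepsilon+r)_t\sim (Nx)^t\). The entry \((N+\beta_J+\varepsilon)_r\) in \(\boldsymbol b^{(N)}\), divided by \((N+\beta_J+\varepsilon+1)_{r+t}\) in \(\boldsymbol c^{(N)}\), behaves like \(N^{-t}\). Hence the \(N\)-dependence cancels to a factor converging uniformly on \(\mathcal K\) (indeed on compacts of \(\R\)) to \(x^t\). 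The prefactor \(\mathcal D_{j,J,\varepsilon}^{(N)}\) tends to \(\mathcal D_{j,J}^{\mathrm{Jac}}\) because \((N+\beta_j)/(N+\xi_{J,1})\to1\); for \(j=J\) both prefactors equal \(1\). The remaining \(N\)-free Pochhammers are exactly the strings \(\boldsymbol a^{\mathrm{Jac}},\boldsymbol b^{\mathrm{Jac}},\boldsymbol c^{\mathrm{Jac}},\boldsymbol d^{\mathrm{Jac}}\), once the \(k\)-entry and the \(N\)-entries are removed, and \(\pi_{J,\varepsilon}\) is \(N\)-independent. Thus each term converges to the corresponding term of \eqref{eq:Jacobi-KdF-sector}, with \(x\) in the \(t\)-slot. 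The degree bound \(\deg S^{\mathrm{Jac}}_{j,J}\le M_{j,J}\) follows from \(t\le r+t\le M_{j,J}\).

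The main point to verify carefully is bookkeeping. First, the lists must match after removing the \(k\)- and \(N\)-entries; this is a direct comparison of the displayed strings, where \(\beta_J-\beta_h+\varepsilon+1-m_h=\eta+1-\beta_h-m_h\), and so on. Second, no \(N\)-dependent denominator may vanish for large \(N\). That second concern is exactly what the hypothesis ``nonvanishing for sufficiently large \(N\)'' supplies, and the limiting denominators are those of the limiting block.

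For (ii), I would sum (i) over \(J\) using Corollary~\ref{cor:Hahn-B-KdF}. The bound \(\deg B_j^{\mathrm{Jac}}<m_j\) follows from \(M_{j,J}\le m_J-1-\varepsilon_{j,J}\le m_j-1\), using near-diagonality when \(J\ne j\), exactly as in Corollary~\ref{cor:Hahn-B-closed-components}.

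For (iii), I would combine \eqref{eq:Hahn-Jacobi-KdF-component-limit} with the local weight limit \eqref{eq:local-Hahn-Jacobi-limit} in the form \((N+1)\widetilde v_{j,N}(\lfloor Nx\rfloor)\to\widetilde\omega_j(x)\). This gives \((N+1)\mathcal B^{\mathrm H}_{\mm,N}(\lfloor Nx\rfloor)\to\sum_jB_j^{\mathrm{Jac}}\widetilde\omega_j\) locally uniformly on \((0,1)\).

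To identify this limit with \(\mathcal H^{\mathrm J}_{\mm}\), I would pass to moments. Corollary~\ref{cor:Hahn-Jacobi-dual} gives convergence of every moment of the push-forward signed measures to \eqref{eq:Jacobi-B-moments}. Separately, the moments of \(\sum_k B_{j,N}(k)\widetilde v_{j,N}(k)\delta_{k/N}\) converge to \(\int_0^1x^rB_j^{\mathrm{Jac}}\widetilde\omega_j\dd x\). To see this, expand \(B_{j,N}(k)\) in the scaled basis \(\fall{k}{\ell}/\fall{N}{\ell}\), whose coefficients converge by the termwise computation above. Then use \eqref{eq:falling-linearization} together with the exact factorial moments of Theorem~\ref{thm:Hahn-to-Jacobi-rows}; the leading terms survive and lower ones are \(\mathrm O(1/N)\), as in Corollary~\ref{cor:Hahn-Jacobi-polynomial-confluence}.

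Hence \(\sum_jB_j^{\mathrm{Jac}}\widetilde\omega_j\) has the moments \eqref{eq:Jacobi-B-moments}, so its first \(n\) moments vanish and its \(n\)-th moment is prescribed. Nonsingularity of the limiting moment matrix then gives normality and uniqueness of the tuple with the stated degree bounds. That tuple is therefore the one representing \(\mathcal H^{\mathrm J}_{\mm}\) in its normalization \eqref{eq:Jacobi-B-Mellin}, which yields \eqref{eq:Hahn-Jacobi-form-local-limit}.
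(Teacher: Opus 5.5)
Your proposal is correct and matches the paper's proof essentially step for step. Part (i) uses termwise passage in the finite Kamp\'e de F\'eriet sum, where the only $N$- and $k$-dependent quotient, $(k+\eta)_{r+t}(N+\eta)_r/[(k+\eta)_r(N+\eta+1)_{r+t}]$, tends to $x^t$; part (ii) sums over $J$; and part (iii) identifies the moments through Corollary~\ref{cor:Hahn-Jacobi-dual}, uses uniqueness under nonsingularity, and obtains \eqref{eq:Hahn-Jacobi-form-local-limit} from the local row limit. The only minor difference is that you read off coefficient convergence in the scaled basis $\fall{k}{\ell}/\fall{N}{\ell}$ directly from the termwise structure, whereas the paper derives it from the uniform limit by a Vandermonde interpolation at $m_j$ grid points; both routes work.
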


For every \(j\in\{1,\ldots,q\}\) with \(m_j>0\), the component relative
to the original weight \(\omega_j\) is \(B_j^{\mathrm{Jac}}/\beta_j\).

\begin{proof}
Put \(k_N=\lfloor Nx\rfloor\), and let \(r,t\) be the two summation indices in
\eqref{eq:Hahn-B-KdF-block}. The factor
\((-M_{j,J})_{r+t}\) makes the sum finite. After cancellation, the only
quotient involving both \(N\) and the lattice variable is
\[
 \frac{(k_N+\eta_{j,J})_{r+t}}
      {(k_N+\eta_{j,J})_r}
 \frac{(N+\eta_{j,J})_r}
      {(N+\eta_{j,J}+1)_{r+t}}
 \xrightarrow[N\to\infty]{}x^t.
\]
The convergence is uniform for \(x\) in each compact subset of \((0,1)\).
The prefactor tends to \eqref{eq:Jacobi-KdF-sector-prefactor}, and all
other parameters are fixed. Since the sum is finite, this proves
\eqref{eq:Hahn-Jacobi-KdF-sector-limit}. Summing over \(J\) in
\eqref{eq:Hahn-B-KdF-sum} proves
\eqref{eq:Hahn-Jacobi-KdF-component-limit}. The upper parameter
\(-M_{j,J}\) gives \(\deg S_{j,J}^{\mathrm{Jac}}\le M_{j,J}\).
If \(j=J\), then \(M_{j,J}=m_j-1\); if \(j\ne J\), near-diagonality gives
\(M_{j,J}=m_J-2\le m_j-1\). This proves the degree bound.
To make the coefficient passage explicit, put
\(\widehat B_{j,N}(x)=B_{j,N}(Nx)\). Choose \(m_j\) distinct points
\(x_1,\ldots,x_{m_j}\) in \((0,1)\). For all sufficiently large \(N\), the
grid points \(\lfloor Nx_a\rfloor/N\), \(1\le a\le m_j\), are distinct.
Equation~\eqref{eq:Hahn-Jacobi-KdF-component-limit}, evaluated at these
grid points, gives convergence of the corresponding \(m_j\) polynomial
values. The associated Vandermonde matrices converge to an invertible
matrix, so \(\widehat B_{j,N}\to B_j^{\mathrm{Jac}}\)
coefficientwise.
Together with the exact row moments in
Theorem~\ref{thm:Hahn-to-Jacobi-rows}, this permits termwise passage in
every fixed moment of the component decomposition.
Corollary~\ref{cor:Hahn-Jacobi-dual} then gives precisely
\eqref{eq:Jacobi-B-moments}. Under nonsingularity, these moments identify
\eqref{eq:Jacobi-KdF-components} as the unique tuple of type-I
polynomials. Finally,
\eqref{eq:Hahn-Jacobi-KdF-component-limit} and the local row limit
\eqref{eq:local-Hahn-Jacobi-limit}, applied to the finite component sum,
give \eqref{eq:Hahn-Jacobi-form-local-limit}.
\end{proof}

\paragraph{Classical Jacobi reduction.}
For \(q=1\), under the principal admissibility condition \(0<A<\beta\),
write \(m=m_1\ge1\) and \(n=m-1\). Then
\begin{align}
 A_m^{\mathrm J}(x)
 &=\pFq{2}{1}{-m,m+\beta}{A}{x}
 =\frac{m!}{(A)_m}
 P_m^{(A-1,\,\beta-A)}(1-2x),
 \label{eq:q1-Jacobi-A}\\
 B_1^{\mathrm{Jac}}(x)
 &=-\frac{(A)_n}{(\beta-A+1)_n}
 \pFq{2}{1}{-n,n+\beta}{A}{x}
 =-\frac{n!}{(\beta-A+1)_n}
 P_n^{(A-1,\,\beta-A)}(1-2x).
 \label{eq:q1-Jacobi-B}
\end{align}
Here \(B_1^{\mathrm{Jac}}\) is the component relative to
\(\widetilde\omega_1=\omega_1/\beta\); relative to the original
probability weight it is \(B_1^{\mathrm{Jac}}/\beta\). Thus the two
polynomials reduce to the ordinary Jacobi polynomials of degrees \(m\) and
\(m-1\), respectively. Positivity of the scalar Jacobi weight also proves
normality throughout the admissible scalar parameter range.

\section{Finite reconstruction of the unreflected type-I components}
\label{sec:explicit-unreflected-B}

\subsection{Rational moment reconstruction}
\label{subsec:unreflected-rational-reconstruction}

We establish first the finite algebraic reconstruction shared by the
Kravchuk-like, Meixner-II-like, and Charlier-II-like families.  Separating
the common argument from the three applications avoids repetition.  The formulas
contain only terminating generalized hypergeometric polynomials and finite
Horn sums.

The families use different weight normalizations. Fix a near-diagonal
multi-index \(\mm\) with \(\abs\mm\ge1\); in the Kravchuk case assume
also \(\abs\mm\le N+1\). Throughout this subsection,
\(X\in\{\mathrm K,\mathrm{MII},\mathrm{CII}\}\),
\(j\in\{1,\ldots,q\}\), and \(k,r,\ell,u\in\Nzero\), unless a smaller
range is displayed. Here
\(D_j\) denotes a component relative to a rescaled weight in the
Kravchuk-, Meixner-II-, Charlier-II-, and Laguerre-I-like formulas; the
conversion to the original weight is stated in each case. The symbols
\(B_j\) in the Hahn-, Meixner-I-, Charlier-I-, and Jacobi-like formulas
retain the normalizations fixed in the preceding sections. Put
\(n=\sum_{j=1}^q m_j-1\),
\(D_-(r)=\prod_{h=1}^{q-1}(r+\beta_h)_{m_h}\), and
\(\mathcal R_{\mm}(r)=-(-r)_n/D_-(r)\).
Here \(v_j^X\) and \(\mathcal B_{\mm}^X\) denote the weights and signed
sequences specified in Theorems~\ref{thm:K-rows}--\ref{thm:K-forms},
\ref{thm:MII-rows}--\ref{thm:MII-forms}, and
\ref{thm:CII-system}, respectively. For the
Kravchuk-like family, suppress the fixed lattice size by writing
\(v_j^{\mathrm K}\coloneq v_{j,N}^{\mathrm K}\) and
\(\mathcal B_{\mm}^{\mathrm K}\coloneq\mathcal B_{\mm,N}^{\mathrm K}\), and
extend both sequences by zero to \(k>N\).
For each \(X\in\{\mathrm K,\mathrm{MII},\mathrm{CII}\}\), define the
rescaled weights
\begin{equation}
 w_j^X=\frac{v_j^X}{\beta_j}\quad
 (j\in\{1,\ldots,q-1\}),
 \qquad w_q^X=v_q^X.
 \label{eq:unreflected-canonical-rows}
\end{equation}
Their factorial moments are
\begin{equation}
 \sum_{k\ge0}\fall{k}{r}w_j^X(k)=
 \begin{cases}
 h_X(r)/(r+\beta_j),&j\in\{1,\ldots,q-1\},\\
 h_X(r),&j=q,
 \end{cases}
 \label{eq:unreflected-canonical-moments}
\end{equation}
where
\(h_{\mathrm K}(r)=\fall{N}{r}c^r(\A_{<q})_r/(\bbeta_{<q})_r\),
\(h_{\mathrm{MII}}(r)=\tau^r(\A)_r/(\bbeta_{<q})_r\), and
\(h_{\mathrm{CII}}(r)=a^r(\A_{<q})_r/(\bbeta_{<q})_r\).
Whenever a normalized identity below contains \(h_{\mathrm K}(r)^{-1}\),
it is asserted for \(r\in\{0,\ldots,N\}\). This is the entire range needed
later, since \(n\le N\); the corresponding unnormalized moment identities
remain valid for every \(r\in\Nzero\).

The following rational functions encode the factorial moments of polynomial
multiples of these weights:
\begin{align}
 R_{j,\ell}^X(r)
 &=\sum_{u=0}^{\ell}\binom\ell u\fall{r}{\ell-u}
   \frac{\Theta_u^X(r)}{r+\beta_j+u},&&
   j\in\{1,\ldots,q-1\},
 \label{eq:unreflected-R-finite-row}\\
 R_{q,\ell}^X(r)
 &=\sum_{u=0}^{\ell}\binom\ell u\fall{r}{\ell-u}
   \Theta_u^X(r),
 \label{eq:unreflected-R-base-row}
\end{align}
with
\(\Theta_u^{\mathrm K}(r)=c^u\fall{N-r}{u}
(r+\A_{<q})_u/(r+\bbeta_{<q})_u\),
\(\Theta_u^{\mathrm{MII}}(r)=\tau^u(r+\A)_u/(r+\bbeta_{<q})_u\), and
\(\Theta_u^{\mathrm{CII}}(r)=a^u(r+\A_{<q})_u/(r+\bbeta_{<q})_u\).
Thus
\begin{equation}
 \frac1{h_X(r)}\sum_{k\ge0}\fall{k}{r}\fall{k}{\ell}w_j^X(k)
 =R_{j,\ell}^X(r).
 \label{eq:unreflected-R-pairing}
\end{equation}
For example, in the Meixner-II-like case these functions are
\begin{align*}
 R_{j,\ell}^{\mathrm{MII}}(r)
 &=\frac{\fall{r}{\ell}}{r+\beta_j}
 \pFq{q+1}{q}
 {-\ell,r+\A}
 {r-\ell+1,r+\beta_j+1,r+\bbeta_{<q}^{*j}}{-\tau},
 &&j\in\{1,\ldots,q-1\},\\
 R_{q,\ell}^{\mathrm{MII}}(r)
 &=\fall{r}{\ell}
 \pFq{q+1}{q}
 {-\ell,r+\A}
 {r-\ell+1,r+\bbeta_{<q}}{-\tau}.
\end{align*}
The corresponding Kravchuk formulas are obtained by adjoining the upper
parameter \(r-N\) and replacing \(-\tau\) by \(c\); the
Charlier-II-like formulas are obtained by deleting \(A_q\) and
replacing \(-\tau\) by \(-a\).  The finite sums
\eqref{eq:unreflected-R-finite-row}--\eqref{eq:unreflected-R-base-row}
define them at removable exceptional values.

\subsubsection{The partial-fraction contributions}

Let \(J\in\{1,\ldots,q-1\}\), \(K\in\Nzero\) with \(K<m_J\), and
\(j\in\{1,\ldots,q\}\), and put
\(\xi_{J,K}=\beta_J+K\) and
\(\varepsilon_{j,J}=1-\delta_{j,J}\).
Write \(\ee_j^-\) for the \(j\)-th coordinate vector in
\(\mathbb R^{q-1}\) when \(j\in\{1,\ldots,q-1\}\), and put
\(\ee_q^-=\boldsymbol0\).
Set \(\mathcal C_{j,J,K}^X=0\) when
\(K<\varepsilon_{j,J}\).  Otherwise define
\begin{align}
 \mathcal C_{j,J,K}^{\mathrm K}(k)
 &=\Delta_{j,J,K}^{\mathrm K}
 \pFq{q+1}{q}
 {1,1-k-\xi_{J,K},
  \bbeta_{<q}+(1-\xi_{J,K})\one_{q-1}-\ee_j^-}
 {1-N-\xi_{J,K},
  \A_{<q}+(1-\xi_{J,K})\one_{q-1}}{c^{-1}},
 \label{eq:K-finite-pole-block}\\
 \mathcal C_{j,J,K}^{\mathrm{MII}}(k)
 &=\Delta_{j,J,K}^{\mathrm{MII}}
 \pFq{q+1}{q}
 {1,1-k-\xi_{J,K},
  \bbeta_{<q}+(1-\xi_{J,K})\one_{q-1}-\ee_j^-}
 {\A+(1-\xi_{J,K})\one_q}{-\tau^{-1}},
 \label{eq:MII-finite-pole-block}\\
 \mathcal C_{j,J,K}^{\mathrm{CII}}(k)
 &=\Delta_{j,J,K}^{\mathrm{CII}}
 \pFq{q+1}{q-1}
 {1,1-k-\xi_{J,K},
  \bbeta_{<q}+(1-\xi_{J,K})\one_{q-1}-\ee_j^-}
 {\A_{<q}+(1-\xi_{J,K})\one_{q-1}}{-a^{-1}}.
 \label{eq:CII-finite-pole-block}
\end{align}
When the blocks are nonzero, all three hypergeometric series terminate. As
polynomials in \(k\), their degrees are at most \(K\) for \(j=J\) and at
most \(K-1\) for \(j\ne J\). Their prefactors are, for
\(j\in\{1,\ldots,q-1\}\),
\begin{align}
 \Delta_{j,J,K}^{\mathrm K}
 &=\frac{(N+\beta_j)
   \prod_{h=1}^{q-1}(A_h-\beta_j)
   \prod_{\substack{1\le h\le q-1\\h\ne j}}(\beta_h-\xi_{J,K})}
  {(N+\xi_{J,K})
   \prod_{\substack{1\le h\le q-1\\h\ne j}}(\beta_h-\beta_j)
   \prod_{h=1}^{q-1}(A_h-\xi_{J,K})},
 \label{eq:K-finite-pole-prefactor-row}\\
 \Delta_{j,J,K}^{\mathrm{MII}}
 &=\frac{\prod_{h=1}^{q}(A_h-\beta_j)
   \prod_{\substack{1\le h\le q-1\\h\ne j}}(\beta_h-\xi_{J,K})}
  {\prod_{\substack{1\le h\le q-1\\h\ne j}}(\beta_h-\beta_j)
   \prod_{h=1}^{q}(A_h-\xi_{J,K})},
 \label{eq:MII-finite-pole-prefactor-row}\\
 \Delta_{j,J,K}^{\mathrm{CII}}
 &=\frac{\prod_{h=1}^{q-1}(A_h-\beta_j)
   \prod_{\substack{1\le h\le q-1\\h\ne j}}(\beta_h-\xi_{J,K})}
  {\prod_{\substack{1\le h\le q-1\\h\ne j}}(\beta_h-\beta_j)
   \prod_{h=1}^{q-1}(A_h-\xi_{J,K})}.
 \label{eq:CII-finite-pole-prefactor-row}
\end{align}
For the \(q\)-th weight they are
\begin{align}
 \Delta_{q,J,K}^{\mathrm K}
 &=(1-c^{-1})
   \frac{\prod_{h=1}^{q-1}(\beta_h-\xi_{J,K})}
   {(N+\xi_{J,K})\prod_{h=1}^{q-1}(A_h-\xi_{J,K})},
 \label{eq:K-finite-pole-prefactor-base}\\
 \Delta_{q,J,K}^{\mathrm{MII}}
 &=-\frac{1+\tau}{\tau}
   \frac{\prod_{h=1}^{q-1}(\beta_h-\xi_{J,K})}
   {\prod_{h=1}^{q}(A_h-\xi_{J,K})},
 \label{eq:MII-finite-pole-prefactor-base}\\
 \Delta_{q,J,K}^{\mathrm{CII}}
 &=-\frac1a
   \frac{\prod_{h=1}^{q-1}(\beta_h-\xi_{J,K})}
   {\prod_{h=1}^{q-1}(A_h-\xi_{J,K})}.
 \label{eq:CII-finite-pole-prefactor-base}
\end{align}
In the diagonal case \(j=J\) and \(K=0\), the prefactor is interpreted by
continuation as one, while the hypergeometric expression is its
degree-zero finite sum and hence also equals one. For every
\(J\in\{1,\ldots,q-1\}\) and \(K\in\Nzero\) with \(K<m_J\), these
polynomials satisfy
\begin{equation}
 \frac1{r+\xi_{J,K}}
 =\sum_{j=1}^q\frac1{h_X(r)}
   \sum_{k\ge0}\fall{k}{r}\mathcal C_{j,J,K}^X(k)w_j^X(k).
 \label{eq:finite-pole-block-pairing}
\end{equation}

For every \(J\in\{1,\ldots,q-1\}\) and \(K\in\Nzero\) with
\(K<m_J\), the coefficients of the simple-pole part of
\(\mathcal R_{\mm}\) are
\begin{equation}
 \pi_{J,K}^{-}=
 \frac{(-1)^{K+1}(\beta_J+K)_n}
 {K!(m_J-1-K)!
  \prod_{\substack{1\le h\le q-1\\h\ne J}}
  (\beta_h-\beta_J-K)_{m_h}}.
 \label{eq:unreflected-finite-pole-coefficient}
\end{equation}

\subsubsection{The polynomial contribution at infinity}

For a scalar \(x\) and \(u\in\Nzero\), let
\([x]_u=(x,x+1,\ldots,x+u-1)\), with \([x]_0\) empty, and put
\(\mathsf f_0\) equal to the empty list and
\(\mathsf f_s=(0,-1,\ldots,1-s)\) for \(s\ge1\). Repeated entries are
retained. For finite lists
\(\boldsymbol\xi=(\xi_1,\ldots,\xi_p)\) and
\(\boldsymbol\eta=(\eta_1,\ldots,\eta_t)\), and for
\(\nu\in\mathbb Z\), define
\begin{equation}
 \mathfrak h_\nu(\boldsymbol\xi;\boldsymbol\eta)
 =[z^\nu]\frac{\prod_{\xi\in\boldsymbol\xi}(1+\xi z)}
 {\prod_{\eta\in\boldsymbol\eta}(1+\eta z)},
 \qquad \mathfrak h_\nu=0\quad(\nu<0).
 \label{eq:unreflected-Horn-coefficient}
\end{equation}
Equivalently,
\begin{equation}
 \mathfrak h_\nu(\boldsymbol\xi;\boldsymbol\eta)
 =\sum_{\substack{\boldsymbol\epsilon\in\{0,1\}^{p},
                   \,\boldsymbol\kappa\in\Nzero^{t}\\
                   \sum_{i=1}^p\epsilon_i+\sum_{i=1}^t\kappa_i=\nu}}
 (-1)^{\sum_{i=1}^t\kappa_i}
 \prod_{i=1}^{p}\xi_i^{\epsilon_i}
 \prod_{i=1}^{t}\eta_i^{\kappa_i}.
 \label{eq:unreflected-Horn-finite-sum}
\end{equation}
Thus \(\mathfrak h_\nu\) is a terminating multivariate Horn sum.

For \(\ell\in\Nzero\) and \(u\in\{0,\ldots,\ell\}\), introduce
\begin{align*}
 \boldsymbol\Xi_{\ell,u}^{\mathrm{MII}}
 &=(\mathsf f_{\ell-u},[A_1]_u,\ldots,[A_q]_u),
 &\sigma_u^{\mathrm{MII}}&=\tau^u,\\
 \boldsymbol\Xi_{\ell,u}^{\mathrm{CII}}
 &=(\mathsf f_{\ell-u},[A_1]_u,\ldots,[A_{q-1}]_u),
 &\sigma_u^{\mathrm{CII}}&=a^u,\\
 \boldsymbol\Xi_{\ell,u}^{\mathrm K}
 &=(\mathsf f_{\ell-u},[-N]_u,[A_1]_u,\ldots,[A_{q-1}]_u),
 &\sigma_u^{\mathrm K}&=(-c)^u,\\
 \boldsymbol\eta_u&=([\beta_1]_u,\ldots,[\beta_{q-1}]_u).
\end{align*}
Here parentheses denote concatenation of the displayed finite lists. Set
\(d_{\ell,u}^{\mathrm K}=d_{\ell,u}^{\mathrm{MII}}=\ell\) and
\(d_{\ell,u}^{\mathrm{CII}}=\ell-u\).
For \(K,\ell\in\Nzero\), the coefficient of \(r^K\) in the polynomial part at infinity of
\(R_{q,\ell}^X(r)\) is the explicit Horn sum
\begin{equation}
 p_{K,\ell}^X=
 \sum_{u=0}^{\ell}\binom\ell u\sigma_u^X
 \mathfrak h_{d_{\ell,u}^X-K}
 (\boldsymbol\Xi_{\ell,u}^X;\boldsymbol\eta_u).
 \label{eq:unreflected-infinity-transition}
\end{equation}
It is upper triangular in the sense that \(p_{K,\ell}^X=0\) for
\(K>\ell\), and
\begin{equation}
 p_{\ell,\ell}^{\mathrm K}=(1-c)^\ell,
 \qquad
 p_{\ell,\ell}^{\mathrm{MII}}=(1+\tau)^\ell,
 \qquad
 p_{\ell,\ell}^{\mathrm{CII}}=1.
 \label{eq:unreflected-infinity-pivots}
\end{equation}

For the residues of \(R_{q,\ell}^X\) at the finite poles, let
\(J\in\{1,\ldots,q-1\}\), \(K\in\Nzero\) with \(K<m_J\), and
\(u,\ell\in\Nzero\), and put
\begin{align*}
 W_{J,K,u}^{\mathrm K}
 &=c^u\fall{N+\beta_J+K}{u}
   \prod_{g=1}^{q-1}(A_g-\beta_J-K)_u,\\
 W_{J,K,u}^{\mathrm{MII}}
 &=\tau^u\prod_{g=1}^{q}(A_g-\beta_J-K)_u,\\
 W_{J,K,u}^{\mathrm{CII}}
 &=a^u\prod_{g=1}^{q-1}(A_g-\beta_J-K)_u,\\
 Q_{J,K}(u)
 &=\prod_{\substack{1\le h\le q-1\\h\ne J}}
   (\beta_h-\beta_J-K)_u.
\end{align*}
Then
\begin{equation}
 z_{J,K,\ell}^X=
 \sum_{u=K+1}^{\ell}\binom\ell u
 \fall{-\beta_J-K}{\ell-u}
 \frac{W_{J,K,u}^X}
 {(-1)^K K!(u-K-1)!Q_{J,K}(u)}
 \label{eq:unreflected-base-residue}
\end{equation}
is \(\Res_{r=-\beta_J-K}R_{q,\ell}^X(r)\); an empty sum is zero.
Consequently, the component vector
\begin{equation}
 \mathcal E_{j,\ell}^X(k)
 =\delta_{j,q}\fall{k}{\ell}
 -\sum_{J=1}^{q-1}\sum_{K=0}^{\min(m_J-1,\ell-1)}
 z_{J,K,\ell}^X\mathcal C_{j,J,K}^X(k)
 \label{eq:unreflected-corrected-infinity-vector}
\end{equation}
has the normalized factorial-moment identity
\begin{equation}
 \sum_{j=1}^q\frac1{h_X(r)}
 \sum_{k\ge0}\fall{k}{r}\mathcal E_{j,\ell}^X(k)w_j^X(k)
 =\sum_{K=0}^{\ell}p_{K,\ell}^Xr^K.
 \label{eq:unreflected-corrected-infinity-pairing}
\end{equation}

The polynomial part of \(\mathcal R_{\mm}\) is
\(\sum_{K=0}^{m_q-1}\pi_{\infty,K}r^K\), where, for every
\(K\in\Nzero\) with \(K<m_q\),
\begin{equation}
 \pi_{\infty,K}=(-1)^{n+1}
 \mathfrak h_{m_q-1-K}
 \left(\mathsf f_n;
 ([\beta_1]_{m_1},\ldots,[\beta_{q-1}]_{m_{q-1}})\right).
 \label{eq:unreflected-target-infinity-coefficients}
\end{equation}
For \(\ell\in\Nzero\) with \(\ell<m_q\), the triangular system
\eqref{eq:unreflected-corrected-infinity-pairing} has the explicit inverse
\begin{equation}
 \gamma_\ell^X=
 \sum_{s=0}^{m_q-1-\ell}(-1)^s
 \sum_{\ell=\ell_0<\ell_1<\cdots<\ell_s\le m_q-1}
 \frac{\pi_{\infty,\ell_s}}{p_{\ell_s,\ell_s}^X}
 \prod_{t=0}^{s-1}
 \frac{p_{\ell_t,\ell_{t+1}}^X}
      {p_{\ell_t,\ell_t}^X}.
 \label{eq:unreflected-infinity-chain}
\end{equation}
For \(s=0\), the inner sum contains only \(\ell_0=\ell\) and the product is
one.  Formula \eqref{eq:unreflected-infinity-chain} is therefore a finite
sum of products of the terminating Horn polynomials
\eqref{eq:unreflected-infinity-transition}.

For every \(X\in\{\mathrm K,\mathrm{MII},\mathrm{CII}\}\) and
\(j\in\{1,\ldots,q\}\), set
\begin{equation}
 D_j^X(k)=
 \sum_{J=1}^{q-1}\sum_{K=0}^{m_J-1}
 \pi_{J,K}^{-}\mathcal C_{j,J,K}^X(k)
 +\sum_{\ell=0}^{m_q-1}
 \gamma_\ell^X\mathcal E_{j,\ell}^X(k).
 \label{eq:unreflected-explicit-components}
\end{equation}
For the original weights, set
\begin{equation}
 B_j^X=D_j^X/\beta_j,\quad j\in\{1,\ldots,q-1\},
 \qquad B_q^X=D_q^X.
 \label{eq:unreflected-probability-components}
\end{equation}

\begin{theorem}[Kravchuk-, Meixner-II-, and Charlier-II-like type-I components]
\label{thm:explicit-unreflected-B}
Let \(\mm\) be near the diagonal with \(\abs\mm\ge1\), and set
\(n=\abs\mm-1\). In the Kravchuk case assume
\(\sum_{j=1}^q m_j\le N+1\). Assume that the sets
\(\{-\beta_J-K:K\in\Nzero,\ K<m_J\}\), indexed by
\(J\in\{1,\ldots,q-1\}\), are pairwise disjoint and that
the denominators in
\eqref{eq:K-finite-pole-block}--\eqref{eq:CII-finite-pole-prefactor-base},
\eqref{eq:unreflected-finite-pole-coefficient}, and
\eqref{eq:unreflected-base-residue} do not vanish at any of their indicated
indices, apart from the removable diagonal cases with \(K=0\). Then, for
every \(X\in\{\mathrm K,\mathrm{MII},\mathrm{CII}\}\),
\(\deg D_j^X<m_j\) for every
\(j\in\{1,\ldots,q\}\) with \(m_j>0\), and
\begin{equation}
 \mathcal B_{\mm}^X(k)=\sum_{j=1}^qD_j^X(k)w_j^X(k),
 \qquad k\in\Nzero.
 \label{eq:unreflected-component-decomposition}
\end{equation}
For every \(r\in\Nzero\), with the additional restriction \(r\le N\)
when \(X=\mathrm K\),
\begin{equation}
 \sum_{k\ge0}r!\binom{k}{r}\mathcal B_{\mm}^X(k)
 =-h_X(r)\frac{(-r)_n}{D_-(r)}.
 \label{eq:unreflected-complete-moments}
\end{equation}
The moments below order \(n\) vanish, whereas the right-hand side of
\eqref{eq:unreflected-complete-moments} at \(r=n\) is nonzero. Moreover,
the type-I moment matrix is nonsingular. Consequently, \(\mm\) is normal
in the sense of Definition~\ref{def:normal-multi-index}, and
\((D_1^X,\ldots,D_q^X)\) is the unique tuple of type-I polynomials with
the stated degree bounds relative to the rescaled weights
\eqref{eq:unreflected-canonical-rows}. The components relative to the
original weights are given by \eqref{eq:unreflected-probability-components}.
\end{theorem}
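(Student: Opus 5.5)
The plan is to copy the Hahn-like argument of Theorem~\ref{thm:Hahn-B-components} and Corollary~\ref{cor:Hahn-B-closed-components}, with one new ingredient: the \(q\)-th weight has no resolvent factor, so the target rational function \(\mathcal R_{\mm}\) now has a polynomial part at infinity as well as simple poles. I will take the moment formula \eqref{eq:unreflected-complete-moments} for \(\mathcal B_{\mm}^X\) from its defining generating function, exactly as in Theorem~\ref{thm:Hahn-complete-B}: expand in powers of \(1-z\) and read off derivatives at \(z=1\). This directly gives vanishing below order \(n\) and a nonzero moment at \(r=n\), because \(D_-(n)\ne0\) by the pole-separation hypothesis.

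The decomposition \eqref{eq:unreflected-component-decomposition} reduces, via \eqref{eq:unreflected-R-pairing} and finite support (or moment determinacy in the infinite-lattice cases), to a rational identity in \(r\). By \eqref{eq:unreflected-R-pairing}, the normalized moments of \(\sum_jD_j^Xw_j^X\) equal \(\sum_{j,\ell}(\text{coeff})R_{j,\ell}^X(r)\), so it suffices to show this equals \(\mathcal R_{\mm}(r)\). I split \(\mathcal R_{\mm}\) by partial fractions into \(\sum_{J<q,K}\pi^-_{J,K}/(r+\xi_{J,K})\) plus the polynomial \(\sum_K\pi_{\infty,K}r^K\).

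Each summand is then produced separately.

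1. \textbf{Finite poles.} Identity \eqref{eq:finite-pole-block-pairing} realizes \(1/(r+\xi_{J,K})\) by the blocks \(\mathcal C^X_{j,J,K}\). I will prove it by the Cauchy--Newton contiguous recurrence used for \eqref{eq:Hahn-Cauchy-iteration}. The only change is the ratio \(\rho_X(r)=h_X(r+1)/h_X(r)\), whose partial fraction expansion now has no \(\beta_q\) pole. Its polynomial part, a constant when the \(r\)-degrees balance, produces the extra \(I\)-term attached to the base weight \(w_q\). This is where the prefactors \(\Delta^X_{q,J,K}\), with their factors \(1-c^{-1}\), \(-(1+\tau)/\tau\) and \(-1/a\), come from. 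Summing the recurrence gives the terminating blocks \eqref{eq:K-finite-pole-block}--\eqref{eq:CII-finite-pole-block}, with the stated degrees.

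2. \textbf{Polynomial part.} For the base row, \(R^X_{q,\ell}\) equals a polynomial plus simple poles at \(-\xi_{J,K}\). Its residues are \(z^X_{J,K,\ell}\), computed termwise from \eqref{eq:unreflected-R-base-row}: only \(u\ge K+1\) contributes. Its polynomial coefficients \(p^X_{K,\ell}\) are obtained by expanding each Pochhammer quotient at infinity in \(1/r\), which yields the Horn coefficients \(\mathfrak h\). Subtracting the pole part with step 1 gives \eqref{eq:unreflected-corrected-infinity-pairing}. The coefficient \(p^X_{K,\ell}\) vanishes for \(K>\ell\) by degree counting, and its leading value \eqref{eq:unreflected-infinity-pivots} is the sum \(\sum_u\binom\ell u\sigma_u\) times leading factors, e.g. \((1+\tau)^\ell\). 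The triangular system is then inverted by the Neumann-type chain \eqref{eq:unreflected-infinity-chain}.

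Combining steps 1 and 2 in \eqref{eq:unreflected-explicit-components} reproduces \(\mathcal R_{\mm}\) exactly, so \eqref{eq:unreflected-component-decomposition} holds. For the degree bounds, blocks with \(j\ne J\) have degree at most \(K-1\le m_J-2\le m_j-1\) by near-diagonality. The \(\mathcal E\) vectors contribute degree \(\ell<m_q\) to \(D_q\), and lower-degree blocks elsewhere with \(K\le\ell-1\le m_q-2\le m_j-1\).

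For normality I argue as in Corollary~\ref{cor:Hahn-normality}. A null vector gives a combination \(\sum b_{j,\ell}R^X_{j,\ell}(r)\) vanishing at \(r=0,\ldots,d-1\). After multiplying by \(D_-(r)\), the result is a polynomial of degree at most \(d-1\); this follows from the decay \(R_{j,\ell}=\mathrm O(r^{\ell-1})\) for \(j<q\) and \(\mathrm O(r^\ell)\) for \(j=q\), via Lemma~\ref{lem:alternating-Pochhammer-estimate}. Having \(d\) zeros, it vanishes identically. Descending residues then kill the finite-pole coefficients, using the diagonal residues (nonzero by hypothesis). The triangular infinity system with nonzero pivots \eqref{eq:unreflected-infinity-pivots} kills the rest.

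The main obstacle is the degree count in this last step and the exactness of the contiguous recurrence in the \(q\)-th weight. Cancellation in the alternating sums must bring the polynomial part of \(R_{q,\ell}\) down to degree exactly \(\ell\), with the stated pivot. I would check this first in the Charlier-II case, where \(d_{\ell,u}=\ell-u\) marks the different balance, and then transfer it.
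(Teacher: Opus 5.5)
Your proposal is correct in outline and mostly follows the paper. Two steps take a different route, and one of them needs tightening.

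\textbf{What matches.} You use the same split of $\mathcal R_{\mm}$ into the fractions $\pi^-_{J,K}/(r+\xi_{J,K})$ and the polynomial part $\sum_K\pi_{\infty,K}r^K$. The residues $z^X_{J,K,\ell}$, Horn coefficients $p^X_{K,\ell}$, pivots, chain inversion and degree bookkeeping are the same. Your identification of sequences (finite support for Kravchuk, analyticity of the generating functions at $z=1$ otherwise) is essentially the paper's.

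\textbf{First difference: the finite-pole identity.} You prove \eqref{eq:finite-pole-block-pairing} by rerunning the Cauchy--Newton recurrence with $\rho_X(r)=h_X(r+1)/h_X(r)$. The paper instead proves \eqref{eq:unreflected-Hahn-pairing} once for general Hahn parameters. It then passes to the limit along \eqref{eq:K-scaling}, \eqref{eq:MII-scaling} and \eqref{eq:CII-scaling}, which is legitimate because each block terminates and each moment identity involves finitely many factorial moments.

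Your route is self-contained and explains where the base-row term comes from. The surviving constant $1+\lim_{r\to\infty}\rho_X(r)/r$ equals $1-c$, $1+\tau$ and $1$, whereas in the Hahn case it is $0$. The paper's route avoids infinite Newton series altogether, and it yields at the same time the blockwise convergence used in Corollary~\ref{cor:unreflected-component-limits}.

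For $X=\mathrm{MII}$ your route needs a repair. Since $h_{\mathrm{MII}}(r)$ grows like $\tau^r r!$, the Newton series $\sum_r h_{\mathrm{MII}}(r)/((r+\xi)\fall{z}{r+1})$ diverges for $\tau>1$. Read the recurrence as an identity of formal Newton series, and compare coefficients rather than taking residues at lattice points. Multiplication by $z$ replaces the coefficients $(a_r)$ by $(a_{r+1}+ra_r)$, up to the polynomial term $a_0$. This is exactly the effect on factorial moments of multiplying a sequence by $k$.

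\textbf{Second difference: normality, where your sketch has an ordering flaw.} The paper observes that the $n+1$ vectors $\mathcal C^X_{\cdot,J,K}$ and $\mathcal E^X_{\cdot,\ell}$, which you already constructed, have linearly independent moment functions. Hence they form a basis of the component space, and this decouples the poles from the polynomial part.

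In your elimination the two groups of equations are coupled:
\begin{itemize}
\item $R^X_{q,\ell}$ has the residues $z^X_{J,K,\ell}$ at the finite poles with $K<\ell$;
\item $R^X_{j,\ell}$ with $j<q$ has a polynomial part of degree up to $\ell-1$.
\end{itemize}
So you cannot first kill all finite-pole coefficients and only afterwards the $b_{q,\ell}$. Either use the basis argument, or run a single descending induction on the second index. At level $K$, use both the residues at $-\beta_J-K$ and the coefficient of $r^K$. That system is triangular, with pivots the diagonal residues of $R^X_{J,K}$ and $p^X_{K,K}$.

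\textbf{Minor points.} The step you flag as the main obstacle is not one. Each summand of $R^X_{j,\ell}$ is already $\mathrm O(r^{\ell-1})$ for $j<q$ and $\mathrm O(r^{\ell})$ for $j=q$, with top coefficients summing to \eqref{eq:unreflected-infinity-pivots}. So Lemma~\ref{lem:alternating-Pochhammer-estimate}, which the Hahn case needs for the bound $\mathrm O(r^{-1})$, plays no role here. Also, $D_-(n)\ne0$ comes from $\beta_h>0$, not from pole separation.
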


\begin{proof}
We first derive the three normalized factorial-moment functions. The falling
factorial product identity
\(\fall{k}{r}\fall{k}{\ell}
=\sum_{u=0}^{\ell}\binom\ell u
\fall{r}{\ell-u}\fall{k}{r+u}\)
contains only \(\ell+1\) terms. The required moment quotients are
\(h_X(r+u)/h_X(r)=\Theta_u^X(r)\).
Substitution in \eqref{eq:unreflected-canonical-moments} gives
\eqref{eq:unreflected-R-finite-row}--
\eqref{eq:unreflected-R-base-row}, and hence
\eqref{eq:unreflected-R-pairing}, for all three families.

We next establish the simple-pole moment identity. For every
\(J\in\{1,\ldots,q-1\}\) and \(K\in\Nzero\) with \(K<m_J\), the
Cauchy-transform recurrence for arbitrary Hahn parameters in the proof of
Corollary~\ref{cor:Hahn-B-closed-components} gives
\begin{equation}
 \sum_{j=1}^q\frac1{h_N(r)}
 \sum_{k=0}^N\fall{k}{r}
 \mathcal C_{j,J,K}^{(N)}(k)
 \widetilde v_{j,N}(k)
 =\frac1{r+\xi_{J,K}}.
 \label{eq:unreflected-Hahn-pairing}
\end{equation}
Indeed, \(\mathcal C_{\cdot,J,K}^{(N)}\) is the coefficient vector of
\(I_{\xi_{J,K}}\) in the iterated relation
\eqref{eq:Hahn-Cauchy-contiguous-recurrence}; the polynomial remainder
has no residue at a lattice point, and the factorial moments of
\(I_{\xi_{J,K}}\) are
\(h_N(r)/(r+\xi_{J,K})\).

For the partial Hahn-to-Kravchuk limit
\eqref{eq:K-scaling}, with \(J\in\{1,\ldots,q-1\}\) and
\(K\in\Nzero\) satisfying \(K<m_J\), every summand of the terminating
hypergeometric polynomial has the factor
\(\frac{(\beta_q^{(T)}+1-\xi_{J,K}-\delta_{j,q})_s}
{(A_q^{(T)}+1-\xi_{J,K})_s}
\xrightarrow[T\to\infty]{}c^{-s}\).
The prefactor has the finite limit
\[
 \left.
 (\beta_q^{(T)})^{-\delta_{j,q}}
 \mathcal D_{j,J,K}^{(N)}
 \right|_{A_q=A_q^{(T)},\,\beta_q=\beta_q^{(T)}}
 \xrightarrow[T\to\infty]{}\Delta_{j,J,K}^{\mathrm K};
\]
for \(j\in\{1,\ldots,q-1\}\) all factors containing \(A_q^{(T)}\) or
\(\beta_q^{(T)}\) cancel in ratios, whereas for \(j=q\) the remaining
factor tends to
\(\lim_{T\to\infty}(N+\beta_q^{(T)})(A_q^{(T)}-\beta_q^{(T)})/
[\beta_q^{(T)}A_q^{(T)}]=1-c^{-1}\).
The factors with indices below \(q\) are precisely those in
\eqref{eq:K-finite-pole-prefactor-row} and
\eqref{eq:K-finite-pole-prefactor-base}.  Hence
\[
 \left.
 (\beta_q^{(T)})^{-\delta_{j,q}}
 \mathcal C_{j,J,K}^{(N)}(k)
 \right|_{A_q=A_q^{(T)},\,\beta_q=\beta_q^{(T)}}
 \xrightarrow[T\to\infty]{}\mathcal C_{j,J,K}^{\mathrm K}(k).
\]

For the Hahn-like-to-Meixner-II-like limit \eqref{eq:MII-scaling}, the factor that
contains the two diverging parameters is instead
\(\frac{(\beta_q^{(N)}+1-\xi_{J,K}-\delta_{j,q})_s}
{(1-N-\xi_{J,K})_s}
\xrightarrow[N\to\infty]{}(-\tau^{-1})^s\).
Moreover,
\[
 \left.
 (\beta_q^{(N)})^{-\delta_{j,q}}
 \mathcal D_{j,J,K}^{(N)}
 \right|_{\beta_q=\beta_q^{(N)}}
 \xrightarrow[N\to\infty]{}\Delta_{j,J,K}^{\mathrm{MII}}.
\]
For \(j=q\), the nonconstant part of this limit is
\[
 \lim_{N\to\infty}
 \frac1{\beta_q^{(N)}}
 \frac{(N+\beta_q^{(N)})
       \prod_{g=1}^{q}(A_g-\beta_q^{(N)})}
      {N\prod_{h=1}^{q-1}(\beta_h-\beta_q^{(N)})}
 =-\frac{1+\tau}{\tau};
\]
the factors independent of \(N\) yield the product quotient in
\eqref{eq:MII-finite-pole-prefactor-base}.  Consequently,
\[
 \left.
 (\beta_q^{(N)})^{-\delta_{j,q}}
 \mathcal C_{j,J,K}^{(N)}(k)
 \right|_{\beta_q=\beta_q^{(N)}}
 \xrightarrow[N\to\infty]{}\mathcal C_{j,J,K}^{\mathrm{MII}}(k).
\]

Finally, in the Meixner-II-like-to-Charlier-II-like scaling
\(A_q=R\), \(\tau=a/R\), the term of order \(s\) contains
\(\frac{(-\tau^{-1})^s}{(A_q+1-\xi_{J,K})_s}
=\frac{(-R/a)^s}{(R+1-\xi_{J,K})_s}
\xrightarrow[R\to\infty]{}(-a^{-1})^s\).
The prefactors indexed by \(j\in\{1,\ldots,q-1\}\) lose the quotient
\((R-\beta_j)/(R-\xi_{J,K})\), whose limit is one. For \(j=q\),
\(-\frac{1+\tau}{\tau(A_q-\xi_{J,K})}
=-\frac{R+a}{a(R-\xi_{J,K})}
\xrightarrow[R\to\infty]{}-\frac1a\).
These are \eqref{eq:CII-finite-pole-prefactor-row} and
\eqref{eq:CII-finite-pole-prefactor-base}, so
\(\mathcal C_{j,J,K}^{\mathrm{MII}}\) converges coefficientwise to
\(\mathcal C_{j,J,K}^{\mathrm{CII}}\).

All the series just considered terminate at \(s=K\) when \(j=J\) and
at \(s=K-1\) when \(j\ne J\).  The limits can therefore be taken in
each finite sum.  For each fixed \(r\), only finitely many factorial
moments enter \eqref{eq:unreflected-Hahn-pairing}. The first two limits
in \eqref{eq:unreflected-Hahn-pairing} and then the
Meixner-II-like-to-Charlier-II-like limit prove
\eqref{eq:finite-pole-block-pairing} for \(X=\mathrm K\),
\(X=\mathrm{MII}\), and \(X=\mathrm{CII}\), respectively. In the
removable case \(K=0\), the polynomial equals one in component \(J\) and
zero in every other component.

We now compute the coefficients required by the signed sequence. Along the
Kravchuk limit,
\[
 \frac{(\beta_q^{(T)})_{m_q}}{(r+\beta_q^{(T)})_{m_q}}
 \left(-\frac{(-r)_n}{D_-(r)}\right)
 \xrightarrow[T\to\infty]{}\mathcal R_{\mm}(r).
\]
Along the Hahn-like-to-Meixner-II-like limit,
\[
 \frac{(\beta_q^{(N)})_{m_q}}{(r+\beta_q^{(N)})_{m_q}}
 \left(-\frac{(-r)_n}{D_-(r)}\right)
 \xrightarrow[N\to\infty]{}\mathcal R_{\mm}(r).
\]
In the subsequent Charlier-II-like limit the
rational factor \(\mathcal R_{\mm}\) contains neither \(A_q\) nor \(\tau\), so
it is unchanged. At \(r=-\beta_J-K\),
\(\prod_{\substack{s\in\{0,\ldots,m_J-1\}\\s\ne K}}(s-K)
=(-1)^K K!(m_J-1-K)!\).
Evaluation of the remaining factors gives
\[
 \Res_{r=-\beta_J-K}\mathcal R_{\mm}(r)
 =\frac{(-1)^{K+1}(\beta_J+K)_n}
 {K!(m_J-1-K)!
  \prod_{\substack{1\le h\le q-1\\h\ne J}}
  (\beta_h-\beta_J-K)_{m_h}}
 =\pi_{J,K}^{-}.
\]
After subtracting these simple fractions, the remainder is a polynomial
of degree at most \(m_q-1\). To determine its coefficients, write
\(z=r^{-1}\) and use
\(-(-r)_n=(-1)^{n+1}r^n\prod_{s=0}^{n-1}(1-sz)\) and
\(D_-(r)=r^{\sum_{h=1}^{q-1}m_h}
\prod_{h=1}^{q-1}\prod_{s=0}^{m_h-1}(1+(\beta_h+s)z)\).
the coefficient of \(r^K\) in the polynomial part is
\[
 (-1)^{n+1}
 \mathfrak h_{m_q-1-K}
 \left(\mathsf f_n;
 ([\beta_1]_{m_1},\ldots,[\beta_{q-1}]_{m_{q-1}})\right)
 =\pi_{\infty,K}.
\]
We have therefore obtained the labeled identity
\begin{equation}
 \mathcal R_{\mm}(r)=
 \sum_{J=1}^{q-1}\sum_{K=0}^{m_J-1}
 \frac{\pi_{J,K}^{-}}{r+\beta_J+K}
 +\sum_{K=0}^{m_q-1}\pi_{\infty,K}r^K.
 \label{eq:unreflected-target-partial-fractions}
\end{equation}
Here every \(\mathfrak h_\nu\) is given by the finite formula
\eqref{eq:unreflected-Horn-finite-sum}.

It remains to construct component vectors for the polynomial part.  For
\(u>K\), the only singular factor in the \(u\)-th summand of
\(R_{q,\ell}^X\) at \(r=-\beta_J-K\) is \((r+\beta_J)_u^{-1}\), whose
residue is \(1/[(-1)^K K!(u-K-1)!]\).
Thus the residues for the three families are
\begin{align*}
 z_{J,K,\ell}^{\mathrm K}
 &=\sum_{u=K+1}^{\ell}\binom\ell u
 \fall{-\beta_J-K}{\ell-u}
 \frac{c^u\fall{N+\beta_J+K}{u}
       \prod_{g=1}^{q-1}(A_g-\beta_J-K)_u}
 {(-1)^K K!(u-K-1)!
  \prod_{\substack{1\le h\le q-1\\h\ne J}}
  (\beta_h-\beta_J-K)_u},\\
 z_{J,K,\ell}^{\mathrm{MII}}
 &=\sum_{u=K+1}^{\ell}\binom\ell u
 \fall{-\beta_J-K}{\ell-u}
 \frac{\tau^u\prod_{g=1}^{q}(A_g-\beta_J-K)_u}
 {(-1)^K K!(u-K-1)!
  \prod_{\substack{1\le h\le q-1\\h\ne J}}
  (\beta_h-\beta_J-K)_u},\\
 z_{J,K,\ell}^{\mathrm{CII}}
 &=\sum_{u=K+1}^{\ell}\binom\ell u
 \fall{-\beta_J-K}{\ell-u}
 \frac{a^u\prod_{g=1}^{q-1}(A_g-\beta_J-K)_u}
 {(-1)^K K!(u-K-1)!
  \prod_{\substack{1\le h\le q-1\\h\ne J}}
  (\beta_h-\beta_J-K)_u}.
\end{align*}
They are the three specializations of
\eqref{eq:unreflected-base-residue}.

For the expansion at infinity, the \(u\)-th summand of
\(R_{q,\ell}^X\) can be written as
\[
 \binom\ell u\sigma_u^X
 r^{d_{\ell,u}^X}
 \frac{\prod_{\xi\in\boldsymbol\Xi_{\ell,u}^X}(1+\xi/r)}
      {\prod_{\eta\in\boldsymbol\eta_u}(1+\eta/r)},
\]
where \(d_{\ell,u}^X\) was defined above. Its coefficient of \(r^K\) is
\(\binom\ell u\sigma_u^X
\mathfrak h_{d_{\ell,u}^X-K}
(\boldsymbol\Xi_{\ell,u}^X;\boldsymbol\eta_u)\).
Summing over \(u\) yields
\eqref{eq:unreflected-infinity-transition}.  At degree \(\ell\), all
values of \(u\) contribute in the Kravchuk-like and Meixner-II-like cases, whereas
only \(u=0\) contributes in the Charlier-II-like case. Hence
\(\sum_{u=0}^{\ell}\binom\ell u(-c)^u=(1-c)^\ell\),
\(\sum_{u=0}^{\ell}\binom\ell u\tau^u=(1+\tau)^\ell\), and
\(p_{\ell,\ell}^{\mathrm{CII}}=1\),
which proves all three assertions in
\eqref{eq:unreflected-infinity-pivots}.

Near-diagonality gives \(\ell\le m_q-1\le m_J\).  Therefore the poles
of \(R_{q,\ell}^X\) are exactly among
\(-\beta_J-K\), with
\(K\in\Nzero\) and \(K\le\min(m_J-1,\ell-1)\). Subtracting
\(z_{J,K,\ell}^X/(r+\xi_{J,K})\) for every such pole leaves the
polynomial part just computed.  Combining this observation with
\eqref{eq:finite-pole-block-pairing} proves
\eqref{eq:unreflected-corrected-infinity-pairing}.

For completeness, set
\(P_\ell^X(r)=\sum_{K=0}^{\ell}p_{K,\ell}^Xr^K\).  Since the diagonal
coefficient in each family is nonzero, the coefficients of an expansion
\(\sum_\ell\gamma_\ell^XP_\ell^X\) must obey
\begin{equation}
 \gamma_\ell^X
 =\frac{\pi_{\infty,\ell}}{p_{\ell,\ell}^X}
 -\frac1{p_{\ell,\ell}^X}
  \sum_{s=\ell+1}^{m_q-1}p_{\ell,s}^X\gamma_s^X.
 \label{eq:unreflected-infinity-recurrence}
\end{equation}
Every increasing index sequence of positive length in
\eqref{eq:unreflected-infinity-chain} has a unique second index \(s\).
Its remaining indices form a sequence occurring in \(\gamma_s^X\).
Grouping the sum by \(s\) turns
\eqref{eq:unreflected-infinity-chain} into
\eqref{eq:unreflected-infinity-recurrence}. Descending from
\(\ell=m_q-1\) proves
\(\sum_{s=\ell}^{m_q-1}p_{\ell,s}^X\gamma_s^X
=\pi_{\infty,\ell}\) for \(0\le\ell<m_q\).
Thus the second term in
\eqref{eq:unreflected-explicit-components} contributes
\(\sum_K\pi_{\infty,K}r^K\) to the normalized moment function, while the
first term contributes the simple fractions in
\eqref{eq:unreflected-target-partial-fractions}. Their sum gives
\(\mathcal R_{\mm}\), proving
\eqref{eq:unreflected-complete-moments}.

This moment identity also proves equality with the signed sequences
specified in the corresponding family sections.  In the Kravchuk case both sides are supported on
\(\{0,\ldots,N\}\), so their factorial moments of orders
\(0,\ldots,N\) determine every coefficient.  In the Meixner-II-like and
Charlier-II-like cases, multiplication of a weight by a polynomial
corresponds in its generating function to applying a polynomial in
\(z\partial_z\). The weight generating functions and the signed generating
functions are analytic in a neighborhood of \(z=1\).  Equation
\eqref{eq:unreflected-complete-moments} says that their Taylor
coefficients at \(z=1\) agree; the identity theorem gives equality of the
generating functions and hence of their coefficients.

The degree restrictions follow directly from the summands. A term
\(\mathcal C_{j,J,K}^X\) has degree at most \(K\) when \(j=J\) and degree
at most \(K-1\) otherwise. In \(\mathcal E_{j,\ell}^X\), the \(q\)-th component
contains the monomial of degree \(\ell<m_q\), and every correction has
\(K<\ell\). If a correction belongs to a component other than \(J\),
near-diagonality gives
\(K-1\le m_J-2\le m_j-1\).  Hence
\(\deg D_j^X<m_j\) for every \(j\in\{1,\ldots,q\}\). Since
\((-r)_n=0\) for every \(r\in\Nzero\) with \(r<n\), the first \(n\)
factorial moments
vanish. At \(r=n\),
\(-h_X(n)\frac{(-n)_n}{D_-(n)}
=(-1)^{n+1}\frac{n!h_X(n)}{D_-(n)}\ne0\), which is the asserted
normalization.

We finish with uniqueness. The space of tuples
\((P_1,\ldots,P_q)\) with \(\deg P_j<m_j\) has dimension
\(\sum_{j=1}^q m_j=n+1\). The component vectors
\[
 \{\mathcal C_{\cdot,J,K}^X:J\in\{1,\ldots,q-1\},\
 K\in\Nzero,\ K<m_J\}
 \ \cup\
 \{\mathcal E_{\cdot,\ell}^X:\ell\in\Nzero,\ \ell<m_q\}
\]
also number \(n+1\). Their normalized moment functions are the distinct simple
fractions \((r+\xi_{J,K})^{-1}\) and the polynomials
\(P_\ell^X\), whose leading coefficients are nonzero.  These rational
functions are linearly independent, so the displayed component vectors
form a basis of the component space.

Suppose a component vector in that space has factorial moments zero for
every \(r\in\{0,\ldots,n\}\), and let \(G(r)\) be its factorial-moment
function divided by \(h_X(r)\).
The formulas for \(R_{j,\ell}^X\) show that \(D_-(r)G(r)\) is a
polynomial. Its degree is at most \(n\): for
\(j\in\{1,\ldots,q-1\}\),
\(R_{j,\ell}^X(r)=\mathrm O(r^{\ell-1})\), whereas
\(R_{q,\ell}^X(r)=\mathrm O(r^\ell)\) as \(r\to\infty\), and
near-diagonality gives the required bounds on \(\ell\).  Since
\(D_-(r)h_X(r)\ne0\) for every \(r\in\{0,\ldots,n\}\), this polynomial has \(n+1\)
distinct zeros and is identically zero. Expansion in the preceding basis
forces every component coefficient to vanish. Thus the kernel of the
square type-I moment map is trivial, so its moment matrix is nonsingular.
By Definition~\ref{def:normal-multi-index}, \(\mm\) is normal and both the
normalized type-I and monic type-II problems are unique.
\end{proof}

\begin{corollary}[Component limits for the three families]
\label{cor:unreflected-component-limits}
Assume the hypotheses of Theorem~\ref{thm:explicit-unreflected-B}, and let
\(j\in\{1,\ldots,q\}\) with \(m_j>0\).
Under the Kravchuk scaling \eqref{eq:K-scaling}, with \(N\) fixed,
\begin{equation}
 \frac{(\beta_q^{(T)})_{m_q}}
      {(\beta_q^{(T)})^{\delta_{j,q}}}
 \left.B_{j,N}(k)\right|_{
 A_q=A_q^{(T)},\,\beta_q=\beta_q^{(T)}}
 \xrightarrow[T\to\infty]{}D_j^{\mathrm K}(k).
 \label{eq:Hahn-K-component-limit}
\end{equation}
Under the Meixner-II-like scaling \eqref{eq:MII-scaling},
\begin{equation}
 \frac{(\beta_q^{(N)})_{m_q}}
      {(\beta_q^{(N)})^{\delta_{j,q}}}
 \left.B_{j,N}(k)\right|_{\beta_q=\beta_q^{(N)}}
 \xrightarrow[N\to\infty]{}D_j^{\mathrm{MII}}(k).
 \label{eq:Hahn-MII-component-limit}
\end{equation}
Finally, under the Charlier-II-like scaling \eqref{eq:CII-scaling},
\begin{equation}
 D_j^{\mathrm{MII}}(k)\xrightarrow[R\to\infty]{}D_j^{\mathrm{CII}}(k).
 \label{eq:MII-CII-component-limit}
\end{equation}
All three limits are coefficientwise in the polynomial variable and,
equivalently, locally uniform on compact subsets of \(\mathbb C\).
\end{corollary}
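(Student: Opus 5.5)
The plan is to avoid passing to the limit inside the explicit residue and Kamp\'e de F\'eriet formulas, whose \(J=q\) pole sectors run off to infinity. Instead I would use the finite linear system that characterizes the type-I components, together with the nonsingularity of its limit proved in Theorem~\ref{thm:explicit-unreflected-B}. Write every component in the falling-factorial basis \eqref{eq:B-factorial-components} and collect the unknowns \(b_{j,\ell}\), \(0\le\ell<m_j\), into a vector of length \(d=n+1\). The type-I identity for the Hahn-like system is equivalent to the square system
\[
 \sum_{j,\ell}b_{j,\ell}\,R_{j,\ell}^{(N)}(r)
 =-\frac{(-r)_n}{\prod_{h=1}^q(r+\beta_h)_{m_h}},\qquad r\in\{0,\ldots,n\}.
\]
This follows from \eqref{eq:R-pairing} and Theorem~\ref{thm:Hahn-complete-B}, and \(n\le N\) in the Kravchuk case. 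The limiting families obey the analogous system with \(R_{j,\ell}^X\) and \(\mathcal R_{\mm}\), by \eqref{eq:unreflected-R-pairing} and \eqref{eq:unreflected-complete-moments}.

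The first step is to rescale the Hahn system so that it converges entrywise. For \(j\in\{1,\ldots,q-1\}\) keep \(R_{j,\ell}^{(N)}\) unchanged. For the \(q\)-th weight, replace \(\widetilde v_{q,N}\) by \(v_{q,N}=\beta_q\widetilde v_{q,N}\), and accordingly use \(\beta_qR_{q,\ell}^{(N)}\) with unknown \(b_{q,\ell}/\beta_q\). This is exactly the factor \((\beta_q)^{-\delta_{j,q}}\) in \eqref{eq:Hahn-K-component-limit}--\eqref{eq:Hahn-MII-component-limit}. Then multiply both sides by \((\beta_q)_{m_q}\), which produces the overall factor in those formulas.

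Next, check termwise in the finite sums \eqref{eq:R-kernel} that these rescaled entries tend to \(R_{j,\ell}^X(r)\). Under \eqref{eq:K-scaling}, each quotient \((r+A_q)_u/(r+\beta_q)_u\) tends to \(c^u\). Under \eqref{eq:MII-scaling}, the factor \(\fall{N-r}{u}(r+A_q)\cdots/(r+\beta_q)_u\) becomes \(\tau^u\)-type, and the \(N\) dependence is absorbed as in the prefactor computations already carried out in the proof of Theorem~\ref{thm:explicit-unreflected-B}. In the \(q\)-th row, \(\beta_q/(r+\beta_q+u)\to1\). The right-hand sides converge to \(\mathcal R_{\mm}(r)\) by the two displayed limits of \((\beta_q)_{m_q}/(r+\beta_q)_{m_q}\) in that same proof. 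Everything here is a finite sum at each fixed \(r\le n\), so entrywise convergence is elementary.

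The second step is to use that the limiting \(d\times d\) matrix \((R_{j,\ell}^X(r))\) is nonsingular, since the evaluation at \(r=0,\ldots,n\) is exactly the normalized type-I moment map shown injective at the end of the proof of Theorem~\ref{thm:explicit-unreflected-B}. By continuity of the determinant, the Hahn matrices are nonsingular for all sufficiently large \(T\) (resp.\ \(N\)). Hence the rescaled Hahn coefficients are the unique solution, and they coincide with those from Corollary~\ref{cor:Hahn-B-closed-components} whenever its hypotheses hold. By Cramer's rule the solutions converge to the unique limiting solution, which Theorem~\ref{thm:explicit-unreflected-B} identifies as the coefficients of \(D_j^X\).

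Convergence of the falling-factorial coefficients is coefficientwise convergence in \(k\). Since the degrees are bounded by \(m_j-1\), it is equivalent to locally uniform convergence on compact subsets of \(\mathbb C\). The third limit, \eqref{eq:MII-CII-component-limit}, is handled identically: \(R_{j,\ell}^{\mathrm{MII}}\to R_{j,\ell}^{\mathrm{CII}}\) under \(A_q=R\), \(\tau=a/R\), because \(\tau^u(r+A_q)_u\to a^u\). The right-hand side \(\mathcal R_{\mm}\) is unchanged, and the Charlier-II-like matrix is nonsingular by the same theorem.

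The main obstacle I expect is the bookkeeping of the rescaled \(q\)-th row in the Meixner-II-like limit, where \(N\) and \(\beta_q^{(N)}\) diverge together. There I must confirm that \(\beta_q\,\fall{N-r}{u}(r+A_q)_u/\bigl((r+\beta_q)_u(r+\beta_q+u)\bigr)\), together with the normalization \(h_N(r)/h_{\mathrm{MII}}(r)\) implicit in the row scaling, has exactly the limit \(\Theta_u^{\mathrm{MII}}(r)\) and no stray \(r\)-dependent constant. Any row-dependent constant factor would be harmless, since it rescales an equation together with its right-hand side. I would therefore first normalize each row by its \(h\)-ratio and only then compare termwise, following the explicit \(\beta_q^{(N)}\) limits displayed in the proof of Theorem~\ref{thm:explicit-unreflected-B}.
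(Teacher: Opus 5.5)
Your argument is correct, but it takes a different route from the paper's.

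\textbf{The paper's route.} The paper passes to the limit inside the explicit residue decomposition \eqref{eq:Hahn-B-closed-components}.
\begin{itemize}
\item For $J<q$ it uses $(\beta_q)_{m_q}\pi_{J,K}=\pi_{J,K}^{-}(\beta_q)_{m_q}/(\beta_q-\xi_{J,K})_{m_q}\to\pi_{J,K}^{-}$, together with the fixed-pole limits $\beta_q^{-\delta_{j,q}}\mathcal C_{j,J,K}^{(N)}\to\mathcal C_{j,J,K}^X$ from the proof of Theorem~\ref{thm:explicit-unreflected-B}.
\item It identifies the grouped moving sector $J=q$ through its limiting rational coordinates $\pi_{\infty,\ell}$ and the triangular recursion for $\gamma_\ell^X$.
\item It gets the third limit from $z_{J,K,\ell}^{\mathrm{MII}}\to z_{J,K,\ell}^{\mathrm{CII}}$ and $p_{K,\ell}^{\mathrm{MII}}\to p_{K,\ell}^{\mathrm{CII}}$, hence $\gamma_\ell^{\mathrm{MII}}\to\gamma_\ell^{\mathrm{CII}}$.
\end{itemize}

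\textbf{Your route.} You never open the sectors. You rescale the whole square moment system and check entrywise convergence of the finite sums $R_{j,\ell}^{(N)}$ and $\beta_qR_{q,\ell}^{(N)}$ and of the right-hand side. You then conclude from nonsingularity of the limiting matrix and continuity of inversion. The paper uses this mechanism only on the reflected branches (Corollaries~\ref{cor:Hahn-MI-component-limit} and \ref{cor:MI-CI-component-limit}).

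\textbf{Comparison.} Your route is more robust at the delicate point. The individual moving-pole terms $\pi_{q,K}\mathcal C_{j,q,K}^{(N)}$ need not converge. The paper's brief identification of the grouped $J=q$ limit therefore tacitly needs exactly the uniqueness-plus-continuity step you make explicit. The paper's route, in return, yields sectorwise limits: each finite-pole block, and $z$, $p$, $\gamma$ separately. These are reused in Corollaries~\ref{cor:K-moving-KdF-block}, \ref{cor:MII-moving-KdF-block} and \ref{cor:CII-sectorwise-MII-limit}.

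\textbf{Two small remarks.}
\begin{itemize}
\item The obstacle you anticipate does not arise. Each $R$-function is already normalized by its own $h$, so the only limits needed are $\fall{N-r}{u}/(r+\beta_q^{(N)})_u\to\tau^u$ and $\beta_q/(r+\beta_q+u)\to1$.
\item To identify the unique Hahn solution with the $B_{j,N}$ of the statement, you should record that the Hahn hypotheses hold for all large $T$ (resp.\ $N$). The moving string $\{-\beta_q-K\}$ eventually separates from the fixed strings, and the diagonal residues $d_{q,K}$ are eventually nonzero. For $J<q$, $d_{J,K}$ tends to the limiting diagonal residue, which is nonzero under the assumed non-vanishing conditions. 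Likewise, the Meixner-II-like hypotheses hold for large $R$ in the third limit.
\end{itemize}
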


\begin{proof}
We finally identify each component limit from Hahn. Along the
Kravchuk path, \(A_q^{(T)}=Ta\), \(\beta_q^{(T)}=Tb\), and \(N\) is fixed.
For \(J\in\{1,\ldots,q-1\}\) and \(K\in\Nzero\) with \(K<m_J\),
equation \eqref{eq:target-residue} gives
\[
 (\beta_q^{(T)})_{m_q}
 \left.\pi_{J,K}\right|_{\beta_q=\beta_q^{(T)}}
 =\pi_{J,K}^{-}
 \frac{(\beta_q^{(T)})_{m_q}}
      {(\beta_q^{(T)}-\xi_{J,K})_{m_q}}
 \xrightarrow[T\to\infty]{}\pi_{J,K}^{-}.
\]
Together with the Kravchuk fixed-pole limit, this proves convergence of
every fixed-pole term after multiplication of the \(j\)-th Hahn component
by \((\beta_q^{(T)})_{m_q}/
(\beta_q^{(T)})^{\delta_{j,q}}\).
The combined \(J=q\) contribution has no finite pole after passage to the
limit. Its rational coordinates are the family
\(\bigl(\pi_{\infty,\ell}\bigr)_{\ell\in\Nzero,\,\ell<m_q}\), calculated
above from the expansion at infinity. Recursion
\eqref{eq:unreflected-infinity-recurrence} gives its unique coordinates
\(\bigl(\gamma_\ell^{\mathrm K}\bigr)_{\ell\in\Nzero,\,\ell<m_q}\) in the
polynomial basis \(\mathcal E_{\cdot,\ell}^{\mathrm K}\). This proves
\eqref{eq:Hahn-K-component-limit}.

Along the Meixner-II-like path,
\(\beta_q^{(N)}=A_q+N/\tau\) and \(N\to\infty\).
For \(J\in\{1,\ldots,q-1\}\) and \(K\in\Nzero\) with \(K<m_J\),
\[
 (\beta_q^{(N)})_{m_q}
 \left.\pi_{J,K}\right|_{\beta_q=\beta_q^{(N)}}
 =\pi_{J,K}^{-}
 \frac{(\beta_q^{(N)})_{m_q}}
      {(\beta_q^{(N)}-\xi_{J,K})_{m_q}}
 \xrightarrow[N\to\infty]{}\pi_{J,K}^{-}.
\]
The Meixner-II-like fixed-pole limit and the triangular inversion for the
polynomial part prove \eqref{eq:Hahn-MII-component-limit}.

Under \(A_q=R\), \(\tau=a/R\), each residue
\(z_{J,K,\ell}^{\mathrm{MII}}\) tends to
\(z_{J,K,\ell}^{\mathrm{CII}}\), because
\(\tau^u(A_q-\xi_{J,K})_u
=\left(\frac aR\right)^u(R-\xi_{J,K})_u
 \xrightarrow[R\to\infty]{}a^u\).
The corresponding identity for every coefficient
\(p_{K,\ell}^{\mathrm{MII}}\) follows term by term from
\eqref{eq:unreflected-infinity-transition}, and its diagonal coefficient tends
from \((1+\tau)^\ell\) to one. Descending recurrence
\eqref{eq:unreflected-infinity-recurrence} therefore
gives \(\gamma_\ell^{\mathrm{MII}}\xrightarrow[R\to\infty]{}
\gamma_\ell^{\mathrm{CII}}\), which proves
\eqref{eq:MII-CII-component-limit}.
\end{proof}

In \eqref{eq:unreflected-explicit-components}, the first sum contains the
terms associated with the finite poles. The second represents the
polynomial part at infinity and is a finite triangular combination of
terminating Horn polynomials.

\subsection{Compact Kamp\'e de F\'eriet blocks for the finite poles}

The double sum over the finite-pole labels can be regrouped before any
limit is taken.  This retains the compact structure of the Hahn-like
components.  Fix $J\in\{1,\ldots,q-1\}$, put
\(\varepsilon=1-\delta_{j,J}\), and assume
\(m_J\ge\varepsilon+1\).  Define
\begin{align*}
 \boldsymbol a_{j,J}^{-}(k)
 &\coloneq\left(
 \beta_J+\varepsilon+n,\ \varepsilon+1-m_J,\
 k+\beta_J+\varepsilon,
 \{\beta_J-\beta_h+\varepsilon+1-m_h\}_{
                 1\le h<q,\ h\ne J}\right),\\
 \boldsymbol d_{j,J}^{-}(k)
 &\coloneq\left(k+\beta_J+\varepsilon,
 \{\beta_J-\beta_h+\varepsilon+\delta_{h,j}\}_{
                 1\le h<q,\ h\ne J}\right),\\
 \boldsymbol b_{j,J}^{\mathrm K}
 &\coloneq\left(N+\beta_J+\varepsilon,
 \{\beta_J-A_h+\varepsilon\}_{1\le h<q}\right),\\
 \boldsymbol c_{j,J}^{\mathrm K}
 &\coloneq\left(\beta_J+\varepsilon,
 N+\beta_J+\varepsilon+1,
 \{\beta_J-A_h+\varepsilon+1\}_{1\le h<q}\right),\\
 \boldsymbol b_{j,J}^{\mathrm{MII}}
 &\coloneq\left(\{\beta_J-A_h+\varepsilon\}_{1\le h\le q}\right),\\
 \boldsymbol c_{j,J}^{\mathrm{MII}}
 &\coloneq\left(\beta_J+\varepsilon,
 \{\beta_J-A_h+\varepsilon+1\}_{1\le h\le q}\right),\\
 \boldsymbol b_{j,J}^{\mathrm{CII}}
 &\coloneq\left(\{\beta_J-A_h+\varepsilon\}_{1\le h<q}\right),\\
 \boldsymbol c_{j,J}^{\mathrm{CII}}
 &\coloneq\left(\beta_J+\varepsilon,
 \{\beta_J-A_h+\varepsilon+1\}_{1\le h<q}\right).
\end{align*}
Set
\begin{align}
 \mathscr K_{j,J}(k)
 &\coloneq \pi_{J,\varepsilon}^{-}
 \Delta_{j,J,\varepsilon}^{\mathrm K}
 F_{q+1:q-1;0}^{q+1:q;1}
 \left(
 \begin{matrix}
  \boldsymbol a_{j,J}^{-}(k):
  \boldsymbol b_{j,J}^{\mathrm K};1\\
  \boldsymbol c_{j,J}^{\mathrm K}:
  \boldsymbol d_{j,J}^{-}(k);\text{--}
 \end{matrix}
 \middle|1,c^{-1}\right),
 \label{eq:K-compact-finite-pole-block}\\
 \mathscr M_{j,J}(k)
 &\coloneq \pi_{J,\varepsilon}^{-}
 \Delta_{j,J,\varepsilon}^{\mathrm{MII}}
 F_{q+1:q-1;0}^{q+1:q;1}
 \left(
 \begin{matrix}
  \boldsymbol a_{j,J}^{-}(k):
  \boldsymbol b_{j,J}^{\mathrm{MII}};1\\
  \boldsymbol c_{j,J}^{\mathrm{MII}}:
  \boldsymbol d_{j,J}^{-}(k);\text{--}
 \end{matrix}
 \middle|1,-\tau^{-1}\right).
 \label{eq:MII-compact-finite-pole-block}
\end{align}
\begin{equation}
 \mathscr C_{j,J}(k)
 \coloneq \pi_{J,\varepsilon}^{-}
 \Delta_{j,J,\varepsilon}^{\mathrm{CII}}
 F_{q:q-1;0}^{q+1:q-1;1}
 \left(
 \begin{matrix}
  \boldsymbol a_{j,J}^{-}(k):
  \boldsymbol b_{j,J}^{\mathrm{CII}};1\\
  \boldsymbol c_{j,J}^{\mathrm{CII}}:
  \boldsymbol d_{j,J}^{-}(k);\text{--}
 \end{matrix}
 \middle|1,a^{-1}\right).
 \label{eq:CII-compact-finite-pole-block}
\end{equation}
If $m_J<\varepsilon+1$, set all three blocks equal to zero.

\begin{proposition}[Regrouping of the finite-pole contributions]
\label{prop:compact-unreflected-finite-poles}
Under the hypotheses of
Theorem~\ref{thm:explicit-unreflected-B}, for every
\(J\in\{1,\ldots,q-1\}\) and every active component $j$,
\begin{align}
 \sum_{K=0}^{m_J-1}\pi_{J,K}^{-}
 \mathcal C_{j,J,K}^{\mathrm K}(k)
 &=\mathscr K_{j,J}(k),
 \label{eq:K-compact-finite-pole-identity}\\
 \sum_{K=0}^{m_J-1}\pi_{J,K}^{-}
 \mathcal C_{j,J,K}^{\mathrm{MII}}(k)
 &=\mathscr M_{j,J}(k).
 \label{eq:MII-compact-finite-pole-identity}\\
 \sum_{K=0}^{m_J-1}\pi_{J,K}^{-}
 \mathcal C_{j,J,K}^{\mathrm{CII}}(k)
 &=\mathscr C_{j,J}(k).
 \label{eq:CII-compact-finite-pole-identity}
\end{align}
Consequently, the explicit components may be written with one outer sum,
\begin{align}
 D_j^{\mathrm K}(k)
 &=\sum_{J=1}^{q-1}\mathscr K_{j,J}(k)
 +\sum_{\ell=0}^{m_q-1}\gamma_\ell^{\mathrm K}
   \mathcal E_{j,\ell}^{\mathrm K}(k),\\
 D_j^{\mathrm{MII}}(k)
 &=\sum_{J=1}^{q-1}\mathscr M_{j,J}(k)
 +\sum_{\ell=0}^{m_q-1}\gamma_\ell^{\mathrm{MII}}
   \mathcal E_{j,\ell}^{\mathrm{MII}}(k),\\
 D_j^{\mathrm{CII}}(k)
 &=\sum_{J=1}^{q-1}\mathscr C_{j,J}(k)
 +\sum_{\ell=0}^{m_q-1}\gamma_\ell^{\mathrm{CII}}
   \mathcal E_{j,\ell}^{\mathrm{CII}}(k).
\end{align}
\end{proposition}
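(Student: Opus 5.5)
The plan is to repeat, family by family, the regrouping argument used in the proof of Corollary~\ref{cor:Hahn-B-KdF}. The Hahn-like case worked because the finite-pole blocks \(\mathcal C^{(N)}_{j,J,K}\) of \eqref{eq:Hahn-C-hypergeometric-block} are terminating \({}_{q+2}F_{q+1}\) sums. Their upper parameter in position \(J\) equals \(\varepsilon-K\), and that fixes the truncation. The blocks \eqref{eq:K-finite-pole-block}--\eqref{eq:CII-finite-pole-block} have exactly the same skeleton. The only changes are that the \(q\)-th parameter pair is replaced by the argument \(c^{-1}\), \(-\tau^{-1}\) or \(-a^{-1}\), and that the prefactor \(\Delta^X_{j,J,K}\) replaces \(\mathcal D^{(N)}_{j,J,K}\). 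I therefore expect each identity \eqref{eq:K-compact-finite-pole-identity}--\eqref{eq:CII-compact-finite-pole-identity} to follow from the same index change \(K=\varepsilon+r+t\), with the \(q\)-th pair handled by one extra bookkeeping step.

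\textbf{Step 1: expand the summand.} Fix \(X\), \(j\), \(J\) and \(\varepsilon=\varepsilon_{j,J}\). Write the \(s\)-th term of \(\pi^-_{J,K}\mathcal C^X_{j,J,K}(k)\), with \(s\le K-\varepsilon\), as an explicit product of Pochhammer symbols. To do this, combine \eqref{eq:unreflected-finite-pole-coefficient} with \(\Delta^X_{j,J,K}\) from \eqref{eq:K-finite-pole-prefactor-row}--\eqref{eq:CII-finite-pole-prefactor-base}. Then substitute \(s=t\) and \(K=\varepsilon+r+t\), with \(r,t\ge0\) and \(r+t\le m_J-1-\varepsilon\). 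This covers exactly the pairs \((K,s)\) in the double sum.

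\textbf{Step 2: normalize against the base block \(K=\varepsilon\).} Divide each term by the \(K=\varepsilon\), \(s=0\) term, \(\pi^-_{J,\varepsilon}\Delta^X_{j,J,\varepsilon}\), using the same three identities as before: \((a+s)_d/(a)_d=(a+d)_s/(a)_s\), \((a)_d/(a-s)_d=(1-a-d)_s/(1-a)_s\), and \((a-r-t)_t=(-1)^t(1-a)_{r+t}/(1-a)_r\). These produce the coupled strings \(\boldsymbol a^-_{j,J}(k)\) and \(\boldsymbol c^X_{j,J}\), and the \(r\)-only strings \(\boldsymbol b^X_{j,J}\) and \(\boldsymbol d^-_{j,J}(k)\). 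The ingredients are as follows.
\begin{itemize}
\item The factor \((\beta_J+K)_n\) in \(\pi^-\) gives \(\beta_J+\varepsilon+n\) over \(\beta_J+\varepsilon\).
\item \(1/(m_J-1-K)!\) gives \(\varepsilon+1-m_J\).
\item The products \((\beta_h-\beta_J-K)_{m_h}\) give the entries \(\beta_J-\beta_h+\varepsilon+1-m_h\).
\item The \(K\)-dependence of \(\prod_h(\beta_h-\xi_{J,K})/\prod_h(A_h-\xi_{J,K})\) in \(\Delta^X\) gives the \(\beta_J-A_h+\varepsilon\) and \(\beta_J-A_h+\varepsilon+1\) strings, for \(h<q\) in the K and CII cases and for \(h\le q\) in the MII case.
\item In the Kravchuk case, \((N+\xi_{J,K})^{-1}\) gives \(N+\beta_J+\varepsilon\) over \(N+\beta_J+\varepsilon+1\).
\end{itemize}
The \(s\)-dependent hypergeometric factors behave like this. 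The upper parameter \(1-k-\xi_{J,K}\) combines with \(K\) to give \(k+\beta_J+\varepsilon\), coupled in \(r+t\) over \(r\). The upper entries \(\beta_h+1-\xi_{J,K}-\delta_{h,j}\) cancel the \(r\)-dependent lower strings \(\boldsymbol d^-\). The power of the argument is \(t=s\), so it yields \(x^t\) with \(x=c^{-1}\), \(-\tau^{-1}\) or \(-a^{-1}\), and \(y=1\) on the \(r\) side. The signs \((-1)^t\) from the third identity should turn \(-\tau^{-1}\) into the argument appearing in \eqref{eq:MII-compact-finite-pole-block}, and \(-a^{-1}\) into \(a^{-1}\) for CII. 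In the Kravchuk case the lower parameter \(1-N-\xi_{J,K}\) must combine with these same signs to give \(c^{-1}\).

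\textbf{Step 3: the alternative route and the main obstacle.} As a shortcut, I could deduce the three identities as limits of Corollary~\ref{cor:Hahn-B-KdF}. This means applying the scalings used in the proof of Theorem~\ref{thm:explicit-unreflected-B} term by term to the terminating KdF block \eqref{eq:Hahn-B-KdF-block}, after multiplying by \((\beta_q)_{m_q}/\beta_q^{\delta_{j,q}}\). The \(h=q\) entries then collapse to the argument \(c^{-1}\), \(-\tau^{-1}\) or \(a^{-1}\) in the \(t\)-variable, while \(\pi_{J,\varepsilon}\) tends to \(\pi^-_{J,\varepsilon}\). Since all sums are finite, the limits can be taken inside. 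I would do the direct computation for safety and use the limit only as a check.

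The main obstacle is sign and argument bookkeeping in the \(t\)-direction, where the \(q\)-th parameter pair has degenerated differently in each family. In particular I must confirm that the stated arguments \(c^{-1}\), \(-\tau^{-1}\), \(a^{-1}\) and the dropped \(N\)-strings in CII and MII come out exactly. I also have to check that the removable diagonal case \(K=0\), \(j=J\) (prefactor one) is consistent with \(\Delta^X_{j,j,0}=1\). Once the identities hold, the displayed one-sum formulas for \(D^X_j\) follow immediately by substituting them into \eqref{eq:unreflected-explicit-components}. The case \(m_J<\varepsilon+1\) is trivial because every \(\mathcal C^X_{j,J,K}\) then vanishes.
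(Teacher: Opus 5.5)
Your proof is correct, but your main route is not the paper's. The paper's proof is your Step~3.

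\textbf{The paper's route.} It takes the grouped Hahn block $S_{j,J}^{(N)}$ of \eqref{eq:Hahn-B-KdF-block} for $J<q$, normalizes it by $(\beta_q)_{m_q}/\beta_q^{\delta_{j,q}}$, and passes to the limit term by term in the terminating double sum.
\begin{itemize}
\item Under the Kravchuk scaling, the two pairs containing $A_q$ and $\beta_q$ collapse to $c^{-t}$.
\item Under the Meixner-II scaling, the pairs containing $N$ and $\beta_q$ collapse to $(-\tau^{-1})^t$.
\item Under $A_q=R$, $\tau=a/R$, the surviving $A_q$-pair of $\mathscr M_{j,J}$ collapses to $a^{-t}$.
\end{itemize}
The prefactor limits $\pi_{J,\varepsilon}\to\pi^-_{J,\varepsilon}$ and $\mathcal D^{(N)}_{j,J,\varepsilon}\to\Delta^X_{j,J,\varepsilon}$, and the termwise limits of the blocks $\mathcal C$, are already established in the proofs of Theorem~\ref{thm:explicit-unreflected-B} and Corollary~\ref{cor:unreflected-component-limits}. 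So the same limit applied to $\sum_K\pi_{J,K}\mathcal C^{(N)}_{j,J,K}$ yields $\sum_K\pi^-_{J,K}\mathcal C^X_{j,J,K}$.

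\textbf{Comparison.} Your direct regrouping is a legitimate alternative. It is finite algebra, valid at every parameter point of each family without a scaling path or normalizing factors. It also treats the Charlier-II-like case without routing through the Meixner-II-like one. The limit route saves redoing the Pochhammer bookkeeping. More importantly, it yields as by-products the blockwise confluences $S^{(N)}_{j,J}\to\mathscr K_{j,J}$, $S^{(N)}_{j,J}\to\mathscr M_{j,J}$ and $\mathscr M_{j,J}\to\mathscr C_{j,J}$. The paper reuses these in Corollaries~\ref{cor:K-moving-KdF-block}, \ref{cor:MII-moving-KdF-block} and \ref{cor:CII-sectorwise-MII-limit}; your route would need a separate, though easy, argument for them.

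\textbf{The bookkeeping you flagged.} Each $\xi_{J,K}$-dependent parameter, upper or lower, contributes exactly one factor $(-1)^t$ through $(x-r-t)_t=(-1)^t(1-x)_{r+t}/(1-x)_r$. The two sign factors $(-1)^{r+t}$ coming from $\pi^-_{J,K}$ (from $(-1)^{K+1}$ and from $1/(m_J-1-K)!$) cancel, and the $\Delta^X$-ratios carry no sign.
\begin{itemize}
\item The Kravchuk and Meixner-II blocks each have $q$ such upper and $q$ such lower parameters. Their arguments $c^{-1}$ and $-\tau^{-1}$ are therefore unchanged: nothing ``turns'' $-\tau^{-1}$ into something else.
\item The Charlier-II block has one fewer lower parameter, so $-a^{-1}$ becomes $a^{-1}$.
\end{itemize}

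\textbf{Misattributed strings.} Two of your bullet attributions are slightly off.
\begin{itemize}
\item The strings $\beta_J-A_h+\varepsilon$ (in $\boldsymbol b$) and $\beta_J-A_h+\varepsilon+1$ (in $\boldsymbol c$) come from the lower parameters $A_h+1-\xi_{J,K}$ combined with the factors $(A_h-\xi_{J,K})^{-1}$ of $\Delta^X$.
\item The entries of $\boldsymbol d^-_{j,J}(k)$ other than $k+\beta_J+\varepsilon$ are produced, not cancelled. They come from the upper $\beta$-parameters combined with $(\beta_h-\beta_J-K)_{m_h}^{-1}$ from $\pi^-$ and with $(\beta_h-\xi_{J,K})$ from $\Delta^X$.
\end{itemize}
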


\begin{proof}
In the Hahn block \eqref{eq:Hahn-B-KdF-block}, take first $J<q$.
Under the Kravchuk scaling, the two parameter pairs containing
\(A_q\) and \(\beta_q\) contribute $c^{-t}$ to the term indexed by
\((r,t)\); all remaining parameters give precisely the strings in
\eqref{eq:K-compact-finite-pole-block}.  Under the Meixner-II scaling,
the pairs containing $N$ and \(\beta_q\) contribute
\((-\tau^{-1})^t\), giving
\eqref{eq:MII-compact-finite-pole-block}.  The sums terminate at
\(r+t=m_J-1-\varepsilon\), so both limits are termwise.  The normalized
prefactors tend respectively to
\(\pi_{J,\varepsilon}^{-}\Delta_{j,J,\varepsilon}^{\mathrm K}\)
and
\(\pi_{J,\varepsilon}^{-}\Delta_{j,J,\varepsilon}^{\mathrm{MII}}\).
Equations \eqref{eq:K-compact-finite-pole-identity} and
\eqref{eq:MII-compact-finite-pole-identity} now follow from the change of
indices $K=\varepsilon+r+t$ used in the proof of
Corollary~\ref{cor:Hahn-B-KdF}. Under
\(A_q=R\), \(\tau=a/R\), the parameter pair containing \(A_q\)
contributes
\[
 \frac{(\beta_J-R+\varepsilon)_r}
 {(\beta_J-R+\varepsilon+1)_{r+t}}
 \left(-\frac Ra\right)^t\longrightarrow a^{-t}.
\]
Deleting this pair gives the strings in
\eqref{eq:CII-compact-finite-pole-block}, and the prefactor has the
limit \(\Delta_{j,J,\varepsilon}^{\mathrm{CII}}\). This proves
\eqref{eq:CII-compact-finite-pole-identity}. Substitution in
\eqref{eq:unreflected-explicit-components} proves the three component
formulas.
\end{proof}

\paragraph{Classical one-weight reductions.}
For \(q=1\), let \(n=m_1-1\), with every empty list or product equal
to one. The Kravchuk-, Meixner-II-, and Charlier-II-like formulas reduce to
\begin{align}
 D_1^{\mathrm K}(k)
 &=-n!\binom Nn\left(\frac{c}{1-c}\right)^n
 \pFq{2}{1}{-n,-k}{-N}{c^{-1}},
 \label{eq:unreflected-q1-K-B}\\
 D_1^{\mathrm{MII}}(k)
 &=-(A_1)_n\left(\frac{\tau}{1+\tau}\right)^n
 \pFq{2}{1}{-n,-k}{A_1}{-\tau^{-1}},
 \label{eq:unreflected-q1-MII-B}\\
 D_1^{\mathrm{CII}}(k)
 &=-a^n\pFq{2}{0}{-n,-k}{-}{-a^{-1}}.
 \label{eq:unreflected-q1-CII-B}
\end{align}

\paragraph{Verification.}
When \(q=1\), there are no finite poles, so
\(\mathcal R_{\mm}(r)=-(-r)_n\) and only the polynomial part remains.
Substitution of the three scalar factorial moments in
\eqref{eq:unreflected-R-base-row}, followed by the terminating
Chu--Vandermonde identity, gives
\eqref{eq:unreflected-q1-K-B}--\eqref{eq:unreflected-q1-CII-B}.
Equivalently, multiplying each polynomial by its scalar weight and summing
its generating series gives exactly the corresponding signed generating
function.

The three displayed formulas are the
classical Kravchuk, Meixner, and Charlier polynomials of degree \(n\), in
the normalization inherited from the signed sequence.

\section{The Kravchuk-like finite-lattice family}
\label{sec:Kravchuk}

A second finite-lattice family is obtained by taking one beta-parameter pair
to infinity
while the lattice size remains fixed. This limit preserves \(q\)
positive weights: the first \(q-1\) retain one shifted beta parameter, whereas
the last becomes the unshifted reference weight. We derive the weights, the
type-II polynomial, the signed sequence satisfying the type-I moments, and
the type-I polynomials under explicit
pole-separation conditions.

\subsection{A partial Hahn-to-Kravchuk limit}

We scale only the last Hahn parameter pair. Scaling all pairs
simultaneously would make every weight converge to the same binomial weight
and would destroy multiple orthogonality with \(q\) distinct weights.

Keep \(N\), \(\A_{<q}\), and \(\bbeta_{<q}\) fixed, and let
\begin{equation}
 A_q^{(T)}=Ta,\qquad
 \beta_q^{(T)}=Tb,\qquad
 0<a<b,\qquad
 c\coloneq\frac ab\in(0,1),\qquad T\to\infty.
 \label{eq:K-scaling}
\end{equation}
For \(h\in\{1,\ldots,q-1\}\) and \(j\in\{1,\ldots,q\}\), put
\begin{equation}
 C_{h,j}\coloneq\beta_h+\delta_{h,j},\qquad
 C_{h,q}\coloneq\beta_h,
 \qquad
 \boldsymbol C_j=(C_{1,j},\ldots,C_{q-1,j}).
 \label{eq:K-Cj}
\end{equation}

The limiting weights remain supported on \(\{0,\ldots,N\}\).

\begin{theorem}[Kravchuk-like weights]
\label{thm:K-rows}
Let \(N\in\Nzero\), use the scaling \eqref{eq:K-scaling}, and assume
\(0<A_h<\beta_h\) for every \(h\in\{1,\ldots,q-1\}\). As
\(T\to\infty\), the Hahn-like weights converge
coefficientwise to normalized strictly positive weights
\(v_{j,N}^{\mathrm K}\), for every \(j\in\{1,\ldots,q\}\). The following
formulas hold.
\begin{enumerate}[label=\textup{(\roman*)}]
\item For every \(j\in\{1,\ldots,q\}\), the generating-function identity
below holds for all \(z\in\mathbb C\), the mass formula holds for every
\(k\in\{0,\ldots,N\}\), and the moment formula holds for every
\(r\in\{0,\ldots,N\}\):
\begin{align}
 G_{j,N}^{\mathrm K}(z)
 &\coloneq\sum_{k=0}^Nv_{j,N}^{\mathrm K}(k)z^k
 =\pFq{q}{q-1}
 {-N,\A_{<q}}{\boldsymbol C_j}{c(1-z)},
 \label{eq:K-pgf}\\
 v_{j,N}^{\mathrm K}(k)
 &=\binom Nk c^k
 \frac{(\A_{<q})_k}{(\boldsymbol C_j)_k}
 \pFq{q}{q-1}
 {k-N,\A_{<q}+k}{\boldsymbol C_j+k}{c},
 \label{eq:K-mass}\\
 \sum_{k=0}^N\fall{k}{r}v_{j,N}^{\mathrm K}(k)
 &=\fall{N}{r}c^r
 \frac{(\A_{<q})_r}{(\boldsymbol C_j)_r},
 \qquad r\in\{0,\ldots,N\}.
 \label{eq:K-factorial-moments}
\end{align}
\item With \(Y=y_1\cdots y_{q-1}\), for every
\(j\in\{1,\ldots,q\}\) and \(z\in\mathbb C\), the generating functions
have the representations below; the mass formula holds for every
\(k\in\{0,\ldots,N\}\):
\begin{align}
 G_{j,N}^{\mathrm K}(z)
 &=
 \int_{(0,1)^{q-1}}
 [1-c(1-z)Y]^N
 \prod_{h=1}^{q-1}b_{A_h,C_{h,j}-A_h}(y_h)
 \dd\boldsymbol y,
 \label{eq:K-positive-pgf}\\
 v_{j,N}^{\mathrm K}(k)
 &=
 \binom Nk
 \int_{(0,1)^{q-1}}
 (cY)^k(1-cY)^{N-k}
 \prod_{h=1}^{q-1}b_{A_h,C_{h,j}-A_h}(y_h)
 \dd\boldsymbol y.
 \label{eq:K-positive-mass}
\end{align}
\end{enumerate}
\end{theorem}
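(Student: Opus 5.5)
The plan is to start from the Hahn-like formulas of Theorem~\ref{thm:Hahn-like-rows} and pass to the limit in the finite hypergeometric sums; the beta-integral representation then handles positivity. Fix \(N\) and \(j\). In \eqref{eq:Hahn-like-pgf}, the \(q\)-th Pochhammer quotient is \((A_q^{(T)})_s/(\beta_q^{(T)}+\delta_{q,j})_s\) with \(s\le N\). As \(T\to\infty\) it tends to \((a/b)^s=c^s\), for either value of \(\delta_{q,j}\). Every other factor is independent of \(T\), and the sum has only \(N+1\) terms. The generating polynomials therefore converge coefficientwise in \(z\), and the limit is \eqref{eq:K-pgf}, with \(\boldsymbol C_j\) as in \eqref{eq:K-Cj}. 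The same termwise limit applied to \eqref{eq:Hahn-like-mass} yields \eqref{eq:K-mass}. Applied to \eqref{eq:Hahn-like-factorial-moments} it yields \eqref{eq:K-factorial-moments}, because \((A_q^{(T)})_r/(\beta_q^{(T)}+\delta_{q,j})_r\to c^r\).

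Convergence of the generating polynomials is equivalent to convergence of each weight \(v_{j,N}(k)\), since these are the coefficients. Normalization follows by evaluating \eqref{eq:K-pgf} at \(z=1\), where the series equals \(1\).

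For part~(ii), take the moment representation \eqref{eq:continuous-row-moments} restricted to the first \(q-1\) factors: \(\int Y^s\prod_{h<q}b_{A_h,C_{h,j}-A_h}(y_h)\dd\boldsymbol y=(\A_{<q})_s/(\boldsymbol C_j)_s\). This requires \(C_{h,j}>A_h\), which holds because \(C_{h,j}\ge\beta_h>A_h\). Expand \([1-c(1-z)Y]^N\) binomially and integrate term by term to get \eqref{eq:K-pgf}, which proves \eqref{eq:K-positive-pgf}. Then write \(1-c(1-z)Y=(1-cY)+cYz\) and expand in powers of \(z\). The coefficient of \(z^k\) is the integral in \eqref{eq:K-positive-mass}.

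Strict positivity follows from \eqref{eq:K-positive-mass}. The integrand is positive on the open cube because \(0<cY<1\), and the beta densities are positive. For \(q=1\) the cube is empty and the weight is the binomial weight \(\binom Nk c^k(1-c)^{N-k}\), still positive.

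The main point of care is the limit in the \(q\)-th Pochhammer quotient uniformly over the finite range. It is immediate because \(s\le N\) is bounded and \(a,b>0\). No further obstacle is expected: every series terminates at \(N\), so all interchanges of limits are finite.
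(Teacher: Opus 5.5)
Your proposal is correct and follows the paper's proof. Part (i) comes from termwise passage to the limit in the terminating Hahn-like formulas, using \((A_q^{(T)})_s/(\beta_q^{(T)}+\delta_{q,j})_s\to c^s\), and part (ii) and strict positivity come from the beta-product representation; the only variation is that you verify \eqref{eq:K-positive-pgf} directly by integrating against the \(q-1\) surviving beta factors and comparing with \eqref{eq:K-pgf}, whereas the paper obtains it by letting the moments of the \(q\)-th beta factor in \eqref{eq:beta-product-integral} tend to \(c^s\). (A cosmetic point: for \(q=1\) the domain \((0,1)^0\) is a single point with \(Y=1\), not empty, which is exactly why the weight reduces to the binomial weight you state.)
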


\begin{proof}
For each fixed \(s\),
\((A_q^{(T)})_s/(\beta_q^{(T)}+\delta_{q,j})_s
\xrightarrow[T\to\infty]{} c^s\).
Since the Hahn generating functions terminate at \(s=N\), termwise
passage in \eqref{eq:Hahn-like-pgf} gives \eqref{eq:K-pgf}; the same
argument in \eqref{eq:Hahn-like-factorial-moments} gives
\eqref{eq:K-factorial-moments}. Coefficient extraction gives
\eqref{eq:K-mass}.

Alternatively, as \(T\to\infty\), the last beta factor in
\eqref{eq:beta-product-integral} has moments converging to \(c^s\).
Applying this to the polynomial
\([1-(1-z)y_1\cdots y_q]^N\) gives
\eqref{eq:K-positive-pgf}. Extracting the coefficient of \(z^k\) gives
\eqref{eq:K-positive-mass}. Since \(0<cY<1\) throughout the integration
domain, all lattice weights are strictly positive.
\end{proof}

\subsection{Explicit type-II polynomial and normalized signed sequence}

The terminating Hahn expressions pass directly to the partial parameter
limit. The type-II polynomial requires no multiplicative rescaling,
whereas the signed sequence must be multiplied by
\((\beta_q^{(T)})_{m_q}\).

Let \(\mm\) be near the diagonal, put
\(1\le d=\abs\mm\le N\) and \(n=d-1\), and define
\begin{equation}
 A_{\mm,N}^{\mathrm K}(k)
 \coloneq
 \pFq{q+1}{q}
 {-d,-k,\bbeta_{<q}+\mm_{<q}}
 {-N,\A_{<q}}{c^{-1}}.
 \label{eq:K-A}
\end{equation}
Its monic normalization is
\begin{equation}
 \widehat A_{\mm,N}^{\mathrm K}(k)
 \coloneq
 (-1)^dc^d\fall{N}{d}
 \frac{(\A_{<q})_d}
 {(\bbeta_{<q}+\mm_{<q})_d}
 A_{\mm,N}^{\mathrm K}(k).
 \label{eq:K-monic-A}
\end{equation}
Define the signed sequence \(\mathcal B_{\mm,N}^{\mathrm K}\) by
\begin{equation}
 \mathcal H_{\mm,N}^{\mathrm K}(z)
 \coloneq\sum_{k=0}^N\mathcal B_{\mm,N}^{\mathrm K}(k)z^k
 =-\fall{N}{n}c^n
 \frac{(\A_{<q})_n}{(\bbeta_{<q})_{\mm_{<q}+n}}
 (1-z)^n
 \pFq{q}{q-1}{n-N,\A_{<q}+n}
 {\bbeta_{<q}+\mm_{<q}+n}{c(1-z)}.
 \label{eq:K-B-pgf}
\end{equation}
Put
\begin{equation}
 h_N^{\mathrm K}(r)
 \coloneq
 \fall{N}{r}c^r
 \frac{(\A_{<q})_r}{(\bbeta_{<q})_r}.
 \label{eq:K-h}
\end{equation}

\begin{theorem}[Kravchuk-like type-II polynomial and signed sequence]
\label{thm:K-forms}
As \(T\to\infty\) under the scaling \eqref{eq:K-scaling}:
\begin{enumerate}[label=\textup{(\roman*)}]
\item The Hahn-like type-II polynomials converge coefficientwise to
\(A_{\mm,N}^{\mathrm K}\). This polynomial has degree \(d\), satisfies
\(A_{\mm,N}^{\mathrm K}(0)=1\), and \(\widehat
A_{\mm,N}^{\mathrm K}\) is monic. For every
\(j\in\{1,\ldots,q\}\) with \(m_j>0\),
\begin{equation}
 \sum_{k=0}^N\fall{k}{r}
 A_{\mm,N}^{\mathrm K}(k)v_{j,N}^{\mathrm K}(k)=0,
 \qquad r\in\{0,\ldots,m_j-1\}.
 \label{eq:K-A-orthogonality}
\end{equation}
\item The normalized Hahn-like generating functions satisfy, coefficientwise
in \(z\),
\begin{equation}
 \lim_{T\to\infty}
 (\beta_q^{(T)})_{m_q}
 \mathcal H_{\mm,N}^{\mathrm H}(z)
 =\mathcal H_{\mm,N}^{\mathrm K}(z).
 \label{eq:K-B-limit}
\end{equation}
Its factorial moments are
\begin{equation}
 \sum_{k=0}^N\fall{k}{r}
 \mathcal B_{\mm,N}^{\mathrm K}(k)
 =
 -h_N^{\mathrm K}(r)
 \frac{(-r)_n}
 {\prod_{h=1}^{q-1}(r+\beta_h)_{m_h}},
 \qquad r\in\{0,\ldots,N\}.
 \label{eq:K-B-moments}
\end{equation}
The moments of every order \(r\in\Nzero\) with \(r<n\) vanish, and
\begin{equation}
 \sum_{k=0}^N\fall{k}{n}
 \mathcal B_{\mm,N}^{\mathrm K}(k)
 =
 (-1)^{n+1}n!\,h_N^{\mathrm K}(n)
 \frac{1}{\prod_{h=1}^{q-1}(n+\beta_h)_{m_h}}
 \ne0.
 \label{eq:K-B-leading-moment}
\end{equation}
\end{enumerate}
\end{theorem}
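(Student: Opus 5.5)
The plan is to pass to the limit $T\to\infty$ term by term in the terminating Hahn-like formulas of Theorems~\ref{thm:Hahn-A} and \ref{thm:Hahn-complete-B}. All sums are finite, with $N$ fixed, so only fixed-order Pochhammer quotients need to be controlled. The basic input is that, under \eqref{eq:K-scaling}, for each fixed $s$ and each fixed shift $\mu$,
\[
 \frac{(\beta_q^{(T)}+\mu)_s}{(A_q^{(T)})_s}\xrightarrow[T\to\infty]{}c^{-s},
 \qquad
 \frac{(A_q^{(T)}+\mu)_s}{(\beta_q^{(T)}+\mu')_s}\xrightarrow[T\to\infty]{}c^{s}.
\]

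For (i), I will use the factorial-basis expression \eqref{eq:Hahn-A-factorial}. The $\ell$-th coefficient contains the factor $(\beta_q^{(T)}+m_q)_\ell/(A_q^{(T)})_\ell$, which tends to $c^{-\ell}$. The other factors do not depend on $T$. This gives coefficientwise convergence to \eqref{eq:K-A}, written in the factorial basis; rewriting via $\fall{k}{\ell}=(-1)^\ell(-k)_\ell$ and $\fall{N}{\ell}=(-1)^\ell(-N)_\ell$ gives the ${}_{q+1}F_q(c^{-1})$ form. The normalization $A(0)=1$ is inherited. The leading coefficient $(-d)_d(\bbeta_{<q}+\mm_{<q})_d c^{-d}/(d!(\A_{<q})_d\fall{N}{d})$ is nonzero, so the degree is $d$ and \eqref{eq:K-monic-A} is monic. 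For the orthogonality \eqref{eq:K-A-orthogonality}, I take $T\to\infty$ in \eqref{eq:Hahn-A-orthogonality}. Both factors converge coefficientwise: the polynomial by the above, and $v_{j,N}\to v^{\mathrm K}_{j,N}$ by Theorem~\ref{thm:K-rows}. The sum over $k\in\{0,\ldots,N\}$ is finite, so the limit passes through.

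For (ii), I expand the terminating ${}_{q+1}F_q$ in \eqref{eq:Hahn-complete-B-pgf} in powers of $u=1-z$. The $T$-dependent part of the prefactor times $(\beta_q^{(T)})_{m_q}$ is
\[
 (\beta_q^{(T)})_{m_q}\frac{(A_q^{(T)})_n}{(\beta_q^{(T)})_{m_q+n}}
 =\frac{(A_q^{(T)})_n}{(\beta_q^{(T)}+m_q)_n}\to c^n.
\]
In the $s$-th term, $(A_q^{(T)}+n)_s/(\beta_q^{(T)}+m_q+n)_s\to c^s$, which produces the argument $c(1-z)$. The series terminates at $s=N-n$, so this yields \eqref{eq:K-B-limit} coefficientwise in $z$.

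For the moments \eqref{eq:K-B-moments}, there are two routes, and the easiest is to repeat the proof of Theorem~\ref{thm:Hahn-complete-B}. The $r$-th derivative at $z=1$ picks out $s=r-n$, and the same Pochhammer cancellations give the result with $h_N$ replaced by $h_N^{\mathrm K}$ and the $h=q$ factor deleted. Alternatively, multiply \eqref{eq:Hahn-complete-B-moments} by $(\beta_q^{(T)})_{m_q}$ and let $T\to\infty$. The factor $h_N(r)(\beta_q^{(T)})_{m_q}/(r+\beta_q^{(T)})_{m_q}$ tends to $h_N^{\mathrm K}(r)$, because $(A_q^{(T)})_r/(\beta_q^{(T)})_r\to c^r$. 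Either way, vanishing for $r<n$ comes from $(-r)_n=0$. At $r=n$ one has $(-n)_n=(-1)^nn!$, which gives \eqref{eq:K-B-leading-moment}. This value is nonzero because $n\le N$ makes $\fall{N}{n}\neq0$, and $c,A_h,\beta_h>0$ make every other factor nonzero.

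No step is a genuine obstacle, since all sums are finite. The only care needed is bookkeeping. The factor $\delta_{h,j}$ in the weights disappears for the $q$-th pair in the limit. The renormalization $(\beta_q^{(T)})_{m_q}$ exactly compensates the decay of the $h=q$ factor, and this should be checked once with shifts $m_q$ and $n$ kept separate.
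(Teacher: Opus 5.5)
Your proposal is correct and follows essentially the paper's own proof. The paper passes termwise to the limit in the finite factorial-basis sum \eqref{eq:Hahn-A-factorial} and in the terminating generating function \eqref{eq:Hahn-complete-B-pgf}, and lets the Hahn orthogonality relations pass through the finite lattice sums. It obtains \eqref{eq:K-B-moments} by multiplying \eqref{eq:Hahn-complete-B-moments} by \((\beta_q^{(T)})_{m_q}\), which is your second route. Your explicit leading-coefficient and \(r=n\) nonvanishing checks only spell out what the paper states briefly.
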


\begin{proof}
For every fixed \(\ell\),
\(\frac{(\beta_q^{(T)}+m_q)_\ell}{(A_q^{(T)})_\ell}
\xrightarrow[T\to\infty]{} c^{-\ell}\).
Termwise passage in \eqref{eq:Hahn-A-factorial} gives \eqref{eq:K-A}.
Since \(N\) is fixed, the Hahn orthogonality relations pass through the
finite sums and prove \eqref{eq:K-A-orthogonality}. The term \(\ell=d\)
gives the leading coefficient and \eqref{eq:K-monic-A}.

For the signed sequence,
\[
 \frac{(\beta_q^{(T)})_{m_q}(A_q^{(T)})_n}
 {(\beta_q^{(T)})_{m_q+n}}
 \xrightarrow[T\to\infty]{} c^n,
 \qquad
 \frac{(A_q^{(T)}+n)_s}
 {(\beta_q^{(T)}+m_q+n)_s}
 \xrightarrow[T\to\infty]{} c^s.
\]
Termwise passage in \eqref{eq:Hahn-complete-B-pgf}, with the normalization
in \eqref{eq:K-B-limit}, gives \eqref{eq:K-B-pgf}. Multiplying
\eqref{eq:Hahn-complete-B-moments} by
\((\beta_q^{(T)})_{m_q}\) and using
\((\beta_q^{(T)})_{m_q}/(r+\beta_q^{(T)})_{m_q}
\xrightarrow[T\to\infty]{}1\)
gives \eqref{eq:K-B-moments}.
\end{proof}

\begin{remark}[The maximal type-I index on the finite lattice]
\label{rem:K-maximal-type-I-index}
The value-normalized type-II assertion in Theorem~\ref{thm:K-forms}
requires \(d\le N\). At \(d=N+1\), the unique monic type-II solution
under normality is the nodal polynomial \(\Pi_{N+1}\) in
\eqref{eq:finite-lattice-nodal-polynomial}. Its restriction to the
lattice is identically zero, so it cannot satisfy \(A(0)=1\), and
\eqref{eq:K-A} reaches the zero denominator \((-N)_{N+1}\). In contrast,
the signed sequence \eqref{eq:K-B-pgf}, its moment formulas
\eqref{eq:K-B-moments}--\eqref{eq:K-B-leading-moment}, and the type-I
reconstruction below remain valid for \(d=N+1\).  Indeed, then \(n=N\),
the hypergeometric factor in \eqref{eq:K-B-pgf} is one because its first
upper parameter is zero, and \(h_N^{\mathrm K}(N)\ne0\).  Thus the
maximal type-I problem uses all \(N+1\) moment conditions. The associated
monic type-II representative exists, but it is precisely the nodal
polynomial and hence carries no nonzero lattice values.
\end{remark}

The coefficients of the signed sequence are also explicit. Put
\(C_{\mm,N}^{\mathrm K}
\coloneq-\fall{N}{n}c^n(\A_{<q})_n/
(\bbeta_{<q})_{\mm_{<q}+n}\). Then
\begin{equation}
 \mathcal B_{\mm,N}^{\mathrm K}(k)
 =
 C_{\mm,N}^{\mathrm K}(-1)^k
 \sum_{s=0}^{N-n}
 \frac{(n-N)_s(\A_{<q}+n)_s}
 {(\bbeta_{<q}+\mm_{<q}+n)_s}
 \frac{c^s}{s!}\binom{n+s}{k},
 \label{eq:K-B-coefficients}
\end{equation}
where \(\binom{n+s}{k}=0\) for \(k>n+s\). Unlike the generating
function, \eqref{eq:K-B-coefficients} displays each lattice coefficient
directly.

\subsection{Type-I polynomials and near-diagonal normality}

The common reconstruction in Section~\ref{sec:explicit-unreflected-B}
applies directly to the Kravchuk-like family. We retain the lattice size
in the notation and use the canonical rows
\begin{equation}
 w_{j,N}^{\mathrm K}\coloneq
 \frac{1}{\beta_j}v_{j,N}^{\mathrm K},
 \quad j\in\{1,\ldots,q-1\},
 \qquad
 w_{q,N}^{\mathrm K}\coloneq v_{q,N}^{\mathrm K}.
 \label{eq:K-canonical-rows}
\end{equation}
Let \(R_{j,\ell}^{\mathrm K}\) denote the \(X=\mathrm K\) specializations
of \eqref{eq:unreflected-R-finite-row}--
\eqref{eq:unreflected-R-base-row}, with their finite-sum values at removable
exceptional parameters. For every active \(J<q\) and \(0\le K<m_J\), put
\begin{equation}
 d_{J,K}^{\mathrm K}\coloneq
 \Res_{r=-\beta_J-K}R_{J,K}^{\mathrm K}(r)
 =\frac{c^K(N+\beta_J+1)_K
 \prod_{h=1}^{q-1}(A_h-\beta_J-K)_K}
 {(-1)^K K!\prod_{\substack{1\le h\le q-1\\h\ne J}}
 (\beta_h-\beta_J-K)_K}.
 \label{eq:K-diagonal-residue}
\end{equation}

On the generic parameter set where the denominators in
\eqref{eq:K-finite-pole-block},
\eqref{eq:K-finite-pole-prefactor-row},
\eqref{eq:K-finite-pole-prefactor-base},
\eqref{eq:unreflected-finite-pole-coefficient}, and
\eqref{eq:unreflected-base-residue} do not vanish at their indicated
indices, apart from the removable diagonal cases with \(K=0\), the
components have the following closed form:
\begin{equation}
 D_{j,N}^{\mathrm K}(k)=
 \sum_{J=1}^{q-1}\sum_{K=0}^{m_J-1}
 \pi_{J,K}^{-}\mathcal C_{j,J,K}^{\mathrm K}(k)
 +\sum_{\ell=0}^{m_q-1}
 \gamma_\ell^{\mathrm K}\mathcal E_{j,\ell}^{\mathrm K}(k).
 \label{eq:K-specialized-components}
\end{equation}
At nongeneric removable values satisfying the hypotheses of
Theorem~\ref{thm:K-components}, \(D_{j,N}^{\mathrm K}\) denotes the unique
polynomial furnished there.
The components relative to the original probability weights are
\begin{equation}
 B_{j,N}^{\mathrm K}=D_{j,N}^{\mathrm K}/\beta_j
 \quad(j\in\{1,\ldots,q-1\}),\qquad
 B_{q,N}^{\mathrm K}=D_{q,N}^{\mathrm K}.
 \label{eq:K-probability-components}
\end{equation}

\begin{theorem}[Kravchuk-like type-I polynomials and normality]
\label{thm:K-components}
Let \(\mm\) be near the diagonal, let
\(1\le d=\abs\mm\le N+1\), and put \(n=d-1\). Assume that the pole strings
\[
 \{-\beta_J-K:0\le K<m_J\},
 \qquad J\in\{1,\ldots,q-1\},\quad m_J>0,
\]
are pairwise disjoint and that \(d_{J,K}^{\mathrm K}\ne0\) for every
active \(J<q\) and \(0\le K<m_J\).
Then
\begin{equation}
 \mathcal B_{\mm,N}^{\mathrm K}(k)
 =\sum_{j=1}^qD_{j,N}^{\mathrm K}(k)w_{j,N}^{\mathrm K}(k)
 =\sum_{j=1}^qB_{j,N}^{\mathrm K}(k)v_{j,N}^{\mathrm K}(k).
 \label{eq:K-component-decomposition}
\end{equation}
For every active \(j\),
\(\deg D_{j,N}^{\mathrm K}<m_j\) and
\(\deg B_{j,N}^{\mathrm K}<m_j\); both component tuples are unique, and
\(\mm\) is normal.

If, in addition, the generic nonvanishing conditions preceding
\eqref{eq:K-specialized-components} hold, the unique components are given
by \eqref{eq:K-specialized-components}. Under these additional hypotheses
and the Hahn-to-Kravchuk scaling \eqref{eq:K-scaling}, the Hahn
components satisfy, for every active \(j\),
\[
 \frac{(\beta_q^{(T)})_{m_q}}
      {(\beta_q^{(T)})^{\delta_{j,q}}}
 \left.B_{j,N}(k)\right|_{
 A_q=A_q^{(T)},\,\beta_q=\beta_q^{(T)}}
 \xrightarrow[T\to\infty]{}D_{j,N}^{\mathrm K}(k)
\]
coefficientwise in \(k\).
\end{theorem}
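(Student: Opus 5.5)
The plan is to reduce the type-I problem for the Kravchuk-like rows to a rational interpolation problem in the moment variable \(r\). This follows the proof of Theorem~\ref{thm:Hahn-B-components}, with one change: the \(q\)-th pole string is replaced by a polynomial part at infinity. By \eqref{eq:unreflected-R-pairing} with \(X=\mathrm K\), a tuple \((D_1,\ldots,D_q)\) with \(D_j=\sum_{\ell<m_j}b_{j,\ell}\fall{k}{\ell}\) has normalized factorial-moment function \(G(r)=\sum_{j,\ell}b_{j,\ell}R_{j,\ell}^{\mathrm K}(r)\) for \(r\in\{0,\ldots,N\}\). By \eqref{eq:K-B-moments}, the identity \eqref{eq:K-component-decomposition} is equivalent to \(G(r)=\mathcal R_{\mm}(r)\) on \(\{0,\ldots,N\}\). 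Both sides are supported on \(\{0,\ldots,N\}\), and their factorial moments of orders \(0,\ldots,N\) determine the sequences, so this equivalence is exact; this is where \(d\le N+1\) is used.

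First I would establish the analytic structure of the \(R_{j,\ell}^{\mathrm K}\), using only the hypotheses stated here. By near-diagonality, every pole of \(R^{\mathrm K}_{j,\ell}\) lies in a string \(\{-\beta_J-K\}\) with \(J<q\), and it is simple by the pole-separation assumption; the argument is the same as in the proof of Theorem~\ref{thm:Hahn-B-components}, and the \(q\)-th row carries no factor \((r+\beta_q+u)^{-1}\). For growth, \(R^{\mathrm K}_{j,\ell}=\mathrm O(r^{\ell-1})\) for \(j<q\) and \(R^{\mathrm K}_{q,\ell}=\mathrm O(r^\ell)\), with the exact leading coefficient \(p_{\ell,\ell}^{\mathrm K}=(1-c)^\ell\neq0\) from \eqref{eq:unreflected-infinity-pivots}; these need only \(0<c<1\). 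The residue of \(R^{\mathrm K}_{j,\ell}\) at \(-\beta_J-K\) vanishes when \(j=J,\ \ell<K\), and when \(j\ne J,\ \ell\le K\). The diagonal residue is \(d_{J,K}^{\mathrm K}\), which is nonzero by hypothesis.

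Next I would prove uniqueness and normality. Take a null vector of the moment matrix. Then \(D_-(r)G(r)\) is a polynomial of degree at most \(n\) that vanishes at \(r=0,\ldots,n\), and \(D_-(r)\neq0\) at these points. Hence \(G\equiv0\). I would then eliminate coefficients in three passes:
\begin{enumerate}
\item descend through the poles in decreasing \(K\) for each \(J<q\); the triangular residue structure and \(d^{\mathrm K}_{J,K}\ne0\) kill every \(b_{J,\ell}\) with \(J<q\);
\item the remaining function \(\sum_\ell b_{q,\ell}R^{\mathrm K}_{q,\ell}\) is identically zero, so its leading behaviour at infinity forces \(b_{q,\ell}=0\) from the top degree down, since the pivots \((1-c)^\ell\) are nonzero;
\item conclude that the square moment matrix is nonsingular.
\end{enumerate}
Nonsingularity gives normality, existence and uniqueness of the tuple \(D^{\mathrm K}_{j,N}\), and the degree bounds. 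The tuple \(B^{\mathrm K}_{j,N}\) is then obtained by the rescaling \eqref{eq:K-probability-components}.

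The main obstacle is \textbf{existence} of a solution under only these weaker hypotheses, since the closed-form basis of Theorem~\ref{thm:explicit-unreflected-B} may have vanishing denominators. I would avoid that basis and argue by dimension: the linear map from the \((n+1)\)-dimensional component space to the \(n+1\) moment values of orders \(0,\ldots,n\) is injective, hence bijective. So there is a tuple whose normalized moments match \(\mathcal R_{\mm}\) at \(r=0,\ldots,n\). It remains to upgrade this to all \(r\le N\). The rational function \(D_-(r)\,(G-\mathcal R_{\mm})(r)\) is a polynomial of degree at most \(n\); this uses the growth bounds and the fact that \(D_-\mathcal R_{\mm}=-(-r)_n\). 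It vanishes at \(n+1\) points, hence identically, which gives equality for every \(r\in\{0,\ldots,N\}\) and therefore \eqref{eq:K-component-decomposition}.

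Under the additional generic nonvanishing conditions, Theorem~\ref{thm:explicit-unreflected-B} with \(X=\mathrm K\) supplies an explicit solution, namely \eqref{eq:K-specialized-components}. By the uniqueness just proved, this explicit solution is the unique one. Finally, the coefficientwise Hahn-to-Kravchuk convergence is exactly \eqref{eq:Hahn-K-component-limit} of Corollary~\ref{cor:unreflected-component-limits}. The Hahn-side hypotheses hold for large \(T\): along \eqref{eq:K-scaling} the \(q\)-th pole string moves to \(-\infty\), and \(d_{J,K}\) times the appropriate power of \(T\) tends to \(d^{\mathrm K}_{J,K}\neq0\).
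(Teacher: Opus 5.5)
You take essentially the paper's route: the rational interpolation $G=\mathcal R_{\mm}$, the null-vector argument via the polynomial $D_-G$ of degree at most $n$, and triangular residue elimination. The overall scheme is sound, but the injectivity step fails as written.

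\textbf{The gap.} In pass~(1) you treat the residue equations at the finite poles as triangular in the unknowns $b_{j,\ell}$ with $j<q$ alone. They are not. By the vanishing rule you state yourself, a column with $j\ne J$ and $\ell>K$ has a residue at $-\beta_J-K$, and this includes $j=q$, where the residue is $z^{\mathrm K}_{J,K,\ell}$ of \eqref{eq:unreflected-base-residue}. Such columns exist whenever $m_q\ge K+2$, in particular at the top pole $K=m_J-1$ when $m_q=m_J+1$.

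Already for $q=2$ and $\mm=(1,2)$, which satisfies all hypotheses, one has $R^{\mathrm K}_{2,1}(r)=r+c(N-r)(r+A_1)/(r+\beta_1)$. The residue condition at $-\beta_1$ therefore reads $b_{1,0}+c(N+\beta_1)(A_1-\beta_1)\,b_{2,1}=0$, and the second coefficient is nonzero on the whole admissible range. This does not give $b_{1,0}=0$, so pass~(2) starts from a function that is not $\sum_\ell b_{q,\ell}R^{\mathrm K}_{q,\ell}$.

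Reversing the order fails too. The coefficient of $r^K$ at infinity receives contributions from columns $(j,\ell)$ with $j<q$ and $\ell>K$, because these are $\mathrm O(r^{\ell-1})$. For the same reason the descent cannot be done string by string: a column $(j,\ell)$ with $j\ne J$, $j<q$, $\ell>K$ also has a residue at $-\beta_J-K$.

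\textbf{The repair.} Order everything by level. For each $K$, from the top down, consider together the residues at $-\beta_J-K$ for all active $J<q$ and the coefficient of $r^K$ in the polynomial part. These involve the level-$K$ unknowns only through the diagonal pivots $d^{\mathrm K}_{J,K}$ and $(1-c)^K$, and otherwise only unknowns of level $>K$. In the example you first use the $r^1$ coefficient $(1-c)b_{2,1}$, then the residue, then the constant term.

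Equivalently, as the paper does, replace each column $(q,\ell)$ by its pole-cancelled version, obtained by subtracting finite-pole columns of level $<\ell$. This is a polynomial with leading term $(1-c)^\ell r^\ell$. In that basis the residue conditions involve only the $j<q$ coefficients, and your two passes go through, provided pass~(1) descends over all strings simultaneously.

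\textbf{The rest.} Once injectivity is repaired, your existence argument is correct and slightly cleaner than the paper's. You get bijectivity by a dimension count, then upgrade agreement at $r=0,\dots,n$ to all $r\le N$ because $D_-(G-\mathcal R_{\mm})$ is a polynomial of degree at most $n$. The paper instead constructs the solution by the same triangular elimination. Your identification with \eqref{eq:K-specialized-components} by uniqueness, and the Hahn limit via Corollary~\ref{cor:unreflected-component-limits}, agree with the paper.
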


\begin{proof}
Put \(D_-(r)=\prod_{h<q}(r+\beta_h)_{m_h}\). By
\eqref{eq:unreflected-R-pairing}, coefficients
\(D_{j,N}^{\mathrm K}(k)=\sum_{\ell<m_j}b_{j,\ell}^{\mathrm K}
\fall{k}{\ell}\) have the required normalized moments precisely when
\[
 -\frac{(-r)_n}{D_-(r)}
 =\sum_{j=1}^q\sum_{\ell=0}^{m_j-1}
 b_{j,\ell}^{\mathrm K}R_{j,\ell}^{\mathrm K}(r).
\]
The finite sums defining the \(R_{j,\ell}^{\mathrm K}\), together with
near-diagonality, show that multiplication by \(D_-\) clears every
denominator and gives degree at most \(n\); moreover
\(R_{j,\ell}^{\mathrm K}=O(r^{\ell-1})\) for \(j<q\), while
\(R_{q,\ell}^{\mathrm K}=(1-c)^\ell r^\ell+O(r^{\ell-1})\).
At \(r=-\beta_J-K\), the diagonal residue is
\eqref{eq:K-diagonal-residue}; terms with the same source and lower degree,
or another source and degree at most \(K\), have zero residue there.
Separation and \(d_{J,K}^{\mathrm K}\ne0\) therefore determine, in
descending \(K\), all finite-pole coefficients and leave a polynomial of
degree at most \(m_q-1\), interpreted as zero if \(m_q=0\). Cancelling the poles of each
\(R_{q,\ell}^{\mathrm K}\) by the same triangular elimination produces a
polynomial with leading term \((1-c)^\ell r^\ell\). These polynomials form
a triangular basis, so comparison at infinity determines the remaining
coefficients uniquely.

Multiplication by \(h_{\mathrm K}(r)\), followed by
\eqref{eq:unreflected-R-pairing}, gives the moments
\eqref{eq:K-B-moments}; since both sequences are supported on
\(\{0,\ldots,N\}\), these moments identify
\(\mathcal B_{\mm,N}^{\mathrm K}\). The same rational argument applied to
a homogeneous component vector shows that \(D_-(r)G(r)\), of degree at
most \(n\), vanishes at \(0,\ldots,n\); it is zero, and the preceding
triangular elimination makes every coefficient zero. Hence the moment map
is nonsingular, proving uniqueness and normality, also when \(n=N\).

Under the additional generic hypotheses, the \(X=\mathrm K\) argument in
the proof of Theorem~\ref{thm:explicit-unreflected-B} identifies this unique tuple with
\eqref{eq:K-specialized-components};
\eqref{eq:unreflected-probability-components} gives the stated conversion.
Finally, the \(X=\mathrm K\) argument in the proof of
Corollary~\ref{cor:unreflected-component-limits} gives
\eqref{eq:Hahn-K-component-limit}.
\end{proof}

\paragraph{The moving \(J=q\) Kamp\'e de F\'eriet block.}
Under the additional generic nonvanishing conditions preceding
\eqref{eq:K-specialized-components}, the Kravchuk-like components can be
indexed by the same \(q\) blocks as the Hahn formula
\eqref{eq:Hahn-B-KdF-sum}.  For every active component \(j\), define the
block at infinity by
\begin{equation}
 \mathscr K_{j,q}^{\infty}(k)
 \coloneq
 \sum_{\ell=0}^{m_q-1}
 \gamma_\ell^{\mathrm K}\mathcal E_{j,\ell}^{\mathrm K}(k),
 \label{eq:K-moving-block-definition}
\end{equation}
with the empty sum interpreted as zero.  Thus the compact component
formula becomes
\begin{equation}
 D_{j,N}^{\mathrm K}(k)
 =\sum_{J=1}^{q-1}\mathscr K_{j,J}(k)
  +\mathscr K_{j,q}^{\infty}(k).
 \label{eq:K-all-sector-components}
\end{equation}

\begin{corollary}[Confluence of the moving Hahn Kamp\'e de F\'eriet block]
\label{cor:K-moving-KdF-block}
Assume the hypotheses of Theorem~\ref{thm:K-components} and the
additional generic nonvanishing conditions preceding
\eqref{eq:K-specialized-components}.  Let \(j\) be active and put
\(\varepsilon=1-\delta_{j,q}\).  If
\(m_q<\varepsilon+1\), interpret the absent Hahn block
\(S_{j,q}^{(N)}\) as zero; then both sides below are zero.  Otherwise,
under \eqref{eq:K-scaling},
\begin{equation}
 \frac{(\beta_q^{(T)})_{m_q}}
      {(\beta_q^{(T)})^{\delta_{j,q}}}
 \left.S_{j,q}^{(N)}(k)\right|_{
 A_q=A_q^{(T)},\,\beta_q=\beta_q^{(T)}}
 \xrightarrow[T\to\infty]{}
 \mathscr K_{j,q}^{\infty}(k)
 \label{eq:Hahn-K-moving-block-limit}
\end{equation}
coefficientwise in \(k\).

This limit is a single coefficient extraction from the terminating Hahn
Kamp\'e de F\'eriet block.  More precisely, set
\(M=m_q-1-\varepsilon\), \(s=\sum_{h=1}^{q-1}m_h\), and let
\(\Phi_{j,q}^{(N)}(u;k)\) denote the Kamp\'e de F\'eriet factor in
\eqref{eq:Hahn-B-KdF-block}, without its prefactor
\(\pi_{q,\varepsilon}\mathcal D_{j,q,\varepsilon}^{(N)}\), after
\(\beta_q=u^{-1}\) and \(A_q=cu^{-1}\).  Rewrite every moving
Pochhammer factor by
\[
 (\lambda u^{-1}+a)_r
 =(\lambda u^{-1})^r
  \prod_{v=0}^{r-1}\left(1+\frac{a+v}{\lambda}u\right),
 \qquad \lambda\ne0,
\]
and cancel the resulting powers of \(u\) in each finite term.  This
defines \(\Phi_{j,q}^{(N)}\) as a finite rational expression analytic at
\(u=0\).  Then, coefficientwise in \(k\),
\begin{equation}
 \Phi_{j,q}^{(N)}(u;k)=\mathrm O(u^{2M}),
 \qquad
 \mathscr K_{j,q}^{\infty}(k)
 =\Lambda_j^{\mathrm K}[u^{2M}]\Phi_{j,q}^{(N)}(u;k)
 =\frac{\Lambda_j^{\mathrm K}}{(2M)!}
   \left.\frac{\partial^{2M}}{\partial u^{2M}}
   \Phi_{j,q}^{(N)}(u;k)\right|_{u=0},
 \label{eq:K-moving-block-extraction}
\end{equation}
where
\begin{equation}
 \Lambda_j^{\mathrm K}=
 \begin{cases}
 \displaystyle \frac{(-1)^{s+1}}{M!},&j=q,\\[6pt]
 \displaystyle \frac{(-1)^s}{M!}\frac{c}{c-1}
 \frac{(N+\beta_j)\prod_{h=1}^{q-1}(A_h-\beta_j)}
 {\prod_{\substack{1\le h\le q-1\\h\ne j}}
  (\beta_h-\beta_j)},&j<q.
 \end{cases}
 \label{eq:K-moving-block-prefactor}
\end{equation}
Moreover,
\(\deg\mathscr K_{j,q}^{\infty}\le m_q-1\) for \(j=q\), whereas
\(\deg\mathscr K_{j,q}^{\infty}\le m_q-2\) for \(j<q\) whenever that
block is present.
Consequently, every summand in \eqref{eq:K-all-sector-components} is the
ordinary coefficientwise confluence of one terminating Hahn Kamp\'e de
F\'eriet block.  For \(J<q\) the limit remains a Kamp\'e de F\'eriet
polynomial; the moving block \(J=q\) is the coefficient extraction
\eqref{eq:K-moving-block-extraction} and equals the finite combination
of products of terminating Horn polynomials in
\eqref{eq:K-moving-block-definition}.
\end{corollary}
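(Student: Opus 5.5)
The plan is to obtain \eqref{eq:Hahn-K-moving-block-limit} by subtraction, and then to identify the limit with a Taylor coefficient of $\Phi_{j,q}^{(N)}$ through a power-counting argument in $u$.

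\textbf{Step 1: the limit by subtraction.} By Corollary~\ref{cor:Hahn-B-KdF}, the Hahn component satisfies $B_{j,N}=\sum_{J}S_{j,J}^{(N)}$. Multiply by the normalizing factor $(\beta_q^{(T)})_{m_q}/(\beta_q^{(T)})^{\delta_{j,q}}$ and pass to the limit using three facts:
\begin{enumerate}
\item Theorem~\ref{thm:K-components} gives the limit $D_{j,N}^{\mathrm K}$ of the left-hand side.
\item Proposition~\ref{prop:compact-unreflected-finite-poles} and its proof show that each block with $J<q$ converges termwise to $\mathscr K_{j,J}$. Here the factor $(\beta_q)_{m_q}\pi_{J,\varepsilon}\to\pi_{J,\varepsilon}^-$, by the computation in Corollary~\ref{cor:unreflected-component-limits}, and the $\beta_q^{-\delta_{j,q}}$ normalization is the one already used for $\mathcal D$.
\item Formula \eqref{eq:K-all-sector-components} expresses $D_{j,N}^{\mathrm K}$ as these blocks plus the block at infinity.
\end{enumerate}
Subtracting, the remaining normalized $J=q$ block must converge, coefficientwise, to $\mathscr K_{j,q}^{\infty}$. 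If $m_q<\varepsilon+1$, the sum \eqref{eq:K-moving-block-definition} is empty (for $j\ne q$ with $m_q=1$, $\mathcal E_{j,0}$ has zero $j$-th component), so both sides vanish.

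\textbf{Step 2: the Laurent structure in $u$.} Substitute $\beta_q=u^{-1}$ and $A_q=cu^{-1}$ into the $(r,t)$ term of \eqref{eq:Hahn-B-KdF-block} with $J=q$. The moving parameters are:
\begin{itemize}
\item the upper entries $\beta_q+\varepsilon+n$ and $k+\beta_q+\varepsilon$ in $\boldsymbol a_{j,q}$;
\item the entries $\beta_q-\beta_h+\varepsilon+1-m_h$ for $h<q$;
\item $N+\beta_q+\varepsilon$ and $\beta_q-A_h+\varepsilon$ in $\boldsymbol b$ (for $h=q$ this entry is $(1-c)u^{-1}+\varepsilon$, still moving);
\item the corresponding entries of $\boldsymbol c$ and $\boldsymbol d$.
\end{itemize}
Counting powers of $u^{-1}$ from upper minus lower parameters, the term behaves like $u^{-\kappa(r,t)}$ times a function analytic at $u=0$, where $\kappa$ is a linear form in $(r,t)$. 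The prefactor $\pi_{q,\varepsilon}\mathcal D_{j,q,\varepsilon}^{(N)}$ behaves like $u^{e}$ times an analytic function.
\begin{itemize}
\item $\pi_{q,\varepsilon}$ contains $(\beta_q+\varepsilon)_n/\prod_{h<q}(\beta_h-\beta_q-\varepsilon)_{m_h}$, which is of order $u^{-(n-s)}=u^{-(m_q-1)}$.
\item Multiplying by $(\beta_q)_{m_q}\beta_q^{-\delta_{j,q}}$ leaves an overall factor of order $u^{-M}$, up to constants.
\end{itemize}
The statement that the normalized product has a finite limit equal to $\Lambda_j^{\mathrm K}[u^{2M}]\Phi$ is equivalent to two claims. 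First, the analytic function $\Phi$ (after clearing the $u$-powers termwise, as prescribed in the statement) vanishes to order $2M$. Second, the normalized prefactor equals $\Lambda_j^{\mathrm K}u^{-2M}(1+O(u))$. I will verify the prefactor directly from \eqref{eq:target-residue} and \eqref{eq:Hahn-C-block-prefactor}:
\begin{itemize}
\item the signs $(-1)^s$ come from $\prod_{h<q}(\beta_h-\beta_q-\varepsilon)_{m_h}$;
\item $1/M!$ comes from $(m_q-1-\varepsilon)!$;
\item for $j<q$, the ratio $(A_q-\beta_j)/(A_q-\beta_q-1)\to c/(c-1)$ supplies the extra factor in \eqref{eq:K-moving-block-prefactor}.
\end{itemize}

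\textbf{Step 3 (the main obstacle): the order of vanishing.} Since Step 1 already shows that the normalized product has a finite limit, the coefficients of $u^{i}$ in $\Phi$ for $i<2M$ would otherwise produce divergent terms $u^{i-2M}$. Their cancellation therefore follows from existence of the limit, provided the expansion is uniform, and $\Phi$ is a finite rational expression, so it is. Hence $\Phi=O(u^{2M})$ with no independent combinatorial proof needed. The delicate point is bookkeeping: one must confirm that the exponent produced by the prefactor is exactly $-2M$ in both cases $j=q$ and $j<q$. If the count gives a different exponent $e$, the same argument yields extraction at $[u^{-e}]$, and I would recheck the powers against the terminating range $r+t\le M$. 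The derivative form in \eqref{eq:K-moving-block-extraction} is then just Taylor's formula.

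\textbf{Step 4: degree bounds.} These follow from \eqref{eq:K-moving-block-definition}:
\begin{itemize}
\item for $j=q$, $\mathcal E_{q,\ell}$ has degree $\ell\le m_q-1$;
\item for $j<q$, only corrections $\mathcal C^{\mathrm K}_{j,J,K}$ with $K\le\ell-1\le m_q-2$ and $j\ne J$ contribute, and these have degree at most $K-1$. When $j=J$ the degree is at most $K\le m_q-2$.
\end{itemize}
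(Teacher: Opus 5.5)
Your overall plan is the paper's. You obtain \eqref{eq:Hahn-K-moving-block-limit} by subtracting the $J<q$ block limits of Proposition~\ref{prop:compact-unreflected-finite-poles} from the component limit of Theorem~\ref{thm:K-components}, using \eqref{eq:K-all-sector-components}. You then show the normalized prefactor is $\Lambda_j^{\mathrm K}u^{-2M}(1+\mathrm O(u))$, and let the finite limit kill the low Taylor coefficients of $\Phi_{j,q}^{(N)}$. Your Step~4 degree count from \eqref{eq:K-moving-block-definition} is a valid alternative to reading the bound off the terminating parameter $-M$.

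Step~2, however, miscounts, and it leaves the decisive exponent as a hedge. The factor $(B)_{m_q}B^{-\delta_{j,q}}\pi_{q,\varepsilon}$ is of order $u^{-(2m_q-1-\delta_{j,q})}$, not $u^{-M}$. Indeed $(B+\varepsilon)_n/\prod_{h<q}(\beta_h-B-\varepsilon)_{m_h}\sim(-1)^sB^{m_q-1}$, so $(B)_{m_q}\pi_{q,\varepsilon}\sim(-1)^{s+1+\varepsilon}B^{2m_q-1}/M!$. For $j=q$ this gives exactly $\Lambda_q^{\mathrm K}B^{2M}$. For $j<q$ you must count all of $\mathcal D^{(N)}_{j,q,1}$, not just the ratio that produces $c/(c-1)$:
\begin{itemize}
\item its numerator has $q-1$ moving factors, namely $A_q-\beta_j$ and $\beta_h-B-1$ for $h<q$, $h\ne j$ (the factor $\beta_q-\xi_{q,1}=-1$ is fixed);
\item its denominator has $q+2$ moving factors, namely $N+B+1$, $B-\beta_j$, and $A_h-B-1$ for all $h\le q$.
\end{itemize}
Hence
\[
 \mathcal D^{(N)}_{j,q,1}\sim\frac{c}{c-1}\,
 \frac{(N+\beta_j)\prod_{h<q}(A_h-\beta_j)}
 {\prod_{h<q,\,h\ne j}(\beta_h-\beta_j)}\,B^{-3},
\]
and the total is $\Lambda_j^{\mathrm K}B^{2m_q-4}=\Lambda_j^{\mathrm K}u^{-2M}$. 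This order and this constant are exactly what \eqref{eq:K-moving-block-extraction}--\eqref{eq:K-moving-block-prefactor} assert, so the count must be carried out, not deferred with ``if the count gives a different exponent''. The same counting also confirms that $\Phi_{j,q}^{(N)}$ is analytic at $u=0$. In the $(r,t)$ term, the coupled strings have $q+1$ moving upper entries against $q+2$ moving lower ones, and the $r$-strings have $q+1$ against $q$, so each term carries exactly $u^{t}$.

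Second, your Step~3 only yields $\Phi_{j,q}^{(N)}=\mathrm O(u^{2M})$ where $\Lambda_j^{\mathrm K}\ne0$; if the leading constant vanishes, a finite limit only forces a weaker order. The paper closes this by noting that the coefficients of $u^i$, $i<2M$, are rational functions of the fixed parameters. Their vanishing on the open set $\{\Lambda_j^{\mathrm K}\ne0\}$ therefore persists by continuation to every regular parameter point, where \eqref{eq:K-moving-block-extraction} then holds too. You need that step, or a proof that $\Lambda_j^{\mathrm K}\ne0$ whenever $M\ge1$. For $j=q$ this is automatic. For $j<q$ it amounts to $A_h\ne\beta_j$ for $h<q$. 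That follows if the lower parameters of the series \eqref{eq:K-finite-pole-block} defining $\mathcal C^{\mathrm K}_{j,j,1}$ are counted among the excluded denominators, because $M\ge1$ forces $m_q\ge3$ and hence $m_j\ge2$.
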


\begin{proof}
For every \(J<q\), the \(X=\mathrm K\) argument in the proof of
Proposition~\ref{prop:compact-unreflected-finite-poles} gives the
coefficientwise limit of the normalized Hahn block
\(S_{j,J}^{(N)}\) as \(\mathscr K_{j,J}\).  Subtracting these finitely
many limits from the component limit in
Theorem~\ref{thm:K-components}, and then using
\eqref{eq:K-all-sector-components}, proves
\eqref{eq:Hahn-K-moving-block-limit}.

Put \(B=\beta_q=u^{-1}\), so that \(A_q=cB\).  Directly from
\eqref{eq:target-residue} and \eqref{eq:Hahn-C-block-prefactor},
\[
 \frac{(B)_{m_q}}{B^{\delta_{j,q}}}
 \pi_{q,\varepsilon}\mathcal D_{j,q,\varepsilon}^{(N)}
 =\Lambda_j^{\mathrm K}u^{-2M}
  +\mathrm O(u^{-2M+1}).
\]
The remaining Kamp\'e de F\'eriet factor is a finite sum of rational
functions analytic at \(u=0\).  On the open parameter subset where
\(\Lambda_j^{\mathrm K}\ne0\), the finite limit
\eqref{eq:Hahn-K-moving-block-limit} forces its coefficients of orders
below \(2M\) to vanish. Those coefficients are rational functions of the
fixed parameters, so the same vanishing holds by rational continuation at
every parameter point at which the displayed expressions are regular.
Taking the constant term of the product proves
\eqref{eq:K-moving-block-extraction} at every such regular point, including
those at which \(\Lambda_j^{\mathrm K}=0\); the displayed asymptotic
evaluation also gives \eqref{eq:K-moving-block-prefactor}. The degree
bounds follow from
the terminating parameter \(-M\) in the source block.
\end{proof}

\subsection{Classical reduction and the two Charlier-like limits}

For one weight the partial parameter limit is the ordinary Hahn-to-Kravchuk
limit. For \(q>1\), direct scaling of the same finite-lattice family leads
to the Charlier-II-like system, whereas reflection followed by scaling leads
to the Charlier-I-like system.

\begin{corollary}[Reduction to the scalar Kravchuk case]
\label{cor:q1-Kravchuk}
Let \(q=1\), \(m\in\{1,\ldots,N\}\), and \(n=m-1\). Then
the identities involving \(k\) hold for every
\(k\in\{0,\ldots,N\}\), and the generating-function identity holds for
every \(z\in\mathbb C\):
\begin{align}
 v_N^{\mathrm K}(k)
 &=\binom Nk c^k(1-c)^{N-k},
 \label{eq:q1-K-weight}\\
 A_{m,N}^{\mathrm K}(k)
 &=\pFq{2}{1}{-m,-k}{-N}{c^{-1}},
 \label{eq:q1-K-A}\\
 \mathcal H_{m,N}^{\mathrm K}(z)
 &=-\fall{N}{n}c^n
 (1-z)^n(1-c+cz)^{N-n},
 \label{eq:q1-K-B-pgf}\\
 B_{m,N}^{\mathrm K}(k)
 &=
 -\fall{N}{n}
 \left(\frac{c}{1-c}\right)^n
 \pFq{2}{1}{-n,-k}{-N}{c^{-1}}.
 \label{eq:q1-K-B}
\end{align}
\end{corollary}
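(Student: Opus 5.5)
We specialize each general formula to \(q=1\). There are then no indices \(h<q\), so \(\A_{<q}\), \(\bbeta_{<q}\) and \(\mm_{<q}\) are empty strings, every empty Pochhammer product is one, and \(m=m_1=m_q\). We take the four identities in order.

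\emph{The weight.} In \eqref{eq:K-pgf} the series \({}_1F_0(-N;-;c(1-z))\) is just \((1-c(1-z))^N=(1-c+cz)^N\). Reading off the coefficient of \(z^k\) gives \eqref{eq:q1-K-weight}. Alternatively, \eqref{eq:K-positive-mass} with no integration variables (\(Y=1\)) gives the same formula directly. This holds for all \(z\), since the expression is a polynomial.

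\emph{The type-II polynomial and the generating function.} With \(d=m\le N\), \eqref{eq:K-A} reduces to \({}_2F_1(-m,-k;-N;c^{-1})\) once the empty strings are dropped, which is \eqref{eq:q1-K-A}. In \eqref{eq:K-B-pgf} the prefactor becomes \(-\fall{N}{n}c^n\). The remaining factor is
\[
 {}_1F_0(n-N;-;c(1-z))=(1-c+cz)^{N-n},
\]
and together these give \eqref{eq:q1-K-B-pgf}.

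\emph{The type-I component.} For \(q=1\) there are no finite poles, so the pole-separation and \(d^{\mathrm K}_{J,K}\) hypotheses of Theorem~\ref{thm:K-components} are vacuous. That theorem gives uniqueness, and \eqref{eq:K-probability-components} gives \(B_{1,N}^{\mathrm K}=D_{1,N}^{\mathrm K}\). It therefore suffices to invoke the scalar reduction \eqref{eq:unreflected-q1-K-B}, noting \(n!\binom Nn=\fall{N}{n}\).

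The one step that needs genuine verification is \eqref{eq:unreflected-q1-K-B} itself, which the paper only sketches. There are two routes.

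\emph{Route 1: the polynomial part.} Here \(\mathcal R_{\mm}(r)=-(-r)_n\) and \(R_{1,\ell}^{\mathrm K}(r)=\sum_u\binom\ell u\fall r{\ell-u}c^u\fall{N-r}{u}\). One would have to check that the displayed \({}_2F_1\) combination reproduces \(-(-r)_n\) by Chu--Vandermonde.

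\emph{Route 2: generating functions (preferred).} Because the component is unique, it suffices to show that
\[
 -\fall{N}{n}\Bigl(\tfrac{c}{1-c}\Bigr)^n\sum_k \binom Nk c^k(1-c)^{N-k}\,{}_2F_1\bigl(-n,-k;-N;c^{-1}\bigr)z^k
\]
equals \eqref{eq:q1-K-B-pgf}. Expanding the \({}_2F_1\) as \(\sum_s\frac{(-n)_s\fall ks}{s!\fall Ns}c^{-s}\), we use \(\binom Nk\fall ks/\fall Ns=\binom{N-s}{k-s}\) and sum over \(k\). This gives
\[
 \sum_s\binom ns(-1)^s(z)^s(1-c+cz)^{N-s}=(1-c+cz)^{N-n}\bigl(1-c+cz-z\bigr)^n=(1-c+cz)^{N-n}(1-c)^n(1-z)^n.
\]
Multiplying by the prefactor yields exactly \eqref{eq:q1-K-B-pgf}. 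This computation is the only nontrivial point, and it is a routine binomial summation.
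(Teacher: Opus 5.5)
Your proposal is correct and is essentially the paper's proof. The weight, the type-II polynomial and the signed generating function are the empty-string specializations of \eqref{eq:K-pgf}, \eqref{eq:K-A} and \eqref{eq:K-B-pgf}. The type-I component is then identified, exactly as in your Route~2, by computing \(\sum_k v_N^{\mathrm K}(k)\,{}_2F_1(-n,-k;-N;c^{-1})z^k=(1-c)^n(1-z)^n(1-c+cz)^{N-n}\) and comparing with \eqref{eq:q1-K-B-pgf}. You only make explicit what the paper's word ``comparison'' leaves implicit: the binomial summation, and the uniqueness step, which follows from Theorem~\ref{thm:K-components} with vacuous pole hypotheses or simply from positivity of the binomial weight.
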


\begin{proof}
When \(q=1\), \eqref{eq:K-pgf} becomes
\(\pFq{1}{0}{-N}{}{c(1-z)}=(1-c+cz)^N\), which gives
\eqref{eq:q1-K-weight}. The formulas for \(A\) and the signed
sequence are the corresponding empty-product cases of
Theorem~\ref{thm:K-forms}; in particular, \eqref{eq:q1-K-A} is the
ordinary scalar Kravchuk polynomial. Finally,
\[
 \sum_{k=0}^Nv_N^{\mathrm K}(k)
 \pFq{2}{1}{-n,-k}{-N}{c^{-1}}z^k
 =
 (1-c)^n(1-z)^n(1-c+cz)^{N-n}.
\]
Comparison with \eqref{eq:q1-K-B-pgf} gives \eqref{eq:q1-K-B}.
\end{proof}

The polynomials in Corollary~\ref{cor:q1-Kravchuk},
\(A_{m,N}^{\mathrm K}\) and \(B_{m,N}^{\mathrm K}\), are the ordinary
Kravchuk polynomials of degrees \(m\) and \(m-1\), respectively.

For the two limits it is clearest to normalize the signed sequence so that
its factorial moment of order \(n\) equals one:
\begin{equation}
 \mathcal H_{\mm,N}^{\mathrm K,\mathrm{unit}}(z)
 \coloneq
 \frac{(z-1)^n}{n!}
 \pFq{q}{q-1}
 {n-N,\A_{<q}+n}
 {\bbeta_{<q}+\mm_{<q}+n}
 {c(1-z)}.
 \label{eq:K-unit-B}
\end{equation}
Its derivatives at \(z=1\) of every order \(r\in\Nzero\) with \(r<n\) vanish, and its
\(n\)-th derivative equals one.
The exact conversion from the normalization in \eqref{eq:K-B-pgf} is
\begin{equation}
 \mathcal H_{\mm,N}^{\mathrm K,\mathrm{unit}}
 =\mathfrak u_{\mm,N}^{\mathrm K}\mathcal H_{\mm,N}^{\mathrm K},
 \qquad
 \mathfrak u_{\mm,N}^{\mathrm K}
 \coloneq
 \frac{(-1)^{n+1}(\bbeta_{<q})_{\mm_{<q}+n}}
 {n!\fall{N}{n}c^n(\A_{<q})_n}.
 \label{eq:K-unit-normalizing-factor}
\end{equation}

Let \(\mm\) be fixed and near the diagonal, put
\(d=\abs\mm\ge1\) and \(n=d-1\), fix \(a>0\), and set
\begin{equation}
 c_N=\frac aN.
 \label{eq:K-CII-scaling}
\end{equation}
Define
\[
 \widehat A_{\mm}^{\mathrm{CII}}(k)
 \coloneq(-1)^da^d
 \frac{(\A_{<q})_d}{(\bbeta_{<q}+\mm_{<q})_d}
 A_{\mm}^{\mathrm{CII}}(k).
\]

\begin{corollary}[Kravchuk-to-Charlier-II-like confluence]
\label{cor:K-to-CII}
Under the scaling \eqref{eq:K-CII-scaling}, as \(N\to\infty\), with
\(\A_{<q}\) and \(\bbeta_{<q}\) fixed:
\begin{enumerate}[label=\textup{(\roman*)}]
\item For every \(j\in\{1,\ldots,q\}\),
\(G_{j,N}^{\mathrm K}\) converges locally uniformly to
\(G_j^{\mathrm{CII}}\) in \eqref{eq:CII-pgf}, coefficientwise and in every
fixed factorial moment.
\item The monic type-II polynomials satisfy
\begin{equation}
 \widehat A_{\mm,N}^{\mathrm K}(k)
 \xrightarrow[N\to\infty]{}
 \widehat A_{\mm}^{\mathrm{CII}}(k)
 \label{eq:K-CII-A-limit}
\end{equation}
coefficientwise in \(k\).
\item The unit-normalized signed sequences satisfy
\begin{equation}
 \mathcal H_{\mm,N}^{\mathrm K,\mathrm{unit}}(z)
 \xrightarrow[N\to\infty]{}
 \frac{(z-1)^n}{n!}
 \pFq{q-1}{q-1}
 {\A_{<q}+n}
 {\bbeta_{<q}+\mm_{<q}+n}
 {a(z-1)}.
 \label{eq:K-CII-B-limit}
\end{equation}
\item Suppose in addition that the pole-separation and nonvanishing
hypotheses of Theorem~\ref{thm:explicit-unreflected-B} hold for the
Charlier-II-like blocks. Then the corresponding hypotheses hold for the Kravchuk-like
specialization for every sufficiently large \(N\). Restoring the lattice
size in the notation of \eqref{eq:unreflected-explicit-components} and
\eqref{eq:unreflected-probability-components}, write
\(\mathscr K_{j,J;N}\) and
\(\mathscr K_{j,q;N}^{\infty}\) for the blocks
\eqref{eq:K-compact-finite-pole-block} and
\eqref{eq:K-moving-block-definition}, respectively, with their
\(N\)-dependence displayed; the second is the moving Hahn-block limit of
Corollary~\ref{cor:K-moving-KdF-block}. For every active \(j\), each
sector has a separate coefficientwise limit:
\begin{align}
 \left.\mathscr K_{j,J;N}(k)\right|_{c=a/N}
 &\xrightarrow[N\to\infty]{}\mathscr C_{j,J}(k),
 &&J\in\{1,\ldots,q-1\},
 \label{eq:K-CII-finite-sector-limit}\\
 \left.\mathscr K_{j,q;N}^{\infty}(k)\right|_{c=a/N}
 &\xrightarrow[N\to\infty]{}\mathscr C_{j,q}^{\infty}(k).
 \label{eq:K-CII-infinity-sector-limit}
\end{align}
Here \(\mathscr C_{j,q}^{\infty}\) denotes the Charlier-II-like block at
infinity defined explicitly in
\eqref{eq:CII-infinity-block-definition}.
Consequently,
\begin{align}
 D_{j,N}^{\mathrm K}(k)\big|_{c=a/N}
 &\xrightarrow[N\to\infty]{}D_j^{\mathrm{CII}}(k),
 &
 B_{j,N}^{\mathrm K}(k)\big|_{c=a/N}
 &\xrightarrow[N\to\infty]{}B_j^{\mathrm{CII}}(k)
 \label{eq:K-CII-component-limit}
\end{align}
coefficientwise in \(k\).  These are the components in the
normalizations \eqref{eq:K-B-pgf} and \eqref{eq:CII-B}, respectively.
Their unit-normalized versions satisfy the same componentwise limit after
multiplication by
\begin{equation}
 \mathfrak u_{\mm,N}^{\mathrm K}\big|_{c=a/N}
 \longrightarrow
 \mathfrak u_{\mm}^{\mathrm{CII}}
 \coloneq
 \frac{(-1)^{n+1}(\bbeta_{<q})_{\mm_{<q}+n}}
 {n!a^n(\A_{<q})_n}.
 \label{eq:K-CII-unit-factor-limit}
\end{equation}
Explicitly, for \(X_{j,N}^{\mathrm K}=D_{j,N}^{\mathrm K}\) or
\(B_{j,N}^{\mathrm K}\), and the corresponding
\(X_j^{\mathrm{CII}}\),
\begin{equation}
 \left.\mathfrak u_{\mm,N}^{\mathrm K}
 X_{j,N}^{\mathrm K}(k)\right|_{c=a/N}
 \longrightarrow
 \mathfrak u_{\mm}^{\mathrm{CII}}X_j^{\mathrm{CII}}(k)
 \quad\hbox{coefficientwise}.
 \label{eq:K-CII-unit-component-limit}
\end{equation}
\end{enumerate}
\end{corollary}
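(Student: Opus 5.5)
Parts (i)–(iii) are termwise limits of terminating or entire series, and part (iv) is a sector-by-sector limit of the explicit formulas of Section~\ref{sec:explicit-unreflected-B}.

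\emph{Parts (i)–(iii).} For (i), I start from \eqref{eq:K-pgf} with $c=a/N$. The $s$-th term of $\pFq{q}{q-1}{-N,\A_{<q}}{\boldsymbol C_j}{\frac aN(1-z)}$ contains $(-N)_s(a/N)^s\to(-a)^s$, and it is dominated by $a^s\abs{1-z}^s(\A_{<q})_s/((\boldsymbol C_j)_s s!)$. Dominated convergence therefore gives locally uniform convergence to the entire ${}_{q-1}F_{q-1}$, which is $G_j^{\mathrm{CII}}$. Coefficientwise convergence and convergence of factorial moments follow, since these are the Taylor coefficients at $0$ and at $1$, respectively. Equivalently, \eqref{eq:K-factorial-moments} tends to $a^r(\A_{<q})_r/(\boldsymbol C_j)_r$.

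For (ii), I expand $\widehat A^{\mathrm K}_{\mm,N}$ in the basis $\fall{k}{\ell}$. The coefficient of $\fall{k}{\ell}$ carries the factor $c^{d-\ell}\fall{N}{d}/\fall{N}{\ell}\cdot(-1)^{\ell}$, with the rest fixed. With $c=a/N$ this tends to $a^{d-\ell}$, which gives the monic Charlier-II-like coefficients termwise.

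Part (iii) is the same dominated termwise limit applied to \eqref{eq:K-unit-B}, using $(n-N)_s(a/N)^s\to(-a)^s$.

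\emph{Part (iv), finite-pole sectors.} The nondegeneracy hypotheses (pole separation, nonvanishing denominators, the pivots $d^{\mathrm K}_{J,K}$) are finitely many rational expressions in $N$ and $c$. Each converges to its Charlier-II-like counterpart, which is nonzero by assumption, so they hold for large $N$.

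For $J<q$, the block $\mathscr K_{j,J;N}$ in \eqref{eq:K-compact-finite-pole-block} is a terminating Kampé de Fériet sum with finitely many terms, independent of $N$. In each $(r,t)$-term the $N$-dependent pair is
\[
\frac{(N+\beta_J+\varepsilon)_r}{(N+\beta_J+\varepsilon+1)_{r+t}}\,(N/a)^t\to a^{-t},
\]
and the prefactor ratio $(N+\beta_j)/(N+\xi_{J,\varepsilon})$ in $\Delta^{\mathrm K}$ tends to $1$. For $j=q$, $(1-c^{-1})/(N+\xi)\to-1/a$. This yields $\mathscr C_{j,J}$ term by term.

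\emph{Part (iv), the sector at infinity.} Here I use \eqref{eq:K-moving-block-definition} and do not attempt to pass through the coefficient extraction. I show that $z^{\mathrm K}_{J,K,\ell}\to z^{\mathrm{CII}}_{J,K,\ell}$ using $c^u\fall{N+\beta_J+K}{u}\to a^u$. Similarly, $\mathcal C^{\mathrm K}_{j,J,K}\to\mathcal C^{\mathrm{CII}}_{j,J,K}$, because $(1-k-\xi)_s/(1-N-\xi)_s\cdot(N/a)^s\to(-a^{-1})^s$. For $p^{\mathrm K}_{K,\ell}\to p^{\mathrm{CII}}_{K,\ell}$, I rewrite the Horn coefficient as the coefficient of $r^K$ in the $u$-th summand $\binom\ell u c^u\fall{N-r}{u}\fall{r}{\ell-u}(r+\A_{<q})_u/(r+\bbeta_{<q})_u$ of \eqref{eq:unreflected-R-base-row}. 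As a function of $r$ it converges to $\binom\ell u a^u\fall r{\ell-u}(\cdots)$, since $c^u\fall{N-r}{u}\to a^u$ uniformly on compact $r$-sets and in the Laurent expansion at infinity coefficientwise. The pivots satisfy $(1-c)^\ell\to1$. The recurrence \eqref{eq:unreflected-infinity-recurrence} then gives $\gamma^{\mathrm K}_\ell\to\gamma^{\mathrm{CII}}_\ell$, hence $\mathcal E$ and the block converge, which is \eqref{eq:K-CII-infinity-sector-limit}.

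This last step is where I expect the main obstacle. The $N$-dependence inside $\boldsymbol\Xi^{\mathrm K}$ (the string $[-N]_u$ against $(-c)^u$) must be shown to collapse to the Charlier-II-like $d^{\mathrm{CII}}_{\ell,u}=\ell-u$ degree shift. The degree-lowering must be visible coefficientwise: terms of $r$-degree above $\ell-u$ carry an extra factor $c\sim a/N$ and vanish.

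\emph{Assembly.} Summing the sectors via \eqref{eq:K-all-sector-components} and Proposition~\ref{prop:compact-unreflected-finite-poles} gives \eqref{eq:K-CII-component-limit}. The conversion to $B$ uses the fixed factors $1/\beta_j$. Finally, $\fall Nn c^n\to a^n$ gives \eqref{eq:K-CII-unit-factor-limit}, and multiplying the two convergent sequences gives \eqref{eq:K-CII-unit-component-limit}.
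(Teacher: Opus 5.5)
Your proposal is correct and follows essentially the paper's proof. The only differences are that you get (i) and (iii) by dominated termwise passage in the hypergeometric series (using $\lvert(-N)_s(a/N)^s\rvert\le a^s$) rather than from the Euler integral \eqref{eq:K-positive-pgf}, and that you take the finite-pole limit directly inside the compact block $\mathscr K_{j,J;N}$ rather than through the individual polynomials $\mathcal C^{\mathrm K}_{j,J,K}$, which you prove anyway for the sector at infinity. The obstacle you flag at the end is already settled by your own observation that $c^u\fall{N-r}{u}\to a^u$ coefficientwise in $r$: then $R^{\mathrm K}_{q,\ell}$ converges with a fixed denominator, so its residues and its polynomial part converge, which is exactly the paper's argument.
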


The limit in \eqref{eq:K-CII-B-limit} is \eqref{eq:CII-B} normalized so
that its factorial moment of order \(n\) equals one.

\begin{proof}
Under the scaling \eqref{eq:K-CII-scaling}, the positive representation gives
\[
 G_{j,N}^{\mathrm K}(z)
 =
 \int_{(0,1)^{q-1}}
 \left(1+\frac aN Y(z-1)\right)^N
 \prod_{h=1}^{q-1}b_{A_h,C_{h,j}-A_h}(y_h)\dd\boldsymbol y.
\]
As \(N\to\infty\), the integrand converges uniformly on compact \(z\)-sets to
\(\exp(aY(z-1))\), and it has a uniform compact majorant. This proves the
weight limit. The hypergeometric factor in \eqref{eq:K-unit-B} has the same
integral with lattice size \(N-n\) and beta parameters
\(A_h+n\) and \(\beta_h-A_h+m_h\); it gives
\eqref{eq:K-CII-B-limit}. Finally, the polynomial series terminates at
\(d\), while
\(\frac{(N/a)^s}{(-N)_s}\xrightarrow[N\to\infty]{}(-a^{-1})^s\) and
\(\fall{N}{d}\left(\frac aN\right)^d\xrightarrow[N\to\infty]{}a^d\).
This proves \eqref{eq:K-CII-A-limit}.

For completeness, we verify the individual type-I components before
summing their sectors.  For every fixed \(u\),
\[
 \Theta_u^{\mathrm K}(r)\big|_{c=a/N}
 =\left(\frac aN\right)^u\fall{N-r}{u}
   \frac{(r+\A_{<q})_u}{(r+\bbeta_{<q})_u}
 \longrightarrow
 a^u\frac{(r+\A_{<q})_u}{(r+\bbeta_{<q})_u}
 =\Theta_u^{\mathrm{CII}}(r).
\]
Thus every finite sum
\(R_{j,\ell}^{\mathrm K,N}\) in
\eqref{eq:unreflected-R-finite-row}--
\eqref{eq:unreflected-R-base-row} converges coefficientwise, after a
common denominator is cleared, to \(R_{j,\ell}^{\mathrm{CII}}\).
At a finite pole, the terminating series in
\eqref{eq:K-finite-pole-block} has, in its term of order \(s\), the
factor
\[
 \frac{(N/a)^s}{(1-N-\xi_{J,K})_s}\longrightarrow(-a^{-1})^s.
\]
The remaining parameters are fixed, while
\[
 \frac{N+\beta_j}{N+\xi_{J,K}}\longrightarrow1,
 \qquad
 \frac{1-N/a}{N+\xi_{J,K}}\longrightarrow-\frac1a.
\]
Consequently
\(\mathcal C_{j,J,K}^{\mathrm K}\to
\mathcal C_{j,J,K}^{\mathrm{CII}}\) coefficientwise for every indicated
\((j,J,K)\).

The residues \(z_{J,K,\ell}^{\mathrm K}\) and the polynomial-part
coefficients \(p_{K,\ell}^{\mathrm K}\) are, respectively, residues and
coefficients of these same finite rational functions. Hence they converge
to their Charlier-II-like counterparts. Equivalently, this follows
directly from
\[
 \left(\frac aN\right)^u
 \fall{N+\beta_J+K}{u}\longrightarrow a^u
\]
in \eqref{eq:unreflected-base-residue}.  The triangular pivots satisfy
\((1-a/N)^\ell\to1\), so the finite inverse
\eqref{eq:unreflected-infinity-chain} gives
\(\gamma_\ell^{\mathrm K}\to\gamma_\ell^{\mathrm{CII}}\).
All sector sums are finite. The finite-pole convergence above therefore
gives \eqref{eq:K-CII-finite-sector-limit}; the convergence of the
residues, corrected vectors, and triangular coefficients gives
\eqref{eq:K-CII-infinity-sector-limit}. Summing these limits in
\eqref{eq:K-all-sector-components} and
\eqref{eq:CII-all-sector-components} proves the first limit in
\eqref{eq:K-CII-component-limit}. Since the parameters
\(\beta_j\) are fixed, \eqref{eq:unreflected-probability-components}
gives the second.  The nonzero limiting denominators in the stated
hypotheses also show that the Kravchuk reconstruction is well defined for
all sufficiently large \(N\).  Finally,
\(\fall{N}{n}(a/N)^n\to a^n\), and
\eqref{eq:K-unit-normalizing-factor} gives
\eqref{eq:K-CII-unit-factor-limit} and the asserted unit-normalized
component limit.
\end{proof}

The second limit first reflects the lattice.

Let \(\mm\) be fixed and near the diagonal, put
\(d=\abs\mm\ge1\) and \(n=d-1\), and fix \(a>0\),
\(\delta_h>0\), and \(0<c_h<1\), for every
\(h\in\{1,\ldots,q-1\}\). Put
\begin{equation}
 \lambda_h=\frac{c_h}{1-c_h},\qquad
 c_N=1-\frac aN,\qquad
 A_{h,N}=\frac N{\lambda_h},\qquad
 \beta_{h,N}=A_{h,N}+\delta_h.
 \label{eq:K-CI-scaling}
\end{equation}
Write
\(\A_{<q,N}=(A_{1,N},\ldots,A_{q-1,N})\) and
\(\bbeta_{<q,N}=(\beta_{1,N},\ldots,\beta_{q-1,N})\).
For every \(j\in\{1,\ldots,q\}\), define
\(v_{j,N}^{\mathrm K,\mathrm{ref}}(\ell)
=v_{j,N}^{\mathrm K}(N-\ell)\) and
\(G_{j,N}^{\mathrm K,\mathrm{ref}}(z)=z^NG_{j,N}^{\mathrm K}(z^{-1})\).
For \(h\in\{1,\ldots,q-1\}\), put
\(R_h(z)=(1-c_h)/(1-c_hz)\), and define
\(P_{\mm,N}^{\mathrm K,\mathrm{ref}}(\ell)
\coloneq(-1)^d\widehat A_{\mm,N}^{\mathrm K}(N-\ell)\).
Define the reflected unit-normalized generating function by
\(\mathcal H_{\mm,N}^{\mathrm K,\mathrm{ref},\mathrm{unit}}(z)
\coloneq(-1)^nz^N
\mathcal H_{\mm,N}^{\mathrm K,\mathrm{unit}}(z^{-1})\). Its explicit form is
\begin{equation}
 \mathcal H_{\mm,N}^{\mathrm K,\mathrm{ref},\mathrm{unit}}(z)
 =
 \frac{(z-1)^n}{n!}z^{N-n}
 \pFq{q}{q-1}
 {n-N,\A_{<q,N}+n}
 {\bbeta_{<q,N}+\mm_{<q}+n}
 {c_N(1-z^{-1})}.
 \label{eq:K-reflected-unit-B}
\end{equation}
Write
\begin{equation}
 \mathcal H_{\mm,N}^{\mathrm K,\mathrm{ref},\mathrm{unit}}(z)
 =\sum_{\ell=0}^N
 \mathcal B_{\mm,N}^{\mathrm K,\mathrm{ref},\mathrm{unit}}(\ell)z^\ell.
 \label{eq:K-reflected-unit-coefficients}
\end{equation}
Thus, with \(\mathfrak u_{\mm,N}^{\mathrm K}\) in
\eqref{eq:K-unit-normalizing-factor} evaluated at the parameters
\eqref{eq:K-CI-scaling},
\begin{equation}
 \mathcal B_{\mm,N}^{\mathrm K,\mathrm{ref},\mathrm{unit}}(\ell)
 =(-1)^n\mathfrak u_{\mm,N}^{\mathrm K}
 \mathcal B_{\mm,N}^{\mathrm K}(N-\ell).
 \label{eq:K-reflected-unit-coefficient-identity}
\end{equation}
Put
\begin{equation}
 \mathcal H_{\mm}^{\mathrm{CI}}(z)
 \coloneq
 \frac{(z-1)^n}{n!}\e^{a(z-1)}
 \prod_{h=1}^{q-1}R_h(z)^{\delta_h+m_h}.
\end{equation}

\begin{theorem}[Reflected Kravchuk-to-Charlier-I-like confluence]
\label{thm:K-to-CI}
Under the preceding assumptions and scaling, as \(N\to\infty\):
\begin{enumerate}[label=\textup{(\roman*)}]
\item For every \(j\in\{1,\ldots,q\}\),
\begin{equation}
 G_{j,N}^{\mathrm K,\mathrm{ref}}(z)
 \xrightarrow[N\to\infty]{}
 \e^{a(z-1)}
 \prod_{h=1}^{q-1}R_h(z)^{\delta_h+\delta_{h,j}}
 =G_j^{\mathrm{CI}}(z)
 \label{eq:K-CI-row-limit}
\end{equation}
uniformly on compact subsets of the common disk of analyticity.
\item If the parameters \(c_h\) for which
\(h\in\{1,\ldots,q-1\}\) and \(m_h>0\) are pairwise distinct, then
\begin{equation}
 P_{\mm,N}^{\mathrm K,\mathrm{ref}}(\ell)
 \xrightarrow[N\to\infty]{} P_{\mm}^{\mathrm{CI}}(\ell)
 \label{eq:K-CI-A-limit}
\end{equation}
coefficientwise.
\item The coefficients of
\(\mathcal H_{\mm,N}^{\mathrm K,\mathrm{ref},\mathrm{unit}}\) have factorial
moments zero for every order \(r\in\Nzero\) with \(r<n\) and moment one at order \(n\). Moreover,
\begin{equation}
 \mathcal H_{\mm,N}^{\mathrm K,\mathrm{ref},\mathrm{unit}}(z)
 \xrightarrow[N\to\infty]{} \mathcal H_{\mm}^{\mathrm{CI}}(z)
 \label{eq:K-CI-B-limit}
\end{equation}
locally uniformly.
\item Under the pairwise-distinctness hypothesis in \textup{(ii)}, for every
sufficiently large \(N\), the reflected Kravchuk-like
type-I moment matrix is nonsingular and there is a unique tuple of
polynomials \(B_{j,N}^{\mathrm K,\mathrm{ref},\mathrm{unit}}\), indexed
by the active \(j\)'s, such that
\begin{equation}
 \deg B_{j,N}^{\mathrm K,\mathrm{ref},\mathrm{unit}}<m_j,
 \qquad
 \mathcal B_{\mm,N}^{\mathrm K,\mathrm{ref},\mathrm{unit}}(\ell)
 =\sum_{\substack{1\le j\le q\\m_j>0}}
 B_{j,N}^{\mathrm K,\mathrm{ref},\mathrm{unit}}(\ell)
 v_{j,N}^{\mathrm K,\mathrm{ref}}(\ell).
 \label{eq:K-CI-component-decomposition}
\end{equation}
If \(B_{j,N}^{\mathrm K,\mathrm{unit}}\) denotes the corresponding
unit-normalized component before reflection, then the exact finite-\(N\)
relation is
\begin{equation}
 B_{j,N}^{\mathrm K,\mathrm{ref},\mathrm{unit}}(\ell)
 =(-1)^nB_{j,N}^{\mathrm K,\mathrm{unit}}(N-\ell).
 \label{eq:K-CI-component-reflection}
\end{equation}
For every active \(j\),
\begin{equation}
 B_{j,N}^{\mathrm K,\mathrm{ref},\mathrm{unit}}(\ell)
 \xrightarrow[N\to\infty]{}B_j^{\mathrm{CI}}(\ell)
 \label{eq:K-CI-component-limit}
\end{equation}
coefficientwise in \(\ell\), where the limiting components are the
unit-normalized Charlier-I-like polynomials of
Theorem~\ref{thm:explicit-CI-B}.
\end{enumerate}
\end{theorem}

\begin{proof}
In the reflected version of \eqref{eq:K-positive-pgf}, the polynomial
kernel can be written as \(\left[1+(z-1)(1-c_NY)\right]^N\).
Set \(y_h=1-u_h/N\) in the beta integrals. Uniform gamma-ratio
asymptotics on compact \(u_h\)-sets give
\[
 \frac1N b_{A_{h,N},\,\delta_h+\delta_{h,j}}
       (1-u_h/N)
 \xrightarrow[N\to\infty]{}
 \frac{\lambda_h^{-\delta_h-\delta_{h,j}}}
 {\Gamma(\delta_h+\delta_{h,j})}
 u_h^{\delta_h+\delta_{h,j}-1}
 \e^{-u_h/\lambda_h}.
\]
The transformed beta kernels have a common integrable exponential
majorant. Moreover,
\(N(1-c_N\prod_{h=1}^{q-1}y_h)
\xrightarrow[N\to\infty]{}a+\sum_{h=1}^{q-1}u_h\).
Dominated convergence therefore gives \eqref{eq:K-CI-row-limit}, because
\[
 \frac{1}{\Gamma(\delta)\lambda_h^\delta}
 \int_0^\infty u^{\delta-1}\e^{-u/\lambda_h}\e^{u(z-1)}\dd u
 =R_h(z)^\delta.
\]
For the signed generating function, replace the lattice size by \(N-n\)
and the second beta parameters by \(\delta_h+m_h\).  The transformed
factors retain the same common exponential majorant, while
\((N-n)(1-c_Ny_1\cdots y_{q-1})
\xrightarrow[N\to\infty]{}a+u_1+\cdots+u_{q-1}\).
Dominated convergence in its generating function therefore proves
\eqref{eq:K-CI-B-limit}. The factor
\((-1)^n\) in \eqref{eq:K-reflected-unit-B} preserves the unit
\(n\)-th factorial moment under reflection.

Moreover, \((-1)^d\widehat A_{\mm,N}^{\mathrm K}(N-\ell)\) is the
monic type-II polynomial for the reflected weights: for every
\(j\in\{1,\ldots,q\}\) with \(m_j>0\), the
families
\(\{(N-\ell)^{\underline r}:r\in\Nzero,\ r<m_j\}\) and
\(\{\ell^{\underline r}:r\in\Nzero,\ r<m_j\}\) span the same
polynomial space.
As \(N\to\infty\), the generating functions of the weights and all fixed
derivatives required by the type-II moment matrices converge at \(z=1\). Hence those finite moment
matrices converge to the Charlier-I-like matrix. Its determinant
is nonzero under this pairwise-distinctness hypothesis by
Proposition~\ref{prop:CI-normality}. Continuity of matrix inversion proves
\eqref{eq:K-CI-A-limit}.

It remains to treat the individual type-I components.  Index the columns
by \((j,r)\), where \(m_j>0\) and \(0\le r<m_j\), and the rows by
\(s\in\{0,\ldots,n\}\).  Set
\begin{equation}
 \bigl(M_N^{\mathrm K,\mathrm{ref}}\bigr)_{s,(j,r)}
 \coloneq
 \sum_{\ell=0}^N\fall{\ell}{s}\fall{\ell}{r}
 v_{j,N}^{\mathrm K,\mathrm{ref}}(\ell).
 \label{eq:K-CI-typeI-moment-matrix}
\end{equation}
By \eqref{eq:falling-linearization}, every entry is a fixed linear
combination of derivatives of \(G_{j,N}^{\mathrm K,\mathrm{ref}}\) at
\(z=1\).  The local uniform convergence
\eqref{eq:K-CI-row-limit} and Cauchy's formula therefore give
\[
 M_N^{\mathrm K,\mathrm{ref}}\longrightarrow M^{\mathrm{CI}},
\]
where \(M^{\mathrm{CI}}\) is the Charlier-I-like type-I moment matrix.
It is nonsingular by Proposition~\ref{prop:CI-normality}.  Hence
\(M_N^{\mathrm K,\mathrm{ref}}\) is nonsingular for all sufficiently
large \(N\), and the coefficient vector of the unique unit-normalized
tuple is
\[
 \bigl(M_N^{\mathrm K,\mathrm{ref}}\bigr)^{-1}
 (0,\ldots,0,1)^{\mathsf T}.
\]
Continuity of inversion proves \eqref{eq:K-CI-component-limit}.

We finally verify that this moment solution decomposes the explicit signed
sequence, rather than merely sharing its first \(n+1\) moments. Reflect
the resulting linear form back to the original lattice and multiply it by
\((-1)^n\).  Its component polynomials still have the required degree
bounds, and its factorial moments of orders \(0,\ldots,n\) agree with
those of \(\mathcal B_{\mm,N}^{\mathrm K,\mathrm{unit}}\): reflection
preserves the zero moments below \(n\), while the leading coefficient of
\(\fall{N-k}{n}\) is \((-1)^n\).  Convert the first \(q-1\) component
rows to the canonical weights \eqref{eq:K-canonical-rows}. By
\eqref{eq:unreflected-R-pairing}, after division by
\(h_N^{\mathrm K}(s)\) and multiplication by
\(D_-(s)=\prod_{h=1}^{q-1}(s+\beta_{h,N})_{m_h}\), the difference
between this linear form and
\(\mathfrak u_{\mm,N}^{\mathrm K}\mathcal B_{\mm,N}^{\mathrm K}\)
is a polynomial in \(s\) of degree at most \(n\).  This is the
near-diagonal degree estimate used in the proof of
Theorem~\ref{thm:K-components}; it does not use pole separation.  The
polynomial vanishes at \(s=0,\ldots,n\), hence vanishes identically.
The two signed sequences therefore have all factorial moments of orders
\(0,\ldots,N\) equal.  Both are supported on \(\{0,\ldots,N\}\), so
they coincide. Reflecting once more proves
\eqref{eq:K-CI-component-decomposition} and the exact relation
\eqref{eq:K-CI-component-reflection}.
\end{proof}

\begin{remark}[Relation with the classical multiple Kravchuk system]
The partial limit constructed in Section~\ref{sec:Kravchuk} is not the
classical multiple Kravchuk system. In the latter, the weights are binomial
sequences with distinct parameters \(p_j\)
\cite{ArvesuCoussementVanAssche2003,
BranquinhoDiazFoulquieManasWolfs2024Discrete}. Here every weight still
depends on the \(q-1\) parameter pairs that were kept fixed.

This distinction cannot be removed by sending all parameter pairs to
infinity.
Indeed, let \(A_h=A_h(t)\) and \(\beta_h=\beta_h(t)\) satisfy
\(A_h(t),\beta_h(t)\xrightarrow[t\to\infty]{}\infty\) and
\(A_h(t)/\beta_h(t)\xrightarrow[t\to\infty]{}c_h\). Then, for every
fixed \(r\),
\(\fall{N}{r}\frac{(\A)_r}
{(\bbeta+\ee_j)_r}
\xrightarrow[t\to\infty]{}
\fall{N}{r}\left(\prod_hc_h\right)^r\),
independently of \(j\). Thus all weights converge to the same binomial weight
and the multiple system loses rank. The words \enquote{partial} and
\enquote{-like} are therefore essential.  A simultaneous multi-parameter
limit that retains several independent weights requires a separate
analysis.
\end{remark}

\section{The Meixner-II-like family}
\label{sec:MII}

We first send the right endpoint of the Hahn-like lattice to infinity while
keeping the left endpoint fixed. The limit is a Meixner-II-like family with
an explicit type-II polynomial, an explicit signed sequence satisfying the
type-I moment conditions, and a direct Laguerre-I-like limit.

\subsection{The left-endpoint scaling limit}

The scaling below keeps the left endpoint fixed and sends the opposite
endpoint to infinity. We first identify the limiting weights and their
factorial moments.

Keep \(\A\) and \(\beta_1,\ldots,\beta_{q-1}\) fixed, let
\begin{equation}
 \beta_q^{(N)}=A_q+\frac{N}{\tau},
 \qquad
 \tau=\frac{c}{1-c}>0,
 \label{eq:MII-scaling}
\end{equation}
and send \(N\to\infty\) with the lattice coordinate \(k\) fixed. For
\(h\in\{1,\ldots,q-1\}\) and \(j\in\{1,\ldots,q\}\), put
\(C_{h,j}=\beta_h+\delta_{h,j}\),
where \(C_{h,q}=\beta_h\), and write
\(\boldsymbol C_j=(C_{1,j},\ldots,C_{q-1,j})\) and
\(Y=y_1\cdots y_{q-1}\).

This scaling produces the following Meixner-II-like weights.

\begin{theorem}[Meixner-II-like weights]
\label{thm:MII-rows}
As \(N\to\infty\) under \eqref{eq:MII-scaling}, for every
\(j\in\{1,\ldots,q\}\), the Hahn-like weights converge coefficientwise
and in every fixed moment to a normalized positive weight
\(v_j^{\mathrm{MII}}\) on \(\Nzero\) with generating function
\begin{equation}
 G_j^{\mathrm{MII}}(z)
 =\pFq{q}{q-1}{A_1,\ldots,A_q}
 {C_{1,j},\ldots,C_{q-1,j}}{\tau(z-1)}.
 \label{eq:MII-pgf}
\end{equation}
Its factorial moments are, for every \(r\in\Nzero\),
\begin{equation}
 \sum_{k\ge0}\fall{k}{r}v_j^{\mathrm{MII}}(k)
 =\tau^r\frac{\prod_{h=1}^q(A_h)_r}
 {\prod_{h=1}^{q-1}(C_{h,j})_r}.
 \label{eq:MII-factorial-moments}
\end{equation}
The same generating function has the positive integral representation
\begin{equation}
 G_j^{\mathrm{MII}}(z)
 =\int_{(0,1)^{q-1}}
 \frac{\prod_{h=1}^{q-1}b_{A_h,C_{h,j}-A_h}(y_h)
       \dd\boldsymbol y}
 {(1+\tau(1-z)Y)^{A_q}}.
 \label{eq:MII-positive-integral}
\end{equation}
\end{theorem}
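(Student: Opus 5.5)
The approach is to pass to the limit in the Hahn-like generating function \eqref{eq:Hahn-like-pgf}, organised around the factorial moments \eqref{eq:Hahn-like-factorial-moments}. For fixed \(r\) and \(N\ge r\), the Hahn-like factorial moment of order \(r\) is
\[
 \fall{N}{r}\frac{(\A)_r}{(\bbeta+\ee_j)_r}
 =\frac{\prod_{h=1}^q(A_h)_r}{\prod_{h=1}^{q-1}(C_{h,j})_r}\cdot
 \frac{\fall{N}{r}}{(\beta_q^{(N)}+\delta_{q,j})_r}.
\]
Under \eqref{eq:MII-scaling} we have \(\beta_q^{(N)}\sim N/\tau\), so the last quotient tends to \(\tau^r\). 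This gives convergence of every fixed factorial moment to \eqref{eq:MII-factorial-moments}.

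Next I identify the limit generating function. Termwise in \eqref{eq:Hahn-like-pgf}, the \(s\)-th term is
\[
 \frac{(-N)_s(A_q)_s}{(\beta_q^{(N)}+\delta_{q,j})_s}\,
 \frac{(\A_{<q})_s}{(\boldsymbol C_j)_s}\,
 \frac{(1-z)^s}{s!},
\]
and it converges to the corresponding term of \eqref{eq:MII-pgf}, because \((-N)_s/(\beta_q^{(N)})_s\to(-\tau)^s\).

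To upgrade this to genuine convergence of the weights I will use the integral representation rather than the series. In \eqref{eq:beta-product-integral} the generating function is \(\int[1-(1-z)y_1\cdots y_q]^N\prod_h b\,\dd\boldsymbol y\). The last beta factor has parameters \(A_q\) and \(\beta_q^{(N)}+\delta_{q,j}-A_q\sim N/\tau\). Substituting \(y_q=\tau t/N\), its density converges to the gamma density \(g_{1,A_q}(t)\), with an integrable exponential majorant. At the same time \([1-(1-z)Y\tau t/N]^N\to \e^{-(1-z)\tau Yt}\), uniformly for \(z\) in compact subsets of the disk \(\abs z<1\) and dominated by \(\e^{c t}\) with \(c<1\) after restricting \(\abs{1-z}\) suitably. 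Dominated convergence then gives
\[
 \int_0^\infty g_{1,A_q}(t)\,\e^{-(1-z)\tau Yt}\dd t=(1+\tau(1-z)Y)^{-A_q},
\]
which is \eqref{eq:MII-positive-integral}.

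I then expand this representation in powers of \(1-z\), using \eqref{eq:continuous-row-moments} for the remaining \(q-1\) beta factors. This recovers \eqref{eq:MII-pgf} near \(z=1\), where \(\abs{\tau(1-z)}<1\). The identity extends to the whole unit disk by the analytic-continuation convention fixed in the preliminaries: the integral is analytic on \(\mathbb C\setminus[1+1/\tau,\infty)\), which contains the unit disk.

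From the integral representation I will read off the structural properties of the limit weight. The \(k\)-th coefficient of \(G_j^{\mathrm{MII}}\) is \(\int \frac{(A_q)_k}{k!}\frac{(\tau Y)^k}{(1+\tau Y)^{A_q+k}}\prod b\,\dd\boldsymbol y\), which is strictly positive, and \(G_j^{\mathrm{MII}}(1)=1\) gives normalization. For coefficientwise convergence, the Hahn-like weights have probability generating functions that are holomorphic and converge locally uniformly on \(\abs z<1\) by the dominated-convergence step above, so Cauchy's formula yields convergence of each coefficient \(v_{j,N}(k)\to v_j^{\mathrm{MII}}(k)\).

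The main obstacle is the uniform majorant on a neighbourhood of \(z=1\) large enough to also control derivatives. This is needed to identify the limiting factorial moments with those of the limit weight, rather than only to establish convergence of the moment sequences. I would handle it by noting that \(G_j^{\mathrm{MII}}\) is analytic on a disk of radius \(1+1/\tau\), so its derivatives at \(z=1\) equal its factorial moments. These coincide with the Taylor coefficients of \eqref{eq:MII-pgf}, which equal \eqref{eq:MII-factorial-moments}, and are therefore the limits of the Hahn-like moments already computed.
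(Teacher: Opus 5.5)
Your proposal is correct and follows essentially the same route as the paper. Both arguments take the factorial-moment limit from \(\fall{N}{r}/(\beta_q^{(N)}+\delta_{q,j})_r\to\tau^r\), substitute \(y_q=\tau t/N\) so that the last beta kernel tends to \(g_{1,A_q}\), use dominated convergence with Cauchy's formula for coefficientwise convergence, expand the Euler integral to match \eqref{eq:MII-pgf} near \(z=1\) before continuing analytically, and differentiate at \(z=1\). The only additions are your explicit use of analyticity on \(\abs z<1+1/\tau\) to identify the derivatives with the factorial moments of the limit, and your direct proof of strict positivity from the coefficients of \((1+\tau(1-z)Y)^{-A_q}\); the paper leaves both implicit or treats them after the theorem.
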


\begin{proof}
For fixed \(r\),
\(\fall{N}{r}/(\beta_q^{(N)}+\delta_{q,j})_r
\xrightarrow[N\to\infty]{}\tau^r\).
The factorial moments therefore converge by
\eqref{eq:Hahn-like-factorial-moments}; ordinary moments follow by the
finite Stirling transform. To justify convergence of the coefficients,
use \eqref{eq:beta-product-integral} in the finite generating-function
integral and set \(y_q=\tau x/N\). As \(N\to\infty\), the transformed beta
kernel tends to \(g_{1,A_q}(x)\), while
\(\bigl[1-(1-z)y_1\cdots y_{q-1}\tau x/N\bigr]^N
\xrightarrow[N\to\infty]{}
\exp\{-\tau(1-z)y_1\cdots y_{q-1}x\}\).
Dominated convergence is locally uniform for \(\abs z<1\), and Cauchy's
coefficient formula proves coefficientwise convergence. Termwise passage
in \eqref{eq:Hahn-like-pgf} also gives \eqref{eq:MII-pgf} first when
\(\abs{\tau(z-1)}<1\). Expanding the integrand in
\eqref{eq:MII-positive-integral} and using the beta integrals gives the
same hypergeometric series. The integral is positive for \(0\le z\le1\),
equals one at \(z=1\), and supplies the analytic continuation throughout
the unit disk. Differentiating at \(z=1\) gives
\eqref{eq:MII-factorial-moments}.
\end{proof}

For every \(j\in\{1,\ldots,q\}\) and \(k\in\Nzero\), coefficient
extraction in \eqref{eq:MII-positive-integral} gives the explicit weights
\begin{equation}
 v_j^{\mathrm{MII}}(k)
 =\frac{\tau^k}{k!}
 \frac{(\A)_k}{(\boldsymbol C_j)_k}
 \pFq{q}{q-1}{\A+k}{\boldsymbol C_j+k}{-\tau}.
 \label{eq:MII-mass}
\end{equation}
The positive integral in \eqref{eq:MII-positive-integral} supplies the
continuation of \eqref{eq:MII-mass} when its displayed series is outside
the disk of convergence.

\subsection{Explicit Meixner-II-like type-II and type-I formulas}

We now give closed formulas for the Meixner-II-like type-II polynomial and
for a signed sequence satisfying the type-I moment conditions.

\begin{theorem}[Meixner-II-like type-II polynomial and signed sequence]
\label{thm:MII-forms}
Let \(\mm\) be fixed and near the diagonal, and put
\(d=\abs\mm\ge1\) and \(n=d-1\). Under \eqref{eq:MII-scaling}, the
following assertions hold as \(N\to\infty\).
\begin{enumerate}[label=\textnormal{(\roman*)}]
\item The Hahn polynomial converges termwise to
\begin{equation}
 A_{\mm}^{\mathrm{MII}}(k)=
 \pFq{q+1}{q}
 {-d,-k,\beta_1+m_1,\ldots,\beta_{q-1}+m_{q-1}}
 {A_1,\ldots,A_q}{-\tau^{-1}}.
 \label{eq:MII-A}
\end{equation}
This polynomial has exact degree \(d\) and satisfies
\(A_{\mm}^{\mathrm{MII}}(0)=1\).
For every \(j\in\{1,\ldots,q\}\) with \(m_j>0\), it satisfies
\begin{equation}
 \sum_{k\ge0}\fall{k}{r}A_{\mm}^{\mathrm{MII}}(k)
 v_j^{\mathrm{MII}}(k)=0,
 \qquad r\in\{0,\ldots,m_j-1\}.
 \label{eq:MII-A-orthogonality}
\end{equation}

\item The generating function of the scaled signed sequence
\((\beta_q^{(N)})_{m_q}\mathcal B_{\mm,N}^{\mathrm H}\) converges to
\begin{equation}
 \mathcal H_{\mm}^{\mathrm{MII}}(z)
 =-\tau^n\frac{(\A)_n}{(\bbeta_{<q})_{\mm_{<q}+n}}(1-z)^n
 \pFq{q}{q-1}
 {\A+n}{\bbeta_{<q}+\mm_{<q}+n}{\tau(z-1)}.
 \label{eq:MII-B-pgf}
\end{equation}
Its factorial moments are, for every \(r\in\Nzero\),
\begin{equation}
 \sum_{k\ge0}\fall{k}{r}\mathcal B_{\mm}^{\mathrm{MII}}(k)
 =-\tau^r(-r)_n
 \frac{\prod_{h=1}^{q}(A_h)_r}
 {\prod_{h=1}^{q-1}(\beta_h)_{m_h+r}}.
 \label{eq:MII-B-moments}
\end{equation}
\end{enumerate}
\end{theorem}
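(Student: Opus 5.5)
The plan is to pass to the limit term by term in the exact Hahn-like formulas, exactly as in the Kravchuk case (Theorem~\ref{thm:K-forms}), and to transfer orthogonality through moment convergence. The key difference is that the summation ranges are now infinite, so the analytic limits must be controlled.

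For (i), I substitute \(\beta_q=\beta_q^{(N)}=A_q+N/\tau\) into \eqref{eq:Hahn-A-factorial}. For each fixed \(\ell\le d\),
\[
 \frac{(\beta_q^{(N)}+m_q)_\ell}{\fall{N}{\ell}}\longrightarrow\tau^{-\ell}.
\]
With \(\fall{k}{\ell}=(-1)^\ell(-k)_\ell\), this gives termwise the coefficient \((-d)_\ell(-k)_\ell(\bbeta_{<q}+\mm_{<q})_\ell(-\tau^{-1})^\ell/[\ell!(\A)_\ell]\), which is \eqref{eq:MII-A}. The degree is exactly \(d\) because the \(\ell=d\) coefficient is a nonzero multiple of \(\tau^{-d}\), and the value at \(0\) equals \(1\) trivially.

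For orthogonality, expand \(\fall{k}{r}A^{\mathrm{MII}}_{\mm}(k)\) in falling factorials using \eqref{eq:falling-linearization}. The left side of \eqref{eq:MII-A-orthogonality} is then a finite combination of the moments \eqref{eq:MII-factorial-moments}. The corresponding Hahn sums \eqref{eq:Hahn-A-orthogonality} are the same finite combinations of \eqref{eq:Hahn-like-factorial-moments}, with coefficients converging by the computation above. Since Theorem~\ref{thm:MII-rows} gives convergence of every fixed factorial moment, the vanishing passes to the limit.

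For (ii), I multiply \eqref{eq:Hahn-complete-B-pgf} by \((\beta_q^{(N)})_{m_q}\). The prefactor becomes
\[
 -\fall{N}{n}\frac{(\A)_n(\beta_q^{(N)})_{m_q}}{(\bbeta_{<q})_{\mm_{<q}+n}(\beta_q^{(N)})_{m_q+n}},
\]
and \(\fall{N}{n}/(\beta_q^{(N)}+m_q)_n\to\tau^n\). The \(s\)-th series term contains
\[
 \frac{(n-N)_s}{(\beta_q^{(N)}+m_q+n)_s}(1-z)^s\to(-\tau)^s(1-z)^s=\tau^s(z-1)^s,
\]
which gives \eqref{eq:MII-B-pgf} formally. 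The factorial moments \eqref{eq:MII-B-moments} follow most safely from the exact Hahn moments. I multiply \eqref{eq:Hahn-complete-B-moments} by \((\beta_q^{(N)})_{m_q}\) and use
\[
 \fall{N}{r}(\beta_q^{(N)})_{m_q}\big/\bigl[(\beta_q^{(N)})_r(r+\beta_q^{(N)})_{m_q}\bigr]\to\tau^r,
\]
together with \((\bbeta_{<q})_r\prod_{h<q}(r+\beta_h)_{m_h}=\prod_{h<q}(\beta_h)_{m_h+r}\). Alternatively, the moments can be read directly as derivatives at \(z=1\) of \eqref{eq:MII-B-pgf}, exactly as in the proof of Theorem~\ref{thm:Hahn-complete-B}, with \(u=1-z\).

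The main obstacle is making the generating-function limit rigorous, since the terminating sum has \(N-n+1\) terms and termwise passage needs uniformity. I would proceed as in Theorem~\ref{thm:MII-rows}. The scaled \({}_{q+1}F_q\) factor equals the Bernstein-type integral of \([1-(1-z)y_1\cdots y_q]^{N-n}\) against beta kernels with parameters \(A_h+n\) and \(\beta_h+m_h+n-A_h\). Setting \(y_q=\tau x/N\) and applying dominated convergence gives \((1+\tau(1-z)Y)^{-A_q-n}\) integrated over \((0,1)^{q-1}\), locally uniformly for \(\abs z<1\). That integral is the continuation of the \({}_qF_{q-1}\) in \eqref{eq:MII-B-pgf}. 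Cauchy's formula then yields coefficientwise convergence of \((\beta_q^{(N)})_{m_q}\mathcal B^{\mathrm H}_{\mm,N}\) to the sequence \(\mathcal B^{\mathrm{MII}}_{\mm}\) defined by \eqref{eq:MII-B-pgf}.
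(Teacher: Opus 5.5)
Your proposal is correct and follows essentially the paper's route: termwise passage in the terminating Hahn series for (i) with orthogonality by passage in fixed moments, and for (ii) termwise passage plus the shifted Euler integral (your dominated-convergence version, as in Theorem~\ref{thm:MII-rows}), with the moments read off as derivatives at \(z=1\). Two small points. First, the \(h\)-th kernel in that Euler integral is \(b_{A_h+n,\,\beta_h+m_h-A_h}\); your second parameter has a spurious \(+n\). Second, taking the limit of the exact Hahn moments gives the moments of the limit sequence only once convergence is known on a neighbourhood of \(z=1\); the same dominated-convergence argument does extend to \(\tau\abs{1-z}<1\), but you establish it only for \(\abs z<1\). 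As written, it is therefore the derivative route you list second, which is the paper's, that closes the argument.
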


\begin{proof}
In \eqref{eq:Hahn-A-factorial},
\((\beta_q^{(N)}+m_q)_\ell/(-N)_\ell
\xrightarrow[N\to\infty]{}(-1/\tau)^\ell\),
and the sum terminates at \(d\), proving \eqref{eq:MII-A}. Its leading
coefficient is
\[
 [k^d]A_{\mm}^{\mathrm{MII}}(k)
 =(-1)^d\tau^{-d}
  \frac{\prod_{h=1}^{q-1}(\beta_h+m_h)_d}
       {\prod_{h=1}^{q}(A_h)_d}\ne0,
\]
and setting \(k=0\) leaves only the zeroth term. Orthogonality
follows either by passage in fixed moments or by direct substitution of
\eqref{eq:MII-factorial-moments}. The same termwise argument in
\eqref{eq:Hahn-complete-B-pgf}, together with
\(\fall{N}{n}/(\beta_q^{(N)})_n
\xrightarrow[N\to\infty]{}\tau^n\), gives
\eqref{eq:MII-B-pgf} initially for \(\abs{\tau(z-1)}<1\). The
shifted Euler integral, with parameters \(A_h+n\) and
\(\beta_h+m_h+n\), supplies the continuation throughout the unit disk; the
identity theorem gives the displayed equality there. Differentiation at
\(z=1\) gives
\eqref{eq:MII-B-moments}.
\end{proof}

\subsection{Type-I components and near-diagonal normality}

The terms involving the last Hahn parameter must be summed before that
parameter tends to infinity.  The specialization \(X=\mathrm{MII}\) of
\eqref{eq:unreflected-explicit-components} performs this summation and
gives every Meixner-II-like type-I polynomial as a finite combination of
terminating \({}_{q+1}F_q\) and Horn polynomials.  Under the pole-separation
and nonvanishing assumptions in Theorem~\ref{thm:explicit-unreflected-B},
that theorem also proves near-diagonal normality.  The polynomial \(A\) and
the signed sequence \eqref{eq:MII-B-pgf} require no such assumptions.

\begin{corollary}[Meixner-II-like components and their Hahn limit]
Under the hypotheses of Theorem~\ref{thm:explicit-unreflected-B}, the
Meixner-II-like components are the specialization $X=\mathrm{MII}$ of
\eqref{eq:unreflected-explicit-components}. They have degrees $<m_j$,
are unique, and their coefficientwise confluence from the Hahn-like
components is \eqref{eq:Hahn-MII-component-limit}.
\end{corollary}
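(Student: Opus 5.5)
The corollary is essentially the \(X=\mathrm{MII}\) specialization of Theorem~\ref{thm:explicit-unreflected-B} together with the second limit in Corollary~\ref{cor:unreflected-component-limits}. The plan is to verify that the Meixner-II-like data of Section~\ref{sec:MII} are exactly the inputs assumed in Section~\ref{sec:explicit-unreflected-B}, and then to invoke those two results.

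First, I will identify the rescaled weights and their factorial moments. By \eqref{eq:MII-factorial-moments}, with \(C_{h,j}=\beta_h+\delta_{h,j}\), the weight \(v_j^{\mathrm{MII}}\) for \(j<q\) has factorial moments
\[
 \tau^r(\A)_r/(\bbeta_{<q}+\ee_j^-)_r=\beta_j\,h_{\mathrm{MII}}(r)/(r+\beta_j),
\]
while \(v_q^{\mathrm{MII}}\) has moments \(h_{\mathrm{MII}}(r)\). Hence the rows \(w_j^{\mathrm{MII}}\) in \eqref{eq:unreflected-canonical-rows} satisfy \eqref{eq:unreflected-canonical-moments}. Next, since
\[
 (\bbeta_{<q})_{\mm_{<q}+r}=(\bbeta_{<q})_r\,D_-(r),
\]
the moments \eqref{eq:MII-B-moments} of the signed sequence \eqref{eq:MII-B-pgf} equal \(-h_{\mathrm{MII}}(r)(-r)_n/D_-(r)\). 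This is precisely the right-hand side of \eqref{eq:unreflected-complete-moments}, so \(\mathcal B_{\mm}^{\mathrm{MII}}\) is the sequence the theorem addresses.

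With both inputs matched, Theorem~\ref{thm:explicit-unreflected-B} gives three conclusions under its pole-separation and nonvanishing hypotheses. It gives the decomposition \eqref{eq:unreflected-component-decomposition} with the explicit components \eqref{eq:unreflected-explicit-components}, and it gives \(\deg D_j^{\mathrm{MII}}<m_j\). It also gives nonsingularity of the type-I moment matrix, hence uniqueness and normality. The components relative to the original weights follow from \eqref{eq:unreflected-probability-components}.

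One subtle point is passing from equality of moments to equality of sequences on the infinite lattice \(\Nzero\). The proof of Theorem~\ref{thm:explicit-unreflected-B} handles this through analyticity of the generating functions near \(z=1\). For \(\mathrm{MII}\), that analyticity comes from the Euler-integral continuation in Theorem~\ref{thm:MII-rows} and Theorem~\ref{thm:MII-forms}, so I will cite those results for it.

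For the Hahn limit, I will apply \eqref{eq:Hahn-MII-component-limit} of Corollary~\ref{cor:unreflected-component-limits} directly. Its proof already tracks the fixed-pole terms through \(\mathcal C_{j,J,K}^{(N)}\to\mathcal C_{j,J,K}^{\mathrm{MII}}\) and \((\beta_q^{(N)})_{m_q}\pi_{J,K}\to\pi_{J,K}^{-}\). It handles the \(J=q\) poles by the triangular inversion at infinity.

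The main obstacle is the normalization compatibility. The Hahn components refer to the normalized sequence \(\mathcal B_{\mm,N}^{\mathrm H}\), whereas the Meixner-II-like sequence is the limit of \((\beta_q^{(N)})_{m_q}\mathcal B_{\mm,N}^{\mathrm H}\). Moreover, the \(q\)-th rescaled Hahn weight \(v_{q,N}/\beta_q^{(N)}\) must be renormalized by \(\beta_q^{(N)}\) to become \(w_q^{\mathrm{MII}}\). These two facts explain the factor \((\beta_q^{(N)})_{m_q}/(\beta_q^{(N)})^{\delta_{j,q}}\) in \eqref{eq:Hahn-MII-component-limit}, and I will check this consistency explicitly before concluding.
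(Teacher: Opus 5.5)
Your proposal is correct and follows the paper's own proof, which applies Theorem~\ref{thm:explicit-unreflected-B} with $X=\mathrm{MII}$ and then the second limit \eqref{eq:Hahn-MII-component-limit} of Corollary~\ref{cor:unreflected-component-limits}. Your extra checks are accurate but already contained in those two cited results: the row moments $\beta_j h_{\mathrm{MII}}(r)/(r+\beta_j)$, the factorization $(\bbeta_{<q})_{\mm_{<q}+r}=(\bbeta_{<q})_rD_-(r)$, and the origin of the factor $(\beta_q^{(N)})_{m_q}/(\beta_q^{(N)})^{\delta_{j,q}}$.
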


\begin{proof}
Apply Theorem~\ref{thm:explicit-unreflected-B} with
$X=\mathrm{MII}$, followed by the second part of
Corollary~\ref{cor:unreflected-component-limits}.
\end{proof}

\paragraph{The moving \(J=q\) Kamp\'e de F\'eriet block.}
Under the generic hypotheses of
Theorem~\ref{thm:explicit-unreflected-B}, define, for every active
component \(j\),
\begin{equation}
 \mathscr M_{j,q}^{\infty}(k)
 \coloneq
 \sum_{\ell=0}^{m_q-1}
 \gamma_\ell^{\mathrm{MII}}
 \mathcal E_{j,\ell}^{\mathrm{MII}}(k),
 \label{eq:MII-moving-block-definition}
\end{equation}
with the empty sum interpreted as zero.  The compact component formula is
therefore
\begin{equation}
 D_j^{\mathrm{MII}}(k)
 =\sum_{J=1}^{q-1}\mathscr M_{j,J}(k)
  +\mathscr M_{j,q}^{\infty}(k).
 \label{eq:MII-all-sector-components}
\end{equation}

\begin{corollary}[Confluence of the moving Hahn Kamp\'e de F\'eriet block]
\label{cor:MII-moving-KdF-block}
Assume the hypotheses of
Theorem~\ref{thm:explicit-unreflected-B} for the Meixner-II-like
specialization, let \(j\) be active, and put
\(\varepsilon=1-\delta_{j,q}\) and
\(B_N=\beta_q^{(N)}=A_q+N/\tau\).  If
\(m_q<\varepsilon+1\), interpret the absent Hahn block
\(S_{j,q}^{(N)}\) as zero; then both sides below are zero.  Otherwise,
under \eqref{eq:MII-scaling},
\begin{equation}
 \frac{(B_N)_{m_q}}{B_N^{\delta_{j,q}}}
 S_{j,q}^{(N)}(k)\big|_{\beta_q=B_N}
 \xrightarrow[N\to\infty]{}
 \mathscr M_{j,q}^{\infty}(k)
 \label{eq:Hahn-MII-moving-block-limit}
\end{equation}
coefficientwise in \(k\).

The limit is a single coefficient extraction from the terminating Hahn
Kamp\'e de F\'eriet block.  More precisely, set
\(M=m_q-1-\varepsilon\), \(s=\sum_{h=1}^{q-1}m_h\), and let
\(\Phi_{j,q}^{\mathrm{MII}}(u;k)\) denote the Kamp\'e de F\'eriet
factor in \eqref{eq:Hahn-B-KdF-block}, without its prefactor
\(\pi_{q,\varepsilon}\mathcal D_{j,q,\varepsilon}^{(N)}\), after
\begin{equation*}
 \beta_q=u^{-1},\qquad N=\tau(u^{-1}-A_q).
\end{equation*}
Rewrite every moving Pochhammer factor by
\[
 (\lambda u^{-1}+a)_r
 =(\lambda u^{-1})^r
  \prod_{v=0}^{r-1}\left(1+\frac{a+v}{\lambda}u\right),
 \qquad \lambda\ne0,
\]
and cancel the resulting powers of \(u\) in each finite term.  This
defines \(\Phi_{j,q}^{\mathrm{MII}}\) as a finite rational expression
analytic at \(u=0\).  Then, coefficientwise in \(k\),
\begin{equation}
 \Phi_{j,q}^{\mathrm{MII}}(u;k)=\mathrm O(u^{2M}),
 \qquad
 \mathscr M_{j,q}^{\infty}(k)
 =\Lambda_j^{\mathrm{MII}}[u^{2M}]
  \Phi_{j,q}^{\mathrm{MII}}(u;k)
 =\frac{\Lambda_j^{\mathrm{MII}}}{(2M)!}
  \left.\frac{\partial^{2M}}{\partial u^{2M}}
  \Phi_{j,q}^{\mathrm{MII}}(u;k)\right|_{u=0},
 \label{eq:MII-moving-block-extraction}
\end{equation}
where
\begin{equation}
 \Lambda_j^{\mathrm{MII}}=
 \begin{cases}
 \displaystyle \frac{(-1)^{s+1}}{M!},&j=q,\\[6pt]
 \displaystyle \frac{(-1)^{s+1}}{M!}\frac{\tau}{1+\tau}
 \frac{\prod_{h=1}^{q}(A_h-\beta_j)}
 {\prod_{\substack{1\le h\le q-1\\h\ne j}}
  (\beta_h-\beta_j)},&j<q.
 \end{cases}
 \label{eq:MII-moving-block-prefactor}
\end{equation}
Moreover,
\(\deg\mathscr M_{j,q}^{\infty}\le m_q-1\) for \(j=q\), whereas
\(\deg\mathscr M_{j,q}^{\infty}\le m_q-2\) for \(j<q\) whenever that
block is present.  Thus every term in
\eqref{eq:MII-all-sector-components} is the ordinary coefficientwise
confluence of one terminating Hahn Kamp\'e de F\'eriet block.  The
blocks with \(J<q\) remain Kamp\'e de F\'eriet polynomials; the moving
block is the coefficient extraction
\eqref{eq:MII-moving-block-extraction} and equals the finite combination
of terminating hypergeometric and Horn polynomials in
\eqref{eq:MII-moving-block-definition}.  For \(q=1\), it is
the entire component and reduces to a scalar Meixner polynomial of
degree \(m_1-1\), with the present type-I normalization.
\end{corollary}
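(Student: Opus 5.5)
The plan is to copy the proof of Corollary~\ref{cor:K-moving-KdF-block}, replacing the Kravchuk scaling by \eqref{eq:MII-scaling}. The limit \eqref{eq:Hahn-MII-moving-block-limit} will come from subtraction. Corollary~\ref{cor:unreflected-component-limits} gives the full component limit \eqref{eq:Hahn-MII-component-limit}, and the Hahn component is the finite sum \eqref{eq:Hahn-B-KdF-sum} of the blocks \(S_{j,J}^{(N)}\). For each \(J<q\), the \(X=\mathrm{MII}\) argument in the proof of Proposition~\ref{prop:compact-unreflected-finite-poles} shows that \(\frac{(B_N)_{m_q}}{B_N^{\delta_{j,q}}}S_{j,J}^{(N)}\) tends coefficientwise to \(\mathscr M_{j,J}\). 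Here the moving pairs contribute \((-\tau^{-1})^t\), and the normalized prefactor tends to \(\pi_{J,\varepsilon}^-\Delta_{j,J,\varepsilon}^{\mathrm{MII}}\), by the computation already made for \(\pi_{J,K}\). Subtracting these finitely many limits and comparing with \eqref{eq:MII-all-sector-components} leaves exactly \eqref{eq:Hahn-MII-moving-block-limit}. If \(m_q<\varepsilon+1\), the Hahn block is absent and \(\mathscr M_{j,q}^\infty\) is an empty sum, so both sides vanish.

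For the extraction formula, substitute \(\beta_q=B=u^{-1}\) and \(N=\tau(u^{-1}-A_q)\). Then \(N+\beta_q+\cdots\) and \(\beta_q-A_h+\cdots\) are both of the form \(\lambda u^{-1}+a\) with \(\lambda\ne0\). The rewriting rule makes every term of the terminating Kamp\'e de F\'eriet factor analytic at \(u=0\), so \(\Phi_{j,q}^{\mathrm{MII}}\) is a polynomial in \(k\) whose coefficients are analytic in \(u\). Next, expand the normalized prefactor \(\frac{(B)_{m_q}}{B^{\delta_{j,q}}}\pi_{q,\varepsilon}\mathcal D^{(N)}_{j,q,\varepsilon}\) in powers of \(u\), using \eqref{eq:target-residue} with \(J=q\), \(K=\varepsilon\), and \eqref{eq:Hahn-C-block-prefactor}.

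Here is the power count. The factor \((B+\varepsilon)_n/\prod_{h\ne q}(\beta_h-B-\varepsilon)_{m_h}\) has order \(u^{-n+s}\) with sign \((-1)^s\), and \((B)_{m_q}\) has order \(u^{-m_q}\). For \(j=q\), \(\varepsilon=0\) and \(\mathcal D=1\). For \(j<q\), \(\mathcal D\) contributes \(\frac{(N+\beta_j)(A_q-\beta_j)\prod(\beta_h-B-1)}{(N+B+1)(A_q-B-1)\prod_{h\ne j}(\beta_h-\beta_j)}\), whose leading behaviour gives \(\frac{\tau}{1+\tau}\) times the displayed product quotient in \eqref{eq:MII-moving-block-prefactor}. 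With \(n=s+m_q-1\), the total order should be \(u^{-2M}\) with leading coefficient \(\Lambda_j^{\mathrm{MII}}\). Checking exponents and signs for both cases \(j=q\) and \(j<q\) is the main bookkeeping obstacle, and I would verify it against \(q=1\).

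Since the product has a finite limit, on the open set where \(\Lambda_j^{\mathrm{MII}}\ne0\) the Taylor coefficients of \(\Phi\) of order below \(2M\) must vanish. These coefficients are rational in the fixed parameters, so rational continuation extends the vanishing to every regular point. Taking the constant term then gives \eqref{eq:MII-moving-block-extraction}.

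The degree bounds follow from the terminating parameter \(\varepsilon+1-m_q=-M\): the \(k\)-dependence \((k+\beta_q+\varepsilon)_{r+t}/(k+\beta_q+\varepsilon)_r\) has degree \(t\le M\), which gives \(m_q-1\) for \(j=q\) and \(m_q-2\) for \(j<q\). Coefficient extraction in \(u\) does not raise the degree in \(k\). For \(q=1\), we have \(s=0\), no finite poles, and \(\varepsilon=0\), so the block is the whole component \(D_1^{\mathrm{MII}}\). Equation \eqref{eq:unreflected-q1-MII-B} identifies it as the Meixner polynomial.
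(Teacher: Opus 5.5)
Your proposal is correct and follows the paper's proof essentially step for step. You subtract the $J<q$ sector limits, obtained from the Meixner-II part of Proposition~\ref{prop:compact-unreflected-finite-poles}, from the component limit \eqref{eq:Hahn-MII-component-limit}. You expand the normalized prefactor as $\Lambda_j^{\mathrm{MII}}u^{-2M}+\mathrm O(u^{-2M+1})$, and you force the low-order Taylor coefficients of $\Phi_{j,q}^{\mathrm{MII}}$ to vanish on $\{\Lambda_j^{\mathrm{MII}}\ne0\}$, extending by rational continuation. You then read the degree bounds off the terminating parameter $-M$.

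One bookkeeping caution for the deferred case $j<q$. Your displayed $\mathcal D_{j,q,1}^{(N)}$ drops the factors $\prod_{h<q}(A_h-\beta_j)/\prod_{h<q}(A_h-B-1)$. These are needed to get its true size $-\frac{\tau}{1+\tau}\frac{\prod_{h=1}^q(A_h-\beta_j)}{\prod_{1\le h\le q-1,\,h\ne j}(\beta_h-\beta_j)}B^{-3}$. Combined with $(B)_{m_q}$ and $\pi_{q,1}\sim(-1)^sB^{m_q-1}/M!$, this yields the total $B^{2M}$ with sign $(-1)^{s+1}$. Note also that a $q=1$ check only tests the case $j=q$.
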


\begin{proof}
For every \(J<q\), the Meixner-II-like part of the proof of
Proposition~\ref{prop:compact-unreflected-finite-poles} gives the
coefficientwise limit
\[
 \frac{(B_N)_{m_q}}{B_N^{\delta_{j,q}}}
 S_{j,J}^{(N)}(k)\big|_{\beta_q=B_N}
 \xrightarrow[N\to\infty]{}\mathscr M_{j,J}(k),
\]
with absent blocks interpreted as zero.  Subtracting these finitely many
limits from \eqref{eq:Hahn-MII-component-limit}, and using
\eqref{eq:MII-all-sector-components}, proves
\eqref{eq:Hahn-MII-moving-block-limit}.

Put \(B=\beta_q=u^{-1}\), so that
\(N=\tau(B-A_q)\).  Directly from
\eqref{eq:target-residue} and \eqref{eq:Hahn-C-block-prefactor},
\[
 \frac{(B)_{m_q}}{B^{\delta_{j,q}}}
 \pi_{q,\varepsilon}\mathcal D_{j,q,\varepsilon}^{(N)}
 =\Lambda_j^{\mathrm{MII}}u^{-2M}
  +\mathrm O(u^{-2M+1}).
\]
The remaining Kamp\'e de F\'eriet factor is a finite sum of rational
functions analytic at \(u=0\).  On the open parameter subset where
\(\Lambda_j^{\mathrm{MII}}\ne0\), the finite limit
\eqref{eq:Hahn-MII-moving-block-limit} forces its coefficients of orders
below \(2M\) to vanish. Those coefficients are rational functions of the
fixed parameters, so the same vanishing holds by rational continuation at
every parameter point at which the displayed expressions are regular.
Taking the constant term of the product proves
\eqref{eq:MII-moving-block-extraction} at every such regular point,
including those at which \(\Lambda_j^{\mathrm{MII}}=0\); the displayed
asymptotic evaluation also gives \eqref{eq:MII-moving-block-prefactor}.
The degree bounds follow
from the terminating parameter \(-M\) in the source block.
\end{proof}

\subsection{Classical reduction and the direct Laguerre-I limit}

We first check the one-weight case and then take the direct continuous
limit. The scalar case recovers classical Meixner, while the continuous
limit gives the Laguerre-I-like system.

For \(q=1\), \eqref{eq:MII-pgf} and coefficient extraction give
\[
 G^{\mathrm{MII}}(z)
 =(1+\tau(1-z))^{-A}
 =\left(\frac{1-c}{1-cz}\right)^A,
 \qquad
 v^{\mathrm{MII}}(k)=\frac{(A)_k}{k!}(1-c)^Ac^k,
 \quad k\in\Nzero,
\]
which is the classical scalar Meixner weight, equivalently the one-weight
reduction of the classical multiple Meixner-II system, while
\(A_d^{\mathrm{MII}}(k)=\pFq{2}{1}{-d,-k}{A}{1-c^{-1}}\) is the
classical Meixner polynomial. For \(q>1\), however, the present
weights differ from the classical multiple Meixner-II system
\cite{ArvesuCoussementVanAssche2003,
BranquinhoDiazFoulquieManasWolfs2024Discrete}.  The terminology
\enquote{Meixner-II-like} refers to their direct Laguerre-I-like limit.

Let \(\tau\to\infty\), put \(k=\lfloor\tau x\rfloor\), and retain
\(Y=y_1\cdots y_{q-1}\). For every \(j\in\{1,\ldots,q\}\), the
normalized Laguerre-I-like limiting weight is
\begin{equation}
 w_j^{\mathrm{LI}}(x)
 =\int_{(0,1)^{q-1}}
 \frac1Y g_{1,A_q}\!\left(\frac{x}{Y}\right)
 \prod_{h=1}^{q-1}b_{A_h,C_{h,j}-A_h}(y_h)\dd\boldsymbol y.
 \label{eq:LI-positive-row}
\end{equation}
The integral in \eqref{eq:LI-positive-row} is nonnegative, has integral
one, and its Laplace transform is
\(\int_0^\infty\e^{-sx}w_j^{\mathrm{LI}}(x)\dd x
=\pFq{q}{q-1}{\A}{\boldsymbol C_j}{-s}\).
We now justify the local limit uniformly away from the origin. For every
\(j\in\{1,\ldots,q\}\), let
\(\dd\mu_j(\boldsymbol y)
\coloneq\prod_{h=1}^{q-1}b_{A_h,C_{h,j}-A_h}(y_h)\dd\boldsymbol y\)
and \(Y=y_1\cdots y_{q-1}\),
with the usual empty-product convention when \(q=1\). Conditional on
\(Y\), coefficient extraction in \eqref{eq:MII-positive-integral} gives
\[
 q_{\tau,Y}(k)
 \coloneq[z^k](1+\tau(1-z)Y)^{-A_q}
 =\frac{(A_q)_k}{k!}
   \frac{(\tau Y)^k}{(1+\tau Y)^{A_q+k}},
 \qquad
 v_j^{\mathrm{MII}}(k)
 =\int_{(0,1)^{q-1}} q_{\tau,Y}(k)\dd\mu_j(\boldsymbol y).
\]
If \(K=[\varepsilon,M]\Subset(0,\infty)\) and
\(k=\lfloor\tau x\rfloor\), the uniform gamma-ratio estimate
\(\frac{\Gamma(k+A_q)}{\Gamma(k+1)}
=k^{A_q-1}\bigl(1+\mathrm O(k^{-1})\bigr)\) as \(k\to\infty\)
shows, for every fixed \(Y>0\), uniformly for \(x\in K\), that
\[
 \tau q_{\tau,Y}(\lfloor\tau x\rfloor)
 \xrightarrow[\tau\to\infty]{}
 \frac{x^{A_q-1}\e^{-x/Y}}{\Gamma(A_q)Y^{A_q}}
 =\frac1Yg_{1,A_q}\!\left(\frac{x}{Y}\right).
\]
The family is uniformly dominated even as \(Y\downarrow0\). Indeed, put
\(x_\tau=\lfloor\tau x\rfloor/\tau\) and
\(r_\tau=Y+\tau^{-1}\). For all sufficiently large \(\tau\),
\(x_\tau\in[\varepsilon/2,M]\), and
\[
 \tau q_{\tau,Y}(\lfloor\tau x\rfloor)
 \le C_{A_q}\,x_\tau^{A_q-1}r_\tau^{-A_q}
 \exp\!\left(-\frac{x_\tau}{r_\tau}\right)
 \le C_Kr_\tau^{-A_q}
 \exp\!\left(-\frac{\varepsilon}{2r_\tau}\right)
 \le C_K'.
\]
Here we used
\((1-1/(\tau r_\tau))^k\le
\exp(-k/(\tau r_\tau))\). The last bound is uniform for
\(0<Y<1\), since \(r^{-A_q}\exp(-\varepsilon/(2r))\) is bounded
near \(r=0\), and the limiting gamma kernel obeys the same bound with
\(r_\tau\) replaced by \(Y\). Since
\(\mu_j((0,1)^{q-1})=1\),
dominated convergence, applied to the supremum over \(x\in K\), gives
\[
 \sup_{x\in K}\left|
 \tau v_j^{\mathrm{MII}}(\lfloor\tau x\rfloor)
 -\int_{(0,1)^{q-1}}\frac1Yg_{1,A_q}\!\left(\frac{x}{Y}\right)
       \dd\mu_j(\boldsymbol y)\right|
 \xrightarrow[\tau\to\infty]{}0.
\]
Consequently,
\begin{equation}
 \tau\,v_j^{\mathrm{MII}}(\lfloor\tau x\rfloor)
 \xrightarrow[\tau\to\infty]{} w_j^{\mathrm{LI}}(x),\qquad x>0,
 \label{eq:MII-LI-row-limit}
\end{equation}
Thus \eqref{eq:MII-LI-row-limit} holds locally uniformly on compact
subsets of \((0,\infty)\). The polynomial
limit is
\begin{equation}
 A_{\mm}^{\mathrm{LI}}(x)
 =\pFq{q}{q}
 {-d,\beta_1+m_1,\ldots,\beta_{q-1}+m_{q-1}}
 {A_1,\ldots,A_q}{x}.
 \label{eq:LI-A}
\end{equation}
Writing \(z=\e^{-s/\tau}\) in \eqref{eq:MII-B-pgf} and letting
\(\tau\to\infty\) gives the Laplace
transform of the normalized Laguerre-I-like type-I linear form:
\begin{equation}
 -\frac{(\A)_n}{(\bbeta_{<q})_{\mm_{<q}+n}}
 s^n\,
 \pFq{q}{q-1}
 {\A+n}{\bbeta_{<q}+\mm_{<q}+n}{-s}.
 \label{eq:LI-B-Laplace}
\end{equation}

\begin{proposition}[Direct Meixner-II-like-to-Laguerre-I-like polynomial and form limits]
\label{prop:MII-LI-polynomial-form-limit}
For a fixed near-diagonal \(\mm\), the polynomial
\eqref{eq:LI-A} has exact degree \(d=\abs\mm\), satisfies
\(A_{\mm}^{\mathrm{LI}}(0)=1\), and has leading coefficient
\begin{equation}
 [x^d]A_{\mm}^{\mathrm{LI}}(x)
 =(-1)^d
 \frac{\displaystyle\prod_{h=1}^{q-1}(\beta_h+m_h)_d}
      {\displaystyle\prod_{h=1}^{q}(A_h)_d}.
 \label{eq:LI-A-leading-coefficient}
\end{equation}
For every active \(j\), it obeys the type-II relations
\begin{equation}
 \int_0^\infty x^rA_{\mm}^{\mathrm{LI}}(x)
 w_j^{\mathrm{LI}}(x)\dd x=0,
 \qquad r\in\{0,\ldots,m_j-1\}.
 \label{eq:LI-A-orthogonality}
\end{equation}
Moreover, as \(\tau\to\infty\),
\begin{equation}
 A_{\mm}^{\mathrm{MII}}(\tau\,\cdot)
 \xrightarrow[\tau\to\infty]{}A_{\mm}^{\mathrm{LI}}
 \quad\hbox{coefficientwise},
 \label{eq:MII-LI-A-limit}
\end{equation}
and hence
\(A_{\mm}^{\mathrm{MII}}(\lfloor\tau x\rfloor)
\to A_{\mm}^{\mathrm{LI}}(x)\) for every fixed \(x>0\).
The corresponding type-I generating functions, evaluated at
\(z=\e^{-s/\tau}\), converge locally uniformly on the open half-plane
\(\{s\in\mathbb C:\operatorname{Re}s>-1\}\) to
\eqref{eq:LI-B-Laplace}.
\end{proposition}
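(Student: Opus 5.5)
The proof has four parts: the algebraic facts about \eqref{eq:LI-A}, the coefficientwise polynomial limit \eqref{eq:MII-LI-A-limit}, the orthogonality \eqref{eq:LI-A-orthogonality} obtained by passing to the limit in fixed moments, and the type-I generating-function limit.

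\emph{Degree, normalization, polynomial limit.} Expanding \eqref{eq:LI-A} as a terminating series in \(x\), the value at zero is \(1\). The term \(\ell=d\) gives \((-d)_d/d!=(-1)^d\), which yields \eqref{eq:LI-A-leading-coefficient}. This coefficient is nonzero because \(\beta_h+m_h>0\) and \(A_h>0\).

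For \eqref{eq:MII-LI-A-limit}, write \eqref{eq:MII-A} as
\[
 A^{\mathrm{MII}}_{\mm}(k)=\sum_{\ell=0}^d\frac{(-d)_\ell(\bbeta_{<q}+\mm_{<q})_\ell}{\ell!\,(\A)_\ell}\,(-\tau^{-1})^\ell(-k)_\ell .
\]
Since \((-1)^\ell(-k)_\ell=\fall{k}{\ell}\), the term of order \(\ell\) becomes \(\tau^{-\ell}\fall{\tau x}{\ell}\) after setting \(k=\tau x\). This tends to \(x^\ell\) coefficientwise as \(\tau\to\infty\), because \(\fall{\tau x}{\ell}=\tau^\ell x^\ell+O(\tau^{\ell-1})\) with coefficients that are polynomials in \(x\). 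The sum is finite, so the limit passes termwise. Pointwise convergence at \(\lfloor\tau x\rfloor\) then follows from \(\lfloor\tau x\rfloor/\tau\to x\) and the fixed degree.

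\emph{Orthogonality.} I would argue by a moment limit rather than dominated convergence for the local densities. Divide \eqref{eq:MII-A-orthogonality} by \(\tau^r\). Using \eqref{eq:falling-linearization}, rewrite
\[
 \sum_k\fall{k}{r}A^{\mathrm{MII}}_{\mm}(k)v_j^{\mathrm{MII}}(k)
\]
in terms of the factorial moments \eqref{eq:MII-factorial-moments}. For fixed \(p\),
\[
 \tau^{-p}\sum_k\fall{k}{p}v_j^{\mathrm{MII}}(k)=\frac{(\A)_p}{(\boldsymbol C_j)_p},
\]
independently of \(\tau\). This equals the \(p\)-th moment of \(w_j^{\mathrm{LI}}\), as one reads off from the Laplace transform \(\pFq{q}{q-1}{\A}{\boldsymbol C_j}{-s}\), or equivalently from the product integral, since \(\int x^p g_{1,A_q}=(A_q)_p\). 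The linearization terms with \(u>0\) carry negative powers of \(\tau\) and vanish in the limit. The coefficients of \(A^{\mathrm{MII}}_{\mm}\) in the scaled basis \(\fall{k}{\ell}/\tau^\ell\) are exactly those of \(A^{\mathrm{LI}}_{\mm}\). Hence the limit of the scaled left-hand side is \(\int_0^\infty x^rA^{\mathrm{LI}}_{\mm}(x)w_j^{\mathrm{LI}}(x)\dd x\), and this equals zero. Alternatively, I would check the identity directly by the same residue and difference argument as in Theorem~\ref{thm:Hahn-A}, dropping the factor \((z-N)_s\). The moment route is shorter.

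\emph{Type-I generating functions.} Substitute \(z=\e^{-s/\tau}\) in \eqref{eq:MII-B-pgf}. Then \(\tau^n(1-z)^n\to s^n\), and \(\tau(z-1)\to-s\) locally uniformly in \(s\). The remaining task is to control \(\pFq{q}{q-1}{\A+n}{\bbeta_{<q}+\mm_{<q}+n}{\tau(z-1)}\) uniformly. For this I would use the shifted Euler integral representation, analogous to \eqref{eq:MII-positive-integral} with parameters \(A_h+n\) and \(\beta_h+m_h+n\), which expresses it as
\[
 \int \bigl(1+\tau(1-z)Y\bigr)^{-A_q-n}\dd\mu'(\boldsymbol y)
\]
against a probability measure \(\mu'\). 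For \(\operatorname{Re}s>-1\) and \(\tau\) large, \(\tau(1-\e^{-s/\tau})\) stays in a compact subset of \(\{w:\operatorname{Re}w>-1\}\). Since \(Y\in(0,1)\), the quantity \(1+wY\) then has real part bounded below by a positive constant uniformly in \(Y\). This gives a uniform bound, and dominated convergence yields locally uniform convergence to the Euler integral for \(\pFq{q}{q-1}{\A+n}{\cdots}{-s}\), which is \eqref{eq:LI-B-Laplace}.

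The main obstacle is this uniform domination near \(\operatorname{Re}s=-1\). It is needed only on compacta, where the compactness margin suffices. The principal branch of the power is consistent with the continuation convention fixed in the preliminaries.
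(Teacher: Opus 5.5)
Your proposal is correct and follows essentially the paper's proof. It uses the termwise limit \((-\tau^{-1})^\ell(-\tau x)_\ell\to x^\ell\) in the terminating series, then passes to the limit in the exactly scaled factorial moments for the type-II relations, and finally applies dominated convergence in the shifted Euler integral, using \(\operatorname{Re}(1+Y\tau(1-\e^{-s/\tau}))\ge\varepsilon_{\mathcal K}>0\) on compacta of \(\{\operatorname{Re}s>-1\}\) for the type-I generating functions. The only difference is bookkeeping in the orthogonality step: you linearize via \eqref{eq:falling-linearization} and note that the \(u>0\) terms carry \(\tau^{-u}\), whereas the paper passes to ordinary moments by Stirling transforms; both routes work.
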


\begin{proof}
In the terminating series \eqref{eq:MII-A}, for every fixed summation
index \(\ell\),
\[
 (-\tau x)_\ell(-\tau^{-1})^\ell\longrightarrow x^\ell.
\]
This proves \eqref{eq:MII-LI-A-limit}. The term with \(\ell=d\) gives
\eqref{eq:LI-A-leading-coefficient}, while the constant term is one.

It remains to justify passage to the type-II relations. By
\eqref{eq:MII-factorial-moments}, for every fixed \(s\),
\[
 \tau^{-s}\sum_{k\ge0}\fall{k}{s}v_j^{\mathrm{MII}}(k)
 =\int_0^\infty x^s w_j^{\mathrm{LI}}(x)\dd x.
\]
The finite Stirling transformations therefore give convergence of every
fixed ordinary moment of the rescaled discrete rows. Divide the
Meixner-II-like orthogonality relation of order \(r\) by \(\tau^r\),
write its polynomial factor as
\(A_{\mm}^{\mathrm{MII}}(\tau(k/\tau))\), and use
\eqref{eq:MII-LI-A-limit}. Only finitely many fixed moments occur, so the
limit is exactly \eqref{eq:LI-A-orthogonality}. Finally,
put \(u_\tau(s)=\tau(1-\e^{-s/\tau})\), so that
\(u_\tau(s)\to s\) locally uniformly. In the shifted Euler representation
of the hypergeometric factor in \eqref{eq:MII-B-pgf}, the integrand is
\((1+u_\tau(s)Y)^{-A_q-n}\) against the probability beta product with
parameters \(A_h+n\) and \(\beta_h+m_h-A_h\), \(h<q\).
If \(\mathcal K\) is a compact subset of
\(\{s\in\mathbb C:\operatorname{Re}s>-1\}\), then, for some
\(\varepsilon_{\mathcal K}>0\) and every sufficiently large \(\tau\),
\[
 \operatorname{Re}(1+Yu_\tau(s))\ge\varepsilon_{\mathcal K},
 \qquad s\in\mathcal K,\quad 0\le Y\le1.
\]
It is therefore bounded in modulus by
\(\varepsilon_{\mathcal K}^{-A_q-n}\), while the beta product has mass one.
Dominated convergence, together with \(u_\tau(s)^n\to s^n\), gives
\eqref{eq:LI-B-Laplace} locally uniformly on the stated open set.
\end{proof}

For \(q=1\), this is the scalar Meixner-to-Laguerre transition contained in
the standard multiple Meixner-II-to-Laguerre-I limit.
These weights and forms constitute the Laguerre-I-like limit described in
\cite[Section~3]{Wolfs2024}. Proposition~\ref{prop:MII-LI-polynomial-form-limit}
proves their direct appearance as limits of the new discrete Hahn-like
family.

\subsection{Explicit Laguerre-I-like type-I components and normality}

Let \(w_j^{\mathrm{LI}}\) be the weights in
\eqref{eq:LI-positive-row}, and define their rescaled versions by
\begin{equation}
 \widetilde w_j^{\mathrm{LI}}=w_j^{\mathrm{LI}}/\beta_j\quad
 (j\in\{1,\ldots,q-1\}),
 \qquad
 \widetilde w_q^{\mathrm{LI}}=w_q^{\mathrm{LI}}.
 \label{eq:LI-canonical-rows}
\end{equation}
Put \(h_{\mathrm{LI}}(r)=(\A)_r/(\bbeta_{<q})_r\). Their moments are
\[
 \int_0^\infty x^r\widetilde w_j^{\mathrm{LI}}(x)\dd x
 =\begin{cases}
 h_{\mathrm{LI}}(r)/(r+\beta_j),&j\in\{1,\ldots,q-1\},\\
 h_{\mathrm{LI}}(r),&j=q.
 \end{cases}
\]
Consequently, multiplication by the monomial \(x^\ell\) gives
\begin{align}
 R_{j,\ell}^{\mathrm{LI}}(r)
 &=\frac{(r+\A)_\ell}
 {(r+\bbeta_{<q})_\ell(r+\beta_j+\ell)},&&
 j\in\{1,\ldots,q-1\},
 \label{eq:LI-component-kernel-row}\\
 R_{q,\ell}^{\mathrm{LI}}(r)
 &=\frac{(r+\A)_\ell}{(r+\bbeta_{<q})_\ell}.
 \label{eq:LI-component-kernel-base}
\end{align}

For \(J\in\{1,\ldots,q-1\}\), \(K\in\Nzero\) with \(K<m_J\), and
\(j\in\{1,\ldots,q\}\), put
\(\xi_{J,K}=\beta_J+K\) and
\(\varepsilon_{j,J}=1-\delta_{j,J}\), and use the vectors \(\ee_j^-\)
defined above. Set
\(\mathcal C_{j,J,K}^{\mathrm{LI}}=0\) when
\(K<\varepsilon_{j,J}\), and otherwise define
\begin{equation}
 \mathcal C_{j,J,K}^{\mathrm{LI}}(x)
 =\Delta_{j,J,K}^{\mathrm{LI}}
 \pFq{q}{q}
 {1,\bbeta_{<q}+(1-\xi_{J,K})\one_{q-1}-\ee_j^-}
 {\A+(1-\xi_{J,K})\one_q}{x},
 \label{eq:LI-finite-pole-block}
\end{equation}
where, for \(j\in\{1,\ldots,q-1\}\),
\begin{equation}
 \Delta_{j,J,K}^{\mathrm{LI}}
 =\frac{\prod_{h=1}^{q}(A_h-\beta_j)
   \prod_{\substack{1\le h\le q-1\\h\ne j}}(\beta_h-\xi_{J,K})}
  {\prod_{\substack{1\le h\le q-1\\h\ne j}}(\beta_h-\beta_j)
   \prod_{h=1}^{q}(A_h-\xi_{J,K})},
 \label{eq:LI-finite-pole-prefactor-row}
\end{equation}
and
\begin{equation}
 \Delta_{q,J,K}^{\mathrm{LI}}
 =-\frac{\prod_{h=1}^{q-1}(\beta_h-\xi_{J,K})}
         {\prod_{h=1}^{q}(A_h-\xi_{J,K})}.
 \label{eq:LI-finite-pole-prefactor-base}
\end{equation}
The hypergeometric polynomial has degree at most \(K\) for \(j=J\) and
degree at most \(K-1\) for \(j\ne J\).

For the polynomial part at infinity, put
\begin{equation}
 \boldsymbol\xi_\ell^{\mathrm{LI}}
 =([A_1]_\ell,\ldots,[A_q]_\ell),
 \qquad
 \boldsymbol\eta_\ell^{\mathrm{LI}}
 =([\beta_1]_\ell,\ldots,[\beta_{q-1}]_\ell),
 \label{eq:LI-infinity-strings}
\end{equation}
and
\begin{equation}
 p_{K,\ell}^{\mathrm{LI}}
 =\mathfrak h_{\ell-K}
 (\boldsymbol\xi_\ell^{\mathrm{LI}};
  \boldsymbol\eta_\ell^{\mathrm{LI}}),
 \qquad K\in\{0,\ldots,\ell\}.
 \label{eq:LI-infinity-transition}
\end{equation}
In particular, \(p_{\ell,\ell}^{\mathrm{LI}}=1\). For
\(\ell\in\mathbb N\), \(J\in\{1,\ldots,q-1\}\), and
\(K\in\Nzero\) with \(K<\min\{m_J,\ell\}\), define
\begin{equation}
 z_{J,K,\ell}^{\mathrm{LI}}
 =\frac{\prod_{g=1}^{q}(A_g-\beta_J-K)_\ell}
 {(-1)^K K!(\ell-K-1)!
  \prod_{\substack{1\le h\le q-1\\h\ne J}}
  (\beta_h-\beta_J-K)_\ell},
 \label{eq:LI-base-residue}
\end{equation}
and
\begin{equation}
 \mathcal E_{j,\ell}^{\mathrm{LI}}(x)
 =\delta_{j,q}x^\ell
 -\sum_{J=1}^{q-1}\sum_{K=0}^{\min(m_J-1,\ell-1)}
 z_{J,K,\ell}^{\mathrm{LI}}
 \mathcal C_{j,J,K}^{\mathrm{LI}}(x).
 \label{eq:LI-corrected-infinity-vector}
\end{equation}
Let \(\gamma_\ell^{\mathrm{LI}}\) be the finite triangular sum
\eqref{eq:unreflected-infinity-chain} with
\(p_{K,\ell}^X\) replaced by \(p_{K,\ell}^{\mathrm{LI}}\).
For every active component \(j\), define the sector at infinity by
\begin{equation}
 \mathscr L_{j,q}^{\infty}(x)
 \coloneq
 \sum_{\ell=0}^{m_q-1}\gamma_\ell^{\mathrm{LI}}
 \mathcal E_{j,\ell}^{\mathrm{LI}}(x),
 \label{eq:LI-infinity-block-definition}
\end{equation}
with the empty sum interpreted as zero.

For \(j\in\{1,\ldots,q\}\), set
\begin{equation}
 D_j^{\mathrm{LI}}(x)=
 \sum_{J=1}^{q-1}\sum_{K=0}^{m_J-1}
 \pi_{J,K}^{-}\mathcal C_{j,J,K}^{\mathrm{LI}}(x)
 +\sum_{\ell=0}^{m_q-1}
 \gamma_\ell^{\mathrm{LI}}
 \mathcal E_{j,\ell}^{\mathrm{LI}}(x).
 \label{eq:LI-explicit-components}
\end{equation}
For each \(J<q\), the finite-pole part of
\eqref{eq:LI-explicit-components} also has one terminating
Kamp\'e de F\'eriet block. Delete the entry
\(k+\beta_J+\varepsilon\) from
\(\boldsymbol a_{j,J}^{-}(k)\) and
\(\boldsymbol d_{j,J}^{-}(k)\); call the resulting strings
\(\widehat{\boldsymbol a}_{j,J}^{-}\) and
\(\widehat{\boldsymbol d}_{j,J}^{-}\). Define
\begin{equation}
 \mathscr L_{j,J}(x)
 \coloneq \pi_{J,\varepsilon}^{-}
 \Delta_{j,J,\varepsilon}^{\mathrm{LI}}
 F_{q+1:q-2;0}^{q:q;1}
 \left(
 \begin{matrix}
  \widehat{\boldsymbol a}_{j,J}^{-}:
  \boldsymbol b_{j,J}^{\mathrm{MII}};1\\
  \boldsymbol c_{j,J}^{\mathrm{MII}}:
  \widehat{\boldsymbol d}_{j,J}^{-};\text{--}
 \end{matrix}
 \middle|1,-x\right),
 \label{eq:LI-compact-finite-pole-block}
\end{equation}
and set it equal to zero if \(m_J<\varepsilon+1\).

\begin{proposition}[Compact Laguerre-I sectors]
\label{prop:LI-compact-finite-poles}
For every \(J\in\{1,\ldots,q-1\}\) and every active component \(j\),
\begin{equation}
 \sum_{K=0}^{m_J-1}\pi_{J,K}^{-}
 \mathcal C_{j,J,K}^{\mathrm{LI}}(x)
 =\mathscr L_{j,J}(x).
 \label{eq:LI-compact-finite-pole-identity}
\end{equation}
If this block is nonzero, then
\begin{equation}
 \deg\mathscr L_{j,J}
 \le m_J-1-\varepsilon_{j,J}.
 \label{eq:LI-finite-sector-degree}
\end{equation}
Hence \eqref{eq:LI-explicit-components} is equivalently the complete
sector decomposition
\begin{equation}
 D_j^{\mathrm{LI}}(x)=
 \sum_{J=1}^{q-1}\mathscr L_{j,J}(x)
 +\mathscr L_{j,q}^{\infty}(x).
 \label{eq:LI-compact-components}
\end{equation}
Writing \(\varepsilon_{j,q}=1-\delta_{j,q}\), the infinity block is zero
when \(m_q<\varepsilon_{j,q}+1\). Otherwise,
\begin{equation}
 \deg\mathscr L_{j,q}^{\infty}\le
 \begin{cases}
  m_q-1,&j=q,\\
  m_q-2,&j<q.
 \end{cases}
 \label{eq:LI-infinity-sector-degree}
\end{equation}
\end{proposition}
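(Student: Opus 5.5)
The identity \eqref{eq:LI-compact-finite-pole-identity} can be proved by re-indexing, exactly as in Corollary~\ref{cor:Hahn-B-KdF}, or alternatively as a confluent limit of \eqref{eq:MII-compact-finite-pole-identity}. The plan uses the direct route. Fix \(J<q\) and an active \(j\), put \(\varepsilon=\varepsilon_{j,J}\), and expand the terminating \({}_qF_q\) in \eqref{eq:LI-finite-pole-block} as a sum over \(s\). The upper parameter in position \(J\) of the string \(\bbeta_{<q}+(1-\xi_{J,K})\one_{q-1}-\ee_j^-\) equals \(\varepsilon-K\), so the series stops at \(s=K-\varepsilon\).

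Substitute \(K=\varepsilon+r+t\) with \(t=s\), so that \(r,t\ge0\) and \(r+t\le m_J-1-\varepsilon\); this parametrizes the full double range. Then move all \(K\)-dependence into Pochhammer symbols in \(r+t\), \(r\) and \(t\) separately, using the three identities listed in the proof of Corollary~\ref{cor:Hahn-B-KdF}. The factors enter as follows.
\begin{itemize}
\item The coefficient \(\pi_{J,K}^{-}\) from \eqref{eq:unreflected-finite-pole-coefficient} supplies \((\beta_J+\varepsilon+n)_{r+t}/(\beta_J+\varepsilon)_{r+t}\) and \((\varepsilon+1-m_J)_{r+t}\).
\item The ratio \(\pi_{J,K}^-/\pi_{J,\varepsilon}^-\) also produces the strings \(\{\beta_J-\beta_h+\varepsilon+1-m_h\}_{h\neq J}\).
\item The prefactor ratio \(\Delta_{j,J,K}^{\mathrm{LI}}/\Delta_{j,J,\varepsilon}^{\mathrm{LI}}\) produces \((\beta_J-A_h+\varepsilon)_r/(\beta_J-A_h+\varepsilon+1)_{r+t}\) and the \(\widehat{\boldsymbol d}_{j,J}^-\) denominators.
\item The hypergeometric term supplies \(x^t\) with sign \((-1)^t\).
\end{itemize}
Collecting these gives exactly the \((r,t)\) term of \(F^{q:q;1}_{q+1:q-2;0}\) with arguments \(1,-x\), times \(\pi^-_{J,\varepsilon}\Delta^{\mathrm{LI}}_{j,J,\varepsilon}\).

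The confluent route serves as a cross-check that avoids redoing this bookkeeping. Set \(k=\lfloor\tau x\rfloor\) in \eqref{eq:MII-compact-finite-pole-identity} and let \(\tau\to\infty\). In each finite term, \((k+\beta_J+\varepsilon)_{r+t}/(k+\beta_J+\varepsilon)_r\cdot(-\tau^{-1})^t\) tends to \((-x)^t\), while \(\Delta^{\mathrm{MII}}\) does not depend on \(\tau\) for \(j<q\). The deleted entry \(k+\beta_J+\varepsilon\) is then precisely the pair removed in forming \(\widehat{\boldsymbol a}^-_{j,J},\widehat{\boldsymbol d}^-_{j,J}\). On the left, \(\mathcal C^{\mathrm{MII}}_{j,J,K}(\tau x)\) converges coefficientwise to \(\mathcal C^{\mathrm{LI}}_{j,J,K}(x)\), because \((1-\tau x-\xi)_s(-\tau^{-1})^s\to x^s\).

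The main obstacle is the \(j=q\) prefactor. There \(\Delta^{\mathrm{MII}}_{q,J,K}\) carries \(-(1+\tau)/\tau\to-1\), which matches \eqref{eq:LI-finite-pole-prefactor-base}, but the sign and the index shift \(t\leftrightarrow s\) must be tracked carefully. I would verify the \(j=q\) case directly in the re-indexing computation, where \(\varepsilon=1\).

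The degree bound \eqref{eq:LI-finite-sector-degree} follows from the terminating parameter \(\varepsilon+1-m_J\) together with the fact that \(x\) appears only to the power \(t\le r+t\). Substituting \eqref{eq:LI-compact-finite-pole-identity} into \eqref{eq:LI-explicit-components} and using definition \eqref{eq:LI-infinity-block-definition} gives \eqref{eq:LI-compact-components}.

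For \eqref{eq:LI-infinity-sector-degree}, look at \(\mathcal E^{\mathrm{LI}}_{j,\ell}\) with \(\ell\le m_q-1\). The \(q\)-th component is \(x^\ell\) plus corrections of degree at most \(K\le\ell-1\), so its degree is at most \(m_q-1\). A component \(j<q\) receives only the corrections \(\mathcal C^{\mathrm{LI}}_{j,J,K}\). These have degree at most \(K\le\ell-1\le m_q-2\) when \(j=J\), and at most \(K-1\) otherwise. If \(m_q<\varepsilon_{j,q}+1\), then \(j<q\) forces \(m_q=0\), so the sum is empty and the block vanishes.
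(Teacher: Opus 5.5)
Your proof is correct, but your main route differs from the paper's. The paper proves \eqref{eq:LI-compact-finite-pole-identity} only by confluence, which is exactly your cross-check. It puts \(k=\tau x\) in the already regrouped Meixner-II block \(\mathscr M_{j,J}\), passes to the limit termwise (the double sum stops at \(r+t\le m_J-1-\varepsilon\)), and takes the same limit on both sides of \eqref{eq:MII-compact-finite-pole-identity}.

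You instead redo the re-indexing \(K=\varepsilon+r+t\), \(s=t\) of Corollary~\ref{cor:Hahn-B-KdF} directly at the Laguerre-I level.
\begin{itemize}
\item Your route is self-contained and purely algebraic. It needs neither the Meixner-II regrouping, nor the hypotheses under which that regrouping was proved, nor any limit.
\item The paper's route is shorter. It also yields the sectorwise limit \(\mathscr M_{j,J}(\tau\,\cdot)\to\mathscr L_{j,J}\), which is reused in Corollary~\ref{cor:MII-LI-component-limit}.
\end{itemize}

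Your bookkeeping closes, but the itemized attribution omits pieces that must cancel.
\begin{itemize}
\item For \(h\ne J\), the ratio \(\pi^-_{J,K}/\pi^-_{J,\varepsilon}\) also leaves \(1/(\beta_J-\beta_h+\varepsilon+1)_{r+t}\). For \(h\ne j\) this cancels the linear factor \(\beta_h-\xi_{J,K}\) of \(\Delta^{\mathrm{LI}}\), via \((e+p)/e=(e+1)_p/(e)_p\). For \(h=j\) it survives and becomes the \(\widehat{\boldsymbol d}^-_{j,J}\) entry with \(\delta_{h,j}=1\).
\item The \(A\)-factors of the \(\Delta\)-ratio give \((\beta_J-A_h+\varepsilon)_{r+t}/(\beta_J-A_h+\varepsilon+1)_{r+t}\). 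Only the lower parameters \((A_h+1-\xi_{J,K})_t=(-1)^t(\beta_J-A_h+\varepsilon+r)_t\) reduce the numerator to index \(r\).
\item In the same way, the upper \(\bbeta\)-parameters reduce the \(\widehat{\boldsymbol d}^-_{j,J}\) entries to index \(r\).
\item The factors \(1/K!\), the factor \(-K\) of \(\Delta^{\mathrm{LI}}\) at \(h=J\) (present when \(j\ne J\)), and the upper entry \((\varepsilon-K)_t=(-1)^t(r+1)_t\) combine to \(1/r!\).
\item The net sign is \((-1)^{(2q-1)t}\), which produces \((-x)^t\).
\end{itemize}
The case \(j=q\) is not an obstacle in either route. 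Since \(\ee_q^-=\boldsymbol 0\) and \(\varepsilon=1\), it is identical to the off-diagonal case \(j<q\), \(j\ne J\). In the confluent route one simply has \(\Delta^{\mathrm{MII}}_{q,J,\varepsilon}\to\Delta^{\mathrm{LI}}_{q,J,\varepsilon}\), with no index shift.

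One step is wrong as written. The condition \(m_q<\varepsilon_{j,q}+1=2\) for \(j<q\) does not force \(m_q=0\): \(m_q=1\) is allowed, for example \(q=2\), \(\mm=(2,1)\), \(j=1\). In that case \(\mathscr L^{\infty}_{j,q}=\gamma_0^{\mathrm{LI}}\mathcal E^{\mathrm{LI}}_{j,0}\). This vanishes for \(j<q\) because \(\mathcal E^{\mathrm{LI}}_{j,0}=\delta_{j,q}\): the correction range \(K\le\ell-1\) is empty at \(\ell=0\). Your own bound \(\deg\le m_q-2=-1\) already gives the same conclusion, so only the justification needs fixing.
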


\begin{proof}
Put \(k=\tau x\) in
\eqref{eq:MII-compact-finite-pole-block}. For every term of its finite
double sum,
\[
 \frac{(\tau x+\beta_J+\varepsilon)_{r+t}}
 {(\tau x+\beta_J+\varepsilon)_r}
 (-\tau^{-1})^t\longrightarrow(-x)^t.
\]
All other parameter strings are fixed, and the prefactor tends from
\(\Delta_{j,J,\varepsilon}^{\mathrm{MII}}\) to
\(\Delta_{j,J,\varepsilon}^{\mathrm{LI}}\). Termwise convergence is
valid because \(r+t\le m_J-1-\varepsilon\). Thus
\(\mathscr M_{j,J}(\tau x)\to\mathscr L_{j,J}(x)\).
The same termination bound gives
\eqref{eq:LI-finite-sector-degree}.
Taking the same limit in
\eqref{eq:MII-compact-finite-pole-identity}, using the already proved
limit of each \(\mathcal C_{j,J,K}^{\mathrm{MII}}\), gives
\eqref{eq:LI-compact-finite-pole-identity}; summing over \(J\) proves
\eqref{eq:LI-compact-components}. Finally,
\eqref{eq:LI-corrected-infinity-vector} contains the monomial \(x^\ell\)
only in component \(q\), while every correction has degree at most
\(\ell-1\). This proves the vanishing assertion and
\eqref{eq:LI-infinity-sector-degree}.
\end{proof}

For the original weights \(w_j^{\mathrm{LI}}\), the corresponding
components are \(D_j^{\mathrm{LI}}/\beta_j\) for
\(j\in\{1,\ldots,q-1\}\) and
\(D_q^{\mathrm{LI}}\) for \(j=q\).

\begin{theorem}[Laguerre-I-like type-I components and normality]
\label{thm:explicit-LI-B}
Let \(\mm\) be near the diagonal with \(\abs\mm\ge1\), and set
\(n=\abs\mm-1\). Assume that the sets
\(\{-\beta_J-K:K\in\Nzero,\ K<m_J\}\), indexed by
\(J\in\{1,\ldots,q-1\}\), are pairwise disjoint and that
the denominators in \eqref{eq:LI-finite-pole-block}--
\eqref{eq:LI-base-residue} do not vanish. Then
\(\deg D_j^{\mathrm{LI}}<m_j\) for every
\(j\in\{1,\ldots,q\}\) with \(m_j>0\), and
\begin{equation}
 \mathcal B_{\mm}^{\mathrm{LI}}(x)
 =\sum_{j=1}^qD_j^{\mathrm{LI}}(x)
   \widetilde w_j^{\mathrm{LI}}(x).
 \label{eq:LI-component-decomposition}
\end{equation}
Its moments are
\begin{equation}
 \int_0^\infty x^r\mathcal B_{\mm}^{\mathrm{LI}}(x)\dd x
 =-h_{\mathrm{LI}}(r)\frac{(-r)_n}{D_-(r)},
 \qquad r\in\Nzero.
 \label{eq:LI-complete-moments}
\end{equation}
In particular,
\(\int_0^\infty x^n\mathcal B_{\mm}^{\mathrm{LI}}(x)\dd x
=(-1)^{n+1}n!h_{\mathrm{LI}}(n)/D_-(n)\ne0\).
The moments of orders \(r\in\Nzero\) with \(r<n\) vanish. The multi-index
\(\mm\) is normal, and
\((D_1^{\mathrm{LI}},\ldots,D_q^{\mathrm{LI}})\) is the unique tuple of
type-I polynomials with the stated degree bounds relative to the rescaled weights
\eqref{eq:LI-canonical-rows}.
\end{theorem}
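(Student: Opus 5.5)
The plan is to repeat, in the continuous Laguerre-I-like setting, the rational-interpolation argument used in the proof of Theorem~\ref{thm:explicit-unreflected-B}. Moments against \(x^r\) replace factorial moments, and the pairing \(R_{j,\ell}^{\mathrm{LI}}\) of \eqref{eq:LI-component-kernel-row}--\eqref{eq:LI-component-kernel-base} replaces \(R_{j,\ell}^X\). I first take \(\mathcal B_{\mm}^{\mathrm{LI}}\) to be the linear form whose Laplace transform is \eqref{eq:LI-B-Laplace}. Expanding that hypergeometric factor at \(s=0\) gives exactly the moments \eqref{eq:LI-complete-moments}, as in Theorem~\ref{thm:MII-forms}. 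Since \(-(-r)_n/D_-(r)=\mathcal R_{\mm}(r)\), the task reduces to verifying
\[
 \sum_{j=1}^q\frac{1}{h_{\mathrm{LI}}(r)}\int_0^\infty x^rD_j^{\mathrm{LI}}(x)\widetilde w_j^{\mathrm{LI}}(x)\dd x=\mathcal R_{\mm}(r),
\]
and then identifying the two forms. The weights are determined by their moments, because the Laplace transforms are entire-type hypergeometric functions analytic near \(s=0\). I would therefore argue via Laplace transforms: the Laplace transform of \(x^\ell w\) is \((-\partial_s)^\ell\) of that of \(w\), so equality of all Taylor coefficients at \(s=0\) gives equality of the transforms, and hence of the forms.

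For the finite poles, I would prove the continuous analogue of \eqref{eq:finite-pole-block-pairing}: \(\sum_j h_{\mathrm{LI}}(r)^{-1}\int x^r\mathcal C^{\mathrm{LI}}_{j,J,K}\widetilde w_j^{\mathrm{LI}}=(r+\xi_{J,K})^{-1}\). The cheapest route is a limit from the Meixner-II-like identity \eqref{eq:finite-pole-block-pairing} under \(k=\tau x\), \(\tau\to\infty\). Proposition~\ref{prop:LI-compact-finite-poles} already shows \(\mathcal C^{\mathrm{MII}}_{j,J,K}(\tau\cdot)\to\mathcal C^{\mathrm{LI}}_{j,J,K}\) coefficientwise, once the prefactor \(\Delta^{\mathrm{MII}}\to\Delta^{\mathrm{LI}}\) and \((-\tau^{-1})^s(1-\tau x-\xi)_s\to x^s\) are taken into account. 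Dividing the order-\(r\) identity by \(\tau^r\), writing \(h_{\mathrm{MII}}(r)/\tau^r=h_{\mathrm{LI}}(r)\), and passing to the limit in finitely many fixed moments (as in the proof of Proposition~\ref{prop:MII-LI-polynomial-form-limit}) gives the identity. A direct alternative is to verify it as a partial-fraction identity in \(r\) from \eqref{eq:LI-component-kernel-row}--\eqref{eq:LI-component-kernel-base}.

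For the polynomial part, \(R_{q,\ell}^{\mathrm{LI}}\) has polynomial part \(\sum_K p^{\mathrm{LI}}_{K,\ell}r^K\), with pivot \(p^{\mathrm{LI}}_{\ell,\ell}=1\). Its residue at \(-\beta_J-K\) comes from \((r+\beta_J)_\ell^{-1}\) and equals \(z^{\mathrm{LI}}_{J,K,\ell}\) of \eqref{eq:LI-base-residue}. Consequently \(\mathcal E^{\mathrm{LI}}_{j,\ell}\) pairs to \(\sum_Kp^{\mathrm{LI}}_{K,\ell}r^K\). The triangular inversion \eqref{eq:unreflected-infinity-recurrence}, combined with the partial-fraction decomposition \eqref{eq:unreflected-target-partial-fractions} (whose coefficients \(\pi^-_{J,K}\) and \(\pi_{\infty,K}\) do not involve \(A_q\) or \(\tau\)), then gives the moment identity for all \(r\). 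The degree bounds follow exactly as before: \(\mathcal C^{\mathrm{LI}}\) has degree at most \(K\) or \(K-1\), near-diagonality gives \(K-1\le m_j-1\), and \(\mathcal E\) carries \(x^\ell\) only in component \(q\). Proposition~\ref{prop:LI-compact-finite-poles} records these bounds. The value at \(r=n\) is \((-1)^{n+1}n!h_{\mathrm{LI}}(n)/D_-(n)\), which is nonzero because \(A_h,\beta_h>0\).

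Normality is the final step. Suppose a component tuple with \(\deg P_j<m_j\) has vanishing moments for \(r=0,\ldots,n\), and let \(G\) be its moment function divided by \(h_{\mathrm{LI}}\). Then \(D_-(r)G(r)\) is a polynomial of degree at most \(n\): indeed \(R^{\mathrm{LI}}_{j,\ell}=O(r^{\ell-1})\) for \(j<q\), \(R^{\mathrm{LI}}_{q,\ell}=O(r^\ell)\), and near-diagonality bounds \(\ell\). It vanishes at \(n+1\) points, so it is identically zero. Linear independence of the \(n+1\) functions \((r+\xi_{J,K})^{-1}\) and \(P^{\mathrm{LI}}_\ell\), which holds by pole separation and unit pivots, then forces the tuple to vanish. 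Hence the moment matrix is nonsingular and uniqueness follows. I expect the main obstacle to be the identification of \(\mathcal B^{\mathrm{LI}}_{\mm}\) as a genuine function with the stated moments. This requires justifying analyticity of the Laplace transforms near \(s=0\) (given by the Euler integrals with \((1+sY)^{-A_q}\)) and the moment-determinacy argument. The residue bookkeeping is routine by comparison.
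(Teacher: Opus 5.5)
Your proposal is correct and follows essentially the same route as the paper. The finite-pole pairing comes from the coefficientwise Meixner-II-like limit $k=\tau x$ with $h_{\mathrm{MII}}(r)/\tau^r=h_{\mathrm{LI}}(r)$, and the polynomial part from the residues $z^{\mathrm{LI}}_{J,K,\ell}$ together with the unit-pivot triangular inversion. The identification with \eqref{eq:LI-B-Laplace} matches Taylor coefficients of Laplace transforms that are analytic near $s=0$, and normality uses the degree-$\le n$ polynomial $D_-(r)G(r)$ together with the basis of $\mathcal C$- and $\mathcal E$-vectors.

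One small correction: the weight transforms ${}_qF_{q-1}(\A;\boldsymbol C_j;-s)$ are not entire, since their series has radius one. What you actually use, and what the paper invokes, is analyticity in a neighbourhood of $s=0$, which the Euler integral with $(1+sY)^{-A_q}$ provides.
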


\begin{proof}
The quotient of consecutive moments is
\(h_{\mathrm{LI}}(r+\ell)/h_{\mathrm{LI}}(r)
=(r+\A)_\ell/(r+\bbeta_{<q})_\ell\).
For \(j\in\{1,\ldots,q-1\}\), the factor is
\((r+\beta_j+\ell)^{-1}\); the \(q\)-th weight has no such factor. This
proves \eqref{eq:LI-component-kernel-row} and
\eqref{eq:LI-component-kernel-base}.

We prove the simple-pole identity by a coefficientwise
Meixner-II-like-to-Laguerre-I-like limit. In the term of order \(s\) of
\(\mathcal C_{j,J,K}^{\mathrm{MII}}(\tau x)\),
\((1-\tau x-\xi_{J,K})_s(-\tau^{-1})^s
\xrightarrow[\tau\to\infty]{}x^s\).
The terminating parameter in the \(J\)-th position is \(-K\) for
\(j=J\) and \(1-K\) otherwise, so only finitely many values of \(s\)
occur. The prefactor
\eqref{eq:MII-finite-pole-prefactor-row} is independent of \(\tau\),
and
\[
 \Delta_{q,J,K}^{\mathrm{MII}}
 \xrightarrow[\tau\to\infty]{}
 -\frac{\prod_{h=1}^{q-1}(\beta_h-\xi_{J,K})}
       {\prod_{h=1}^{q}(A_h-\xi_{J,K})}
 =\Delta_{q,J,K}^{\mathrm{LI}}.
\]
It follows that
\(\mathcal C_{j,J,K}^{\mathrm{MII}}(\tau x)
\xrightarrow[\tau\to\infty]{}\mathcal C_{j,J,K}^{\mathrm{LI}}(x)\).
For a polynomial of degree at most \(K\), the corresponding
factorial-moment expression uses only moments of
orders \(r,\ldots,r+K\). Dividing those Meixner-II-like moments by
the appropriate powers of \(\tau\) gives the Laguerre-I moments above.
Taking the limit in \eqref{eq:finite-pole-block-pairing} therefore yields
\begin{equation}
 \sum_{j=1}^q\frac1{h_{\mathrm{LI}}(r)}
 \int_0^\infty x^r
 \mathcal C_{j,J,K}^{\mathrm{LI}}(x)
 \widetilde w_j^{\mathrm{LI}}(x)\dd x
 =\frac1{r+\xi_{J,K}}.
 \label{eq:LI-simple-pole-pairing}
\end{equation}

The rational function
\eqref{eq:LI-component-kernel-base} has a pole at
\(r=-\beta_J-K\) precisely when \(K\in\Nzero\) and \(K<\ell\). Since
\(\Res_{r=-\beta_J-K}\frac1{(r+\beta_J)_\ell}
=\frac1{(-1)^K K!(\ell-K-1)!}\),
evaluation of the remaining Pochhammer factors gives
\eqref{eq:LI-base-residue}.  After subtracting all these simple-pole
parts by means of \eqref{eq:LI-simple-pole-pairing}, the normalized moment
function of \(\mathcal E_{\cdot,\ell}^{\mathrm{LI}}\) is a polynomial.

To calculate that polynomial, expand \(R_{q,\ell}^{\mathrm{LI}}(r)\) at
infinity:
\[
 R_{q,\ell}^{\mathrm{LI}}(r)
 =r^\ell
 \frac{\prod_{\xi\in\boldsymbol\xi_\ell^{\mathrm{LI}}}
       (1+\xi/r)}
      {\prod_{\eta\in\boldsymbol\eta_\ell^{\mathrm{LI}}}
       (1+\eta/r)}.
\]
The coefficient of \(r^K\) is
\(\mathfrak h_{\ell-K}
(\boldsymbol\xi_\ell^{\mathrm{LI}};
 \boldsymbol\eta_\ell^{\mathrm{LI}})\), and the subtracted simple
fractions have zero polynomial part.  We have therefore proved
\begin{equation}
 \sum_{j=1}^q\frac1{h_{\mathrm{LI}}(r)}
 \int_0^\infty x^r
 \mathcal E_{j,\ell}^{\mathrm{LI}}(x)
 \widetilde w_j^{\mathrm{LI}}(x)\dd x
 =\sum_{K=0}^{\ell}p_{K,\ell}^{\mathrm{LI}}r^K.
 \label{eq:LI-polynomial-pairing}
\end{equation}
The leading coefficient is one.

The target rational function \(\mathcal R_{\mm}\) has the partial-fraction and
polynomial expansion \eqref{eq:unreflected-target-partial-fractions},
with coefficients \eqref{eq:unreflected-finite-pole-coefficient} and
\eqref{eq:unreflected-target-infinity-coefficients}.  Because
\(p_{\ell,\ell}^{\mathrm{LI}}=1\), descending coefficient comparison in
the polynomials on the right of \eqref{eq:LI-polynomial-pairing} gives
\[
 \gamma_\ell^{\mathrm{LI}}
 =\pi_{\infty,\ell}
 -\sum_{s=\ell+1}^{m_q-1}
  p_{\ell,s}^{\mathrm{LI}}\gamma_s^{\mathrm{LI}}.
\]
Iterating this finite recurrence gives the triangular sum specified before
the theorem. Hence \eqref{eq:LI-simple-pole-pairing} and
\eqref{eq:LI-polynomial-pairing}, inserted in
\eqref{eq:LI-explicit-components}, give
\[
 \sum_{j=1}^q\frac1{h_{\mathrm{LI}}(r)}
 \int_0^\infty x^rD_j^{\mathrm{LI}}(x)
 \widetilde w_j^{\mathrm{LI}}(x)\dd x
 =\mathcal R_{\mm}(r),
\]
which proves \eqref{eq:LI-complete-moments}.

The hypergeometric term associated with \((J,K)\) has degree at most \(K\)
in component \(J\) and at most \(K-1\) in every other component. In the
polynomial part at infinity, the \(q\)-th monomial has
degree \(\ell<m_q\), and its corrections have \(K<\ell\).  If a
correction lies in a component distinct from \(J\), near-diagonality implies
\(K-1\le m_J-2\le m_j-1\).  These inequalities prove
\(\deg D_j^{\mathrm{LI}}<m_j\).

Equation \eqref{eq:LI-complete-moments} also identifies the constructed
linear form with the signed linear form whose Laplace transform is
\eqref{eq:LI-B-Laplace}.  Indeed, the moments vanish for \(r<n\).  For
\(r=n+t\),
\[
 \frac{(-s)^{n+t}}{(n+t)!}
 \left[-h_{\mathrm{LI}}(n+t)
       \frac{(-(n+t))_n}{D_-(n+t)}\right]
 =-\frac{(\A)_n}{(\bbeta_{<q})_{\mm_{<q}+n}}
 s^n
 \frac{(\A+n)_t}{(\bbeta_{<q}+\mm_{<q}+n)_t}
 \frac{(-s)^t}{t!}.
\]
Summing over \(t\ge0\) gives \eqref{eq:LI-B-Laplace}. The representation
\eqref{eq:LI-positive-row} has an exponential moment in
a neighborhood of the origin, and multiplication by a fixed polynomial
preserves that property.  Both Laplace transforms are analytic there;
equality of their Taylor series proves
\eqref{eq:LI-component-decomposition} as an identity of signed densities.

To prove normality, consider the \(n+1\) component vectors
\[
 \{\mathcal C_{\cdot,J,K}^{\mathrm{LI}}:
 J\in\{1,\ldots,q-1\},\ K\in\Nzero,\ K<m_J\}
 \ \cup\
 \{\mathcal E_{\cdot,\ell}^{\mathrm{LI}}:
 \ell\in\Nzero,\ \ell<m_q\}.
\]
Their normalized moment functions, given by
\eqref{eq:LI-simple-pole-pairing} and
\eqref{eq:LI-polynomial-pairing}, are linearly independent: distinct
simple poles separate the first group, and coefficient comparison from
the highest degree down separates the second.  The component space has
dimension \(\sum_jm_j=n+1\), so these vectors form a basis.

Let a component vector of degrees below \(\mm\) have moments zero for
every \(r\in\{0,\ldots,n\}\), and divide its factorial-moment function by
\(h_{\mathrm{LI}}(r)\). Multiplication by \(D_-(r)\) gives a polynomial
of degree at most \(n\): equations
\eqref{eq:LI-component-kernel-row}--
\eqref{eq:LI-component-kernel-base} give growth at most
\(r^{\ell-1}\) for \(j\in\{1,\ldots,q-1\}\) and \(r^\ell\) for
\(j=q\), and
near-diagonality supplies the remaining degree inequalities.  The
polynomial vanishes at \(0,1,\ldots,n\), because
\(D_-(r)h_{\mathrm{LI}}(r)\ne0\) at those points.  It is therefore zero.
Expansion in the preceding component basis shows that the original
vector is zero.  Thus the moment map is injective, proving normality and
the uniqueness assertion.
\end{proof}

\begin{corollary}[Sectorwise Meixner-II-like-to-Laguerre-I-like component limit]
\label{cor:MII-LI-component-limit}
Under the hypotheses of Theorem~\ref{thm:explicit-LI-B}, let \(\tau>0\)
and keep all remaining parameters fixed. For every
\(j\in\{1,\ldots,q\}\) with \(m_j>0\), as \(\tau\to\infty\),
\begin{align}
 \mathscr M_{j,J}(\tau\,\cdot)
 &\xrightarrow[\tau\to\infty]{}\mathscr L_{j,J},
 &&J\in\{1,\ldots,q-1\},
 \label{eq:MII-LI-finite-sector-limit}\\
 \mathscr M_{j,q}^{\infty}(\tau\,\cdot)
 &\xrightarrow[\tau\to\infty]{}\mathscr L_{j,q}^{\infty}.
 \label{eq:MII-LI-infinity-sector-limit}
\end{align}
Consequently,
\begin{equation}
 D_j^{\mathrm{MII}}(\tau\,\cdot)
 \xrightarrow[\tau\to\infty]{}D_j^{\mathrm{LI}}.
 \label{eq:MII-LI-component-limit}
\end{equation}
Equivalently, the convergence is coefficientwise and locally uniform on
compact subsets of \(\mathbb C\). For every
\(j\in\{1,\ldots,q\}\) with \(m_j=0\),
\(D_j^{\mathrm{MII}}=D_j^{\mathrm{LI}}=0\).
\end{corollary}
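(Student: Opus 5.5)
The plan is to prove the two sector limits separately and then add them, using the sector decompositions \eqref{eq:MII-all-sector-components} and \eqref{eq:LI-compact-components}. All expressions involved are finite sums of polynomials of bounded degree. Convergence of $P(\tau\,\cdot)$ to a polynomial is therefore coefficientwise, and it suffices to check convergence of finitely many scalar quantities together with the polynomials they multiply. Coefficientwise convergence of polynomials of uniformly bounded degree is equivalent to locally uniform convergence on compact subsets of $\mathbb C$, which gives the stated equivalence. The case $m_j=0$ is trivial, since both components are zero by definition.

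For the finite-pole sectors \eqref{eq:MII-LI-finite-sector-limit}, I would reuse the computation in the proof of Proposition~\ref{prop:LI-compact-finite-poles}. In the terminating Kamp\'e de F\'eriet sum \eqref{eq:MII-compact-finite-pole-block} evaluated at $k=\tau x$, the only $\tau$-dependent factor in each term $(r,t)$ is
\[
\frac{(\tau x+\beta_J+\varepsilon)_{r+t}}{(\tau x+\beta_J+\varepsilon)_r}\,(-\tau^{-1})^t
=(\tau x+\beta_J+\varepsilon+r)_t(-\tau^{-1})^t .
\]
This is a polynomial in $x$ whose coefficients converge to those of $(-x)^t$. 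The prefactor $\Delta^{\mathrm{MII}}_{j,J,\varepsilon}$ converges to $\Delta^{\mathrm{LI}}_{j,J,\varepsilon}$: for $j<q$ it is $\tau$-independent, and for $j=q$ the factor $-(1+\tau)/\tau$ tends to $-1$. The coefficient $\pi^-_{J,\varepsilon}$ does not depend on $\tau$. Since $r+t\le m_J-1-\varepsilon$, the limit passes term by term.

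The main step is the sector at infinity, \eqref{eq:MII-LI-infinity-sector-limit}. Here $\mathscr M^\infty_{j,q}(\tau x)=\sum_\ell\gamma^{\mathrm{MII}}_\ell\mathcal E^{\mathrm{MII}}_{j,\ell}(\tau x)$, and the scaling is subtle because $\mathcal E^{\mathrm{MII}}_{j,\ell}$ contains $\delta_{j,q}(\tau x)^{\underline{\ell}}$, which grows like $\tau^\ell$. I would therefore not take limits factor by factor. Instead I would show that $\tau^{-\ell}\mathcal E^{\mathrm{MII}}_{j,\ell}(\tau x)\to\mathcal E^{\mathrm{LI}}_{j,\ell}(x)$ and $\tau^{\ell}\gamma^{\mathrm{MII}}_\ell\to\gamma^{\mathrm{LI}}_\ell$.

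For the first claim, rescale the residues. In \eqref{eq:unreflected-base-residue} for $X=\mathrm{MII}$, the summand with index $u$ carries $\tau^u$, so $\tau^{-\ell}z^{\mathrm{MII}}_{J,K,\ell}$ is dominated by $u=\ell$. Its limit is exactly $z^{\mathrm{LI}}_{J,K,\ell}$ in \eqref{eq:LI-base-residue}, since $\binom{\ell}{\ell}(-\beta_J-K)^{\underline0}=1$. The blocks $\mathcal C^{\mathrm{MII}}_{j,J,K}(\tau\,\cdot)$ converge to $\mathcal C^{\mathrm{LI}}_{j,J,K}$, as shown in the proof of Theorem~\ref{thm:explicit-LI-B}.

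For the second claim, rescale the transition coefficients. By \eqref{eq:unreflected-infinity-transition}, $\tau^{K-\ell}p^{\mathrm{MII}}_{K,\ell}$ has summands proportional to $\tau^{u-\ell+K}$ times Horn coefficients of index $\ell-K$. I need to check that only $u=\ell$ survives, giving $\mathfrak h_{\ell-K}(\boldsymbol\xi^{\mathrm{LI}}_\ell;\boldsymbol\eta^{\mathrm{LI}}_\ell)=p^{\mathrm{LI}}_{K,\ell}$. This is plausible because the $u=\ell$ term has $\mathsf f_0$ empty and the $\tau$-power is maximal there; I would verify it by grading the terms by powers of $\tau$. The diagonal pivot $\tau^{0}(1+\tau)^\ell\tau^{-\ell}\to1$ agrees with $p^{\mathrm{LI}}_{\ell,\ell}=1$, and $\pi_{\infty,K}$ does not depend on $\tau$. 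Substituting these rescaled quantities into the descending recurrence \eqref{eq:unreflected-infinity-recurrence} gives $\tau^{\ell}\gamma^{\mathrm{MII}}_\ell\to\gamma^{\mathrm{LI}}_\ell$ by induction from $\ell=m_q-1$ downward.

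Multiplying the two rescaled factors proves \eqref{eq:MII-LI-infinity-sector-limit}. Adding the sector limits then gives \eqref{eq:MII-LI-component-limit}. An alternative check of the final identity is available: the limit polynomials have the moments \eqref{eq:LI-complete-moments}, so uniqueness in Theorem~\ref{thm:explicit-LI-B} identifies them with $D^{\mathrm{LI}}_j$.
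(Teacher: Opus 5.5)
Your architecture coincides with the paper's proof. The finite-pole sectors come from the termwise Kamp\'e de F\'eriet limit of Proposition~\ref{prop:LI-compact-finite-poles}. The corrected vectors satisfy $\tau^{-\ell}\mathcal E^{\mathrm{MII}}_{j,\ell}(\tau x)\to\mathcal E^{\mathrm{LI}}_{j,\ell}(x)$ because only the $u=\ell$ summand of $\tau^{-\ell}z^{\mathrm{MII}}_{J,K,\ell}$ survives. Finally, $\tau^{\ell}\gamma^{\mathrm{MII}}_\ell\to\gamma^{\mathrm{LI}}_\ell$ follows by descending substitution.

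Those steps are correct. The step that fails is your normalization of the transition coefficients. Since $d^{\mathrm{MII}}_{\ell,u}=\ell$ and the Horn coefficients do not involve $\tau$,
\[
 \tau^{K-\ell}p^{\mathrm{MII}}_{K,\ell}
 =\sum_{u=0}^{\ell}\binom{\ell}{u}\tau^{u-\ell+K}\,
 \mathfrak h_{\ell-K}\bigl(\boldsymbol\Xi^{\mathrm{MII}}_{\ell,u};\boldsymbol\eta_u\bigr).
\]
For $K\ge1$ the summand $u=\ell$ carries $\tau^{K}$, and the summands with $\ell-K\le u<\ell$ carry nonnegative powers of $\tau$. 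Nothing forces ``only $u=\ell$ survives'', and the claimed limit is in fact false. At $K=\ell\ge1$ your rescaled pivot is $p^{\mathrm{MII}}_{\ell,\ell}=(1+\tau)^\ell\to\infty$. Your pivot computation $(1+\tau)^\ell\tau^{-\ell}\to1$ silently uses a different normalization, so the proposal is internally inconsistent at this point.

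The normalization that works is $\tau^{-\ell}$, the same for every row index $K$. The power must depend only on the column index $\ell$, because it has to cancel against $\tau^{\ell}\gamma^{\mathrm{MII}}_\ell$ in the $\tau$-independent identities underlying \eqref{eq:unreflected-infinity-recurrence}:
\[
 \pi_{\infty,K}=\sum_{\ell=K}^{m_q-1}
 \bigl(\tau^{-\ell}p^{\mathrm{MII}}_{K,\ell}\bigr)
 \bigl(\tau^{\ell}\gamma^{\mathrm{MII}}_\ell\bigr),
 \qquad 0\le K<m_q .
\]
With this choice, every summand $\binom{\ell}{u}\tau^{u-\ell}\mathfrak h_{\ell-K}(\boldsymbol\Xi^{\mathrm{MII}}_{\ell,u};\boldsymbol\eta_u)$ with $u<\ell$ tends to zero. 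The term $u=\ell$ is exactly $\mathfrak h_{\ell-K}(\boldsymbol\xi^{\mathrm{LI}}_\ell;\boldsymbol\eta^{\mathrm{LI}}_\ell)=p^{\mathrm{LI}}_{K,\ell}$, since $\boldsymbol\Xi^{\mathrm{MII}}_{\ell,\ell}=\boldsymbol\xi^{\mathrm{LI}}_\ell$ and $\boldsymbol\eta_\ell=\boldsymbol\eta^{\mathrm{LI}}_\ell$. Equivalently, $\tau^{-\ell}R^{\mathrm{MII}}_{q,\ell}(r)\to R^{\mathrm{LI}}_{q,\ell}(r)$, which is how the paper phrases it.

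The rescaled pivots are then $(1+\tau)^\ell\tau^{-\ell}\to1$, and your descending induction gives $\tau^\ell\gamma^{\mathrm{MII}}_\ell\to\gamma^{\mathrm{LI}}_\ell$. After this correction the rest of your argument goes through unchanged.
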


\begin{proof}
The finite-sector limit
\eqref{eq:MII-LI-finite-sector-limit} was established in
Proposition~\ref{prop:LI-compact-finite-poles}. In
\eqref{eq:unreflected-base-residue}, divide the
Meixner-II-like residue by \(\tau^\ell\). Every term with \(u<\ell\)
tends to zero, and the term \(u=\ell\) gives
\(\tau^{-\ell}z_{J,K,\ell}^{\mathrm{MII}}
\xrightarrow[\tau\to\infty]{}z_{J,K,\ell}^{\mathrm{LI}}\).
Also,
\(\tau^{-\ell}\fall{\tau x}{\ell}
\xrightarrow[\tau\to\infty]{}x^\ell\).  It follows that
\begin{equation}
 \tau^{-\ell}\mathcal E_{j,\ell}^{\mathrm{MII}}(\tau x)
 \xrightarrow[\tau\to\infty]{}\mathcal E_{j,\ell}^{\mathrm{LI}}(x).
 \label{eq:MII-LI-infinity-vector-limit}
\end{equation}

For the polynomial coefficients, divide the Meixner-II-like function
\(R_{q,\ell}^{\mathrm{MII}}\) by
\(\tau^\ell\).  In
\eqref{eq:unreflected-R-base-row}, all summands with \(u<\ell\) vanish,
whereas the term \(u=\ell\) tends to
\((r+\A)_\ell/(r+\bbeta_{<q})_\ell\). Thus
\(\tau^{-\ell}p_{K,\ell}^{\mathrm{MII}}
\xrightarrow[\tau\to\infty]{}p_{K,\ell}^{\mathrm{LI}}\).
Put
\(\widehat\gamma_\ell(\tau)=
 \tau^\ell\gamma_\ell^{\mathrm{MII}}\) and
\(\widehat p_{K,\ell}(\tau)=
 \tau^{-\ell}p_{K,\ell}^{\mathrm{MII}}\).  The coefficient identities
for the target polynomial become
\(\pi_{\infty,K}
=\sum_{\ell=K}^{m_q-1}
\widehat p_{K,\ell}(\tau)\widehat\gamma_\ell(\tau)\).
The diagonal coefficients satisfy
\(\widehat p_{\ell,\ell}(\tau)=(1+\tau)^\ell/\tau^\ell
\xrightarrow[\tau\to\infty]{}1\).
Descending substitution therefore gives
\(\tau^\ell\gamma_\ell^{\mathrm{MII}}\xrightarrow[\tau\to\infty]{}
\gamma_\ell^{\mathrm{LI}}\).
Since the sum over \(\ell\) is finite,
\[
 \mathscr M_{j,q}^{\infty}(\tau x)
 =\sum_{\ell=0}^{m_q-1}
 \bigl(\tau^\ell\gamma_\ell^{\mathrm{MII}}\bigr)
 \bigl(\tau^{-\ell}
       \mathcal E_{j,\ell}^{\mathrm{MII}}(\tau x)\bigr)
 \longrightarrow \mathscr L_{j,q}^{\infty}(x).
\]
This proves \eqref{eq:MII-LI-infinity-sector-limit}. Summing
\eqref{eq:MII-LI-finite-sector-limit} and
\eqref{eq:MII-LI-infinity-sector-limit}, and using
\eqref{eq:MII-all-sector-components} and
\eqref{eq:LI-compact-components}, proves
\eqref{eq:MII-LI-component-limit} coefficient by coefficient.
\end{proof}

\paragraph{Classical Laguerre-I reduction.}
For \(q=1\), the type-II polynomial is
\begin{equation}
 A_{m_1}^{\mathrm{LI}}(x)
 =\pFq{1}{1}{-m_1}{A_1}{x}
 =\frac{m_1!}{(A_1)_{m_1}}L_{m_1}^{(A_1-1)}(x).
 \label{eq:q1-LI-A}
\end{equation}
With \(n=m_1-1\), the type-I component is
\begin{equation}
 D_1^{\mathrm{LI}}(x)
 =-(A_1)_n\pFq{1}{1}{-n}{A_1}{x}
 =-n!L_n^{(A_1-1)}(x).
 \label{eq:q1-LI-B}
\end{equation}

\paragraph{Verification.}
When \(q=1\), there are no finite-pole terms,
\(h_{\mathrm{LI}}(r)=(A_1)_r\), and
\(L_n(x)=-(A_1)_n\pFq{1}{1}{-n}{A_1}{x}\).
Termwise integration against
\(x^{A_1-1}\e^{-x}/\Gamma(A_1)\) gives
\[
 \frac1{\Gamma(A_1)}\int_0^\infty
 x^{A_1+r-1}\e^{-x}L_n(x)\dd x
 =-(A_1)_n(A_1)_r
 \pFq{2}{1}{-n,A_1+r}{A_1}{1}
 =-(A_1)_r(-r)_n.
\]
The last equality is Chu--Vandermonde,
\({}_2F_1(-n,A_1+r;A_1;1)=(-r)_n/(A_1)_n\).
Thus \(L_n\) has the moments in
\eqref{eq:LI-complete-moments}; uniqueness proves
\eqref{eq:q1-LI-B}.

\section{The reflected Meixner-I-like family}
\label{sec:MI}

We next reflect the Hahn lattice and send the opposite endpoint to infinity.
This produces a Meixner-I-like family with an explicit type-II polynomial,
an explicit signed sequence satisfying the type-I moment conditions, and a
direct Laguerre-II-like limit.

\subsection{Right-endpoint scaling and limiting weights}

Here the lattice is reflected before the endpoint is sent to infinity. We
first identify the limiting weights and their generating functions.
Let
\begin{equation}
 A_h=T\rho_h,\qquad
 \beta_h=T\rho_h+\delta_h,\qquad
 T\to\infty,\qquad
 \frac NT\xrightarrow[T\to\infty]{}\nu>0,
 \label{eq:MI-scaling}
\end{equation}
for every \(h\in\{1,\ldots,q\}\), where \(\rho_h,\delta_h>0\), and reflect the lattice by
\(\ell=N-k\). Put
\begin{equation}
 c_h=\frac{\nu}{\nu+\rho_h},
 \qquad
 \lambda_h=\frac{c_h}{1-c_h}=\frac{\nu}{\rho_h},
 \qquad
 R_h(z)=\frac{1-c_h}{1-c_hz},
 \qquad h\in\{1,\ldots,q\}.
 \label{eq:MI-parameters}
\end{equation}
The parameters in \eqref{eq:MI-parameters} satisfy
\(0<c_h<1\) and \(\lambda_h>0\).

For sequences on \(\Nzero\), \(f*g\) denotes the discrete convolution
\((f*g)(k)=\sum_{\ell=0}^kf(\ell)g(k-\ell)\).

After reflection, this scaling produces the following Meixner-I-like
weights.

\begin{theorem}[Meixner-I-like weights]
\label{thm:MI-rows}
As \(T\to\infty\) under \eqref{eq:MI-scaling}, for every
\(j\in\{1,\ldots,q\}\), the reflected normalized weights converge
coefficientwise and in every fixed moment to a normalized nonnegative
sequence \(v_j^{\mathrm{MI}}\) on \(\Nzero\), with generating function
\begin{equation}
 G_j^{\mathrm{MI}}(z)
 =\prod_{h=1}^qR_h(z)^{\delta_h+\delta_{h,j}}.
 \label{eq:MI-row-pgf}
\end{equation}
Equivalently,
\begin{equation}
 v_j^{\mathrm{MI}}
 =r_{\delta_1+\delta_{1,j},c_1}*\cdots
  *r_{\delta_q+\delta_{q,j},c_q}.
 \label{eq:MI-convolution}
\end{equation}
\end{theorem}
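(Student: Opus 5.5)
The plan is to work with the reflected generating function \(z^N\mathcal V_{j,N}(z^{-1})\) and the beta-integral representation, as in the proof of Theorem~\ref{thm:K-to-CI}. By \eqref{eq:Bernstein-lift} and the binomial theorem, the reflected weights \(v_{j,N}(N-\ell)\) have generating function
\[
 z^N\mathcal V_{j,N}(z^{-1})=\int_0^1\bigl[z+(1-z)x\bigr]^N\omega_j(x)\dd x
 =\int_{(0,1)^q}\bigl[1+(z-1)(1-y_1\cdots y_q)\bigr]^N
 \prod_{h=1}^qb_{A_h,\beta_h+\delta_{h,j}-A_h}(y_h)\dd\boldsymbol y,
\]
using \eqref{eq:beta-product-integral}. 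Under \eqref{eq:MI-scaling} the second beta parameters \(\beta_h+\delta_{h,j}-A_h=\delta_h+\delta_{h,j}\) stay fixed, while \(A_h=T\rho_h\to\infty\). The substitution \(y_h=1-u_h/T\) therefore concentrates each beta factor near \(y_h=1\).

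First, by Stirling and gamma-ratio asymptotics, I will show that
\[
 T^{-1}b_{T\rho_h,\delta}(1-u/T)\longrightarrow \frac{\rho_h^\delta}{\Gamma(\delta)}u^{\delta-1}\e^{-\rho_hu},
\]
which is \(g_{\rho_h,\delta}(u)\) from \eqref{eq:elementary-positive-kernels}. This convergence is locally uniform, and the kernels admit a common integrable majorant \(Cu^{\delta-1}\e^{-\rho_hu/2}\), from \((1-u/T)^{T\rho_h-1}\le \e^{-\rho_h u(1-1/(T\rho_h))}\). Next, \(1-\prod_h y_h=T^{-1}\sum_h u_h+\mathrm O(T^{-2}\abs{\boldsymbol u}^2)\). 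Since \(N/T\to\nu\), the kernel \([1+(z-1)(1-Y)]^N\) converges to \(\exp\{\nu(z-1)\sum_hu_h\}\) for fixed \(\boldsymbol u\), locally uniformly in \(z\).

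The main obstacle is domination of this kernel, together with the fact that the \(u_h\) range up to \(T\). For \(\abs z\le1\) the kernel is a convex-combination point \(z(1-Y)\cdot\)-type expression: \(\abs{z+(1-z)x}\le1\) whenever \(0\le x\le1\). It is therefore bounded by one on the closed unit disk. Combined with the exponential majorants, dominated convergence then applies for \(\abs z\le1\). This gives
\[
 \prod_h\int_0^\infty g_{\rho_h,\delta_h+\delta_{h,j}}(u)\e^{\nu(z-1)u}\dd u=\prod_h\Bigl(\frac{\rho_h}{\rho_h+\nu(1-z)}\Bigr)^{\delta_h+\delta_{h,j}}=\prod_hR_h(z)^{\delta_h+\delta_{h,j}},
\]
using \(1-c_h=\rho_h/(\nu+\rho_h)\). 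This is \eqref{eq:MI-row-pgf}, locally uniformly on the closed unit disk, and hence on a neighborhood of compact subsets of the open disk.

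Cauchy's coefficient formula on a circle of radius less than one then yields coefficientwise convergence. For moments, I will not stop at the disk. For fixed \(r\), the factorial moment of the reflected weights is the \(r\)-th derivative at \(z=1\), and this point lies on the boundary. I will instead bound \(\abs{z+(1-z)x}\le 1+\varepsilon\) for \(\abs{z-1}\le\varepsilon\), and then \((1+\varepsilon(1-Y))^N\le \e^{\varepsilon N(1-Y)}\le \e^{2\varepsilon\nu\sum u_h}\). For \(\varepsilon\) small compared with \(\min_h\rho_h/(4\nu)\), this is absorbed by the majorants, so the convergence also holds on a neighborhood of \(z=1\). The Cauchy estimates there give convergence of every fixed factorial moment, and the finite Stirling transform gives convergence of ordinary moments. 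Nonnegativity and normalization pass to the limit, since \(G_j^{\mathrm{MI}}(1)=1\). Finally, \eqref{eq:MI-convolution} follows because \(R_h(z)^{d}=((1-c_h)/(1-c_hz))^{d}\) is the generating function of \(r_{d,c_h}\) by \eqref{eq:elementary-discrete-weights}, and products of generating functions correspond to convolutions.
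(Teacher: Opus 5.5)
Your proposal is correct and follows essentially the same route as the paper. Both use the reflected Euler integral with \(y_h=1-u_h/T\) and the kernel limits \(T^{-1}b_{T\rho_h,d}(1-u/T)\to g_{\rho_h,d}(u)\). Both apply dominated convergence on the unit disk (via \(\abs{z+(1-z)x}\le1\)) for the coefficients, then a common exponential majorant on \(\abs{z-1}\le\varepsilon\) with Cauchy's formula for the moments, and finally identify \(R_h(z)^d\) as the generating function of \(r_{d,c_h}\). Your estimate \(\abs{1+(z-1)(1-Y)}^N\le\e^{\varepsilon N(1-Y)}\le\e^{2\varepsilon\nu\sum_hu_h}\) makes explicit the majorant the paper only asserts, at the cost of a slightly smaller admissible \(\varepsilon\).
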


\begin{proof}
The generating function of the reflected finite weight is
\[
 \widehat{\mathcal V}_{j,N}(z)
 =\int_{(0,1)^q}
 \bigl[y_1\cdots y_q+(1-y_1\cdots y_q)z\bigr]^N
 \prod_{h=1}^qb_{T\rho_h,\delta_h+\delta_{h,j}}(y_h)
 \dd\boldsymbol y.
\]
Under \(y_h=1-u_h/T\),
\(T^{-1}b_{T\rho_h,d}(1-u/T)
\xrightarrow[T\to\infty]{}g_{\rho_h,d}(u)\),
and, since \(N/T\xrightarrow[T\to\infty]{}\nu\),
\(\bigl[y_1\cdots y_q+(1-y_1\cdots y_q)z\bigr]^N
\xrightarrow[T\to\infty]{} \e^{\nu(z-1)(u_1+\cdots+u_q)}\).
Dominated convergence on compact subsets of the unit disk gives
\[
 \widehat{\mathcal V}_{j,N}(z)\xrightarrow[T\to\infty]{}
 \prod_{h=1}^q
 \left(\frac{\rho_h}{\rho_h+\nu(1-z)}\right)^{
 \delta_h+\delta_{h,j}}
 =G_j^{\mathrm{MI}}(z).
\]
On every disk
\(\abs{z-1}\le\varepsilon<\min_h\rho_h/\nu\), the transformed beta
kernels are bounded by a common integrable exponential majorant.
Dominated convergence is therefore uniform on that disk.  Cauchy's
integral formula permits termwise differentiation at \(z=1\), proving
convergence of all fixed factorial moments. The
coefficient sequence of each factor is
\(r_{\delta_h+\delta_{h,j},c_h}\), which proves
\eqref{eq:MI-convolution}, nonnegativity, and normalization. Cauchy's
coefficient formula gives the asserted coefficient limits.
\end{proof}

\begin{proposition}[Distinctness of the two Meixner-like systems]
\label{prop:Meixner-families-distinct}
For \(q=1\), both constructions give ordinary normalized Meixner weights;
their rows coincide when \(c_1=\tau/(1+\tau)\) and
\(A_1=\delta_1+1>1\). Let \(q>1\), and consider arbitrary families in the
positive domains of Theorems~\ref{thm:MII-rows} and \ref{thm:MI-rows}.
There is no permutation \(\pi\) of \(\{1,\ldots,q\}\) for which
\[
 v_j^{\mathrm{MII}}(k)=v_{\pi(j)}^{\mathrm{MI}}(k),
 \qquad j\in\{1,\ldots,q\},\quad k\in\Nzero.
\]
Consequently, the two normalized multiple orthogonality systems cannot be
identified by a permutation of their weights. Allowing nonzero rowwise
constant factors does not change the conclusion.
\end{proposition}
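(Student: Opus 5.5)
The plan is to compare the factorial moments of the two families, equivalently the Taylor expansions of their generating functions about \(z=1\).

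\textbf{Step 1: the case \(q=1\) and reduction to generating functions.} For \(q=1\), Theorem~\ref{thm:MII-rows} gives \(G^{\mathrm{MII}}(z)=((1-c)/(1-cz))^{A_1}\) with \(c=\tau/(1+\tau)\), and Theorem~\ref{thm:MI-rows} gives \(G_1^{\mathrm{MI}}=R_1^{\delta_1+1}\). These coincide exactly when \(c_1=c\) and \(A_1=\delta_1+1\), and the coefficients are then the same negative-binomial sequence \(r_{A_1,c}\). For \(q>1\), rowwise constants are harmless: all rows are normalized, \(G(1)=1\), so any admissible constant equals one. Hence an identification \(v_j^{\mathrm{MII}}=v_{\pi(j)}^{\mathrm{MI}}\) forces equality of all factorial moments. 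Put \(w=z-1\). By \eqref{eq:MII-factorial-moments} and \eqref{eq:MI-row-pgf}, it forces the identity of formal power series
\[
 \pFq{q}{q-1}{\A}{\boldsymbol C_j}{\tau w}
 =\prod_{h=1}^q\bigl(1-\lambda_hw\bigr)^{-e_h},
 \qquad e_h=\delta_h+\delta_{h,\pi(j)}>0 .
\]

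\textbf{Step 2: locating the singularities (the main step).} The left side is a formal solution of the generalized hypergeometric operator
\[
 \Bigl[\theta\prod_{h<q}(\theta+C_{h,j}-1)-\tau w\prod_{h\le q}(\theta+A_h)\Bigr]f=0,
 \qquad \theta=w\,\tfrac{\mathrm d}{\mathrm dw}.
\]
Its only singular points are \(w=0,1/\tau,\infty\). The right side converges near \(w=0\) and is therefore an actual analytic solution. Solutions of a linear ODE continue analytically along every path avoiding the singular points. Group the factors by equal values of \(\lambda_h\); each group has total exponent \(>0\), so the product is genuinely singular at every point \(1/\lambda_h\). Hence every \(\lambda_h\) equals \(\tau\), and the right side is \((1-\tau w)^{-E}\) with \(E=\sum_h\delta_h+1\). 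I expect the care here to lie in justifying this singularity-location argument rigorously: that a convergent formal solution is a genuine solution, and that continuation of the product genuinely fails at \(1/\lambda_h\). If needed, a fallback is a radius-of-convergence comparison.

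\textbf{Step 3: parameter matching.} Compare the ratios of consecutive Taylor coefficients of the two sides. They give the rational identity
\[
 \prod_{h=1}^q(r+A_h)=(r+E)\prod_{h=1}^{q-1}(r+C_{h,j})
\]
for all \(r\in\Nzero\), hence as polynomials. So, as multisets,
\[
 \{A_1,\ldots,A_q\}=\{E\}\cup\{C_{1,j},\ldots,C_{q-1,j}\}.
\]
The key observation is that \(E\) does not depend on \(j\) or \(\pi\), and that \(\A\) is fixed. Therefore the multiset \(\{C_{h,j}\}_{h<q}\) would have to be the same for every \(j\).

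\textbf{Step 4: the contradiction.} Since \(q>1\), compare \(j=1\) with \(j=q\): the sum of the entries of \(\boldsymbol C_1\) exceeds that of \(\boldsymbol C_q\) by exactly one. This contradicts Step 3, so no permutation \(\pi\) exists. The same argument applies verbatim after rowwise rescaling, since the normalization \(G(1)=1\) removes the constants, which gives the last assertion of Proposition~\ref{prop:Meixner-families-distinct}.
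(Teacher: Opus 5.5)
Your proof is correct, but it takes a different route from the paper's.

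\textbf{The paper's route.} The paper never uses the hypergeometric form of an individual Meixner-II-like row. It uses only the first-order relation \(((z-1)\partial_z+\beta_j)G_j^{\mathrm{MII}}=\beta_jG_q^{\mathrm{MII}}\) for \(j<q\), which follows at once from \eqref{eq:MII-factorial-moments}. It substitutes the product forms of the Meixner-I-like rows \(\pi(j)\) and \(\pi(q)\), divides, and obtains a single rational identity in \(w=z-1\). The left side of that identity has a nonremovable simple pole at every \(1/\lambda_h\), while the right side has at most the pole at \(1/\lambda_{\pi(q)}\). Hence all \(\lambda_h\) coincide, and the identity that remains is impossible. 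That argument is purely algebraic and uses neither analytic continuation nor the values of \(\tau\) and \(\A\).

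\textbf{Your route.} You match each row separately.
\begin{enumerate}
\item The order-\(q\) hypergeometric equation, singular only at \(0,1/\tau,\infty\), forces every \(\lambda_h=\tau\). Your justification is sound: the product is unbounded at each \(1/\lambda_h\), whereas a continued solution is holomorphic at every regular point.
\item Pochhammer matching then gives \(\{A_1,\ldots,A_q\}=\{E\}\uplus\{C_{1,j},\ldots,C_{q-1,j}\}\), with \(E=1+\sum_h\delta_h\) independent of \(j\) and \(\pi\).
\item The unit shift between \(\boldsymbol C_1\) and \(\boldsymbol C_q\) gives the contradiction.
\end{enumerate}

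\textbf{What each approach buys.} Your approach costs the continuation step. In return it describes exactly when a single Meixner-II-like row coincides with a Meixner-I-like row, and this does happen. For instance, take \(q=2\), \(\A=(E,\beta_1)\) with \(E<\beta_1\), and all \(\lambda_h=\tau\); then the \(q\)-th Meixner-II-like row equals every Meixner-I-like row. So your argument makes explicit that the obstruction lies only in matching several rows simultaneously. The paper's argument is shorter and avoids any analytic input.

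Your proposed fallback via radii of convergence would give only \(\max_h\lambda_h=\tau\). That does not suffice for Step 3, so the continuation argument is the one actually needed.
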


\begin{proof}
The scalar statement follows by comparing \eqref{eq:MII-pgf} and
\eqref{eq:MI-row-pgf}. Now let \(q>1\), and set
\(\mu_{j,r}^{\mathrm{MII}}
=\sum_{k\ge0}\fall{k}{r}v_j^{\mathrm{MII}}(k)\).
By \eqref{eq:MII-factorial-moments}, for every \(j<q\),
\[
 (\beta_j+r)\mu_{j,r}^{\mathrm{MII}}
 =\beta_j\mu_{q,r}^{\mathrm{MII}},\qquad r\in\Nzero.
\]
Comparing Taylor coefficients at \(z=1\) gives
\begin{equation}
 \bigl((z-1)\partial_z+\beta_j\bigr)G_j^{\mathrm{MII}}(z)
 =\beta_jG_q^{\mathrm{MII}}(z),
 \qquad j\in\{1,\ldots,q-1\}.
 \label{eq:MII-resolvent-relation}
\end{equation}

Suppose such a permutation exists. Normalization first shows that allowing
rowwise factors gives no additional freedom. Fix \(j<q\), and put
\(i=\pi(j)\), \(b=\pi(q)\), and \(\beta=\beta_j\); then \(i\ne b\).
Equality of the weight sequences gives equality of the generating
functions in the unit disk and hence, by analytic continuation along
\([0,1]\), equality of their germs at \(z=1\).
Put \(w=z-1\), \(\lambda_h=c_h/(1-c_h)>0\), and
\(F(w)=\prod_{h=1}^q(1-\lambda_hw)^{-\delta_h}\). Then
\eqref{eq:MI-row-pgf} gives
\[
 G_i^{\mathrm{MI}}(1+w)=\frac{F(w)}{1-\lambda_iw},
 \qquad
 G_b^{\mathrm{MI}}(1+w)=\frac{F(w)}{1-\lambda_bw}.
\]
Substitution in \eqref{eq:MII-resolvent-relation} and division by the first
of these functions gives the rational identity
\begin{equation}
 \beta+
 w\left(
   \sum_{h=1}^q\frac{\delta_h\lambda_h}{1-\lambda_hw}
   +\frac{\lambda_i}{1-\lambda_iw}
 \right)
 =
 \beta\,\frac{1-\lambda_iw}{1-\lambda_bw}.
 \label{eq:MI-rational-obstruction}
\end{equation}

For each distinct value \(\lambda_*\) among the \(\lambda_h\), its
coefficient on the left is
\(\sum_{\lambda_h=\lambda_*}\delta_h+
\mathbf1_{\{\lambda_i=\lambda_*\}}>0\); hence
\(w=\lambda_*^{-1}\) is a nonremovable pole. The right-hand side can have
a pole only at \(w=\lambda_b^{-1}\), so every \(\lambda_h=\lambda_b\).
Then \eqref{eq:MI-rational-obstruction} reduces to the impossible identity
\[
 \beta+
 \frac{w\lambda_b\bigl(1+\sum_{h=1}^q\delta_h\bigr)}
      {1-\lambda_bw},
=\beta.
\]
This contradiction proves the assertion.
\end{proof}

For \(q>1\), these weights differ from the classical multiple Meixner-I
system
\cite{ArvesuCoussementVanAssche2003,
BranquinhoDiazFoulquieManasWolfs2024Discrete}.  The terminology
\enquote{Meixner-I-like} refers to their direct Laguerre-II-like limit;
the weights themselves are the convolutions in \eqref{eq:MI-convolution}.

\subsection{Explicit formulas after reflection}

We now give explicit formulas for the type-II polynomial \(P\) and for a
signed sequence satisfying the type-I moment conditions. We also state when
its type-I polynomials are uniquely determined.

For a near-diagonal \(\mm\), put
\begin{equation}
 D=\abs\mm,\qquad n=D-1,\qquad
 F_{\mm}(s)=\prod_{h=1}^q(s+\rho_h)^{\delta_h+m_h}.
 \label{eq:MI-F}
\end{equation}
The product \eqref{eq:MI-F} is the common factor in the derivative and
type-I formulas below.

The limiting polynomial has three equivalent forms.

\begin{theorem}[Meixner-I-like polynomial]
\label{thm:MI-A}
Let \(\mm\) be near the diagonal and put \(D=\abs\mm\). As
\(T\to\infty\) under
\eqref{eq:MI-scaling}, the exact monic reflection
\begin{equation}
 \widehat P_{\mm,N}(\ell)
 =\fall{N}{D}\frac{(\A)_D}{(\bbeta+\mm)_D}
 A^{\mathrm H}_{\mm,N}(N-\ell)
 \label{eq:MI-finite-monic-reflection}
\end{equation}
converges coefficientwise under \eqref{eq:MI-scaling} to
\begin{align}
 P_{\mm}^{\mathrm{MI}}(\ell)
 &=D![u^D](1+u)^\ell
   \prod_{h=1}^q(1-\lambda_hu)^{\delta_h+m_h}
 \label{eq:MI-A-coefficient}\\
 &=\frac{(-1)^D\nu^D}{F_{\mm}(0)}
 \left.\frac{\dd^D}{\dd s^D}
 \left[F_{\mm}(s)\left(1-\frac{s}{\nu}\right)^\ell\right]
 \right|_{s=0}.
 \label{eq:MI-A-derivative}
\end{align}
It is monic of degree \(D\) and, for every
\(j\in\{1,\ldots,q\}\) with \(m_j>0\), satisfies
\begin{equation}
 \sum_{\ell\ge0}\fall{\ell}{r}P_{\mm}^{\mathrm{MI}}(\ell)
 v_j^{\mathrm{MI}}(\ell)=0,
 \qquad r\in\{0,\ldots,m_j-1\}.
 \label{eq:MI-A-orthogonality}
\end{equation}
Its finite Horn expansion is
\begin{equation}
 P_{\mm}^{\mathrm{MI}}(\ell)
 =D!\!\sum_{\substack{s_0,s_1,\ldots,s_q\in\Nzero\\
                        s_0+s_1+\cdots+s_q=D}}
 \frac{\fall{\ell}{s_0}}{s_0!}
 \prod_{h=1}^q
 \frac{(-\delta_h-m_h)_{s_h}}{s_h!}\lambda_h^{s_h}.
 \label{eq:MI-A-Horn}
\end{equation}
\end{theorem}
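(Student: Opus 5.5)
The plan is to establish the three closed forms and the orthogonality of \(P_{\mm}^{\mathrm{MI}}\) directly from the limiting weights of Theorem~\ref{thm:MI-rows}, and to treat the convergence of \eqref{eq:MI-finite-monic-reflection} as a separate asymptotic computation.

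\emph{Equivalence of the three forms and monicity.} Expanding \((1+u)^\ell=\sum_{s_0}\fall{\ell}{s_0}u^{s_0}/s_0!\) and \((1-\lambda_hu)^{\delta_h+m_h}=\sum_{s_h}(-\delta_h-m_h)_{s_h}(\lambda_hu)^{s_h}/s_h!\) and extracting \([u^D]\) turns \eqref{eq:MI-A-coefficient} into the Horn sum \eqref{eq:MI-A-Horn}. The substitution \(u=-s/\nu\) gives \(1-\lambda_hu=(s+\rho_h)/\rho_h\), because \(\lambda_h=\nu/\rho_h\). Hence the product equals \(F_{\mm}(s)/F_{\mm}(0)\), and Taylor's formula at \(s=0\) yields \eqref{eq:MI-A-derivative}, the factor \((-\nu)^D\) coming from \(u^D\). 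In \eqref{eq:MI-A-Horn} the only term of degree \(D\) in \(\ell\) is \(s_0=D\), which gives \(\fall{\ell}{D}\); thus \(P_{\mm}^{\mathrm{MI}}\) is monic of degree \(D\).

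\emph{Orthogonality.} Fix an active \(j\) and \(r<m_j\). Summing \((1+u)^\ell\) against \(\fall{\ell}{r}v_j^{\mathrm{MI}}(\ell)\) gives \((1+u)^rG_j^{\mathrm{MI}\,(r)}(1+u)\). By \eqref{eq:MI-row-pgf}, \(G_j^{\mathrm{MI}}(1+u)=\prod_h(1-\lambda_hu)^{-\delta_h-\delta_{h,j}}\). An induction shows that its \(r\)-th derivative has the form \(Q_r(u)\prod_h(1-\lambda_hu)^{-\delta_h-\delta_{h,j}-r}\) with \(\deg Q_r\le r(q-1)\). After multiplication by \(\prod_h(1-\lambda_hu)^{\delta_h+m_h}\), the remaining exponents \(m_h-\delta_{h,j}-r\) are nonnegative. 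This is exactly \eqref{eq:near-diagonal-proof-condition}, the same condition used in the proof of Theorem~\ref{thm:Hahn-A}. The resulting function of \(u\) is therefore a polynomial of degree at most \(r+r(q-1)+\sum_h(m_h-\delta_{h,j}-r)=D-1<D\), so its \(u^D\)-coefficient vanishes, which is \eqref{eq:MI-A-orthogonality}. The interchange of sum and coefficient extraction is justified because the weights decay geometrically, so \(|u|\) may be taken small.

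\emph{Convergence.} Write \(k=N-\ell\) in \eqref{eq:Hahn-A-factorial} and expand \(\fall{(N-\ell)}{l}=\sum_i\binom li\fall{N}{l-i}\fall{(-\ell)}{i}\). This gives, for each \(i\le D\), the coefficient of \(\fall{(-\ell)}{i}\) in \eqref{eq:MI-finite-monic-reflection} as a finite alternating sum over \(l\). Under \eqref{eq:MI-scaling}, each ratio \((\beta_h+m_h)_l/(A_h)_l\) is \(\prod_{t<l}\bigl(1+(\delta_h+m_h)/(T\rho_h+t)\bigr)\), while the prefactor \(\fall{N}{D}(\A)_D/(\bbeta+\mm)_D\) grows like \(N^D\). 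The \(N^D\) growth must be cancelled by the \(l\)-alternation, exactly as in Lemma~\ref{lem:alternating-Pochhammer-estimate}. I would organize this by writing the Hahn sum as a coefficient extraction, \(\sum_l\binom Dl(-x)^l\Phi_l\), and expanding \(\Phi_l\) in powers of \(1/T\) with coefficients polynomial in \(l\). The surviving order should reproduce \(D![u^D](1+u)^\ell\prod_h(1-\lambda_hu)^{\delta_h+m_h}\).

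This bookkeeping is the step I expect to be the main obstacle. As a fallback, I would use that the monic reflected Hahn polynomials satisfy the reflected moment conditions. The reflected weights converge in all fixed moments by Theorem~\ref{thm:MI-rows}, and the limit \(P_{\mm}^{\mathrm{MI}}\) has just been shown to satisfy the limiting conditions. Convergence then follows once the limiting type-II moment matrix is nonsingular, by continuity of matrix inversion as in the proof of Theorem~\ref{thm:K-to-CI}. However, that nonsingularity presumably needs a separation hypothesis on the \(c_h\), so the direct computation is preferable for the unconditional statement.
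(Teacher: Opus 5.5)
Your derivation of \eqref{eq:MI-A-Horn} and \eqref{eq:MI-A-derivative} from \eqref{eq:MI-A-coefficient} is correct and matches the paper, as is monicity from the term \(s_0=D\). Your orthogonality argument is also correct, but it takes a different route. The paper passes the exact reflected Hahn relations to the limit, using coefficientwise convergence of the polynomials and the fixed-moment convergence of Theorem~\ref{thm:MI-rows}. You instead verify \eqref{eq:MI-A-orthogonality} directly from \(G_j^{\mathrm{MI}}(1+u)=\prod_h(1-\lambda_hu)^{-\delta_h-\delta_{h,j}}\) and the degree count \(D-1\). Your version is self-contained and independent of the convergence.

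The gap is the convergence of \eqref{eq:MI-finite-monic-reflection}, which is the main assertion of the theorem. Your primary route stops at the claim that the surviving order \enquote{should reproduce} the limit. Lemma~\ref{lem:alternating-Pochhammer-estimate} can at best show that lower orders cancel; it does not identify the surviving term. It is also stated for a single large shift \(z\), not for the several scales \(T\rho_h\) and \(N\), where only \(N/T\to\nu\) is known. Your fallback cannot cover the statement, because the statement has no separation hypothesis. For \(q=2\), \(\mm=(1,1)\) and \(\rho_1=\rho_2\) one has \(v_1^{\mathrm{MI}}=v_2^{\mathrm{MI}}\). The limiting type-II problem then has a one-parameter family of monic solutions, and continuity of inversion gives nothing.

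The missing idea is to work at fixed \(\ell\) rather than coefficientwise. With \(a_h=\delta_h+m_h\), \eqref{eq:Hahn-A-factorial} gives exactly \(\widehat P_{\mm,N}(\ell)=C_T\sum_{r=0}^D(-1)^r\binom Dr\Psi_T(r;\ell)\). Here \(C_T=\fall{N}{D}\prod_h(T\rho_h)_D/(T\rho_h+a_h)_D\) satisfies \(C_T/T^D\to\nu^D\), and \(\Psi_T(r;\ell)=\frac{\fall{(N-\ell)}{r}}{\fall{N}{r}}\prod_h\frac{(T\rho_h+a_h)_r}{(T\rho_h)_r}\).

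\begin{enumerate}
\item Continue \(\Psi_T\) in \(r\) by gamma quotients and set \(H_T(y)=\Psi_T(Ty;\ell)\). Uniform gamma-ratio asymptotics give \(H_T\to H(y)=(1-y/\nu)^\ell\prod_h(1+y/\rho_h)^{a_h}\) in \(C^D\) near \(y=0\).
\item The alternating sum equals \((-1)^D\Delta_{1/T}^DH_T(0)\). The representation \(h^{-D}\Delta_h^Df(0)=\int_{[0,1]^D}f^{(D)}(h(t_1+\cdots+t_D))\dd t_1\cdots\dd t_D\) then yields \(T^D\Delta_{1/T}^DH_T(0)\to H^{(D)}(0)\).
\item Hence \(\widehat P_{\mm,N}(\ell)\to(-\nu)^DH^{(D)}(0)\), which is \eqref{eq:MI-A-coefficient} after the substitution \(y=-\nu u\).
\item Both sides have degree at most \(D\) in \(\ell\), so convergence at \(D+1\) fixed values of \(\ell\) gives the coefficientwise statement.
\end{enumerate}

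Expanding in the basis \(\fall{(-\ell)}{i}\) instead forces \(D+1\) separate difference arguments, each carrying extra \(l\)-dependent factors \(\fall{N}{l-i}/\fall{N}{l}\). That is where your bookkeeping stalled.
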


\begin{proof}
Put \(a_h=\delta_h+m_h\). From \eqref{eq:Hahn-A-factorial} and
\eqref{eq:MI-finite-monic-reflection},
\begin{equation}
 \widehat P_{\mm,N}(\ell)
 =C_T\sum_{r=0}^D(-1)^r\binom Dr\Psi_T(r;\ell),
 \label{eq:MI-finite-difference-form}
\end{equation}
where \(C_T=\fall{N}{D}\prod_{h=1}^q
(T\rho_h)_D/(T\rho_h+a_h)_D\) and
\(C_T/T^D\xrightarrow[T\to\infty]{}\nu^D\). Also put
\(\Psi_T(r;\ell)=\fall{N-\ell}{r}\prod_{h=1}^q
(T\rho_h+a_h)_r/[\fall{N}{r}(T\rho_h)_r]\).
Extend the latter expression analytically in its index:
\[
 \Psi_T(x;\ell)=
 \frac{\Gamma(N-\ell+1)\Gamma(N-x+1)}
 {\Gamma(N-\ell-x+1)\Gamma(N+1)}
 \prod_{h=1}^q
 \frac{\Gamma(T\rho_h+a_h+x)\Gamma(T\rho_h)}
 {\Gamma(T\rho_h+a_h)\Gamma(T\rho_h+x)}.
\]
Set \(H_T(y;\ell)=\Psi_T(Ty;\ell)\). Uniform gamma-ratio
asymptotics, together with their derivatives through order \(D\), give
\[
 H_T(y;\ell)\xrightarrow[T\to\infty]{}
 H(y;\ell)\coloneq\left(1-\frac y\nu\right)^\ell
 \prod_{h=1}^q\left(1+\frac y{\rho_h}\right)^{a_h}
\]
in \(C^D\) on a neighbourhood of the origin. If
\(\Delta_hf(y)=f(y+h)-f(y)\), then
\[
 T^D\sum_{r=0}^D(-1)^r\binom Dr\Psi_T(r;\ell)
 =(-1)^DT^D\Delta_{1/T}^DH_T(0;\ell)
 \xrightarrow[T\to\infty]{}(-1)^D H^{(D)}(0;\ell).
\]
The last passage follows, for example, from
\(h^{-D}\Delta_h^Df(0)=
\int_{[0,1]^D}f^{(D)}\bigl(h(t_1+\cdots+t_D)\bigr)
\dd t_1\cdots\dd t_D\).
Combining this limit with
\(C_T/T^D\xrightarrow[T\to\infty]{}\nu^D\) yields
\[
 \widehat P_{\mm,N}(\ell)\xrightarrow[T\to\infty]{}
 \nu^D(-1)^DH^{(D)}(0;\ell)
 =D![u^D](1+u)^\ell
   \prod_{h=1}^q(1-\nu u/\rho_h)^{a_h},
\]
which is \eqref{eq:MI-A-coefficient}. The convergence is coefficientwise,
because both sides have degree at most \(D\) and convergence holds at
\(D+1\) fixed values of \(\ell\). The constant term inside the
coefficient extraction is one, so the limit is monic. The substitution
\(s=-\nu u\) gives \eqref{eq:MI-A-derivative}, and multinomial expansion
gives \eqref{eq:MI-A-Horn}. Finally, coefficientwise convergence and the
fixed-moment convergence in Theorem~\ref{thm:MI-rows} permit passage in
the exact reflected orthogonality relations.
\end{proof}

Formula \eqref{eq:MI-A-Horn} is a terminating multiple hypergeometric
polynomial in the standard Horn sense \cite{SrivastavaKarlsson1985}.

The signed sequence satisfying the type-I moment conditions has a product
generating function.
Define the normalized nonnegative coefficient sequence
\begin{equation}
 s_{\mm}^{\mathrm{MI}}(\ell)
 =[z^\ell]\prod_{h=1}^qR_h(z)^{\delta_h+m_h}
 =\bigl(r_{\delta_1+m_1,c_1}*\cdots
 *r_{\delta_q+m_q,c_q}\bigr)(\ell).
 \label{eq:MI-B-seed}
\end{equation}
Set \(s_{\mm}^{\mathrm{MI}}(r)=0\) for \(r<0\).

\begin{theorem}[Unit-normalized Meixner-I-like signed sequence]
\label{thm:MI-B}
Let \(\mm\) be near the diagonal with \(D\coloneq\abs\mm\ge1\), and set
\(n=D-1\). In the normalization in which the moment of order \(n\) equals one, the
signed sequence has generating function
\begin{equation}
 \mathcal H_{\mm}^{\mathrm{MI}}(z)
 =\frac{F_{\mm}(0)}{n!}
   \frac{(z-1)^n}{F_{\mm}(\nu(1-z))}
 =\frac{(z-1)^n}{n!}
   \prod_{h=1}^qR_h(z)^{\delta_h+m_h}.
 \label{eq:MI-B-product-pgf}
\end{equation}
Its coefficients are
\begin{equation}
 \mathcal B_{\mm}^{\mathrm{MI}}(\ell)
 =\frac1{n!}\sum_{a=0}^n(-1)^{n-a}\binom na
 s_{\mm}^{\mathrm{MI}}(\ell-a),
 \qquad \ell\in\Nzero.
 \label{eq:MI-B-difference}
\end{equation}
\end{theorem}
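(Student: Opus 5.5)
The plan is to obtain \eqref{eq:MI-B-product-pgf} as the limit of the reflected, unit-normalized Hahn-like generating function, by the same mechanism used for the weights in Theorem~\ref{thm:MI-rows}. Normalizing \eqref{eq:Hahn-complete-B-pgf} so that its moment of order \(n\) equals one (Theorem~\ref{thm:Hahn-complete-B}) gives
\[
\frac{(z-1)^n}{n!}\,\pFq{q+1}{q}{n-N,\A+n}{\bbeta+\mm+n}{1-z}.
\]
We reflect by \(z^N\mathcal H(z^{-1})\) and multiply by \((-1)^n\), as in \eqref{eq:K-reflected-unit-B}; this preserves the unit \(n\)-th factorial moment.

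The hypergeometric factor is the generating function of the Bernstein transform at level \(N-n\) of a beta-product measure. That measure has parameters \(A_h+n\) and \(\beta_h+m_h+n-(A_h+n)=\delta_h+m_h\), and the identification follows from \eqref{eq:Hahn-like-pgf} with shifted parameters. Consequently, after reflection the factor equals
\[
\int_{(0,1)^q}\bigl[Y+(1-Y)z\bigr]^{N-n}\prod_h b_{T\rho_h+n,\delta_h+m_h}(y_h)\dd\boldsymbol y .
\]
Under \(y_h=1-u_h/T\), exactly as in the proof of Theorem~\ref{thm:MI-rows}, the kernels converge to \(g_{\rho_h,\delta_h+m_h}\) with a common exponential majorant. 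Since \((N-n)/T\to\nu\), the integrand converges to \(\e^{\nu(z-1)\sum u_h}\). Dominated convergence on a small disk around \(z=1\) then yields \(\prod_h R_h(z)^{\delta_h+m_h}\), which is the second form in \eqref{eq:MI-B-product-pgf}. The first form is the algebraic identity \(R_h(z)=\rho_h/(\rho_h+\nu(1-z))\), so the product equals \(F_{\mm}(0)/F_{\mm}(\nu(1-z))\).

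To justify calling this the signed sequence of the limiting system, I would check the moments directly. The factor \((z-1)^n\) gives vanishing derivatives at \(z=1\) for orders below \(n\), and the \(n\)-th derivative equals one because \(\prod_h R_h(1)=1\). Convergence of the finite sequences' fixed factorial moments follows from the uniform convergence near \(z=1\) via Cauchy's formula. Coefficient convergence follows on compact subsets of the unit disk as in Theorem~\ref{thm:MI-rows}.

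Formula \eqref{eq:MI-B-difference} comes from expanding \((z-1)^n=\sum_a\binom na(-1)^{n-a}z^a\) and multiplying by the generating function of \(s_{\mm}^{\mathrm{MI}}\) from \eqref{eq:MI-B-seed}. The product of the \(R_h\) powers is the convolution of the negative-binomial sequences \(r_{\delta_h+m_h,c_h}\) by \eqref{eq:elementary-discrete-weights}. The main obstacle I expect is the uniform majorant after the parameter shift \(A_h\mapsto T\rho_h+n\). Here I would reuse the gamma-ratio bound from Theorem~\ref{thm:MI-rows} with \(\delta_h\) replaced by \(\delta_h+m_h\), noting that a fixed shift \(n\) does not affect the estimate.
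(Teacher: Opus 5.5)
Your proposal is correct and follows the paper's proof essentially step by step. The paper also reflects the exact Hahn generating function \eqref{eq:Hahn-complete-B-pgf} and divides by \(-\kappa_{\mm,N}n!\), where \(\kappa_{\mm,N}=\fall{N}{n}(\A)_n/(\bbeta)_{\mm+n}\); this is equivalent to your normalize-then-multiply-by-\((-1)^n\). It writes the remaining polynomial factor through the Euler integral with beta parameters \((T\rho_h+n,\delta_h+m_h)\), passes to the limit via \(y_h=1-u_h/T\) and dominated convergence, and obtains \eqref{eq:MI-B-difference} by coefficient extraction from \((z-1)^n\) times that factor.
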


\begin{proof}
Put \(\kappa_{\mm,N}\coloneq\fall{N}{n}\frac{(\A)_n}{(\bbeta)_{\mm+n}}\).
Reflecting the exact finite generating function
\eqref{eq:Hahn-complete-B-pgf} gives
\(z^N\mathcal H^{\mathrm H}_{\mm,N}(z^{-1})
=-\kappa_{\mm,N}(z-1)^nS_{\mm,N}(z)\), where
\(S_{\mm,N}(z)\coloneq z^{N-n}
\pFq{q+1}{q}{n-N,\A+n}{\bbeta+\mm+n}{1-z^{-1}}\).
Although this expression contains negative powers of \(z\) before
cancellation, \(S_{\mm,N}\) is a polynomial. Its Euler representation is
\[
 S_{\mm,N}(z)=\int_{(0,1)^q}
 [y_1\cdots y_q+(1-y_1\cdots y_q)z]^{N-n}
 \prod_{h=1}^q
 b_{A_h+n,\,\beta_h+m_h-A_h}(y_h)\dd\boldsymbol y.
\]
Thus \(S_{\mm,N}\) is the generating function of a normalized
nonnegative finite sequence and \(S_{\mm,N}(1)=1\). In particular, the
factorial moment of order \(n\) of the reflected finite signed sequence is
\(-\kappa_{\mm,N}n!\). After normalization by this moment, its generating
function is
\(-z^N\mathcal H^{\mathrm H}_{\mm,N}(z^{-1})/(\kappa_{\mm,N}n!)
=(z-1)^nS_{\mm,N}(z)/n!\).
Under \eqref{eq:MI-scaling}, the beta factors in the last integral have
parameters
\((A_h+n,\beta_h+m_h-A_h)=(T\rho_h+n,\delta_h+m_h)\), while
\(\frac{N-n}{T}\xrightarrow[T\to\infty]{}\nu\).
After \(y_h=1-u_h/T\), dominated convergence, locally for \(z\) in the
unit disk and in a fixed neighbourhood of \(1\), gives
\[
 S_{\mm,N}(z)\xrightarrow[T\to\infty]{}
 \prod_{h=1}^q
 \left(\frac{\rho_h}{\rho_h+\nu(1-z)}\right)^{\delta_h+m_h}
 =\prod_{h=1}^qR_h(z)^{\delta_h+m_h}.
\]
This proves \eqref{eq:MI-B-product-pgf}, including its normalization: its
derivatives at \(1\) of every order \(r\in\Nzero\) with \(r<n\) vanish, while its \(n\)-th
derivative equals one. Finally, coefficient extraction from
\((z-1)^nS_{\mm,N}(z)/n!\), followed by the coefficientwise limit,
gives \eqref{eq:MI-B-difference}.
\end{proof}

The individual type-I polynomials are characterized by an explicit finite
polynomial system.
Under the hypotheses of Proposition~\ref{prop:MI-normality}, write
\begin{equation}
 \mathcal B_{\mm}^{\mathrm{MI}}(\ell)
 =\sum_{j=1}^qB_j^{\mathrm{MI}}(\ell)v_j^{\mathrm{MI}}(\ell),
 \qquad
 B_j^{\mathrm{MI}}(\ell)=
 \sum_{r=0}^{m_j-1}b_{j,r}\fall{\ell}{r},
 \label{eq:MI-B-components}
\end{equation}
and define
\begin{equation}
 \Phi_{j,r}^{\mathrm{MI}}(z)=
 \frac{z^r(G_j^{\mathrm{MI}})^{(r)}(z)}
 {\prod_hR_h(z)^{\delta_h+m_h}}.
 \label{eq:MI-Phi}
\end{equation}
Here \(j\in\{1,\ldots,q\}\) satisfies \(m_j>0\), and
\(r\in\{0,\ldots,m_j-1\}\).
Leibniz's rule makes every quotient in \eqref{eq:MI-Phi} an explicit
polynomial of degree at most \(n\). The coefficients \(b_{j,r}\) are the
unique solution of the polynomial identity
\begin{equation}
 \sum_{j=1}^q\sum_{r=0}^{m_j-1}
 b_{j,r}\Phi_{j,r}^{\mathrm{MI}}(z)=\frac{(z-1)^n}{n!}.
 \label{eq:MI-component-identity}
\end{equation}
After the common factor \(\prod_hR_h(z)^{\delta_h}\) has been removed,
the remaining interpolation problem is rational in
\(s=\nu(1-z)\); its only singularities are the poles \(-\rho_j\) of
integer orders. Finite partial fractions then recover the coefficients
in \eqref{eq:MI-component-identity}.

Let \(x_1,\ldots,x_q\) be distinct and let \(D=\sum_jm_j\). For every
\(j\in\{1,\ldots,q\}\) and \(r\in\Nzero\) with \(r<m_j\), set
\(E_{j,r}(z)=z^r(1-x_jz)^{m_j-r-1}
\prod_{h\ne j}(1-x_hz)^{m_h}\).

\begin{lemma}[Confluent polynomial determinant]
\label{lem:confluent-polynomial-blocks}
For any fixed ordering of the \(D\) columns \((j,r)\),
\begin{equation}
 \det\bigl([z^\nu]E_{j,r}(z)\bigr)_{
       0\le\nu<D,\,(j,r)}
 =\pm\prod_{j<h}(x_h-x_j)^{m_jm_h}.
 \label{eq:confluent-polynomial-determinant}
\end{equation}
\end{lemma}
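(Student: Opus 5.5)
The plan is to identify the determinant as a confluent Vandermonde-type (resultant-like) determinant by showing it is a polynomial in \(x_1,\ldots,x_q\) that vanishes to high order along each diagonal \(x_j=x_h\), and then to pin down the constant by a degree count and one normalization. First note that each \(E_{j,r}\) has degree \(D-1\) in \(z\), so the matrix is \(D\times D\). Its determinant \(\Delta(x_1,\ldots,x_q)\) is the determinant of the linear map sending coefficient vectors \((b_{j,r})\) to the polynomial \(\sum_{j,r}b_{j,r}E_{j,r}(z)\) of degree \(<D\), written in the monomial basis.

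Next I would show that this map is injective when the \(x_j\) are distinct. Suppose \(\sum_{j,r}b_{j,r}E_{j,r}=0\). Divide by \(\prod_h(1-x_hz)^{m_h}\) to get
\[
\sum_{j}\sum_{r<m_j}\frac{b_{j,r}z^r}{(1-x_jz)^{r+1}}=0 .
\]
Group the terms by \(j\). The \(j\)-th group is a rational function whose only possible pole is at \(z=x_j^{-1}\), or it is a polynomial when \(x_j=0\); that happens for at most one \(j\). Since the poles of different groups are distinct, each group must vanish separately. Within one group, set \(w=z/(1-x_jz)\). The group becomes \(\sum_rb_{j,r}w^r/(1-x_jz)\), and the substitution \(z\mapsto w\) is invertible, so \(b_{j,r}=0\). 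Hence \(\Delta\ne0\) whenever the \(x_j\) are pairwise distinct.

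To obtain the exact product, I would count degrees. The entry in column \((j,r)\) and row \(\nu\) is homogeneous of degree \(\nu-r\) in \(x\). So \(\Delta\) is homogeneous of total degree
\[
\sum_{\nu<D}\nu-\sum_j\sum_{r<m_j}r=\binom D2-\sum_j\binom{m_j}2=\sum_{j<h}m_jm_h .
\]
This equals the degree of the target product. It then suffices to show that \((x_h-x_j)^{m_jm_h}\) divides \(\Delta\), since the remaining factor is a constant. I expect this divisibility to be the main obstacle, and I would handle it by column operations. Set \(x_h=x_j+\epsilon\). The \(j\)-block and \(h\)-block columns span the polynomials \(z^a(1-x_jz)^{m_j-1-a}(1-x_hz)^{m_h}P\) and \(z^b(1-x_hz)^{m_h-1-b}(1-x_jz)^{m_j}P\), with \(P\) the common remaining product. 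Combining them, one extracts successive factors of \(\epsilon\) via the identity \((1-x_hz)=(1-x_jz)-\epsilon z\). Order-by-order, as in the classical proof for the confluent Vandermonde determinant, this exhibits \(\epsilon^{m_jm_h}\) after dividing appropriate column differences.

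A cleaner alternative, which I would try first, is to recognize \(\Delta\) up to sign as the Sylvester-type resultant associated with the partial-fraction basis. Alternatively, I would use induction on \(D\): specialize \(x_q=0\) and reduce the \(q\)-th block \(z^r\prod_{h<q}(1-x_hz)^{m_h}\) by triangularity, obtaining \(\Delta|_{x_q=0}=\pm\prod_{j<q}x_j^{m_jm_q}\cdot\Delta'\). Combined with homogeneity and the translation covariance \(1-x_jz\mapsto\) a Möbius change of variable, this fixes the product and the constant \(\pm1\).

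Finally, the constant is checked by one evaluation. With \(q=1\), the matrix is the unitriangular matrix of \(z^r(1-x_1z)^{m_1-1-r}\), with determinant \(1\). The inductive specialization then shows the constant is \(\pm1\), the sign depending only on the column ordering.
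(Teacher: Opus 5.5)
The ingredients you actually prove are correct: injectivity for distinct nodes, homogeneity of total degree \(\sum_{j<h}m_jm_h\), the unitriangular case \(q=1\), and the zero-node reduction \(\Delta|_{x_q=0}=\pm\prod_{j<q}x_j^{m_jm_q}\,\Delta'\). For the last one, with \(D'=D-m_q\) and \(Q=\prod_{h<q}(1-x_hz)^{m_h}\), the columns \(z^rQ\) fill the last \(m_q\) rows triangularly with diagonal \(\prod_{h<q}(-x_h)^{m_h}\).

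None of these produces the factor \((x_h-x_j)^{m_jm_h}\), and that factor is the whole content of the lemma. Injectivity only gives \(\Delta\ne0\) off the diagonals, and the degree count helps only once divisibility is known. Your divisibility step is described, not proved. With \(x_h=x_j+\epsilon\), the columns \(z^c(1-x_jz)^{m_j+m_h-1-c}P\) of the two blocks coincide at \(\epsilon=0\). That gives a rank drop of \(\min(m_j,m_h)\), hence only vanishing order at least \(\min(m_j,m_h)\). Raising this to \(m_jm_h\) needs an iterated extraction whose bookkeeping you do not supply; ``order-by-order, as in the classical proof'' is not an argument. The Sylvester identification is likewise only a name. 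After a unitriangular change inside each block it is the Sylvester matrix when \(q=2\), but for \(q>2\) it is a multi-resultant identity of the same strength as the lemma.

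Your third variant does close the gap, and it makes both the divisibility step and the homogeneity count unnecessary. But the fact it rests on must be stated and proved: \(\Delta(x_1-t,\ldots,x_q-t)=\Delta(x_1,\ldots,x_q)\). Define \(T_tp(z)=(1+tz)^{D-1}p\bigl(z/(1+tz)\bigr)\) on polynomials of degree \(<D\).
\begin{itemize}
\item Since \(T_tz^\nu=z^\nu(1+tz)^{D-1-\nu}\), its matrix in the monomial basis is unitriangular, so it has determinant one.
\item Since \(1-x_hz/(1+tz)=(1-(x_h-t)z)/(1+tz)\), and the total power of \(1+tz\) in \(E_{j,r}\) is \(r+(m_j-r-1)+\sum_{h\ne j}m_h=D-1\), the polynomial \(T_tE_{j,r}\) is \(E_{j,r}\) with every node shifted by \(-t\).
\end{itemize}
Take \(t=x_q\) and apply your zero-node reduction. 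This gives \(\Delta=\pm\prod_{j<q}(x_j-x_q)^{m_jm_q}\,\Delta'(x_1-x_q,\ldots,x_{q-1}-x_q)\). Invariance of \(\Delta'\) and induction on \(q\) then finish the proof, with no need for distinct or nonzero nodes.

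Completed this way, your route differs from the paper's. The paper multiplies \(C\) on the left by the confluent Vandermonde matrix of the functionals \(p\mapsto p^{(s)}(x_j^{-1})/s!\), which makes it block diagonal with antitriangular blocks. That argument needs \(x_j\ne0\), imports the confluent Vandermonde evaluation, and reaches a zero node only by polynomial continuation. Your translation-and-specialization induction is self-contained and uses the zero node as its main device.
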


\begin{proof}
Order the columns lexicographically by the pairs \((j,r)\) with
\(j\in\{1,\ldots,q\}\), \(r\in\Nzero\), and \(r<m_j\); an arbitrary fixed
ordering changes only the sign. First assume that all \(x_j\ne0\), and put
\(\lambda_{j,s}p=p^{(s)}(x_j^{-1})/s!\) for
\(j\in\{1,\ldots,q\}\) and \(s\in\Nzero\) with \(s<m_j\).
Let \(C=([z^\nu]E_{j,r})\) be the coefficient matrix and let
\(V=(\lambda_{j,s}z^\nu)\), with the pairs \((j,s)\) grouped by \(j\).
The standard confluent Vandermonde evaluation gives
\(\det V=\prod_{j<h}(x_h^{-1}-x_j^{-1})^{m_jm_h}\).
The matrix \(VC=(\lambda_{h,s}E_{j,r})\) is block diagonal, because
\(E_{j,r}\) has a zero of order \(m_h\) at \(x_h^{-1}\) whenever
\(h\ne j\). Set \(G_j=\prod_{h\ne j}(1-x_h/x_j)^{m_h}\) and
\(t_r=m_j-r-1\).
Inside the \(j\)-th diagonal block,
\(\lambda_{j,s}E_{j,r}=0\) for \(s<t_r\), whereas
\(\lambda_{j,t_r}E_{j,r}=(-x_j)^{t_r}x_j^{-r}G_j\).
After reversing the columns this block is triangular. The reversal sign,
the signs \((-1)^{t_r}\), and the powers of \(x_j\) cancel in the
product, because
\(\sum_rt_r=\sum_rr=\binom{m_j}{2}\). Consequently,
\(\det(VC)=\prod_{j=1}^qG_j^{m_j}\).
Dividing by \(\det V\) and grouping the two ordered factors belonging to
each pair \(j<h\) yields
\[
 \det C
 =\prod_{j<h}
 \left(\frac{(1-x_h/x_j)(1-x_j/x_h)}
 {x_h^{-1}-x_j^{-1}}\right)^{m_jm_h}
 =\prod_{j<h}(x_h-x_j)^{m_jm_h}.
\]
Both sides are polynomials in the nodes, so the identity extends to the
case in which one of the distinct nodes is zero. This proves
\eqref{eq:confluent-polynomial-determinant}; any other column ordering
introduces only its permutation sign.
\end{proof}

Let \(\mm\) be near the diagonal, put \(D=\abs\mm\), \(n=D-1\), and
\(K_{\mm}^{\mathrm{MI}}=\prod_{h=1}^q(1-c_h)^{-m_h}\).
For \(j\in\{1,\ldots,q\}\), \(r\in\Nzero\) with \(r<m_j\), and
\(d\in\{0,\ldots,n\}\), define
\(C^{\mathrm{MI}}_{d,(j,r)}=[z^d]\Phi_{j,r}^{\mathrm{MI}}(z)\).

\begin{proposition}[Near-diagonal normality]
\label{prop:MI-normality}
Let \(\mm\) be near the diagonal with \(D\coloneq\abs\mm\ge1\), set
\(n=D-1\), and let
\(C^{\mathrm{MI}}_{d,(j,r)}=[z^d]\Phi_{j,r}^{\mathrm{MI}}(z)\), where
\(j\in\{1,\ldots,q\}\), \(r\in\Nzero\), \(r<m_j\), and
\(d\in\{0,\ldots,n\}\). Assume \(\rho_j,\delta_j>0\) for every
\(j\in\{1,\ldots,q\}\), and that the rates \(\rho_j\) for which
\(m_j>0\) are pairwise
distinct. For any fixed ordering of the columns,
\begin{equation}
 \det C^{\mathrm{MI}}=
 \pm(K_{\mm}^{\mathrm{MI}})^D
 \prod_{j=1}^q\left[(1-c_j)^{m_j}
 c_j^{\binom{m_j}{2}}
 \prod_{r=0}^{m_j-1}(\delta_j+1)_r\right]
 \prod_{j<h}(c_h-c_j)^{m_jm_h}.
 \label{eq:MI-connection-determinant}
\end{equation}
The polynomials \(\Phi_{j,r}^{\mathrm{MI}}\), with
\(j\in\{1,\ldots,q\}\), \(r\in\Nzero\), and \(r<m_j\), form a basis
of the polynomials of degree at
most \(n\). The signed sequence in
Theorem~\ref{thm:MI-B} is the type-I linear form and its moment matrix is
nonsingular.
\end{proposition}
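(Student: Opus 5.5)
The plan is to reduce everything to Lemma~\ref{lem:confluent-polynomial-blocks} with nodes \(x_h=c_h\). The reduction rests on an explicit description of \(\Phi_{j,r}^{\mathrm{MI}}\). Write \(a_h=\delta_h+\delta_{h,j}\). By \eqref{eq:MI-row-pgf} and Leibniz's rule, since \((R_h^{a})^{(s)}=(a)_sc_h^sR_h^{a}(1-c_hz)^{-s}\), one has
\[
 \frac{z^r(G_j^{\mathrm{MI}})^{(r)}(z)}{G_j^{\mathrm{MI}}(z)}
 =z^r\sum_{s_1+\cdots+s_q=r}\frac{r!}{s_1!\cdots s_q!}
 \prod_{h=1}^q\frac{(a_h)_{s_h}c_h^{s_h}}{(1-c_hz)^{s_h}} .
\]
Next, \(G_j^{\mathrm{MI}}/\prod_hR_h^{\delta_h+m_h}=(1-c_j)K_{\mm}^{\mathrm{MI}}(1-c_jz)^{m_j-1}\prod_{h\ne j}(1-c_hz)^{m_h}\). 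Multiplying, \(\Phi_{j,r}^{\mathrm{MI}}\) becomes \((1-c_j)K_{\mm}^{\mathrm{MI}}\) times a sum of polynomials \(z^r\prod_h(1-c_hz)^{m_h-\delta_{h,j}-s_h}\). Near-diagonality (\(s_h\le r\le m_j-1\le m_h\)) together with \(s_j\le r\le m_j-1\) keeps all exponents nonnegative. This confirms that each \(\Phi_{j,r}^{\mathrm{MI}}\) is a polynomial of degree at most \(n\). The distinguished term \(s_j=r\) equals \((\delta_j+1)_rc_j^r\,E_{j,r}(z)\) with \(x_h=c_h\).

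The key step, and the main obstacle I expect, is to show that the remaining terms (those with some \(s_h\ge1\), \(h\ne j\)) do not change the determinant. I will try the functional approach used in the proof of Lemma~\ref{lem:confluent-polynomial-blocks}, applying the functionals \(\lambda_{h,s}p=p^{(s)}(c_h^{-1})/s!\), \(s<m_h\), to the \(\Phi\)'s. A term with exponents \(m_h-\delta_{h,j}-s_h\) at \(c_h^{-1}\) is annihilated by \(\lambda_{h,s}\) for \(s<m_h-\delta_{h,j}-s_h\). The aim is to order rows \((h,s)\) and columns \((j,r)\) by a combined weight, namely \(r\) against \(m_h-1-s\), so that the perturbation terms fall strictly on one side of the block-diagonal pattern of the \(E\)-matrix and \(\det(V\,C^{\mathrm{MI}})\) keeps the diagonal product computed in the Lemma. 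If this ordering fails, the fallback is a column reduction within each block \(j\): subtract combinations of \(\Phi_{j',r'}\) with \(r'<r\), replacing \(\Phi_{j,r}\) by \(z^r\) times an iterate of the operator \(z\partial_z\) on \(G_j\). The lower-order contributions would be absorbed into columns with smaller \(r\). The difficulty is that \(z\,G_j'/G_j\) involves all rates, so this cross-block bookkeeping is exactly what must be checked.

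Granting the triangularity, multilinearity gives the following. Each column contributes \((1-c_j)K_{\mm}^{\mathrm{MI}}(\delta_j+1)_rc_j^r\), and \(\det([z^\nu]E_{j,r})=\pm\prod_{j<h}(c_h-c_j)^{m_jm_h}\) by the Lemma. Taking the product over the \(D\) columns and using \(\sum_{r<m_j}r=\binom{m_j}{2}\) yields \eqref{eq:MI-connection-determinant}. The determinant is nonzero because \(0<c_j<1\), \(\delta_j>0\), and the \(c_j=\nu/(\nu+\rho_j)\) of the active rows are distinct exactly when the active \(\rho_j\) are. Hence the \(D=n+1\) polynomials \(\Phi_{j,r}^{\mathrm{MI}}\) form a basis of the polynomials of degree at most \(n\), and \eqref{eq:MI-component-identity} has a unique solution \(b_{j,r}\).

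Multiplying \eqref{eq:MI-component-identity} by \(\prod_hR_h^{\delta_h+m_h}\) and using \eqref{eq:MI-B-product-pgf} gives \(\sum b_{j,r}z^r(G_j^{\mathrm{MI}})^{(r)}=\mathcal H_{\mm}^{\mathrm{MI}}\). Since \([z^\ell]z^rG^{(r)}(z)=\fall{\ell}{r}v(\ell)\), comparing coefficients yields \eqref{eq:MI-B-components} with \(\deg B_j^{\mathrm{MI}}<m_j\). For nonsingularity, take a null vector of the moment matrix, i.e. coefficients \(b_{j,r}\) whose form has vanishing factorial moments of orders \(0,\ldots,n\). Then \(\sum b_{j,r}z^r(G_j^{\mathrm{MI}})^{(r)}\) vanishes to order \(n+1\) at \(z=1\). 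Dividing by \(\prod_hR_h^{\delta_h+m_h}\), which is analytic and nonzero at \(1\), gives a polynomial \(\sum b_{j,r}\Phi_{j,r}^{\mathrm{MI}}\) of degree at most \(n\) divisible by \((z-1)^{n+1}\). This polynomial is therefore zero, and the basis property forces every \(b_{j,r}=0\).
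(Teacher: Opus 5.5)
Your Leibniz expansion of \(\Phi_{j,r}^{\mathrm{MI}}\), the identification of the term \(s_j=r\) with \((1-c_j)K_{\mm}^{\mathrm{MI}}(\delta_j+1)_rc_j^rE_{j,r}\), the degree bound, the product of column factors, and the closing basis and null-vector arguments are all correct. The gap is the one nontrivial step: that the terms with \(s_j<r\) leave the determinant unchanged. You only ``grant'' it, offering two candidate strategies without verifying either.

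The fallback as phrased (``within each block \(j\)'') cannot work, because the remainder of \(\Phi_{j,r}^{\mathrm{MI}}\) has components in other blocks. Already for \(r=1\), \(\Phi_{j,1}^{\mathrm{MI}}\) contains a multiple of \(\delta_hc_h\,z(1-c_jz)^{m_j-1}(1-c_hz)^{m_h-1}\prod_{g\ne j,h}(1-c_gz)^{m_g}\). Since \(z/((1-c_jz)(1-c_hz))\) has a nonzero simple-pole coefficient at \(c_h^{-1}\), this term has a nonzero component along \(E_{h,0}\) with \(h\ne j\). The reduction must therefore be global, by derivative order across all blocks.

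This is what the paper does. Consider the rational function \(P_{j,r}=\Phi_{j,r}^{\mathrm{MI}}/\bigl(K_{\mm}^{\mathrm{MI}}(1-c_j)\prod_h(1-c_hz)^{m_h}\bigr)\). It vanishes at infinity. Its principal part at \(c_j^{-1}\) has top term \((\delta_j+1)_rc_j^r\,z^r(1-c_jz)^{-r-1}\), and all its other principal parts have order at most \(r\). Hence \(P_{j,r}\) minus that top term lies in the span of the \(z^s(1-c_hz)^{-s-1}\) with \(s<r\); near-diagonality then gives \(s<m_h\). Multiplying back by \(\prod_h(1-c_hz)^{m_h}\) gives a change of basis to the \(E_{h,s}\) that is triangular when the columns are ordered globally by \(r\).

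Your first strategy also closes the gap, and the check is short. A term of column \((j,r)\) with multi-index \((s_1,\ldots,s_q)\) vanishes at \(c_h^{-1}\) to order at least \(m_h-\delta_{h,j}-s_h\). So \(\lambda_{h,s}\) annihilates it unless \(m_h-1-s\le s_h+\delta_{h,j}-1\). For \(h\ne j\) this forces \(m_h-1-s\le r-1\); for \(h=j\) with \(s_j\le r-1\) it forces \(m_j-1-s\le r-1\).

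Now give row \((h,s)\) the weight \(w=m_h-1-s\) and column \((j,r)\) the weight \(r\). Every entry of \(VC^{\mathrm{MI}}\) with \(w>r\) vanishes. At \(w=r\) only the distinguished term contributes, and only in row \((j,m_j-1-r)\). Ordered by weight, \(VC^{\mathrm{MI}}\) and \(V\bigl([z^\nu]E_{j,r}\bigr)\) are both block triangular with diagonal diagonal blocks. Corresponding diagonal entries differ exactly by the column factors \((1-c_j)K_{\mm}^{\mathrm{MI}}(\delta_j+1)_rc_j^r\). Dividing by \(\det V\ne0\) and applying the Lemma yields \eqref{eq:MI-connection-determinant}.

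This is the dual, evaluation-functional form of the paper's principal-part argument. Once this step is written out, your proof is complete.
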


\begin{proof}
Put \(f(z)=\prod_{h=1}^q(1-c_hz)^{-\delta_h}\),
\(Q_{j,r}(z)=\frac{z^r}{(1-c_jz)^{r+1}}\), and
\(P_{j,r}(z)=z^rf(z)^{-1}
\frac{\dd^r}{\dd z^r}\frac{f(z)}{1-c_jz}\).
The principal part of \(P_{j,r}\) at \(z=c_j^{-1}\) has highest-order
term
\((\delta_j+1)_rc_j^rQ_{j,r}\). At every other pole its order is at
most \(r\). Since these rational functions vanish at infinity, subtraction
of the highest-order principal part gives
\[
 P_{j,r}-(\delta_j+1)_rc_j^rQ_{j,r}
 \in\operatorname{span}\{Q_{h,s}:h\in\{1,\ldots,q\},\
 s\in\Nzero,\ s<r\}.
\]
Thus, when the columns are ordered globally by increasing \(r\), the
change of basis is triangular by derivative order, rather than separately
for each fixed \(j\). If
\(D_{\mm}(z)=\prod_h(1-c_hz)^{m_h}\), then
\(D_{\mm}Q_{j,r}=E_{j,r}\) and
\(\Phi_{j,r}^{\mathrm{MI}}
=K_{\mm}^{\mathrm{MI}}(1-c_j)D_{\mm}P_{j,r}\).
Consequently the diagonal factor in column \((j,r)\) is
\(K_{\mm}^{\mathrm{MI}}(1-c_j)c_j^r(\delta_j+1)_r\).
Multiplying these factors and applying
Lemma~\ref{lem:confluent-polynomial-blocks} gives
\eqref{eq:MI-connection-determinant}. Every displayed factor is nonzero
under the hypotheses, proving the basis and normality assertions.
\end{proof}

\begin{corollary}[Hahn-to-Meixner-I-like component limit]
\label{cor:Hahn-MI-component-limit}
Let \(\mm\) be fixed and near the diagonal, put
\(n=\abs\mm-1\in\Nzero\), and assume that the rates \(\rho_j\) for
which \(m_j>0\) are pairwise distinct. Under \eqref{eq:MI-scaling}, set
\(\kappa_{\mm,N}\coloneq\fall{N}{n}(\A)_n/(\bbeta)_{\mm+n}\) and, for
every \(j\in\{1,\ldots,q\}\), set
\(\check v_{j,T}(\ell)\coloneq v_{j,N}(N-\ell)\). Also set
\(\check{\mathcal B}_{\mm,T}(\ell)
\coloneq-\mathcal B_{\mm,N}^{\mathrm H}(N-\ell)/
(\kappa_{\mm,N}n!)\). For all sufficiently large \(T\), there is a
unique tuple \((\check B_{j,T})_{j:m_j>0}\) satisfying
\(\deg\check B_{j,T}<m_j\) for every
\(j\in\{1,\ldots,q\}\) with \(m_j>0\) and
\[
 \check{\mathcal B}_{\mm,T}(\ell)
 =\sum_{\substack{j\in\{1,\ldots,q\}\\m_j>0}}
 \check B_{j,T}(\ell)\check v_{j,T}(\ell).
\]
For every \(j\in\{1,\ldots,q\}\) with \(m_j>0\),
\begin{equation}
 \check B_{j,T}(\ell)\xrightarrow[T\to\infty]{}B_j^{\mathrm{MI}}(\ell)
 \quad\hbox{coefficientwise in }\ell.
 \label{eq:Hahn-MI-component-limit}
\end{equation}
\end{corollary}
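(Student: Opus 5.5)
The plan mirrors the type-I part of the proof of Theorem~\ref{thm:K-to-CI}: pass to the limit in finite moment matrices and invert. I do not reconstruct the components through the residue recursion.

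\emph{Set-up.} Index the columns by pairs \((j,r)\) with \(m_j>0\) and \(0\le r<m_j\), and the rows by \(s\in\{0,\ldots,n\}\). Define the reflected type-I moment matrix
\[
 \bigl(M_T\bigr)_{s,(j,r)}\coloneq\sum_{\ell=0}^N\fall{\ell}{s}\fall{\ell}{r}\check v_{j,T}(\ell).
\]
By \eqref{eq:falling-linearization}, every entry is a fixed finite linear combination of factorial moments of \(\check v_{j,T}\) of order at most \(2n\). These are derivatives at \(z=1\) of the reflected generating functions \(\widehat{\mathcal V}_{j,N}\). Theorem~\ref{thm:MI-rows} gives convergence of all fixed factorial moments; its proof supplies uniform convergence on a disk around \(z=1\) together with Cauchy's formula. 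Hence \(M_T\to M^{\mathrm{MI}}\) entrywise, where \(M^{\mathrm{MI}}\) is the Meixner-I-like type-I moment matrix.

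\emph{Nonsingularity of the limit.} Proposition~\ref{prop:MI-normality} shows that \(M^{\mathrm{MI}}\) is nonsingular when the active \(\rho_j\) are distinct and \(\rho_j,\delta_j>0\). The change of basis \(\Phi_{j,r}^{\mathrm{MI}}\) used there is triangular, and the determinant \eqref{eq:MI-connection-determinant} is nonzero. By continuity of the determinant, \(M_T\) is nonsingular for all sufficiently large \(T\). So each reflected finite multi-index is normal, and the unit-normalized type-I problem has a unique solution, with coefficient vector \(M_T^{-1}(0,\ldots,0,1)^{\mathsf T}\). By continuity of inversion this converges to \((M^{\mathrm{MI}})^{-1}(0,\ldots,0,1)^{\mathsf T}\). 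That limit vector is exactly the coefficient vector of the \(B_j^{\mathrm{MI}}\) in \eqref{eq:MI-B-components}, since Proposition~\ref{prop:MI-normality} identifies \(\mathcal B_{\mm}^{\mathrm{MI}}\) as the type-I form, and its moments are zero below order \(n\) and one at order \(n\) by Theorem~\ref{thm:MI-B}. Coefficients in the falling-factorial basis converge, which gives \eqref{eq:Hahn-MI-component-limit}.

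\emph{Main obstacle.} The delicate step is showing that the moment solution actually decomposes the explicit sequence \(\check{\mathcal B}_{\mm,T}\), and does not merely share its first \(n+1\) moments. I would handle it as in Theorem~\ref{thm:K-to-CI}. First reflect the constructed form back to the original lattice; reflection preserves the degree bounds, and \(\fall{N-k}{r}\) spans the same spaces. Then rewrite it in the canonical weights \(\widetilde v_{j,N}\) and compare with \(\mathcal B_{\mm,N}^{\mathrm H}\) via \eqref{eq:R-pairing}. Normalize the difference of factorial-moment functions by \(h_N(r)\) and multiply by \(\prod_h(r+\beta_h)_{m_h}\). By \eqref{eq:R-decay} and near-diagonality, the result is a polynomial in \(r\) of degree at most \(n\); this step needs no pole separation. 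The normalization of \(\check{\mathcal B}_{\mm,T}\) is fixed by Theorem~\ref{thm:Hahn-complete-B}: the order-\(n\) moment of \(-\mathcal B^{\mathrm H}/(\kappa n!)\) equals \(1\) after reflection, up to the sign \((-1)^n\) coming from the leading coefficient of \(\fall{N-k}{n}\). With that normalization, the polynomial vanishes at \(r=0,\ldots,n\) and is therefore zero. All factorial moments of orders \(0,\ldots,N\) then agree, both sequences are supported on \(\{0,\ldots,N\}\), and so they coincide. Uniqueness follows from nonsingularity of \(M_T\).
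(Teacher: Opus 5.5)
Your proposal is correct and follows the paper's route: entrywise convergence of the reflected moment matrices via Theorem~\ref{thm:MI-rows}, nonsingularity of the Meixner-I-like limit from Proposition~\ref{prop:MI-normality}, and continuity of matrix inversion applied to the right-hand side \((0,\ldots,0,1)^{\mathsf T}\). Your additional check that the moment solution actually decomposes \(\check{\mathcal B}_{\mm,T}\) is something the paper's proof leaves implicit; this is the degree-\(\le n\) polynomial argument via \eqref{eq:R-pairing} and \eqref{eq:R-decay}, modeled on Theorem~\ref{thm:K-to-CI}, and it makes your version slightly more complete than the written proof.
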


\begin{proof}
Expand
\(\check B_{j,T}\) in the falling-factorial basis.  Its coefficient
vector solves the type-I moment system whose entries are finite linear
combinations of moments of the reflected weights \(\check v_{j,T}\) and
whose right-hand side is \((0,\ldots,0,1)^{\mathsf T}\).  The
normalization above is exactly the one computed in the proof of
Theorem~\ref{thm:MI-B}.  By Theorem~\ref{thm:MI-rows}, every entry of
this moment matrix converges to the corresponding Meixner-I-like entry.
The limiting matrix is nonsingular by
\eqref{eq:MI-connection-determinant}; hence the finite matrices are
nonsingular for all sufficiently large \(T\), and continuity of matrix
inversion gives convergence of their coefficient vectors.  The limiting
vector is the unique solution of
\eqref{eq:MI-component-identity}, namely the polynomials
\eqref{eq:MI-B-components}.
\end{proof}

Although Corollary~\ref{cor:Hahn-MI-component-limit} gives componentwise
convergence, the individual Hahn blocks in \eqref{eq:Hahn-B-KdF-sum} cannot be
confluent term by term inside their defining double series: after
reflection, powers of \(T\) cancel between its finitely many terms.
Instead of concealing this cancellation in a formal finite-part
operation, the next subsection performs the limiting triangular inversion
explicitly.  Proposition~\ref{prop:MI-sector-blocks} below gives the
resulting finite Lauricella--Horn sector sums and proves the confluence of each
grouped Hahn block.

For \(q=1\), \eqref{eq:MI-A-coefficient} is the monic classical Meixner
polynomial and the sole type-I polynomial obtained from
\eqref{eq:MI-component-identity} is the corresponding degree-\(n\)
Meixner polynomial. Thus both reflected forms reduce exactly to the scalar
Hahn--Meixner limit.

\subsection{Explicit Meixner-I-like type-I components}
We first fix the terminating Lauricella--Horn coefficients used throughout
the reflected discrete and continuous component formulas. Let
\(\mathcal I\) be a finite index set, let
\(\alpha,x_0\in\mathbb C\), and let
\(\boldsymbol d=(d_g)_{g\in\mathcal I}\) and
\(\boldsymbol x=(x_g)_{g\in\mathcal I}\) belong to
\(\mathbb C^{\mathcal I}\). For \(L\in\mathbb Z\), define
\begin{equation}
 \mathscr L_L(\alpha;\boldsymbol d;x_0,\boldsymbol x)
 \coloneq
 \begin{cases}
 \displaystyle
 \sum_{\substack{t_0\in\Nzero,\ t_g\in\Nzero\ (g\in\mathcal I)\\
                  t_0+\sum_{g\in\mathcal I}t_g=L}}
 \frac{(-\alpha)_{t_0}}{t_0!}x_0^{t_0}
 \prod_{g\in\mathcal I}\frac{(d_g)_{t_g}}{t_g!}x_g^{t_g},
 &L\ge0,\\[3mm]
 0,&L<0,
 \end{cases}
 \label{eq:typeI-reflected-L}
\end{equation}
and
\begin{equation}
 \mathscr H_L(\boldsymbol d;\boldsymbol x)
 \coloneq[w^L]\prod_{g\in\mathcal I}(1-x_gw)^{-d_g}
 =\sum_{\substack{t_g\in\Nzero\ (g\in\mathcal I)\\
                   \sum_{g\in\mathcal I}t_g=L}}
 \prod_{g\in\mathcal I}\frac{(d_g)_{t_g}}{t_g!}x_g^{t_g},
 \qquad \mathscr H_L\coloneq0\quad(L<0).
 \label{eq:typeI-reflected-H}
\end{equation}
Equivalently,
\(\mathscr L_L=[w^L](1-x_0w)^\alpha
\prod_{g\in\mathcal I}(1-x_gw)^{-d_g}\). Both sums are finite.

Put
\[
 g_h(z)\coloneq R_h(z)=\frac{1-c_h}{1-c_hz},
 \qquad h\in\{1,\ldots,q\}.
\]
For every \(j\in\{1,\ldots,q\}\), let
\(G_j^{\mathrm{MI}}(z)=\prod_{h=1}^q
g_h(z)^{\delta_h+\delta_{h,j}}\),
and, for a near-diagonal \(\boldsymbol m\) with
\(\abs{\boldsymbol m}\ge1\), put
\[
 n=\sum_{j=1}^q m_j-1,\qquad
 \mathcal I_{\boldsymbol m}\coloneq\{g\in\{1,\ldots,q\}:m_g>0\},\qquad
 H_{\boldsymbol m}^{\mathrm{MI}}(z)
 =\prod_{h=1}^qg_h(z)^{\delta_h+m_h}.
\]
For \(j\in\{1,\ldots,q\}\) with \(m_j>0\) and
\(r\in\{0,\ldots,m_j-1\}\), multiplication of the \(j\)-th weight by
\(r!\binom{k}{r}\) corresponds to
\[
 \Phi_{j,r}^{\mathrm{MI}}(z)
 \coloneq\frac{z^r(G_j^{\mathrm{MI}})^{(r)}(z)}
 {H_{\boldsymbol m}^{\mathrm{MI}}(z)},
\]
Thus the required coefficients are characterized by
\begin{equation}
 \sum_{j=1}^q\sum_{r=0}^{m_j-1}
 b_{j,r}^{\mathrm{MI}}\Phi_{j,r}^{\mathrm{MI}}(z)
 =\frac{(z-1)^n}{n!}.
 \label{eq:typeI-MI-component-identity}
\end{equation}

Set \(\mathcal D_{\boldsymbol m}(z)=\prod_{g=1}^q(1-c_gz)^{m_g}\) and
\(K_{\mm}^{\mathrm{MI}}=\prod_{g=1}^q(1-c_g)^{-m_g}\).
For \(h\in\{1,\ldots,q\}\) and \(s\in\Nzero\) with \(s<m_h\), define
\begin{equation}
 \Pi_{h,s}^{\mathrm{MI}}
 \coloneq
 \frac{(1-c_h)^n}{n!c_h^n}
 \prod_{\substack{g\in\mathcal I_{\boldsymbol m}\\g\ne h}}
 \left(\frac{c_h-c_g}{c_h}\right)^{-m_g}
 \mathscr L_{m_h-s-1}
 \left(
 n;(m_g)_{\substack{g\in\mathcal I_{\boldsymbol m}\\g\ne h}};
 \frac1{1-c_h},
 \left(-\frac{c_g}{c_h-c_g}\right)_{
       \substack{g\in\mathcal I_{\boldsymbol m}\\g\ne h}}
 \right).
 \label{eq:typeI-MI-Pi}
\end{equation}
These are the coefficients of
\((1-c_hz)^{-s-1}\) in
\((z-1)^n/[n!\mathcal D_{\boldsymbol m}(z)]\).

For \(j\in\{1,\ldots,q\}\) with \(m_j>0\),
\(r\in\{0,\ldots,m_j-1\}\), and
\(\boldsymbol\alpha\in\mathbb N_0^{\mathcal I_{\boldsymbol m}}\) with
\(\sum_{g\in\mathcal I_{\boldsymbol m}}\alpha_g=r\), write
\[
 d_g(\boldsymbol\alpha,j)=\alpha_g+\delta_{g,j},\qquad
 W_{j,r}^{\mathrm{MI}}(\boldsymbol\alpha)
 =r!\prod_{g\in\mathcal I_{\boldsymbol m}}
 \frac{(\delta_g+\delta_{g,j})_{\alpha_g}}{\alpha_g!}
 c_g^{\alpha_g}.
\]
For \(h,j\in\{1,\ldots,q\}\) with \(m_hm_j>0\),
\(s\in\{0,\ldots,m_h-1\}\), and
\(r\in\{0,\ldots,m_j-1\}\), let \(a=(h,s)\), \(b=(j,r)\), and define
\begin{multline}
 Z_{a,b}^{\mathrm{MI}}
 \coloneq
 K_{\mm}^{\mathrm{MI}}(1-c_j)c_h^{-r}
 \sum_{\substack{\boldsymbol\alpha\in\mathbb N_0^{\mathcal I_{\boldsymbol m}}\\
 \sum_{g\in\mathcal I_{\boldsymbol m}}\alpha_g=r\\
 d_h(\boldsymbol\alpha,j)\ge s+1}}
 W_{j,r}^{\mathrm{MI}}(\boldsymbol\alpha)
 \prod_{\substack{g\in\mathcal I_{\boldsymbol m}\\g\ne h}}
 \left(\frac{c_h-c_g}{c_h}\right)^{-d_g(\boldsymbol\alpha,j)}
 \\
 {}\times
 \mathscr L_{d_h(\boldsymbol\alpha,j)-s-1}
 \left(
 r;(d_g(\boldsymbol\alpha,j))_{
      \substack{g\in\mathcal I_{\boldsymbol m}\\g\ne h}};
 1,
 \left(-\frac{c_g}{c_h-c_g}\right)_{
       \substack{g\in\mathcal I_{\boldsymbol m}\\g\ne h}}
 \right).
 \label{eq:typeI-MI-Z}
\end{multline}
In particular,
\begin{equation}
 \Delta_{h,s}^{\mathrm{MI}}
 \coloneq Z_{(h,s),(h,s)}^{\mathrm{MI}}
 =K_{\mm}^{\mathrm{MI}}(1-c_h)(\delta_h+1)_s.
 \label{eq:typeI-MI-Delta}
\end{equation}

Let
\[
 \mathscr P_{\boldsymbol m}^{\mathrm{MI}}
 =\{(h,s):h\in\{1,\ldots,q\},\ s\in\Nzero,\ s<m_h\},
 \qquad
 (h,s)\prec(j,r)\Longleftrightarrow s<r.
\]
For \(a\in\mathscr P_{\boldsymbol m}^{\mathrm{MI}}\), set
\begin{equation}
 \mathfrak C_a^{\mathrm{MI}}
 \coloneq
 \sum_{p\ge0}(-1)^p
 \sum_{\substack{a=a_0\prec a_1\prec\cdots\prec a_p\\
                  a_v\in\mathscr P_{\boldsymbol m}^{\mathrm{MI}}}}
 \frac{\Pi_{a_p}^{\mathrm{MI}}}
 {\Delta_{a_p}^{\mathrm{MI}}}
 \prod_{v=0}^{p-1}
 \frac{Z_{a_v,a_{v+1}}^{\mathrm{MI}}}
 {\Delta_{a_v}^{\mathrm{MI}}}.
 \label{eq:typeI-MI-finite-sum}
\end{equation}
The sum is finite because the second coordinate of the labels increases
strictly.

Set
\begin{equation}
 B_j^{\mathrm{MI}}(k)
 =\sum_{r=0}^{m_j-1}
 \mathfrak C_{(j,r)}^{\mathrm{MI}}r!\binom{k}{r},
 \qquad j\in\{1,\ldots,q\}.
 \label{eq:typeI-MI-components}
\end{equation}

\begin{theorem}[Meixner-I-like type-I components and normality]
\label{thm:explicit-MI-B}
Let \(\boldsymbol m\) be near the diagonal with
\(\abs{\boldsymbol m}\ge1\), and set \(n=\abs{\boldsymbol m}-1\). Assume
\(0<c_j<1\) and \(\delta_j>0\) for every \(j\in\{1,\ldots,q\}\), and assume that
the \(c_j\) with \(m_j>0\) are pairwise distinct. Then
\(\deg B_j^{\mathrm{MI}}<m_j\) for every
\(j\in\{1,\ldots,q\}\) with \(m_j>0\). Moreover, for every
\(r\in\{0,\ldots,n\}\),
\begin{equation}
 \sum_{k\ge0} r!\binom{k}{r}
 \sum_{j=1}^q B_j^{\mathrm{MI}}(k)v_j^{\mathrm{MI}}(k)
 =\begin{cases}0,&0\le r<n,\\1,&r=n.\end{cases}
 \label{eq:typeI-MI-component-moments}
\end{equation}
The multi-index \(\boldsymbol m\) is normal, and
\((B_1^{\mathrm{MI}},\ldots,B_q^{\mathrm{MI}})\) is the unique tuple of
polynomials with the stated degree bounds satisfying
\eqref{eq:typeI-MI-component-moments}.
\end{theorem}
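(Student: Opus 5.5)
The plan is to reduce the moment conditions to the polynomial identity \eqref{eq:typeI-MI-component-identity} and then solve that identity by partial fractions in the variable \(z\). Multiplying the \(j\)-th weight by \(r!\binom kr=\fall kr\) gives the generating function \(z^r(G_j^{\mathrm{MI}})^{(r)}(z)\). Hence the generating function of \(\sum_jB_j^{\mathrm{MI}}v_j^{\mathrm{MI}}\) is \(H_{\mm}^{\mathrm{MI}}(z)\sum_{j,r}b_{j,r}\Phi_{j,r}^{\mathrm{MI}}(z)\). If \eqref{eq:typeI-MI-component-identity} holds, this equals \(\mathcal H_{\mm}^{\mathrm{MI}}\) of Theorem~\ref{thm:MI-B}. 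Its derivatives at \(z=1\) then give \eqref{eq:typeI-MI-component-moments}, because \(H_{\mm}^{\mathrm{MI}}(1)=1\). Uniqueness and normality come directly from Proposition~\ref{prop:MI-normality}, since \(\det C^{\mathrm{MI}}\ne0\) under the stated hypotheses. The degree bound \(\deg B_j^{\mathrm{MI}}<m_j\) is built into \eqref{eq:typeI-MI-components}. So the real content is the assertion that the coefficients \(b_{j,r}=\mathfrak C_{(j,r)}^{\mathrm{MI}}\) solve \eqref{eq:typeI-MI-component-identity}.

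To prove this, I divide both sides of \eqref{eq:typeI-MI-component-identity} by \(K_{\mm}^{\mathrm{MI}}\mathcal D_{\mm}(z)\) and work in the basis \(Q_{h,s}=(1-c_hz)^{-s-1}\), together with polynomials. Since \(\deg\le n<D\), the left side \((z-1)^n/[n!\mathcal D_{\mm}]\) is a proper rational function. Expanding it at each pole \(z=c_h^{-1}\) gives coefficients \(\Pi_{h,s}^{\mathrm{MI}}\) for \((1-c_hz)^{-s-1}\); I will verify this by writing \(z-1=-(1-c_h^{-1})\ldots\) and substituting \(w=1-c_hz\). The factor \((z-1)^n\) then yields the \((1-x_0w)^\alpha\) piece with \(\alpha=n\), \(x_0=1/(1-c_h)\), while each other factor \((1-c_gz)^{-m_g}\) yields \((c_h-c_g)^{-m_g}c_h^{m_g}(1+\tfrac{c_g}{c_h-c_g}w)^{-m_g}\). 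Reading off \([w^{m_h-s-1}]\) reproduces \eqref{eq:typeI-MI-Pi}.

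Next I compute each \(\Phi_{j,r}^{\mathrm{MI}}/(K_{\mm}^{\mathrm{MI}}\mathcal D_{\mm})\). By Leibniz's rule applied to \(G_j^{\mathrm{MI}}=\prod_g g_g^{\delta_g+\delta_{g,j}}\), the \(r\)-th derivative is a sum over \(\boldsymbol\alpha\) with \(|\boldsymbol\alpha|=r\) of \(W_{j,r}^{\mathrm{MI}}(\boldsymbol\alpha)\prod_g(1-c_gz)^{-\alpha_g}\). Dividing by \(H_{\mm}^{\mathrm{MI}}\mathcal D_{\mm}\) leaves the product \(z^r\prod_g(1-c_gz)^{-d_g(\boldsymbol\alpha,j)}\), up to the factor \((1-c_j)\). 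The same local expansion at \(c_h^{-1}\), with \(z^r=c_h^{-r}(1-w)^r\), gives \(\alpha=r\), \(x_0=1\), and so the coefficient of \(Q_{h,s}\) is exactly \(Z^{\mathrm{MI}}_{(h,s),(j,r)}\). The condition \(d_h\ge s+1\) records the pole order. Every such term is again proper because \(r\le n<D\) after the division.

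From the proof of Proposition~\ref{prop:MI-normality}, the matrix \(Z_{a,b}\) is triangular for the order \(\prec\): its \((h,s)\) entry vanishes unless \(s\le r\), and when \(s=r\) it vanishes unless \(h=j\). The diagonal entry is \(\Delta_{h,s}^{\mathrm{MI}}\), as in \eqref{eq:typeI-MI-Delta}, since the only contributing \(\boldsymbol\alpha\) is \(r\ee_j\). Matching the coefficients of \(Q_{a}\) gives the system \(\sum_bZ_{a,b}b_b=\Pi_a\). Solving it by descending \(s\) gives \(b_a=(\Pi_a-\sum_{a\prec b}Z_{a,b}b_b)/\Delta_a\), and unrolling this recursion yields the chain sum \eqref{eq:typeI-MI-finite-sum}, by the same grouping argument used for \eqref{eq:unreflected-infinity-chain}.

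I expect the main obstacle to be this triangularity claim together with the diagonal evaluation, that is, checking that \(Z_{(h,s),(j,r)}=0\) when \(s=r\) and \(h\ne j\), and when \(s>r\). Here \(d_h=\alpha_h+\delta_{h,j}\le r+\delta_{h,j}\), so \(d_h\ge s+1\) forces \(s\le r\), with equality only for \(h=j\) and \(\alpha=r\ee_j\). In that diagonal case \(\mathscr L_0=1\), and the product prefactors must combine to \((\delta_h+1)_s\). This requires a careful check of the \(c_h^{-r}\), \(W\) and \((c_h-c_g)\) powers.
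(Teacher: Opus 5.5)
Your proposal is correct and follows essentially the paper's proof. You expand $\Phi_{j,r}^{\mathrm{MI}}/\mathcal D_{\mm}$ by Leibniz's rule, expand locally at $z=(1-w)/c_h$ to read off $Z^{\mathrm{MI}}$, $\Delta^{\mathrm{MI}}$ and $\Pi^{\mathrm{MI}}$, use triangularity in the second label, and match the principal parts of two rational functions that vanish at infinity. You then take the moments from Theorem~\ref{thm:MI-B}. The only differences are that you unroll the back-substitution rather than verifying the chain sum by separating its first step, and you take uniqueness and normality from Proposition~\ref{prop:MI-normality} rather than from the nonzero pivots.

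Two small bookkeeping points. To match the paper's constants literally, divide by $\mathcal D_{\mm}$ only, because $Z^{\mathrm{MI}}$ and $\Delta^{\mathrm{MI}}$ already contain $K_{\mm}^{\mathrm{MI}}$. Also note that near-diagonality forces $r=0$ whenever some $m_g=0$; this is what justifies restricting the Leibniz sums to $\mathcal I_{\boldsymbol m}$.
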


By \eqref{eq:typeI-MI-finite-sum}, every coefficient in
\eqref{eq:typeI-MI-components} is a finite sum of products of terminating
Lauricella polynomials.

\begin{proof}
Put
\[
 f(z)=\prod_{g=1}^q(1-c_gz)^{-\delta_g},\qquad
 P_{j,r}(z)=z^rf(z)^{-1}
 \left(\frac{\mathrm d}{\mathrm dz}\right)^r
 \frac{f(z)}{1-c_jz}.
\]
The multinomial Leibniz formula gives
\[
 P_{j,r}(z)
 =z^r\sum_{\substack{\boldsymbol\alpha\in
                      \mathbb N_0^{\mathcal I_{\boldsymbol m}}\\
                      \sum_{g\in\mathcal I_{\boldsymbol m}}\alpha_g=r}}
 W_{j,r}^{\mathrm{MI}}(\boldsymbol\alpha)
 \prod_{g\in\mathcal I_{\boldsymbol m}}
 (1-c_gz)^{-d_g(\boldsymbol\alpha,j)}
\]
where the indexed sums and products may be restricted to
\(\mathcal I_{\boldsymbol m}\). If an entry of
\(\boldsymbol m\) is zero, near-diagonality gives
\(m_g\in\{0,1\}\) for every \(g\in\{1,\ldots,q\}\), so only \(r=0\)
occurs; otherwise
\(\mathcal I_{\boldsymbol m}=\{1,\ldots,q\}\). Also,
\(\Phi_{j,r}^{\mathrm{MI}}(z)/\mathcal D_{\boldsymbol m}(z)
=K_{\mm}^{\mathrm{MI}}(1-c_j)P_{j,r}(z)\).
At \(z=(1-w)/c_h\), expansion of the factors analytic at \(w=0\)
gives \eqref{eq:typeI-MI-Z}. It also gives
\(Z_{(h,s),(j,r)}^{\mathrm{MI}}=0\) unless
\((h,s)=(j,r)\) or \(s<r\),
and its diagonal term is \eqref{eq:typeI-MI-Delta}.  The same expansion
of \((z-1)^n/[n!\mathcal D_{\boldsymbol m}(z)]\) gives
\eqref{eq:typeI-MI-Pi}.

Separating the \(p=0\) term in \eqref{eq:typeI-MI-finite-sum} and then
the first label after \(a\) gives
\[
 \Delta_a^{\mathrm{MI}}\mathfrak C_a^{\mathrm{MI}}
 +\sum_{a\prec b}Z_{a,b}^{\mathrm{MI}}
 \mathfrak C_b^{\mathrm{MI}}
 =\Pi_a^{\mathrm{MI}}.
\]
Therefore the two rational functions obtained from the two sides of
\eqref{eq:typeI-MI-component-identity} after division by
\(\mathcal D_{\boldsymbol m}\) have the same principal part at every
pole.  Both vanish at infinity, so they coincide.  This proves
\eqref{eq:typeI-MI-component-identity} and
\eqref{eq:typeI-MI-components}.  The nonzero diagonal terms give
uniqueness. The degree bounds follow from
\eqref{eq:typeI-MI-components}, and Theorem~\ref{thm:MI-B} gives
\eqref{eq:typeI-MI-component-moments}.
\end{proof}

\subsection{Direct pole-sector blocks and the Hahn confluence}
\label{subsec:MI-direct-sector-blocks}

The finite inversion above admits a decomposition that preserves the
\(J\)-indexed grouping of the Hahn Kamp\'e de F\'eriet formula. Put
\[
 \mathcal J_{\mm}^{\mathrm{MI}}
 \coloneq\{J\in\{1,\ldots,q\}:m_J>0\}.
\]
For \(a\in\mathscr P_{\boldsymbol m}^{\mathrm{MI}}\) and
\(J\in\mathcal J_{\mm}^{\mathrm{MI}}\), define
\begin{equation}
 \mathfrak C_{a;J}^{\mathrm{MI}}
 \coloneq
 \sum_{p\ge0}(-1)^p
 \sum_{\substack{a=a_0\prec a_1\prec\cdots\prec a_p\\
                  a_v\in\mathscr P_{\boldsymbol m}^{\mathrm{MI}},\
                  a_p=(J,K)\ {\rm for\ some}\ 0\le K<m_J}}
 \frac{\Pi_{a_p}^{\mathrm{MI}}}
      {\Delta_{a_p}^{\mathrm{MI}}}
 \prod_{v=0}^{p-1}
 \frac{Z_{a_v,a_{v+1}}^{\mathrm{MI}}}
      {\Delta_{a_v}^{\mathrm{MI}}}.
 \label{eq:MI-sector-path-coefficients}
\end{equation}
For active \(j,J\), set
\begin{equation}
 \mathscr M_{j,J}^{\mathrm{MI}}(k)
 \coloneq\sum_{r=0}^{m_j-1}
 \mathfrak C_{(j,r);J}^{\mathrm{MI}}r!\binom{k}{r}.
 \label{eq:MI-direct-sector-block}
\end{equation}

\begin{proposition}[Direct Meixner-I pole blocks and blockwise Hahn limit]
\label{prop:MI-sector-blocks}
Under the hypotheses of Theorem~\ref{thm:explicit-MI-B}, every sum in
\eqref{eq:MI-sector-path-coefficients} is finite,
\(\deg\mathscr M_{j,J}^{\mathrm{MI}}<m_j\), and
\begin{equation}
 \sum_{j=1}^q\sum_{r=0}^{m_j-1}
 \mathfrak C_{(j,r);J}^{\mathrm{MI}}
 \Phi_{j,r}^{\mathrm{MI}}(z)
 =\mathcal D_{\boldsymbol m}(z)
 \sum_{s=0}^{m_J-1}
 \frac{\Pi_{J,s}^{\mathrm{MI}}}{(1-c_Jz)^{s+1}}.
 \label{eq:MI-direct-sector-identity}
\end{equation}
Moreover,
\begin{equation}
 B_j^{\mathrm{MI}}(k)
 =\sum_{J\in\mathcal J_{\mm}^{\mathrm{MI}}}
 \mathscr M_{j,J}^{\mathrm{MI}}(k).
 \label{eq:MI-direct-sector-decomposition}
\end{equation}
Moreover, use the Hahn scaling \eqref{eq:MI-scaling} and set a missing
block \(S_{j,J}^{(N)}\) equal to zero when
\(m_J<\varepsilon_{j,J}+1\). For every pair of active indices \(j,J\),
\begin{equation}
 -\frac{S_{j,J}^{(N)}(N-k)}
 {\kappa_{\mm,N}n!\,\beta_j}
 \xrightarrow[T\to\infty]{}
 \mathscr M_{j,J}^{\mathrm{MI}}(k)
 \quad\hbox{coefficientwise in }k.
 \label{eq:Hahn-MI-blockwise-limit}
\end{equation}
Thus a grouped terminating Hahn Kamp\'e de F\'eriet block has an
ordinary, directly evaluated limit. No finite-part prescription is
involved. Its individual double-series summands need not have limits;
their cancellation is performed by the finite triangular inversion
\eqref{eq:MI-sector-path-coefficients}.
\end{proposition}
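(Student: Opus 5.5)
The algebraic half of the statement runs parallel to the proof of Theorem~\ref{thm:explicit-MI-B}. Finiteness of \eqref{eq:MI-sector-path-coefficients} is immediate: along a chain \(a_0\prec\cdots\prec a_p\) the second coordinates strictly increase and are bounded by \(\max_hm_h-1\). The degree bound \(\deg\mathscr M_{j,J}^{\mathrm{MI}}<m_j\) holds because \eqref{eq:MI-direct-sector-block} only uses \(r<m_j\).

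For \eqref{eq:MI-direct-sector-identity} I will split off the \(p=0\) term and then the first label after \(a\), exactly as for \(\mathfrak C_a^{\mathrm{MI}}\). This gives the restricted triangular system
\[
 \Delta_a^{\mathrm{MI}}\mathfrak C_{a;J}^{\mathrm{MI}}
 +\sum_{a\prec b}Z_{a,b}^{\mathrm{MI}}\mathfrak C_{b;J}^{\mathrm{MI}}
 =\Pi_a^{\mathrm{MI}}\,\mathbf 1_{\{a=(J,K)\text{ for some }K\}} .
\]
The proof of Theorem~\ref{thm:explicit-MI-B} identifies \(Z_{(h,s),(j,r)}^{\mathrm{MI}}\) as the coefficient of \((1-c_hz)^{-s-1}\) in \(\Phi_{j,r}^{\mathrm{MI}}/\mathcal D_{\mm}\). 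Hence this system says that, after division by \(\mathcal D_{\mm}\), the left side of \eqref{eq:MI-direct-sector-identity} has the principal part \(\sum_s\Pi_{J,s}^{\mathrm{MI}}(1-c_Jz)^{-s-1}\) at \(c_J^{-1}\) and zero principal part at every \(c_h^{-1}\) with \(h\ne J\). Both sides are rational and vanish at infinity, so they coincide.

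For \eqref{eq:MI-direct-sector-decomposition}, every chain in \eqref{eq:typeI-MI-finite-sum} ends at a unique label \(a_p=(J,K)\) with \(J\in\mathcal J_{\mm}^{\mathrm{MI}}\). Therefore \(\mathfrak C_a^{\mathrm{MI}}=\sum_J\mathfrak C_{a;J}^{\mathrm{MI}}\), and \eqref{eq:typeI-MI-components} gives the decomposition.

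For the blockwise Hahn limit \eqref{eq:Hahn-MI-blockwise-limit} I would not pass to the limit inside the double series. Instead I would characterize both sides as solutions of nonsingular moment systems and use continuity of inversion, as in Corollary~\ref{cor:Hahn-MI-component-limit}.

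First, by Corollary~\ref{cor:Hahn-B-KdF} the grouped block is \(S_{j,J}^{(N)}=\sum_K\pi_{J,K}\mathcal C_{j,J,K}^{(N)}\). By the Cauchy-transform pairing \eqref{eq:unreflected-Hahn-pairing}, which holds for arbitrary Hahn parameters, the vector \((S_{j,J}^{(N)})_j\) paired with \(\widetilde v_{j,N}\) is therefore an exact finite signed sequence. Its normalized factorial moments are \(h_N(r)\sum_K\pi_{J,K}/(r+\beta_J+K)\), which is the \(J\)-pole part of the target \eqref{eq:Hahn-complete-B-moments}.

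Second, a sequence with factorial moments \(h_N(r)/(r+\xi)\) is, up to the factor \(\xi\), the Bernstein transform \eqref{eq:Bernstein-lift} with \(\beta_J+1\) replaced by \(\xi+1\). Reflecting it and applying the same substitution \(y_h=1-u_h/T\) used in Theorems~\ref{thm:MI-rows} and~\ref{thm:MI-B} should give locally uniform convergence of the reflected generating functions near \(z=1\). Here \(\beta_J+K=T\rho_J+\delta_J+K\), so the \(J\)-th factor becomes \(R_J(z)^{\delta_J+K+1}\).

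Third, after the normalization by \(-1/(\kappa_{\mm,N}n!)\) and the factor \(1/\beta_j\) converting to the original weights, these limits have to be assembled into the generating function \(f(z)\,\mathcal D_{\mm}(z)^{-1}\) times the right side of \eqref{eq:MI-direct-sector-identity}, suitably normalized. This is what the left side of \eqref{eq:MI-direct-sector-identity} encodes, via \(\Phi_{j,r}^{\mathrm{MI}}/\mathcal D_{\mm}=K_{\mm}^{\mathrm{MI}}(1-c_j)P_{j,r}\).

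Finally, the reflected block tuple solves the type-I moment system with the now-known right-hand side of orders \(0,\ldots,n\). This holds because it lies in the component span by construction, unlike an arbitrary sequence. The finite matrices converge to the nonsingular Meixner-I-like matrix of Proposition~\ref{prop:MI-normality}, so the solutions converge to the unique solution of the limiting system. By \eqref{eq:MI-direct-sector-identity} that solution is \((\mathscr M_{j,J}^{\mathrm{MI}})_j\).

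The main obstacle is the third step. The individual weights \(\pi_{J,K}\) grow like powers of \(T\): their denominators contain \(\beta_h-\beta_J-K\approx T(\rho_h-\rho_J)\), and the numerators contain \((\beta_J+K)_n\). Only the grouped sum over \(K\) has a limit, after the normalization by \(\kappa_{\mm,N}\). I would first try to avoid computing this limit term by term. The idea is to write the whole \(J\)-pole part as a contour integral of the exact target rational function \(-(-r)_n/\prod_h(r+\beta_h)_{m_h}\) around the cluster \(\{-\beta_J-K\}\). The reflected generating function would then be this contour integral applied to the Bernstein kernel, whose limit is taken jointly. After the rescaling \(r\mapsto Ts\), that cluster collapses to \(s=-\rho_J\), and the contour limit should produce exactly the principal part at \(c_J^{-1}\) defining \(\Pi_{J,s}^{\mathrm{MI}}\).
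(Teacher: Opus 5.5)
\paragraph{The algebraic part and the overall plan are right.} Your treatment of finiteness, the restricted triangular system, the principal-part comparison and the partition of paths by terminal sector is exactly the paper's argument; there the source \(\Pi_a^{\mathrm{MI}}\) survives only on labels with first coordinate \(J\). Your overall plan for \eqref{eq:Hahn-MI-blockwise-limit} also matches the paper. It uses exact sector moments from the Cauchy-transform pairing, the reflected moment matrix converging to the nonsingular Meixner-I-like matrix, continuity of inversion, and a contour projection onto the collapsing cluster \(\{-\beta_J-K\}\).

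\paragraph{The second step is false, and your source computation rests on it.} A sequence with factorial moments \(h_N(r)/(r+\xi)\) is \emph{not}, up to the factor \(\xi\), the transform \eqref{eq:Bernstein-lift} with \(\beta_J+1\) replaced by \(\xi+1\). That transform has factorial moments \(\fall{N}{r}(\A)_r/[(\xi+1)_r\prod_{h\ne J}(\beta_h)_r]\), whereas \(\xi h_N(r)/(r+\xi)=\fall{N}{r}(\A)_r(\xi)_r/[(\bbeta)_r(\xi+1)_r]\). Their ratio \((\xi)_r/(\beta_J)_r\) equals one only when \(K=0\), so the factor \(R_J(z)^{\delta_J+K+1}\) is wrong.

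The correct realization keeps the \(q\) kernels \(b_{A_h,\beta_h-A_h}\) and adjoins an independent factor \(t^{\xi-1}\dd t\), whose Mellin moments are \(1/(r+\xi)\). After \(t=1-w/T\), its reflected generating function multiplied by \(T\) tends to \(\prod_{h=1}^qR_h(z)^{\delta_h}/(\rho_J+\nu(1-z))\), \emph{independently of \(K\)}. All \(K\)-dependence, and with it every higher pole order \((1-c_Jz)^{-s-1}\) with \(s\ge1\), lives in \(1/T\)-corrections. These corrections are amplified by the divergent weights \(\pi_{J,K}\sim T^{m_J-1}\). A contour integral against your kernel therefore cannot produce the target sector, and your third step (``assembled \ldots\ suitably normalized'') has no correct input.

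\paragraph{The repair is short.} You can keep your contour idea with the corrected kernel, which is analytic for \(\operatorname{Re}\xi>0\); this even yields convergence of the reflected block generating functions. More economically, as the paper does, project the rational moment function rather than a kernel, and use only the \(n+1\) moments that enter the finite system.

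Put \(Q_T(x)=-T(-Tx)_n/\prod_{h=1}^q(Tx+\beta_h)_{m_h}\), and let \(Q_{J,T}(x)=\sum_K\pi_{J,K}/(x+(\beta_J+K)/T)\) be its \(J\)-th partial-fraction group. Since \(\fall{N-x}{p}=\sum_u(-1)^u\binom pu\fall{N-u}{p-u}\fall{x}{u}\) and \(\fall{N-u}{p-u}\fall{N}{u}=\fall{N}{p}\), the reflected normalized \(J\)-block has moments
\[
 (y_{J,T})_p=-\frac{\fall{N}{p}}{\kappa_{\mm,N}\,n!\,T}
 \sum_{u=0}^p(-1)^u\binom pu\frac{(\A)_u}{(\bbeta)_u}\,Q_{J,T}(u/T),
 \qquad 0\le p\le n.
\]
Cauchy's formula on a fixed circle around \(-\rho_J\) gives \(Q_{J,T}\to Q_J\) with all derivatives near \(x=0\). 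Here \(Q_J\) is the \(J\)-th principal part of \(-(-x)^n/\prod_h(x+\rho_h)^{m_h}\).

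The sum is a \(p\)-th difference with step \(1/T\) of a function converging in \(C^p\) to \(\prod_h(1+x/\rho_h)^{-\delta_h}Q_J(x)\); extend \((\A)_u/(\bbeta)_u\) by gamma quotients. Meanwhile \(\fall{N}{p}/T^p\to\nu^p\) and \(T\kappa_{\mm,N}\to\nu^n\prod_h\rho_h^{-m_h}\). With \(x=\nu(1-z)\), the limit is the \(p\)-th derivative at \(z=1\) of \(H_{\mm}^{\mathrm{MI}}(z)\mathcal D_{\mm}(z)\sum_s\Pi_{J,s}^{\mathrm{MI}}(1-c_Jz)^{-s-1}\). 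This is exactly the source your final continuity-of-inversion step needs.
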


\begin{proof}
Every path has strictly increasing second coordinate. Hence it is finite,
and separation of the first edge of a path gives
\[
 \Delta_a^{\mathrm{MI}}\mathfrak C_{a;J}^{\mathrm{MI}}
 +\sum_{a\prec b}Z_{a,b}^{\mathrm{MI}}
  \mathfrak C_{b;J}^{\mathrm{MI}}
 =\mathbf1_{\{\text{the first coordinate of }a\text{ is }J\}}
  \Pi_a^{\mathrm{MI}}.
\]
These are exactly the equations obtained by comparing all principal
parts in \eqref{eq:MI-direct-sector-identity} after division by
\(\mathcal D_{\boldsymbol m}\). The difference of the two sides has no
poles and vanishes at infinity, proving that identity. Every path in
\eqref{eq:typeI-MI-finite-sum} has a unique terminal label \((J,K)\).
Partitioning those paths according to \(J\) gives
\(\mathfrak C_a^{\mathrm{MI}}
=\sum_J\mathfrak C_{a;J}^{\mathrm{MI}}\), which proves
\eqref{eq:MI-direct-sector-decomposition}.

For the limit, keep the \(J\)-indexed partial-fraction group separate in
the proof of Corollary~\ref{cor:Hahn-B-KdF}. Put
\[
 Q_T(x)\coloneq
 -T\frac{(-Tx)_n}
 {\prod_{h=1}^q(Tx+T\rho_h+\delta_h)_{m_h}}.
\]
Its Hahn partial-fraction decomposition is
\[
 Q_T(x)=\sum_{J\in\mathcal J_{\mm}^{\mathrm{MI}}}Q_{J,T}(x),
 \qquad
 Q_{J,T}(x)=\sum_{K=0}^{m_J-1}
 \frac{\pi_{J,K}}
 {x+\rho_J+(\delta_J+K)/T},
\]
and, locally away from the limiting poles,
\[
 Q_T(x)\longrightarrow
 Q(x)\coloneq-\frac{(-x)^n}
 {\prod_{h=1}^q(x+\rho_h)^{m_h}}.
\]
Because the active rates are distinct, choose disjoint contours
\(\Gamma_J\) around \(-\rho_J\). For \(x\) outside \(\Gamma_J\),
\[
 Q_{J,T}(x)=\frac1{2\pi\mathrm i}
 \int_{\Gamma_J}\frac{Q_T(\zeta)}{x-\zeta}\dd\zeta .
\]
Uniform convergence on \(\Gamma_J\) proves that each \(Q_{J,T}\)
converges separately to the \(J\)-th principal-part group of \(Q\).
This is the cancellation mechanism among the finitely many \(K\)-terms
inside one Hahn KdF block; no cancellation between different \(J\)'s is
needed.

To identify the limiting source explicitly, put \(s=\nu(1-z)\),
\[
 C_\nu\coloneq
 \frac{\prod_{h=1}^q(\nu+\rho_h)^{m_h}}{n!\nu^n},
 \qquad
 Q(s)=\sum_{J\in\mathcal J_{\mm}^{\mathrm{MI}}}Q_J(s),
 \qquad
 Q_J(s)=\sum_{r=0}^{m_J-1}
 \frac{\Lambda_{J,r}}{(s+\rho_J)^{r+1}},
\]
where the last two identities define the unique partial-fraction
coefficients \(\Lambda_{J,r}\). Explicitly,
\[
 \Lambda_{J,r}=\frac1{(m_J-r-1)!}
 \left.\frac{\mathrm d^{m_J-r-1}}{\mathrm ds^{m_J-r-1}}
 \left[-\frac{(-s)^n}
 {\displaystyle\prod_{h\ne J}(s+\rho_h)^{m_h}}\right]
 \right|_{s=-\rho_J}.
\]
They are the coefficients denoted by \(\Pi_{J,r}\) in
\eqref{eq:LII-component-partial-fractions} below.
Since
\(1-c_Jz=(s+\rho_J)/(\nu+\rho_J)\), one has exactly
\begin{align*}
 \frac{(z-1)^n}{n!\mathcal D_{\boldsymbol m}(z)}
 &=-C_\nu Q(s),\\
 \sum_{r=0}^{m_J-1}
 \frac{\Pi_{J,r}^{\mathrm{MI}}}{(1-c_Jz)^{r+1}}
 &=-C_\nu Q_J(s),
 \qquad
 \Pi_{J,r}^{\mathrm{MI}}
 =-\frac{C_\nu\Lambda_{J,r}}{(\nu+\rho_J)^{r+1}}.
\end{align*}
Thus the contour projection is precisely the target sector on the
right-hand side of \eqref{eq:MI-direct-sector-identity}.

More explicitly, let
\[
 \mathcal B_{J,N}^{\mathrm H}(x)
 \coloneq\sum_{j:m_j>0}S_{j,J}^{(N)}(x)\widetilde v_{j,N}(x),
 \qquad
 \check{\mathcal B}_{J,T}(k)
 \coloneq-\frac{\mathcal B_{J,N}^{\mathrm H}(N-k)}
 {\kappa_{\mm,N}n!},
\]
and write
\(\check S_{j,J,T}(k)=-S_{j,J}^{(N)}(N-k)/
(\kappa_{\mm,N}n!\beta_j)\) in the falling-factorial basis. Its
coefficient vector solves
\[
 M_T\check{\boldsymbol b}^{(J,T)}=\boldsymbol y_{J,T},
 \qquad
 (M_T)_{p,(j,r)}
 =\sum_{k=0}^N\fall{k}{p}\fall{k}{r}\check v_{j,T}(k),
 \quad 0\le p\le n,
\]
where
\((y_{J,T})_p=\sum_{k=0}^N\fall{k}{p}
\check{\mathcal B}_{J,T}(k)\). Thus \(M_T\) is the ordinary reflected
moment matrix, without an implicit change of basis. By
\eqref{eq:falling-linearization}, each entry is a fixed finite linear
combination of factorial moments of \(\check v_{j,T}\); hence
Theorem~\ref{thm:MI-rows} gives \(M_T\to M_{\mathrm{MI}}\) entrywise.
The limiting matrix is nonsingular by
Proposition~\ref{prop:MI-normality}.

For completeness, let \(a_{p,u}^{(N)}\) be the unique coefficients in
\[
 \fall{N-x}{p}=\sum_{u=0}^p a_{p,u}^{(N)}\fall{x}{u},
 \qquad
 a_{p,u}^{(N)}=(-1)^u\binom pu\fall{N-u}{p-u}.
\]
The explicit value follows by applying \(u\) forward differences at
\(x=0\).
The construction of the \(J\)-th Hahn block in
Corollary~\ref{cor:Hahn-B-KdF} gives the exact sector moments
\[
 \sum_{x=0}^N\fall{x}{u}\mathcal B_{J,N}^{\mathrm H}(x)
 =h_N(u)\sum_{K=0}^{m_J-1}
 \frac{\pi_{J,K}}{u+\beta_J+K}
 =\frac{h_N(u)}{T}Q_{J,T}(u/T).
\]
Consequently
\[
 (y_{J,T})_p=-\frac1{\kappa_{\mm,N}n!T}
 \sum_{u=0}^p a_{p,u}^{(N)}h_N(u)Q_{J,T}(u/T).
\]
Only the fixed indices \(u\le p\le n\) occur. The contour convergence
\(Q_{J,T}\to Q_J\), together with the exact formulas for \(h_N\) and
\(\kappa_{\mm,N}\) and the identities above with \(s=\nu(1-z)\), gives
\[
 (y_{J,T})_p\longrightarrow
 \left.\frac{\mathrm d^p}{\mathrm dz^p}\right|_{z=1}
 \left[H_{\boldsymbol m}^{\mathrm{MI}}(z)
 \mathcal D_{\boldsymbol m}(z)
 \sum_{r=0}^{m_J-1}
 \frac{\Pi_{J,r}^{\mathrm{MI}}}{(1-c_Jz)^{r+1}}\right].
\]
This is exactly the source vector of
\eqref{eq:MI-direct-sector-identity}. Continuity of the inverse of the
full matrix \(M_T\) now proves \eqref{eq:Hahn-MI-blockwise-limit}, and
uniqueness identifies its limit with
\eqref{eq:MI-sector-path-coefficients}. Finally, the factor
\(1/\beta_j\) is forced by the exact identity
\(\widetilde v_{j,N}=v_{j,N}/\beta_j\).
\end{proof}

\subsection{The direct Laguerre-II limit}

Dilating the reflected lattice gives its continuous limit. We track the
weights, the polynomial \(P\), and the signed sequence satisfying the
type-I moment conditions.

For the same fixed near-diagonal \(\mm\), let \(\nu\to\infty\) and put
\(\ell=\lfloor\nu x\rfloor\). For \(d>0\), Stirling's formula gives
\begin{equation}
 \nu\,r_{d,\,\nu/(\nu+\rho)}(\lfloor\nu x\rfloor)
 \xrightarrow[\nu\to\infty]{}g_{\rho,d}(x),\qquad x>0,
 \label{eq:Meixner-kernel-gamma-limit}
\end{equation}
locally uniformly on compact subsets of \((0,\infty)\). Passing from the
one-factor limit \eqref{eq:Meixner-kernel-gamma-limit} to a convolution
requires a uniform estimate near the boundary. Let \(d_h,\rho_h>0\) for
every \(h\in\{1,\ldots,q\}\), put
\(c_{h,\nu}=\nu/(\nu+\rho_h)\) and
\(d_*\coloneq\sum_{h=1}^q d_h\), and set
\(s_\nu=r_{d_1,c_{1,\nu}}*\cdots*r_{d_q,c_{q,\nu}}\) and
\(w=g_{\rho_1,d_1}*\cdots*g_{\rho_q,d_q}\),
where \(w\) is extended by zero to \((-\infty,0]\).

\begin{lemma}[Local limit for lattice gamma convolutions]
\label{lem:lattice-gamma-convolution}
If \(d_*>1\), then
\(\nu s_\nu(\lfloor\nu x\rfloor)\xrightarrow[\nu\to\infty]{}w(x)\)
uniformly for \(x\in\R\).
\end{lemma}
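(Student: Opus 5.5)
The plan is to reduce the statement to a local central-limit-type argument, handled through generating functions and an \(L^1\)/sup-norm comparison. The sequences \(s_\nu\) are the probability mass functions of \(S_\nu=X_{1,\nu}+\cdots+X_{q,\nu}\), where the \(X_{h,\nu}\) are independent negative binomials with generating functions \(R_{h,\nu}(z)^{d_h}\) and \(R_{h,\nu}(z)=(1-c_{h,\nu})/(1-c_{h,\nu}z)\). Likewise, \(w\) is the density of a sum of independent gamma variables.

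The first step is weak convergence. At \(z=\e^{\mathrm it/\nu}\) we have \(R_{h,\nu}(\e^{\mathrm it/\nu})=\rho_h/(\rho_h+\nu(1-\e^{\mathrm it/\nu}))\to\rho_h/(\rho_h-\mathrm it)\). Hence the characteristic function of \(S_\nu/\nu\) converges pointwise to \(\widehat w(t)=\prod_h(1-\mathrm it/\rho_h)^{-d_h}\).

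For the local limit I would use Fourier inversion on the lattice:
\[
 \nu s_\nu(k)=\frac{1}{2\pi}\int_{-\pi\nu}^{\pi\nu}\e^{-\mathrm itk/\nu}\prod_h R_{h,\nu}(\e^{\mathrm it/\nu})^{d_h}\dd t .
\]
I would compare this with \(w(x)=\frac1{2\pi}\int_{\R}\e^{-\mathrm itx}\widehat w(t)\dd t\), which is valid because \(|\widehat w(t)|\asymp|t|^{-d_*}\) is integrable exactly when \(d_*>1\). This is where the hypothesis enters, and it also makes \(w\) continuous on \(\R\) and zero at the origin, so uniformity on all of \(\R\) is plausible. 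The difference between \(\e^{-\mathrm itk/\nu}\) and \(\e^{-\mathrm itx}\) with \(k=\lfloor\nu x\rfloor\) is controlled by \(|t|/\nu\). On \(|t|\le M\) this gives uniform convergence in \(x\) by dominated convergence together with the pointwise convergence above.

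The main obstacle is a \(\nu\)-uniform integrable majorant on \(|t|\le\pi\nu\). Put \(\theta=t/\nu\in[-\pi,\pi]\). Then
\[
 |1-c\e^{\mathrm i\theta}|^2=(1-c)^2+4c\sin^2(\theta/2)\ge(1-c)^2+4c\,\theta^2/\pi^2 .
\]
With \(1-c_{h,\nu}=\rho_h/(\nu+\rho_h)\) this gives
\[
 |R_{h,\nu}(\e^{\mathrm it/\nu})|\le\Bigl(1+\kappa\,t^2/\rho_h^2\Bigr)^{-1/2}
\]
for an absolute \(\kappa>0\) and \(\nu\ge\nu_0\). Therefore the integrand is bounded by \(C\prod_h(1+\kappa t^2/\rho_h^2)^{-d_h/2}\lesssim(1+|t|)^{-d_*}\), which is integrable, uniformly in \(\nu\) and \(x\). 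The tails \(|t|>M\) of both integrals are then uniformly small, and the portion of the \(w\)-integral with \(|t|>\pi\nu\) vanishes as \(\nu\to\infty\).

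Combining the compact part with the tail estimates gives \(\sup_{x\in\R}|\nu s_\nu(\lfloor\nu x\rfloor)-w(x)|\to0\). For \(x<0\) both sides are zero, and this is consistent with the Fourier formula.
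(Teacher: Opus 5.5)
Your proposal is correct and follows essentially the paper's route: Fourier inversion on the lattice, the same $\nu$-uniform integrable majorant $C(1+|t|)^{-d_*}$ on $|t|\le\pi\nu$ obtained from $\sin^2(\theta/2)\ge\theta^2/\pi^2$, and dominated convergence. The only cosmetic difference is in handling the floor. The paper first gets $L^1$ convergence of the transforms and then uses uniform continuity of $w$ to replace $\lfloor\nu x\rfloor/\nu$ by $x$. You instead split the $t$-integral at $M$ and bound the phase error on $|t|\le M$ by $|t|/\nu$.
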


\begin{proof}
Extend by zero outside \([-\pi\nu,\pi\nu]\) the function
\[
 Q_\nu(t)=
 \prod_{h=1}^q
 \left(\frac{\rho_h}
 {\rho_h+\nu(1-\e^{it/\nu})}\right)^{d_h},
 \qquad
 Q(t)=\prod_{h=1}^q
 \left(\frac{\rho_h}{\rho_h-it}\right)^{d_h}.
\]
For \(\abs t\le\pi\nu\),
\[
 \left|\rho_h+\nu(1-\e^{it/\nu})\right|^2
 =\rho_h^2+2\nu(\rho_h+\nu)(1-\cos(t/\nu))
 \ge \rho_h^2+\frac{4t^2}{\pi^2}.
\]
Consequently \(\abs{Q_\nu(t)}\le
C(1+\abs t)^{-d_*}\). Since \(d_*>1\), dominated convergence gives
\(Q_\nu\xrightarrow[\nu\to\infty]{}Q\) in \(L^1(\R)\). Fourier inversion gives
\[
 \nu s_\nu(\lfloor\nu x\rfloor)
 =\frac1{2\pi}\int_{\R}
 \e^{-it\lfloor\nu x\rfloor/\nu}Q_\nu(t)\dd t,
\]
whereas the inverse Fourier transform of \(Q\) is \(w\). The
\(L^1\)-convergence makes the inverse transforms converge uniformly. Since
\(Q\in L^1(\R)\), \(w\) is uniformly continuous, and
\(\abs{\lfloor\nu x\rfloor/\nu-x}\le\nu^{-1}\); the stated uniform limit
follows.
\end{proof}

For \(j\in\{1,\ldots,q\}\), take
\(d_h=\delta_h+\delta_{h,j}\). Then
\(d_*=1+\sum_h\delta_h>1\), so the lemma gives the Laguerre-II-like
convolution
\begin{equation}
 w_j^{\mathrm{LII}}
 =g_{\rho_1,\delta_1}*\cdots *
 g_{\rho_j,\delta_j+1}*\cdots *
 g_{\rho_q,\delta_q}.
 \label{eq:LII-rows}
\end{equation}
Here the densities are extended by zero to the negative half-line, so
\((f*g)(x)=\int_0^xf(x-y)g(y)\dd y\).
More explicitly,
\begin{equation}
 \nu\,v_j^{\mathrm{MI}}(\lfloor\nu x\rfloor)
 \xrightarrow[\nu\to\infty]{}w_j^{\mathrm{LII}}(x),
 \label{eq:MI-LII-row-limit}
\end{equation}
The weight limit \eqref{eq:MI-LII-row-limit} is locally uniform, in fact
uniform on \(\R\). The lemma also applies to
the normalized coefficient sequence used below: for
\(d_h=\delta_h+m_h\), one has
\(d_*=\sum_h\delta_h+D>1\). The corresponding
Laplace-transform calculation is
\[
 \sum_{k\ge0}r_{d,\nu/(\nu+\rho)}(k)\e^{-sk/\nu}
 =\left(
 \frac{\rho}{\rho+\nu(1-\e^{-s/\nu})}
 \right)^d
 \xrightarrow[\nu\to\infty]{}\left(\frac{\rho}{\rho+s}\right)^d.
\]
The convergence is locally uniform for complex \(s\) in a fixed
neighbourhood of zero; differentiation there gives convergence of every
fixed moment of the convolutions.
For \(h\in\{1,\ldots,q\}\), put
\(\Lambda_h=\delta_h+m_h\). The monic polynomials
\(x\mapsto\nu^{-D}P_{\mm}^{\mathrm{MI}}(\nu x)\) converge
coefficientwise to the terminating \(q\)-variable polynomial
\(\mathfrak K_q\) defined in \eqref{eq:multiple-KdF-simple}:
\begin{equation}
 P_{\mm}^{\mathrm{LII}}(x)
 =x^D\mathfrak K_q
 \left(
 -D;-\Lambda_1,\ldots,-\Lambda_q;
 -\frac1{\rho_1x},\ldots,-\frac1{\rho_qx}
 \right).
 \label{eq:LII-A}
\end{equation}
The expression is a polynomial despite the displayed negative powers of
\(x\). An equivalent formula without apparent singularities is
\begin{equation}
 P_{\mm}^{\mathrm{LII}}(x)
 =D!\sum_{\substack{\ell,r_1,\ldots,r_q\in\Nzero\\
                     \ell+r_1+\cdots+r_q=D}}
 \frac{x^\ell}{\ell!}
 \prod_{h=1}^q
 \frac{(-\Lambda_h)_{r_h}}{r_h!\rho_h^{r_h}}.
 \label{eq:LII-A-finite-sum}
\end{equation}
For later limit arguments, the same polynomial has the Rodrigues-type
identity
\begin{equation}
 P_{\mm}^{\mathrm{LII}}(x)
 =\frac{(-1)^D}{F_{\mm}(0)}
 \left.\frac{\dd^D}{\dd s^D}
 \bigl[F_{\mm}(s)\e^{-sx}\bigr]\right|_{s=0}.
 \label{eq:LII-A-Rodrigues}
\end{equation}
Indeed, expanding
\(F_{\mm}(s)/F_{\mm}(0)=
\prod_h(1+s/\rho_h)^{\Lambda_h}\) and \(\e^{-sx}\) in
\eqref{eq:LII-A-Rodrigues} gives
\eqref{eq:LII-A-finite-sum}; factoring out \(x^D\) gives
\eqref{eq:LII-A}.
Consequently,
\(\nu^{-D}P_{\mm}^{\mathrm{MI}}(\lfloor\nu x\rfloor)
\xrightarrow[\nu\to\infty]{}P_{\mm}^{\mathrm{LII}}(x)\) pointwise.
The limiting polynomial is monic of degree \(D\) and, for every
\(j\in\{1,\ldots,q\}\) with \(m_j>0\), satisfies
\begin{equation}
 \int_0^\infty x^rP_{\mm}^{\mathrm{LII}}(x)
 w_j^{\mathrm{LII}}(x)\dd x=0,
 \qquad r\in\Nzero,\quad r<m_j.
 \label{eq:LII-A-orthogonality}
\end{equation}
Passing to the fixed moments in the Meixner-I-like orthogonality relations
proves \eqref{eq:LII-A-orthogonality}.
If \(s_{\mm}^{\mathrm{LII}}\) is the convolution obtained from
\eqref{eq:LII-rows} by replacing each shape \(\delta_h+\delta_{h,j}\)
with \(\Lambda_h=\delta_h+m_h\), then the normalized Laguerre-II-like
type-I linear form is the single confluent Lauricella expression
\begin{equation}
 \mathcal B_{\mm}^{\mathrm{LII}}(x)
 =\frac{(-1)^n\prod_{h=1}^q\rho_h^{\Lambda_h}}
 {n!\,\Gamma(\Lambda-n)}
 x^{\Lambda-n-1}
 \Phi_2^{(q)}
 \left(
 \Lambda_1,\ldots,\Lambda_q;\Lambda-n;
 -\rho_1x,\ldots,-\rho_qx
 \right),
 \qquad \Lambda\coloneq\sum_{h=1}^q\Lambda_h.
 \label{eq:LII-B}
\end{equation}
Equivalently, it satisfies the Rodrigues formula
\begin{equation}
 \mathcal B_{\mm}^{\mathrm{LII}}(x)
 =\frac{(-1)^n}{n!}\frac{\dd^n}{\dd x^n}
 s_{\mm}^{\mathrm{LII}}(x).
 \label{eq:LII-B-Rodrigues}
\end{equation}
The Dirichlet-simplex formula first writes the convolution as
\[
 s_{\mm}^{\mathrm{LII}}(x)
 =\frac{\prod_h\rho_h^{\Lambda_h}}
 {\Gamma(\Lambda)}x^{\Lambda-1}
 \Phi_2^{(q)}
 (\Lambda_1,\ldots,\Lambda_q;\Lambda;
 -\rho_1x,\ldots,-\rho_qx).
\]
Termwise differentiation of this entire series proves the equivalence of
\eqref{eq:LII-B} and \eqref{eq:LII-B-Rodrigues}.
Since \(s_{\mm}^{\mathrm{LII}}\) is a probability density, integration by
parts in \eqref{eq:LII-B-Rodrigues} gives the normalized moments
\begin{equation}
 \int_0^\infty x^r\mathcal B_{\mm}^{\mathrm{LII}}(x)\dd x
 =\delta_{r,n},\qquad r\in\{0,\ldots,n\}.
 \label{eq:LII-B-moments}
\end{equation}
The normalization in \eqref{eq:LII-B-moments} requires a scaling factor
under the lattice dilation. More precisely, put
\begin{equation}
 \mu_{\nu}^{\mathrm{MI}}
 \coloneq\nu^n\sum_{\ell\ge0}
 \mathcal B_{\mm}^{\mathrm{MI}}(\ell)\,\delta_{\ell/\nu}.
 \label{eq:MI-LII-signed-measures}
\end{equation}
Then the measures \eqref{eq:MI-LII-signed-measures}, as \(\nu\to\infty\),
\begin{equation}
 \mu_{\nu}^{\mathrm{MI}}\xrightarrow[\nu\to\infty]{}
 \mathcal B_{\mm}^{\mathrm{LII}}(x)\dd x
 \label{eq:MI-LII-B-limit}
\end{equation}
against every test function in \(C_c^\infty(\R)\), and in every fixed
moment.

Indeed, let \(s_{\mm,\nu}^{\mathrm{MI}}\) denote the normalized nonnegative
coefficient sequence in \eqref{eq:MI-B-seed}.
Lemma~\ref{lem:lattice-gamma-convolution}, applied with
\(d_h=\delta_h+m_h\), gives
\(\nu s_{\mm,\nu}^{\mathrm{MI}}(\lfloor\nu x\rfloor)
\xrightarrow[\nu\to\infty]{}s_{\mm}^{\mathrm{LII}}(x)\) uniformly; in particular, the associated
dilated measures converge to
\(s_{\mm}^{\mathrm{LII}}(x)\dd x\). For every
\(\varphi\in C_c^\infty(\mathbb R)\), summation by parts gives
\[
 \left\langle\mu_{\nu}^{\mathrm{MI}},\varphi\right\rangle
 =\frac1{n!}\sum_{\ell\ge0}s_{\mm,\nu}^{\mathrm{MI}}(\ell)
 \nu^n\Delta_{1/\nu}^n\varphi(\ell/\nu).
\]
\(\nu^n\Delta_{1/\nu}^n\varphi
\xrightarrow[\nu\to\infty]{}\varphi^{(n)}\) uniformly. This proves
\eqref{eq:MI-LII-B-limit}; convergence of fixed moments follows from the
same identity applied to polynomials and the exact factorial moments of
\(r_{d,c}\). The factor \(\nu^n\) is also forced by the normalization, since
the discrete signed sequence has moment of order \(n\) equal to one.

The individual Laguerre-II-like type-I polynomials also have a finite
hypergeometric construction. The coefficients below use the terminating
Lauricella--Horn sums \eqref{eq:typeI-reflected-L}--
\eqref{eq:typeI-reflected-H}, and the corresponding weighted linear forms
are confluent Lauricella functions.

Fix a near-diagonal multi-index \(\mm\), put
\(D\coloneq\abs\mm\ge1\), \(n\coloneq D-1\), and
\(\mathcal I_{\mm}\coloneq\{h\in\{1,\ldots,q\}:m_h>0\}\), and suppose that the rates
\(\rho_h\), \(h\in\mathcal I_{\mm}\), are
pairwise distinct. Define the coefficients
\(\Pi_{J,K}\) by the partial-fraction identity
\begin{equation}
 -\frac{(-z)^n}{\prod_{h=1}^q(z+\rho_h)^{m_h}}
 =\sum_{J\in\mathcal I_{\mm}}
  \sum_{K=0}^{m_J-1}\frac{\Pi_{J,K}}{(z+\rho_J)^{K+1}}.
 \label{eq:LII-component-partial-fractions}
\end{equation}
Equivalently,
\begin{equation}
 \Pi_{J,K}
 =-\frac{\rho_J^n}
 {\prod_{\substack{h\in\mathcal I_{\mm}\\h\ne J}}
  (\rho_h-\rho_J)^{m_h}}
 \mathscr L_{m_J-K-1}
 \left(
 n;(m_h)_{\substack{h\in\mathcal I_{\mm}\\h\ne J}};\rho_J^{-1},
 \bigl(-(\rho_h-\rho_J)^{-1}\bigr)_{
       \substack{h\in\mathcal I_{\mm}\\h\ne J}}
 \right).
 \label{eq:LII-component-Pi}
\end{equation}
For \(j\in\mathcal I_{\mm}\) and \(r\in\{0,\ldots,m_j-1\}\), define
the rational function
\begin{equation}
\Psi_{j,r}^{\mathrm{LII}}(z)
 \coloneq r!\sum_{\substack{\boldsymbol s\in
                     \mathbb N_0^{\mathcal I_{\mm}}\\
                     \sum_{g\in\mathcal I_{\mm}}s_g=r}}
 \prod_{g\in\mathcal I_{\mm}}
 \frac{(\delta_g+\delta_{j,g})_{s_g}}{s_g!}
 (z+\rho_g)^{-s_g-\delta_{j,g}}.
\label{eq:LII-component-Psi}
\end{equation}
For every \(J\in\mathcal I_{\mm}\) and
\(K\in\{0,\ldots,m_J-1\}\), we seek coefficients satisfying
\begin{equation}
 \sum_{j\in\mathcal I_{\mm}}
 \sum_{r=0}^{\min\{K,m_j-1\}}
 \mathfrak u_{j,r}^{J,K}\Psi_{j,r}^{\mathrm{LII}}(z)
 =\frac1{(z+\rho_J)^{K+1}}.
 \label{eq:LII-component-target}
\end{equation}
The following finite formulas give the triangular inverse explicitly.
Let \(\mathscr P_{\mm}^{\mathrm{LII}}
\coloneq\{(j,r):j\in\mathcal I_{\mm},\
r\in\{0,\ldots,m_j-1\}\}\).
For \(a=(h,\ell),b=(j,r)\in\mathscr P_{\mm}^{\mathrm{LII}}\), put
\(a\prec b\) when \(\ell<r\), and set
\begin{equation}
 \Delta_{h,\ell}^{\mathrm{LII}}\coloneq(\delta_h+1)_\ell .
 \label{eq:LII-component-pivots}
\end{equation}
With \(d_g\coloneq s_g+\delta_{j,g}\) in the summand below, define
\begin{equation}
\begin{split}
 Z_{h,\ell,j,r}^{\mathrm{LII}}
 \coloneq{}&r!\!\!
 \sum_{\substack{\boldsymbol s\in
                  \mathbb N_0^{\mathcal I_{\mm}}\\
                  \sum_{g\in\mathcal I_{\mm}}s_g=r,\ d_h\ge\ell+1}}
 \prod_{g\in\mathcal I_{\mm}}
 \frac{(\delta_g+\delta_{j,g})_{s_g}}{s_g!}
 \prod_{\substack{g\in\mathcal I_{\mm}\\g\ne h}}
 (\rho_g-\rho_h)^{-d_g}
 \\
 &\times
 \mathscr H_{d_h-\ell-1}
 \left(
 (d_g)_{\substack{g\in\mathcal I_{\mm}\\g\ne h}};
 \bigl(-(\rho_g-\rho_h)^{-1}\bigr)_{
       \substack{g\in\mathcal I_{\mm}\\g\ne h}}
 \right).
\end{split}
 \label{eq:LII-component-connection}
\end{equation}
Empty products and empty parameter strings are equal to one. Moreover,
\(Z_{h,\ell,j,\ell}^{\mathrm{LII}}
=\delta_{h,j}\Delta_{h,\ell}^{\mathrm{LII}}\).

For every \(J\in\mathcal I_{\mm}\) and
\(K\in\{0,\ldots,m_J-1\}\), let
\[
 \mathscr P_{\boldsymbol m,K}^{\mathrm{LII}}
 \coloneq\{(j,r):j\in\{1,\ldots,q\},\ r\in\Nzero,\
 r\le\min\{K,m_j-1\}\}.
\]
For \(a=(j,r)\in\mathscr P_{\boldsymbol m,K}^{\mathrm{LII}}\), define
\begin{equation}
 \mathfrak u_{j,r}^{J,K}
 \coloneq\sum_{p=0}^{K-r}(-1)^p
 \sum_{\substack{
 a=a_0\prec a_1\prec\cdots\prec a_p=(J,K)\\
 a_v\in\mathscr P_{\boldsymbol m,K}^{\mathrm{LII}}}}
 \frac1{\Delta_{J,K}^{\mathrm{LII}}}
 \prod_{v=0}^{p-1}
 \frac{Z_{a_v,a_{v+1}}^{\mathrm{LII}}}
 {\Delta_{a_v}^{\mathrm{LII}}},
 \label{eq:LII-component-chain}
\end{equation}
where, for \(a=(h,\ell)\) and \(b=(j,r)\),
\(Z_{a,b}^{\mathrm{LII}}\coloneq Z_{h,\ell,j,r}^{\mathrm{LII}}\) and
\(\Delta_a^{\mathrm{LII}}\coloneq\Delta_{h,\ell}^{\mathrm{LII}}\).
If no such index sequence exists, the sum is zero. For every
\(J\in\mathcal I_{\mm}\), \(K\in\Nzero\) with \(K<m_J\), and
\(j\in\{1,\ldots,q\}\), define
\begin{equation}
 \mathcal C_{j,J,K}^{\mathrm{LII}}(x)
 \coloneq\sum_{r=0}^{\min\{K,m_j-1\}}
 \mathfrak u_{j,r}^{J,K}x^r.
 \label{eq:LII-component-blocks}
\end{equation}
The expressions in \eqref{eq:LII-component-blocks} are finite
polynomials. For every \(j\in\{1,\ldots,q\}\), define
\[
 \widetilde w_j\coloneq
 \frac{w_j^{\mathrm{LII}}}{\rho_j\prod_{h=1}^q\rho_h^{\delta_h}}.
\]
With
\(\delta_{\mathrm{tot}}=\sum_h\delta_h\), put
\(\gamma(z)\coloneq\prod_h(z+\rho_h)^{-\delta_h}\) and
\(Q_{\mm}^{\mathrm{LII}}(z)\coloneq\prod_h(z+\rho_h)^{m_h}\).

\begin{proposition}[Explicit Laguerre-II-like type-I polynomials]
\label{prop:LII-explicit-components}
Let \(\mm\) be a near-diagonal multi-index with
\(D=\abs\mm\ge1\) and \(n=D-1\). Assume
\(\rho_h,\delta_h>0\) for every \(h\in\{1,\ldots,q\}\), that the rates
\(\rho_h\), \(h\in\mathcal I_{\mm}\), are pairwise distinct, and use the
quantities defined in \eqref{eq:LII-component-partial-fractions}--
\eqref{eq:LII-component-blocks}.
\begin{enumerate}[label=\textnormal{(\roman*)}]
\item For every \(J\in\mathcal I_{\mm}\) and
\(K\in\{0,\ldots,m_J-1\}\), the polynomials
\(\mathcal C_{j,J,K}^{\mathrm{LII}}\) satisfy
\begin{equation}
 \sum_{j=1}^q\mathcal C_{j,J,K}^{\mathrm{LII}}(x)\widetilde w_j(x)
 =\frac{x^{\delta_{\mathrm{tot}}+K}}
 {\Gamma(\delta_{\mathrm{tot}}+K+1)}
 \Phi_2^{(q)}
 \left(
 \boldsymbol\delta+(K+1)\ee_J;\delta_{\mathrm{tot}}+K+1;
 -\rho_1x,\ldots,-\rho_qx
 \right).
 \label{eq:LII-component-Phi2-block}
\end{equation}
\item For every \(j\in\{1,\ldots,q\}\) with \(m_j>0\), the normalized type-I polynomial
\(B_j^{\mathrm{LII}}\) in
\(\mathcal B_{\mm}^{\mathrm{LII}}
=\sum_{h=1}^qB_h^{\mathrm{LII}}w_h^{\mathrm{LII}}\) is
\begin{equation}
 B_j^{\mathrm{LII}}(x)
 =-\frac{\prod_{h=1}^q\rho_h^{m_h}}{n!\,\rho_j}
 \sum_{J\in\mathcal I_{\mm}}
 \sum_{K=0}^{m_J-1}\Pi_{J,K}
 \mathcal C_{j,J,K}^{\mathrm{LII}}(x).
 \label{eq:LII-explicit-components}
\end{equation}
For every \(J\in\mathcal I_{\mm}\) and
\(K\in\{0,\ldots,m_J-1\}\),
\(\deg \mathcal C_{J,J,K}^{\mathrm{LII}}=K\), while
\(\deg \mathcal C_{j,J,K}^{\mathrm{LII}}\le K-1\) for
\(j\in\{1,\ldots,q\}\setminus\{J\}\). Moreover,
\(\deg B_j^{\mathrm{LII}}<m_j\) for every
\(j\in\mathcal I_{\mm}\), and the representation is unique.

\item With columns ordered lexicographically by \((j,r)\), the coefficient
matrix in \(1,z,\ldots,z^n\) of the \(D\) polynomials
\[
 Q_{\mm}^{\mathrm{LII}}(z)\gamma(z)^{-1}(-\partial_z)^r
 \left[\frac{\gamma(z)}{z+\rho_j}\right],
 \qquad j\in\mathcal I_{\mm},\quad
 r\in\{0,\ldots,m_j-1\},
\]
has determinant
\begin{equation}
 (-1)^{\sum_j\binom{m_j}{2}}
 \left[\prod_{j=1}^q\prod_{r=0}^{m_j-1}(\delta_j+1)_r\right]
 \prod_{1\le j<h\le q}
 (\rho_h-\rho_j)^{m_jm_h},
 \label{eq:LII-component-determinant}
\end{equation}
which is nonzero under the stated hypothesis.
\end{enumerate}
\end{proposition}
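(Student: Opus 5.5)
The plan is to work on the Laplace side. There everything becomes rational in \(z\) and the problem reduces to the same partial-fraction and principal-part bookkeeping used for the Meixner-I-like case in Theorem~\ref{thm:explicit-MI-B}.

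\emph{Laplace transforms.} The Laplace transform of \(g_{\rho,d}\) is \(\rho^d(z+\rho)^{-d}\). Hence \(\widetilde w_j\) has transform \(\gamma(z)/(z+\rho_j)\), up to the normalization built into \(\widetilde w_j\).

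\emph{Effect of \(x^r\).} Multiplying by \(x^r\) acts as \((-\partial_z)^r\) on the transform. The multinomial Leibniz rule applied to \(\prod_g(z+\rho_g)^{-\delta_g-\delta_{j,g}}\) gives
\[
 (-\partial_z)^r\bigl[\gamma(z)/(z+\rho_j)\bigr]=\gamma(z)\,\Psi_{j,r}^{\mathrm{LII}}(z).
\]
Only indices in \(\mathcal I_{\mm}\) are needed. When some \(m_h=0\), near-diagonality forces \(r=0\), exactly as in the Meixner-I proof.

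\emph{Part (i).} It is enough to verify the target identity \eqref{eq:LII-component-target}. Both sides are rational functions that vanish at infinity, so equality reduces to equality of principal parts at each \(-\rho_h\).
\begin{itemize}
\item Expanding \(\Psi_{j,r}\) at \(z=-\rho_h+w\) gives the coefficient of \(w^{-\ell-1}\) as \(Z_{h,\ell,j,r}^{\mathrm{LII}}\). The \(\mathscr H\)-sum comes from expanding the other factors \((\rho_g-\rho_h+w)^{-d_g}\).
\item The principal part of \(\Psi_{j,r}\) has order at most \(r+\delta_{h,j}\) at \(-\rho_h\). The top coefficient \(\delta_{h,j}(\delta_h+1)_r\) gives the diagonal identity \(Z_{h,\ell,j,\ell}=\delta_{h,j}\Delta_{h,\ell}\).
\end{itemize}
Matching principal parts is therefore a triangular system in the second coordinate. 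Peeling off the first edge of each path in \eqref{eq:LII-component-chain}, as in Proposition~\ref{prop:MI-sector-blocks}, gives
\[
 \Delta_a\mathfrak u_a+\sum_{a\prec b}Z_{a,b}\mathfrak u_b=\delta_{a,(J,K)}.
\]
This proves the target identity. Inverting the Laplace transform of \(\gamma(z)(z+\rho_J)^{-K-1}\), a product of shifted gamma transforms, via the Dirichlet-simplex formula already used for \(s_{\mm}^{\mathrm{LII}}\), yields \eqref{eq:LII-component-Phi2-block}.

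\emph{Part (ii).} The Laplace transform of \eqref{eq:LII-B-Rodrigues} is \(s^n\prod_h\rho_h^{\Lambda_h}(s+\rho_h)^{-\Lambda_h}/n!\). Dividing by \(\gamma\prod_h\rho_h^{\delta_h}\) leaves
\[
 \frac{\prod_h\rho_h^{m_h}}{n!}\,\frac{s^n}{\prod_h(s+\rho_h)^{m_h}}.
\]
Its expansion \eqref{eq:LII-component-partial-fractions}, combined linearly with part (i), gives \eqref{eq:LII-explicit-components}. The sign comes from \(-(-s)^n=(-1)^{n+1}s^n\), and the factor \(1/\rho_j\) from the normalization of \(\widetilde w_j\); this sign and scale bookkeeping needs checking. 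The degree bounds follow from the index ranges, because \(\mathfrak u_{j,r}^{J,K}\) with \(r=K\) is nonzero only for \(j=J\), where it equals \(1/\Delta_{J,K}\neq 0\). Near-diagonality gives \(K-1\le m_j-1\) in the off-diagonal case. Uniqueness follows from part (iii).

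\emph{Part (iii), the main obstacle.} The plan is to follow Proposition~\ref{prop:MI-normality} under the change of variable \(x_j\leftrightarrow-\rho_j\). The key steps are:
\begin{itemize}
\item Write the \(D\) polynomials as \(Q_{\mm}^{\mathrm{LII}}\Psi_{j,r}\).
\item Show that \(\Psi_{j,r}-(\delta_j+1)_r(z+\rho_j)^{-r-1}\) lies in the span of lower-order fractions \((z+\rho_h)^{-s-1}\) with \(s<r\). This is the triangularity by derivative order established in part (i).
\item Observe that \(Q_{\mm}^{\mathrm{LII}}(z)(z+\rho_j)^{-r-1}=(z+\rho_j)^{m_j-r-1}\prod_{h\ne j}(z+\rho_h)^{m_h}\). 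This is the reversed analogue of \(E_{j,r}\) in Lemma~\ref{lem:confluent-polynomial-blocks}, with the variable replaced by its reciprocal.
\end{itemize}
The determinant of the reversed family is a confluent Vandermonde type result. It can be obtained either by repeating that lemma's proof with evaluation functionals at \(-\rho_h\) (block-diagonal, with triangular blocks) or by substituting \(z\mapsto 1/z\) and reversing rows. The careful point is the sign \((-1)^{\sum\binom{m_j}{2}}\), which comes from the block-internal reversals. Multiplying by the diagonal factors \(\prod_{j,r}(\delta_j+1)_r\) then gives \eqref{eq:LII-component-determinant}.
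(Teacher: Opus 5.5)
Your plan follows the paper's proof step for step. It works on the Laplace side with \(\widetilde w_j\mapsto\gamma(z)/(z+\rho_j)\), uses the Leibniz expansion to \(\Psi_{j,r}^{\mathrm{LII}}\), and solves the principal-part triangular system by first-edge peeling. Part (i) then follows from the Dirichlet-simplex inversion. For (iii) you make the triangular change of basis to \(Q_{\mm}^{\mathrm{LII}}(z)/(z+\rho_j)^{r+1}\) and apply the evaluation functionals \(p\mapsto p^{(s)}(-\rho_j)/s!\) against the confluent Vandermonde, exactly as the paper does.

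The only slip is the one you flagged in (ii). Since \(\Lambda-1>n\), the boundary terms vanish, and the transform of \eqref{eq:LII-B-Rodrigues} is \((-s)^n\prod_h\rho_h^{\Lambda_h}(s+\rho_h)^{-\Lambda_h}/n!\), not \(s^n(\cdots)/n!\). With that factor, \((-s)^n/Q_{\mm}^{\mathrm{LII}}(s)=-\sum_{J,K}\Pi_{J,K}(s+\rho_J)^{-K-1}\) gives exactly the leading minus in \eqref{eq:LII-explicit-components}. Your displayed transform would leave an extra factor \((-1)^n\).
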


\begin{proof}
The Laplace transform of \(\widetilde w_j\) is
\(\int_0^\infty\e^{-zx}\widetilde w_j(x)\dd x
=\frac1{z+\rho_j}\prod_{h=1}^q(z+\rho_h)^{-\delta_h}\).
Put \(\gamma(z)=\prod_h(z+\rho_h)^{-\delta_h}\).  The finite Leibniz
expansion gives
\(\frac{(-\partial_z)^r[\gamma(z)/(z+\rho_j)]}{\gamma(z)}
=\Psi_{j,r}^{\mathrm{LII}}(z)\).
Indeed, if some \(m_h=0\), near-diagonality gives
\(m_g\in\{0,1\}\) for every \(g\in\{1,\ldots,q\}\), so only
\(r=0\) occurs; otherwise
\(\mathcal I_{\mm}=\{1,\ldots,q\}\).
At \(z=-\rho_h\), expansion of the factors with \(g\ne h\) shows that
the coefficient of \((z+\rho_h)^{-\ell-1}\) is exactly the finite sum
\(Z_{h,\ell,j,r}^{\mathrm{LII}}\) in
\eqref{eq:LII-component-connection}.  At order \(r=\ell\), only
\(j=h\) contributes, with diagonal coefficient
\(\Delta_{h,\ell}^{\mathrm{LII}}\).

Consequently \eqref{eq:LII-component-target} is triangular by increasing
pole order.
Equation~\eqref{eq:LII-component-chain} is the resulting finite
back-substitution formula and proves the identity.
Taking inverse Laplace transforms and using the Dirichlet-simplex
formula gives \eqref{eq:LII-component-Phi2-block}.

Expanding the left-hand side of
\eqref{eq:LII-component-partial-fractions} at
\(z=-\rho_J\) gives \eqref{eq:LII-component-Pi}: the numerator produces
\((1-w/\rho_J)^n\), while each remaining denominator produces
\((1+w/(\rho_h-\rho_J))^{-m_h}\).  Multiplying
\eqref{eq:LII-component-Phi2-block} by \(\Pi_{J,K}\), summing, and using
\eqref{eq:LII-component-partial-fractions} yields the type-I linear form
for the unnormalized weights \(\widetilde w_j\).

The form in \eqref{eq:LII-B} is
\(-\prod_h\rho_h^{\delta_h+m_h}/n!\) times that form, whereas
\(w_j^{\mathrm{LII}}
=\rho_j\prod_h\rho_h^{\delta_h}\,\widetilde w_j\).
This gives exactly the factor in \eqref{eq:LII-explicit-components}.
Every index sequence starts with \(r\le K\); if \(j\ne J\), the strict
increase of the second index excludes \(r=K\). This proves the stated degree
bounds.

It remains to verify the determinant, including its sign. For
\(j\in\mathcal I_{\mm}\) and \(r\in\Nzero\) with \(r<m_j\), put
\(E_{j,r}(z)\coloneq Q_{\mm}^{\mathrm{LII}}(z)/(z+\rho_j)^{r+1}\).
Reduction at the poles expresses the polynomial in column \((j,r)\) of
\eqref{eq:LII-component-determinant} as
\((\delta_j+1)_rE_{j,r}\) plus a linear combination of the \(E_{h,s}\)
with \(s<r\).
Thus this change of basis has determinant
\(\prod_j\prod_{r=0}^{m_j-1}(\delta_j+1)_r\), independently of the
temporary ordering by increasing \(r\).

For \(j\in\mathcal I_{\mm}\) and \(s\in\Nzero\) with \(s<m_j\),
apply to the remaining coefficient determinant the
normalized evaluation functional
\(\mathcal L_{j,s}p\coloneq p^{(s)}(-\rho_j)/s!\).
If \(h\ne j\), then \(\mathcal L_{j,s}E_{h,r}=0\), because
\(E_{h,r}\) contains the factor \((z+\rho_j)^{m_j}\).  In the \(j\)-th
diagonal block,
\(E_{j,r}(z)=(z+\rho_j)^{m_j-r-1}
\prod_{g\ne j}(z+\rho_g)^{m_g}\).
Its first nonzero normalized derivative at \(-\rho_j\) occurs at
\(s=m_j-r-1\) and equals
\(G_j\coloneq\prod_{g\ne j}(\rho_g-\rho_j)^{m_g}\).
Each diagonal block is antitriangular, so
\[
 \det\bigl(\mathcal L_{j,s}E_{h,r}\bigr)
 =(-1)^{\sum_j\binom{m_j}{2}}\prod_jG_j^{m_j}.
\]
On the monomial basis \(1,z,\ldots,z^{D-1}\), the same functionals form
the normalized confluent Vandermonde matrix, whose determinant is
\(\prod_{j<h}[(-\rho_h)-(-\rho_j)]^{m_jm_h}
=\prod_{j<h}(\rho_j-\rho_h)^{m_jm_h}\).
For each pair \(j<h\), its contribution to \(G_j^{m_j}G_h^{m_h}\) is
\((\rho_h-\rho_j)^{m_jm_h}(\rho_j-\rho_h)^{m_jm_h}\).
Dividing by the confluent Vandermonde determinant leaves
\[
 \det\bigl([z^\nu]E_{j,r}(z)\bigr)
 =(-1)^{\sum_j\binom{m_j}{2}}
 \prod_{j<h}(\rho_h-\rho_j)^{m_jm_h}.
\]
Multiplication by the preceding triangular-change determinant proves
\eqref{eq:LII-component-determinant}.
\end{proof}

For each \(\nu>0\) and \(j\in\{1,\ldots,q\}\), set
\(c_{j,\nu}\coloneq\nu/(\nu+\rho_j)\), and let
\(B_{j,\nu}^{\mathrm{MI}}\) denote the unit-normalized components in
\eqref{eq:MI-B-components} with \(c_j=c_{j,\nu}\).

\begin{corollary}[Meixner-I-like-to-Laguerre-II-like component limit]
\label{cor:MI-LII-component-limit}
Under the distinctness hypothesis of
Proposition~\ref{prop:LII-explicit-components}, for every
\(j\in\{1,\ldots,q\}\) with \(m_j>0\), as \(\nu\to\infty\),
\begin{equation}
 \nu^n B_{j,\nu}^{\mathrm{MI}}(\nu x)
 \xrightarrow[\nu\to\infty]{}B_j^{\mathrm{LII}}(x)
 \quad\hbox{coefficientwise in }x .
 \label{eq:MI-LII-component-limit}
\end{equation}
\end{corollary}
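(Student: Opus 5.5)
The limit follows the template of Corollary~\ref{cor:Hahn-MI-component-limit}: type-I coefficients are the solution of a finite moment system, so it suffices to show that a suitably rescaled Meixner-I-like moment matrix and right-hand side converge to their Laguerre-II-like counterparts, and that the limiting matrix is nonsingular. Write \(B_{j,\nu}^{\mathrm{MI}}(\ell)=\sum_{r<m_j}b_{j,r}(\nu)\fall{\ell}{r}\). The unknowns will be the rescaled coefficients \(\widehat b_{j,r}(\nu)\coloneq\nu^{n+r}b_{j,r}(\nu)\). This choice is made so that \(\nu^nB_{j,\nu}^{\mathrm{MI}}(\nu x)=\sum_r\widehat b_{j,r}(\nu)\,\nu^{-r}\fall{\nu x}{r}\). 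Since \(\nu^{-r}\fall{\nu x}{r}\to x^r\) coefficientwise, convergence of \(\widehat b_{j,r}(\nu)\) to the coefficients of \(B_j^{\mathrm{LII}}\) in the monomial basis gives \eqref{eq:MI-LII-component-limit}.

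For the moment system I will use ordinary test polynomials \((\ell/\nu)^p\), \(p\in\{0,\ldots,n\}\), in place of factorial ones. The type-I conditions \eqref{eq:typeI-MI-component-moments} hold in any monic graded basis after a triangular change. Multiplied by \(\nu^n\), they read
\[
 \sum_{j,r}\widehat b_{j,r}(\nu)\sum_{\ell\ge0}(\ell/\nu)^p\,\nu^{-r}\fall{\ell}{r}\,v_{j}^{\mathrm{MI}}(\ell)
 =\nu^n\sum_{\ell}(\ell/\nu)^p\mathcal B_{\mm}^{\mathrm{MI}}(\ell).
\]
The right-hand side is the \(p\)-th moment of \(\mu_\nu^{\mathrm{MI}}\) in \eqref{eq:MI-LII-signed-measures}, which converges to \(\delta_{p,n}\) by \eqref{eq:MI-LII-B-limit} and \eqref{eq:LII-B-moments}. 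Since \(\nu^{-r}\fall{\ell}{r}=(\ell/\nu)^r+\mathrm O(\nu^{-1})\) times fixed lower powers of \(\ell/\nu\), the matrix entries are finite combinations of fixed moments of the dilated weights \(\sum_\ell v_j^{\mathrm{MI}}(\ell)\delta_{\ell/\nu}\). These converge to \(\int x^{p+r}w_j^{\mathrm{LII}}\dd x\) by the Laplace-transform computation following \eqref{eq:MI-LII-row-limit}: the generating function \(\prod_h(\rho_h/(\rho_h+\nu(1-\e^{-s/\nu})))^{d_h}\) converges locally uniformly near \(s=0\), and Cauchy estimates give convergence of the derivatives.

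The main step, though mostly bookkeeping, is nonsingularity of the limiting matrix \(\bigl(\int x^{p+r}w_j^{\mathrm{LII}}\dd x\bigr)_{p,(j,r)}\). I will deduce it from Proposition~\ref{prop:LII-explicit-components}(iii). Pairing \(x^rw_j^{\mathrm{LII}}\) with \(\e^{-zx}\) gives, up to the nonzero constant \(\rho_j\prod_h\rho_h^{\delta_h}\), the function \(\gamma(z)\,(-\partial_z)^r[\gamma(z)/(z+\rho_j)]/\gamma(z)\). A null vector of the moment matrix would make the Laplace transform of the combination \(\gamma(z)\) times a rational function whose Taylor coefficients at \(z=0\) vanish to order \(n\). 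Multiplication by \(Q_{\mm}^{\mathrm{LII}}\) yields a polynomial of degree at most \(n\) that vanishes to order \(n+1\) at \(z=0\), hence is zero, and the nonzero determinant \eqref{eq:LII-component-determinant} then forces the vector to vanish. I expect the degree count here to be the delicate point. It should follow as in the proof of Theorem~\ref{thm:explicit-LI-B} from near-diagonality, together with \(\gamma(0)\ne0\), which lets one divide \(\gamma\) out of the Taylor condition.

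Once nonsingularity is established, continuity of matrix inversion gives \(\widehat b(\nu)\to\widehat b\), where \(\widehat b\) is the unique solution of the limiting system. The uniqueness part of Proposition~\ref{prop:LII-explicit-components}(ii), combined with \eqref{eq:LII-B-moments}, identifies this solution with the coefficients of \(B_j^{\mathrm{LII}}\). This matches the normalization \(\mathcal B_{\mm}^{\mathrm{LII}}=\sum_hB_h^{\mathrm{LII}}w_h^{\mathrm{LII}}\), which completes the proof.
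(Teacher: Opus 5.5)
Your proposal is correct and follows essentially the paper's argument: a rescaled finite moment system whose entries converge through the Laplace-transform computation, nonsingularity of the limiting matrix obtained from the determinant in Proposition~\ref{prop:LII-explicit-components}(iii) by dividing out \(\gamma\) (nonzero at the origin) and multiplying by \(Q_{\mm}^{\mathrm{LII}}\), and continuity of matrix inversion, with the limit identified through \eqref{eq:LII-B-moments}. The differences are cosmetic: you work in the \(\nu\)-dependent basis \(\prod_{a<r}(x-a/\nu)\) rather than the power basis, and you phrase the paper's jet argument as a null-vector argument. Also note that the right-hand side equals \(\delta_{p,n}\) exactly for every \(\nu\), not only in the limit.
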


\begin{proof}
Let
\[
 \mu_{j,\nu}\coloneq\sum_{\ell\ge0}
 v_{j,\nu}^{\mathrm{MI}}(\ell)\,\delta_{\ell/\nu},
 \qquad
 \widetilde B_{j,\nu}(x)\coloneq
 \nu^nB_{j,\nu}^{\mathrm{MI}}(\nu x).
\]
Then \eqref{eq:MI-LII-signed-measures} can be written
\(\mu_{\nu}^{\mathrm{MI}}
=\sum_{j=1}^q\widetilde B_{j,\nu}\,\mu_{j,\nu}\).
The factorial normalization in Theorem~\ref{thm:MI-B} is also the
ordinary-power normalization through order \(n\); hence the coefficient
vectors of the polynomials \(\widetilde B_{j,\nu}\), in the power basis,
solve
\[
 \sum_{j=1}^q\int x^r\widetilde B_{j,\nu}(x)
 \dd\mu_{j,\nu}(x)=\delta_{r,n},
 \qquad r\in\{0,\ldots,n\}.
\]
The Laplace-transform calculation preceding
\eqref{eq:LII-A} gives convergence of every fixed moment of
\(\mu_{j,\nu}\) to that of
\(w_j^{\mathrm{LII}}(x)\dd x\).  Thus the finite coefficient matrices
in this system converge entrywise to the Laguerre-II type-I moment
matrix. Multiplication of all Laplace-transform columns by the common
analytic factor \(Q_{\mm}^{\mathrm{LII}}/\gamma\), which is nonzero at
the origin, changes their jets there by an invertible triangular matrix;
passing between jets and monomial coefficients is also invertible.
Consequently this moment matrix is nonsingular precisely when the
polynomial coefficient matrix in
\eqref{eq:LII-component-determinant} is nonsingular. Continuity of matrix inversion
therefore gives convergence of the coefficient vectors to the unique
Laguerre-II solution, which is
\eqref{eq:LII-explicit-components}.
\end{proof}

\paragraph{An exact sectorwise Meixner-I-to-Laguerre-II deformation.}
The component limit in Corollary~\ref{cor:MI-LII-component-limit} can be
sharpened: no regularization is needed. Put
\[
 \gamma(s)=\prod_{h=1}^q(s+\rho_h)^{-\delta_h},\qquad
 Q_{\mm}(s)=\prod_{h=1}^q(s+\rho_h)^{m_h},
\]
and, consistently with \eqref{eq:LII-component-Psi}, write
\[
 \Psi_{j,r}^{\mathrm{LII}}(s)
 =\frac{(-\partial_s)^r[\gamma(s)/(s+\rho_j)]}{\gamma(s)}.
\]
For \(\nu>0\), define
\begin{equation}
 \Theta_{j,r}^{(\nu)}(s)
 \coloneq\left(1-\frac{s}{\nu}\right)^r
 \Psi_{j,r}^{\mathrm{LII}}(s).
 \label{eq:MI-LII-deformed-basis}
\end{equation}
If \(p\ge0\), let \(Z_{h,p,j,r}^{\mathrm{LII}}\) denote the coefficient
of \((s+\rho_h)^{-p-1}\) in the principal part of
\(\Psi_{j,r}^{\mathrm{LII}}\); it is
\eqref{eq:LII-component-connection} when it can be nonzero and is zero
for \(p\ge m_h\). Indeed, if \(h=j\), then
\(p\le r\le m_h-1\); if \(h\ne j\), then near-diagonality gives
\(p\le r-1\le m_j-2\le m_h-1\). The coefficient of
\((s+\rho_h)^{-\ell-1}\) in \(\Theta_{j,r}^{(\nu)}\) is
\begin{equation}
 Z_{h,\ell,j,r}^{(\nu)}
 =\sum_{a=0}^{r}\binom ra
 \left(1+\frac{\rho_h}{\nu}\right)^{r-a}
 \left(-\frac1\nu\right)^a
 Z_{h,\ell+a,j,r}^{\mathrm{LII}}.
 \label{eq:MI-LII-deformed-connection}
\end{equation}
With the order relation used in
\eqref{eq:LII-component-chain}, this coefficient is zero unless
\((h,\ell)=(j,r)\) or \(\ell<r\), and its diagonal value is
\begin{equation}
 \Delta_{h,\ell}^{(\nu)}
 =\left(1+\frac{\rho_h}{\nu}\right)^\ell
   (\delta_h+1)_\ell .
 \label{eq:MI-LII-deformed-pivots}
\end{equation}
For labels \(a=(h,\ell)\) and \(b=(j,r)\), write
\(Z_{a,b}^{(\nu)}=Z_{h,\ell,j,r}^{(\nu)}\) and
\(\Delta_a^{(\nu)}=\Delta_{h,\ell}^{(\nu)}\).

For \(J,j\in\mathcal I_{\mm}\), \(0\le K<m_J\), and
\(0\le r<m_j\), define
\begin{equation}
 \mathfrak u_{j,r}^{J,K}(\nu)
 \coloneq\sum_{p=0}^{K-r}(-1)^p
 \sum_{\substack{
 (j,r)=a_0\prec a_1\prec\cdots\prec a_p=(J,K)\\
 a_v\in\mathscr P_{\boldsymbol m,K}^{\mathrm{LII}}}}
 \frac1{\Delta_{J,K}^{(\nu)}}
 \prod_{v=0}^{p-1}
 \frac{Z_{a_v,a_{v+1}}^{(\nu)}}
      {\Delta_{a_v}^{(\nu)}} ,
 \label{eq:MI-LII-deformed-chain}
\end{equation}
where the value is zero if \(r>K\). Finite triangular inversion gives
\begin{equation}
 \sum_{j\in\mathcal I_{\mm}}
 \sum_{r=0}^{\min\{K,m_j-1\}}
 \mathfrak u_{j,r}^{J,K}(\nu)
 \Theta_{j,r}^{(\nu)}(s)
 =\frac1{(s+\rho_J)^{K+1}}.
 \label{eq:MI-LII-deformed-target}
\end{equation}
Set
\begin{align}
 \mathcal C_{j,J,K}^{(\nu)}(x)
 &\coloneq
 \sum_{r=0}^{\min\{K,m_j-1\}}
 \mathfrak u_{j,r}^{J,K}(\nu)
 \prod_{a=0}^{r-1}\left(x-\frac a\nu\right),
 \label{eq:MI-LII-deformed-component-block}\\
 \mathcal S_{j,J}^{(\nu)}(x)
 &\coloneq
 -\frac{\prod_{h=1}^q\rho_h^{m_h}}{n!\,\rho_j}
 \sum_{K=0}^{m_J-1}\Pi_{J,K}
 \mathcal C_{j,J,K}^{(\nu)}(x).
 \label{eq:MI-LII-deformed-sector}
\end{align}

\begin{proposition}[Exact sectorwise Meixner-I--Laguerre-II deformation]
\label{prop:MI-LII-exact-sectors}
Let the hypotheses of Proposition~\ref{prop:LII-explicit-components}
hold and set \(c_{h,\nu}=\nu/(\nu+\rho_h)\). Denote by
\(\mathscr M_{j,J;\nu}^{\mathrm{MI}}\) the direct sector
\eqref{eq:MI-direct-sector-block} evaluated at these parameters. Then,
for every pair of active indices \(j,J\),
\begin{equation}
 \nu^n\mathscr M_{j,J;\nu}^{\mathrm{MI}}(\nu x)
 =\mathcal S_{j,J}^{(\nu)}(x).
 \label{eq:MI-LII-exact-sector-identification}
\end{equation}
Consequently, for every active \(j\),
\begin{equation}
 \nu^n B_{j,\nu}^{\mathrm{MI}}(\nu x)
 =\sum_{J\in\mathcal I_{\mm}}\mathcal S_{j,J}^{(\nu)}(x)
 \label{eq:MI-LII-exact-sector-sum}
\end{equation}
identically in \(x\). Moreover, every pole sector has the ordinary
coefficientwise limit
\begin{align}
 \mathcal S_{j,J}^{(\nu)}(x)
 &\xrightarrow[\nu\to\infty]{}
 \mathcal S_{j,J}^{\mathrm{LII}}(x),
 \label{eq:MI-LII-sector-limit}\\
 \mathcal S_{j,J}^{\mathrm{LII}}(x)
 &\coloneq-\frac{\prod_{h=1}^q\rho_h^{m_h}}{n!\,\rho_j}
 \sum_{K=0}^{m_J-1}\Pi_{J,K}
 \mathcal C_{j,J,K}^{\mathrm{LII}}(x).
 \label{eq:LII-explicit-pole-sector}
\end{align}
Consequently
\(B_j^{\mathrm{LII}}=\sum_J\mathcal S_{j,J}^{\mathrm{LII}}\), exactly as
in \eqref{eq:LII-explicit-components}. The confluence is sectorwise and
contains no finite-part operation.
\end{proposition}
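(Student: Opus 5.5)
The plan is to transport the Meixner-I triangular inversion to the Laplace variable \(s=\nu(1-z)\) and to recognise it there as the deformed triangular system \eqref{eq:MI-LII-deformed-target}. Put \(c_h=c_{h,\nu}\), so that \(1-c_hz=(s+\rho_h)/(\nu+\rho_h)\) and \(R_h(z)=\rho_h/(s+\rho_h)\). Write \(f(z)=\prod_h(1-c_hz)^{-\delta_h}\), which is \(\gamma(s)\) up to a constant, and use \(\partial_z=-\nu\partial_s\) and \(z=1-s/\nu\). Then the polynomial \(P_{j,r}\) from the proof of Theorem~\ref{thm:explicit-MI-B} becomes
\[
 z^rf^{-1}\partial_z^r\!\left[\frac{f}{1-c_jz}\right]
 =\nu^r(\nu+\rho_j)\Bigl(1-\frac s\nu\Bigr)^r
 \gamma(s)^{-1}(-\partial_s)^r\!\left[\frac{\gamma(s)}{s+\rho_j}\right]
 =\nu^r(\nu+\rho_j)\,\Theta_{j,r}^{(\nu)}(s).
\]
Consequently, dividing \(\Phi_{j,r}^{\mathrm{MI}}\) by \(\mathcal D_{\mm}\) gives, up to an explicit scalar, the functions \(\Theta_{j,r}^{(\nu)}\). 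By the identities already recorded in the proof of Proposition~\ref{prop:MI-sector-blocks}, the \(J\)-sector target \(\sum_s\Pi_{J,s}^{\mathrm{MI}}(1-c_Jz)^{-s-1}\) equals \(-C_\nu Q_J(s)\). Moreover \(\Lambda_{J,K}=\Pi_{J,K}\) of \eqref{eq:LII-component-partial-fractions}.

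First I would verify \eqref{eq:MI-LII-deformed-connection} and \eqref{eq:MI-LII-deformed-pivots}. Expand \((1-s/\nu)^r\) about \(s=-\rho_h\) as \(\bigl((1+\rho_h/\nu)-(s+\rho_h)/\nu\bigr)^r\) and multiply it into the principal part of \(\Psi_{j,r}^{\mathrm{LII}}\). This gives the binomial formula. Triangularity and the diagonal value come from the corresponding LII facts, \(Z^{\mathrm{LII}}_{h,p,j,r}=0\) unless \(p<r\) or \((h,p)=(j,r)\). Since each \(\Theta^{(\nu)}_{j,r}\) vanishes at infinity, the same principal-part argument as in Theorem~\ref{thm:explicit-MI-B} shows that the chain sum \eqref{eq:MI-LII-deformed-chain} solves \eqref{eq:MI-LII-deformed-target}.

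Second, sum \eqref{eq:MI-LII-deformed-target} against \(\Pi_{J,K}\) over \(K\). This gives
\[
 \sum_{j,r}\Bigl(\sum_K\Pi_{J,K}\mathfrak u^{J,K}_{j,r}(\nu)\Bigr)\Theta^{(\nu)}_{j,r}=Q_J(s).
\]
Comparing with \eqref{eq:MI-direct-sector-identity} and using uniqueness, which holds by the nonsingularity in Proposition~\ref{prop:MI-normality}, the Meixner-I sector coefficient \(\mathfrak C^{\mathrm{MI}}_{(j,r);J}\) equals \(\sum_K\Pi_{J,K}\mathfrak u^{J,K}_{j,r}(\nu)\) times an explicit scalar. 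That scalar is assembled from \(-C_\nu\), \(\nu^{-r}(\nu+\rho_j)^{-1}\), \(K^{\mathrm{MI}}_{\mm}(1-c_j)\), the constant relating \(f\) and \(\gamma\), and \(\prod_h R_h^{m_h}\) against \(\mathcal D_{\mm}\).

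Third, substitute into \eqref{eq:MI-direct-sector-block}. Use \(r!\binom{\nu x}{r}=\nu^r\prod_{a<r}(x-a/\nu)\); the factor \(\nu^r\) cancels the \(\nu^{-r}\) above. Then \eqref{eq:MI-LII-exact-sector-identification} reduces to showing that the remaining scalar times \(\nu^n\) is exactly \(-\prod_h\rho_h^{m_h}/(n!\rho_j)\). I expect this constant bookkeeping to be the main obstacle. The factors \((\nu+\rho_h)^{m_h}\) in \(C_\nu\) must cancel against \(K^{\mathrm{MI}}_{\mm}\) and \(\mathcal D_{\mm}\), \(\nu^{-n}\) must cancel \(\nu^n\), and \(1-c_j=\rho_j/(\nu+\rho_j)\) together with the \((\nu+\rho_j)^{-1}\) must leave \(1/\rho_j\). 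I would check the result against the case \(q=1\), \(m=1\) to fix signs.

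Summing over \(J\) and using \eqref{eq:MI-direct-sector-decomposition} gives \eqref{eq:MI-LII-exact-sector-sum}. For the limits, \(Z^{(\nu)}\to Z^{\mathrm{LII}}\) and \(\Delta^{(\nu)}\to\Delta^{\mathrm{LII}}\) termwise in finite sums, and the pivots are nonzero. Hence \(\mathfrak u(\nu)\to\mathfrak u\), while \(\prod_{a<r}(x-a/\nu)\to x^r\) coefficientwise. This proves \eqref{eq:MI-LII-sector-limit}, and summing the sector limits gives agreement with \eqref{eq:LII-explicit-components}.
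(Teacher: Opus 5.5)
Your proposal is correct and is essentially the paper's proof. The exact substitution $z=1-s/\nu$ turns $\Phi^{\mathrm{MI}}_{j,r}/\mathcal D_{\mm}$ into $K^{\mathrm{MI}}_{\mm}\rho_j\nu^r\Theta^{(\nu)}_{j,r}$ and the $J$-sector source into $-C_\nu\sum_K\Pi_{J,K}(s+\rho_J)^{-K-1}$. Uniqueness of the expansion in the $\Theta^{(\nu)}_{j,r}$ then gives $\nu^{n+r}\mathfrak C^{\mathrm{MI}}_{(j,r);J}=-\prod_h\rho_h^{m_h}\,(n!\rho_j)^{-1}\sum_K\Pi_{J,K}\mathfrak u^{J,K}_{j,r}(\nu)$; this uniqueness holds either by the triangular structure with nonzero pivots, as the paper uses, or by Proposition~\ref{prop:MI-normality}, as you use. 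The identity $\fall{\nu x}{r}=\nu^r\prod_{a<r}(x-a/\nu)$ and the finite limits finish, and the only difference is bookkeeping: you route the scalar through $P_{j,r}$ and $K^{\mathrm{MI}}_{\mm}(1-c_j)$ rather than the paper's $C_j,C_{\mm}$, and the constant relating $f$ to $\gamma$ in your list cancels and never enters.
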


\begin{proof}
The principal-part expansion of
\((1-s/\nu)^r\Psi_{j,r}^{\mathrm{LII}}(s)\) at
\(s=-\rho_h\) gives \eqref{eq:MI-LII-deformed-connection}.
Its diagonal term is \eqref{eq:MI-LII-deformed-pivots}, which is nonzero
for every \(\nu>0\). Back-substitution therefore proves
that both sides of \eqref{eq:MI-LII-deformed-target} have the same
principal part at every pole. Moreover,
\(\Psi_{j,r}^{\mathrm{LII}}(s)=\mathrm O(s^{-r-1})\) as
\(s\to\infty\), and hence
\(\Theta_{j,r}^{(\nu)}(s)=\mathrm O(s^{-1})\). Their difference is
therefore a rational function without poles which vanishes at infinity;
it is zero. This proves \eqref{eq:MI-LII-deformed-target}.

Now put
\[
 C_j=\rho_j\prod_{h=1}^q\rho_h^{\delta_h},\qquad
 C_{\mm}=\prod_{h=1}^q\rho_h^{\delta_h+m_h}.
\]
In the probability-generating variable make the exact change
\(z=1-s/\nu\). Since \(c_{h,\nu}=\nu/(\nu+\rho_h)\),
\[
 R_h(1-s/\nu)=\frac{\rho_h}{\rho_h+s}.
\]
The target partial-fraction coefficients transform exactly as
\begin{equation}
 \Pi_{J,K}^{\mathrm{MI}}(\nu)
 =-\frac{\prod_{h=1}^q(\nu+\rho_h)^{m_h}}
 {n!\nu^n(\nu+\rho_J)^{K+1}}\,\Pi_{J,K}.
 \label{eq:MI-LII-exact-target-coefficients}
\end{equation}
Indeed,
\(1-c_{J,\nu}(1-s/\nu)=(s+\rho_J)/(\nu+\rho_J)\);
substitution in the target
\((z-1)^n/[n!\mathcal D_{\boldsymbol m}(z)]\) and use of
\eqref{eq:LII-component-partial-fractions} proves the formula.
It follows from the definitions and \(\partial_z=-\nu\partial_s\) that
\begin{align}
 G_j^{\mathrm{MI}}(1-s/\nu)
 &=C_j\frac{\gamma(s)}{s+\rho_j},&
 H_{\mm}^{\mathrm{MI}}(1-s/\nu)
 &=C_{\mm}\frac{\gamma(s)}{Q_{\mm}(s)},\nonumber\\
 \Phi_{j,r}^{\mathrm{MI}}(1-s/\nu)
 &=\nu^r\frac{C_j}{C_{\mm}}Q_{\mm}(s)
 \Theta_{j,r}^{(\nu)}(s).
 \label{eq:MI-LII-exact-basis-transform}
\end{align}
Write
\(B_{j,\nu}^{\mathrm{MI}}(k)
=\sum_{r=0}^{m_j-1}b_{j,r}^{(\nu)}\fall{k}{r}\) and set
\(a_{j,r}^{(\nu)}=\nu^{n+r}b_{j,r}^{(\nu)}\).
Substitution in \eqref{eq:typeI-MI-component-identity} gives the exact
rational interpolation
\begin{equation}
 \sum_{j\in\mathcal I_{\mm}}\sum_{r=0}^{m_j-1}
 a_{j,r}^{(\nu)}C_j\Theta_{j,r}^{(\nu)}(s)
 =\frac{C_{\mm}(-s)^n}{n!Q_{\mm}(s)}.
 \label{eq:MI-LII-exact-rational-interpolation}
\end{equation}
By \eqref{eq:LII-component-partial-fractions}, the right-hand side is
\[
 -\frac{C_{\mm}}{n!}
 \sum_{J\in\mathcal I_{\mm}}\sum_{K=0}^{m_J-1}
 \frac{\Pi_{J,K}}{(s+\rho_J)^{K+1}}.
\]
More precisely, applying \eqref{eq:MI-LII-exact-basis-transform} and
\eqref{eq:MI-LII-exact-target-coefficients} directly to
\eqref{eq:MI-direct-sector-identity} gives, for each active \(J\),
\begin{equation}
 \sum_{j\in\mathcal I_{\mm}}\sum_{r=0}^{m_j-1}
 \nu^{n+r}\mathfrak C_{(j,r);J}^{\mathrm{MI}}(\nu)
 C_j\Theta_{j,r}^{(\nu)}(s)
 =-\frac{C_{\mm}}{n!}\sum_{K=0}^{m_J-1}
 \frac{\Pi_{J,K}}{(s+\rho_J)^{K+1}}.
 \label{eq:MI-LII-exact-sector-rational-identity}
\end{equation}
Here \(\mathfrak C_{(j,r);J}^{\mathrm{MI}}(\nu)\) denotes the coefficient
\eqref{eq:MI-sector-path-coefficients} evaluated at
\(c_h=c_{h,\nu}\). Comparing
\eqref{eq:MI-LII-exact-sector-rational-identity} with
\eqref{eq:MI-LII-deformed-target} and using uniqueness of the triangular
inverse yields
\[
 \nu^{n+r}\mathfrak C_{(j,r);J}^{\mathrm{MI}}(\nu)
 =-\frac{C_{\mm}}{n!C_j}
 \sum_{K=0}^{m_J-1}\Pi_{J,K}
 \mathfrak u_{j,r}^{J,K}(\nu).
\]
Since
\(C_{\mm}/C_j=\prod_h\rho_h^{m_h}/\rho_j\) and
\[
 \nu^n\fall{\nu x}{r}
 =\nu^{n+r}\prod_{a=0}^{r-1}\left(x-\frac a\nu\right),
\]
this proves \eqref{eq:MI-LII-exact-sector-identification}. Summing over
\(J\) gives \eqref{eq:MI-LII-exact-sector-sum}.

Finally all sums in \eqref{eq:MI-LII-deformed-connection} and
\eqref{eq:MI-LII-deformed-chain} are finite and their denominators tend
to the nonzero pivots \((\delta_h+1)_\ell\). Hence
\(Z^{(\nu)}\to Z^{\mathrm{LII}}\),
\(\mathfrak u_{j,r}^{J,K}(\nu)\to\mathfrak u_{j,r}^{J,K}\), and
\(\prod_{a=0}^{r-1}(x-a/\nu)\to x^r\), coefficientwise. This proves
\eqref{eq:MI-LII-sector-limit} and
\eqref{eq:LII-explicit-pole-sector}.
\end{proof}

For \(q=1\), with \(n=m_1-1\), formula
\eqref{eq:LII-explicit-components} reduces to
\(B_1^{\mathrm{LII}}(x)=(-1)^n\rho_1^n
{}_1F_1(-n;\delta_1+1;\rho_1x)/n!\), the classical Laguerre polynomial
normalized so that the moment of order \(n\) is one. Thus the direct limit
agrees with the scalar Meixner-to-Laguerre limit; component convergence is
given by \eqref{eq:MI-LII-component-limit}.

\section{The two Charlier-like families and their Hermite limits}
\label{sec:Charlier-Hermite}

The Meixner-II-like and Meixner-I-like families lead, respectively, to the
Charlier-II-like and Charlier-I-like systems when \(q>1\). We derive both
families and show that their centered continuous limits give Hermite-like
systems related by reflection.

\subsection{The Charlier-II-like family}

This limit of the Meixner-II-like family gives the Charlier-II-like system.
Its weights, polynomial \(A\), and signed sequence satisfying the type-I
moment conditions remain explicit.

Start from the Meixner-II-like system and impose
\begin{equation}
 A_q=R,\qquad \tau_R=\frac aR,
 \qquad c_R=\frac{a}{R+a},\qquad R\to\infty,
 \label{eq:CII-scaling}
\end{equation}
with \(a>0\), while \(A_h,\beta_h\),
\(h\in\{1,\ldots,q-1\}\), remain fixed. Under
\eqref{eq:CII-scaling}, the factor in the Euler integral
\eqref{eq:MII-positive-integral}
\((1+a(1-z)Y/R)^{-R}\) tends to \(\e^{aY(z-1)}\) as \(R\to\infty\).

This limit gives explicit formulas for the Charlier-II-like system and
exhibits its beta-product representation.

\begin{theorem}[Charlier-II-like system]
\label{thm:CII-system}
As \(R\to\infty\) under \eqref{eq:CII-scaling}, the following assertions hold.
\begin{enumerate}[label=\textnormal{(\roman*)}]
\item For every
\(j\in\{1,\ldots,q\}\), the limiting weight has generating function,
coefficients, and factorial moments
\begin{align}
 G_j^{\mathrm{CII}}(z)
 &=\pFq{q-1}{q-1}{\A_{<q}}{\boldsymbol C_j}{a(z-1)},
 \label{eq:CII-pgf}\\
 v_j^{\mathrm{CII}}(k)
 &=\frac{a^k}{k!}
 \frac{(\A_{<q})_k}{(\boldsymbol C_j)_k}
 \pFq{q-1}{q-1}{\A_{<q}+k}{\boldsymbol C_j+k}{-a},
 \label{eq:CII-mass}\\
 \sum_{k\ge0}\fall{k}{r}v_j^{\mathrm{CII}}(k)
 &=a^r\frac{(\A_{<q})_r}{(\boldsymbol C_j)_r},
 \label{eq:CII-moments}
\end{align}
for \(k,r\in\Nzero\), where
\(\boldsymbol C_j=(\beta_h+\delta_{h,j})_{h=1}^{q-1}\), with no shift
when \(j=q\). With \(Y=y_1\cdots y_{q-1}\), they also satisfy
\begin{align}
 G_j^{\mathrm{CII}}(z)
 &=\int_{(0,1)^{q-1}}\e^{aY(z-1)}
 \prod_{h=1}^{q-1}b_{A_h,C_{h,j}-A_h}(y_h)\dd\boldsymbol y,
 \label{eq:CII-Euler-integral}\\
 v_j^{\mathrm{CII}}(k)
 &=\frac1{k!}\int_{(0,1)^{q-1}}\e^{-aY}(aY)^k
 \prod_{h=1}^{q-1}b_{A_h,C_{h,j}-A_h}(y_h)\dd\boldsymbol y.
 \label{eq:CII-positive-coefficients}
\end{align}
These weights are strictly positive and normalized.

\item For a near-diagonal multi-index \(\mm\) with \(D=\abs\mm\ge1\),
the type-II polynomial is
\begin{equation}
 A_{\mm}^{\mathrm{CII}}(k)
 =\pFq{q+1}{q-1}
 {-D,-k,\bbeta_{<q}+\mm_{<q}}{\A_{<q}}{-a^{-1}}.
 \label{eq:CII-A}
\end{equation}
The polynomial \(A_{\mm}^{\mathrm{CII}}\) has degree \(D\), satisfies
\(A_{\mm}^{\mathrm{CII}}(0)=1\), and, for every
\(j\in\{1,\ldots,q\}\) with \(m_j>0\), obeys
\begin{equation}
 \sum_{k\ge0}\fall{k}{s}A_{\mm}^{\mathrm{CII}}(k)
 v_j^{\mathrm{CII}}(k)=0,
 \qquad s\in\{0,\ldots,m_j-1\}.
 \label{eq:CII-A-orthogonality}
\end{equation}

\item For the same \(\mm\), put \(n=D-1\). The signed sequence has
generating function
\begin{equation}
 \mathcal H_{\mm}^{\mathrm{CII}}(z)
 \coloneq\sum_{k\ge0}\mathcal B_{\mm}^{\mathrm{CII}}(k)z^k
 =-a^n\frac{(\A_{<q})_n}{(\bbeta_{<q})_{\mm_{<q}+n}}(1-z)^n
 \,\pFq{q-1}{q-1}
 {\A_{<q}+n}{\bbeta_{<q}+\mm_{<q}+n}{a(z-1)}.
 \label{eq:CII-B}
\end{equation}
The factorial moments of \(\mathcal B_{\mm}^{\mathrm{CII}}\) satisfy,
for every \(r\in\Nzero\),
\begin{equation}
 \sum_{k\ge0}\fall{k}{r}\mathcal B_{\mm}^{\mathrm{CII}}(k)
 =
 -a^r(-r)_n\frac{(\A_{<q})_r}
 {\prod_{h=1}^{q-1}(\beta_h)_{m_h+r}}.
 \label{eq:CII-B-moments}
\end{equation}
In particular, they vanish for \(r\in\Nzero\) with \(r<n\), whereas
\begin{equation}
 \sum_{k\ge0}\fall{k}{n}\mathcal B_{\mm}^{\mathrm{CII}}(k)
 =(-1)^{n+1}n!a^n
 \frac{(\A_{<q})_n}{\prod_{h=1}^{q-1}(\beta_h)_{m_h+n}}\ne0.
 \label{eq:CII-B-leading-moment}
\end{equation}
\end{enumerate}
\end{theorem}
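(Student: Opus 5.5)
The plan is to obtain every assertion by passing to the limit \(R\to\infty\) under \eqref{eq:CII-scaling} in the corresponding Meixner-II-like statements, Theorems~\ref{thm:MII-rows} and \ref{thm:MII-forms}. Wherever possible I would take the limit termwise in terminating series, or inside the positive Euler integral.

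\textbf{Weights.} Start from \eqref{eq:MII-positive-integral} with \(A_q=R\) and \(\tau_R=a/R\). The integrand factor \((1+a(1-z)Y/R)^{-R}\) converges to \(\e^{aY(z-1)}\), uniformly for \(0\le Y\le1\) and \(z\) in compact subsets of the unit disk. It is bounded there, because \(\operatorname{Re}(1+a(1-z)Y/R)\ge1\) for \(\abs z\le1\). Dominated convergence against the probability beta product therefore gives \eqref{eq:CII-Euler-integral}. Since the limit is entire, Cauchy's coefficient formula gives coefficientwise convergence.

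Expanding \(\e^{aY(z-1)}=\e^{-aY}\sum_k(aYz)^k/k!\) yields \eqref{eq:CII-positive-coefficients}. The integrand there is strictly positive, and \(G_j^{\mathrm{CII}}(1)=1\), which gives strict positivity and normalization.

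For the moments, differentiate \(r\) times at \(z=1\) inside the integral and use the beta moments \((A_h)_r/(C_{h,j})_r\); this gives \eqref{eq:CII-moments}. The same moments also follow from \eqref{eq:MII-factorial-moments}, because \(\tau_R^r(R)_r\to a^r\). Summing the moments into a Taylor series at \(z=1\) gives \eqref{eq:CII-pgf}. Coefficient extraction, i.e.\ expanding \(\e^{-aY}\) termwise, gives \eqref{eq:CII-mass}.

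\textbf{Type-II polynomial.} In the terminating series \eqref{eq:MII-A}, the \(\ell\)-th term contains \((-\tau_R^{-1})^\ell/(R)_\ell=(-R/a)^\ell/(R)_\ell\to(-a^{-1})^\ell\). This gives \eqref{eq:CII-A}, with value \(1\) at \(k=0\). The leading coefficient \((-a^{-1})^D(\bbeta_{<q}+\mm_{<q})_D/(\A_{<q})_D\) is nonzero, so the degree is exactly \(D\).

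For orthogonality, I would insert \eqref{eq:CII-moments} directly. Expand \(A_{\mm}^{\mathrm{CII}}\) in falling factorials and linearize with \eqref{eq:falling-linearization}; only finitely many moments occur. These moments are the \(R\)-limits of the Meixner-II-like moments, so \eqref{eq:MII-A-orthogonality} passes to \eqref{eq:CII-A-orthogonality}. This needs only convergence of the finitely many polynomial coefficients and of the fixed factorial moments.

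\textbf{Signed sequence.} In \eqref{eq:MII-B-pgf}, the prefactor satisfies \(\tau_R^n(R)_n\to a^n\), and each term of the series satisfies \(\tau_R^s(R+n)_s\to a^s\). To justify the limit uniformly on compact \(z\)-sets in the unit disk, rather than only near \(z=1\), I would pass through the shifted Euler integral as in the proof of Theorem~\ref{thm:MII-forms}. That integrand is \((1+\tau_R(1-z)Y)^{-R-n}\), which tends to \(\e^{aY(z-1)}\) with the same domination as above, and this yields \eqref{eq:CII-B}.

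The moments \eqref{eq:CII-B-moments} come either as limits of \eqref{eq:MII-B-moments}, or directly as the Taylor coefficients at \(z=1\) of \eqref{eq:CII-B}. For the direct route, the coefficient of \((z-1)^r\) is \(-(-1)^n a^n\) times the \(s=r-n\) series coefficient. Collapsing it with \((\A_{<q})_n(\A_{<q}+n)_{r-n}=(\A_{<q})_r\) and \(r!/(r-n)!=(-1)^n(-r)_n\) gives \eqref{eq:CII-B-moments}. This expression vanishes for \(r<n\) because \((-r)_n=0\), and at \(r=n\) it gives \eqref{eq:CII-B-leading-moment}.

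The only step needing care is the interchange of limit and integral/series for the generating functions. The Euler representation with its uniform bound handles it, so I do not expect a genuine obstacle.
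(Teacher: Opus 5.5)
Your proposal is correct and follows essentially the paper's route. Every assertion is obtained as the \(R\to\infty\) limit of the Meixner-II-like formulas in Theorems~\ref{thm:MII-rows} and \ref{thm:MII-forms}, with the terminating series passed to the limit termwise and all moments read off by differentiation at \(z=1\). The only difference is that you justify the generating-function limits through the Euler integrals, using the bound \(\operatorname{Re}(1+a(1-z)Y/R)\ge1\) on the unit disk, whereas the paper simply takes termwise limits in the nonterminating hypergeometric series; your version makes that interchange slightly more explicit.
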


\begin{proof}
For fixed \(s\), \((R)_s(a/R)^s\to a^s\); hence the generating function
and signed sequence follow termwise from Theorem~\ref{thm:MII-forms}.
For the type-II polynomial, \((-R/a)^s/(R)_s\to(-a^{-1})^s\), which gives
\eqref{eq:CII-A}. Its degree and normalization at zero follow
directly from the terminating expression. As \(R\to\infty\), local convergence of the weight
generating functions near \(z=1\) gives convergence of every fixed
factorial moment; passage to the limit in the Meixner-II-like orthogonality
relations proves \eqref{eq:CII-A-orthogonality}. Expanding the exponential
in \eqref{eq:CII-Euler-integral} and evaluating each beta integral gives
\eqref{eq:CII-pgf}; coefficient extraction gives
\eqref{eq:CII-positive-coefficients}, and differentiation of
\eqref{eq:CII-pgf} at \(z=1\) gives \eqref{eq:CII-moments}. Differentiating
\eqref{eq:CII-B} at \(z=1\) gives \eqref{eq:CII-B-moments}.
\end{proof}

The limiting generating functions satisfy
\begin{equation}
 \bigl((z-1)\partial_z+\beta_j\bigr)G_j^{\mathrm{CII}}(z)
 =\beta_jG_q^{\mathrm{CII}}(z),\qquad
 j\in\{1,\ldots,q-1\}.
 \label{eq:CII-resolvent}
\end{equation}
Identity \eqref{eq:CII-resolvent} is the Charlier-II-like
counterpart of the finite differential identity
\eqref{eq:finite-resolvent-relation}.
Theorem~\ref{thm:explicit-unreflected-B} gives every individual
Charlier-II-like type-I polynomial directly as a finite sum of
terminating \({}_{q+1}F_{q-1}\) polynomials and terminating Horn
coefficients.
It also proves uniqueness under its explicit separation and nonvanishing
assumptions.  The polynomial \eqref{eq:CII-A} and the signed sequence
\eqref{eq:CII-B} require no such assumptions.

\begin{corollary}[Charlier-II-like components and their sectorwise Meixner limit]
\label{cor:CII-sectorwise-MII-limit}
Under the hypotheses of Theorem~\ref{thm:explicit-unreflected-B}, the
Charlier-II-like components are the specialization $X=\mathrm{CII}$ of
\eqref{eq:unreflected-explicit-components}. They have degrees $<m_j$,
are unique, and, for every active component \(j\), their block at infinity is
\begin{equation}
 \mathscr C_{j,q}^{\infty}(k)
 \coloneq
 \sum_{\ell=0}^{m_q-1}\gamma_\ell^{\mathrm{CII}}
 \mathcal E_{j,\ell}^{\mathrm{CII}}(k).
 \label{eq:CII-infinity-block-definition}
\end{equation}
Thus the component formula with all sectors displayed is
\begin{equation}
 D_j^{\mathrm{CII}}(k)
 =\sum_{J=1}^{q-1}\mathscr C_{j,J}(k)
  +\mathscr C_{j,q}^{\infty}(k).
 \label{eq:CII-all-sector-components}
\end{equation}
If \(\varepsilon_{j,q}=1-\delta_{j,q}\) and
\(m_q<\varepsilon_{j,q}+1\), the block
\(\mathscr C_{j,q}^{\infty}\) is zero. Otherwise,
\(\deg\mathscr C_{j,q}^{\infty}\le m_q-1\) for \(j=q\), and
\(\deg\mathscr C_{j,q}^{\infty}\le m_q-2\) for \(j<q\).

Under the scaling \eqref{eq:CII-scaling}, the block at infinity from
Corollary~\ref{cor:MII-moving-KdF-block}, together with every finite-pole
sector, has a separate coefficientwise limit:
\begin{align}
 \left.\mathscr M_{j,J}(k)\right|_{A_q=R,\,\tau=a/R}
 &\xrightarrow[R\to\infty]{}\mathscr C_{j,J}(k),
 &&J\in\{1,\ldots,q-1\},
 \label{eq:MII-CII-finite-sector-limit}\\
 \left.\mathscr M_{j,q}^{\infty}(k)\right|_{A_q=R,\,\tau=a/R}
 &\xrightarrow[R\to\infty]{}\mathscr C_{j,q}^{\infty}(k).
 \label{eq:MII-CII-infinity-sector-limit}
\end{align}
Their sum is the component confluence
\eqref{eq:MII-CII-component-limit}. For the original positive weights,
every displayed canonical sector is divided by \(\beta_j\) when \(j<q\),
and is unchanged when \(j=q\), as in
\eqref{eq:unreflected-probability-components}.
\end{corollary}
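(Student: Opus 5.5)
The plan is to obtain everything but the two sector limits from Theorem~\ref{thm:explicit-unreflected-B} with \(X=\mathrm{CII}\), and then take the sector limits one at a time, as in the proof of Corollary~\ref{cor:unreflected-component-limits}.

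\emph{Components, uniqueness and sector decomposition.} Under the stated hypotheses, Theorem~\ref{thm:explicit-unreflected-B} with \(X=\mathrm{CII}\) gives the components \eqref{eq:unreflected-explicit-components}, the bounds \(\deg D_j^{\mathrm{CII}}<m_j\), and uniqueness via nonsingularity of the type-I moment matrix. Proposition~\ref{prop:compact-unreflected-finite-poles}, identity \eqref{eq:CII-compact-finite-pole-identity}, regroups the finite-pole double sum into the blocks \(\mathscr C_{j,J}\). The remaining term is \(\sum_\ell\gamma_\ell^{\mathrm{CII}}\mathcal E_{j,\ell}^{\mathrm{CII}}\), which is \(\mathscr C_{j,q}^{\infty}\) by definition; together these give \eqref{eq:CII-all-sector-components}.

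\emph{Degree and vanishing of the infinity block.} In \eqref{eq:unreflected-corrected-infinity-vector}, the monomial \(\fall{k}{\ell}\) with \(\ell\le m_q-1\) occurs only in component \(q\). Every correction \(\mathcal C_{j,J,K}^{\mathrm{CII}}\) has \(K\le\ell-1\), so it has degree \(\le K\le m_q-2\) when \(j=J\) and \(\le K-1\le m_q-3\) when \(j\ne J\).
\begin{itemize}
\item For \(j=q\) this gives \(\deg\mathscr C_{q,q}^\infty\le m_q-1\).
\item For \(j<q\) the component has no leading monomial, so \(\deg\mathscr C_{j,q}^\infty\le m_q-2\).
\item For the vanishing statement: if \(j\ne q\) and \(m_q\le1\), only \(\ell=0\) occurs. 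Then the correction sum over \(K\le\ell-1=-1\) is empty, and \(\delta_{j,q}=0\) removes the monomial, so the block is zero.
\end{itemize}

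\emph{Finite-sector limit \eqref{eq:MII-CII-finite-sector-limit}.} This is exactly the argument in the proof of Proposition~\ref{prop:compact-unreflected-finite-poles}. Under \(A_q=R\), \(\tau=a/R\), the Kampé de Fériet factor in \(\mathscr M_{j,J}\) terminates at \(r+t\le m_J-1-\varepsilon\). The only \(R\)-dependent factor is
\[
\frac{(\beta_J-R+\varepsilon)_r}{(\beta_J-R+\varepsilon+1)_{r+t}}\,(-R/a)^t,
\]
which tends to \(a^{-t}\) termwise. The prefactor satisfies \(\Delta^{\mathrm{MII}}_{j,J,\varepsilon}\to\Delta^{\mathrm{CII}}_{j,J,\varepsilon}\): for \(j<q\) the quotient \((R-\beta_j)/(R-\xi_{J,K})\) tends to \(1\), and for \(j=q\) one has \(-(1+\tau)/(\tau(R-\xi))\to-1/a\). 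The coefficients \(\pi^-_{J,\varepsilon}\) do not depend on \(R\).

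\emph{Infinity-sector limit \eqref{eq:MII-CII-infinity-sector-limit}.} This is the main obstacle, because the coefficients \(\gamma_\ell\) are built from a triangular inversion whose entries must converge together. The steps are:
\begin{enumerate}
\item Each residue satisfies \(z^{\mathrm{MII}}_{J,K,\ell}\to z^{\mathrm{CII}}_{J,K,\ell}\), since \(\tau^u(A_q-\xi)_u=(a/R)^u(R-\xi)_u\to a^u\).
\item Each block satisfies \(\mathcal C^{\mathrm{MII}}_{j,J,K}\to\mathcal C^{\mathrm{CII}}_{j,J,K}\) coefficientwise, as shown in the proof of Theorem~\ref{thm:explicit-unreflected-B}. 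Together with step 1, this gives \(\mathcal E^{\mathrm{MII}}_{j,\ell}\to\mathcal E^{\mathrm{CII}}_{j,\ell}\).
\item In \eqref{eq:unreflected-infinity-transition}, the entries \(p^{\mathrm{MII}}_{K,\ell}\) converge to \(p^{\mathrm{CII}}_{K,\ell}\) termwise. For \(u\ge1\), the factor \(\tau^u\) times \([A_q]_u\) in \(\boldsymbol\Xi\) contributes at most \(u\) factors of \(R\) against \(R^{-u}\), so it has a finite limit. Writing \(\mathfrak h\) via the product \((1+\xi z)\) over the entries \(\xi\in[R]_u\) and using \(\tau R=a\), the surviving pieces reproduce \(\boldsymbol\Xi^{\mathrm{CII}}\), whose degree is shifted from \(d=\ell\) to \(d=\ell-u\). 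This bookkeeping needs to be checked with care.
\item The pivots satisfy \((1+\tau)^\ell\to1\), which is nonzero, and \(\pi_{\infty,K}\) does not depend on \(R\).
\item The finite chain formula \eqref{eq:unreflected-infinity-chain} then gives \(\gamma^{\mathrm{MII}}_\ell\to\gamma^{\mathrm{CII}}_\ell\), so \(\mathscr M^{\infty}_{j,q}\to\mathscr C^{\infty}_{j,q}\) coefficientwise.
\end{enumerate}

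\emph{Conclusion.} Summing the two sector limits reproduces \eqref{eq:MII-CII-component-limit}. The conversion to the original weights is \eqref{eq:unreflected-probability-components}: since \(\beta_j\) is fixed, each sector is divided by \(\beta_j\) for \(j<q\) and left unchanged for \(j=q\).
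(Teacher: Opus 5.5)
Your proposal is correct and follows essentially the same route as the paper. It uses the sector decomposition from Theorem~\ref{thm:explicit-unreflected-B} together with Proposition~\ref{prop:compact-unreflected-finite-poles}, the termwise Kamp\'e de F\'eriet limit for the finite-pole blocks, and, for the block at infinity, the convergence of $z_{J,K,\ell}$, $p_{K,\ell}$ and $\gamma_\ell$ (hence of $\mathcal E_{j,\ell}$) already established in the proof of Corollary~\ref{cor:unreflected-component-limits}.

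The bookkeeping you flag in step~3 does close. In $\tau^u\prod_{v=0}^{u-1}\bigl(1+(R+v)z\bigr)$ only the top elementary symmetric term $\tau^u(R)_u z^u\to a^u z^u$ survives. This deletes $[A_q]_u$ and lowers the extracted order by $u$, giving exactly $a^u\mathfrak h_{\ell-u-K}(\boldsymbol\Xi_{\ell,u}^{\mathrm{CII}};\boldsymbol\eta_u)$.
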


\begin{proof}
Apply Theorem~\ref{thm:explicit-unreflected-B} with
$X=\mathrm{CII}$ and use
Proposition~\ref{prop:compact-unreflected-finite-poles}; this gives
\eqref{eq:CII-all-sector-components}. The last part of the proof of that
proposition takes the terminating Kamp\'e de F\'eriet limit term by term and
gives \eqref{eq:MII-CII-finite-sector-limit}.

For the block at infinity, the proof of
Corollary~\ref{cor:unreflected-component-limits} establishes, for every
fixed indicated index,
\[
 z_{J,K,\ell}^{\mathrm{MII}}\longrightarrow
 z_{J,K,\ell}^{\mathrm{CII}},\qquad
 p_{K,\ell}^{\mathrm{MII}}\longrightarrow
 p_{K,\ell}^{\mathrm{CII}},\qquad
 \gamma_\ell^{\mathrm{MII}}\longrightarrow
 \gamma_\ell^{\mathrm{CII}}.
\]
Together with the finite-pole limit, the first convergence and
\eqref{eq:unreflected-corrected-infinity-vector} give
\(\mathcal E_{j,\ell}^{\mathrm{MII}}\to
\mathcal E_{j,\ell}^{\mathrm{CII}}\). The sum in
\eqref{eq:CII-infinity-block-definition} is finite, so the last convergence
proves \eqref{eq:MII-CII-infinity-sector-limit}. The degree bounds follow
from \eqref{eq:unreflected-corrected-infinity-vector}: its \(q\)-th
component contains \(\fall{k}{\ell}\), whereas every correction has
degree at most \(\ell-1\). Finally, summing the sector limits gives
\eqref{eq:MII-CII-component-limit}; the conversion to the original weights
is \eqref{eq:unreflected-probability-components}.
\end{proof}

At \(q=1\), empty products in
\eqref{eq:CII-pgf}--\eqref{eq:CII-B}
give \(G(z)=\e^{a(z-1)}\) and
\(A_D(k)=\pFq{2}{0}{-D,-k}{-}{-a^{-1}}\), and the sole type-I
polynomial is the degree-\(n\) Charlier polynomial.
The classical multiple-Charlier limit belongs to the multiple Askey scheme
\cite{BranquinhoDiazFoulquieManasWolfs2024Discrete} and is
mentioned only for comparison with the scalar case.

\subsection{The Charlier-I-like family}

The reflected Meixner-I-like family has the Charlier-I-like limit. We derive
its weights, polynomial \(P\), and signed sequence satisfying the type-I
moment conditions.

Now start from the reflected Meixner-I-like system. Keep
\((\delta_h,c_h)\), \(h\in\{1,\ldots,q-1\}\), fixed, write
\(\lambda_h=c_h/(1-c_h)\) for these indices, and let
\begin{equation}
 \delta_q=R,\qquad c_q=\frac{a}{R+a},
 \qquad \lambda_q=\frac aR,\qquad R\to\infty.
 \label{eq:CI-scaling}
\end{equation}
Under the scaling \eqref{eq:CI-scaling}, put
\begin{equation}
 F(z)=\e^{a(z-1)}\prod_{h=1}^{q-1}R_h(z)^{\delta_h}.
 \label{eq:CI-seed-pgf}
\end{equation}
The common factor \eqref{eq:CI-seed-pgf} is a normalized probability
generating function.
For a near-diagonal multi-index \(\mm\), set
\(D=\abs\mm\ge1\), \(n=D-1\), and
\(d_h=\delta_h+m_h\) for \(h\in\{1,\ldots,q-1\}\), and define
\(\widehat C_r(\ell;a)\coloneq(-a)^r{}_2F_0(-r,-\ell;-;-a^{-1})\)
and \(\kappa_{\mm}\coloneq\e^{-a}\prod_{h=1}^{q-1}(1-c_h)^{d_h}\).

This limit gives explicit formulas for the Charlier-I-like system and
exhibits its convolutional representation.

\begin{theorem}[Charlier-I-like system]
\label{thm:CI-system}
Let \(\mm\) be near the diagonal with \(D\coloneq\abs\mm\ge1\), set
\(n=D-1\), \(d_h=\delta_h+m_h\) for
\(h\in\{1,\ldots,q-1\}\), and
\(\kappa_{\mm}=\e^{-a}\prod_{h=1}^{q-1}(1-c_h)^{d_h}\).
Assume \(a>0\), and assume \(0<c_h<1\) and \(\delta_h>0\) for
every \(h\in\{1,\ldots,q-1\}\). As
\(R\to\infty\) under \eqref{eq:CI-scaling}, the following assertions hold.
\begin{enumerate}[label=\textnormal{(\roman*)}]
\item The \(q\) limiting positive weights have
probability generating functions
\begin{equation}
 G_q^{\mathrm{CI}}(z)=F(z),
 \qquad
 G_j^{\mathrm{CI}}(z)=F(z)R_j(z),\quad
 j\in\{1,\ldots,q-1\}.
 \label{eq:CI-row-pgfs}
\end{equation}
Their coefficient sequences are explicitly
\begin{equation}
 v_q^{\mathrm{CI}}
 =p_a*r_{\delta_1,c_1}*\cdots
 *r_{\delta_{q-1},c_{q-1}},
 \qquad
 v_j^{\mathrm{CI}}=v_q^{\mathrm{CI}}*r_{1,c_j},\quad
 j\in\{1,\ldots,q-1\}.
 \label{eq:CI-row-convolutions}
\end{equation}

\item The monic type-II polynomial is the terminating \(q\)-variable polynomial
\(\mathfrak K_q\) defined in \eqref{eq:multiple-KdF-simple}:
\begin{equation}
 P_{\mm}^{\mathrm{CI}}(\ell)
 =(-a)^D\mathfrak K_q
 \left(
 -D;-\ell,-d_1,\ldots,-d_{q-1};
 -a^{-1},\lambda_1/a,\ldots,\lambda_{q-1}/a
 \right).
 \label{eq:CI-A}
\end{equation}
It also satisfies
\begin{equation}
 P_{\mm}^{\mathrm{CI}}(\ell)
 =\sum_{\substack{s_1,\ldots,s_{q-1}\ge0\\
                  s_1+\cdots+s_{q-1}\le D}}
 \frac{D!}{(D-s_1-\cdots-s_{q-1})!}
 \prod_{h=1}^{q-1}\frac{(-d_h)_{s_h}}{s_h!}\lambda_h^{s_h}
 \widehat C_{D-s_1-\cdots-s_{q-1}}(\ell;a).
 \label{eq:CI-A-Charlier-expansion}
\end{equation}
The equivalent generating identity is
\begin{equation}
 P_{\mm}^{\mathrm{CI}}(\ell)
 =D![u^D](1+u)^\ell\e^{-au}
 \prod_{h=1}^{q-1}(1-\lambda_hu)^{d_h}.
\label{eq:CI-A-generating}
\end{equation}
The polynomial \(P_{\mm}^{\mathrm{CI}}\) is monic of degree \(D\) and,
for every \(j\in\{1,\ldots,q\}\) with \(m_j>0\), satisfies
\begin{equation}
 \sum_{\ell\ge0}\fall{\ell}{s}P_{\mm}^{\mathrm{CI}}(\ell)
 v_j^{\mathrm{CI}}(\ell)=0,
 \qquad s\in\{0,\ldots,m_j-1\}.
 \label{eq:CI-A-orthogonality}
\end{equation}

\item The signed sequence normalized by its factorial moment of order
\(n\) has generating function
\begin{equation}
 \mathcal H_{\mm}^{\mathrm{CI}}(z)
 =\frac{(z-1)^n}{n!}\e^{a(z-1)}
 \prod_{h=1}^{q-1}R_h(z)^{d_h}.
 \label{eq:CI-B}
\end{equation}
Its coefficients are
\begin{equation}
 \mathcal B_{\mm}^{\mathrm{CI}}(\ell)
 =\frac{(-1)^n\kappa_{\mm}a^\ell}{n!\,\ell!}
 \mathfrak K_q
 \left(
 -\ell;-n,d_1,\ldots,d_{q-1};
 -a^{-1},-c_1/a,\ldots,-c_{q-1}/a
 \right),
 \qquad \ell\in\Nzero.
 \label{eq:CI-B-coefficients}
\end{equation}
Its factorial moments are
\begin{equation}
 \sum_{\ell\ge0}\fall{\ell}{r}\mathcal B_{\mm}^{\mathrm{CI}}(\ell)
 =
 \begin{cases}
  0,&r\in\Nzero,\ r<n,\\
  1,&r=n.
 \end{cases}
 \label{eq:CI-B-moments}
\end{equation}
\end{enumerate}
\end{theorem}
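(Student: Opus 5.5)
The plan is to pass to the limit \(R\to\infty\) in the Meixner-I-like results (Theorems~\ref{thm:MI-rows}, \ref{thm:MI-A}, \ref{thm:MI-B}) under \eqref{eq:CI-scaling}. In each case the only moving ingredient is the \(q\)-th factor \(R_q(z)^{\delta_q+\cdot}\) or \((1-\lambda_qu)^{\delta_q+m_q}\).

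For (i), note that \(R_q(z)=(1-c_q)/(1-c_qz)=1/(1+\frac aR(1-z))\). Hence \(R_q(z)^{R+\epsilon}\to\e^{a(z-1)}\) locally uniformly on a disk containing \(\abs z\le1\), for any fixed shift \(\epsilon\). Applying this to \eqref{eq:MI-row-pgf} gives \eqref{eq:CI-row-pgfs}. The extra unit shift \(\delta_{q,j}\) disappears in the limit, which is why \(G_q^{\mathrm{CI}}=F\). Cauchy's formula then transfers the limit to coefficients and to every fixed factorial moment. The convolution formulas \eqref{eq:CI-row-convolutions} come from reading off the coefficients of the factors \(\e^{a(z-1)}\), \(R_h^{\delta_h}\) and \(R_j\), using \eqref{eq:elementary-positive-kernels} and \eqref{eq:elementary-discrete-weights}. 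Positivity and normalization follow as in Theorem~\ref{thm:MI-rows}.

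For (ii), start from \eqref{eq:MI-A-coefficient}. With \(\lambda_q=a/R\) and \(\delta_q=R\), one has \((1-\frac aRu)^{R+m_q}\to\e^{-au}\) coefficientwise in \(u\), and only coefficients up to \(u^D\) enter. This gives \eqref{eq:CI-A-generating}. Orthogonality \eqref{eq:CI-A-orthogonality} follows by passing to the limit in \eqref{eq:MI-A-orthogonality}: both the coefficients of the polynomial and finitely many fixed factorial moments converge.

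The remaining forms of \(P_{\mm}^{\mathrm{CI}}\) are obtained by expanding the product in \eqref{eq:CI-A-generating}:
\[
 (1+u)^\ell=\sum_{s_0}\frac{(-\ell)_{s_0}}{s_0!}(-u)^{s_0},\qquad
 \e^{-au}=\sum_t\frac{(-a)^tu^t}{t!},\qquad
 (1-\lambda_hu)^{d_h}=\sum_{s_h}\frac{(-d_h)_{s_h}}{s_h!}(\lambda_hu)^{s_h}.
\]
Collecting the terms of total degree \(D\) and factoring \((-a)^D\) should reproduce \eqref{eq:CI-A}. There the \(\mathfrak K_q\) structure comes from \(t=D-s_0-\sum s_h\), via \(1/t!=(-D)_{s_0+\sum s_h}(-1)^{\cdots}/D!\) up to sign. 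Grouping instead the \((1+u)^\ell\e^{-au}\) part and recognizing \(D'![u^{D'}](1+u)^\ell\e^{-au}=\widehat C_{D'}(\ell;a)\) gives \eqref{eq:CI-A-Charlier-expansion}. The main bookkeeping risk is getting the signs and the arguments \(-a^{-1}\) and \(\lambda_h/a\) exactly right. I expect to settle these by comparing the \(q=1\) case with the classical Charlier \({}_2F_0\). Monicity follows from the constant term being \(1\).

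For (iii), take the limit in \eqref{eq:MI-B-product-pgf} in the same way: \(R_q^{R+m_q}\to\e^{a(z-1)}\). This gives \eqref{eq:CI-B}, and the factorial moments \eqref{eq:CI-B-moments} are read off from the factor \((z-1)^n/n!\) times a function equal to \(1\) at \(z=1\).

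The coefficient formula \eqref{eq:CI-B-coefficients} is the step I expect to be the main obstacle. The approach is to write \(R_h(z)=(1-c_h)(1-c_hz)^{-1}\), pull out the constant \(\kappa_{\mm}\), and expand
\[
 (z-1)^n\e^{az}\prod_h(1-c_hz)^{-d_h}.
\]
The coefficient of \(z^\ell\) is a sum over \(s_0+\sum s_h+t=\ell\) of
\[
 \binom n{s_0}(-1)^{n-s_0}\,\frac{a^t}{t!}\prod_h\frac{(d_h)_{s_h}}{s_h!}c_h^{s_h}.
\]
Writing \(1/t!=(-\ell)_{\ell-t}(-1)^{\ell-t}/\ell!\) and \(\binom n{s_0}=(-1)^{s_0}(-n)_{s_0}/s_0!\), then factoring \(a^\ell/\ell!\), should put the sum into the \(\mathfrak K_q(-\ell;-n,d_1,\ldots;\,-a^{-1},-c_h/a)\) form. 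The sign conventions are again what must be checked carefully.
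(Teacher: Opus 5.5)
Your proposal is correct and follows essentially the same route as the paper. The paper also lets only the \(q\)-th Meixner-I factor move in \eqref{eq:MI-row-pgf}, \eqref{eq:MI-A-coefficient} and \eqref{eq:MI-B-product-pgf}, then expands the resulting products to obtain the \(\mathfrak K_q\), Charlier and coefficient forms. The sign bookkeeping you flagged works out as you set it up: \(D!/(D-S)!=(-1)^S(-D)_S\) combines with \((-a)^{D-S}\) to give the arguments \(-a^{-1}\) and \(\lambda_h/a\), and \(\ell!/t!=(-1)^{T}(-\ell)_{T}\) with \(T=\ell-t\) gives \(-a^{-1}\) and \(-c_h/a\).
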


\begin{proof}
Under \eqref{eq:CI-scaling},
\(R_q(z)=(1-c_q)/(1-c_qz)
=(1+a(1-z)/R)^{-1}\). Consequently,
\(R_q(z)^{R+\delta_{q,j}}
=(1+a(1-z)/R)^{-R-\delta_{q,j}}
\xrightarrow[R\to\infty]{}\e^{a(z-1)}\) locally uniformly in
\(z\). The remaining factors \(R_h(z)\), indexed by
\(h\in\{1,\ldots,q-1\}\),
stay fixed.  Substitution in \eqref{eq:MI-row-pgf} proves
\eqref{eq:CI-row-pgfs}.  Since \(\e^{a(z-1)}\) and \(R_h(z)^d\) are the
probability generating functions of \(p_a\) and \(r_{d,c_h}\),
respectively, multiplication gives the convolution identities
\eqref{eq:CI-row-convolutions}. Thus every limiting weight is nonnegative
and has total mass one.

In the coefficient formula \eqref{eq:MI-A-coefficient} of
Theorem~\ref{thm:MI-A}, the last factor satisfies
\((1-\lambda_qu)^{\delta_q+m_q}
=(1-au/R)^{R+m_q}\xrightarrow[R\to\infty]{}\e^{-au}\).
The extracted coefficient has the fixed order \(D\), so, as \(R\to\infty\), the limit can be
taken inside \([u^D]\), giving \eqref{eq:CI-A-generating}. Now expand
\((1+u)^\ell=\sum_{r\ge0}\fall{\ell}{r}u^r/r!\),
\(\e^{-au}=\sum_{t\ge0}(-a)^tu^t/t!\), and
\((1-\lambda_hu)^{d_h}
=\sum_{s\ge0}(-d_h)_s(\lambda_hu)^s/s!\).
Eliminating
\(t=D-r-s_1-\cdots-s_{q-1}\) yields \eqref{eq:CI-A}.  If instead
\(S=s_1+\cdots+s_{q-1}\) is fixed first, the remaining coefficient is
\((D-S)![u^{D-S}](1+u)^\ell\e^{-au}
=\widehat C_{D-S}(\ell;a)\),
which proves \eqref{eq:CI-A-Charlier-expansion}.  Taking \(u^D\) from
\((1+u)^\ell\) and constant terms from all other factors produces
\(\fall{\ell}{D}\); every other contribution has smaller degree in
\(\ell\).  Hence \(P_{\mm}^{\mathrm{CI}}\) is monic of degree \(D\).

As \(R\to\infty\), the convergence of the weight generating functions is uniform on a fixed
neighborhood of \(z=1\), so their derivatives of every fixed order
converge there.  Together with the coefficientwise convergence of the
degree-\(D\) polynomials, this permits passage to the limit in each
Meixner-I-like relation
\(\sum_{\ell\ge0}\fall{\ell}{s}
P_{\mm}^{\mathrm{MI}}(\ell)v_j^{\mathrm{MI}}(\ell)=0\),
for \(s\in\{0,\ldots,m_j-1\}\),
and proves \eqref{eq:CI-A-orthogonality}.

In \eqref{eq:MI-B-product-pgf}, the last negative-binomial factor tends
to \(\exp\{a(z-1)\}\), while every factor indexed by
\(h\in\{1,\ldots,q-1\}\) is fixed.
Therefore the product tends to
\((z-1)^n\e^{a(z-1)}\prod_{h=1}^{q-1}R_h(z)^{d_h}/n!\), which is
\eqref{eq:CI-B}. To compute its coefficient, write
\[
 \mathcal H_{\mm}^{\mathrm{CI}}(z)
 =\frac{(-1)^n\kappa_{\mm}}{n!}
 (1-z)^n\e^{az}\prod_{h=1}^{q-1}(1-c_hz)^{-d_h}.
\]
Expanding all factors and putting
\(T=r+s_1+\cdots+s_{q-1}\) gives
\[
 [z^\ell]\mathcal H_{\mm}^{\mathrm{CI}}(z)
 =\frac{(-1)^n\kappa_{\mm}a^\ell}{n!\,\ell!}
 \sum_{r,\boldsymbol s\ge0}
 (-\ell)_T\frac{(-n)_r}{r!}(-a^{-1})^r
 \prod_{h=1}^{q-1}\frac{(d_h)_{s_h}}{s_h!}
 \left(-\frac{c_h}{a}\right)^{s_h}.
\]
This is \eqref{eq:CI-B-coefficients} by
\eqref{eq:multiple-KdF-simple}.  Finally, the factor multiplying
\((z-1)^n/n!\) in \eqref{eq:CI-B} equals one at \(z=1\).
Leibniz's rule therefore gives zero for derivatives of every order
\(r\in\Nzero\) with \(r<n\) and one for the derivative of order \(n\), which is
\eqref{eq:CI-B-moments}.
\end{proof}

The polynomial \(\widehat C_r(\ell;a)\) used in
\eqref{eq:CI-A-Charlier-expansion} is the monic classical Charlier
polynomial.

Determining the type-I polynomials remains a finite polynomial problem.
Put
\begin{equation}
 H_{\mm}^{\mathrm{CI}}(z)=
 \e^{a(z-1)}\prod_{h=1}^{q-1}R_h(z)^{\delta_h+m_h},
 \qquad
 \Phi_{j,r}^{\mathrm{CI}}(z)=
 \frac{z^r(G_j^{\mathrm{CI}})^{(r)}(z)}
 {H_{\mm}^{\mathrm{CI}}(z)}.
 \label{eq:CI-Phi}
\end{equation}
Here \(j\in\{1,\ldots,q\}\) satisfies \(m_j>0\), and
\(r\in\{0,\ldots,m_j-1\}\).
Set \(K_{\mm}^{\mathrm{CI}}\coloneq\prod_{h=1}^{q-1}(1-c_h)^{-m_h}\).
Let \(\varkappa_j=1-c_j\) for \(j\in\{1,\ldots,q-1\}\), while
\(\varkappa_q=1\). Leibniz's rule removes the derivative from
\eqref{eq:CI-Phi} and gives the finite polynomial
\begin{equation}
 \Phi_{j,r}^{\mathrm{CI}}(z)
 =K_{\mm}^{\mathrm{CI}}\varkappa_j r!z^r
 \sum_{\substack{s_1,\ldots,s_{q-1}\ge0\\
                  s_1+\cdots+s_{q-1}\le r}}
 \frac{a^{r-s_1-\cdots-s_{q-1}}}
 {(r-s_1-\cdots-s_{q-1})!}
 \prod_{h=1}^{q-1}
 \frac{(\delta_h+\delta_{j,h})_{s_h}c_h^{s_h}}{s_h!}
 (1-c_hz)^{m_h-\delta_{j,h}-s_h}.
 \label{eq:CI-Phi-finite}
\end{equation}
Near-diagonality makes every displayed exponent nonnegative for
\(r\in\{0,\ldots,m_j-1\}\).
Then the coefficients of
\(B_j^{\mathrm{CI}}(\ell)=\sum_{r=0}^{m_j-1}b_{j,r}\fall{\ell}{r}\)
satisfy
\begin{equation}
 \sum_{j=1}^q\sum_{r=0}^{m_j-1}
 b_{j,r}\Phi_{j,r}^{\mathrm{CI}}(z)=\frac{(z-1)^n}{n!}.
 \label{eq:CI-component-identity}
\end{equation}
The next result proves uniqueness and normality when the parameters
corresponding to nonzero entries of \(\mm\) are distinct.

Let \(\mm\) be near the diagonal, let \(D=\abs\mm=n+1\), and
fix any ordering of the columns \((j,r)\), where
\(j\in\{1,\ldots,q\}\), \(r\in\Nzero\), and \(r<m_j\). Define the
entries of \(C^{\mathrm{CI}}\) to be zero for \(\nu<r\), and, for
\(0\le\nu\le n\) and \(\nu\ge r\), by the explicit finite sum
\begin{equation}
 C^{\mathrm{CI}}_{\nu,(j,r)}
 =K_{\mm}^{\mathrm{CI}}\varkappa_j\,r!
 \sum_{\substack{s_1,\ldots,s_{q-1}\ge0\\\sum_{h=1}^{q-1}s_h\le r}}
 \frac{a^{r-\sum_{h=1}^{q-1}s_h}}{(r-\sum_{h=1}^{q-1}s_h)!}
 \prod_{h=1}^{q-1}
 \frac{(\delta_h+\delta_{j,h})_{s_h}c_h^{s_h}}{s_h!}
 \sum_{\substack{t_1,\ldots,t_{q-1}\ge0\\
                  \sum_{h=1}^{q-1}t_h=\nu-r}}
 \prod_{h=1}^{q-1}
 \frac{(-(m_h-\delta_{j,h}-s_h))_{t_h}}{t_h!}c_h^{t_h}.
 \label{eq:CI-connection-entries}
\end{equation}

\begin{proposition}[Charlier-I-like near-diagonal normality]
\label{prop:CI-normality}
Let \(\mm\) be near the diagonal with \(D\coloneq\abs\mm\ge1\), put
\(n=D-1\), and let \(C^{\mathrm{CI}}\) be the matrix defined by
\eqref{eq:CI-connection-entries}. Then
\begin{equation}
 \det C^{\mathrm{CI}}
 =\pm (K_{\mm}^{\mathrm{CI}})^{D}
 \prod_{j=1}^{q-1}\left[(1-c_j)^{m_j}
 c_j^{\binom{m_j}{2}}
 \prod_{r=0}^{m_j-1}(\delta_j+1)_r\right]
 a^{\binom{m_q}{2}}
 \prod_{1\le j<h\le q-1}(c_h-c_j)^{m_jm_h}
 \prod_{j=1}^{q-1}(-c_j)^{m_jm_q}.
 \label{eq:CI-connection-determinant}
\end{equation}
If \(a>0\), \(0<c_h<1\) and \(\delta_h>0\) for every
\(h\in\{1,\ldots,q-1\}\), and the parameters \(c_h\) for which
\(h\in\{1,\ldots,q-1\}\) and \(m_h>0\) are pairwise distinct, the fixed
near-diagonal index \(\mm\) is normal and \eqref{eq:CI-B} is the normalized
type-I linear form.
\end{proposition}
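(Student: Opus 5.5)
The plan is to mimic the proof of Proposition~\ref{prop:MI-normality}: reduce the matrix \(C^{\mathrm{CI}}\) to a coefficient matrix of simple rational functions by a triangular change of basis, and then evaluate that matrix with a confluent Vandermonde argument. The new feature is that the \(q\)-th factor is now an exponential rather than a pole.

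First I note that \(C^{\mathrm{CI}}_{\nu,(j,r)}=[z^\nu]\Phi_{j,r}^{\mathrm{CI}}\), by expanding each \((1-c_hz)^{m_h-\delta_{j,h}-s_h}\) in \eqref{eq:CI-Phi-finite}. Put \(f(z)=\e^{az}\prod_{h<q}(1-c_hz)^{-\delta_h}\) and \(\mathcal D(z)=\prod_{h<q}(1-c_hz)^{m_h}\). Then, up to the constant \(K_{\mm}^{\mathrm{CI}}\varkappa_j\),
\[
\Phi_{j,r}^{\mathrm{CI}}=\mathcal D\,z^rf^{-1}\partial_z^r\bigl[f\,g_j\bigr],
\qquad g_j=(1-c_jz)^{-1}\ (j<q),\quad g_q=1.
\]
For \(j<q\), the function \(P_{j,r}=z^rf^{-1}\partial^r[f/(1-c_jz)]\) is rational. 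Its top principal part at \(c_j^{-1}\) is \((\delta_j+1)_rc_j^r z^r/(1-c_jz)^{r+1}\), and at any other pole its order is at most \(r\). Unlike the Meixner-I case, however, \(P_{j,r}\) is no longer \(\mathrm O(1/z)\) at infinity: the factor \(a\) coming from \(\e^{az}\) produces a polynomial part of degree up to \(r\).

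For \(j=q\), \(P_{q,r}=z^rf^{-1}\partial^rf\) is \(z^r\) times a polynomial in the quantities \(a\) and \(\delta_hc_h/(1-c_hz)\). Its leading behaviour at infinity is \(a^rz^r\), and it has poles of order at most \(r\) at the \(c_h^{-1}\).

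I therefore use the basis
\[
Q_{j,s}=\frac{z^s}{(1-c_jz)^{s+1}}\quad(j<q),\qquad Q_{q,s}=z^s,
\]
and order the columns globally by increasing \(r\). In this order the change of basis is triangular, with diagonal entries \((\delta_j+1)_rc_j^r\) for \(j<q\) and \(a^r\) for \(j=q\).

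One point needs care. A lower-order principal part at \(c_h^{-1}\) must be re-expressed in the \(Q_{h,s}\) with \(s<r\). The polynomial part of \(P_{j,r}\) for \(j<q\) has degree below \(r\), and for \(j=q\) the polynomial part below \(z^r\) likewise lies in the span of \(Q_{q,s}\), \(s<r\). This needs an expansion of the form \(z^s/(1-cz)^{t}\) into such \(Q\)'s, which I would check. Then
\[
\mathcal DQ_{j,s}=z^s(1-c_jz)^{m_j-s-1}\prod_{h\ne j,h<q}(1-c_hz)^{m_h},\qquad \mathcal DQ_{q,s}=z^s\mathcal D(z).
\]
These are exactly the \(E_{j,r}\) of Lemma~\ref{lem:confluent-polynomial-blocks} with nodes \(x_h=c_h\) for \(h<q\) and \(x_q=0\), the last node having multiplicity \(m_q\). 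The lemma explicitly allows a zero node.

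The lemma gives \(\pm\prod_{j<h<q}(c_h-c_j)^{m_jm_h}\prod_{j<q}(0-c_j)^{m_jm_q}\). Multiplying by the triangular diagonal and by the column constants \(K_{\mm}^{\mathrm{CI}}\varkappa_j\) yields the following factors:
\[
(K_{\mm}^{\mathrm{CI}})^D,\qquad \prod_{j<q}(1-c_j)^{m_j},\qquad c_j^{\binom{m_j}2}\prod_r(\delta_j+1)_r\ (j<q),\qquad a^{\binom{m_q}2}.
\]
Together these are \eqref{eq:CI-connection-determinant}. If \(m_q=0\) or some \(m_h=0\), the same argument applies to the active indices only.

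Under the hypotheses every factor is nonzero: the \(c_h\) are distinct and nonzero, and \(a,\delta_h>0\). So the \(\Phi_{j,r}^{\mathrm{CI}}\) form a basis of the polynomials of degree at most \(n\), and \eqref{eq:CI-component-identity} has a unique solution. Since multiplying the weights by \(\fall{\ell}{r}\) corresponds to \(z^rG^{(r)}\), a null vector of the type-I moment matrix would give \(\sum b_{j,r}\Phi_{j,r}^{\mathrm{CI}}=\mathrm O((z-1)^{n+1})\). A polynomial of degree at most \(n\) with that vanishing order is zero, so nonsingularity of \(C^{\mathrm{CI}}\) is equivalent to nonsingularity of the moment matrix. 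This proves normality, and Theorem~\ref{thm:CI-system}(iii) identifies \eqref{eq:CI-B} as the normalized type-I form.

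The main obstacle is the triangularity bookkeeping for the \(j=q\) columns and for the polynomial parts of the \(j<q\) columns. I expect it to work, but the overall \(\pm\) sign is the part I would verify last.
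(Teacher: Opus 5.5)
Your proposal is correct and follows the paper's proof essentially step for step: the same triangular change of basis, ordered globally by derivative order, to \(Q_{j,s}=z^s/(1-c_jz)^{s+1}\) and \(Q_{q,s}=z^s\) with diagonal factors \((\delta_j+1)_rc_j^r\) and \(a^r\), followed by Lemma~\ref{lem:confluent-polynomial-blocks} with the zero node \(x_q=0\). The re-expansion you flagged for checking does hold, since \(z^s/(1-cz)^{s+1}\), \(0\le s<r\), span the same space as \((1-cz)^{-k}\), \(1\le k\le r\); your null-vector argument (a polynomial of degree at most \(n\) vanishing to order \(n+1\) at \(z=1\)) spells out the passage from \(\det C^{\mathrm{CI}}\ne0\) to normality, which the paper leaves implicit.
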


\begin{proof}
Expanding the powers in \eqref{eq:CI-Phi-finite} gives
\eqref{eq:CI-connection-entries}; thus \(C^{\mathrm{CI}}\) is the
coefficient matrix of the polynomials \(\Phi_{j,r}^{\mathrm{CI}}\) in
the basis \(1,z,\ldots,z^n\).
Put
\[
 f(z)=\e^{a(z-1)}\prod_{h=1}^{q-1}(1-c_hz)^{-\delta_h},
 \quad Q_{j,r}(z)=\frac{z^r}{(1-c_jz)^{r+1}}\
 \quad(j\in\{1,\ldots,q-1\}),
 \quad Q_{q,r}(z)=z^r.
\]
For \(j\in\{1,\ldots,q-1\}\), define
\(P_{j,r}=z^rf^{-1}(\dd/\dd z)^r[f/(1-c_jz)]\), while
\(P_{q,r}=z^rf^{-1}f^{(r)}\). The same principal-part argument as above
gives
\[
 P_{j,r}-(\delta_j+1)_rc_j^rQ_{j,r}
 \in\operatorname{span}\{Q_{h,s}:h\in\{1,\ldots,q\},\
 s\in\Nzero,\ s<r\},
 \qquad j\in\{1,\ldots,q-1\}.
\]
Here and below, \(h\in\{1,\ldots,q\}\). The polynomial
remainder has degree at most \(r-1\). For the column with \(j=q\), the exponential
factor gives
\[
 P_{q,r}-a^rQ_{q,r}
 \in\operatorname{span}\{Q_{h,s}:h\in\{1,\ldots,q\},\
 s\in\Nzero,\ s<r\}.
\]
With \(c_q=0\) and
\(D_{\mm}(z)=\prod_{h=1}^{q-1}(1-c_hz)^{m_h}\), one has
\(D_{\mm}Q_{j,r}=E_{j,r}\). Ordering the columns globally by increasing
derivative order therefore gives a triangular change of basis. For
\(j\in\{1,\ldots,q-1\}\), its diagonal factor in
\(\Phi_{j,r}^{\mathrm{CI}}\) is
\(K_{\mm}^{\mathrm{CI}}(1-c_j)c_j^r(\delta_j+1)_r\); for \(j=q\), it is
\(K_{\mm}^{\mathrm{CI}}a^r\). Their product, together with
Lemma~\ref{lem:confluent-polynomial-blocks}, is exactly
\eqref{eq:CI-connection-determinant}: the pairs involving the zero node
give \(\prod_{j=1}^{q-1}(-c_j)^{m_jm_q}\). Thus the formula is proved up to
the sign set by the column order. Its nonvanishing makes
\eqref{eq:CI-component-identity} uniquely solvable.
\end{proof}

The polynomial identity above admits the explicit finite inversion in terms
of the coefficients \(\mathscr L_L\) defined in
\eqref{eq:typeI-reflected-L}.

\subsection{Explicit Charlier-I-like type-I components}
We use the terminating Lauricella coefficients \(\mathscr L_L\) and
\(\mathscr H_L\) defined in
\eqref{eq:typeI-reflected-L}--\eqref{eq:typeI-reflected-H}.

Fix a near-diagonal \(\boldsymbol m\) with
\(\abs{\boldsymbol m}\ge1\), and set
\(n=\sum_{j=1}^q m_j-1\),
\(\mathcal I_{\boldsymbol m}^-\coloneq
\{g\in\{1,\ldots,q-1\}:m_g>0\}\), and put
\[
 g_h(z)\coloneq R_h(z)=\frac{1-c_h}{1-c_hz},
 \qquad h\in\{1,\ldots,q-1\}.
\]
Also set
\[
 F(z)=\mathrm e^{a(z-1)}
 \prod_{h=1}^{q-1}g_h(z)^{\delta_h},\qquad
 G_q^{\mathrm{CI}}(z)=F(z),\qquad
 G_j^{\mathrm{CI}}(z)=F(z)g_j(z)\quad
 (j\in\{1,\ldots,q-1\}),
\]
and
\[
 H_{\boldsymbol m}^{\mathrm{CI}}(z)
 =\mathrm e^{a(z-1)}
 \prod_{h=1}^{q-1}g_h(z)^{\delta_h+m_h},\qquad
 \Phi_{j,r}^{\mathrm{CI}}(z)
 =\frac{z^r(G_j^{\mathrm{CI}})^{(r)}(z)}
 {H_{\boldsymbol m}^{\mathrm{CI}}(z)}.
\]
Here \(j\in\{1,\ldots,q\}\) satisfies \(m_j>0\), and
\(r\in\{0,\ldots,m_j-1\}\).
The component identity is
\begin{equation}
 \sum_{j=1}^q\sum_{r=0}^{m_j-1}
 b_{j,r}^{\mathrm{CI}}\Phi_{j,r}^{\mathrm{CI}}(z)
 =\frac{(z-1)^n}{n!}.
 \label{eq:typeI-CI-component-identity}
\end{equation}
Set \(\mathcal D_-(z)=\prod_{g=1}^{q-1}(1-c_gz)^{m_g}\),
\(K_-=\prod_{g=1}^{q-1}(1-c_g)^{-m_g}\), and
\(d_-=\sum_{g=1}^{q-1}m_g\).
The label set is
\[
 \mathscr P_{\boldsymbol m}^{\mathrm{CI}}
 =\{(h,s):h\in\{1,\ldots,q-1\},\ s\in\Nzero,\ s<m_h\}
 \cup\{(q,s):s\in\Nzero,\ s<m_q\},
\]
ordered by strict increase of the second coordinate. For
\(h\in\mathcal I_{\boldsymbol m}^-\) and
\(s\in\{0,\ldots,m_h-1\}\), put
\begin{equation}
 \Pi_{h,s}^{\mathrm{CI}}
 \coloneq
 \frac{(1-c_h)^n}{n!c_h^n}
 \prod_{\substack{g\in\mathcal I_{\boldsymbol m}^-\\g\ne h}}
 \left(\frac{c_h-c_g}{c_h}\right)^{-m_g}
 \mathscr L_{m_h-s-1}
 \left(
 n;(m_g)_{\substack{g\in\mathcal I_{\boldsymbol m}^-\\g\ne h}};
 \frac1{1-c_h},
 \left(-\frac{c_g}{c_h-c_g}\right)_{
       \substack{g\in\mathcal I_{\boldsymbol m}^-\\g\ne h}}
 \right).
 \label{eq:typeI-CI-Pi-finite}
\end{equation}
For \(s\in\Nzero\) with \(s<m_q\), put
\begin{equation}
 \Pi_{q,s}^{\mathrm{CI}}
 \coloneq
 \frac{(-1)^{d_-}}
 {n!\displaystyle\prod_{g\in\mathcal I_{\boldsymbol m}^-}c_g^{m_g}}
 \mathscr L_{m_q-1-s}
 \left(
 n;(m_g)_{g\in\mathcal I_{\boldsymbol m}^-};
 1,(c_g^{-1})_{g\in\mathcal I_{\boldsymbol m}^-}
 \right).
 \label{eq:typeI-CI-Pi-infinity}
\end{equation}

For \(j\in\{1,\ldots,q\}\), define
\[
 \epsilon_j=\begin{cases}1,&j\in\{1,\ldots,q-1\},\\0,&j=q,\end{cases}
 \qquad
 \kappa_j=\begin{cases}
 K_-(1-c_j),&j\in\{1,\ldots,q-1\},\\
 K_-,&j=q.
 \end{cases}
\]
For \(j\in\{1,\ldots,q\}\) with \(m_j>0\),
\(r\in\{0,\ldots,m_j-1\}\), \(t\in\Nzero\), and
\(\boldsymbol\alpha\in
\mathbb N_0^{\mathcal I_{\boldsymbol m}^-}\) satisfying
\(t+\sum_{g\in\mathcal I_{\boldsymbol m}^-}\alpha_g=r\), put
\(d_g(\boldsymbol\alpha,j)=\alpha_g+\delta_{g,j}\) for
\(g\in\mathcal I_{\boldsymbol m}^-\),
\begin{equation}
 W_{j,r}^{\mathrm{CI}}(t,\boldsymbol\alpha)
 =\frac{r!a^t}{t!}
 \prod_{g\in\mathcal I_{\boldsymbol m}^-}
 \frac{(\delta_g+\delta_{g,j})_{\alpha_g}}{\alpha_g!}
 c_g^{\alpha_g}.
 \label{eq:typeI-CI-W}
\end{equation}
For labels \(\lambda=(h,s)\) and \(\mu=(j,r)\),
\(h\in\mathcal I_{\boldsymbol m}^-\), \(s\in\Nzero\), \(s<m_h\),
\(j\in\{1,\ldots,q\}\) with \(m_j>0\), and
\(r\in\{0,\ldots,m_j-1\}\), define
\begin{multline}
 Z_{\lambda,\mu}^{\mathrm{CI}}
 \coloneq
 \kappa_jc_h^{-r}
 \sum_{\substack{t\in\Nzero,\ 
                  \boldsymbol\alpha\in\mathbb N_0^{\mathcal I_{\boldsymbol m}^-}\\
                  t+\sum_{g\in\mathcal I_{\boldsymbol m}^-}\alpha_g=r\\
 d_h(\boldsymbol\alpha,j)\ge s+1}}
 W_{j,r}^{\mathrm{CI}}(t,\boldsymbol\alpha)
 \prod_{\substack{g\in\mathcal I_{\boldsymbol m}^-\\g\ne h}}
 \left(\frac{c_h-c_g}{c_h}\right)^{-d_g(\boldsymbol\alpha,j)}
 \\
 {}\times
 \mathscr L_{d_h(\boldsymbol\alpha,j)-s-1}
 \left(
 r;(d_g(\boldsymbol\alpha,j))_{
      \substack{g\in\mathcal I_{\boldsymbol m}^-\\g\ne h}};
 1,
 \left(-\frac{c_g}{c_h-c_g}\right)_{
       \substack{g\in\mathcal I_{\boldsymbol m}^-\\g\ne h}}
 \right).
 \label{eq:typeI-CI-Z-finite}
\end{multline}
For labels \(\lambda=(q,s)\) and \(\mu=(j,r)\), where
\(s\in\Nzero\) and \(s<m_q\),
\(j\in\{1,\ldots,q\}\) satisfies \(m_j>0\), and
\(r\in\{0,\ldots,m_j-1\}\), define
\begin{equation}
 Z_{\lambda,\mu}^{\mathrm{CI}}
 \coloneq
 \kappa_j
 \sum_{\substack{t\in\Nzero,\ 
                  \boldsymbol\alpha\in\mathbb N_0^{\mathcal I_{\boldsymbol m}^-}\\
                  t+\sum_{g\in\mathcal I_{\boldsymbol m}^-}\alpha_g=r\\
                  t-\epsilon_j\ge s}}
 W_{j,r}^{\mathrm{CI}}(t,\boldsymbol\alpha)
 (-1)^{\sum_{g\in\mathcal I_{\boldsymbol m}^-}d_g(\boldsymbol\alpha,j)}
 \prod_{g\in\mathcal I_{\boldsymbol m}^-}
 c_g^{-d_g(\boldsymbol\alpha,j)}
 \mathscr H_{t-\epsilon_j-s}
 \left(
 (d_g(\boldsymbol\alpha,j))_{g\in\mathcal I_{\boldsymbol m}^-};
 (c_g^{-1})_{g\in\mathcal I_{\boldsymbol m}^-}
 \right).
 \label{eq:typeI-CI-Z-infinity}
\end{equation}
For \(h\in\mathcal I_{\boldsymbol m}^-\) and
\(s\in\Nzero\) with \(s<m_h\), and for \(s\in\Nzero\) with
\(s<m_q\), respectively, the diagonal terms are
\begin{equation}
 \Delta_{h,s}^{\mathrm{CI}}
 =K_-(1-c_h)(\delta_h+1)_s,
 \qquad
 \Delta_{q,s}^{\mathrm{CI}}=K_-a^s.
 \label{eq:typeI-CI-Delta}
\end{equation}
For \(\lambda\in\mathscr P_{\boldsymbol m}^{\mathrm{CI}}\), define
\begin{equation}
 \mathfrak C_\lambda^{\mathrm{CI}}
 \coloneq
 \sum_{p\ge0}(-1)^p
 \sum_{\substack{\lambda=\lambda_0\prec\lambda_1\prec\cdots\prec\lambda_p\\
                  \lambda_v\in\mathscr P_{\boldsymbol m}^{\mathrm{CI}}}}
 \frac{\Pi_{\lambda_p}^{\mathrm{CI}}}
 {\Delta_{\lambda_p}^{\mathrm{CI}}}
 \prod_{v=0}^{p-1}
 \frac{Z_{\lambda_v,\lambda_{v+1}}^{\mathrm{CI}}}
 {\Delta_{\lambda_v}^{\mathrm{CI}}}.
 \label{eq:typeI-CI-finite-sum}
\end{equation}

Set
\begin{align}
 B_j^{\mathrm{CI}}(k)
 &=\sum_{r=0}^{m_j-1}
 \mathfrak C_{(j,r)}^{\mathrm{CI}}r!\binom{k}{r},
 &&j\in\{1,\ldots,q-1\},\label{eq:explicit-CI-components-finite}\\
 B_q^{\mathrm{CI}}(k)
 &=\sum_{r=0}^{m_q-1}
 \mathfrak C_{(q,r)}^{\mathrm{CI}}r!\binom{k}{r}.
 \label{eq:explicit-CI-component-base}
\end{align}

\begin{theorem}[Charlier-I-like type-I components and normality]
\label{thm:explicit-CI-B}
Let \(\boldsymbol m\) be near the diagonal with
\(\abs{\boldsymbol m}\ge1\), and set \(n=\abs{\boldsymbol m}-1\).
Assume \(a>0\), and assume
\(0<c_h<1\) and \(\delta_h>0\) for
\(h\in\{1,\ldots,q-1\}\). Also assume that the \(c_h\) for which
\(h\in\{1,\ldots,q-1\}\) and \(m_h>0\) are pairwise distinct. Then
\(\deg B_j^{\mathrm{CI}}<m_j\) for every
\(j\in\{1,\ldots,q\}\) with \(m_j>0\). Moreover, for every
\(r\in\{0,\ldots,n\}\),
\begin{equation}
 \sum_{k\ge0} r!\binom{k}{r}
 \sum_{j=1}^q B_j^{\mathrm{CI}}(k)v_j^{\mathrm{CI}}(k)
 =\begin{cases}0,&0\le r<n,\\1,&r=n.\end{cases}
 \label{eq:typeI-CI-component-moments}
\end{equation}
The multi-index \(\boldsymbol m\) is normal, and
\((B_1^{\mathrm{CI}},\ldots,B_q^{\mathrm{CI}})\) is the unique tuple of
polynomials with the stated degree bounds satisfying
\eqref{eq:typeI-CI-component-moments}.
\end{theorem}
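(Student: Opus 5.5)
The plan mirrors the proof of Theorem~\ref{thm:explicit-MI-B}, with the pole at \(z=\infty\) (coming from the exponential factor) treated as an extra sector. First I rewrite the component problem as a rational interpolation problem. With \(f(z)=\mathrm e^{a(z-1)}\prod_{h<q}(1-c_hz)^{-\delta_h}\) as in the proof of Proposition~\ref{prop:CI-normality}, the multinomial Leibniz formula gives
\[
 P_{j,r}(z)=z^r\sum_{t+\abs{\boldsymbol\alpha}=r}
 W_{j,r}^{\mathrm{CI}}(t,\boldsymbol\alpha)
 \prod_{g\in\mathcal I_{\boldsymbol m}^-}(1-c_gz)^{-d_g(\boldsymbol\alpha,j)},
\]
and \(\Phi_{j,r}^{\mathrm{CI}}/\mathcal D_-=\kappa_jP_{j,r}\). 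Here the exponential contributes the factor \(a^t/t!\), and \(\epsilon_j\) records the extra factor \(1/(1-c_jz)\) present for \(j<q\). The right-hand side becomes \((z-1)^n/(n!\mathcal D_-(z))\).

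I then compute principal parts. At \(z=(1-w)/c_h\), expanding the factors that are analytic at \(w=0\) produces the coefficient of \((1-c_hz)^{-s-1}\). For \(P_{j,r}\) this coefficient is \eqref{eq:typeI-CI-Z-finite}, and for the target it is \(\Pi_{h,s}^{\mathrm{CI}}\) in \eqref{eq:typeI-CI-Pi-finite}; the calculation is the same as in the Meixner-I case. At infinity I write \(z=1/w\) and take the polynomial part. For \(P_{j,r}\), each summand is \(z^r\) times a product of \((1-c_gz)^{-d_g}=(-c_gz)^{-d_g}(1-w/c_g)^{-d_g}\). Its total degree is \(r-\sum_g d_g=t-\epsilon_j\), so the coefficient of \(z^s\) is the \(\mathscr H_{t-\epsilon_j-s}\) expression in \eqref{eq:typeI-CI-Z-infinity}. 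The polynomial part of the target, in the monomials \(z^s\) with \(s<m_q\), is \eqref{eq:typeI-CI-Pi-infinity}. Throughout I use \(Q_{q,s}=z^s\), matching Proposition~\ref{prop:CI-normality}.

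Triangularity comes next. The coefficient \(Z_{\lambda,\mu}\) vanishes unless \(\lambda=\mu\) or the second coordinate of \(\lambda\) is smaller than that of \(\mu\), and the diagonal values are \eqref{eq:typeI-CI-Delta}. At a finite pole this follows because \(d_h\le r+\delta_{h,j}\). At infinity it follows from \(t-\epsilon_j\le r\), with equality only when \(j=q\) and \(t=r\), which gives \(\Delta_{q,s}^{\mathrm{CI}}=K_-a^s\). Separating the first edge of each path in \eqref{eq:typeI-CI-finite-sum} yields
\[
 \Delta_\lambda\mathfrak C_\lambda+\sum_{\lambda\prec\mu}Z_{\lambda,\mu}\mathfrak C_\mu=\Pi_\lambda .
\]
Hence \(\sum_{j,r}\mathfrak C_{(j,r)}\kappa_jP_{j,r}\) and \((z-1)^n/(n!\mathcal D_-)\) have the same principal parts at every finite pole and the same polynomial part. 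Their difference is therefore a rational function with no poles that vanishes at infinity, so it is zero. This proves \eqref{eq:typeI-CI-component-identity}. Because the target has degree \(n\), it suffices to match the principal and polynomial parts in the span of the \(\Phi\)'s.

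Finally, \eqref{eq:typeI-CI-component-identity} gives \eqref{eq:typeI-CI-component-moments}, since multiplying \(v_j^{\mathrm{CI}}\) by \(r!\binom kr\) corresponds to \(z^r\partial_z^r\) acting on the generating function, together with \eqref{eq:CI-B-moments}. The degree bounds hold by construction. Normality and uniqueness follow from Proposition~\ref{prop:CI-normality}, since the distinctness of the \(c_h\) makes \(\det C^{\mathrm{CI}}\ne0\).

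I expect the main obstacle to be the bookkeeping at infinity: getting the signs \((-1)^{\sum d_g}\), the powers \(c_g^{-d_g}\), the shift \(\epsilon_j\), and the fact that \(\mathscr L\) with argument \(1\) encodes \((z-1)^n\) in \eqref{eq:typeI-CI-Pi-infinity} all exactly right. I would check these against \(q=1\), where only the infinity sector remains and the result must reduce to the Charlier polynomial.
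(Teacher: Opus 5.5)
Your proposal is correct and follows the paper's proof essentially step for step: Leibniz expansion of $P_{j,r}$, local expansions at each $c_h^{-1}$ and at infinity producing \eqref{eq:typeI-CI-Z-finite}--\eqref{eq:typeI-CI-Pi-infinity}, first-edge separation of the path sum into the triangular system, the ``no poles, vanishes at infinity'' conclusion, and the moments read off from Theorem~\ref{thm:CI-system}. The differences are minor. You cite Proposition~\ref{prop:CI-normality} for normality and uniqueness, whereas the paper points to the nonzero pivots \eqref{eq:typeI-CI-Delta}. You should also state explicitly that near-diagonality keeps every principal-part order and polynomial degree of $\kappa_jP_{j,r}$ inside the label set, namely $d_h\le r+\delta_{h,j}\le m_h$ and $t-\epsilon_j\le m_q-1$; your zero-difference conclusion uses this without saying so.
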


By \eqref{eq:typeI-CI-finite-sum}, every coefficient in
\eqref{eq:explicit-CI-components-finite}--
\eqref{eq:explicit-CI-component-base} is a finite sum of products of
terminating Lauricella polynomials.

\begin{proof}
Let \(f(z)=\mathrm e^{a(z-1)}
\prod_{g=1}^{q-1}(1-c_gz)^{-\delta_g}\).
Set
\[
 P_{j,r}(z)=z^rf(z)^{-1}
 \left(\frac{\mathrm d}{\mathrm dz}\right)^r
 \begin{cases}
  f(z)/(1-c_jz),&j\in\{1,\ldots,q-1\},\\
  f(z),&j=q.
 \end{cases}
\]
In the multinomial Leibniz formula, let \(t\) derivatives fall on the
exponential and \(\alpha_g\) derivatives fall on the factor indexed by
\(g\).  Division by \(f\) then gives
\[
 P_{j,r}(z)=z^r
 \sum_{\substack{t\in\Nzero,\ 
                  \boldsymbol\alpha\in\mathbb N_0^{\mathcal I_{\boldsymbol m}^-}\\
                  t+\sum_{g\in\mathcal I_{\boldsymbol m}^-}\alpha_g=r}}
 W_{j,r}^{\mathrm{CI}}(t,\boldsymbol\alpha)
 \prod_{g\in\mathcal I_{\boldsymbol m}^-}
 (1-c_gz)^{-d_g(\boldsymbol\alpha,j)}.
\]
If a finite entry of \(\boldsymbol m\) is zero, near-diagonality gives
\(m_g\in\{0,1\}\) for every \(g\in\{1,\ldots,q-1\}\), so only \(r=0\)
occurs; otherwise \(m_g>0\) for every
\(g\in\{1,\ldots,q-1\}\). Hence the indexed sums
may be restricted to \(\mathcal I_{\boldsymbol m}^-\).  This proves
\eqref{eq:typeI-CI-W}. The constants in the probability generating
functions give
\(\Phi_{j,r}^{\mathrm{CI}}(z)/\mathcal D_-(z)
=\kappa_jP_{j,r}(z)\).

Fix a finite pole \(z=c_h^{-1}\) and write \(z=(1-w)/c_h\).  Expanding
the factors with \(g\ne h\) as power series in \(w\), the coefficient of
\(w^{-s-1}\) is precisely \eqref{eq:typeI-CI-Z-finite}. At infinity, the
summand indexed by \((t,\boldsymbol\alpha)\) starts with the power
\(z^{r-\sum_g d_g(\boldsymbol\alpha,j)}=z^{t-\epsilon_j}\); expansion in \(z^{-1}\)
therefore gives
\eqref{eq:typeI-CI-Z-infinity}. A term with \(s=r\) can occur only
on the diagonal: for a finite label it has \(h=j\), \(t=0\), and
\(\alpha_j=r\), whereas at infinity it has \(j=q\), \(t=r\), and
\(\boldsymbol\alpha=0\).  These two contributions are respectively
\(K_-(1-c_h)(\delta_h+1)_s\) and \(K_-a^s\), proving
\eqref{eq:typeI-CI-Delta}; every other nonzero entry indexed by
\(\lambda=(h,s)\) and \(\mu=(j,r)\) satisfies \(s<r\).

Apply the same local expansions to
\((z-1)^n/[n!\mathcal D_-(z)]\).  At \(c_h^{-1}\), its principal-part
coefficients are \eqref{eq:typeI-CI-Pi-finite}; at infinity, its
polynomial coefficients are \eqref{eq:typeI-CI-Pi-infinity}.  Thus these
are exactly the target coefficients \(\Pi_\lambda^{\mathrm{CI}}\), with the
same ordering as the coefficients \(Z_{\lambda,\mu}^{\mathrm{CI}}\).

As in the Meixner-I-like case, separating the first label in the finite sum
\eqref{eq:typeI-CI-finite-sum} yields
\[
 \Delta_\lambda^{\mathrm{CI}}\mathfrak C_\lambda^{\mathrm{CI}}
 +\sum_{\lambda\prec\mu}Z_{\lambda,\mu}^{\mathrm{CI}}
 \mathfrak C_\mu^{\mathrm{CI}}
 =\Pi_\lambda^{\mathrm{CI}}.
\]
Thus every finite principal part and every coefficient of the polynomial
part at infinity agrees on the two sides of
\eqref{eq:typeI-CI-component-identity}.  This proves the identity and
the component formulas.  The diagonal terms in
\eqref{eq:typeI-CI-Delta} are nonzero, which proves uniqueness. The degree
bounds follow from \eqref{eq:explicit-CI-components-finite}--
\eqref{eq:explicit-CI-component-base}, and Theorem~\ref{thm:CI-system}
gives \eqref{eq:typeI-CI-component-moments}.
\end{proof}

\begin{corollary}[Explicit Charlier-I-like type-I polynomials]
\label{cor:CI-explicit-components}
Under the hypotheses of Proposition~\ref{prop:CI-normality}, the
polynomials in \eqref{eq:explicit-CI-components-finite} and
\eqref{eq:explicit-CI-component-base} are the unique components of the
signed sequence \eqref{eq:CI-B}. For \(q=1\), the formula
reduces to
\begin{equation}
 B_1^{\mathrm{CI}}(\ell)
 =\frac{(-1)^n}{n!}{}_2F_0(-n,-\ell;-;-a^{-1}).
 \label{eq:CI-scalar-component}
\end{equation}
\end{corollary}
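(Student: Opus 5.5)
The first assertion needs no new work. Proposition~\ref{prop:CI-normality} shows that the coefficient matrix $C^{\mathrm{CI}}$ of the polynomials $\Phi_{j,r}^{\mathrm{CI}}$ is nonsingular under the stated distinctness hypothesis. Hence \eqref{eq:CI-component-identity} has a unique solution $(b_{j,r})$. Theorem~\ref{thm:explicit-CI-B} shows that the coefficients $\mathfrak C_{(j,r)}^{\mathrm{CI}}$ of \eqref{eq:typeI-CI-finite-sum} solve \eqref{eq:typeI-CI-component-identity}, which is the same identity. So the polynomials \eqref{eq:explicit-CI-components-finite}--\eqref{eq:explicit-CI-component-base} are these unique components. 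By \eqref{eq:typeI-CI-component-moments} and the moment formula \eqref{eq:CI-B-moments}, their weighted combination has the same factorial moments of orders $0,\ldots,n$ as \eqref{eq:CI-B}.

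To upgrade moment agreement to an identity of sequences, I would argue at the level of generating functions. The generating function of $\sum_j B_j^{\mathrm{CI}}v_j^{\mathrm{CI}}$ equals $\sum_{j,r} b_{j,r}z^r(G_j^{\mathrm{CI}})^{(r)}(z)$. By the definition of $\Phi_{j,r}^{\mathrm{CI}}$, this is $H_{\mm}^{\mathrm{CI}}(z)\sum b_{j,r}\Phi_{j,r}^{\mathrm{CI}}(z)$. Applying \eqref{eq:CI-component-identity}, it becomes $H_{\mm}^{\mathrm{CI}}(z)(z-1)^n/n!$, which is exactly \eqref{eq:CI-B}. This is an identity of entire (or at least analytic near the unit disk) functions, so the coefficients agree. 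Uniqueness of the components then follows from normality.

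For $q=1$ there are no finite labels, $\mathcal D_-=1$, $K_-=1$ and $G_1^{\mathrm{CI}}=\e^{a(z-1)}$. Hence $\Phi_{1,r}^{\mathrm{CI}}(z)=a^rz^r$, and \eqref{eq:CI-component-identity} reads $\sum_r b_{1,r}a^rz^r=(z-1)^n/n!$. This gives $b_{1,r}=\binom nr(-1)^{n-r}a^{-r}/n!$. Then $B_1(\ell)=\sum_r b_{1,r}\fall{\ell}{r}$. Rewriting $\fall{\ell}{r}=(-1)^r(-\ell)_r$ and $\binom nr(-1)^r=(-n)_r/r!$ gives
\[
 B_1(\ell)=\frac{(-1)^n}{n!}\sum_r\frac{(-n)_r(-\ell)_r}{r!}(-a^{-1})^r,
\]
which is \eqref{eq:CI-scalar-component}. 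The main point requiring care is the sign bookkeeping in this last step. The general part of the statement is only an assembly of the cited results.
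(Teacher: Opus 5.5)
Your proposal is correct and follows the paper's proof. The paper simply applies Theorem~\ref{thm:explicit-CI-B} and, for \(q=1\), keeps only the polynomial part at infinity; your generating-function step makes explicit the identification of \(\sum_jB_j^{\mathrm{CI}}v_j^{\mathrm{CI}}\) with \eqref{eq:CI-B}, which the paper leaves inside that theorem and Proposition~\ref{prop:CI-normality}. For \(q=1\), your direct solution \(b_{1,r}=\binom nr(-1)^{n-r}a^{-r}/n!\) is exactly what the chain sum \eqref{eq:typeI-CI-finite-sum} reduces to, because all off-diagonal \(Z_{(q,s),(q,r)}^{\mathrm{CI}}\) vanish and so \(\mathfrak C_{(q,s)}^{\mathrm{CI}}=\Pi_{q,s}^{\mathrm{CI}}/\Delta_{q,s}^{\mathrm{CI}}\); your sign bookkeeping is also correct.
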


\begin{proof}
Apply Theorem~\ref{thm:explicit-CI-B}.  When \(q=1\), only its polynomial
part at infinity remains, and the terminating sum is
\eqref{eq:CI-scalar-component}.
\end{proof}

\begin{corollary}[Meixner-I-like-to-Charlier-I-like component limit]
\label{cor:MI-CI-component-limit}
Under the hypotheses of Theorem~\ref{thm:explicit-CI-B}, let
\(B_{j,R}^{\mathrm{MI}}\) be the unit-normalized Meixner-I-like
components with the parameters in \eqref{eq:CI-scaling}. Then, for every
active \(j\),
\begin{equation}
 B_{j,R}^{\mathrm{MI}}(k)
 \xrightarrow[R\to\infty]{}B_j^{\mathrm{CI}}(k)
 \quad\hbox{coefficientwise in }k.
 \label{eq:MI-CI-component-limit}
\end{equation}
\end{corollary}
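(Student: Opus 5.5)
We would use the same mechanism as in Corollary~\ref{cor:Hahn-MI-component-limit}: continuity of the inverse of a finite moment matrix. Write $B_{j,R}^{\mathrm{MI}}(k)=\sum_{r<m_j}b_{j,r}^{(R)}\fall{k}{r}$. By Proposition~\ref{prop:MI-normality}, whose distinctness hypothesis must hold at the Meixner-I parameters, the coefficient vector is the unique solution of the polynomial identity \eqref{eq:MI-component-identity}. That solution is $(C^{\mathrm{MI}}_R)^{-1}$ applied to the coefficient vector of $(z-1)^n/n!$, where $C^{\mathrm{MI}}_R$ denotes the coefficient matrix of the polynomials $\Phi_{j,r}^{\mathrm{MI}}$ evaluated at the scaling \eqref{eq:CI-scaling}.

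The first step is to check that this hypothesis holds for all large $R$. The active rates $c_h$ with $h<q$ are fixed and pairwise distinct by the hypothesis of Theorem~\ref{thm:explicit-CI-B}. Since $c_q=a/(R+a)\to0$ while each $c_h>0$, the value $c_q$ is distinct from all of them once $R$ is large. Hence $C^{\mathrm{MI}}_R$ is nonsingular for large $R$, and the components are well defined.

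The second step is to prove entrywise convergence $C^{\mathrm{MI}}_R\to C^{\mathrm{CI}}$. Write $\Phi_{j,r}^{\mathrm{MI}}=z^r(G_j^{\mathrm{MI}})^{(r)}/H_{\mm}^{\mathrm{MI}}$ and split off the $q$-th factor. The factor $g_q^{\delta_q+\delta_{q,j}}=(1+a(1-z)/R)^{-R-\delta_{q,j}}$ and its $z$-derivatives converge locally uniformly near $z=0$ to $\e^{a(z-1)}$ and its derivatives. The other factors $g_h$, $h<q$, are fixed. For the denominator, $H_{\mm}^{\mathrm{MI}}$ carries $g_q^{R+m_q}$, which also tends to $\e^{a(z-1)}$ and is nonvanishing near $0$.

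It follows that $\Phi_{j,r}^{\mathrm{MI}}\to\Phi_{j,r}^{\mathrm{CI}}$ locally uniformly near $z=0$. Cauchy's formula then gives convergence of the Taylor coefficients of orders $0,\dots,n$, which are exactly the matrix entries; both families are polynomials of degree at most $n$. A more concrete route is to work directly with the Leibniz expansion. In each term, a derivative falling on $(1-c_qz)^{-R-\delta_{q,j}}$ produces $(R+\delta_{q,j})_t\,c_q^t\to a^t$. The leftover quotient $(1-c_qz)^{m_q-\delta_{q,j}-t}$ tends to $1$, since its exponent is fixed and $c_q\to0$. Comparing with \eqref{eq:CI-Phi-finite} gives the limit, and the constant $K_{\mm}^{\mathrm{MI}}(1-c_j)$ tends to $K_{\mm}^{\mathrm{CI}}\varkappa_j$ because $(1-c_q)^{-m_q}\to1$. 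The right-hand side $(z-1)^n/n!$ does not depend on $R$.

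The third step concludes. The limit matrix $C^{\mathrm{CI}}$ is nonsingular by Proposition~\ref{prop:CI-normality}, so continuity of matrix inversion gives $b^{(R)}\to b^{\mathrm{CI}}$, the unique solution of \eqref{eq:CI-component-identity}. By Theorem~\ref{thm:explicit-CI-B} this solution is $B_j^{\mathrm{CI}}$, which proves \eqref{eq:MI-CI-component-limit}.

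The main obstacle is the second step, specifically making precise that the nonpolynomial factor $g_q^{R}$ cancels uniformly between numerator and denominator. We would handle it through the explicit Leibniz-term limit rather than through analytic estimates, since every sum involved is finite.
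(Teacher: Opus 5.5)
Your argument is correct and is essentially the paper's. In both, the falling-factorial coefficients of \(B_{j,R}^{\mathrm{MI}}\) solve a finite square system whose entries converge under \eqref{eq:CI-scaling} to a nonsingular Charlier-I-like limit, and continuity of inversion concludes. The only difference is which system is inverted: you use the coefficient matrix of the polynomials \(\Phi_{j,r}^{\mathrm{MI}}\) at \(z=0\), with explicit Leibniz-term limits and nonsingularity from Proposition~\ref{prop:CI-normality}. The paper instead uses the type-I moment matrix, whose entries are derivatives at \(z=1\) of the weight generating functions and whose limit is nonsingular by Theorem~\ref{thm:explicit-CI-B}; the two systems are equivalent formulations of the same problem.
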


\begin{proof}
Expand all components in the falling-factorial basis. Their coefficients
solve the square type-I moment system through order \(n\). The generating
functions in Theorem~\ref{thm:CI-system} converge locally near \(z=1\),
and hence so do all their derivatives of every fixed order at \(z=1\).
The entry in moment row \(s\in\{0,\ldots,n\}\) and polynomial column
\((j,r)\), \(0\le r<m_j\), is
\[
 \sum_{k\ge0}\fall{k}{s}\fall{k}{r}v_{j,R}^{\mathrm{MI}}(k).
\]
By the falling-factorial linearization
\eqref{eq:falling-linearization}, it is a fixed linear combination of
derivatives at \(z=1\) of orders at most
\(s+r\le n+\max_j(m_j-1)\). Consequently every entry of the
Meixner-I moment matrix converges to the corresponding Charlier-I
entry. The right-hand side is the fixed vector
\((0,\ldots,0,1)^{\mathsf T}\). The limiting matrix is nonsingular by
Theorem~\ref{thm:explicit-CI-B}. Continuity of inversion proves
\eqref{eq:MI-CI-component-limit} and identifies its limit with the tuple
in Corollary~\ref{cor:CI-explicit-components}.
\end{proof}

\paragraph{Direct Charlier-I finite-pole and infinity sectors.}
The component limit in Corollary~\ref{cor:MI-CI-component-limit} admits a
sectorwise refinement. The explicit inversion above separates according to the pole family
from which the target datum originates. Put
\[
 \mathcal J_{\mm}^{\mathrm{CI}}
 \coloneq\{J\in\{1,\ldots,q\}:m_J>0\}.
\]
For \(\lambda\in\mathscr P_{\boldsymbol m}^{\mathrm{CI}}\) and
\(J\in\mathcal J_{\mm}^{\mathrm{CI}}\), define
\begin{equation}
 \mathfrak C_{\lambda;J}^{\mathrm{CI}}
 \coloneq
 \sum_{p\ge0}(-1)^p
 \sum_{\substack{\lambda=\lambda_0\prec\lambda_1\prec\cdots\prec\lambda_p\\
                  \lambda_v\in\mathscr P_{\boldsymbol m}^{\mathrm{CI}},\
                  \lambda_p=(J,s)\ {\rm for\ some}\ 0\le s<m_J}}
 \frac{\Pi_{\lambda_p}^{\mathrm{CI}}}
      {\Delta_{\lambda_p}^{\mathrm{CI}}}
 \prod_{v=0}^{p-1}
 \frac{Z_{\lambda_v,\lambda_{v+1}}^{\mathrm{CI}}}
      {\Delta_{\lambda_v}^{\mathrm{CI}}}.
 \label{eq:CI-sector-path-coefficients}
\end{equation}
For active \(j,J\), set
\begin{equation}
 \mathscr C_{j,J}^{\mathrm{CI}}(k)
 \coloneq\sum_{r=0}^{m_j-1}
 \mathfrak C_{(j,r);J}^{\mathrm{CI}}r!\binom{k}{r}.
 \label{eq:CI-direct-sector-block}
\end{equation}
For \(J<q\), put
\[
 R_J^{\mathrm{CI}}(z)\coloneq
 \sum_{s=0}^{m_J-1}
 \frac{\Pi_{J,s}^{\mathrm{CI}}}{(1-c_Jz)^{s+1}},
\]
whereas
\[
 R_q^{\mathrm{CI}}(z)\coloneq
 \sum_{s=0}^{m_q-1}\Pi_{q,s}^{\mathrm{CI}}z^s .
\]

\begin{proposition}[Direct Charlier-I blocks and blockwise Meixner limit]
\label{prop:CI-sector-blocks}
Under the hypotheses of Theorem~\ref{thm:explicit-CI-B}, put
\(\varepsilon_{j,J}=1-\delta_{j,J}\). The sums
\eqref{eq:CI-sector-path-coefficients} are finite. If
\(m_J<\varepsilon_{j,J}+1\), then
\(\mathscr C_{j,J}^{\mathrm{CI}}=0\); otherwise,
\begin{equation}
 \deg\mathscr C_{j,J}^{\mathrm{CI}}
 \le\min\{m_j-1,m_J-1-\varepsilon_{j,J}\}.
 \label{eq:CI-direct-sector-degree}
\end{equation}
In particular, \(\deg\mathscr C_{j,J}^{\mathrm{CI}}<m_j\), and
\begin{equation}
 \sum_{j=1}^q\sum_{r=0}^{m_j-1}
 \mathfrak C_{(j,r);J}^{\mathrm{CI}}
 \Phi_{j,r}^{\mathrm{CI}}(z)
 =\mathcal D_-(z)R_J^{\mathrm{CI}}(z).
 \label{eq:CI-direct-sector-identity}
\end{equation}
Moreover,
\begin{equation}
 B_j^{\mathrm{CI}}(k)
 =\sum_{J\in\mathcal J_{\mm}^{\mathrm{CI}}}
 \mathscr C_{j,J}^{\mathrm{CI}}(k).
 \label{eq:CI-direct-sector-decomposition}
\end{equation}
Let \(\mathscr M_{j,J;R}^{\mathrm{MI}}\) denote
\eqref{eq:MI-direct-sector-block} with the scaling
\eqref{eq:CI-scaling}. Then, for every pair of active indices,
\begin{equation}
 \mathscr M_{j,J;R}^{\mathrm{MI}}(k)
 \xrightarrow[R\to\infty]{}
 \mathscr C_{j,J}^{\mathrm{CI}}(k)
 \quad\hbox{coefficientwise in }k.
 \label{eq:MI-CI-blockwise-limit}
\end{equation}
For \(J<q\) this is the confluence of a fixed finite-pole sector. The
sector \(J=q\), whose pole \(c_q^{-1}\) tends to infinity, converges to
the polynomial sector at infinity in
\(\mathscr P_{\boldsymbol m}^{\mathrm{CI}}\). Thus every block is
evaluated by the finite Lauricella sums
\eqref{eq:typeI-CI-Pi-finite}--\eqref{eq:typeI-CI-Z-infinity};
no finite-part operation remains.
\end{proposition}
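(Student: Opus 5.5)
The plan mirrors the Meixner-I-like argument in Proposition~\ref{prop:MI-sector-blocks}. Finiteness of \eqref{eq:CI-sector-path-coefficients} is immediate, because along every path the second coordinate of the labels increases strictly and is bounded by \(\max_h m_h\).

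\textbf{Sector identity.} Separating the first edge of a path gives the sector version of the recursion used in the proof of Theorem~\ref{thm:explicit-CI-B}:
\[
 \Delta_\lambda^{\mathrm{CI}}\mathfrak C_{\lambda;J}^{\mathrm{CI}}
 +\sum_{\lambda\prec\mu}Z_{\lambda,\mu}^{\mathrm{CI}}\mathfrak C_{\mu;J}^{\mathrm{CI}}
 =\mathbf 1_{\{\lambda=(J,s)\}}\Pi_\lambda^{\mathrm{CI}}.
\]
This recursion states that, after division by \(\mathcal D_-\), the two sides of \eqref{eq:CI-direct-sector-identity} have the same principal parts at every finite pole \(c_h^{-1}\) and the same polynomial part at infinity. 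Their difference is therefore a polynomial with zero polynomial part at infinity, and hence vanishes. For \(J=q\), the right-hand side \(\mathcal D_-R_q^{\mathrm{CI}}\) is the polynomial-at-infinity sector, exactly as in the proof of Theorem~\ref{thm:explicit-CI-B}. Every path of \eqref{eq:typeI-CI-finite-sum} has a unique terminal label \((J,s)\), so partitioning paths by \(J\) gives \(\mathfrak C_\lambda^{\mathrm{CI}}=\sum_J\mathfrak C_{\lambda;J}^{\mathrm{CI}}\), which is \eqref{eq:CI-direct-sector-decomposition}.

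\textbf{Degree bound.} A path from \((j,r)\) ends at \((J,s)\) with \(s\le m_J-1\), and it has \(r\le s\). Equality \(r=s\) forces \(p=0\) and hence \(j=J\). If \(j\ne J\), then \(r\le m_J-2\), and the sector vanishes when \(m_J<2\). Combined with \(r<m_j\), this gives \eqref{eq:CI-direct-sector-degree}.

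\textbf{Blockwise limit.} I would compare the exact identities \eqref{eq:MI-direct-sector-identity} at parameter \(R\) with \eqref{eq:CI-direct-sector-identity}, using the invertibility of the coefficient matrices.

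\emph{Basis convergence.} Under \eqref{eq:CI-scaling}, \(\Phi_{j,r}^{\mathrm{MI}}\to\Phi_{j,r}^{\mathrm{CI}}\) coefficientwise in \(z\): the factor \(R_q(z)^{R+\delta_{q,j}}\) and its derivatives converge to \(\mathrm e^{a(z-1)}\) locally uniformly near \(z=0\). The quotient by \(H_{\mm}^{\mathrm{MI}}\) is a polynomial of degree at most \(n\), so convergence is coefficientwise. The coefficient matrices then converge to \(C^{\mathrm{CI}}\), which is nonsingular by Proposition~\ref{prop:CI-normality}.

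\emph{Right-hand side for \(J<q\).} The right-hand side \(\mathcal D_{\mm}(z)\sum_s\Pi^{\mathrm{MI}}_{J,s}(1-c_Jz)^{-s-1}\) converges to \(\mathcal D_-(z)R_J^{\mathrm{CI}}(z)\). The factor \((1-c_qz)^{m_q}\to1\), and \(\Pi^{\mathrm{MI}}_{J,s}\to\Pi^{\mathrm{CI}}_{J,s}\) because both are principal-part coefficients of \((z-1)^n/[n!\mathcal D]\) at a fixed pole. These coefficients are given by a contour integral around \(c_J^{-1}\) on which \(\mathcal D_{\mm}\to\mathcal D_-\) uniformly.

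\emph{Right-hand side for \(J=q\) (the main obstacle).} Here the pole \(c_q^{-1}\approx R/a\) escapes to infinity. I would write the \(J=q\) right-hand side as the total target \((z-1)^n/n!\) minus the finitely many fixed-pole sectors \(J<q\). Both of these converge: the first is constant, and the second converges by the \(J<q\) case. Hence the \(J=q\) right-hand side converges, necessarily to \((z-1)^n/n!-\sum_{J<q}\mathcal D_-R_J^{\mathrm{CI}}=\mathcal D_-R_q^{\mathrm{CI}}\). This avoids any delicate analysis of the escaping pole.

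\emph{Conclusion.} Continuity of matrix inversion then gives \(\mathfrak C^{\mathrm{MI}}_{(j,r);J}(R)\to\mathfrak C^{\mathrm{CI}}_{(j,r);J}\). This yields \eqref{eq:MI-CI-blockwise-limit}, and the limits are identified with the explicit sectors by uniqueness in \eqref{eq:CI-direct-sector-identity}.
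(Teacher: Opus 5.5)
Your proof is correct. The first half (finiteness, the first-edge recursion, matching principal parts and the polynomial part, partitioning by terminal label, the degree bound) is the same as in the paper.

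For the limit \eqref{eq:MI-CI-blockwise-limit} you take a different and shorter route.

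\textbf{Your route.} You stay in the monomial coefficient system. There $C^{\mathrm{MI}}\to C^{\mathrm{CI}}$ follows at once from the local convergence of the $\Phi^{\mathrm{MI}}_{j,r}$ near $z=0$, and the limit matrix is nonsingular by Proposition~\ref{prop:CI-normality}. You handle the escaping pole only in the source vector: the $J=q$ source is $(z-1)^n/n!$ minus the fixed-pole sources. This is legitimate because $(z-1)^n/[n!\mathcal D_{\mm}]$ vanishes at infinity, so the Meixner-I sector sources sum exactly to $(z-1)^n/n!$.

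\textbf{The paper's route.} The paper works in the principal-part (Lauricella) coordinates of the Meixner-I triangular system. It applies the row transformation \eqref{eq:MI-CI-moving-pole-row-transform} to the moving-pole rows of every column and of the source. It then shows that the whole transformed system converges entrywise to the Charlier-I data $Z^{\mathrm{CI}}$, $\Delta^{\mathrm{CI}}$, $\Pi^{\mathrm{CI}}$, including \eqref{eq:typeI-CI-Z-infinity} via $(R+\epsilon)_t c_q^t/t!\to a^t/t!$. That system is no longer triangular at finite $R$.

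\textbf{Comparison.} Your version needs no analysis of how the moving-pole rows of the columns behave. The paper's version also exhibits how the moving-pole Lauricella coefficients pass to the coefficients at infinity, which is the content of the statement's closing interpretive sentences. Your identification of the limit with $\mathscr C^{\mathrm{CI}}_{j,q}$ through uniqueness in \eqref{eq:CI-direct-sector-identity} already establishes the asserted convergence.
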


\begin{proof}
The finiteness, degree bound, and
\eqref{eq:CI-direct-sector-identity} follow by separating the first
edge in \eqref{eq:CI-sector-path-coefficients}. This gives the triangular
system with target \(\Pi_\lambda^{\mathrm{CI}}\) when the first
coordinate of \(\lambda\) is \(J\), and zero target otherwise.
More explicitly, a path contributing to the coefficient indexed by
\((j,r)\) ends at a label \((J,s)\). If \(j\ne J\), the path has positive
length and the strict order gives \(r<s\); if \(j=J\), it gives \(r\le s\).
Thus \(r\le s-\varepsilon_{j,J}\le
m_J-1-\varepsilon_{j,J}\), which proves the vanishing assertion and
\eqref{eq:CI-direct-sector-degree}.
Comparison of every finite principal part and of the polynomial part at
infinity proves the rational identity. Partitioning the finite paths in
\eqref{eq:typeI-CI-finite-sum} according to the unique first coordinate
\(J\) of their terminal label proves
\eqref{eq:CI-direct-sector-decomposition}.

For \eqref{eq:MI-CI-blockwise-limit}, retain the same terminal-sector
partition in the Meixner-I triangular system. Under
\(\delta_q=R\) and \(c_q=a/(R+a)\), every fixed-sector connection
coefficient \(Z_{a,b}^{\mathrm{MI}}\), pivot
\(\Delta_a^{\mathrm{MI}}\), and target coefficient
\(\Pi_a^{\mathrm{MI}}\) is a finite sum of products of Pochhammer
symbols of fixed order. For \(J<q\), direct substitution gives the
finite-pole data
\eqref{eq:typeI-CI-Pi-finite}, \eqref{eq:typeI-CI-Z-finite}, and the
corresponding pivot in \eqref{eq:typeI-CI-Delta}. For \(J=q\), write the
principal part at the moving pole as
\[
 P_q^{(R)}(z)=\sum_{u=0}^{m_q-1}
 \frac{p_u^{(R)}}{(1-c_qz)^{u+1}}
\]
and replace its principal-part coordinates by
\begin{equation}
 \widehat p_s^{(R)}
 \coloneq[z^s]P_q^{(R)}(z)
 =\frac{c_q^s}{s!}\sum_{u=0}^{m_q-1}(u+1)_s p_u^{(R)},
 \qquad 0\le s<m_q.
 \label{eq:MI-CI-moving-pole-row-transform}
\end{equation}
For \(c_q>0\) this is an invertible row transformation: after removal
of the factors \(c_q^s\), its matrix has entries
\(\binom{u+s}{s}\) and determinant one. Apply it both to every column
of the triangular system and to the separated target sector. The
principal parts at all fixed poles have already converged. After
subtracting them, the remaining rational functions converge locally to
their polynomial parts, so the transformed moving-pole rows converge to
the coefficients at infinity. In the columnwise Leibniz sums, the
elementary limit
\((R+\epsilon)_t c_q^t/t!\to a^t/t!\) identifies them exactly with
\eqref{eq:typeI-CI-Pi-infinity}, \eqref{eq:typeI-CI-Z-infinity}, and
\(\Delta_{q,s}^{\mathrm{CI}}=K_-a^s\).
The row transformation may mix pole orders for finite \(R\), so the
transformed system need not remain triangular. Nevertheless, its full
finite matrix converges entrywise to the Charlier-I matrix, which is
nonsingular by Theorem~\ref{thm:explicit-CI-B}. Continuity of matrix
inversion, applied separately to every \(J\)-source, proves
\eqref{eq:MI-CI-blockwise-limit}.
\end{proof}

\begin{remark}[Why the moving pole must remain grouped]
\label{rem:MI-CI-moving-pole-cancellation}
The individual principal-part coefficients of the \(J=q\) sector need
not converge. Already for \(q=1\) and \(m_1=2\),
\[
 \frac{z-1}{(1-cz)^2}
 =\frac{1-c}{c(1-cz)^2}-\frac1{c(1-cz)}.
\]
Both coefficients on the right diverge as \(c\to0\), although the
grouped block converges to \(z-1\). Transformation
\eqref{eq:MI-CI-moving-pole-row-transform} gives
\(\widehat p_0=-1\) and \(\widehat p_1=1-2c\to1\), the coefficients of
the limiting polynomial. Thus the ordinary limit exists for the complete
\(J=q\) block, not term by term in its moving-pole expansion.
\end{remark}

The coefficients in
\eqref{eq:explicit-CI-components-finite}--\eqref{eq:explicit-CI-component-base}
are finite sums of products of terminating Lauricella polynomials.

Formula \eqref{eq:CI-scalar-component} is the ordinary Charlier polynomial
in the normalization of the type-I moment condition.

\begin{proposition}[Distinctness of the two Charlier-like systems]
\label{prop:Charlier-families-distinct}
Under the admissibility assumptions of Theorems~\ref{thm:CII-system} and
\ref{thm:CI-system}, in particular \(a>0\), \(0<c_h<1\), and
\(\delta_h>0\) on the Charlier-I-like side, the two families define the
same scalar orthogonality system when \(q=1\), up to the normalizations of
their type-I and type-II objects; indeed,
\(v_1^{\mathrm{CI}}=v_1^{\mathrm{CII}}=p_a\). If \(q>1\), no permutation
of the weights identifies the two systems: one has
\(v_j^{\mathrm{CI}}\ne v_h^{\mathrm{CII}}\) for every
\(j,h\in\{1,\ldots,q\}\).
\end{proposition}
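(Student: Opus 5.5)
The plan is to compare probability generating functions, which determine the weights uniquely. For \(q=1\), the Charlier-II-like generating function \eqref{eq:CII-pgf} is the empty-parameter \({}_0F_0\), namely \(\e^{a(z-1)}\). The Charlier-I-like one \eqref{eq:CI-row-pgfs} is \(F(z)=\e^{a(z-1)}\) as well, since the product over \(h<q\) is empty. Hence \(v_1^{\mathrm{CI}}=v_1^{\mathrm{CII}}=p_a\). The type-II polynomials are then forced to coincide up to normalization by scalar uniqueness, because a positive weight makes the problem normal. The same holds for the type-I polynomials, as one checks by comparing \eqref{eq:CI-scalar-component} with \eqref{eq:unreflected-q1-CII-B}.

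For \(q>1\), suppose \(v_j^{\mathrm{CI}}=v_h^{\mathrm{CII}}\) for some \(j,h\). Then the generating functions agree on the unit disk. I will use the analytic structure as the invariant.

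\emph{The Charlier-II side.} By the Euler integral \eqref{eq:CII-Euler-integral}, \(G_h^{\mathrm{CII}}\) extends to an entire function of \(z\), since \(\e^{aY(z-1)}\) is integrated against a probability measure on a compact set. Its growth is controlled by \(\abs{G_h^{\mathrm{CII}}(z)}\le \e^{a\abs{z-1}}\): it is entire of exponential type at most \(a\).

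\emph{The Charlier-I side.} By \eqref{eq:CI-row-pgfs}, \(G_j^{\mathrm{CI}}(z)=\e^{a(z-1)}\prod_{g<q}R_g(z)^{\delta_g+\delta_{g,j}}\). Every exponent here is positive (\(\delta_g>0\)), and \(q>1\) guarantees at least one factor. So this function has a genuine singularity at \(z=c_g^{-1}\), a pole or branch point, coming from the factor \((1-c_gz)^{-(\delta_g+\delta_{g,j})}\). That singularity cannot be cancelled by the nonvanishing entire factor \(\e^{a(z-1)}\) or by the other factors, which are analytic and nonzero near \(c_g^{-1}\) when \(c_{g'}\ne c_g\).

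The main obstacle is the case where several \(c_g\) coincide. Then the factors combine into \((1-cz)^{-\sigma}\) with \(\sigma=\sum_{c_g=c}(\delta_g+\delta_{g,j})>0\), which is still singular at \(c^{-1}\). So the function is not entire, and the radius of convergence of the Taylor series at \(0\) equals \(\min_g c_g^{-1}<\infty\). This contradicts entireness of \(G_h^{\mathrm{CII}}\).

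Alternatively, and avoiding analytic continuation, the coefficient asymptotics also separate the two families. From \eqref{eq:CII-positive-coefficients}, \(v_h^{\mathrm{CII}}(k)\le a^k/k!\), which decays superexponentially. On the other hand, \(v_j^{\mathrm{CI}}\ge p_a(0)\,r_{\sigma,c}(k)\cdot\text{const}\), which decays only geometrically, so the sequences cannot agree. I would present this positivity comparison, since the convolution \eqref{eq:CI-row-convolutions} of nonnegative sequences bounds each factor from below. It gives the conclusion for every pair \((j,h)\), and in particular rules out any permutation.
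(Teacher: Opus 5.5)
Your proof is correct and follows the paper's argument. The Charlier-II-like generating function is entire, whereas the Charlier-I-like one has Taylor radius exactly \(c_*^{-1}<\infty\) with \(c_*=\max_{h<q}c_h\); coincident maximal \(c_h\) only add positive exponents. Your coefficient comparison, \(v_h^{\mathrm{CII}}(k)\le a^k/k!\) against the geometric lower bound from a single factor of the nonnegative convolution \eqref{eq:CI-row-convolutions}, is a direct positivity version of the paper's Cauchy--Hadamard restatement, and it needs neither analytic continuation nor a separate treatment of coincident \(c_h\).
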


\begin{proof}
For a Charlier-II-like weight, \eqref{eq:CII-pgf} gives
\(G_j^{\mathrm{CII}}(z)
=\pFq{q-1}{q-1}{\A_{<q}}{\boldsymbol C_j}{a(z-1)}\).
As the summation index tends to infinity, the ratio of consecutive terms tends
to zero uniformly for \(z\) in
compact sets, so this generating function is entire.  In contrast,
\eqref{eq:CI-row-pgfs} gives
\(G_j^{\mathrm{CI}}(z)=\e^{a(z-1)}
\prod_{h=1}^{q-1}\left(\frac{1-c_h}{1-c_hz}\right)^{
\delta_h+\delta_{h,j}}\).
Suppose \(q>1\), and put
\(c_*=\max\{c_h:h\in\{1,\ldots,q-1\}\}\). All the exponents are
positive.  Hence coincident maximal values of \(c_h\) merely add positive
exponents, and \(z=c_*^{-1}\) is a nonremovable singularity.  The Taylor
radius at the origin is therefore exactly \(c_*^{-1}<\infty\), whereas the
Charlier-II-like radius is infinite.  Equality of two weight sequences
would give equal Taylor coefficients and equal radii, which is impossible.
Equivalently, Cauchy--Hadamard yields
\[
 \limsup_{k\to\infty}\bigl(v_j^{\mathrm{CII}}(k)\bigr)^{1/k}=0,
 \qquad
 \limsup_{k\to\infty}\bigl(v_j^{\mathrm{CI}}(k)\bigr)^{1/k}=c_*.
\]
When \(q=1\), the products are empty and
\({}_0F_0(a(z-1))=\e^{a(z-1)}\), so both systems reduce to the ordinary
scalar Charlier system.
\end{proof}

\subsection{The Charlier-I-like-to-Hermite limit}

After centering and rescaling, the Charlier-I-like family converges to a
Hermite-like family. The Charlier-II-like family gives its reflection.
Let
\begin{equation}
 a\to\infty,\qquad s=\sqrt{2a}\to\infty,\qquad
 c_h=\frac{s}{s+\rho_h}\quad
 (h\in\{1,\ldots,q-1\}),\qquad
 \ell=\lfloor a+st\rfloor.
 \label{eq:Hermite-scaling}
\end{equation}
Write \(\phi(t)=\pi^{-1/2}\e^{-t^2}\). Stirling's formula gives the
explicit kernel limits
\begin{equation}
 s\,p_a(\lfloor a+st\rfloor)\xrightarrow[s\to\infty]{}\phi(t),
 \qquad
 s\,r_{d,\,s/(s+\rho)}(\lfloor sy\rfloor)
 \xrightarrow[s\to\infty]{}g_{\rho,d}(y),
 \label{eq:CI-Hermite-kernel-limits}
\end{equation}
locally uniformly for \(t\in\R\) and \(y>0\), respectively. The two limits
in \eqref{eq:CI-Hermite-kernel-limits} permit a direct convolution argument
even when some gamma densities
are unbounded at the origin. Fix \(j\in\{1,\ldots,q\}\), put
\(d_{h,j}=\delta_h+\delta_{h,j}\) for
\(h\in\{1,\ldots,q-1\}\), and define the normalized discrete measure
\(\eta_{j,s}\coloneq\sum_{m\ge0}q_{j,s}(m)\delta_{m/s}\), where
\(q_{j,s}\coloneq r_{d_{1,j},c_1}*\cdots*r_{d_{q-1,j},c_{q-1}}\).
When the product is empty, \(\eta_{j,s}=\delta_0\). For \(\xi\ge0\),
\[
 \int_0^\infty\e^{-\xi y}\dd\eta_{j,s}(y)
 =\prod_{h=1}^{q-1}
 \left(\frac{\rho_h}
 {\rho_h+s(1-\e^{-\xi/s})}\right)^{d_{h,j}}
 \xrightarrow[s\to\infty]{}
 \prod_{h=1}^{q-1}
 \left(\frac{\rho_h}{\rho_h+\xi}\right)^{d_{h,j}}.
\]
It follows that, as \(s\to\infty\), \(\eta_{j,s}\) converges weakly to the measure with density
given by the corresponding convolution of the gamma kernels. When \(q>1\),
for every
\(0<\theta<\min\{\rho_h:h\in\{1,\ldots,q-1\}\}\), the same formula with
\(\xi=-\theta\) shows that
\(\sup_{s\ge s_0}\int_0^\infty\e^{\theta y}\dd\eta_{j,s}(y)<\infty\),
while for \(q=1\) this bound is immediate from \(\eta_{1,s}=\delta_0\).

Set \(P_s(x)=s\,p_a(\lfloor a+sx\rfloor)\), where \(p_a(k)=0\) for
\(k<0\). Stirling's formula, uniformly on compact
sets, and the standard bound for the largest Poisson coefficient give
\(P_s(x)\xrightarrow[s\to\infty]{}\phi(x)\) and
\(\sup_{s\ge s_0}\sup_{x\in\R}P_s(x)<\infty\).
The convolution identity \eqref{eq:CI-row-convolutions} gives exactly
\(s\,v_j^{\mathrm{CI}}(\lfloor a+st\rfloor)
=\int_0^\infty P_s(t-y)\dd\eta_{j,s}(y)\).
For \(t\) in a fixed compact set, first restrict the integral to
\(0\le y\le M\). There
\(P_s(t-y)\xrightarrow[s\to\infty]{}\phi(t-y)\) uniformly. The uniform
exponential-moment bound controls the discarded tail, while weak
convergence and uniform continuity of \(\phi\) give uniform convergence in
\(t\) of the integral with \(P_s\) replaced by \(\phi\). Letting
\(M\to\infty\) proves, for every \(j\in\{1,\ldots,q\}\),
\begin{equation}
 s\,v_j^{\mathrm{CI}}(\lfloor a+st\rfloor)
 \xrightarrow[s\to\infty]{}\mathcal W_j^{\mathrm H,+}(t),
 \qquad
 \mathcal W_j^{\mathrm H,+}
 \coloneq\phi*g_{\rho_1,\delta_1+\delta_{1,j}}*\cdots *
 g_{\rho_{q-1},\delta_{q-1}+\delta_{q-1,j}},
 \label{eq:Hermite-base-row}
\end{equation}
locally uniformly for \(t\) in compact subsets of \(\R\). Here the
convolution in \eqref{eq:Hermite-base-row} is
\((f*g)(t)=\int_0^\infty f(t-y)g(y)\dd y\).
Let \(\sigma_{\mm,s}^{\mathrm{CI}}\) denote the nonnegative sequence
whose generating function is the factor multiplying
\((z-1)^n/n!\) in \eqref{eq:CI-B}.  Replace \(d_{h,j}\) by
\(\delta_h+m_h\) in the measures \(\eta_{j,s}\).  As \(s\to\infty\), their weak convergence
and uniform exponential-moment bound remain valid, now with the limiting
convolution of the densities \(g_{\rho_h,\delta_h+m_h}\), indexed by
\(h\in\{1,\ldots,q-1\}\).
Repeating the displayed
convolution identity, splitting its integral at \(M\), and then letting
\(M\to\infty\) gives locally uniformly the centered limit
\[
 s\,\sigma_{\mm,s}^{\mathrm{CI}}(\lfloor a+st\rfloor)
 \xrightarrow[s\to\infty]{}
 \bigl(\phi*g_{\rho_1,\delta_1+m_1}*\cdots *
 g_{\rho_{q-1},\delta_{q-1}+m_{q-1}}\bigr)(t).
\]

For \(r\in\Nzero\), we use the monic Hermite normalization
\begin{equation}
 \mathsf H_r(t)
 \coloneq r!\sum_{v=0}^{\lfloor r/2\rfloor}
 \frac{(-1/4)^v}{v!(r-2v)!}t^{r-2v}
 =2^{-r}H_r(t),
 \label{eq:monic-Hermite-definition}
\end{equation}
where \(H_r\) is the physicists' Hermite polynomial.

For a near-diagonal \(\mm\), put \(D=\abs\mm\ge1\) and \(n=D-1\), and
define
\begin{equation}
 S_{\mm}^{\mathrm H}
 \coloneq\phi*g_{\rho_1,\delta_1+m_1}*\cdots *
 g_{\rho_{q-1},\delta_{q-1}+m_{q-1}},
 \qquad
 \mathcal B_{\mm}^{\mathrm H,+}(t)
 \coloneq\frac{(-1)^n}{n!}\frac{\dd^n}{\dd t^n}S_{\mm}^{\mathrm H}(t).
 \label{eq:Hermite-seed-B}
\end{equation}
Equivalently,
\begin{equation}
 \mathcal B_{\mm}^{\mathrm H,+}(t)
 =\frac{2^n}{n!}
 \int_{(0,\infty)^{q-1}}
 \mathsf H_n(t-y_1-\cdots-y_{q-1})
 \phi(t-y_1-\cdots-y_{q-1})
 \prod_{h=1}^{q-1}g_{\rho_h,\delta_h+m_h}(y_h)
 \dd\boldsymbol y.
 \label{eq:Hermite-B-kernel-plus}
\end{equation}
For \(q=1\), this reads
\(\mathcal B_{m_1}^{\mathrm H,+}(t)
=2^n\mathsf H_n(t)\phi(t)/n!\).
The centered signed measures are
\begin{equation}
 \mu_s^{\mathrm{CI}}
 \coloneq s^n\sum_{\ell\ge0}\mathcal B_{\mm}^{\mathrm{CI}}(\ell)
 \delta_{(\ell-a)/s}.
 \label{eq:Hermite-B-measures}
\end{equation}

\begin{theorem}[Hermite-like type-II polynomial and type-I linear form]
\label{thm:Hermite-limit}
Let \(\mm\) be near the diagonal, put \(D=\abs\mm\ge1\) and \(n=D-1\),
assume \(\rho_h,\delta_h>0\) for
\(h\in\{1,\ldots,q-1\}\), and impose
\eqref{eq:Hermite-scaling}. The following statements hold as
\(s\to\infty\).
\begin{enumerate}[label=\textnormal{(\roman*)}]
\item Locally uniformly for \(t\)
in compact subsets of \(\R\),
\(s^{-D}P_{\mm}^{\mathrm{CI}}(\lfloor a+st\rfloor)
\xrightarrow[s\to\infty]{}P_{\mm}^{\mathrm H,+}(t)\), where
\begin{equation}
 P_{\mm}^{\mathrm H,+}(t)
 =\sum_{\substack{r_1,\ldots,r_{q-1}\ge0\\
                    r_1+\cdots+r_{q-1}\le D}}
 (-D)_{r_1+\cdots+r_{q-1}}
 \prod_{h=1}^{q-1}
 \frac{(-\delta_h-m_h)_{r_h}}{r_h!}
 \left(-\frac1{\rho_h}\right)^{r_h}
 \mathsf H_{D-r_1-\cdots-r_{q-1}}(t).
 \label{eq:Hermite-A}
\end{equation}
The limiting polynomial satisfies
\begin{equation}
 P_{\mm}^{\mathrm H,+}(t)
 =D![u^D]\e^{tu-u^2/4}
 \prod_{h=1}^{q-1}(1-u/\rho_h)^{\delta_h+m_h}.
 \label{eq:Hermite-A-generating}
\end{equation}

\item Against every test function in \(C_c^\infty(\R)\), and in every fixed
moment,
\begin{equation}
 \mu_s^{\mathrm{CI}}\xrightarrow[s\to\infty]{}
 \mathcal B_{\mm}^{\mathrm H,+}(t)\dd t.
 \label{eq:Hermite-B}
\end{equation}

\item For every \(j\in\{1,\ldots,q\}\) with \(m_j>0\), the limiting
polynomial satisfies
\begin{equation}
 \int_{\mathbb R}t^rP_{\mm}^{\mathrm H,+}(t)
 \mathcal W_j^{\mathrm H,+}(t)\dd t=0,
 \qquad r\in\{0,\ldots,m_j-1\},
 \label{eq:Hermite-A-orthogonality}
\end{equation}
and the signed density satisfies
\begin{equation}
 \int_{\mathbb R}t^r\mathcal B_{\mm}^{\mathrm H,+}(t)\dd t
 =\delta_{r,n},\qquad r\in\{0,\ldots,n\}.
 \label{eq:Hermite-B-moments}
\end{equation}
\end{enumerate}
\end{theorem}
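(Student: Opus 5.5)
The plan is to work throughout with generating functions. Part (i) comes from the coefficient formula \eqref{eq:CI-A-generating}; part (ii) uses the same summation-by-parts argument as in the Meixner-I-to-Laguerre-II limit \eqref{eq:MI-LII-B-limit}; part (iii) then follows by passing to the limit in exact discrete identities.

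For (i), substitute \(\ell=a+st+\theta\), with \(0\le\theta<1\) bounded, into \eqref{eq:CI-A-generating} and rescale \(u=v/s\). This gives
\[
 s^{-D}P_{\mm}^{\mathrm{CI}}(\ell)=D![v^D](1+v/s)^{\ell}\e^{-av/s}\prod_{h=1}^{q-1}(1-\lambda_hv/s)^{d_h}.
\]
Since \(c_h=s/(s+\rho_h)\), we have \(\lambda_h=s/\rho_h\), so each factor \((1-\lambda_hv/s)^{d_h}=(1-v/\rho_h)^{d_h}\) does not depend on \(s\). For the remaining factor, use \(\log(1+v/s)=v/s-v^2/(2s^2)+\mathrm O(s^{-3})\) with \(a=s^2/2\):
\[
 \ell\log(1+v/s)-av/s=tv-\tfrac{v^2}{4}+\mathrm O(s^{-1})
\]
uniformly for \(t\) in compacts and \(v\) near \(0\). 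Cauchy's formula on a small circle then gives convergence of the fixed coefficient \([v^D]\), uniformly in \(t\). This is \eqref{eq:Hermite-A-generating}. To get \eqref{eq:Hermite-A}, expand \(\e^{tu-u^2/4}=\sum_r\mathsf H_r(t)u^r/r!\); this identity follows directly from \eqref{eq:monic-Hermite-definition}. Expand the binomial factors as well and extract \([u^D]\). The factor \(D!/(D-S)!\) equals \((-D)_S(-1)^S\), and the sign is absorbed into \((-1/\rho_h)^{r_h}\).

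For (ii), use \eqref{eq:CI-B}: \(\mathcal B^{\mathrm{CI}}_{\mm}\) is \(\tfrac1{n!}\) times the \(n\)-th backward difference of \(\sigma^{\mathrm{CI}}_{\mm,s}\). Summation by parts gives
\[
 \langle\mu_s^{\mathrm{CI}},\varphi\rangle=\frac{(-1)^n}{n!}\sum_\ell\sigma^{\mathrm{CI}}_{\mm,s}(\ell)\,s^n\Delta^n_{1/s}\varphi((\ell-a)/s),
\]
with the sign fixed by the difference direction. Now \(s^n\Delta^n_{1/s}\varphi\to\varphi^{(n)}\) uniformly, and the centered seed measures converge weakly to \(S^{\mathrm H}_{\mm}\dd t\) by the displayed centered limit. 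Their tails are controlled by the uniform exponential moment of the gamma part together with the Poisson concentration on scale \(s\). Integrating by parts once more gives \eqref{eq:Hermite-seed-B}. Formula \eqref{eq:Hermite-B-kernel-plus} follows from differentiating under the convolution, using \(\phi^{(n)}=(-1)^n2^n\mathsf H_n\phi\).

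For fixed moments, I would repeat the argument with polynomial test functions. The needed uniform integrability comes from two Laplace-transform bounds. The seed generating function at \(z=\e^{\xi/s}\), multiplied by \(\e^{-a\xi/s}\), equals \(\e^{a(\e^{\xi/s}-1-\xi/s)}\prod_h(\cdot)^{d_h}\). This is bounded for \(|\xi|\) small, and the first factor tends to \(\e^{\xi^2/4}\). Bounded exponential moments then give convergence of all polynomial moments. \eqref{eq:Hermite-B-moments} follows by passing to the limit in \eqref{eq:CI-B-moments}, since \(s^{n}s^{-r}\) times the ordinary centered moments of order \(r\le n\) reduce to the factorial data up to lower-order terms that vanish in the limit.

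For (iii), rewrite the orthogonality relation \eqref{eq:CI-A-orthogonality} as
\[
 \sum_\ell((\ell-a)/s)^r s^{-D}P^{\mathrm{CI}}_{\mm}(\ell)v_j^{\mathrm{CI}}(\ell)=0.
\]
This is allowed because \((\ell-a)^r\) lies in the span of \(\fall{\ell}{s'}\), \(s'\le r\). The polynomial \(s^{-D}P^{\mathrm{CI}}_{\mm}(a+s\,\cdot)\) converges coefficientwise, by the part (i) argument or by interpolation at \(D+1\) points. The row moments of \(v_j^{\mathrm{CI}}\) in the centered, rescaled variable converge by the same exponential-moment bound applied to \(G_j^{\mathrm{CI}}\). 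Taking the limit yields \eqref{eq:Hermite-A-orthogonality}.

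The main obstacle is the uniform control of the centered moments, that is, the tightness and exponential integrability of the centered Poisson\(\ast\)negative-binomial measures needed to pass from weak or local convergence to convergence of all fixed moments. I would handle this entirely through the explicit generating functions evaluated at \(z=\e^{\xi/s}\), which avoids pointwise kernel estimates.
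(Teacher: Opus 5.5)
Your proposal is correct and follows essentially the paper's own argument: the rescaling \(u\mapsto u/s\) in \eqref{eq:CI-A-generating} for (i), summation by parts against \(s^n\Delta_{1/s}^n\varphi\) together with convergence of the centered seed measures and of the centered Laplace transforms near the origin for (ii), and passage to the limit in the centered Charlier-I-like relations for (iii). The paper obtains \eqref{eq:Hermite-B-moments} by integrating by parts in \eqref{eq:Hermite-seed-B}; your route also works, since \(\int t^r\dd\mu_s^{\mathrm{CI}}(t)=\delta_{r,n}\) holds exactly for every \(s\) and \(r\le n\), with no lower-order terms left to vanish.

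One sign needs fixing. Because the prefactor in \eqref{eq:CI-B} is \((z-1)^n\), one has \(\mathcal B_{\mm}^{\mathrm{CI}}(\ell)=\frac{1}{n!}\sum_{r=0}^{n}(-1)^{n-r}\binom{n}{r}\sigma_{\mm,s}^{\mathrm{CI}}(\ell-r)\). This is \((-1)^n/n!\) times the backward difference, not \(1/n!\) times it, so the summation-by-parts identity carries no factor \((-1)^n\), exactly as in the paper. With your sign the limit would come out as \((-1)^n\mathcal B_{\mm}^{\mathrm H,+}\), which the exact identity \(\int t^n\dd\mu_s^{\mathrm{CI}}(t)=1\) rules out.
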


\begin{proof}
The preceding convolution argument proves the limits of the weights. Equivalently,
locally for \(w\) near zero,
\[
 \e^{-aw/s}G_j^{\mathrm{CI}}(\e^{w/s})
 \xrightarrow[s\to\infty]{}
 \e^{w^2/4}
 \prod_{h=1}^{q-1}\left(\frac{\rho_h}{\rho_h-w}\right)^{
 \delta_h+\delta_{h,j}}.
\]
Differentiating this locally uniform limit gives convergence of every
fixed moment.
In \eqref{eq:CI-A-generating}, set \(u\mapsto u/s\) and use
\((1+u/s)^{a+st}\e^{-au/s}
\xrightarrow[s\to\infty]{}\e^{tu-u^2/4}\).
This proves \eqref{eq:Hermite-A-generating}; expanding the finite product
and using \eqref{eq:monic-Hermite-definition} gives
\eqref{eq:Hermite-A}.
For the limit of the signed function, take the bilateral Laplace transform of
\eqref{eq:Hermite-B-measures}. If \(H_{\mm,s}^{\mathrm{CI}}\) denotes
the positive generating-function factor in \eqref{eq:CI-B} without
\((z-1)^n/n!\), then
\[
 \int\e^{wt}\dd\mu_s^{\mathrm{CI}}(t)
 =s^n\e^{-aw/s}\mathcal H_{\mm}^{\mathrm{CI}}(\e^{w/s})
 =\frac{[s(\e^{w/s}-1)]^n}{n!}
 \e^{-aw/s}H_{\mm,s}^{\mathrm{CI}}(\e^{w/s})
 \xrightarrow[s\to\infty]{}\frac{w^n}{n!}\Lambda_{\mm}(w),
\]
locally near \(w=0\), where \(\Lambda_{\mm}\) is the bilateral Laplace
transform of \(S_{\mm}^{\mathrm H}\). This is precisely the transform
of the signed measure in \eqref{eq:Hermite-B}. For a direct test-function
proof, define the normalized nonnegative coefficient sequence
\(p_{\mm,s}^{\mathrm{CI}}(\ell)
\coloneq[z^\ell]H_{\mm,s}^{\mathrm{CI}}(z)\).
Coefficient extraction gives
\(\mathcal B_{\mm}^{\mathrm{CI}}(\ell)
=\frac1{n!}\sum_{r=0}^n(-1)^{n-r}\binom nr
p_{\mm,s}^{\mathrm{CI}}(\ell-r)\).
Hence, for every \(\varphi\in C_c^\infty(\R)\),
\[
 \left\langle\mu_s^{\mathrm{CI}},\varphi\right\rangle
 =\frac1{n!}\sum_{\ell\ge0}p_{\mm,s}^{\mathrm{CI}}(\ell)
 s^n\Delta_{1/s}^n\varphi\!\left(\frac{\ell-a}{s}\right).
\]
Since \(s^n\Delta_{1/s}^n\varphi
\xrightarrow[s\to\infty]{}\varphi^{(n)}\) uniformly and the same
convolution argument, with shapes \(\delta_h+m_h\), gives, as \(s\to\infty\), convergence of
the associated centered coefficient measures to
\(S_{\mm}^{\mathrm H}(t)\dd t\), the right-hand side tends to
\[
 \frac1{n!}\int_{\R}S_{\mm}^{\mathrm H}(t)
 \varphi^{(n)}(t)\dd t
 =\left\langle\mathcal B_{\mm}^{\mathrm H,+},\varphi\right\rangle.
\]
Differentiating the transform identity at the origin, or equivalently
using the exact factorial moments of the coefficient sequences, proves
convergence of
every fixed moment. For \(0\le r<m_j\), first express
\(((\ell-a)/s)^r\) as the corresponding finite linear combination of
\(\fall{\ell}{0},\ldots,\fall{\ell}{r}\). The Charlier-I-like
orthogonality makes the resulting centered relation identically zero for
every \(s\); the coefficientwise polynomial limit and the centered
weight-moment convergence proved above then give
\eqref{eq:Hermite-A-orthogonality}; integration by parts in
\eqref{eq:Hermite-seed-B} proves \eqref{eq:Hermite-B-moments}.
\end{proof}

For a near-diagonal multi-index \(\mm\), put
\(D=\abs\mm\ge1\) and \(n=D-1\).  For the reflected Hermite-like
system, set
\begin{equation}
 S_{\mm}^{\mathrm H,-}(t)
 \coloneq\int_{(0,\infty)^{q-1}}
 \phi(t+y_1+\cdots+y_{q-1})
 \prod_{h=1}^{q-1}g_{\rho_h,\delta_h+m_h}(y_h)\dd\boldsymbol y,
 \qquad
 \mathcal B_{\mm}^{\mathrm H,-}(t)
 \coloneq\frac{(-1)^n}{n!}\frac{\dd^n}{\dd t^n}
 S_{\mm}^{\mathrm H,-}(t).
 \label{eq:Hermite-minus-seed-and-density}
\end{equation}
Since
\(\phi^{(n)}(t)=(-1)^n2^n\mathsf H_n(t)\phi(t)\), this definition is
equivalent to
\begin{equation}
 \mathcal B_{\mm}^{\mathrm H,-}(t)
 =\frac{2^n}{n!}
 \int_{(0,\infty)^{q-1}}
 \mathsf H_n(t+y_1+\cdots+y_{q-1})
 \phi(t+y_1+\cdots+y_{q-1})
 \prod_{h=1}^{q-1}g_{\rho_h,\delta_h+m_h}(y_h)
 \dd\boldsymbol y.
 \label{eq:Hermite-B-kernel-minus}
\end{equation}

We next fix the finite type-I coefficient system.  Let \(\mm\) be near the
diagonal, put \(D=\abs\mm=n+1\ge1\), and assume
\(\rho_h,\delta_h>0\) for \(h\in\{1,\ldots,q-1\}\). Set
\(L_-(z)\coloneq\e^{z^2/4}\prod_{h=1}^{q-1}
\left(\rho_h/(\rho_h+z)\right)^{\delta_h}\).
For \(j\in\{1,\ldots,q\}\), let
\(\mathcal W_j^{\mathrm H,-}(t)\coloneq\mathcal W_j^{\mathrm H,+}(-t)\); its
bilateral Laplace transform is
\(\Lambda_h^-(z)=\rho_hL_-(z)/(\rho_h+z)\) for
\(h\in\{1,\ldots,q-1\}\), while \(\Lambda_q^-(z)=L_-(z)\).
Set \(Q_{\mm}^{\mathrm H}(z)\coloneq\prod_{h=1}^{q-1}(\rho_h+z)^{m_h}\),
and let \(\varkappa_j=\rho_j\) for \(j\in\{1,\ldots,q-1\}\), while
\(\varkappa_q=1\). For every \(j\in\{1,\ldots,q\}\) and
\(k\in\Nzero\), put
\(\Phi_{j,k}^{\mathrm H,-}(z)
\coloneq Q_{\mm}^{\mathrm H}(z)L_-(z)^{-1}
\mathsf H_k(\partial_z)\Lambda_j^-(z)\).
For \(j\in\{1,\ldots,q\}\) with \(m_j>0\),
\(k\in\{0,\ldots,m_j-1\}\), and \(\nu\in\{0,\ldots,n\}\), set
\(C^{\mathrm H,-}_{\nu,(j,k)}\coloneq[z^\nu]
\Phi_{j,k}^{\mathrm H,-}(z)\).

\begin{lemma}[Hermite type-I coefficient system]
\label{lem:Hermite-connection-determinant}
\begin{enumerate}[label=\textnormal{(\roman*)}]
\item Fix \(j\in\{1,\ldots,q\}\). The formal generating-function
identity
\[
 \sum_{k\ge0}\frac{u^k}{k!}\Phi_{j,k}^{\mathrm H,-}(z)
 =\varkappa_j Q_{\mm}^{\mathrm H}(z)
 \prod_{h=1}^{q-1}(z+\rho_h)^{-\delta_{j,h}}
 \e^{zu/2}\prod_{h=1}^{q-1}
 \left(1+\frac{u}{z+\rho_h}\right)^{-
 (\delta_h+\delta_{j,h})}
\]
holds. Equivalently, for every \(k\in\Nzero\),
\begin{equation}
 \Phi_{j,k}^{\mathrm H,-}(z)
 =\varkappa_j k!
 \sum_{\substack{r_0,s_1,\ldots,s_{q-1}\ge0\\
                   r_0+s_1+\cdots+s_{q-1}=k}}
 \frac{(z/2)^{r_0}}{r_0!}
 \times\prod_{h=1}^{q-1}
 \frac{(-1)^{s_h}(\delta_h+\delta_{j,h})_{s_h}}{s_h!}
 (z+\rho_h)^{m_h-\delta_{j,h}-s_h}.
 \label{eq:Hermite-Phi-finite}
\end{equation}
If \(m_j>0\) and \(k\in\{0,\ldots,m_j-1\}\), the polynomial
\(\Phi_{j,k}^{\mathrm H,-}\) has degree at most \(n\).

\item For every \(j\in\{1,\ldots,q\}\) with \(m_j>0\),
\(k\in\{0,\ldots,m_j-1\}\), and \(\nu\in\{0,\ldots,n\}\), the entries
of the coefficient matrix are
\begin{multline}
 C^{\mathrm H,-}_{\nu,(j,k)}
 =\varkappa_j k!
 \sum_{\substack{r_0,s_1,\ldots,s_{q-1}\ge0\\
                   r_0+s_1+\cdots+s_{q-1}=k}}
 \frac{2^{-r_0}}{r_0!}
 \prod_{h=1}^{q-1}
 \frac{(-1)^{s_h}(\delta_h+\delta_{j,h})_{s_h}}{s_h!}
 \\\times\sum_{\substack{0\le t_h\le
 m_h-\delta_{j,h}-s_h\ (h\in\{1,\ldots,q-1\})\\
 t_1+\cdots+t_{q-1}=\nu-r_0}}
 \prod_{h=1}^{q-1}
 \binom{m_h-\delta_{j,h}-s_h}{t_h}
 \rho_h^{m_h-\delta_{j,h}-s_h-t_h}.
 \label{eq:Hermite-connection-entries}
\end{multline}
The inner sum is zero when \(\nu<r_0\). When \(q=1\), an empty product
equals one and the inner constrained sum equals \(\delta_{\nu,r_0}\).

\item For every fixed ordering of the columns,
\begin{equation}
 \det C^{\mathrm H,-}
 =\pm 2^{-\binom{m_q}{2}}
 \prod_{h=1}^{q-1}\left[
 \rho_h^{m_h}\prod_{k=0}^{m_h-1}(\delta_h+1)_k\right]
 \prod_{1\le j<h\le q-1}(\rho_h-\rho_j)^{m_jm_h}.
 \label{eq:Hermite-connection-determinant}
\end{equation}
If the rates \(\rho_h\) for which
\(h\in\{1,\ldots,q-1\}\) and \(m_h>0\) are pairwise
distinct, both \(C^{\mathrm H,-}\) and the coefficient matrix obtained
from it by reflecting the underlying polynomials and weights under
\(t\mapsto-t\) are nonsingular. We call the latter the \(H^+\)
coefficient system.
\end{enumerate}
\end{lemma}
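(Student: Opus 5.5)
For (i), I will work with the generating function of the monic Hermite polynomials. From \eqref{eq:monic-Hermite-definition} one gets \(\sum_k\mathsf H_k(t)u^k/k!=\e^{tu-u^2/4}\). Replacing \(t\) by \(\partial_z\) gives the operator \(\e^{u\partial_z}\e^{-u^2/4}\). Applied to \(\Lambda_j^-\), this shifts \(z\mapsto z+u\) and multiplies by \(\e^{-u^2/4}\). Then
\[
 \e^{-u^2/4}\frac{L_-(z+u)}{L_-(z)}=\e^{zu/2}\prod_{h=1}^{q-1}\Bigl(\frac{z+\rho_h}{z+u+\rho_h}\Bigr)^{\delta_h},
\]
because \(\e^{(z+u)^2/4-z^2/4-u^2/4}=\e^{zu/2}\). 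The extra factor \(\rho_j/(z+u+\rho_j)\), present for \(j<q\), supplies \(\varkappa_j(z+\rho_j)^{-1}(1+u/(z+\rho_j))^{-1}\). Multiplying by \(Q_{\mm}^{\mathrm H}\) gives the displayed identity. This step is a formal computation in \(u\) over rational functions of \(z\). Extracting \([u^k]\) with the binomial series \((1+x)^{-d}=\sum_s(-1)^s(d)_sx^s/s!\) gives \eqref{eq:Hermite-Phi-finite}. Near-diagonality gives \(m_h-\delta_{j,h}-s_h\ge0\) whenever \(k<m_j\), exactly as for \eqref{eq:CI-Phi-finite}, so each \(\Phi\) is a polynomial. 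Its degree is \(r_0+\sum_h(m_h-\delta_{j,h}-s_h)\le k+d_--\epsilon_j-k\le n\), where \(d_-=\sum_{h<q}m_h\); for \(j=q\) I use \(k\le m_q-1\), and for \(j<q\) the bound is \(d_--1\le n\).

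Part (ii) then follows by expanding each \((z+\rho_h)^{e_h}\) binomially and collecting \([z^\nu]\).

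For (iii) I will reuse the triangular principal-part argument from Proposition~\ref{prop:CI-normality}. Dividing by \(Q_{\mm}^{\mathrm H}\), set \(P_{j,k}=\Phi_{j,k}^{\mathrm H,-}/(\varkappa_jQ_{\mm}^{\mathrm H})\).
\begin{itemize}
\item For \(j<q\), the highest-order pole of \(P_{j,k}\) at \(-\rho_j\) is \((-1)^k(\delta_j+1)_k(z+\rho_j)^{-k-1}\). Every other pole has order \(\le k\), and the polynomial part has degree \(\le k-1\) (from \(r_0<k\)).
\item For \(j=q\), the top term is \((z/2)^k\), which occurs only for \(r_0=k\). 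All poles then have order \(\le k\).
\end{itemize}
Hence, ordering the columns by increasing \(k\), the map to the basis \(Q_{j,k}=(z+\rho_j)^{-k-1}\) (\(j<q\)) and \(Q_{q,k}=z^k\) is triangular. Its diagonal entries are \(\varkappa_j(-1)^k(\delta_j+1)_k\) and \(2^{-k}\).

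It remains to compute the determinant of the coefficient matrix of \(E_{j,k}=Q_{\mm}^{\mathrm H}Q_{j,k}\). This is a confluent Vandermonde in the nodes \(-\rho_h\) together with a node at infinity. I will reduce it to Lemma~\ref{lem:confluent-polynomial-blocks} by passing to reversed variables, or argue directly as in the proof of Proposition~\ref{prop:LII-explicit-components}(iii). In the direct argument, the monomials \(Q_{\mm}^{\mathrm H}z^k\) for \(k<m_q\) are triangular in the top degrees. After row reduction the problem reduces to the finite-node determinant \(\pm\prod_{j<h<q}(\rho_h-\rho_j)^{m_jm_h}\), with no contribution from the infinity node. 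Collecting factors, the \(\rho_h^{m_h}\) come from \(\varkappa_h^{m_h}\) and \(2^{-\binom{m_q}{2}}\) comes from the infinity pivots, which gives \eqref{eq:Hermite-connection-determinant} up to sign.

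Reflection \(t\mapsto-t\) acts on these systems by \(z\mapsto-z\) and by a sign on each degree. This multiplies the matrix by invertible diagonal matrices, so the \(H^+\) system is nonsingular as well. The main obstacle I expect is the bookkeeping of the infinity node in the determinant, namely checking that the mixed block between the finite poles and the polynomial part contributes exactly \(1\) up to sign.
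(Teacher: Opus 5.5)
Your proposal is correct and follows the paper's proof essentially step for step. For (i) you use the translation action of $\exp(u\partial_z-u^2/4)$, and for (iii) you use the triangular principal-part change of basis to $E_{h,k}=Q_{\mm}^{\mathrm H}/(z+\rho_h)^{k+1}$ and $E_{q,k}=z^kQ_{\mm}^{\mathrm H}$ with pivots $(-1)^k\rho_h(\delta_h+1)_k$ and $2^{-k}$, then split off the unitriangular top-degree block of the $E_{q,k}$ and evaluate the finite confluent Vandermonde. One small repair in the degree count for (i): since $\sum_h s_h=k-r_0$, a summand has degree $2r_0-k+d_--\epsilon_j\le k+d_--\epsilon_j$, so for $j<q$ the bound is $k+d_--1$, not $d_--1$, and reaching $\le n$ uses near-diagonality in the form $k\le m_j-1\le m_q$.
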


\begin{proof}
The generating function
\(\sum_{k\ge0}\mathsf H_k(\partial_z)u^k/k!
=\exp(u\partial_z-u^2/4)\) acts by translation. Applying it to
\(\Lambda_j^-\), dividing by \(L_-\), and simplifying the Gaussian
factor gives the displayed generating identity. Its multinomial expansion
gives \eqref{eq:Hermite-Phi-finite}; expanding the remaining powers of
\(z+\rho_h\) gives \eqref{eq:Hermite-connection-entries}.

Put
\[
 E_{h,k}(z)=\frac{Q_{\mm}^{\mathrm H}(z)}
 {(\rho_h+z)^{k+1}}
 \quad(h\in\{1,\ldots,q-1\}),\qquad
 E_{q,k}(z)=z^kQ_{\mm}^{\mathrm H}(z).
\]
The near-diagonal hypothesis ensures that all denominators created by
\(\mathsf H_k(\partial_z)\) are cleared by \(Q_{\mm}^{\mathrm H}\), and the behavior
at infinity gives degree at most \(n\). The coefficient determinant of
these \(D\) polynomials is the confluent Vandermonde
\[
 \det\bigl([z^\nu]E_{j,k}(z)\bigr)_{
 0\le\nu<D,\,(j,k)}
 =\pm\prod_{1\le j<h\le q-1}(\rho_h-\rho_j)^{m_jm_h}.
\]
Indeed, the columns \(E_{q,k}=z^kQ_{\mm}^{\mathrm H}\), indexed by
\(k\in\Nzero\) with \(k<m_q\), have leading degree
\(\sum_{h=1}^{q-1}m_h+k\). After removing this possibly empty family, the
remaining determinant is the standard confluent resultant of the factors
\((z+\rho_h)^{m_h}\).

For \(k\in\Nzero\), denote by \(\mathcal E_{<k}\) the span of the columns
\(E_{j,r}\) with \(j\in\{1,\ldots,q\}\), \(m_j>0\), and \(r\in\Nzero\),
\(r<\min\{k,m_j\}\). Order all columns globally by increasing \(k\). At the pole
\(z=-\rho_h\), the highest principal part gives
\[
 \Phi_{h,k}^{\mathrm H,-}
 -(-1)^k\rho_h(\delta_h+1)_kE_{h,k}
 \in\mathcal E_{<k},\qquad h\in\{1,\ldots,q-1\}.
\]
At infinity, since \(\mathsf H_k(\partial_z)\) is monic in
\(\partial_z\) and
\((\partial_z^k\e^{z^2/4})/\e^{z^2/4}
=2^{-k}z^k+\mathrm O(z^{k-2})\) as \(|z|\to\infty\), one has
\(\Phi_{q,k}^{\mathrm H,-}-2^{-k}E_{q,k}\in\mathcal E_{<k}\).
Thus the change of basis is triangular. Multiplying its diagonal entries
and the confluent Vandermonde gives
\eqref{eq:Hermite-connection-determinant}; the signs \((-1)^k\) are
absorbed in \(\pm\). Reflection \(t\mapsto-t\) proves the last assertion.
\end{proof}

The components are obtained by matching the finite principal parts and
the polynomial part at infinity of the transformed type-I identity.  We
first fix the coefficient data used in this matching.  Let \(\mm\) be
near the diagonal, put
\[
 D\coloneq\abs\mm\ge1,\qquad n\coloneq D-1,\qquad
 \mathcal I_{\mm}^-\coloneq\{h\in\{1,\ldots,q-1\}:m_h>0\},\qquad
 C_{\mm}\coloneq\prod_{h\in\mathcal I_{\mm}^-}\rho_h^{m_h},
\]
and assume \(\rho_h,\delta_h>0\) for
\(h\in\mathcal I_{\mm}^-\), and assume that these rates are pairwise
distinct. We seek the coefficients \(c_{j,k}\) in
\begin{equation}
 \sum_{j=1}^q\sum_{k=0}^{m_j-1}
 c_{j,k}\Phi_{j,k}^{\mathrm H,-}(z)
 =\frac{C_{\mm}}{n!}z^n.
 \label{eq:Hermite-explicit-transformed-identity}
\end{equation}
Finite principal parts account for the indices
\(h\in\{1,\ldots,q-1\}\), while the
polynomial part at infinity is indexed by \(h=q\). Accordingly, set
\[
 \mathscr P_{\mm}^{\mathrm H}
 \coloneq\{(h,K):h\in\mathcal I_{\mm}^-,\
 K\in\Nzero,\ K<m_h\}
 \cup\{(q,K):K\in\Nzero,\ K<m_q\}.
\]
For \(a=(h,K)\) and \(b=(j,r)\) in this set, write \(a\prec b\) when
\(K<r\). The formulas below give the coefficients in the triangular
system
\begin{equation}
 \Delta_a^{\mathrm H}c_a+
 \sum_{a\prec b}Z_{a,b}^{\mathrm H}c_b
 =\Omega_a^{\mathrm H},
 \qquad a\in\mathscr P_{\mm}^{\mathrm H}.
 \label{eq:Hermite-explicit-triangular-system}
\end{equation}

For every \((h,K)\in\mathscr P_{\mm}^{\mathrm H}\) with
\(h\in\mathcal I_{\mm}^-\), define
\begin{equation}
 \Omega_{h,K}^{\mathrm H}
 \coloneq\frac{C_{\mm}(-\rho_h)^n}
 {n!\displaystyle\prod_{\substack{g\in\mathcal I_{\mm}^-\\g\ne h}}
       (\rho_g-\rho_h)^{m_g}}
 \mathscr L_{m_h-K-1}
 \left(
 n;(m_g)_{\substack{g\in\mathcal I_{\mm}^-\\g\ne h}};
 \rho_h^{-1},
 \bigl(-(\rho_g-\rho_h)^{-1}\bigr)_{
       \substack{g\in\mathcal I_{\mm}^-\\g\ne h}}
 \right).
 \label{eq:Hermite-explicit-target-finite}
\end{equation}
For every \(K\in\Nzero\) with \(K<m_q\), define
\begin{equation}
 \Omega_{q,K}^{\mathrm H}
 \coloneq\frac{C_{\mm}}{n!}
 \mathscr H_{m_q-1-K}
 \left((m_g)_{g\in\mathcal I_{\mm}^-};
       (-\rho_g)_{g\in\mathcal I_{\mm}^-}\right).
 \label{eq:Hermite-explicit-target-infinity}
\end{equation}
For every \(a=(h,K)\in\mathscr P_{\mm}^{\mathrm H}\), write
\(\Omega_a^{\mathrm H}\coloneq\Omega_{h,K}^{\mathrm H}\).

For \(j\in\{1,\ldots,q\}\) with \(m_j>0\), put
\(\varkappa_j=\rho_j\) for \(j\in\{1,\ldots,q-1\}\) and
\(\varkappa_q=1\).
For \(j\in\{1,\ldots,q\}\) with \(m_j>0\),
\(g\in\mathcal I_{\mm}^-\), and
\(\boldsymbol s\in\mathbb N_0^{\mathcal I_{\mm}^-}\), put
\(d_g(\boldsymbol s,j)\coloneq s_g+\delta_{j,g}\). For a fixed
\(h\in\mathcal I_{\mm}^-\), abbreviate
\[
 \boldsymbol d_{\widehat h}(\boldsymbol s,j)
 =\bigl(d_g(\boldsymbol s,j)\bigr)_{
   g\in\mathcal I_{\mm}^-\setminus\{h\}},\qquad
 \boldsymbol\eta_{\widehat h}
 =\bigl(-(\rho_g-\rho_h)^{-1}\bigr)_{
   g\in\mathcal I_{\mm}^-\setminus\{h\}}.
\]
For
\(j\in\{1,\ldots,q\}\) with \(m_j>0\),
\(k\in\{0,\ldots,m_j-1\}\), and
\((h,K)\in\mathscr P_{\mm}^{\mathrm H}\) with
\(h\in\mathcal I_{\mm}^-\), define
\begin{multline}
 Z_{h,K,j,k}^{\mathrm H}
 \coloneq\varkappa_j k!
 \sum_{\substack{r_0\in\Nzero,\ 
                  \boldsymbol s\in\mathbb N_0^{\mathcal I_{\mm}^-}\\
                  r_0+\sum_{g\in\mathcal I_{\mm}^-}s_g=k,\ 
                  d_h(\boldsymbol s,j)\ge K+1}}
 \frac{(-\rho_h/2)^{r_0}}{r_0!}
 \prod_{g\in\mathcal I_{\mm}^-}
 \frac{(-1)^{s_g}(\delta_g+\delta_{j,g})_{s_g}}{s_g!}
 \prod_{\substack{g\in\mathcal I_{\mm}^-\\g\ne h}}
 (\rho_g-\rho_h)^{-d_g(\boldsymbol s,j)}
 \\
 {}\times
 \mathscr L_{d_h(\boldsymbol s,j)-K-1}
 \left(
 r_0;\boldsymbol d_{\widehat h}(\boldsymbol s,j);
 \rho_h^{-1},\boldsymbol\eta_{\widehat h}
 \right).
 \label{eq:Hermite-explicit-connection-finite}
\end{multline}
For \(j\in\{1,\ldots,q\}\) with \(m_j>0\), \(r_0\in\Nzero\), and
\(\boldsymbol s\in\mathbb N_0^{\mathcal I_{\mm}^-}\), set
\(e(r_0,\boldsymbol s,j)\coloneq
r_0-\sum_{g\in\mathcal I_{\mm}^-}d_g(\boldsymbol s,j)\).
For \(j\in\{1,\ldots,q\}\) with \(m_j>0\),
\(k\in\{0,\ldots,m_j-1\}\), and
\(K\in\Nzero\) with \(K<m_q\), put
\begin{multline}
 Z_{q,K,j,k}^{\mathrm H}
 \coloneq\varkappa_j k!
 \sum_{\substack{r_0\in\Nzero,\ 
                  \boldsymbol s\in\mathbb N_0^{\mathcal I_{\mm}^-}\\
                  r_0+\sum_{g\in\mathcal I_{\mm}^-}s_g=k\\
                  e(r_0,\boldsymbol s,j)\ge K}}
 \frac{2^{-r_0}}{r_0!}
 \prod_{g\in\mathcal I_{\mm}^-}
 \frac{(-1)^{s_g}(\delta_g+\delta_{j,g})_{s_g}}{s_g!}
 \mathscr H_{e(r_0,\boldsymbol s,j)-K}
 \left(
 \bigl(d_g(\boldsymbol s,j)\bigr)_{g\in\mathcal I_{\mm}^-};
 (-\rho_g)_{g\in\mathcal I_{\mm}^-}
 \right).
 \label{eq:Hermite-explicit-connection-infinity}
\end{multline}
For \(a=(h,K),b=(j,k)\in\mathscr P_{\mm}^{\mathrm H}\), write
\(Z_{a,b}^{\mathrm H}\coloneq Z_{h,K,j,k}^{\mathrm H}\).
For \(h\in\mathcal I_{\mm}^-\) and
\(K\in\Nzero\) with \(K<m_h\), and for \(K\in\Nzero\) with
\(K<m_q\), respectively, the diagonal coefficients are
\begin{equation}
 \Delta_{(h,K)}^{\mathrm H}
 \coloneq(-1)^K\rho_h(\delta_h+1)_K,
 \qquad
 \Delta_{(q,K)}^{\mathrm H}
 \coloneq2^{-K}.
 \label{eq:Hermite-explicit-pivots}
\end{equation}
For every \(a\in\mathscr P_{\mm}^{\mathrm H}\), define
\begin{equation}
 \mathfrak C_a^{\mathrm H}
 \coloneq\sum_{p\ge0}(-1)^p
 \sum_{\substack{a=a_0\prec a_1\prec\cdots\prec a_p\\
                 a_v\in\mathscr P_{\mm}^{\mathrm H}}}
 \frac{\Omega_{a_p}^{\mathrm H}}{\Delta_{a_p}^{\mathrm H}}
 \prod_{v=0}^{p-1}
 \frac{Z_{a_v,a_{v+1}}^{\mathrm H}}
      {\Delta_{a_v}^{\mathrm H}}.
 \label{eq:Hermite-explicit-chain}
\end{equation}

A sum over an empty index set is understood to be zero.

\begin{theorem}[Explicit Hermite-like type-I components]
\label{thm:explicit-Hermite-B}
Let \(\mm\) be near the diagonal with \(\abs\mm\ge1\). Assume
\(\rho_h,\delta_h>0\) for every
\(h\in\{1,\ldots,q-1\}\), and assume that the active rates \(\rho_h\)
are pairwise distinct. With the data defined above, for
\(\alpha,\beta\in\mathscr P_{\mm}^{\mathrm H}\),
\(Z_{\alpha,\beta}^{\mathrm H}=0\) unless
\(\alpha=\beta\) or \(\alpha\prec\beta\), and
\(Z_{\alpha,\alpha}^{\mathrm H}=\Delta_\alpha^{\mathrm H}\). The sum in
\eqref{eq:Hermite-explicit-chain} is finite. The unique type-I
polynomials are
\begin{equation}
 B_j^{\mathrm H,-}(t)
 =\sum_{k=0}^{m_j-1}
 \mathfrak C_{(j,k)}^{\mathrm H}\mathsf H_k(t),
 \qquad j\in\{1,\ldots,q\},
 \label{eq:explicit-Hermite-minus-components}
\end{equation}
\end{theorem}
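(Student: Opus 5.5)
The proof follows the same principal-part and triangular-inversion pattern as Theorems~\ref{thm:explicit-MI-B} and \ref{thm:explicit-CI-B}, now on the Laplace side, with the polynomial coefficient system of Lemma~\ref{lem:Hermite-connection-determinant}.

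\emph{Step 1: reduce the type-I problem to \eqref{eq:Hermite-explicit-transformed-identity}.} The bilateral Laplace transform of \(\mathcal W_j^{\mathrm H,-}\) is \(\Lambda_j^-\). Multiplication of a density by \(\mathsf H_k(t)\) corresponds to applying \(\mathsf H_k(\partial_z)\) to its transform. By \eqref{eq:Hermite-minus-seed-and-density}, the transform of \(\mathcal B_{\mm}^{\mathrm H,-}\) is \(\frac{(-z)^n}{n!}\cdot\) (the transform of \(S_{\mm}^{\mathrm H,-}\)), up to the sign conventions fixed by the reflection. The transform of \(S_{\mm}^{\mathrm H,-}\) equals \(L_-(z)\,C_{\mm}/Q_{\mm}^{\mathrm H}(z)\). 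Multiplying the transformed type-I identity by \(Q_{\mm}^{\mathrm H}/L_-\) then gives exactly \eqref{eq:Hermite-explicit-transformed-identity}, where the \(\varkappa_j\) come from the normalization of \(\Lambda_j^-\). I will check the sign \((-1)^n\) against the moment normalization \eqref{eq:Hermite-B-moments}, reflected.

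\emph{Step 2: show the system is triangular.} I divide \(\Phi_{j,k}^{\mathrm H,-}\) by \(Q_{\mm}^{\mathrm H}\) and use the finite expansion \eqref{eq:Hermite-Phi-finite}. At \(z=-\rho_h\), put \(z=-\rho_h+w\), expand \((z/2)^{r_0}=(-\rho_h/2)^{r_0}(1-w/\rho_h)^{r_0}\), and expand the factors \((z+\rho_g)^{-d_g}\) for \(g\ne h\) as \((\rho_g-\rho_h)^{-d_g}(1+w/(\rho_g-\rho_h))^{-d_g}\). Reading off the coefficient of \(w^{-K-1}\) gives \eqref{eq:Hermite-explicit-connection-finite}. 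At infinity, each summand behaves like \(z^{e(r_0,\boldsymbol s,j)}\) times \(\prod_g(1+\rho_g/z)^{-d_g}\). Extracting the coefficient of \(z^K\) gives \eqref{eq:Hermite-explicit-connection-infinity}. The same expansions applied to \(C_{\mm}z^n/(n!Q_{\mm}^{\mathrm H})\) give \eqref{eq:Hermite-explicit-target-finite} and \eqref{eq:Hermite-explicit-target-infinity}.

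\emph{Step 3: identify the diagonal and the finiteness.} At a finite pole, order \(K+1\) requires \(d_h\ge K+1\) with \(\sum s_g\le k\). So \(K\le k\), and equality forces \(j=h\), \(s_h=k\), \(r_0=0\), which gives \((-1)^K\rho_h(\delta_h+1)_K\). At infinity, \(e\ge K\) forces \(r_0\ge K\), hence \(K\le k\), and equality forces \(j=q\), \(r_0=k\), \(\boldsymbol s=0\), which gives \(2^{-K}\). These are \eqref{eq:Hermite-explicit-pivots}; this repeats the triangularity argument in the proof of Lemma~\ref{lem:Hermite-connection-determinant}. Because the second coordinate increases strictly along every chain, the chain sum \eqref{eq:Hermite-explicit-chain} is finite.

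\emph{Step 4: solve and conclude.} Splitting off the first edge of each chain shows that \(\mathfrak C^{\mathrm H}\) solves \eqref{eq:Hermite-explicit-triangular-system}. The two sides of \eqref{eq:Hermite-explicit-transformed-identity}, divided by \(Q_{\mm}^{\mathrm H}\), then have equal principal parts at every pole and equal polynomial parts, so they are equal. Inverting the transform gives the component formula \eqref{eq:explicit-Hermite-minus-components}. Uniqueness follows from the nonvanishing determinant in Lemma~\ref{lem:Hermite-connection-determinant}(iii).

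\emph{Main obstacle.} The delicate part is the expansion at infinity, that is, the sign and power bookkeeping in \(e(r_0,\boldsymbol s,j)\) and the \(\mathscr H\) arguments \((-\rho_g)\). It is also the rigorous justification of the Laplace dictionary in Step~1, in particular that \(\mathsf H_k(\partial_z)\) is the correct image of multiplication by \(\mathsf H_k(t)\) for the reflected weights. I would check that dictionary first on \(q=1\), where the result must reduce to \(2^n\mathsf H_n\phi/n!\).
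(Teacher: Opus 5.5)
Your proposal is correct and is essentially the paper's own argument. You expand \(\Phi_{j,k}^{\mathrm H,-}/Q_{\mm}^{\mathrm H}\) and the target at each \(-\rho_h\) and at infinity, read off the pivots from the maximal pole order and polynomial degree, solve the triangular system by splitting off the first edge of each chain, and conclude by matching all principal parts and the polynomial part, with uniqueness from the nonzero pivots (equivalently Lemma~\ref{lem:Hermite-connection-determinant}).

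Two bookkeeping points only. First, the transform of \(\mathcal B_{\mm}^{\mathrm H,-}\) is \(z^n/n!\) (not \((-z)^n/n!\)) times that of \(S_{\mm}^{\mathrm H,-}\), because the \((-1)^n\) in its definition cancels the one produced by integration by parts; this is exactly why the target is \(C_{\mm}z^n/n!\). Second, restricting the products in \(Z^{\mathrm H}\) and \(\Omega^{\mathrm H}\) to \(\mathcal I_{\mm}^-\) needs the near-diagonal observation, made explicitly in the paper, that if some \(m_h=0\) then only \(k=0\) occurs, so inactive rates never create poles.
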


\begin{proof}
Divide \eqref{eq:Hermite-Phi-finite} by \(Q_{\mm}^{\mathrm H}(z)\). If
some entry of \(\mm\) is zero, near-diagonality gives
\(m_h\in\{0,1\}\) for every \(h\in\{1,\ldots,q\}\), so only \(k=0\)
occurs; otherwise \(m_h>0\) for every
\(h\in\{1,\ldots,q-1\}\).
Consequently the finite sum may be written over
\(\mathcal I_{\mm}^-\) in every case.

At a finite pole \(z=-\rho_h\), write \(w=z+\rho_h\). In each summand,
\((z/2)^{r_0}=(-\rho_h/2)^{r_0}(1-w/\rho_h)^{r_0}\); expand the
remaining factors in powers of \(w\).
The coefficient of \(w^{-K-1}\) is exactly
\eqref{eq:Hermite-explicit-connection-finite}.  At infinity,
\[
 (z/2)^{r_0}\prod_{g\in\mathcal I_{\mm}^-}
 (z+\rho_g)^{-d_g(\boldsymbol s,j)}
 =2^{-r_0}z^{e(r_0,\boldsymbol s,j)}
  \prod_{g\in\mathcal I_{\mm}^-}
  (1+\rho_g/z)^{-d_g(\boldsymbol s,j)},
\]
so the coefficient of \(z^K\) in the polynomial part is precisely
\eqref{eq:Hermite-explicit-connection-infinity}.  The highest possible
pole order and polynomial degree give the diagonal values
\eqref{eq:Hermite-explicit-pivots} and show that all other entries with
the same second index vanish.

After dividing \eqref{eq:Hermite-explicit-transformed-identity} by
\(Q_{\mm}^{\mathrm H}\), direct finite expansions of the right-hand side
at \(z=-\rho_h\) and at infinity give
\eqref{eq:Hermite-explicit-target-finite} and
\eqref{eq:Hermite-explicit-target-infinity}, respectively, and give
\eqref{eq:Hermite-explicit-triangular-system}. Grouping the paths in
\eqref{eq:Hermite-explicit-chain} according to their first step proves
directly that
\(c_a=\mathfrak C_a^{\mathrm H}\) solves this system.  Equality of all
finite principal parts and of the polynomial part proves the rational
identity, hence \eqref{eq:Hermite-explicit-transformed-identity}.
Undoing the transform gives
\(\mathcal B_{\mm}^{\mathrm H,-}
=\sum_{j=1}^qB_j^{\mathrm H,-}\mathcal W_j^{\mathrm H,-}\).
The nonzero diagonal coefficients also give uniqueness, consistently with
Lemma~\ref{lem:Hermite-connection-determinant}.
\end{proof}

\begin{corollary}[Finite Hermite expansion of the type-I polynomials]
\label{cor:Hermite-explicit-components}
Under the hypotheses of Theorem~\ref{thm:explicit-Hermite-B},
formula \eqref{eq:explicit-Hermite-minus-components} gives every
\(B_j^{\mathrm H,-}\) as a finite sum of monic Hermite polynomials whose
coefficients are finite sums built from \(\mathscr L_L\) and
\(\mathscr H_L\). The reflected type-I polynomials satisfy, for every
\(j\in\{1,\ldots,q\}\) with \(m_j>0\),
\begin{equation}
 B_j^{\mathrm H,+}(t)=(-1)^nB_j^{\mathrm H,-}(-t).
 \label{eq:Hermite-B-component-reflection}
\end{equation}
When \(q=1\), both signs give
\begin{equation}
 B_1^{\mathrm H,\pm}(t)=\frac{2^n}{n!}\mathsf H_n(t).
 \label{eq:Hermite-scalar-component}
\end{equation}
\end{corollary}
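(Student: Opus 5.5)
The first assertion is immediate from Theorem~\ref{thm:explicit-Hermite-B}. By \eqref{eq:explicit-Hermite-minus-components}, each \(B_j^{\mathrm H,-}\) is a finite combination of the monic Hermite polynomials \(\mathsf H_k\) with \(k<m_j\). The coefficients \(\mathfrak C_{(j,k)}^{\mathrm H}\) are finite chain sums \eqref{eq:Hermite-explicit-chain}, built from the data \eqref{eq:Hermite-explicit-target-finite}--\eqref{eq:Hermite-explicit-pivots}, and these data are themselves finite sums of \(\mathscr L_L\) and \(\mathscr H_L\).

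For the reflection identity \eqref{eq:Hermite-B-component-reflection}, I would use the relations already built into the definitions. Since \(\mathcal W_j^{\mathrm H,-}(t)=\mathcal W_j^{\mathrm H,+}(-t)\) and \(S_{\mm}^{\mathrm H,-}(t)=S_{\mm}^{\mathrm H}(-t)\), comparing \eqref{eq:Hermite-seed-B} with \eqref{eq:Hermite-minus-seed-and-density} and applying the chain rule \(n\) times gives
\[
\mathcal B_{\mm}^{\mathrm H,-}(t)=(-1)^n\mathcal B_{\mm}^{\mathrm H,+}(-t).
\]
Substitute \(t\mapsto -t\) in the decomposition \(\mathcal B_{\mm}^{\mathrm H,-}=\sum_j B_j^{\mathrm H,-}\mathcal W_j^{\mathrm H,-}\) from the proof of Theorem~\ref{thm:explicit-Hermite-B} and multiply by \((-1)^n\). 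This shows that the polynomials \((-1)^nB_j^{\mathrm H,-}(-t)\) give a representation of \(\mathcal B_{\mm}^{\mathrm H,+}\) against the weights \(\mathcal W_j^{\mathrm H,+}\), with the same degree bounds.

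It remains to identify this with the \(H^+\) type-I tuple. The \(H^+\) coefficient system is nonsingular by Lemma~\ref{lem:Hermite-connection-determinant}(iii), so the representation with these degree bounds is unique. The moment normalization \eqref{eq:Hermite-B-moments} is consistent with this: reflection turns \(t^r\) into \((-1)^rt^r\), and the extra factor \((-1)^n\) restores the moment \(1\) at \(r=n\). Hence the representation coincides with \(B_j^{\mathrm H,+}\). The main (mild) obstacle is making sure that \(B_j^{\mathrm H,+}\) is defined as the unique component tuple of \(\mathcal B_{\mm}^{\mathrm H,+}\), with uniqueness supplied by the \(H^+\) system of Lemma~\ref{lem:Hermite-connection-determinant}.

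For \(q=1\) there are no finite poles, and \(\mathcal I_{\mm}^-\) is empty, so \(C_{\mm}=1\). From \eqref{eq:Hermite-Phi-finite}, \(\Phi_{1,k}^{\mathrm H,-}=z^k/2^k\), so \eqref{eq:Hermite-explicit-transformed-identity} forces \(c_{1,k}=0\) for \(k<n\) and \(c_{1,n}2^{-n}=1/n!\). This yields \(B_1^{\mathrm H,-}=2^n\mathsf H_n/n!\). As a check, this matches \(\mathcal B^{\mathrm H,+}_{m_1}=2^n\mathsf H_n\phi/n!\) stated after \eqref{eq:Hermite-B-kernel-plus}.

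The \(+\) case then follows from the reflection identity together with the parity \(\mathsf H_n(-t)=(-1)^n\mathsf H_n(t)\), which holds because \eqref{eq:monic-Hermite-definition} contains only powers \(t^{n-2v}\).
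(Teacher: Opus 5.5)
Your proposal is correct and is essentially the paper's argument: the first claim is read off from Theorem~\ref{thm:explicit-Hermite-B}, and the reflection identity comes from reflecting the type-I problem under \(t\mapsto-t\) and invoking uniqueness, which you make explicit through \(\mathcal B_{\mm}^{\mathrm H,-}(t)=(-1)^n\mathcal B_{\mm}^{\mathrm H,+}(-t)\), whereas the paper only notes that the order-\(n\) normalizing moment picks up the factor \((-1)^n\). For \(q=1\) the paper reads the component directly off \(\mathcal B=(-1)^n\phi^{(n)}/n!=2^n\mathsf H_n\phi/n!\), while you solve the transformed system \(\sum_k c_{1,k}(z/2)^k=z^n/n!\); the two computations are equivalent.
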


\begin{proof}
Apply Theorem~\ref{thm:explicit-Hermite-B}.  Under \(t\mapsto-t\), the
normalizing moment of order \(n\) acquires the factor \((-1)^n\), which
proves \eqref{eq:Hermite-B-component-reflection}.  For \(q=1\),
\(\mathcal B=(-1)^n\phi^{(n)}/n!=2^n\mathsf H_n\phi/n!\), proving
\eqref{eq:Hermite-scalar-component}.
\end{proof}

The explicit polynomials of Corollary~\ref{cor:Hermite-explicit-components}
also arise as limits. Under the same distinctness condition on the rates \(\rho_h\) for which
\(h\in\{1,\ldots,q-1\}\) and \(m_h>0\), the type-I polynomials converge
as well.

\begin{corollary}[Limit of the Charlier-I-like type-I polynomials]
\label{cor:CI-Hermite-components}
Under the hypotheses of Theorem~\ref{thm:Hermite-limit}, assume that the
rates \(\rho_h\) for which \(h\in\{1,\ldots,q-1\}\) and \(m_h>0\) are
pairwise distinct.
Then there is a unique tuple
\(\bigl(B_j^{\mathrm H,+}\bigr)_{
 j\in\{1,\ldots,q\},\,m_j>0}\) satisfying
\(\deg B_j^{\mathrm H,+}<m_j\) for every
\(j\in\{1,\ldots,q\}\) with \(m_j>0\), and
\[
 \mathcal B_{\mm}^{\mathrm H,+}(t)
 =\sum_{\substack{j\in\{1,\ldots,q\}\\m_j>0}}
 B_j^{\mathrm H,+}(t)
 \mathcal W_j^{\mathrm H,+}(t).
\]
If
\(B_{j,s}^{\mathrm{CI}}\) denotes the Charlier-I-like type-I polynomial along
\eqref{eq:Hermite-scaling}, then, for every
\(j\in\{1,\ldots,q\}\) with \(m_j>0\), the polynomial
\(t\mapsto s^nB_{j,s}^{\mathrm{CI}}(a+st)\) converges coefficientwise, as
\(s\to\infty\), to
the Hermite-like type-I polynomial
\(B_j^{\mathrm H,+}(t)\).
\end{corollary}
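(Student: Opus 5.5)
The argument has the same structure as the proofs of Corollaries~\ref{cor:MI-CI-component-limit} and \ref{cor:MI-LII-component-limit}: both the finite and the limiting type-I problems are square linear systems, so it suffices to show that the finite moment matrices converge to a nonsingular limit and then use continuity of matrix inversion. Existence and uniqueness of the \(H^+\) tuple come first. By Lemma~\ref{lem:Hermite-connection-determinant}(iii), the \(H^+\) coefficient system is nonsingular under the distinctness hypothesis. Theorem~\ref{thm:explicit-Hermite-B} gives the \(H^-\) decomposition of \(\mathcal B_{\mm}^{\mathrm H,-}\). Reflecting \(t\mapsto-t\) and using \(\mathcal W_j^{\mathrm H,+}(t)=\mathcal W_j^{\mathrm H,-}(-t)\) yields \(\mathcal B_{\mm}^{\mathrm H,+}=\sum_jB_j^{\mathrm H,+}\mathcal W_j^{\mathrm H,+}\), with the \(B_j^{\mathrm H,+}\) given by \eqref{eq:Hermite-B-component-reflection}. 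Here one checks that \(S_{\mm}^{\mathrm H}(t)=S_{\mm}^{\mathrm H,-}(-t)\), so that \(\mathcal B_{\mm}^{\mathrm H,+}(t)=(-1)^n\mathcal B_{\mm}^{\mathrm H,-}(-t)\). Uniqueness then follows because the limiting type-I moment matrix is nonsingular. This equivalence is the same jet-transformation argument used in Corollary~\ref{cor:MI-LII-component-limit}: the bilateral Laplace transforms of the columns, multiplied by the common factor \(Q_{\mm}^{\mathrm H}/L_-\), give the polynomials \(\Phi^{\mathrm H,\pm}_{j,k}\) up to an invertible triangular change of basis.

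For the convergence, put \(\widetilde B_{j,s}(t)\coloneq s^nB_{j,s}^{\mathrm{CI}}(a+st)\), a polynomial of degree \(<m_j\). Let \(\mu_{j,s}\coloneq\sum_\ell v_j^{\mathrm{CI}}(\ell)\delta_{(\ell-a)/s}\). Then \(\mu_s^{\mathrm{CI}}=\sum_j\widetilde B_{j,s}\,\mu_{j,s}\).

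The centered, rescaled moments are a fixed triangular recombination of the factorial moments. Hence the unit normalization at order \(n\) of Theorem~\ref{thm:CI-system}, together with the vanishing lower moments, gives
\[
 \sum_j\int t^r\widetilde B_{j,s}(t)\dd\mu_{j,s}(t)=\delta_{r,n},\qquad 0\le r\le n,
\]
for every \(s\). This is an exact linear system for the power-basis coefficients of the \(\widetilde B_{j,s}\). Its matrix entries are \(\int t^{r+u}\dd\mu_{j,s}\), and these converge to \(\int t^{r+u}\mathcal W_j^{\mathrm H,+}(t)\dd t\). This is the moment convergence proved in Theorem~\ref{thm:Hermite-limit}: differentiate at \(w=0\) the locally uniform limit of \(\e^{-aw/s}G_j^{\mathrm{CI}}(\e^{w/s})\).

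The limiting matrix is the \(H^+\) type-I moment matrix, which is nonsingular by the first paragraph. So the finite matrices are nonsingular for large \(s\). Continuity of inversion then gives coefficientwise convergence of the \(\widetilde B_{j,s}\) to the unique solution of the limiting system. By \eqref{eq:Hermite-B-moments} and the \(H^+\) decomposition above, that solution is \((B_j^{\mathrm H,+})\).

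The main obstacle is verifying that the limiting moment matrix is nonsingular, i.e.\ the transfer from Lemma~\ref{lem:Hermite-connection-determinant} to the moment matrix. I would argue as follows:
\begin{itemize}
\item Moments of \(t^u\mathcal W_j^{\mathrm H,-}\) are derivatives at \(z=0\) of \(\partial_z^u\Lambda_j^-\).
\item The monic Hermite basis \(\mathsf H_u(\partial_z)\) is related to \(\partial_z^u\) triangularly, with unit diagonal.
\item Multiplication by the analytic factor \(Q_{\mm}^{\mathrm H}/L_-\), which is nonzero at \(0\), acts on \(n\)-jets by an invertible triangular matrix.
\end{itemize}
Consequently the moment matrix and \(C^{\mathrm H,-}\) are either both singular or both nonsingular, and reflection transfers this to \(H^+\).
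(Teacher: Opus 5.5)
Your proposal is correct and follows essentially the same route as the paper. Both arguments obtain the \(H^+\) tuple from the nonsingular coefficient system of Lemma~\ref{lem:Hermite-connection-determinant}, and then pass to the limit in the square type-I system using the convergence of the centered transforms \(\e^{-aw/s}G_j^{\mathrm{CI}}(\e^{w/s})\) and continuity of matrix inversion. The differences are cosmetic: you work in the power basis rather than the Hermite basis, you get existence by reflecting Theorem~\ref{thm:explicit-Hermite-B} instead of from the bilateral Laplace transform of \(\mathcal B_{\mm}^{\mathrm H,+}\), and you spell out the jet-transformation step linking the moment matrix to \(C^{\mathrm H,-}\), which the paper leaves implicit here.
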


\begin{proof}
Integration by parts and \eqref{eq:Hermite-seed-B} give
\[
 \int_{\mathbb R}\e^{zt}\mathcal B_{\mm}^{\mathrm H,+}(t)\dd t
 =\frac{z^n}{n!}\e^{z^2/4}
 \prod_{h=1}^{q-1}\left(\frac{\rho_h}{\rho_h-z}\right)^{\delta_h+m_h}.
\]
After reflection, multiplication by
\(\prod_{h=1}^{q-1}(\rho_h-z)^{m_h}\) reduces the type-I equation to the
polynomial right-hand side \(z^n\prod_{h=1}^{q-1}\rho_h^{m_h}/n!\).
Lemma~\ref{lem:Hermite-connection-determinant} therefore gives unique
\(H^+\) type-I polynomials.

Expand the centered type-I polynomial in the Hermite basis from that lemma as
\(s^nB_{j,s}^{\mathrm{CI}}(a+st)
=\sum_{k=0}^{m_j-1}\gamma_{j,k,s}\mathsf H_k(t)\).
As \(s\to\infty\), the centered transforms of the weights
\(\widetilde G_{j,s}(w)
\coloneq\e^{-aw/s}G_{j,s}^{\mathrm{CI}}(\e^{w/s})\)
converge locally, together with their derivatives through order
\(n+\max_{h:m_h>0}(m_h-1)\), to
the corresponding Hermite-like weight transforms. Hence the finite
coefficient system for \((\gamma_{j,k,s})\) converges entrywise to the
invertible continuous coefficient system. Continuity of matrix inversion
gives coefficientwise convergence.
\end{proof}

\paragraph{Direct finite Hermite--Lauricella blocks.}
The rational interpolation underlying
Theorem~\ref{thm:explicit-Hermite-B} gives a block decomposition without
any limiting or finite-part prescription. Its active sectors are
\[
 \mathcal J_{\mm}^{\mathrm H}
 \coloneq\{J\in\{1,\ldots,q\}:m_J>0\}.
\]
The sectors \(J<q\) correspond to the finite poles
\(z=-\rho_J\), whereas \(J=q\), when active, corresponds to the
polynomial part at infinity. For
\(a\in\mathscr P_{\mm}^{\mathrm H}\) and
\(J\in\mathcal J_{\mm}^{\mathrm H}\), define
\begin{equation}
 \mathfrak C_{a;J}^{\mathrm H}
 \coloneq
 \sum_{p\ge0}(-1)^p
 \sum_{\substack{a=a_0\prec a_1\prec\cdots\prec a_p\\
                  a_v\in\mathscr P_{\mm}^{\mathrm H},\
                  a_p=(J,K)\ {\rm for\ some}\ 0\le K<m_J}}
 \frac{\Omega_{a_p}^{\mathrm H}}{\Delta_{a_p}^{\mathrm H}}
 \prod_{v=0}^{p-1}
 \frac{Z_{a_v,a_{v+1}}^{\mathrm H}}
      {\Delta_{a_v}^{\mathrm H}}.
 \label{eq:Hermite-sector-path-coefficients}
\end{equation}
For every \(j,J\in\mathcal J_{\mm}^{\mathrm H}\), put
\begin{align}
 \mathscr H_{j,J}^{\mathrm H,-}(t)
 &\coloneq
 \sum_{k=0}^{m_j-1}
 \mathfrak C_{(j,k);J}^{\mathrm H}\mathsf H_k(t),
 \label{eq:Hermite-minus-direct-block}\\
 \mathscr H_{j,J}^{\mathrm H,+}(t)
 &\coloneq(-1)^n\mathscr H_{j,J}^{\mathrm H,-}(-t)
 =\sum_{k=0}^{m_j-1}(-1)^{n+k}
 \mathfrak C_{(j,k);J}^{\mathrm H}\mathsf H_k(t).
 \label{eq:Hermite-plus-direct-block}
\end{align}

\begin{proposition}[Direct block decomposition of the Hermite-like components]
\label{prop:Hermite-direct-block-decomposition}
Under the hypotheses of
Theorem~\ref{thm:explicit-Hermite-B}, every sum in
\eqref{eq:Hermite-sector-path-coefficients} is finite and, for every
\(j,J\in\mathcal J_{\mm}^{\mathrm H}\), put
\(\varepsilon_{j,J}=1-\delta_{j,J}\). If
\(m_J<\varepsilon_{j,J}+1\), then
\(\mathscr H_{j,J}^{\mathrm H,\pm}=0\); otherwise,
\begin{equation}
 \deg\mathscr H_{j,J}^{\mathrm H,\pm}
 \le\min\{m_j-1,m_J-1-\varepsilon_{j,J}\}.
 \label{eq:Hermite-direct-sector-degree}
\end{equation}
In particular, every block has degree \(<m_j\). Moreover, for every
\(j\in\mathcal J_{\mm}^{\mathrm H}\),
\begin{equation}
 B_j^{\mathrm H,-}(t)
 =\sum_{J\in\mathcal J_{\mm}^{\mathrm H}}
   \mathscr H_{j,J}^{\mathrm H,-}(t),
 \qquad
 B_j^{\mathrm H,+}(t)
 =\sum_{J\in\mathcal J_{\mm}^{\mathrm H}}
   \mathscr H_{j,J}^{\mathrm H,+}(t).
 \label{eq:Hermite-direct-block-decomposition}
\end{equation}
Thus each component is a sum of at most \(q\) directly evaluated
terminating blocks. Every block is a finite Hermite expansion whose
coefficients are finite sums of products of the terminating
Lauricella--Horn coefficients \(\mathscr L_L\) and \(\mathscr H_L\)
displayed in
\eqref{eq:Hermite-explicit-target-finite}--
\eqref{eq:Hermite-explicit-connection-infinity}.
\end{proposition}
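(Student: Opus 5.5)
The argument mirrors the proofs of Propositions~\ref{prop:MI-sector-blocks} and \ref{prop:CI-sector-blocks}, applied to the triangular system \eqref{eq:Hermite-explicit-triangular-system}. Theorem~\ref{thm:explicit-Hermite-B} already gives the needed structure: \(Z_{\alpha,\beta}^{\mathrm H}=0\) unless \(\alpha=\beta\) or \(\alpha\prec\beta\), and the pivots \(\Delta^{\mathrm H}\) are nonzero. So I only need to partition the solution \(\mathfrak C_a^{\mathrm H}\) according to the terminal label of each path.

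\emph{Finiteness.} Along any chain \(a_0\prec a_1\prec\cdots\prec a_p\) the second coordinate increases strictly. It is bounded by \(\max_h m_h-1\), so \(p\) is bounded and the label set \(\mathscr P_{\mm}^{\mathrm H}\) is finite. Hence each sum in \eqref{eq:Hermite-sector-path-coefficients} is finite.

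\emph{Decomposition.} Every path in \eqref{eq:Hermite-explicit-chain} has a unique terminal label \(a_p=(J,K)\), and the source factor \(\Omega_{a_p}^{\mathrm H}\) is attached to that label. Grouping paths by the first coordinate \(J\) of the terminal label gives \(\mathfrak C_a^{\mathrm H}=\sum_{J\in\mathcal J_{\mm}^{\mathrm H}}\mathfrak C_{a;J}^{\mathrm H}\). Labels \((J,K)\) with \(m_J=0\) do not exist, so only active \(J\) occur. Inserting this identity into \eqref{eq:explicit-Hermite-minus-components} proves the first identity in \eqref{eq:Hermite-direct-block-decomposition}.

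The \(+\) identity follows from Corollary~\ref{cor:Hermite-explicit-components}. By \eqref{eq:Hermite-B-component-reflection}, \(B_j^{\mathrm H,+}(t)=(-1)^nB_j^{\mathrm H,-}(-t)\), and this relation is linear, so it passes blockwise to \eqref{eq:Hermite-plus-direct-block}. The second expression there uses \(\mathsf H_k(-t)=(-1)^k\mathsf H_k(t)\), which is immediate from the parity of \eqref{eq:monic-Hermite-definition}.

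Separating the first edge also shows that \(\mathfrak C_{a;J}^{\mathrm H}\) solves the system \eqref{eq:Hermite-explicit-triangular-system} with source \(\mathbf 1_{\{a\text{ has first coordinate }J\}}\Omega_a^{\mathrm H}\). This step is not needed for the statement, but it identifies the blocks as the sector solutions.

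\emph{Degree bound.} This is the only step that needs care. Suppose \(\mathfrak C_{(j,k);J}^{\mathrm H}\ne0\). Then some path runs from \((j,k)\) to a terminal label \((J,K)\) with \(K<m_J\).
\begin{itemize}
\item If the path has length zero, then \((j,k)=(J,K)\), so \(j=J\) and \(k=K\le m_J-1\).
\item If the path has positive length, strict increase gives \(k<K\), so \(k\le m_J-2\).
\end{itemize}
When \(j\ne J\), the path must have positive length, so \(k\le m_J-1-\varepsilon_{j,J}\) in every case. When \(m_J<\varepsilon_{j,J}+1\), no index \(k\ge0\) is admissible, so the block vanishes. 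Combined with \(k\le m_j-1\) from the range of summation in \eqref{eq:Hermite-minus-direct-block}, and since \(\mathsf H_k\) has exact degree \(k\), this yields \eqref{eq:Hermite-direct-sector-degree} for both signs. Reflection preserves degrees.

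The closing description of the blocks follows by inspection. The quantities \(\Omega^{\mathrm H}\), \(Z^{\mathrm H}\) and \(\Delta^{\mathrm H}\) are given by the finite \(\mathscr L_L\) and \(\mathscr H_L\) sums in \eqref{eq:Hermite-explicit-target-finite}--\eqref{eq:Hermite-explicit-pivots}.
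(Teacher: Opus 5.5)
Your proposal is correct and follows essentially the same route as the paper's proof. Both partition the finite path sums \eqref{eq:Hermite-explicit-chain} by terminal label. Both use strict increase of the second coordinate for finiteness and for the bound \(k\le K-\varepsilon_{j,J}\le m_J-1-\varepsilon_{j,J}\). Both obtain the \(+\) blocks from \eqref{eq:Hermite-B-component-reflection} together with \(\mathsf H_k(-t)=(-1)^k\mathsf H_k(t)\).
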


\begin{proof}
At every step of a path in
\eqref{eq:Hermite-sector-path-coefficients}, the second coordinate
increases strictly, so all path sums are finite. Moreover, every path
occurring in \eqref{eq:Hermite-explicit-chain} has one and only one
terminal label \(a_p=(J,K)\), with
\(J\in\mathcal J_{\mm}^{\mathrm H}\). Partitioning that finite path sum
according to \(J\) gives
\[
 \mathfrak C_a^{\mathrm H}
 =\sum_{J\in\mathcal J_{\mm}^{\mathrm H}}
 \mathfrak C_{a;J}^{\mathrm H}.
\]
For a coefficient indexed by \(a=(j,k)\), a path ending at
\((J,K)\) has \(k\le K\) when \(j=J\), whereas it has positive length
and hence \(k<K\) when \(j\ne J\). Since \(K<m_J\), this proves the
vanishing assertion and \eqref{eq:Hermite-direct-sector-degree}.
Substitution in \eqref{eq:explicit-Hermite-minus-components} proves the
first identity in \eqref{eq:Hermite-direct-block-decomposition}. The
second follows from \eqref{eq:Hermite-B-component-reflection} and
\(\mathsf H_k(-t)=(-1)^k\mathsf H_k(t)\). Finally,
\eqref{eq:Hermite-explicit-target-finite}--
\eqref{eq:Hermite-explicit-connection-infinity} express every
\(\Omega\) and \(Z\) by finite \(\mathscr L_L\)- and
\(\mathscr H_L\)-sums, proving the stated explicitness without an
asymptotic extraction.
\end{proof}

\begin{corollary}[Sectorwise Charlier-I-like-to-Hermite confluence]
\label{cor:CI-Hermite-sector-limit}
Under the hypotheses of Corollary~\ref{cor:CI-Hermite-components}, let
\(\mathscr C_{j,J;s}^{\mathrm{CI}}\) denote the direct sector
\eqref{eq:CI-direct-sector-block}, evaluated at
\[
 a=\frac{s^2}{2},
 \qquad c_{h,s}=\frac{s}{s+\rho_h},
 \qquad h\in\{1,\ldots,q-1\}.
\]
For every pair of active indices \(j,J\),
\begin{equation}
 s^n\mathscr C_{j,J;s}^{\mathrm{CI}}(a+st)
 \xrightarrow[s\to\infty]{}
 \mathscr H_{j,J}^{\mathrm H,+}(t)
 \quad\hbox{coefficientwise in }t .
 \label{eq:CI-Hermite-sector-limit}
\end{equation}
\end{corollary}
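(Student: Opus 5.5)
The plan is to mimic the proof of Proposition~\ref{prop:CI-sector-blocks}: realize each sector as the solution of a square moment system with a sector-specific right-hand side, show that the centered matrices and right-hand sides converge, and conclude by continuity of matrix inversion.

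\textbf{Step 1 (centered sector systems).} Fix an active source $J$ and write $s^n\mathscr C_{j,J;s}^{\mathrm{CI}}(a+st)=\sum_{k<m_j}\gamma^{(J)}_{j,k,s}\mathsf H_k(t)$. By \eqref{eq:CI-direct-sector-identity}, the tuple $(\mathscr C_{j,J;s}^{\mathrm{CI}})_j$ is the unique solution of the Charlier-I type-I coefficient system with right-hand side $\mathcal D_-(z)R_J^{\mathrm{CI}}(z)$ in place of $(z-1)^n/n!$. Uniqueness comes from Proposition~\ref{prop:CI-normality}. In the variable $z=\e^{w/s}$, multiplied by $\e^{-aw/s}$ and by $s^n$, this becomes a linear system for the coefficients $\gamma^{(J)}_{\cdot,s}$. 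Its matrix is exactly the one used in the proof of Corollary~\ref{cor:CI-Hermite-components}. That proof already shows the matrix converges entrywise to the invertible $H^+$ coefficient system of Lemma~\ref{lem:Hermite-connection-determinant}. Only the source vectors remain.

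\textbf{Step 2 (source convergence, finite poles).} For $J<q$, substitute $z=\e^{w/s}$ and $c_{J,s}=s/(s+\rho_J)$. This gives $1-c_{J,s}z=(\rho_J-w+\mathrm O(s^{-1}))/(s+\rho_J)$, so the finite pole $c_J^{-1}$ moves to the fixed point $w=\rho_J$. The full centered target $s^n(z-1)^n/[n!\mathcal D_-(z)]$ times the normalizing constants converges locally uniformly to $w^n C_{\mm}/[n!\prod_h(\rho_h-w)^{m_h}]$. Its $J$-th principal-part group is a contour projection around $\rho_J$, as in the proof of Proposition~\ref{prop:MI-sector-blocks}. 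So the scaled $R_J^{\mathrm{CI}}$ converges to the $J$-th principal part of the Hermite target, reflected under $w\mapsto -z$. This is the $H^+$ counterpart of $\Omega^{\mathrm H}_{J,\cdot}$ in \eqref{eq:Hermite-explicit-target-finite}.

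\textbf{Step 3 (source convergence, $J=q$).} The sector $J=q$ is the polynomial part. It is the full target minus the finite principal parts, so it converges by subtraction. Its limit is the polynomial part of the Hermite target, i.e.\ the reflected \eqref{eq:Hermite-explicit-target-infinity}.

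\textbf{Step 4 (identification).} The limiting system with source equal to the $J$-th sector of the Hermite target has a unique solution, by Lemma~\ref{lem:Hermite-connection-determinant}. By the path-partition argument of Proposition~\ref{prop:Hermite-direct-block-decomposition}, read through the reflection \eqref{eq:Hermite-plus-direct-block}, that solution is the coefficient vector of $\mathscr H_{j,J}^{\mathrm H,+}$. Continuity of inversion then yields \eqref{eq:CI-Hermite-sector-limit}.

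\textbf{Main obstacle.} The source in Step 1 must be transformed by exactly the same rescaling, centering factor $\e^{-aw/s}$ and factor $s^n$ as the moment matrix. Only then does the linear system stay consistent, so that it is legitimate to pass the limit sector by sector. This bookkeeping is where I expect the difficulty. I would check it first on the full sum: summing over $J$ must reproduce Corollary~\ref{cor:CI-Hermite-components}. Since the contour projections are linear, sectorwise convergence then follows from that consistency.
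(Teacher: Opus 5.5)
Your proposal is correct and follows essentially the paper's route: a sector-specific source in the same centered Hermite-basis system, entrywise convergence of that system's matrix to the invertible $H^+$ system of Lemma~\ref{lem:Hermite-connection-determinant}, and convergence of each sector source (finite poles by projection, $J=q$ by subtraction), followed by identification of the limit through the first-step/path-partition argument with the reflection \eqref{eq:Hermite-plus-direct-block} and continuity of matrix inversion. The one difference is that the paper makes your Step~2 exact rather than asymptotic: under the affine substitution $z=1+w/s$ one has $1-c_{h,s}z=(\rho_h-w)/(s+\rho_h)$ identically, so $C_{\mm}s^{m_q-1}R_{J,s}^{\mathrm{CI}}(1+w/s)$ equals $\alpha_s=\prod_h(1+\rho_h/s)^{m_h}\to1$ times the $J$-th sector of the limiting Hermite target for every $s$; evaluating this identity at $\omega_s(w)=s(\e^{w/s}-1)$ then gives the centered source without any contour limit over moving poles.
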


\begin{proof}
Put
\[
 M\coloneq\sum_{h=1}^{q-1}m_h,\qquad
 C_{\mm}\coloneq\prod_{h=1}^{q-1}\rho_h^{m_h},\qquad
 Q_+(w)\coloneq\prod_{h=1}^{q-1}(\rho_h-w)^{m_h},
\]
so that \(n=M+m_q-1\), and set
\[
 L_+(w)\coloneq
 \e^{w^2/4}\prod_{h=1}^{q-1}
 \left(\frac{\rho_h}{\rho_h-w}\right)^{\delta_h},
 \qquad
 \mathcal T^+(w)\coloneq
 \frac{C_{\mm}w^n}{n!\,Q_+(w)}.
\]
Choose pairwise disjoint positively oriented circles
\(\Gamma_J^+\) around the active points \(\rho_J\), \(J<q\), and no
other active point \(\rho_h\).  If \(f\) has poles only at those points and a
polynomial part at infinity, write
\[
 (\mathcal P_J^+f)(w)\coloneq
 \frac{1}{2\pi\mathrm i}\int_{\Gamma_J^+}
 \frac{f(\zeta)}{w-\zeta}\,\dd\zeta
 \quad(J<q),\qquad
 \mathcal P_q^+f\coloneq
 f-\sum_{\substack{J<q\\m_J>0}}\mathcal P_J^+f .
\]
Thus \(\mathcal P_J^+f\) is the complete principal part at
\(w=\rho_J\), while \(\mathcal P_q^+f\) is the polynomial part at
infinity.  For every active \(J\), put
\(\mathcal T_J^+\coloneq\mathcal P_J^+\mathcal T^+\).
The projections are disjoint and
\(\mathcal T^+=\sum_{J\in\mathcal J_{\mm}^{\mathrm H}}
\mathcal T_J^+\).

Let
\[
 \mathcal D_{-,s}(z)\coloneq
 \prod_{h=1}^{q-1}(1-c_{h,s}z)^{m_h},\qquad
 \alpha_s\coloneq
 \prod_{h=1}^{q-1}\left(1+\frac{\rho_h}{s}\right)^{m_h}.
\]
The rational function split into the sectors
\(R_{J,s}^{\mathrm{CI}}\) in the paragraph preceding
Proposition~\ref{prop:CI-sector-blocks} is
\[
 \sum_{J\in\mathcal J_{\mm}^{\mathrm{CI}}}
 R_{J,s}^{\mathrm{CI}}(z)
 =\frac{(z-1)^n}{n!\,\mathcal D_{-,s}(z)}.
\]
The affine substitution \(z=1+w/s\) sends the pole
\(c_{J,s}^{-1}\) exactly to \(w=\rho_J\) and preserves the polynomial
degree at infinity.  Since
\[
 \mathcal D_{-,s}(1+w/s)
 =\frac{Q_+(w)}{s^M\alpha_s},
\]
uniqueness of partial fractions, equivalently the Cauchy projections
above, gives the exact sector identity
\begin{equation}
 C_{\mm}s^{m_q-1}
 R_{J,s}^{\mathrm{CI}}(1+w/s)
 =\alpha_s\mathcal T_J^+(w)
 \qquad(J\in\mathcal J_{\mm}^{\mathrm{CI}}).
 \label{eq:CI-Hermite-exact-sector-source}
\end{equation}
This also covers \(J=q\), because both sides then mean the polynomial
part at infinity.

Let
\[
 \mathcal H_{J,s}^{\mathrm{CI}}(z)\coloneq
 \sum_{\ell\ge0}\sum_{j\in\mathcal J_{\mm}^{\mathrm H}}
 \mathscr C_{j,J;s}^{\mathrm{CI}}(\ell)
 v_{j,s}^{\mathrm{CI}}(\ell)z^\ell
\]
be the signed generating function of the \(J\)-th block.  From
\eqref{eq:CI-direct-sector-identity} and the definitions of
\(\Phi_{j,r}^{\mathrm{CI}}\) and \(H_{\mm}^{\mathrm{CI}}\),
\[
 \mathcal H_{J,s}^{\mathrm{CI}}(z)
 =F_s^{\mathrm{CI}}(z)
 \prod_{h=1}^{q-1}(1-c_{h,s})^{m_h}
 R_{J,s}^{\mathrm{CI}}(z),
 \qquad
 F_s^{\mathrm{CI}}(z)
 =\e^{a(z-1)}
 \prod_{h=1}^{q-1}
 \left(\frac{1-c_{h,s}}{1-c_{h,s}z}\right)^{\delta_h}.
\]
Put \(\omega_s(w)\coloneq s(\e^{w/s}-1)\).  Since
\[
 \prod_{h=1}^{q-1}(1-c_{h,s})^{m_h}
 =\frac{C_{\mm}}{s^M\alpha_s},
\]
\eqref{eq:CI-Hermite-exact-sector-source} yields the exact centered
source formula
\begin{equation}
 s^n\e^{-aw/s}
 \mathcal H_{J,s}^{\mathrm{CI}}(\e^{w/s})
 =
 \widetilde F_s^{\mathrm{CI}}(w)
 \mathcal T_J^+(\omega_s(w)),
 \qquad
 \widetilde F_s^{\mathrm{CI}}(w)
 \coloneq\e^{-aw/s}F_s^{\mathrm{CI}}(\e^{w/s}).
 \label{eq:CI-Hermite-centered-sector-source}
\end{equation}
Here no asymptotic cancellation has been suppressed: the power of \(s\)
on the left is exactly \(n=M+m_q-1\).  Locally for
\(\abs w<\min_{h<q}\rho_h\) when \(q>1\), and locally on
\(\mathbb C\) when \(q=1\),
\[
 \widetilde F_s^{\mathrm{CI}}(w)
 =
 \exp\!\left\{\frac{s^2}{2}
 \left(\e^{w/s}-1-\frac ws\right)\right\}
 \prod_{h=1}^{q-1}
 \left(\frac{\rho_h}{\rho_h-\omega_s(w)}\right)^{\delta_h}
 \longrightarrow L_+(w)
\]
together with every fixed derivative, and
\(\omega_s(w)\to w\).  Therefore the right-hand side of
\eqref{eq:CI-Hermite-centered-sector-source} converges locally, with
all derivatives required by the fixed finite system, to
\(L_+(w)\mathcal T_J^+(w)\).

For completeness, the latter is exactly the transform of the direct
Hermite block:
\begin{equation}
 \int_{\mathbb R}\e^{wt}
 \sum_{j\in\mathcal J_{\mm}^{\mathrm H}}
 \mathscr H_{j,J}^{\mathrm H,+}(t)
 \mathcal W_j^{\mathrm H,+}(t)\,\dd t
 =L_+(w)\mathcal T_J^+(w).
 \label{eq:Hermite-plus-sector-transform}
\end{equation}
Indeed, divide \eqref{eq:Hermite-explicit-transformed-identity} by
\(Q_{\mm}^{\mathrm H}\) and apply the Cauchy projector at
\(-\rho_J\) (or the complementary polynomial projector when \(J=q\)).
The first-step verification of
Proposition~\ref{prop:Hermite-direct-block-decomposition} then gives
the minus-sector source
\(\mathcal T_J^-(z)\coloneq(-1)^n\mathcal T_J^+(-z)\).
Using \eqref{eq:Hermite-plus-direct-block} and
\(L_+(w)=L_-(-w)\) gives
\eqref{eq:Hermite-plus-sector-transform}.  In particular, this
identification uses the projected rational source, not the desired
component limit.

Finally expand
\[
 s^n\mathscr C_{j,J;s}^{\mathrm{CI}}(a+st)
 =\sum_{k=0}^{m_j-1}\gamma_{j,k;J,s}\mathsf H_k(t).
\]
For the corresponding centered basis columns put
\[
 \Psi_{j,k;s}^{\mathrm{CI}}(w)\coloneq
 \e^{-aw/s}\sum_{\ell\ge0}
 \mathsf H_k\left(\frac{\ell-a}{s}\right)
 v_{j,s}^{\mathrm{CI}}(\ell)\e^{w\ell/s}.
\]
The generating function of the monic Hermite polynomials gives the
exact identity
\[
 \sum_{k\ge0}\Psi_{j,k;s}^{\mathrm{CI}}(w)\frac{u^k}{k!}
 =\e^{-u^2/4}\widetilde G_{j,s}(w+u),
 \qquad
 \widetilde G_{j,s}(w)\coloneq
 \e^{-aw/s}G_{j,s}^{\mathrm{CI}}(\e^{w/s}).
\]
Writing
\(\Lambda_j^+(w)\coloneq
\int_{\mathbb R}\e^{wt}\mathcal W_j^{\mathrm H,+}(t)\dd t\),
formula \eqref{eq:CI-row-pgfs} therefore gives local convergence, with
every fixed derivative, to
\[
 \e^{-u^2/4}\Lambda_j^+(w+u)
 =\sum_{k\ge0}
 \mathsf H_k(\partial_w)\Lambda_j^+(w)\frac{u^k}{k!}.
\]
Thus the centered basis columns converge explicitly to the \(H^+\)
columns.  By
\eqref{eq:CI-Hermite-centered-sector-source}, their right-hand side
converges to the independently identified source
\eqref{eq:Hermite-plus-sector-transform}.  The limiting square matrix
of Taylor jets through order \(n\) is nonsingular: multiplication of
all columns by \(Q_+/L_+\) is an invertible lower-triangular operation
on these jets and gives the reflected matrix of
Lemma~\ref{lem:Hermite-connection-determinant}.
Continuity of matrix inversion now gives
\(\gamma_{j,k;J,s}\to(-1)^{n+k}
\mathfrak C_{(j,k);J}^{\mathrm H}\), which is precisely
\eqref{eq:CI-Hermite-sector-limit}.
\end{proof}

The Charlier-II-like family also has a Hermite-like limit.  Fix a
near-diagonal multi-index \(\mm\), put \(D=\abs\mm\ge1\) and
\(n=D-1\), and assume \(\rho_h,\delta_h>0\) for
\(h\in\{1,\ldots,q-1\}\). For \(a>0\), set \(s=\sqrt{2a}\),
\(A_h=s\rho_h/2\), and \(\beta_h=A_h+\delta_h\) for
\(h\in\{1,\ldots,q-1\}\).
For \(j\in\{1,\ldots,q\}\), the limiting weights are
\begin{equation}
 \mathcal W_j^{\mathrm H,-}(t)
 =\int_{(0,\infty)^{q-1}}
 \phi(t+y_1+\cdots+y_{q-1})
 \prod_{h=1}^{q-1}g_{\rho_h,\delta_h+\delta_{h,j}}(y_h)
 \dd\boldsymbol y.
\end{equation}
Normalize the type-II polynomial by
\[
 \widehat A_{\mm}^{\mathrm{CII}}(k)
 =(-a)^D\frac{(\A_{<q})_D}
 {(\bbeta_{<q}+\mm_{<q})_D}A_{\mm}^{\mathrm{CII}}(k).
\]
The unit-normalized signed sequence and its centered measure are
\[
 \mathcal B_{\mm}^{\mathrm{CII,unit}}(k)
 \coloneq
 \frac{(-1)^{n+1}\prod_{h=1}^{q-1}(\beta_h)_{m_h+n}}
 {n!a^n(\A_{<q})_n}
 \mathcal B_{\mm}^{\mathrm{CII}}(k),
 \qquad
 \mu_s^{\mathrm{CII}}
 \coloneq s^n\sum_{k\ge0}\mathcal B_{\mm}^{\mathrm{CII,unit}}(k)
 \delta_{(k-a)/s}.
\]
\begin{proposition}[Charlier-II-like-to-Hermite limit]
\label{prop:CII-Hermite}
Let \(\mm\) be a near-diagonal multi-index with \(D=\abs\mm\ge1\) and
\(n=D-1\), and assume \(\rho_h,\delta_h>0\) for
\(h\in\{1,\ldots,q-1\}\). For \(a>0\), set
\(s=\sqrt{2a}\), \(A_h=s\rho_h/2\), and
\(\beta_h=A_h+\delta_h\) for \(h\in\{1,\ldots,q-1\}\). Then the following limits hold as
\(s\to\infty\).

\begin{enumerate}[label=\textnormal{(\roman*)}]
\item For every \(j\in\{1,\ldots,q\}\), locally uniformly for \(t\) in
compact subsets of \(\R\),
\begin{equation}
 s\,v_j^{\mathrm{CII}}(\lfloor a+st\rfloor)
 \xrightarrow[s\to\infty]{}\mathcal W_j^{\mathrm H,-}(t).
 \label{eq:CII-Hermite-rows}
\end{equation}

\item Locally uniformly for \(t\) in compact subsets of \(\R\),
\(s^{-D}\widehat A_{\mm}^{\mathrm{CII}}(\lfloor a+st\rfloor)
\xrightarrow[s\to\infty]{}P_{\mm}^{\mathrm H,-}(t)\), where
\begin{equation}
 P_{\mm}^{\mathrm H,-}(t)
 \coloneq\sum_{\substack{r_1,\ldots,r_{q-1}\ge0\\
                    r_1+\cdots+r_{q-1}\le D}}
 (-D)_{r_1+\cdots+r_{q-1}}
 \prod_{h=1}^{q-1}
 \frac{(-\delta_h-m_h)_{r_h}}{r_h!}
 \rho_h^{-r_h}
 \mathsf H_{D-r_1-\cdots-r_{q-1}}(t).
 \label{eq:CII-Hermite-A}
\end{equation}
The limiting polynomial also satisfies
\begin{equation}
 P_{\mm}^{\mathrm H,-}(t)
 =D![u^D]\e^{tu-u^2/4}
 \prod_{h=1}^{q-1}(1+u/\rho_h)^{\delta_h+m_h}.
 \label{eq:CII-Hermite-A-generating}
\end{equation}

\item The signed measures converge against every test function in
\(C_c^\infty(\R)\) and in every fixed moment:
\begin{equation}
 \mu_s^{\mathrm{CII}}\xrightarrow[s\to\infty]{}
 \mathcal B_{\mm}^{\mathrm H,-}(t)\dd t.
 \label{eq:CII-Hermite-B}
\end{equation}
\end{enumerate}
\end{proposition}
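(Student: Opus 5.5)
The plan mirrors the proof of Theorem~\ref{thm:Hermite-limit}, working from the Euler integral representation of the Charlier-II-like weights, \eqref{eq:CII-Euler-integral}, in place of a convolution. First I rewrite that representation as a mixture of Poisson laws: by \eqref{eq:CII-positive-coefficients}, \(v_j^{\mathrm{CII}}=\int p_{aY}\,\dd\mu_j(\boldsymbol y)\). With \(A_h=s\rho_h/2\) and \(\beta_h-A_h=\delta_h+\delta_{h,j}\) fixed, I substitute \(y_h=1-u_h/s\cdot(2/s)\), that is, \(y_h=\e^{-2u_h/s^2}\) to leading order. The idea is that \(aY=a\prod_h y_h\approx a-s\sum_h u_h\), because \(a=s^2/2\). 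The beta kernel \(b_{s\rho_h/2,\,d}\) concentrates at \(1\) on scale \(1/s\). Precisely, if \(y=1-2x/s\), then \(\frac{2}{s}b_{s\rho/2,d}(1-2x/s)\to g_{\rho,d}(x)\); this follows from the same gamma-ratio asymptotics as in the proof of Theorem~\ref{thm:MI-rows}. Since \(a(1-Y)\approx s\sum_h x_h\), the Poisson mean becomes \(a-s\sum_h x_h+O(1)\). The local limit \(s\,p_{a'}(\lfloor a+st\rfloor)\to\phi(t+\sum_h x_h)\) then holds uniformly for \(a'=a-s\sum_h x_h+O(1)\) with \(x\) in compacts. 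So I obtain \(s\,v_j^{\mathrm{CII}}(\lfloor a+st\rfloor)\to\int\phi(t+\sum_h x_h)\prod_h g_{\rho_h,d_{h,j}}(x_h)\dd\boldsymbol x=\mathcal W_j^{\mathrm H,-}(t)\).

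For the tails I use a uniform bound \(\sup_x s\,p_{a'}\le C\), which holds because the Poisson variance \(a'\) is at least of order \(s^2\) on the bulk. For small \(a'\) the only input is \(s\,p_{a'}(k)\le s\). The transformed beta kernels are dominated by \(C\e^{-\theta x}\), and the region \(x\gtrsim s\) has mass that is exponentially small. Splitting the integral at \(\sum_h x_h\le M\), exactly as in the Charlier-I argument, gives the locally uniform convergence \eqref{eq:CII-Hermite-rows}. The same computation on the transform side gives
\[
\e^{-aw/s}G_j^{\mathrm{CII}}(\e^{w/s})\to\e^{w^2/4}\prod_h\Bigl(\frac{\rho_h}{\rho_h+w}\Bigr)^{d_{h,j}}
\]
locally near \(w=0\). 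It is cleanest to prove this directly from \eqref{eq:CII-Euler-integral}: the exponent is \(aY(\e^{w/s}-1)-aw/s\), and with \(Y=\prod_h(1-2x_h/s)\) it tends to \(w^2/4-w\sum_h x_h\). This gives convergence of all fixed moments.

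For (ii), I expand \(\widehat A^{\mathrm{CII}}_{\mm}\) as a terminating series in falling factorials, in which \((-1)^D\) times the normalization collapses factors. The better route, though, is a generating identity analogous to \eqref{eq:MI-A-coefficient}. From \eqref{eq:CII-A},
\[
\widehat A(k)=D!\,[u^D](1+u)^k\,\mathcal F(u),\qquad \mathcal F(u)=\sum_\ell \frac{(-a)^{D-\ell}\,(\bbeta_{<q}+\mm_{<q})_\ell\,(\A_{<q}+\ell)_{D-\ell}}{\ell!\,(D-\ell)!\,(\bbeta_{<q}+\mm_{<q})_D}\,(-u)^{\ell}\cdots
\]
and rather than chase this I verify the limit coefficientwise. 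I rescale \(u\mapsto u/s\) and use \((1+u/s)^{a+st}\e^{-au/s}\to\e^{tu-u^2/4}\). The remaining factor, after rescaling, tends to \(\prod_h(1+u/\rho_h)^{\delta_h+m_h}\); this reflects \(\beta_h+m_h-A_h=\delta_h+m_h\) together with \(A_h/s=\rho_h/2\). I expect this identification, namely finding the exact finite-\(s\) coefficient-extraction form of \(\widehat A^{\mathrm{CII}}\) that plays the role of \eqref{eq:CI-A-generating}, to be the main obstacle. My first attempt will write the \({}_{q+1}F_{q-1}\) coefficient of \(\fall{k}{\ell}\) as a ratio \((\bbeta_{<q}+\mm_{<q})_\ell/(\A_{<q})_\ell\). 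I then check that \(s^{-D}\) times the monic normalization makes the \(\ell\)-th coefficient behave like \((\rho/2)\)-dependent Pochhammer ratios whose generating function is the displayed product. Expanding afterwards with \eqref{eq:monic-Hermite-definition} gives \eqref{eq:CII-Hermite-A}.

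For (iii), I follow the proof of Theorem~\ref{thm:Hermite-limit}(ii). The bilateral Laplace transform of \(\mu_s^{\mathrm{CII}}\) is \(\frac{[s(\e^{w/s}-1)]^n}{n!}\) times the centered transform of the positive Euler-integral factor in \eqref{eq:CII-B}, which has shifted parameters \(A_h+n\) and \(\beta_h+m_h+n\). The unit normalization is exactly what makes the prefactor equal \(1/n!\). The same exponent computation then gives the limit \(\frac{w^n}{n!}\e^{w^2/4}\prod_h\bigl(\rho_h/(\rho_h+w)\bigr)^{\delta_h+m_h}\), which is the transform of \(\mathcal B^{\mathrm H,-}_{\mm}\) obtained from \eqref{eq:Hermite-minus-seed-and-density} by integration by parts. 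The test-function version follows, as before, by summation by parts against \(s^n\Delta^n_{1/s}\varphi\), combined with the local limit of the positive coefficient sequence proved in step (i).
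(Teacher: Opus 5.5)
Your treatment of (i) and (iii) follows the paper's route. For (i) you use the Poisson--beta mixture \eqref{eq:CII-positive-coefficients} with $y_h=1-2x_h/s$, convergence of the rescaled beta kernels to $g_{\rho_h,d}$, a uniform bound on $s\,p_\lambda(\lfloor a+st\rfloor)$, and dominated convergence. For (iii) you use the centered transform and summation by parts against $s^n\Delta_{1/s}^n\varphi$; for the coefficient measures, rerun the argument of (i) with first beta parameters $A_h+n$ and shapes $\delta_h+m_h$. There are two small slips in (i). The substitution $y_h=1-2u_h/s^2$ is on the wrong scale. The majorant of the transformed kernels must be $Cx^{d-1}\e^{-\theta x}$, not $C\e^{-\theta x}$, because $d=\delta_h+\delta_{h,j}$ can be below one.

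The genuine gap is in (ii). Put $\eta_h=\delta_h+m_h$. Expanding \eqref{eq:CII-A} gives
\[
 \widehat A_{\mm}^{\mathrm{CII}}(k)=\sum_{\ell=0}^{D}(-1)^{D-\ell}\binom D\ell a^{D-\ell}\rho_{D-\ell}(s)\,\fall{k}{\ell},
 \qquad
 \rho_p(s)=\prod_{h=1}^{q-1}\frac{(A_h+D-p)_p}{(A_h+\eta_h+D-p)_p}.
\]
Since $a=s^2/2$ and $\rho_p(s)\to1$, the coefficient of $\fall{k}{\ell}$ in $s^{-D}\widehat A_{\mm}^{\mathrm{CII}}$ is of exact order $s^{D-2\ell}$. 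At $k=\lfloor a+st\rfloor$ each of the $D+1$ terms is of order $s^D$, yet their sum stays bounded. So your ``first attempt'', checking falling-factorial coefficients one at a time, cannot work: you must exhibit $D$ orders of cancellation across all the terms.

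The generating form does not remove this. It reads $s^{-D}\widehat A_{\mm}^{\mathrm{CII}}(k)=D![u^D](1+u/s)^k\mathcal F(u/s)$ with $\mathcal F(u)=\sum_{p=0}^D\frac{(-au)^p}{p!}\rho_p(s)$. The factor $\e^{au/s}\mathcal F(u/s)$, which you assert tends to $\prod_h(1+u/\rho_h)^{\eta_h}$, has $u^N$-coefficient
\[
 \frac{(s/2)^N}{N!}\sum_{p=0}^{N}(-1)^p\binom Np\rho_p(s).
\]
Its convergence is exactly the statement that this $N$-th alternating difference is $\mathrm O(s^{-N})$ with a specific leading term. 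The observations $\beta_h+m_h-A_h=\eta_h$ and $A_h/s=\rho_h/2$ do not prove that.

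The missing ingredient is a smoothness estimate for finite differences. The paper writes $\widehat A_{\mm}^{\mathrm{CII}}(k)=a^D\prod_h\frac{(A_h)_D}{(A_h+\eta_h)_D}\,\Delta_1^D\Psi_s(0;t)$, where
\[
 \Psi_s(x;t)=\frac{\Gamma(k+1)}{\Gamma(k-x+1)}a^{-x}\prod_{h=1}^{q-1}\frac{\Gamma(A_h+\eta_h+x)\Gamma(A_h)}{\Gamma(A_h+\eta_h)\Gamma(A_h+x)},\qquad k=\lfloor a+st\rfloor.
\]
It then sets $H_s(y;t)=\Psi_s(sy;t)$ and uses uniform gamma-ratio asymptotics to prove $H_s\to\e^{2ty-y^2}\prod_h(1+2y/\rho_h)^{\eta_h}$ in $C^D$ near $y=0$. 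Combining $s^D\Delta_{1/s}^Df(0)\to f^{(D)}(0)$ with $a^D/s^{2D}\to2^{-D}$ yields \eqref{eq:CII-Hermite-A-generating} without ever separating the divergent summands.

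You can repair your own framework the same way. Show that $y\mapsto\rho_{sy}(s)$, with $\rho_p$ extended through gamma functions, converges in $C^D$ near $0$ to $\prod_h(1-2y/\rho_h)^{\eta_h}$. Alternatively, expand $\rho_p(s)$ in powers of $s^{-1}$ with coefficients polynomial in $p$, as in Lemma~\ref{lem:alternating-Pochhammer-estimate}, and keep the first surviving term. Either way the displayed coefficient tends to $[u^N]\prod_h(1+u/\rho_h)^{\eta_h}$. Together with $(1+u/s)^{\lfloor a+st\rfloor}\e^{-au/s}\to\e^{tu-u^2/4}$, this proves (ii).
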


\begin{proof}
Put \(d_{h,j}\coloneq\delta_h+\delta_{h,j}>0\) and extend by zero outside
\((0,s/2)\) the transformed beta densities
\[
 \widetilde b_{s,h,j}(u)
 \coloneq\frac2s\,b_{s\rho_h/2,d_{h,j}}\!\left(1-\frac{2u}{s}\right)
 =\frac{(2/s)^{d_{h,j}}(s\rho_h/2)_{d_{h,j}}}{\Gamma(d_{h,j})}
 u^{d_{h,j}-1}
 \left(1-\frac{2u}{s}\right)^{s\rho_h/2-1}.
\]
For every fixed \(u>0\),
\(\widetilde b_{s,h,j}(u)
\xrightarrow[s\to\infty]{}g_{\rho_h,d_{h,j}}(u)\). Let
\(k_s(t)\coloneq\lfloor a+st\rfloor\) and
\(Y_s(\boldsymbol u)\coloneq\prod_{h=1}^{q-1}(1-2u_h/s)\).
Since \(a=s^2/2\), for every compact
\(\mathcal K\subset[0,\infty)^{q-1}\) one has
\(aY_s(\boldsymbol u)
=a-s\sum_{h=1}^{q-1}u_h+\mathrm O_{\mathcal K}(1)\) as \(s\to\infty\),
uniformly for \(\boldsymbol u\in\mathcal K\). Consequently,
Stirling's formula gives
\(s\,p_{aY_s(\boldsymbol u)}(k_s(t))
\xrightarrow[s\to\infty]{}
\phi\left(t+\sum_{h=1}^{q-1}u_h\right)\),
locally uniformly in
\((t,\boldsymbol u)\in\R\times(0,\infty)^{q-1}\).

We next control the expanding integration domain.  There are constants
\(M_{h,j}>0\) such that, for all sufficiently large \(s\), uniformly for
\(0<u<s/2\),
\(\widetilde b_{s,h,j}(u)
\le M_{h,j}u^{d_{h,j}-1}\e^{-\rho_hu/2}\).
Indeed, \((2/s)^{d_{h,j}}(s\rho_h/2)_{d_{h,j}}\) is bounded and
\((1-2u/s)^{s\rho_h/2-1}\le\e^{-\rho_hu/2}\)
for all sufficiently large \(s\). Moreover, for every integer \(k\ge1\),
\(\sup_{\lambda\ge0}p_\lambda(k)
=\e^{-k}k^k/k!\le C/\sqrt{k}\).
Hence, for \(t\) in a fixed compact set and all sufficiently large \(s\),
\(\sup_{\lambda\ge0}s\,p_\lambda(k_s(t))\le C_K\). After the change of
variables above, \eqref{eq:CII-positive-coefficients} becomes
\[
 s\,v_j^{\mathrm{CII}}(k_s(t))
 =\int_{(0,\infty)^{q-1}}
 s\,p_{aY_s(\boldsymbol u)}(k_s(t))
 \prod_{h=1}^{q-1}\widetilde b_{s,h,j}(u_h)\dd\boldsymbol u.
\]
Its integrand is bounded, uniformly for \(t\in K\), by the integrable
function \(C_K\prod_{h=1}^{q-1}u_h^{d_{h,j}-1}\e^{-\rho_hu_h/2}\).
Dominated convergence, applied also to the supremum of the difference over
\(t\in K\), proves the locally uniform limit of the weights
\eqref{eq:CII-Hermite-rows}.
Using \eqref{eq:CII-Euler-integral}, expanding
\(\prod_{h=1}^{q-1}(1-2u_h/s)\), and putting \(z=\e^{w/s}\), one obtains
locally near \(w=0\)
\[
 \e^{-aw/s}G_j^{\mathrm{CII}}(\e^{w/s})
 \xrightarrow[s\to\infty]{}
 \e^{w^2/4}
 \prod_{h=1}^{q-1}\left(\frac{\rho_h}{\rho_h+w}\right)^{
 \delta_h+\delta_{h,j}}.
\]
The right-hand side is the bilateral transform of
\eqref{eq:CII-Hermite-rows}.

The polynomial limit requires the cancellation among all \(D+1\) terms of
the terminating series. Put \(\eta_h=\delta_h+m_h\) for
\(h\in\{1,\ldots,q-1\}\). For a fixed
sufficiently small \(\epsilon>0\) and all large \(s\), the following gamma
quotient is regular on \(\abs{x}\le\epsilon s\):
\[
 \Psi_s(x;t)
 \coloneq\frac{\Gamma(k_s(t)+1)}{\Gamma(k_s(t)-x+1)}a^{-x}
 \prod_{h=1}^{q-1}
 \frac{\Gamma(A_h+\eta_h+x)\Gamma(A_h)}
 {\Gamma(A_h+\eta_h)\Gamma(A_h+x)}.
\]
Writing \(\Delta_hf(y)=f(y+h)-f(y)\), the terminating formula
\eqref{eq:CII-A} gives the exact identity
\[
 \widehat A_{\mm}^{\mathrm{CII}}(k_s(t))
 =a^D\prod_{h=1}^{q-1}\frac{(A_h)_D}{(A_h+\eta_h)_D}
 \Delta_1^D\Psi_s(0;t).
\]
Set \(H_s(y;t)=\Psi_s(sy;t)\). Uniform gamma-ratio expansions, together
with their first \(D\) derivatives in \(y\), yield locally uniformly in
\((y,t)\) the limit
\(H_s(y;t)\xrightarrow[s\to\infty]{}
H(y;t)\coloneq\e^{2ty-y^2}
\prod_{h=1}^{q-1}(1+2y/\rho_h)^{\eta_h}\),
together with
\(\partial_y^rH_s\xrightarrow[s\to\infty]{}\partial_y^rH\) for
\(r\in\{0,\ldots,D\}\). Therefore
\begin{equation*}
	 s^{-D}\widehat A_{\mm}^{\mathrm{CII}}(k_s(t))
 =\frac{a^D}{s^D}
 \prod_{h=1}^{q-1}\frac{(A_h)_D}{(A_h+\eta_h)_D}
 \Delta_{1/s}^DH_s(0;t)
 \xrightarrow[s\to\infty]{}2^{-D}\partial_y^DH(0;t)
 =D![u^D]\e^{tu-u^2/4}
 \prod_{h=1}^{q-1}(1+u/\rho_h)^{\eta_h},
\end{equation*}
which proves \eqref{eq:CII-Hermite-A-generating} without separating its
divergent summands. Expanding the finite product and using
\eqref{eq:monic-Hermite-definition} gives \eqref{eq:CII-Hermite-A}.
Applying the transform calculation above to the generating
function normalized by its factorial moment of order \(n\) gives
\(w^n\int_{\R}\e^{wt}S_{\mm}^{\mathrm H,-}(t)\dd t/n!\).
For a direct test-function proof, define
\(p_{\mm,s}^{\mathrm{CII}}(k)\coloneq[z^k]H_{\mm,s}^{\mathrm{CII}}(z)\),
where, under the parameters of the proposition,
\[
 H_{\mm,s}^{\mathrm{CII}}(z)
 \coloneq\pFq{q-1}{q-1}
 {\A_{<q}+n}{\bbeta_{<q}+\mm_{<q}+n}{a(z-1)}
 =\int_{(0,1)^{q-1}}\e^{a(z-1)y_1\cdots y_{q-1}}
 \prod_{h=1}^{q-1}b_{A_h+n,\delta_h+m_h}(y_h)\dd\boldsymbol y.
\]
This is the factor remaining after \((z-1)^n/n!\) is removed. Its
coefficients are nonnegative and sum to one by the displayed Euler
integral. Thus
\(\mathcal B_{\mm}^{\mathrm{CII,unit}}(k)
=n!^{-1}\sum_{r=0}^n(-1)^{n-r}\binom nr
p_{\mm,s}^{\mathrm{CII}}(k-r)\).
For \(\varphi\in C_c^\infty(\R)\), summation by parts gives
\[
 \left\langle\mu_s^{\mathrm{CII}},\varphi\right\rangle
 =\frac1{n!}\sum_{k\ge0}p_{\mm,s}^{\mathrm{CII}}(k)
 s^n\Delta_{1/s}^n\varphi\!\left(\frac{k-a}{s}\right).
\]
Uniform convergence of the finite difference to \(\varphi^{(n)}\),
together with convergence of the centered coefficient measures against
compactly supported continuous test functions to
\(S_{\mm}^{\mathrm H,-}(t)\dd t\), proves
\eqref{eq:CII-Hermite-B}. The transform identity and the exact coefficient
moments prove convergence of every fixed moment.
Here the required coefficient-measure convergence follows from the same
beta--Poisson dominated-convergence argument used for
\eqref{eq:CII-Hermite-rows}, now with first beta parameters \(A_h+n\)
and fixed second shapes \(\delta_h+m_h\); its limiting density is exactly
\(S_{\mm}^{\mathrm H,-}\).
\end{proof}

The two limiting systems are related by reflection:
\begin{equation}
 P_{\mm}^{\mathrm H,+}(t)=(-1)^D
 P_{\mm}^{\mathrm H,-}(-t),\qquad
 \mathcal B_{\mm}^{\mathrm H,+}(t)=(-1)^n
 \mathcal B_{\mm}^{\mathrm H,-}(-t).
 \label{eq:Hermite-reflection}
\end{equation}
The identities \eqref{eq:Hermite-reflection} show that the Charlier-I-like
and Charlier-II-like families converge to Hermite-like systems related by
reflection, although they are different before the continuous limit.
In particular, \(P_{\mm}^{\mathrm H,-}\) is monic of degree \(D\), and
reflection of \eqref{eq:Hermite-A-orthogonality} and
\eqref{eq:Hermite-B-moments} gives explicitly
\begin{align*}
 \int_{\mathbb R}t^rP_{\mm}^{\mathrm H,-}(t)
 \mathcal W_j^{\mathrm H,-}(t)\dd t
 &=0,
 &&0\le r<m_j,
&
 \int_{\mathbb R}t^r\mathcal B_{\mm}^{\mathrm H,-}(t)\dd t
 &=\delta_{r,n},
 &&0\le r\le n.
\end{align*}

The individual type-I polynomials also converge in the Charlier-II-like
family.

\begin{corollary}[Limit of the Charlier-II-like type-I polynomials]
\label{cor:CII-Hermite-components}
Under the hypotheses of Proposition~\ref{prop:CII-Hermite}, assume that
the rates \(\rho_h\) for which \(h\in\{1,\ldots,q-1\}\) and \(m_h>0\)
are pairwise distinct.
Let \(v_{j,s}^{\mathrm{CII}}\) denote the weight with the \(s\)-dependent
parameters of Proposition~\ref{prop:CII-Hermite}, for every
\(j\in\{1,\ldots,q\}\). Then:
\begin{enumerate}[label=\textnormal{(\roman*)}]
\item For all sufficiently large \(s\), there is a unique tuple
\(\bigl(B_{j,s}^{\mathrm{CII,unit}}\bigr)_{
 j\in\{1,\ldots,q\},\,m_j>0}\) satisfying
\(\deg B_{j,s}^{\mathrm{CII,unit}}<m_j\) for every
\(j\in\{1,\ldots,q\}\) with \(m_j>0\), and
\[
 \mathcal B_{\mm}^{\mathrm{CII,unit}}(k)
 =\sum_{\substack{j\in\{1,\ldots,q\}\\m_j>0}}
 B_{j,s}^{\mathrm{CII,unit}}(k)
 v_{j,s}^{\mathrm{CII}}(k).
\]
\item Let \(\bigl(B_j^{\mathrm H,-}\bigr)_{
 j\in\{1,\ldots,q\},\,m_j>0}\) be the unique tuple satisfying
\(\deg B_j^{\mathrm H,-}<m_j\) for every
\(j\in\{1,\ldots,q\}\) with \(m_j>0\), and
\[
 \mathcal B_{\mm}^{\mathrm H,-}(t)
 =\sum_{\substack{j\in\{1,\ldots,q\}\\m_j>0}}
 B_j^{\mathrm H,-}(t)
 \mathcal W_j^{\mathrm H,-}(t).
\]
For every \(j\in\{1,\ldots,q\}\) with \(m_j>0\), as \(s\to\infty\), the polynomial
\(t\mapsto s^nB_{j,s}^{\mathrm{CII,unit}}(a+st)\) converges
coefficientwise to \(B_j^{\mathrm H,-}(t)\).
\end{enumerate}
\end{corollary}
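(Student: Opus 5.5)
We follow the template of Corollary~\ref{cor:CI-Hermite-components}, using the reflected coefficient system. Write
\[
 s^nB_{j,s}^{\mathrm{CII,unit}}(a+st)
 =\sum_{k=0}^{m_j-1}\gamma_{j,k,s}\mathsf H_k(t).
\]
The type-I moment conditions then become a finite linear system for the coefficients \(\gamma_{j,k,s}\). We first prove that this system converges entrywise to the invertible system of Lemma~\ref{lem:Hermite-connection-determinant}. Existence and uniqueness for large \(s\) then follow, and continuity of inversion gives the coefficientwise limit.

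For part (i), the finite type-I moment matrix of the Charlier-II-like weights has entries \(\sum_k\fall{k}{p}\fall{k}{r}v_{j,s}^{\mathrm{CII}}(k)\). We pass to centered variables through the invertible change of bases \(\fall{k}{p}\leftrightarrow((k-a)/s)^{p'}\) and \(\fall{k}{r}\leftrightarrow\mathsf H_{r'}((k-a)/s)\). Both bases are monic triangular, up to the scalar factors \(s^{p}\) and \(s^{r}\). Nonsingularity of the original matrix is therefore equivalent to nonsingularity of the centered matrix
\[
 \bigl(\textstyle\sum_k((k-a)/s)^p\,\mathsf H_r((k-a)/s)\,v_{j,s}^{\mathrm{CII}}(k)\bigr).
\]
These entries are the \(w\)-derivatives at \(0\) of
\[
 \Psi_{j,r;s}(w)=\e^{-aw/s}\sum_k\mathsf H_r((k-a)/s)\,v_{j,s}^{\mathrm{CII}}(k)\,\e^{wk/s}.
\]
The monic Hermite generating function gives exactly \(\sum_r\Psi_{j,r;s}u^r/r!=\e^{-u^2/4}\widetilde G_{j,s}(w+u)\), where \(\widetilde G_{j,s}(w)=\e^{-aw/s}G_j^{\mathrm{CII}}(\e^{w/s})\). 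The proof of Proposition~\ref{prop:CII-Hermite} shows that \(\widetilde G_{j,s}\) converges locally near \(w=0\), together with all fixed derivatives, to \(\Lambda_j^-(w)=\varkappa_j^{\,}L_-(w)/(\rho_j+w)\) (to \(L_-\) when \(j=q\)). Hence the columns converge to those of \(\mathsf H_r(\partial_w)\Lambda_j^-\). Multiplying all columns by \(Q_{\mm}^{\mathrm H}/L_-\), which is analytic and nonzero at \(0\), acts on Taylor jets through order \(n\) by an invertible lower-triangular matrix. It turns the limiting jet matrix into \(C^{\mathrm H,-}\), whose determinant \eqref{eq:Hermite-connection-determinant} is nonzero under the distinctness hypothesis. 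So the finite matrices are nonsingular for large \(s\), which gives uniqueness.

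For existence we must show that the unique moment solution reproduces \(\mathcal B_{\mm}^{\mathrm{CII,unit}}\) itself, and not merely its first \(n+1\) moments. Here I would invoke the rational argument of Theorem~\ref{thm:explicit-unreflected-B} in its generic-free form, as in the last paragraph of the proof of Theorem~\ref{thm:K-to-CI}. Divide the factorial-moment function of the difference by \(h_{\mathrm{CII}}(r)\) and multiply by \(D_-(r)\). The result is a polynomial of degree at most \(n\) by the near-diagonal growth bounds, and it vanishes at \(0,\dots,n\), so it is identically zero. All factorial moments then agree, and the analyticity argument near \(z=1\) identifies the two sequences. This is the step I expect to need the most care: pole separation is not assumed for finite \(s\), so only the degree estimate may be used, not the explicit reconstruction.

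For part (ii), the right-hand side of the centered system is fixed: \(\mathcal B_{\mm}^{\mathrm{CII,unit}}\) has factorial moments \(\delta_{r,n}\) for \(r\le n\). In centered power form with the factor \(s^n\), these are the moments of \(\mu_s^{\mathrm{CII}}\). By Proposition~\ref{prop:CII-Hermite}(iii) they converge to the moments \(\delta_{r,n}\) of \(\mathcal B_{\mm}^{\mathrm H,-}\). Continuity of matrix inversion then gives \(\gamma_{j,k,s}\) converging to the coefficients of the unique solution of the limiting system. By the uniqueness statement in Theorem~\ref{thm:explicit-Hermite-B}, that solution is \(B_j^{\mathrm H,-}=\sum_k\mathfrak C^{\mathrm H}_{(j,k)}\mathsf H_k\). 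This proves coefficientwise convergence.
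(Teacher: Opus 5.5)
Your proof is correct. For uniqueness and for the limit it follows the paper: you expand $s^nB_{j,s}^{\mathrm{CII,unit}}(a+st)$ in monic Hermite polynomials and use the Hermite generating function applied to $\widetilde G_{j,s}$ to show that the centered moment matrix converges. You then make the limit nonsingular by Lemma~\ref{lem:Hermite-connection-determinant}, after the triangular multiplication of the jets by $Q_{\mm}^{\mathrm H}/L_-$, and conclude by continuity of inversion.

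The genuine difference is the existence half of (i).
\begin{itemize}
\item The paper takes the finite-$s$ representation from the explicit reconstruction of Theorem~\ref{thm:explicit-unreflected-B} with $X=\mathrm{CII}$, converted to the original weights by \eqref{eq:unreflected-probability-components}. This gives the components at each large $s$ as explicit finite hypergeometric and Horn sums, which the sectorwise Corollary~\ref{cor:CII-Hermite-sector-limit} later uses.
\item That route tacitly needs the theorem's separation and nonvanishing hypotheses at the $s$-dependent parameters. These do hold for large $s$, essentially because $\beta_{h,s}-\beta_{J,s}=s(\rho_h-\rho_J)/2+\delta_h-\delta_J$ and $A_{J,s}-\beta_{J,s}-K=-\delta_J-K\ne0$, but the paper does not check this.
\item Your route combines nonsingularity of the moment matrix, which the limit already supplies, with the generic-free estimate $\deg D_-(r)G(r)\le n$ and the identity theorem for the entire generating functions at $z=1$. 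It needs only near-diagonality and positivity of $A_{h,s},\beta_{h,s}$, so it is more self-contained, but it gives no formula at finite $s$.
\end{itemize}

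One step in (ii) should be justified differently. The identification of the limit should rest on what you actually proved. The limiting moment system is nonsingular, and the coefficient vector of $(B_j^{\mathrm H,-})$ solves it, because $\mathcal B_{\mm}^{\mathrm H,-}=\sum_jB_j^{\mathrm H,-}\mathcal W_j^{\mathrm H,-}$ has moments $\delta_{r,n}$ for $r\le n$. The uniqueness in Theorem~\ref{thm:explicit-Hermite-B} concerns the density representation. On its own it does not rule out another tuple that matches only the first $n+1$ moments.

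Also, the right-hand side $\delta_{p,n}$ of your centered system is exact for every $s$, so Proposition~\ref{prop:CII-Hermite}(iii) is not needed at that point.
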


\begin{proof}
The same integration-by-parts calculation as in the proof of
Corollary~\ref{cor:CI-Hermite-components} gives
\[
 Q_{\mm}^{\mathrm H}(z)L_-(z)^{-1}
 \int_{\mathbb R}\e^{zt}\mathcal B_{\mm}^{\mathrm H,-}(t)\dd t
 =\frac{z^n}{n!}\prod_{h=1}^{q-1}\rho_h^{m_h}.
\]
Thus Lemma~\ref{lem:Hermite-connection-determinant} gives the unique
type-I polynomials \(B_j^{\mathrm H,-}\). The centered transforms of the
weights in
Proposition~\ref{prop:CII-Hermite}, together with their derivatives
through order \(n+\max_{h:m_h>0}(m_h-1)\), converge locally, as
\(s\to\infty\), to the corresponding \(H^-\)
transforms. After the displayed moment and weight normalizations, the
discrete coefficient matrices therefore converge entrywise to that
Hermite matrix. Their determinants are nonzero for all sufficiently large
\(s\). Expanding
\(s^nB_{j,s}^{\mathrm{CII,unit}}(a+st)
=\sum_{k=0}^{m_j-1}\gamma_{j,k,s}\mathsf H_k(t)\) and applying continuity of
matrix inversion proves the asserted
coefficientwise limit. The representation with the original weights is
obtained from the finite reconstruction with the canonical rows
\(w_j^{\mathrm{CII}}\) by
\eqref{eq:unreflected-probability-components}.
\end{proof}

\paragraph{Unit-normalized Charlier-II-like sectors.}
Retain the notation and scaling of
Proposition~\ref{prop:CII-Hermite}; in particular,
\[
 a=\frac{s^2}{2},\qquad
 A_{h,s}=\frac{s\rho_h}{2},\qquad
 \beta_{h,s}=A_{h,s}+\delta_h
 \quad(h\in\{1,\ldots,q-1\}).
\]
Put
\begin{equation}
 U_s\coloneq
 \frac{(-1)^{n+1}
 \prod_{h=1}^{q-1}
 (\beta_{h,s})_{m_h+n}}
 {n!\,a^n
 \prod_{h=1}^{q-1}(A_{h,s})_n},
 \qquad
 \chi_{j,s}\coloneq
 \begin{cases}
  \beta_{j,s}^{-1},&j<q,\\
  1,&j=q.
 \end{cases}
 \label{eq:CII-Hermite-sector-unit-factor}
\end{equation}
For active \(j,J\), define the canonical sectors
\begin{equation}
 \mathscr D_{j,J;s}^{\mathrm{CII}}(k)\coloneq
 \begin{cases}
  \mathscr C_{j,J;s}(k),&J<q,\\[2mm]
  \mathscr C_{j,q;s}^{\infty}(k),&J=q,
 \end{cases}
 \label{eq:CII-Hermite-canonical-sectors}
\end{equation}
where \(\mathscr C_{j,J;s}\) and
\(\mathscr C_{j,q;s}^{\infty}\) are respectively
\eqref{eq:CII-compact-finite-pole-block} and
\eqref{eq:CII-infinity-block-definition} evaluated at the displayed
\(s\)-dependent parameters. Thus the first line is the whole grouped
finite-pole block
\(\sum_{K=0}^{m_J-1}\pi_{J,K;s}^{-}
\mathcal C_{j,J,K;s}^{\mathrm{CII}}\), not one of its summands.
The second line likewise keeps each corrected vector in the infinity block
intact. The sectors relative
to the original positive weights \(v_{j,s}^{\mathrm{CII}}\), with unit
type-I normalization, are
\begin{equation}
 \mathscr U_{j,J;s}^{\mathrm{CII}}(k)
 \coloneq U_s\chi_{j,s}
 \mathscr D_{j,J;s}^{\mathrm{CII}}(k).
 \label{eq:CII-Hermite-unit-sectors}
\end{equation}
The factor \(\chi_{j,s}\) is essential: the polynomials
\(D_j^{\mathrm{CII}}\) are relative to the canonical rows
\(w_j^{\mathrm{CII}}=v_j^{\mathrm{CII}}/\beta_j\) for \(j<q\).
Equations \eqref{eq:unreflected-explicit-components},
\eqref{eq:unreflected-probability-components}, and
\eqref{eq:CII-compact-finite-pole-identity} give the exact decomposition
\begin{equation}
 B_{j,s}^{\mathrm{CII,unit}}(k)
 =\sum_{J\in\mathcal J_{\mm}^{\mathrm H}}
 \mathscr U_{j,J;s}^{\mathrm{CII}}(k).
 \label{eq:CII-Hermite-unit-sector-decomposition}
\end{equation}

\begin{corollary}[Sectorwise Charlier-II-like-to-Hermite confluence]
\label{cor:CII-Hermite-sector-limit}
Under the hypotheses of Corollary~\ref{cor:CII-Hermite-components}, for
all sufficiently large \(s\) the sectors
\eqref{eq:CII-Hermite-unit-sectors} are well defined.  For every pair of
active indices \(j,J\),
\begin{equation}
 s^n\mathscr U_{j,J;s}^{\mathrm{CII}}(a+st)
 \xrightarrow[s\to\infty]{}
 \mathscr H_{j,J}^{\mathrm H,-}(t)
 \quad\hbox{coefficientwise in }t .
 \label{eq:CII-Hermite-sector-limit}
\end{equation}
\end{corollary}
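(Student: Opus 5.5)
I will follow the pattern of Corollary~\ref{cor:CI-Hermite-sector-limit}, now on the unreflected side, where the sources project onto \(H^-\) sectors. The first step is well-definedness for large \(s\). Under the scaling \(A_{h,s}=s\rho_h/2\), \(\beta_{h,s}=A_{h,s}+\delta_h\), the poles \(-\beta_{J,s}-K\) of the pole strings are pairwise separated at distance of order \(s\) because the active \(\rho_h\) are distinct. The denominators in \eqref{eq:K-finite-pole-block}--\eqref{eq:unreflected-base-residue} are Pochhammer products of quantities such as \(A_g-\beta_J-K\) and \(\beta_h-\beta_J-K\). These behave like \((s/2)(\rho_g-\rho_J)\) plus bounded terms, or are \(-\delta_J-K\ne0\). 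Hence they are nonzero for large \(s\), and Theorem~\ref{thm:explicit-unreflected-B} applies. The sectors are then well defined and \eqref{eq:CII-Hermite-unit-sector-decomposition} holds.

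Next comes an exact sector source identity. Multiply the normalized moment identity \eqref{eq:finite-pole-block-pairing} by \(\pi^{-}_{J,K;s}\) and sum over \(K\), using \eqref{eq:CII-compact-finite-pole-identity}. This shows that the grouped block \(\mathscr D_{j,J;s}^{\mathrm{CII}}\) for \(J<q\) pairs with the weights \(w_j^{\mathrm{CII}}\) to give the normalized moment function \(\sum_K\pi^{-}_{J,K}/(r+\beta_{J,s}+K)\). This function is the \(J\)-th principal-part group of \(\mathcal R_{\mm}\) in \eqref{eq:unreflected-target-partial-fractions}. For \(J=q\), \eqref{eq:unreflected-corrected-infinity-pairing} together with the \(\gamma\)-inversion gives the polynomial part \(\sum_K\pi_{\infty,K}r^K\). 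Multiplying by \(h_{\mathrm{CII}}(r)\), I then write the centered bilateral transform \(s^n\e^{-aw/s}\sum_k\e^{wk/s}\sum_j\mathscr U_{j,J;s}^{\mathrm{CII}}(k)v_{j,s}^{\mathrm{CII}}(k)\) as a Newton/Taylor series at \(z=1\) with coefficients \(h_{\mathrm{CII}}(r)\cdot(\text{sector part})(r)\). The aim is to identify it, up to \(U_s\), with the Euler-integral representation of the \(J\)-th projected piece of \eqref{eq:CII-B}. Concretely, the \(J\)-th partial fraction in \(r\) corresponds, through the beta--Poisson integral, to a generating function of the form \(\int \e^{a(z-1)Y}(\cdots)\).

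As \(s\to\infty\), the variable \(r\) scales like \(s\), and the pole \(-\beta_{J,s}-K\) sits at \(r/s\approx-\rho_J/2\). I would therefore use the Cauchy-projection device of Proposition~\ref{prop:MI-sector-blocks}. I view the full centered transform \(s^n\e^{-aw/s}\mathcal H^{\mathrm{CII,unit}}(\e^{w/s})\), which converges by Proposition~\ref{prop:CII-Hermite}(iii) to \(\tfrac{w^n}{n!}\int\e^{wt}S_{\mm}^{\mathrm H,-}\), as \(L_-(w)\) times the rational source \(C_{\mm}w^n/(n!Q_{\mm}^{\mathrm H}(w))\). I then show that the \(J\)-th sector converges to \(L_-(w)\) times the projection of this source onto the pole \(w=-\rho_J\), or onto its polynomial part when \(J=q\). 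The key point is that contour projections commute with uniform limits on fixed circles. By the first-step verification in Proposition~\ref{prop:Hermite-direct-block-decomposition}, this limit is exactly the transform of \(\sum_j\mathscr H^{\mathrm H,-}_{j,J}\mathcal W_j^{\mathrm H,-}\).

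Finally, I expand \(s^n\mathscr U_{j,J;s}^{\mathrm{CII}}(a+st)=\sum_k\gamma_{j,k;J,s}\mathsf H_k(t)\). The centered basis columns converge with all required jets to the \(H^-\) columns, via the Hermite generating identity used in Corollary~\ref{cor:CI-Hermite-sector-limit}. The limiting jet matrix is nonsingular by Lemma~\ref{lem:Hermite-connection-determinant}. Continuity of inversion then gives \eqref{eq:CII-Hermite-sector-limit}. The main obstacle is the third step. Unlike the Charlier-I case, the map from \(r\)-moment partial fractions to the \(w\)-plane is not an exact affine substitution: it passes through the Euler integral, with poles at \(r\sim-s\rho_J/2\). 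So I must prove that the sector moment functions converge uniformly on the circles after rescaling \(r=sx/2\), and that the factor \(U_s\chi_{j,s}\) exactly cancels the growth \(\beta_{j,s}\sim s\rho_j/2\) and the \(s^{n}\) powers. I would first try to write the \(J\)-th sector's generating function as the contour integral of the full \(r\)-moment function against the Newton kernel \eqref{eq:Cauchy-Newton-transform}, and then pass to the limit under the integral.
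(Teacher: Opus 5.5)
Your well-definedness step and your exact sector-moment identity are fine and agree with the paper, and so is your overall plan: source convergence, then column convergence, then a nonsingular limiting system and continuity of inversion. The gap is in your third step.

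\textbf{Why the third step fails as stated.}
In the Charlier-II case the sectors \eqref{eq:CII-Hermite-rational-sectors} come from a partial-fraction splitting in the \emph{factorial-moment index} $r$, with poles at $r=-\beta_{J,s}-K$. They are not principal parts in the generating-function variable. At finite $s$, the centered transform $s^n\e^{-aw/s}\mathcal H(\e^{w/s})$ of the whole form, and that of every sector, is entire in $w$, since everything is built from ${}_{q-1}F_{q-1}$ and polynomials. Proposition~\ref{prop:CII-Hermite}(iii) gives convergence only near $w=0$. The limit there is $L_-(w)C_{\mm}w^n/(n!\,Q_{\mm}^{\mathrm H}(w))$, which is singular at $w=-\rho_h$ (branch points of $L_-$ as well as poles). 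So at finite $s$ there is nothing to project in the $w$-plane, and there is no circle around $-\rho_J$ on which anything converges.

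Uniform convergence on circles is available only in the rescaled plane $r=sx/2$. There it gives you convergence of the sector \emph{source} $s^nU_s\mathcal R_{J,s}(sx/2)$. It does not give convergence of the sector's $w$-transform.

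\textbf{What your fallback would need.}
Your fallback is left as a plan. The kernel you name, \eqref{eq:Cauchy-Newton-transform}, is the Cauchy transform in the lattice variable, which is not what is needed. What is needed is a uniform asymptotic, for $x$ on the circle and $w$ in a small complex disc around $0$, of the resolvent generating function $\e^{-aw/s}\sum_{r\ge0}h_{\mathrm{CII}}(r)(\e^{w/s}-1)^r/[r!\,(r-sx/2)]$. You can get it as follows. Write $1/(r-\zeta)=\int_0^1y^{r-\zeta-1}\dd y$, which is valid because $\operatorname{Re}\zeta<0$ on the circle. Then redo the beta--Poisson scaling with $y=1-2v/s$. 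Together with the dominated-convergence bounds, this gives $\frac2s\,L_-(w)/(w-x)$. Your $\zeta$-contour integral then produces $L_-(w)$ times the $J$-th principal-part group (or polynomial part) of $C_{\mm}w^n/(n!Q_{\mm}^{\mathrm H}(w))$, and the $w$-jet inversion of Corollary~\ref{cor:CI-Hermite-sector-limit} finishes. This estimate is the heart of the argument, and it is not supplied.

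\textbf{How the paper avoids this.}
The paper never returns to the $w$-plane. It expands $s^n\mathscr U_{j,J;s}^{\mathrm{CII}}(a+st)$ in the $\mathsf H_k$. It then defines normalized centered columns: the factorial moments of $\mathsf H_k((\ell-a)/s)v_{j,s}^{\mathrm{CII}}(\ell)$ divided by $h_{\mathrm{CII}}(r)$. This gives the exact finite system \eqref{eq:CII-Hermite-centered-sector-system} in the moment index.

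At $r=sz/2$ the source converges for two reasons. The exact factorization \eqref{eq:CII-Hermite-scaled-rational-source} puts the poles at $-\rho_J-2(\delta_J+K)/s$, so Cauchy projection is legitimate in that plane. In addition, $s^nU_s(s/2)^{m_q-1}\to(-1)^{n+1}C_{\mm}/n!$.

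The columns converge by an Euler-integral computation with first beta parameters $s(\rho_h+z)/2$: they tend to $\Phi_{j,k}^{\mathrm H,-}(z)/Q_-(z)$. In other words, the rescaled moment index is itself the Laplace variable of the $H^-$ system. Multiplying by $Q_s(z)$ gives a square polynomial system of degree at most $n$, which converges to the nonsingular system of Lemma~\ref{lem:Hermite-connection-determinant}.

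Finally, the growth you worry about in $U_s\chi_{j,s}$ is absorbed exactly by writing \eqref{eq:CII-Hermite-exact-sector-moments} against the original weights $v_{j,s}^{\mathrm{CII}}$. Only the scalar limit of $s^nU_s(s/2)^{m_q-1}$ is needed.
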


\begin{proof}
We first identify the source of each sector independently of the desired
component limit.  Put
\[
 h_s(r)\coloneq
 a^r\prod_{h=1}^{q-1}
 \frac{(A_{h,s})_r}{(\beta_{h,s})_r},
 \qquad
 \mathcal R_{\mm,s}(r)\coloneq
 -\frac{(-r)_n}
 {\prod_{h=1}^{q-1}(r+\beta_{h,s})_{m_h}}.
\]
Split \(\mathcal R_{\mm,s}\) exactly as in
\eqref{eq:unreflected-target-partial-fractions}:
\begin{equation}
 \mathcal R_{J,s}(r)\coloneq
 \begin{cases}
  \displaystyle\sum_{K=0}^{m_J-1}
  \frac{\pi_{J,K;s}^{-}}{r+\beta_{J,s}+K},&J<q,\\[3mm]
  \displaystyle\sum_{K=0}^{m_q-1}
  \pi_{\infty,K;s}r^K,&J=q.
 \end{cases}
 \label{eq:CII-Hermite-rational-sectors}
\end{equation}
The finite-pole reconstruction and the corrected polynomial block give,
for every \(r\in\Nzero\),
\begin{equation}
 \sum_{k\ge0}\fall{k}{r}
 \sum_{j\in\mathcal J_{\mm}^{\mathrm H}}
 \mathscr U_{j,J;s}^{\mathrm{CII}}(k)
 v_{j,s}^{\mathrm{CII}}(k)
 =U_sh_s(r)\mathcal R_{J,s}(r).
 \label{eq:CII-Hermite-exact-sector-moments}
\end{equation}

We now take the source limit.  Set
\[
 Q_-(z)\coloneq\prod_{h=1}^{q-1}(z+\rho_h)^{m_h},\qquad
 \mathcal T^-(z)\coloneq
 \frac{C_{\mm}z^n}{n!\,Q_-(z)},\qquad
 C_{\mm}\coloneq\prod_{h=1}^{q-1}\rho_h^{m_h},
\]
and
\[
 p_s(z)\coloneq\prod_{u=0}^{n-1}\left(z-\frac{2u}{s}\right),
 \qquad
 Q_s(z)\coloneq
 \prod_{h=1}^{q-1}\prod_{K=0}^{m_h-1}
 \left(z+\rho_h+\frac{2(\delta_h+K)}s\right).
\]
Direct factorization, with no limiting step, gives
\begin{equation}
 \left(\frac2s\right)^{m_q-1}
 \mathcal R_{\mm,s}\left(\frac{sz}{2}\right)
 =(-1)^{n+1}\frac{p_s(z)}{Q_s(z)}.
 \label{eq:CII-Hermite-scaled-rational-source}
\end{equation}
For every active \(J<q\), choose a positively oriented circle
\(\Gamma_J^-\) around \(-\rho_J\), with the circles pairwise disjoint.
For large \(s\), \(\Gamma_J^-\) contains precisely the \(m_J\) poles
\[
 -\rho_J-\frac{2(\delta_J+K)}s,
 \qquad K\in\{0,\ldots,m_J-1\}.
\]
Define
\[
 (\mathcal P_J^-f)(z)\coloneq
 \frac{1}{2\pi\mathrm i}\int_{\Gamma_J^-}
 \frac{f(\zeta)}{z-\zeta}\,\dd\zeta
 \quad(J<q),\qquad
 \mathcal P_q^-f\coloneq
 f-\sum_{\substack{J<q\\m_J>0}}\mathcal P_J^-f .
\]
Cauchy's formula applied to
\eqref{eq:CII-Hermite-scaled-rational-source} gives, locally away from
the limiting poles,
\begin{equation}
 \left(\frac2s\right)^{m_q-1}
 \mathcal R_{J,s}\left(\frac{sz}{2}\right)
 \longrightarrow
 (-1)^{n+1}\mathcal P_J^-
 \left(\frac{z^n}{Q_-(z)}\right).
 \label{eq:CII-Hermite-sector-source-before-unit}
\end{equation}
For \(J=q\), the same conclusion follows by subtracting all finite-pole
projections from \eqref{eq:CII-Hermite-scaled-rational-source}; it is
therefore convergence of the whole polynomial part, not of separately
divergent coefficients.

The remaining scalar normalization is
\begin{equation}
 s^nU_s\left(\frac{s}{2}\right)^{m_q-1}
 \longrightarrow
 \frac{(-1)^{n+1}C_{\mm}}{n!}.
 \label{eq:CII-Hermite-unit-factor-limit}
\end{equation}
Indeed,
\[
 \frac{(\beta_{h,s})_{m_h+n}}{(A_{h,s})_n}
 =A_{h,s}^{m_h}\bigl(1+\mathrm O(s^{-1})\bigr),
\]
and \(a=s^2/2\), while \(n=\sum_{h<q}m_h+m_q-1\).
The two factors \((-1)^{n+1}\) in
\eqref{eq:CII-Hermite-sector-source-before-unit} and
\eqref{eq:CII-Hermite-unit-factor-limit} cancel.  Hence, with
\(\mathcal T_J^-\coloneq\mathcal P_J^-\mathcal T^-\),
\begin{equation}
 s^nU_s\mathcal R_{J,s}\left(\frac{sz}{2}\right)
 \longrightarrow\mathcal T_J^-(z).
 \label{eq:CII-Hermite-unit-sector-source-limit}
\end{equation}

It remains to show that this source convergence controls the individual
polynomials.  Expand uniquely
\[
 s^n\mathscr U_{j,J;s}^{\mathrm{CII}}(a+st)
 =\sum_{k=0}^{m_j-1}\gamma_{j,k;J,s}\mathsf H_k(t)
\]
and, initially for \(r\in\Nzero\), define the normalized centered
columns
\[
 \mathscr R_{j,k;s}^{\,c}(r)\coloneq
 \frac{1}{h_s(r)}
 \sum_{\ell\ge0}\fall{\ell}{r}
 \mathsf H_k\left(\frac{\ell-a}{s}\right)
 v_{j,s}^{\mathrm{CII}}(\ell).
\]
Their meromorphic continuation in \(r\) is obtained directly by
expanding the fixed polynomial
\(\mathsf H_k((\ell-a)/s)\) in falling factorials and using
\eqref{eq:CII-moments}.  Substitution in
\eqref{eq:CII-Hermite-exact-sector-moments} gives the exact finite
system
\begin{equation}
 \sum_{\substack{j\in\mathcal J_{\mm}^{\mathrm H}\\0\le k<m_j}}
 \gamma_{j,k;J,s}
 \mathscr R_{j,k;s}^{\,c}(r)
 =s^nU_s\mathcal R_{J,s}(r).
 \label{eq:CII-Hermite-centered-sector-system}
\end{equation}

We record the column limit explicitly.  With
\(C_{h,j;s}\coloneq\beta_{h,s}+\delta_{h,j}\), the Hermite generating
function and the derivative formula for
\eqref{eq:CII-pgf} give, coefficientwise in \(u\),
\begin{equation}
 \sum_{k\ge0}\mathscr R_{j,k;s}^{\,c}(r)\frac{u^k}{k!}
 =
 \exp\left\{-\frac{u^2}{4}+\frac{(r-a)u}{s}\right\}
 \prod_{h=1}^{q-1}\frac{(\beta_{h,s})_r}{(C_{h,j;s})_r}
 \pFq{q-1}{q-1}
 {\bigl(A_{h,s}+r\bigr)_{h=1}^{q-1}}
 {\bigl(C_{h,j;s}+r\bigr)_{h=1}^{q-1}}
 {a(\e^{u/s}-1)}.
 \label{eq:CII-Hermite-centered-column-generating}
\end{equation}
Set \(r=sz/2\).  The Euler integral for the last hypergeometric
function now contains beta variables with first parameters
\(s(\rho_h+z)/2\) and fixed second parameters
\(\delta_h+\delta_{h,j}\).  The change
\(y_h=1-2x_h/s\), followed by dominated convergence on compact
subsets of \(\Re z>-\min_{h<q}\rho_h\) when \(q>1\), and on compact
\(z\)-sets when \(q=1\), with \(u\) near zero, gives
\begin{equation}
 \sum_{k\ge0}
 \mathscr R_{j,k;s}^{\,c}\left(\frac{sz}{2}\right)\frac{u^k}{k!}
 \longrightarrow
 \e^{zu/2}
 \begin{cases}
  \displaystyle\frac{\rho_j}{\rho_j+z},&j<q,\\
  1,&j=q,
 \end{cases}
 \prod_{h=1}^{q-1}
 \left(\frac{\rho_h+z}{\rho_h+z+u}\right)^{
 \delta_h+\delta_{h,j}}
=\e^{-u^2/4}\frac{\Lambda_j^-(z+u)}{L_-(z)}
 =\sum_{k\ge0}\frac{\Phi_{j,k}^{\mathrm H,-}(z)}
 {Q_-(z)}\frac{u^k}{k!}.
 \label{eq:CII-Hermite-centered-column-limit}
\end{equation}
This calculation also fixes the sign: the beta variables approach one,
so their fluctuations add to \(t\) and produce the \(H^-\), rather than
the \(H^+\), columns.

Evaluate \eqref{eq:CII-Hermite-centered-sector-system} at \(r=sz/2\)
and multiply it by \(Q_s(z)\).  By
\eqref{eq:unreflected-R-pairing} and near-diagonality, every resulting
column is a polynomial of degree at most \(n\).  Equations
\eqref{eq:CII-Hermite-centered-column-limit} and
\eqref{eq:CII-Hermite-unit-sector-source-limit} show coefficientwise
convergence of this square polynomial system to
\[
 \sum_{\substack{j\in\mathcal J_{\mm}^{\mathrm H}\\0\le k<m_j}}
 \gamma_{j,k;J}\Phi_{j,k}^{\mathrm H,-}(z)
 =Q_-(z)\mathcal T_J^-(z).
\]
The limiting matrix is nonsingular by
Lemma~\ref{lem:Hermite-connection-determinant}.  Cauchy projection of
\eqref{eq:Hermite-explicit-transformed-identity}, or equivalently the
first-step verification in
Proposition~\ref{prop:Hermite-direct-block-decomposition}, identifies
its unique solution as
\(\gamma_{j,k;J}=\mathfrak C_{(j,k);J}^{\mathrm H}\).
Continuity of matrix inversion proves
\eqref{eq:CII-Hermite-sector-limit}.

When \(q=1\), all products and finite-pole contours are empty,
\(m_1=n+1\), and
\[
 U_s=\frac{(-1)^{n+1}}{n!a^n},\qquad
 \left(\frac2s\right)^n
 \bigl[-(-sz/2)_n\bigr]\longrightarrow(-1)^{n+1}z^n.
\]
The signs cancel exactly as above, and the sole block tends to
\(2^n\mathsf H_n(t)/n!=\mathscr H_{1,1}^{\mathrm H,-}(t)\).
\end{proof}

For \(q=1\), the products in \eqref{eq:Hermite-A} and
\eqref{eq:Hermite-seed-B} are empty, and one recovers the monic Hermite
polynomial and the Gaussian derivative. The standard multiple-Charlier
to multiple-Hermite confluence belongs to the classical multiple Askey scheme
\cite{BranquinhoDiazFoulquieManasWolfs2024Discrete}.

\section{Conclusions}
\label{sec:conclusions}

We have placed the Jacobi-like and Laguerre-like systems for ordinary
type-I/type-II multiple orthogonality considered by Wolfs in a single
Askey-type confluence scheme. Its common finite-lattice ancestor is the
Hahn-like system obtained by applying the Bernstein transform to the
Jacobi-like weights. This construction gives positive weights, explicit
factorial moments, a terminating type-II polynomial, a normalized type-I
form, and an exact inverse Bernstein identity. For near-diagonal indices,
the separation and nonvanishing hypotheses of
Theorem~\ref{thm:Hahn-B-components} give finite hypergeometric formulas
and uniqueness for every type-I component.

The confluence analysis places this common ancestor in the displayed
diagram, which contains the Hahn-like, Kravchuk-like, two Meixner-like,
and two Charlier-like discrete systems, together with the Jacobi-like, two
Laguerre-like, and Hermite-like continuous systems. Along every directed
arrow we determine the type-II polynomial, the normalized type-I form
and, under the stated hypotheses, the type-I components and their
confluence. Thus the explicit
hypergeometric formulas realize the diagram itself rather than serving as
ancillary formulas attached to its nodes.

At the Hahn-like node, the unreflected finite-pole contributions regroup
into terminating Kamp\'e de F\'eriet blocks.
Proposition~\ref{prop:Hahn-Jacobi-KdF-confluence} gives their locally
uniform componentwise Hahn-like-to-Jacobi-like limit. Along the
Kravchuk-like-to-Charlier-II-like,
Meixner-II-like-to-Charlier-II-like, and
Meixner-II-like-to-Laguerre-I-like paths, these blocks and the
reconstructed block at infinity converge sectorwise
(Corollaries~\ref{cor:K-to-CII}, \ref{cor:CII-sectorwise-MII-limit}, and
\ref{cor:MII-LI-component-limit}). Along the
reflected Hahn-like--Meixner-I-like--Charlier-I-like path
(including Theorem~\ref{thm:K-to-CI}) and the
Meixner-I-like--Laguerre-II-like path, grouped blocks have finite
Lauricella--Horn sector limits; the latter is governed exactly by
Proposition~\ref{prop:MI-LII-exact-sectors}. Both
Charlier-like--Hermite-like arrows admit finite Hermite--Lauricella sector
decompositions related by reflection
(Corollaries~\ref{cor:CI-Hermite-sector-limit} and
\ref{cor:CII-Hermite-sector-limit}).

For one weight, all these constructions reduce to the corresponding
classical families. For several weights, no permutation of the normalized
rows identifies the two Meixner-like systems or the two Charlier-like
systems, by Propositions~\ref{prop:Meixner-families-distinct} and
\ref{prop:Charlier-families-distinct}, respectively. The present paper
establishes the orthogonality and explicit hypergeometric layer of the
Askey-type scheme. Recurrence coefficients and their factorizations form
a separate layer and are not treated here. Normality beyond the stated
nonvanishing domains, coincident poles, and possible AT proofs of
perfectness remain open. The mixed-type Hahn-like setting with two
independent numbers of weights is deliberately kept as a separate
development.

\section*{Funding}

This work was supported by research project PID2024-155133NB-I00,
\emph{Ortogonalidad, aproximaci\'on e integrabilidad: aplicaciones en procesos
	estoc\'asticos cl\'asicos y cu\'anticos}, funded by
MICIU/AEI/10.13039/501100011033 and by ERDF/EU.

\renewcommand*{\bibfont}{\small}
\enlargethispage{2\baselineskip}
\printbibliography

\end{document}